\let\@fnsymbol\@arabic
\newtheorem{theorem}{Theorem}\numberwithin{theorem}{section}
\newtheorem*{theorem*}{Theorem}
\newtheorem*{lemma*}{Lemma}
\newtheorem{definition}[theorem]{Definition}
\newtheorem*{definition*}{Definition}
\newtheorem*{proposition*}{Proposition}
\newtheorem{theoremm}{Theorem}\numberwithin{theoremm}{subsection}
\newtheorem{deffinition}[theoremm]{Definition}
\newtheorem{lemmma}[theoremm]{Lemma}
\newtheorem{corrollary}[theoremm]{Corollary}
\newtheorem{propposition}[theoremm]{Proposition}
\newtheorem{connjecture}[theoremm]{Conjecture}
\newtheorem{quesstion}[theoremm]{Question}
\newtheorem{problemm}[theoremm]{Problem}
\newtheorem{theoremmm}{Theorem}\numberwithin{theoremmm}{subsubsection}
\newtheorem{lemmmma}[theoremmm]{Lemma}
\newtheorem{defffinition}[theoremmm]{Definition}
\newtheorem{corrrollary}[theoremmm]{Corollary}
\newtheorem{proppposition}[theoremmm]{Proposition}
\theoremstyle{remark}
\newtheorem{remark}[theorem]{Remark}
\newtheorem*{example*}{Example}
\newtheorem{remmark}[theoremm]{Remark}
\newtheorem{exxample}[theoremm]{Example}
\newtheorem{remmmark}[theoremmm]{Remark}
\newlist{denumerate}{enumerate}{9}
\setlist[denumerate,1]{label=(\arabic*)}
\setlist[denumerate,2]{label=(\Roman*)}
\setlist[denumerate,3]{label=(\Alph*)}
\setlist[denumerate,4]{label=(\roman*)}
\setlist[denumerate,5]{label=(\alph*)}
\setlist[denumerate,6]{label=(\arabic*)}
\setlist[denumerate,7]{label=(\Roman*)}
\setlist[denumerate,8]{label=(\Alph*)}
\setlist[denumerate,9]{label=(\roman*)}
\newcommand{\Aut}{\operatorname{Aut}}
\newcommand{\lcm}{\operatorname{lcm}}
\newcommand{\ord}{\operatorname{ord}}
\newcommand{\Sym}{\operatorname{Sym}}
\renewcommand{\l}{\operatorname{l}}
\newcommand{\id}{\operatorname{id}}
\newcommand{\e}{\mathrm{e}}
\newcommand{\im}{\operatorname{im}}
\newcommand{\V}{\operatorname{V}}
\newcommand{\Mod}[1]{\ \left(\textup{mod}\ #1\right)}
\newcommand{\conj}{\operatorname{conj}}
\newcommand{\IN}{\mathbb{N}}
\newcommand{\Hcal}{\mathcal{H}}
\newcommand{\IF}{\mathbb{F}}
\newcommand{\Stab}{\operatorname{Stab}}
\newcommand{\IZ}{\mathbb{Z}}
\newcommand{\rem}{\operatorname{rem}}
\newcommand{\dom}{\operatorname{dom}}
\newcommand{\Acal}{\mathcal{A}}
\newcommand{\Pcal}{\mathcal{P}}
\newcommand{\CT}{\operatorname{CT}}
\newcommand{\Q}{\operatorname{Q}}
\newcommand{\Rcal}{\mathcal{R}}
\newcommand{\inv}{\operatorname{inv}}
\newcommand{\Tcal}{\mathcal{T}}
\newcommand{\Ocal}{\mathcal{O}}
\newcommand{\hfrak}{\mathfrak{h}}
\newcommand{\num}{\mathrm{num}}
\newcommand{\den}{\mathrm{den}}
\newcommand{\rfrak}{\mathfrak{r}}
\newcommand{\dfrak}{\mathfrak{d}}
\newcommand{\Gcal}{\mathcal{G}}
\newcommand{\Ical}{\mathcal{I}}
\newcommand{\Ccal}{\mathcal{C}}
\newcommand{\IQ}{\mathbb{Q}}
\newcommand{\BU}{\operatorname{BU}}
\newcommand{\ufrak}{\mathfrak{u}}
\newcommand{\Vcal}{\mathcal{V}}
\newcommand{\Bcal}{\mathcal{B}}
\newcommand{\Lcal}{\mathcal{L}}
\newcommand{\wfrak}{\mathfrak{w}}
\newcommand{\Ucal}{\mathcal{U}}
\newcommand{\Wcal}{\mathcal{W}}
\newcommand{\AGL}{\operatorname{AGL}}
\newcommand{\Pfrak}{\mathfrak{P}}
\newcommand{\AC}{\operatorname{AC}}
\newcommand{\Cfrak}{\mathfrak{C}}
\newcommand{\per}{\operatorname{per}}
\newcommand{\nil}{\operatorname{nil}}
\newcommand{\Tree}{\operatorname{Tree}}
\newcommand{\mpe}{\operatorname{mpe}}
\newcommand{\Xcal}{\mathcal{X}}
\newcommand{\Qcal}{\mathcal{Q}}
\newcommand{\pfrak}{\mathfrak{p}}
\newcommand{\aord}{\operatorname{aord}}
\newcommand{\ffrak}{\mathfrak{f}}
\newcommand{\Good}{\operatorname{Good}}
\newcommand{\Expand}{\operatorname{Expand}}
\newcommand{\Scal}{\mathcal{S}}
\newcommand{\height}{\operatorname{ht}}
\newcommand{\proj}{\operatorname{proj}}
\newcommand{\Ncal}{\mathcal{N}}
\newcommand{\Tfrak}{\mathfrak{T}}
\newcommand{\trans}{\operatorname{trans}}
\newcommand{\bfrak}{\mathfrak{b}}
\newcommand{\afrak}{\mathfrak{a}}
\newcommand{\minper}{\operatorname{minper}}
\newcommand{\nfrak}{\mathfrak{n}}
\newcommand{\Rfrak}{\mathfrak{R}}
\newcommand{\Nfrak}{\mathfrak{N}}
\newcommand{\Mfrak}{\mathfrak{M}}
\newcommand{\Vfrak}{\mathfrak{V}}
\newcommand{\Lfrak}{\mathfrak{L}}
\newcommand{\lab}{\operatorname{lab}}
\newcommand{\Lab}{\operatorname{Lab}}
\newcommand{\mfrak}{\mathfrak{m}}
\newcommand{\tfrak}{\mathfrak{t}}
\newcommand{\pperl}{\operatorname{pperl}}
\newcommand{\perl}{\operatorname{perl}}
\newcommand{\lfrak}{\mathfrak{l}}
\newcommand{\xfrak}{\mathfrak{x}}
\newcommand{\Xfrak}{\mathfrak{X}}
\newcommand{\proc}{\operatorname{proc}}
\newcommand{\rt}{\operatorname{rt}}
\newcommand{\Ifrak}{\mathfrak{I}}
\newcommand{\rrm}{\mathrm{r}}
\newcommand{\sfrak}{\mathfrak{s}}
\newcommand{\vfrak}{\mathfrak{v}}
\newcommand{\SF}{\operatorname{SF}}
\newcommand{\Dfrak}{\mathfrak{D}}
\newcommand{\Zcal}{\mathcal{Z}}
\newcommand{\class}{\mathrm{class}}
\newcommand{\quant}{\mathrm{quant}}
\newcommand{\conv}{\mathrm{conv}}
\newcommand{\inn}{\mathrm{in}}
\newcommand{\fact}{\mathrm{fact}}
\newcommand{\mdl}{\mathrm{mdl}}
\newcommand{\fdl}{\mathrm{fdl}}
\newcommand{\prt}{\mathrm{prt}}
\newcommand{\qry}{\mathrm{qry}}
\newcommand{\mord}{\mathrm{mord}}
\newcommand{\yfrak}{\mathfrak{y}}
\newcommand{\LV}{\mathrm{LV}}
\newcommand{\para}{\operatorname{par}}
\newcommand{\Jcal}{\mathcal{J}}
\newcommand{\Hfrak}{\mathfrak{H}}
\newcommand{\Layer}{\operatorname{Layer}}
\newcommand{\Type}{\operatorname{Type}}
\newcommand{\Yfrak}{\mathfrak{Y}}
\newcommand{\test}{\operatorname{test}}
\newcommand{\Test}{\operatorname{Test}}
\newcommand{\minperl}{\operatorname{minperl}}
\newcommand{\ifrak}{\mathfrak{i}}
\newcommand{\product}{\operatorname{prod}}
\newcommand{\up}{\mathrm{upper}}
\newcommand{\low}{\mathrm{lower}}
\newcommand{\WX}{\operatorname{WX}}
\newcommand{\Ycal}{\mathcal{Y}}
\newcommand{\TRAG}{\operatorname{TRAG}}
\newcommand{\STRAG}{\operatorname{STRAG}}
\newcommand{\Fcal}{\mathcal{F}}
\newcommand{\total}{\mathrm{total}}
\newcommand{\bit}{\mathrm{bit}}
\begin{document}

\title{Functional graphs of generalized cyclotomic mappings of finite fields}

\author{Alexander Bors\textsuperscript{1} \and Daniel Panario\textsuperscript{1} \and Qiang Wang\thanks{School of Mathematics and Statistics, Carleton University, 1125 Colonel By Drive, Ottawa ON K1S 5B6, Canada. \newline First author's e-mail: \href{mailto:alexanderbors@cunet.carleton.ca}{alexanderbors@cunet.carleton.ca} \newline Second author's e-mail: \href{mailto:daniel@math.carleton.ca}{daniel@math.carleton.ca} \newline Third author's e-mail: \href{mailto:wang@math.carleton.ca}{wang@math.carleton.ca} \newline The authors were supported by the Natural Sciences and Engineering Research Council of Canada, projects RGPIN-2018-05328 (A.~Bors and D.~Panario) and RGPIN-2017-06410 (Q.~Wang). \newline 2020 \emph{Mathematics Subject Classification}: Primary: 11T22. Secondary: 05C05, 05C25, 05C60, 11A07, 37P25. \newline \emph{Keywords and phrases}: cyclotomic mapping; finite dynamical system; finite field; functional graph; generalized cyclotomic mapping.}}

\date{\today}

\maketitle

\abstract{The functional graph of a function $g:X\rightarrow X$ is the directed graph with vertex set $X$ the edges of which are of the form $x\rightarrow g(x)$ for $x\in X$. Functional graphs are heavily studied because they allow one to understand the behavior of $g$ under iteration (i.e., to understand the discrete dynamical system $(X,g)$), which has various applications, especially when $X$ is a finite field $\IF_q$. This paper is an extensive study of the functional graphs of so-called index $d$ generalized cyclotomic mappings of $\IF_q$, which are a natural and manageable generalization of monomial functions. We provide both theoretical results on the structure of their functional graphs and Las Vegas algorithms for solving fundamental problems, such as parametrizing the connected components of the functional graph by representative vertices, or describing the structure of a connected component given by a representative vertex. The complexity of these algorithms is analyzed in detail, and we make the point that for fixed index $d$ and most prime powers $q$ (in the sense of asymptotic density), suitable implementations of these algorithms have an expected runtime that is polynomial in $\log{q}$ on quantum computers, whereas their expected runtime is subexponential in $\log{q}$ on a classical computer. We also discuss four special cases in which one can devise Las Vegas algorithms with this kind of complexity behavior over most finite fields that solve the graph isomorphism problem for functional graphs of generalized cyclotomic mappings.}

\section{Introduction}\label{sec1}

A \emph{discrete dynamical system}\phantomsection\label{term1} is a pair $(X,g)$ where $X$\phantomsection\label{not1} is a set and $g$\phantomsection\label{not2} is a function $X\rightarrow X$. The motivation behind this definition is to think of a complicated system that evolves in discrete time steps (such as a neural network), with $X$ being the set of all states which the system can assume, and $g(x)$ being the successor state of $x\in X$. For this reason, one calls $X$ the \emph{state space}\phantomsection\label{term2} and $g$ the \emph{(state) transition function}\phantomsection\label{term3} of $(X,g)$. When studying a discrete dynamical system $(X,g)$, one is naturally interested in the behavior of $g$ under iteration (i.e., in the function iterates $g^n$ for $n\in\IN_0=\{n\in\IZ: n\geq0\}$). See the monograph \cite{Mar99a} for a general introduction to discrete dynamical systems, and \cite[Chapter 7]{Mar99a} in particular for some examples of practical applications of them.

When $X$ is finite, one also calls $(X,g)$ a \emph{finite dynamical system}\phantomsection\label{term4}. Some important special cases with regard to applications are when $X=\IZ/m\IZ$\phantomsection\label{not3} and $g$ is a polynomial modulo $m$\phantomsection\label{not4} (which is used in Pollard's rho algorithm \cite{Pol75a}), or when $X=\IF_q^n$ (Cartesian power of the finite field $\IF_q$\phantomsection\label{not5}\phantomsection\label{not6}), with a particular focus on $q=2$ in the literature (see \cite{JLSS07a,LP01a,LBL91a,MW93a,VL12a}). It should be noted that one may identify $\IF_q^n$ with $\IF_{q^n}$ by fixing an $\IF_q$-basis in the latter, so there is in fact no loss of generality when assuming $n=1$ (i.e., when only considering finite fields themselves as state spaces).

A simple yet remarkable fact when $X$ is finite is that all points $x\in X$\phantomsection\label{not7} are \emph{pre-periodic under $g$}\phantomsection\label{term5}, i.e., there exist unique smallest integers $\pperl_g(x)\geq0$\phantomsection\label{not8} and $\perl_g(x)\geq1$\phantomsection\label{not9}, called the \emph{pre-period (length)}\phantomsection\label{term6} and \emph{period (length)}\phantomsection\label{term7} of $x$ under $g$ respectively, such that $g^{\pperl_g(x)+\perl_g(x)}(x)=g^{\pperl_g(x)}(x)$; in case $\pperl_g(x)=0$, one says that $x$ is \emph{periodic under $g$} (or \emph{$g$-periodic})\phantomsection\label{term8}. The subset of $X$ consisting of all $g$-periodic points is denoted by $\per(g)$\phantomsection\label{not10}. A point in $X$ that is not $g$-periodic is called \emph{transient under $g$} (or $g$-transient)\phantomsection\label{term9}. Various stochastic parameters of random functions $X\rightarrow X$ that are of interest for the study of finite dynamical systems, such as the expected pre-period and period length of a point, were determined in \cite{FO90a}.

An important means of visualizing a discrete dynamical system $(X,g)$, especially when $X$ is finite, is the so-called \emph{functional graph of $g$}\phantomsection\label{term10}, denoted by $\Gamma_g$\phantomsection\label{not11}. This is the directed graph with vertex set $X$ that has an arc (directed edge)\phantomsection\label{term11} $x\rightarrow g(x)$ for each $x\in X$, and no other arcs. It is straightforward to show that a directed graph $\Gamma$\phantomsection\label{not11P5} with vertex set $X$ is a functional graph (i.e., is of the form $\Gamma_g$ for some $g:X\rightarrow X$) if and only if each $x\in X$ has out-degree $1$ in $\Gamma$.

Particularly for \emph{finite} functional graphs $\Gamma_g$, one can give the following precise characterization of their shape: A \emph{connected component of $\Gamma_g$}\phantomsection\label{term12} is the induced subgraph of $\Gamma_g$ on a subset of $X$ that is the vertex set of a connected component of the underlying undirected graph of $\Gamma_g$. Each such connected component contains a single cycle of periodic points of $g$. Apart from those periodic points, the connected component consists precisely of those points which eventually map to the cycle after sufficiently many iterations of $g$ -- the \emph{iterated pre-images (under $g$)}\phantomsection\label{term13} of points on the cycle. For each $x$ on the cycle, the iterated pre-images of $x$ form a directed rooted tree, with root $x$, that has all of its arcs oriented toward the root. Henceforth, for simplicity, whenever we say \enquote{(directed) rooted tree}\phantomsection\label{term14}, it means \enquote{directed rooted tree in which all arcs are oriented toward the root}. Here is a picture to illustrate the situation:

\begin{center}
\begin{tikzpicture}
\node (v03) at (0,3) {};
\draw (v03) circle [radius=1pt];
\node (v04) at (0,4) {};
\draw (v04) circle [radius=1pt];
\node (v05) at (0,5) {};
\draw (v05) circle [radius=1pt];
\node (v06) at (0,6) {};
\draw (v06) circle [radius=1pt];
\node (v11) at (1,1) {};
\draw (v11) circle [radius=1pt];
\node (v12) at (1,2) {};
\draw (v12) circle [radius=1pt];
\node (v15) at (1,5) {};
\draw (v15) circle [radius=1pt];
\node (v17) at (1,7) {};
\draw (v17) circle [radius=1pt];
\node (v20) at (2,0) {};
\draw (v20) circle [radius=1pt];
\node (v23) at (2,3) {};
\draw (v23) circle [radius=1pt];
\node (v24) at (2,4) {};
\draw (v24) circle [radius=1pt];
\node (v26) at (2,6) {};
\draw (v26) circle [radius=1pt];
\node (v31) at (3,1) {};
\draw (v31) circle [radius=1pt];
\node (v35) at (3,5) {};
\draw (v35) circle [radius=1pt];
\node (v37) at (3,7) {};
\draw (v37) circle [radius=1pt];
\node (v38) at (3,8) {};
\draw (v38) circle [radius=1pt];
\node (v41) at (4,1) {};
\draw (v41) circle [radius=1pt];
\node (v42) at (4,2) {};
\draw (v42) circle [radius=1pt];
\node (v43) at (4,3) {};
\draw (v43) circle [radius=1pt];
\node (v45) at (4,5) {};
\draw (v45) circle [radius=1pt];
\node (v50) at (5,0) {};
\draw (v50) circle [radius=1pt];
\node (v51) at (5,1) {};
\draw (v51) circle [radius=1pt];
\node (v52) at (5,2) {};
\draw (v52) circle [radius=1pt];
\node (v61) at (6,1) {};
\draw (v61) circle [radius=1pt];
\node (v66) at (6,6) {};
\draw (v66) circle [radius=1pt];
\node (v67) at (6,7) {};
\draw (v67) circle [radius=1pt];
\node (v72) at (7,2) {};
\draw (v72) circle [radius=1pt];
\node (v74) at (7,4) {};
\draw (v74) circle [radius=1pt];
\node (v75) at (7,5) {};
\draw (v75) circle [radius=1pt];
\node (v86) at (8,6) {};
\draw (v86) circle [radius=1pt];
\node (v87) at (8,7) {};
\draw (v87) circle [radius=1pt];
\draw[->]
(v03) edge (v12)
(v04) edge (v15)
(v05) edge (v15)
(v06) edge (v15)
(v11) edge (v20)
(v12) edge (v11)
(v15) edge (v26)
(v17) edge (v26)
(v20) edge[bend left=15] (v50)
(v23) edge (v12)
(v24) edge (v15)
(v26) edge (v35)
(v31) edge (v20)
(v35) edge (v24)
(v37) edge (v26)
(v38) edge (v37)
(v41) edge (v50)
(v42) edge (v41)
(v43) edge (v42)
(v45) edge (v35)
(v50) edge[bend left=15] (v20)
(v51) edge (v50)
(v52) edge (v61)
(v61) edge (v50)
(v66) edge (v75)
(v67) edge (v66)
(v72) edge (v61)
(v74) edge[loop right] (v74)
(v75) edge (v74)
(v86) edge (v75)
(v87) edge (v86);
\end{tikzpicture}
\end{center}

Conversely, each finite digraph of the shape described above is a functional graph, as it is readily verified that all vertices in it have out-degree $1$. The study of finite dynamical systems may be understood as the study of finite functional graphs. In this context, it is also noteworthy that in case $\psi_1$ and $\psi_2$ are permutations of a finite set, we have $\Gamma_{\psi_1}\cong\Gamma_{\psi_2}$ if and only if $\psi_1$ and $\psi_2$ are of the same \emph{cycle type}\phantomsection\label{term15}, i.e., they have the same number of cycles of each given length. Formally, the cycle type of a permutation $\psi$\phantomsection\label{not12} of $X$, denoted by $\CT(\psi)$\phantomsection\label{not13}, is defined as the unique monomial in $\IQ[x_n: n\in\IN^+]$\phantomsection\label{not14}, where $\IN^+=\{n\in\IZ: n\geq1\}$, in which the degree of each variable $x_n$\phantomsection\label{not15} is the number of $\psi$-cycles of length equal to $n$. For example, if $X=\{1,2,\ldots,9\}$ and $\psi=(1,2,3)(4,5)(6,7)(8)(9)$, then $\CT(\psi)=x_1^2x_2^2x_3$. Cycle types (and the related notion of cycle indices) are well-studied in combinatorics, and studying isomorphism types of functional graphs may be seen as a natural generalization of this to arbitrary functions on finite sets.

Functional graphs of certain classes of functions on finite fields received considerable attention recently, see the papers \cite{CS04a,MP16a,PR19a,QP15a,QP19a,Ugo13a,VS04a} and references therein. Additionally, the papers \cite{CS04a,PMMY01a} do not deal explicitly with functional graphs, but with the iteration of functions on finite fields, and their results could be reformulated in terms of functional graphs. In this paper, we contribute to this line of research by investigating functional graphs of so-called \emph{generalized cyclotomic mappings} in the following sense.

\begin{definition}\label{genCycMapDef}
Let $q$ be a prime power, and let $d\mid q-1$. A \emph{generalized cyclotomic mapping of $\IF_q$ of index $d$}\phantomsection\label{term16}\phantomsection\label{not16} is a function $f:\IF_q\rightarrow\IF_q$\phantomsection\label{not17} with $f(0)=0$ such that the restriction of $f$ to each coset $C_i$ of the unique index $d$ subgroup of $\IF_q^{\ast}$ agrees with a monomial function $x\mapsto a_ix^{r_i}$.
\end{definition}

More specifically, let $\omega$\phantomsection\label{not18} be a primitive element of $\IF_q$ (i.e., a generator of the cyclic multiplicative group $\IF_q^{\ast}$), and let $C$\phantomsection\label{not19} be the index $d$ subgroup of $\IF_q^{\ast}$. The $d$ cosets of $C$ in $\IF_q^{\ast}$ are of the form $C_i=\omega^iC$\phantomsection\label{not20} for $i=0,1,\ldots,d-1$\phantomsection\label{not21}. The general form of an index $d$ generalized cyclotomic mapping $f$ of $\IF_q$ is
\begin{equation}\label{cyclotomicFormEq}
f(x)=
\begin{cases}
0, & \text{if }x=0, \\
a_0x^{r_0}, & \text{if }x\in C=C_0, \\
a_1x^{r_1}, & \text{if }x\in C_1, \\
\vdots & \vdots \\
a_{d-1}x^{r_{d-1}}, & \text{if }x\in C_{d-1},
\end{cases}
\end{equation}
where\phantomsection\label{not22}\phantomsection\label{not23} $a_i\in\IF_q$ and $r_i\in\{0,1,\ldots,q-2\}$\phantomsection\label{not24} for $i=0,1,\ldots,d-1$. These functions are interesting because they generalize monomial mappings (which constitute the special case $d=1$) while still being relatively well-controlled. From an abstract algebraic point of view, it is noteworthy that monomial functions $\IF_q^{\ast}\rightarrow\IF_q^{\ast}$, $x\mapsto ax^r$ where $a\not=0_{\IF_q}$ necessarily, are \emph{affine maps} of the multiplicative group $\IF_q^{\ast}$, in the sense that they are compositions of a group endomorphism of $\IF_q^{\ast}$ (viz., the power function $x\mapsto x^r$) with a (multiplicative) translation $x\mapsto ax$ by a fixed group element $a$ (see also Definition \ref{affineMapDef}). Hence, at least generalized cyclotomic mappings in which all coefficients $a_i$ from (\ref{cyclotomicFormEq}) are nonzero may be viewed as \enquote{coset-wise affine} functions, and we explore the idea of generalizing the methods and results from this paper to other (possibly non-abelian) groups in Subsection \ref{subsec6P4}. In this context, we also note that the celebrated Collatz function $g:\IZ\rightarrow\IZ$, given by the formula
\[
g(x)=
\begin{cases}
x/2, & \text{if }x\in 2\IZ, \\
3x+1, & \text{if }x\in 2\IZ+1,
\end{cases}
\]
is also a coset-wise affine function, of its respective domain of definition group $\IZ$. The reason why we are able to develop a theory for understanding the behavior of generalized cyclotomic mappings under iteration in this paper (while the analogous task for the Collatz function is wide open) is because generalized cyclotomic mappings preserve the associated partition of $\IF_q$ into the cosets $C_i$ and the singleton set $\{0_{\IF_q}\}$ (and, relatedly, they form a semigroup under function composition) -- see also the distinction between the two concepts introduced in Definition \ref{cosetwiseAffineDef}(2,3).

We observe that a given generalized cyclotomic mapping of $\IF_q$ may have several possible indices, and that \emph{every} function $f:\IF_q\rightarrow\IF_q$ with $f(0)=0$ is a generalized cyclotomic mapping of $\IF_q$ of index $q-1$, though the study of generalized cyclotomic mappings is mostly focused on small values of $d$. For $d=q-1$, known methods of handling generalized cyclotomic mappings, such as \cite[Algorithm 1]{BW22b}, are essentially the trivial brute-force approaches. Apart from \cite{BW22b}, generalized cyclotomic mappings were also studied in \cite{Wan13a,Wan17a,ZYZP16a}. An important special case is when $r_i=r$ for all $i$; then one speaks of an \emph{$r$-th order cyclotomic mapping of $\IF_q$ of index $d$}\phantomsection\label{term17}, and those functions were studied e.g.~in \cite{NW05a,WL91a,Wan07a,Wan19a}.

Our goal in this paper is to develop algorithms that answer fundamental questions concerning the structure of the functional graph $\Gamma_f$ of a given index $d$ generalized cyclotomic mapping $f$ of $\IF_q$, specified in the form (\ref{cyclotomicFormEq}). For example, let $\omega$ be any fixed primitive element of $\IF_{256}$, and consider the following index $5$ generalized cyclotomic mapping $f$ of $\IF_{256}$.
\begin{equation}\label{concreteFIntroEq}
f(x)=
\begin{cases}
0, & \text{if }x=0, \\
\omega^5x^9, & \text{if }x\in C_0, \\
x^3, & \text{if }x\in C_1, \\
x^{17}, & \text{if }x\in C_2, \\
\omega^3x^{34}, & \text{if }x\in C_3, \\
\omega^4x^9, & \text{if }x\in C_4.
\end{cases}
\end{equation}
The functional graph $\Gamma_f$ has $256$ vertices, and one can understand its structure by drawing it, which we do below. In this drawing, a vertex labeled $n\in\{0,1,\ldots,254\}$ corresponds to the field element $\omega^n\in\IF_q^{\ast}$ (in particular, the label $0$ corresponds to the field element $\omega^0=1_{\IF_{256}}$), whereas the vertex representing the field element $0$ is labeled by $0_{\IF_{256}}$. It turns out that $\Gamma_f$ has four connected components, and in one of them (the fourth one in our order of drawing), the rooted trees formed by the $f$-transient iterated pre-images of three particular $f$-periodic points in that connected component are relatively large and thus drawn separately; we mark those rooted trees with $\Delta_k$ for $k\in\{1,2,3\}$ in the schematic drawing of the corresponding connected component.

\begin{center}
\begin{tikzpicture}
\node (0Fq) at (0,0) {$0_{\IF_{256}}$};
\path[->]
(0Fq) edge [loop right] (0Fq);
\node (95) at (8,-1) {$95$};
\node (180) at (7.5,0) {$180$};
\node (10) at (8.5,0) {$10$};
\node (29) at (7,1) {$29$};
\node (114) at (8,1) {$114$};
\node (199) at (9,1) {$199$};
\path[->]
(95) edge [loop right] (95)
(180) edge (95)
(10) edge (95)
(29) edge (10)
(114) edge (10)
(199) edge (10);
\node (110) at (1,-4) {\tiny $110$};
\node (210) at (0.5,-3) {\tiny $210$};
\node (40) at (1.5,-3) {\tiny $40$};
\node (4) at (0.5,-2) {\tiny $4$};
\node (89) at (1,-2) {\tiny $89$};
\node (174) at (1.5,-2) {\tiny $174$};
\path[->]
(210) edge (110)
(40) edge (110)
(4) edge (40)
(89) edge (40)
(174) edge (40);
\node (230) at (2.5,-4) {\tiny $230$};
\node (195) at (2,-3) {\tiny $195$};
\node (25) at (3,-3) {\tiny $25$};
\node (59) at (2,-2) {\tiny $59$};
\node (144) at (2.5,-2) {\tiny $144$};
\node (229) at (3,-2) {\tiny $229$};
\path[->]
(195) edge (230)
(25) edge (230)
(59) edge (25)
(144) edge (25)
(229) edge (25);
\node (35) at (4,-4) {\tiny $35$};
\node (60) at (3.5,-3) {\tiny $60$};
\node (145) at (4.5,-3) {\tiny $145$};
\node (44) at (3.5,-2) {\tiny $44$};
\node (129) at (4,-2) {\tiny $129$};
\node (214) at (4.5,-2) {\tiny $214$};
\path[->]
(60) edge (35)
(145) edge (35)
(44) edge (145)
(129) edge (145)
(214) edge (145);
\node (65) at (5.5,-4) {\tiny $65$};
\node (120) at (5,-3) {\tiny $120$};
\node (205) at (6,-3) {\tiny $205$};
\node (79) at (5,-2) {\tiny $79$};
\node (164) at (5.5,-2) {\tiny $164$};
\node (249) at (6,-2) {\tiny $249$};
\path[->]
(120) edge (65)
(205) edge (65)
(79) edge (205)
(164) edge (205)
(249) edge (205);
\node (80) at (7,-4) {\tiny $80$};
\node (150) at (6.5,-3) {\tiny $150$};
\node (235) at (7.5,-3) {\tiny $235$};
\node (54) at (6.5,-2) {\tiny $54$};
\node (139) at (7,-2) {\tiny $139$};
\node (224) at (7.5,-2) {\tiny $224$};
\path[->]
(150) edge (80)
(235) edge (80)
(54) edge (235)
(139) edge (235)
(224) edge (235);
\node (215) at (8.5,-4) {\tiny $215$};
\node (165) at (8,-3) {\tiny $165$};
\node (250) at (9,-3) {\tiny $250$};
\node (84) at (8,-2) {\tiny $84$};
\node (169) at (8.5,-2) {\tiny $169$};
\node (254) at (9,-2) {\tiny $254$};
\path[->]
(165) edge (215)
(250) edge (215)
(84) edge (250)
(169) edge (250)
(254) edge (250);
\node (155) at (10,-4) {\tiny $155$};
\node (45) at (9.5,-3) {\tiny $45$};
\node (130) at (10.5,-3) {\tiny $130$};
\node (14) at (9.5,-2) {\tiny $14$};
\node (99) at (10,-2) {\tiny $99$};
\node (184) at (10.5,-2) {\tiny $184$};
\path[->]
(45) edge (155)
(130) edge (155)
(14) edge (130)
(99) edge (130)
(184) edge (130);
\node (125) at (11.5,-4) {\tiny $125$};
\node (240) at (11,-3) {\tiny $240$};
\node (70) at (12,-3) {\tiny $70$};
\node (64) at (11,-2) {\tiny $64$};
\node (149) at (11.5,-2) {\tiny $149$};
\node (234) at (12,-2) {\tiny $234$};
\path[->]
(240) edge (125)
(70) edge (125)
(64) edge (70)
(149) edge (70)
(234) edge (70);
\path[->]
(110) edge (230)
(230) edge (35)
(35) edge (65)
(65) edge (80)
(80) edge (215)
(215) edge (155)
(155) edge (125)
(125) edge[bend left=15] (110);
\node (185) at (1,-8) {\tiny $185$};
\node (Delta1) at (1,-7) {$\Delta_1$};
\draw (Delta1) circle[x radius=0.5, y radius=0.9];
\node (140) at (2.5,-8) {\tiny $140$};
\node (15) at (2,-7) {\tiny $15$};
\node (100) at (3,-7) {\tiny $100$};
\node (39) at (2,-6) {\tiny $39$};
\node (124) at (2.5,-6) {\tiny $124$};
\node (209) at (3,-6) {\tiny $209$};
\path[->]
(15) edge (140)
(100) edge (140)
(39) edge (100)
(124) edge (100)
(209) edge (100);
\node (245) at (4,-8) {\tiny $245$};
\node (Delta2) at (4,-7) {$\Delta_2$};
\draw (Delta2) circle[x radius=0.5, y radius=0.9];
\node (170) at (5.5,-8) {\tiny $170$};
\node (75) at (5,-7) {\tiny $75$};
\node (160) at (6,-7) {\tiny $160$};
\node (74) at (5,-6) {\tiny $74$};
\node (159) at (5.5,-6) {\tiny $159$};
\node (244) at (6,-6) {\tiny $244$};
\path[->]
(75) edge (170)
(160) edge (170)
(74) edge (160)
(159) edge (160)
(244) edge (160);
\node (5) at (7,-8) {\tiny $5$};
\node (0) at (6.5,-7) {\tiny $0$};
\node (85) at (7.5,-7) {\tiny $85$};
\node (9) at (6.5,-6) {\tiny $9$};
\node (94) at (7,-6) {\tiny $94$};
\node (179) at (7.5,-6) {\tiny $179$};
\path[->]
(0) edge (5)
(85) edge (5)
(9) edge (85)
(94) edge (85)
(179) edge (85);
\node (50) at (8.5,-8) {\tiny $50$};
\node (90) at (8,-7) {\tiny $90$};
\node (175) at (9,-7) {\tiny $175$};
\node (19) at (8,-6) {\tiny $19$};
\node (104) at (8.5,-6) {\tiny $104$};
\node (189) at (9,-6) {\tiny $189$};
\path[->]
(90) edge (50)
(175) edge (50)
(19) edge (175)
(104) edge (175)
(189) edge (175);
\node (200) at (10,-8) {\tiny $200$};
\node (135) at (9.5,-7) {\tiny $135$};
\node (220) at (10.5,-7) {\tiny $220$};
\node (24) at (9.5,-6) {\tiny $24$};
\node (109) at (10,-6) {\tiny $109$};
\node (194) at (10.5,-6) {\tiny $194$};
\path[->]
(135) edge (200)
(220) edge (200)
(24) edge (220)
(109) edge (220)
(194) edge (220);
\node (20) at (11.5,-8) {\tiny $20$};
\node (Delta3) at (11.5,-7) {$\Delta_3$};
\draw (Delta3) circle[x radius=0.5, y radius=0.9];
\path[->]
(185) edge (140)
(140) edge (245)
(245) edge (170)
(170) edge (5)
(5) edge (50)
(50) edge (200)
(200) edge (20)
(20) edge[bend left=15] (185);
\end{tikzpicture}
\end{center}

The rooted tree $\Delta_1$ looks as follows.

\begin{center}
\begin{tikzpicture}
\node (1) at (0,0) {\tiny $1$};
\node (86) at (0,-0.2) {\tiny $86$};
\node (171) at (0,-0.4) {\tiny $171$};
\node (6) at (0,-0.6) {\tiny $6$};
\node (91) at (0,-0.8) {\tiny $91$};
\node (176) at (0,-1) {\tiny $176$};
\node (11) at (0,-1.2) {\tiny $11$};
\node (96) at (0,-1.4) {\tiny $96$};
\node (181) at (0,-1.6) {\tiny $181$};
\node (16) at (0,-1.8) {\tiny $16$};
\node (101) at (0,-2) {\tiny $101$};
\node (186) at (0,-2.2) {\tiny $186$};
\node (21) at (0,-2.4) {\tiny $21$};
\node (106) at (0,-2.6) {\tiny $106$};
\node (191) at (0,-2.8) {\tiny $191$};
\node (26) at (0,-3) {\tiny $26$};
\node (111) at (0,-3.2) {\tiny $111$};
\node (196) at (0,-3.4) {\tiny $196$};
\node (31) at (0,-3.6) {\tiny $31$};
\node (116) at (0,-3.8) {\tiny $116$};
\node (201) at (0,-4) {\tiny $201$};
\node (36) at (0,-4.2) {\tiny $36$};
\node (121) at (0,-4.4) {\tiny $121$};
\node (206) at (0,-4.6) {\tiny $206$};
\node (41) at (0,-4.8) {\tiny $41$};
\node (126) at (0,-5) {\tiny $126$};
\node (211) at (0,-5.2) {\tiny $211$};
\node (46) at (0,-5.4) {\tiny $46$};
\node (131) at (0,-5.6) {\tiny $131$};
\node (216) at (0,-5.8) {\tiny $216$};
\node (51) at (0,-6) {\tiny $51$};
\node (136) at (0,-6.2) {\tiny $136$};
\node (221) at (0,-6.4) {\tiny $221$};
\node (56) at (0,-6.6) {\tiny $56$};
\node (141) at (0,-6.8) {\tiny $141$};
\node (226) at (0,-7) {\tiny $226$};
\node (61) at (0,-7.2) {\tiny $61$};
\node (146) at (0,-7.4) {\tiny $146$};
\node (231) at (0,-7.6) {\tiny $231$};
\node (66) at (0,-7.8) {\tiny $66$};
\node (151) at (0,-8) {\tiny $151$};
\node (236) at (0,-8.2) {\tiny $236$};
\node (71) at (0,-8.4) {\tiny $71$};
\node (156) at (0,-8.6) {\tiny $156$};
\node (241) at (0,-8.8) {\tiny $241$};
\node (76) at (0,-9) {\tiny $76$};
\node (161) at (0,-9.2) {\tiny $161$};
\node (246) at (0,-9.4) {\tiny $246$};
\node (81) at (0,-9.6) {\tiny $81$};
\node (166) at (0,-9.8) {\tiny $166$};
\node (251) at (0,-10) {\tiny $251$};
\node (3) at (1,-0.2) {\tiny $3$};
\node (18) at (1,-0.8) {\tiny $18$};
\node (33) at (1,-1.4) {\tiny $33$};
\node (48) at (1,-2) {\tiny $48$};
\node (63) at (1,-2.6) {\tiny $63$};
\node (78) at (1,-3.2) {\tiny $78$};
\node (93) at (1,-3.8) {\tiny $93$};
\node (108) at (1,-4.4) {\tiny $108$};
\node (123) at (1,-5) {\tiny $123$};
\node (138) at (1,-5.6) {\tiny $138$};
\node (153) at (1,-6.2) {\tiny $153$};
\node (168) at (1,-6.8) {\tiny $168$};
\node (183) at (1,-7.4) {\tiny $183$};
\node (198) at (1,-8) {\tiny $198$};
\node (213) at (1,-8.6) {\tiny $213$};
\node (228) at (1,-9.2) {\tiny $228$};
\node (243) at (1,-9.8) {\tiny $243$};
\node (105) at (3,-5) {$105$};
\node (185) at (6,-5) {$185$};
\path[->]
(1) edge (3)
(86) edge (3)
(171) edge (3)
(6) edge (18)
(91) edge (18)
(176) edge (18)
(11) edge (33)
(96) edge (33)
(181) edge (33)
(16) edge (48)
(101) edge (48)
(186) edge (48)
(21) edge (63)
(106) edge (63)
(191) edge (63)
(26) edge (78)
(111) edge (78)
(196) edge (78)
(31) edge (93)
(116) edge (93)
(201) edge (93)
(36) edge (108)
(121) edge (108)
(206) edge (108)
(41) edge (123)
(126) edge (123)
(211) edge (123)
(46) edge (138)
(131) edge (138)
(216) edge (138)
(51) edge (153)
(136) edge (153)
(221) edge (153)
(56) edge (168)
(141) edge (168)
(226) edge (168)
(61) edge (183)
(146) edge (183)
(231) edge (183)
(66) edge (198)
(151) edge (198)
(236) edge (198)
(71) edge (213)
(156) edge (213)
(241) edge (213)
(76) edge (228)
(161) edge (228)
(246) edge (228)
(81) edge (243)
(166) edge (243)
(251) edge (243)
(3) edge (105)
(18) edge (105)
(33) edge (105)
(48) edge (105)
(63) edge (105)
(78) edge (105)
(93) edge (105)
(108) edge (105)
(123) edge (105)
(138) edge (105)
(153) edge (105)
(168) edge (105)
(183) edge (105)
(198) edge (105)
(213) edge (105)
(228) edge (105)
(243) edge (105)
(105) edge (185);
\node (190) at (9,-5) {$190$};
\node (49) at (12,0) {\tiny $49$};
\node (134) at (12,-0.5) {\tiny $134$};
\node (219) at (12,-1) {\tiny $219$};
\node (13) at (12,-1.5) {\tiny $13$};
\node (28) at (12,-2) {\tiny $28$};
\node (43) at (12,-2.5) {\tiny $43$};
\node (58) at (12,-3) {\tiny $58$};
\node (73) at (12,-3.5) {\tiny $73$};
\node (88) at (12,-4) {\tiny $88$};
\node (103) at (12,-4.5) {\tiny $103$};
\node (118) at (12,-5) {\tiny $118$};
\node (133) at (12,-5.5) {\tiny $133$};
\node (148) at (12,-6) {\tiny $148$};
\node (163) at (12,-6.5) {\tiny $163$};
\node (178) at (12,-7) {\tiny $178$};
\node (193) at (12,-7.5) {\tiny $193$};
\node (208) at (12,-8) {\tiny $208$};
\node (223) at (12,-8.5) {\tiny $223$};
\node (238) at (12,-9) {\tiny $238$};
\node (253) at (12,-9.5) {\tiny $253$};
\path[->]
(49) edge (190)
(134) edge (190)
(219) edge (190)
(13) edge (190)
(28) edge (190)
(43) edge (190)
(58) edge (190)
(73) edge (190)
(88) edge (190)
(103) edge (190)
(118) edge (190)
(133) edge (190)
(148) edge (190)
(163) edge (190)
(178) edge (190)
(193) edge (190)
(208) edge (190)
(223) edge (190)
(238) edge (190)
(253) edge (190)
(190) edge (185);
\end{tikzpicture}
\end{center}

The rooted tree $\Delta_2$ looks as follows.

\begin{center}
\begin{tikzpicture}
\node (245) at (0,0) {$245$};
\node (225) at (-1,0) {$225$};
\node (55) at (1,0) {$55$};
\node (119) at (2,0) {$119$};
\node (34) at (0,1) {$34$};
\node (204) at (0,-1) {$204$};
\node (2) at (-4,3) {\tiny $2$};
\node (17) at (-3.5,3) {\tiny $17$};
\node (32) at (-3,3) {\tiny $32$};
\node (47) at (-2.5,3) {\tiny $47$};
\node (62) at (-2,3) {\tiny $62$};
\node (77) at (-1.5,3) {\tiny $77$};
\node (92) at (-1,3) {\tiny $92$};
\node (107) at (-0.5,3) {\tiny $107$};
\node (122) at (0,3) {\tiny $122$};
\node (137) at (0.5,3) {\tiny $137$};
\node (152) at (1,3) {\tiny $152$};
\node (167) at (1.5,3) {\tiny $167$};
\node (182) at (2,3) {\tiny $182$};
\node (197) at (2.5,3) {\tiny $197$};
\node (212) at (3,3) {\tiny $212$};
\node (227) at (3.5,3) {\tiny $227$};
\node (242) at (4,3) {\tiny $242$};
\node (12) at (-4,-3) {\tiny $12$};
\node (27) at (-3.5,-3) {\tiny $27$};
\node (42) at (-3,-3) {\tiny $42$};
\node (57) at (-2.5,-3) {\tiny $57$};
\node (72) at (-2,-3) {\tiny $72$};
\node (87) at (-1.5,-3) {\tiny $87$};
\node (102) at (-1,-3) {\tiny $102$};
\node (117) at (-0.5,-3) {\tiny $117$};
\node (132) at (0,-3) {\tiny $132$};
\node (147) at (0.5,-3) {\tiny $147$};
\node (162) at (1,-3) {\tiny $162$};
\node (177) at (1.5,-3) {\tiny $177$};
\node (192) at (2,-3) {\tiny $192$};
\node (207) at (2.5,-3) {\tiny $207$};
\node (222) at (3,-3) {\tiny $222$};
\node (237) at (3.5,-3) {\tiny $237$};
\node (252) at (4,-3) {\tiny $252$};
\node (7) at (4,2) {\tiny $7$};
\node (22) at (4,1.75) {\tiny $22$};
\node (37) at (4,1.5) {\tiny $37$};
\node (52) at (4,1.25) {\tiny $52$};
\node (67) at (4,1) {\tiny $67$};
\node (82) at (4,0.75) {\tiny $82$};
\node (97) at (4,0.5) {\tiny $97$};
\node (112) at (4,0.25) {\tiny $112$};
\node (127) at (4,0) {\tiny $127$};
\node (142) at (4,-0.25) {\tiny $142$};
\node (157) at (4,-0.5) {\tiny $157$};
\node (172) at (4,-0.75) {\tiny $172$};
\node (187) at (4,-1) {\tiny $187$};
\node (202) at (4,-1.25) {\tiny $202$};
\node (217) at (4,-1.5) {\tiny $217$};
\node (232) at (4,-1.75) {\tiny $232$};
\node (247) at (4,-2) {\tiny $247$};
\path[->]
(225) edge (245)
(55) edge (245)
(119) edge (55)
(34) edge (55)
(204) edge (55)
(2) edge (34)
(17) edge (34)
(32) edge (34)
(47) edge (34)
(62) edge (34)
(77) edge (34)
(92) edge (34)
(107) edge (34)
(122) edge (34)
(137) edge (34)
(152) edge (34)
(167) edge (34)
(182) edge (34)
(197) edge (34)
(212) edge (34)
(227) edge (34)
(242) edge (34)
(7) edge (119)
(22) edge (119)
(37) edge (119)
(52) edge (119)
(67) edge (119)
(82) edge (119)
(97) edge (119)
(112) edge (119)
(127) edge (119)
(142) edge (119)
(157) edge (119)
(172) edge (119)
(187) edge (119)
(202) edge (119)
(217) edge (119)
(232) edge (119)
(247) edge (119)
(12) edge (204)
(27) edge (204)
(42) edge (204)
(57) edge (204)
(72) edge (204)
(87) edge (204)
(102) edge (204)
(117) edge (204)
(132) edge (204)
(147) edge (204)
(162) edge (204)
(177) edge (204)
(192) edge (204)
(207) edge (204)
(222) edge (204)
(237) edge (204)
(252) edge (204);
\end{tikzpicture}
\end{center}

The rooted tree $\Delta_3$ looks as follows.

\begin{center}
\begin{tikzpicture}
\node (20) at (0,0) {$20$};
\node (30) at (-4.5,2) {\tiny $30$};
\node (115) at (-4,2) {\tiny $115$};
\node (8) at (-3.5,2) {\tiny $8$};
\node (23) at (-3,2) {\tiny $23$};
\node (38) at (-2.5,2) {\tiny $38$};
\node (53) at (-2,2) {\tiny $53$};
\node (68) at (-1.5,2) {\tiny $68$};
\node (83) at (-1,2) {\tiny $83$};
\node (98) at (-0.5,2) {\tiny $98$};
\node (113) at (0,2) {\tiny $113$};
\node (128) at (0.5,2) {\tiny $128$};
\node (143) at (1,2) {\tiny $143$};
\node (158) at (1.5,2) {\tiny $158$};
\node (173) at (2,2) {\tiny $173$};
\node (188) at (2.5,2) {\tiny $188$};
\node (203) at (3,2) {\tiny $203$};
\node (218) at (3.5,2) {\tiny $218$};
\node (233) at (4,2) {\tiny $233$};
\node (248) at (4.5,2) {\tiny $248$};
\node (69) at (-5,3) {\tiny $69$};
\node (154) at (-4,3) {\tiny $154$};
\node (239) at (-3,3) {\tiny $239$};
\path[->]
(30) edge (20)
(115) edge (20)
(8) edge (20)
(23) edge (20)
(38) edge (20)
(53) edge (20)
(68) edge (20)
(83) edge (20)
(98) edge (20)
(113) edge (20)
(128) edge (20)
(143) edge (20)
(158) edge (20)
(173) edge (20)
(188) edge (20)
(203) edge (20)
(218) edge (20)
(233) edge (20)
(248) edge (20)
(69) edge (115)
(154) edge (115)
(239) edge (115);
\end{tikzpicture}
\end{center}

Of course, this approach of understanding $\Gamma_f$ by drawing it becomes intractable for large values of $q$, as its complexity is at least linear in $q$ (i.e., exponential in $\log{q}$). The aim of our algorithms is to obtain an understanding of the structure of $\Gamma_f$ without needing to draw it vertex by vertex. A detailed complexity analysis of those algorithms, which we carry out in Section \ref{sec5}, shows that for asymptotically almost every finite field and fixed index $d$, our algorithms have implementations with polynomial complexity (in $\log{q}$) on quantum computers, and implementations with subexponential complexity on classical computers. In the remainder of this introduction, we discuss the main ideas underlying our algorithms. We also note that we revisit the example (\ref{concreteFIntroEq}) in Subsection \ref{subsec4P2}, where we derive the structure of its functional graph with our methods.

The first step in understanding the functional graph $\Gamma_g$ of any function $g:X\rightarrow X$ is to obtain a suitable parametrization of the connected components of $\Gamma_g$. The following notion is helpful in that regard.

\begin{definition}\label{crlListDef}
Let $X$ be a finite set, and $g:X\rightarrow X$. A \emph{cycle representatives and lengths list} (or \emph{CRL-list}\phantomsection\label{term18} for short) \emph{of $g$} is a (finite) set $\Lcal\subseteq X\times\IN^+$\phantomsection\label{not25} with the following properties:
\begin{enumerate}
\item The first entries of the ordered pairs in $\Lcal$ form a system of representatives for the cycles of $g$ on its periodic points.
\item If $(r,l)\in\Lcal$\phantomsection\label{not26}\phantomsection\label{not27}, then $l$ is the cycle length of $r$ under $g$.
\end{enumerate}
\end{definition}

\begin{remark}\label{crlListRem}
When $g$ is a function on a finite set, it is easy to determine the cycle type of the restriction $g_{\mid\per(g)}$\phantomsection\label{not28} from any CRL-list $\Lcal$ of $g$. Namely,
\[
\CT(g_{\mid\per(g)})=\prod_{(r,l)\in\Lcal}{x_l}.
\]
A CRL-list of $g$ can thus be seen as a more refined piece of information than $\CT(g_{\mid\per(g)})$.
\end{remark}

We recall from above that each connected component of $\Gamma_g$ contains precisely one cycle of $g$ on its periodic points. This means that a CRL-list of $g$ also gives a parametrization of the connected components of $g$ via representative vertices, along with the basic information how long the cycle of each representative is. Let us give some more details on how to obtain $\Lcal$ when $X=\IF_q$ and $g$ is an index $d$ generalized cyclotomic mapping $f$ of $\IF_q$.

We already introduced the notation $C_i=\omega^iC$ for $i\in\{0,1,\ldots,d-1\}$ to denote the cosets of $C$ in $\IF_q^{\ast}$. Let us additionally set $C_d:=\{0_{\IF_q}\}$. Then the sets $C_i$ for $i=0,1,\ldots,d$ form a partition of $\IF_q$ that is preserved by $f$ in the sense that $f$ maps blocks of this partition to other such blocks (not necessarily surjectively). In other words, there is a unique function $\overline{f}:\{0,1,\ldots,d\}\rightarrow\{0,1,\ldots,d\}$\phantomsection\label{not29}, which we call \emph{induced by $f$}\phantomsection\label{term19}, such that $f(C_i)\subseteq C_{\overline{f}(i)}$ for each $i=0,1,\ldots,d$. We note in particular that $\overline{f}(d)=d$, and that $\overline{f}^{-1}(\{d\})=\{d\}$ unless at least one of the coefficients $a_i$ in (\ref{cyclotomicFormEq}) is $0$.

Setting $s:=(q-1)/d=|C|$\phantomsection\label{not30}, we may view each coset $C_i=\omega^iC$ for $i=0,1,\ldots,d-1$ as a copy of the cyclic group $\IZ/s\IZ$ (with underlying set $\{0,1,\ldots,s-1\}$ and modular addition as its group operation) via the bijection $\iota_i:\IZ/s\IZ\rightarrow C_i, x\mapsto\omega^{i+dx}$\phantomsection\label{not31}. As such, $f$ may be viewed as a function that maps between copies of $\IZ/s\IZ$ (as well as a unique singleton block). More specifically, if $i\in\{0,1,\ldots,d-1\}$ and $a_i\not=0$, and if we write $a_i=\omega^{e_i}$\phantomsection\label{not32}, then we have $d\mid e_i+r_ii-\overline{f}(i)$ necessarily, and $f$ maps $\omega^{i+dx}\in C_i$ to
\[
a_i(\omega^{i+dx})^{r_i}=\omega^{e_i+r_ii+r_idx}=\omega^{\overline{f}(i)+d\cdot\left(\frac{e_i+r_ii-\overline{f}(i)}{d}+r_ix\right)}\in C_{\overline{f}(i)}.
\]
This means that under the identifications of $C_i$ and $C_{\overline{f}(i)}$ with $\IZ/s\IZ$, the restriction of $f$ to $C_i$ corresponds to the affine function $A_i:x\mapsto r_ix+\frac{e_i+r_ii-\overline{f}(i)}{d}$\phantomsection\label{not33} of $\IZ/s\IZ$.

In summary, $f$ consists essentially of affine functions mapping between copies of $\IZ/s\IZ$, though some copies of $\IZ/s\IZ$ may also be constantly mapped into $C_d=\{0\}$ by $f$, in case the corresponding coefficient $a_i$ is $0$. We note that if all $a_i$ are nonzero, then the way $f_{\mid\IF_q^{\ast}}$ preserves the partition of $\IF_q^{\ast}$ into the cosets $C_i$ for $i\in\{0,1,\ldots,d-1\}$ is analogous to the way the elements of the imprimitive permutational wreath product\phantomsection\label{term20} $\Sym(C)\wr\Sym(d)$\phantomsection\label{not34} (where $\Sym(X)$\phantomsection\label{not35} and $\Sym(n)$\phantomsection\label{not36} denote the symmetric group on the set $X$ and on $\{0,1,\ldots,n-1\}$, respectively) preserve this partition. In fact, the definition of an imprimitive permutational wreath product naturally extends to one of an \emph{imprimitive wreath product of transformation semigroups} such that $f_{\mid\IF_q^{\ast}}$ is an element of the imprimitive wreath product of $C^C$\phantomsection\label{not37} (the transformation semigroup of all functions $C\rightarrow C$) with $\{0,1,\ldots,d-1\}^{\{0,1,\ldots,d-1\}}$, and $\overline{f}_{\mid\{0,1,\ldots,d-1\}}$ is the projection of $f$ to $\{0,1,\ldots,d-1\}^{\{0,1,\ldots,d-1\}}$. Wreath products of transformation semigroups have been studied before and play a central role in algebraic automata theory, though the notion used in that theory is the natural generalization of \emph{primitive} permutational wreath products \cite[pp.~55f.]{Hol82a}.

In any case, these ideas allow us to easily reduce the determination of a CRL-list $\Lcal$ of $f$ to the determination of CRL-lists of affine functions on $\IZ/s\IZ$ -- see Subsection \ref{subsec3P1} for the details of this. CRL-lists of affine functions of finite cyclic groups are determined in Subsection \ref{subsec2P3}.

The remainder of our algorithmic approach is concerned with understanding, for each given $(r,l)\in\Lcal$, the isomorphism type of the connected component of $\Gamma_f$ containing $r$. We recall from above that this connected component is essentially obtained by glueing certain directed rooted trees to the vertices on the cycle. Let us introduce the following precise notation.

\begin{definition}\label{treeAboveDef}
Let $\Gamma$ be a finite functional graph with vertex set $X$, and let $g$ be the unique function $X\rightarrow X$ such that $\Gamma=\Gamma_g$. For each $x\in X$, we define $\Tree_{\Gamma}(x)$\phantomsection\label{not38}, the so-called \emph{tree above $x$ in $\Gamma$}\phantomsection\label{term21}, as follows.
\begin{enumerate}
\item If $x$ is $g$-transient, we define $\Tree_{\Gamma}(x)$ as the induced subgraph of $\Gamma$ on the set
\[
\{x\}\cup\{y\in \V(\Gamma): y\not=x\text{, and }g^k(y)=x\text{ for some }k=k(y)\geq1\}.
\]
\phantomsection\label{not38P5}
\phantomsection\label{not39}
\item If $x$ is $g$-periodic, we define $\Tree_{\Gamma}(x)$ as the induced subgraph of $\Gamma$ on the set
\[
\{x\}\cup\bigcup\{\V(\Tree_{\Gamma}(y)): y\text{ is }g\text{-transient and }g(y)=x\},
\]
with the convention that in case $g(x)=x$, the loop at $x$ is deleted from $\Tree_{\Gamma}(x)$.
\end{enumerate}
\end{definition}

With this definition, $\Tree_{\Gamma_g}(x)$ is defined for all $x\in X=\V(\Gamma_g)$, and for periodic vertices $x$, those are the trees that need to be glued to the cycles of $g$ in order to obtain the full connected components of $\Gamma_g$.

Necklaces are a well-studied concept in combinatorics. Let us consider vertex-labeled, directed graphs that consist of a single, directed cycle (let us call such a graph a \emph{necklace graph}\phantomsection\label{term22}). Intuitively, one may think of the vertices as beads on a necklace (in the common sense of the word), and the vertex labels represent colors of those beads. An \emph{isomorphism of vertex-labeled digraphs}\phantomsection\label{term23} is a digraph isomorphism preserving vertex labels, and a \emph{necklace}\phantomsection\label{term24} is an isomorphism class of necklace graphs under isomorphism of vertex-labeled digraphs. If $\vec{\xfrak}=(\xfrak_0,\xfrak_1,\ldots,\xfrak_{L-1})$\phantomsection\label{not40} is a length $L$ sequence with entries from a set $\Xfrak$\phantomsection\label{not41}, then we denote by $[\vec{\xfrak}]=[\xfrak_0,\xfrak_1,\ldots,\xfrak_{L-1}]$\phantomsection\label{not42} the orbit of $\vec{\xfrak}$ under the natural action of the cyclic group $\IZ/L\IZ$ on $\Xfrak^L$. Hence, $[\vec{\xfrak}]$ consists of those length $L$ sequences over $\Xfrak$ that can be obtained from $\vec{\xfrak}$ through cyclic shifts. We also call $[\vec{\xfrak}]$ the \emph{cyclic sequence associated with $\vec{\xfrak}$}\phantomsection\label{term25}. We observe that two necklace graphs are isomorphic if and only if their sequences of vertex labels are cyclically equivalent, whence in combinatorics, a necklace is often simply defined as a cyclic sequence\phantomsection\label{term26} (cyclic equivalence class of strings).

The connected components of a functional graph $\Gamma_g$ of a function $g:X\rightarrow X$, where $X$ is a finite set, may be viewed as necklace graphs. Indeed, we take the unique directed cycle contained in a given connected component as the underlying digraph of the associated necklace graph. The label of a (representative) vertex $r$ on that cycle is defined as the rooted tree isomorphism type of $\Tree_{\Gamma_g}(r)$. For example, if we denote by
\begin{itemize}
\item $\Ifrak_0$ the digraph isomorphism type of the trivial rooted tree (consisting of a single vertex without arcs);
\item $\Ifrak_1$ most common rooted tree isomorphism type above a periodic vertex in the functional graph of the exemplary generalized cyclotomic mapping $f$ of $\IF_{256}$ defined in (\ref{concreteFIntroEq}) above (i.e., a rooted tree of height $2$ where the root has in-degree $2$ and one of the two neighbors of the root has in-degree $0$, the other has in-degree $3$);
\item $\Ifrak_2$ the digraph isomorphism type of $\Delta_3$ in the above example (the seemingly chaotic numbering for the $\Ifrak_j$ is chosen such that it matches with Table \ref{fullTreesConcreteTable} in Subsection \ref{subsec4P2});
\item $\Ifrak_3$ the digraph isomorphism type of $\Delta_1$;
\item $\Ifrak_4$ the digraph isomorphism type of $\Delta_2$;
\end{itemize}
then the four connected components of the example above may be identified with necklace graphs corresponding to the following cyclic sequences of rooted tree isomorphism types (in order of drawing).
\begin{itemize}
\item $[\Ifrak_0]$;
\item $[\Ifrak_1]$;
\item $[\Ifrak_1,\Ifrak_1,\Ifrak_1,\Ifrak_1,\Ifrak_1,\Ifrak_1,\Ifrak_1,\Ifrak_1]$;
\item $[\Ifrak_3,\Ifrak_1,\Ifrak_4,\Ifrak_1,\Ifrak_1,\Ifrak_1,\Ifrak_1,\Ifrak_2]$.
\end{itemize}

With the above convention of identifying connected components of functional graphs with certain necklace graphs, two digraphs that are connected components of finite functional graphs are isomorphic as digraphs if and only if they are isomorphic as necklace graphs (i.e., they represent the same necklace of rooted tree isomorphism types). This means that in order to understand the connected components of $\Gamma_g$, we need to understand the associated cyclic sequences of rooted tree isomorphism types.

We note that if the goal is to understand the (undirected graph) isomorphism type of the underlying undirected graph of a connected component of a functional graph, an analogous approach can be used. One needs to replace necklace graphs by \emph{bracelet graphs}\phantomsection\label{term27} (undirected, vertex-labeled cycle graphs), cyclic sequences by \emph{dihedral sequences}\phantomsection\label{term28} (orbits of the natural action of the dihedral group of order $2L$ on $\Xfrak^L$, where the generating reflection acts by writing the sequence in reverse order), and necklaces by \emph{bracelets}\phantomsection\label{term29} (isomorphism classes of bracelet graphs).

Let us next explain our approach for understanding the digraph isomorphism types of the connected components of $\Gamma_g$ via necklaces of rooted tree isomorphism types in case $X=\IF_q$ and $g$ is an index $d$ generalized cyclotomic mapping $f$ of $\IF_q$. For this, we first need to understand the rooted tree above a given (periodic) point. The basic idea is to construct a certain partition $\Pcal_i$\phantomsection\label{not43} of each coset $C_i$ that \enquote{controls} the isomorphism types of rooted trees above the vertices in each of its blocks. Dealing with entire blocks of vertices at once is crucial to ensure that the complexities of our algorithms are not at least linear in the number of vertices $q$ like many general-purpose algorithms for handling graph isomorphism, including Babai's breakthrough quasi-polynomial algorithm from \cite{Bab18a}.

In order to sketch how the said partition $\Pcal_i$ of $C_i$ is constructed, we need to introduce some more concepts. For a given positive integer $m$, we define the notion of an \emph{$m$-(in)congruence}\phantomsection\label{term30} to be an (in)congruence of the form $\nu(x\equiv \bfrak\Mod{\afrak})$ where $\nu\in\{\emptyset,\neg\}$\phantomsection\label{not44} is a \enquote{logical sign}, and $\afrak,\bfrak$\phantomsection\label{not45}\phantomsection\label{not46} are integers with $\afrak\geq1$ and $\afrak\mid m$. We subsume these two notions under the name \emph{$m$-congruential condition}, or \emph{$m$-CC}\phantomsection\label{term31} for short. Next, we consider the concept of an \emph{arithmetic partition of $\IZ/m\IZ$}, see point (2) of the following definition.

\begin{definition}\label{sArithDef}
Let $m$ be a positive integer. We identify the elements of $\IZ/m\IZ$ with their standard representatives in $\{0,1,\ldots,m-1\}$.
\begin{enumerate}
\item Let $x\equiv \bfrak_j\Mod{\afrak_j}$ for $j=1,2,\ldots,K$ be $m$-congruences. There is a unique partition of $\IZ/m\IZ$, which we denote by $\Pfrak(x\equiv \bfrak_j\Mod{\afrak_j}: j=1,2,\ldots,K)$\phantomsection\label{not47}, such that each block of this partition is the solution set modulo $m$ of a system of $m$-CCs of the form
\begin{align}\label{addedSignsEq}
\notag \nu_1(x &\equiv \bfrak_1\Mod{\afrak_1}) \\
\notag \nu_2(x &\equiv \bfrak_2\Mod{\afrak_2}) \\
\notag &\vdots \\
\nu_K(x &\equiv \bfrak_K\Mod{\afrak_K})
\end{align}
where $\nu_1,\nu_2,\ldots,\nu_K\in\{\emptyset,\neg\}$ are logical signs.
\item A partition $\Pcal$\phantomsection\label{not48} of $\IZ/m\IZ$ is called \emph{arithmetic}\phantomsection\label{term32} if it is of the form
\[
\Pfrak(x\equiv \bfrak_j\Mod{\afrak_j}: j=1,2,\ldots,K)
\]
for a suitable nonnegative integer $K$ and suitable $m$-congruences $x\equiv \bfrak_j\Mod{\afrak_j}$ for $j=1,2,\ldots,K$. If $\Pcal=\Pfrak(x\equiv \bfrak_j\Mod{\afrak_j}: j=1,2,\ldots,K)$, then we also say that $\Pcal$ is the arithmetic partition of $\IZ/m\IZ$ \emph{spanned by}\phantomsection\label{term33} the congruences $x\equiv \bfrak_j\Mod{\afrak_j}$ for $j=1,2,\ldots,K$.
\item When $\Pcal$ is an arithmetic partition of $\IZ/m\IZ$, then the smallest value of $K\in\IN_0$ such that $\Pcal$ is spanned by $K$ suitably chosen $m$-congruences is called the \emph{(arithmetic) complexity of $\Pcal$}\phantomsection\label{term34}, written $\AC(\Pcal)$\phantomsection\label{not49}.
\item When $\Pcal$ is an arithmetic partition of $\IZ/m\IZ$ and a sequence of spanning congruences $(x\equiv \bfrak_j\Mod{\afrak_j})_{j=1,2,\ldots,K}$ has been fixed for $\Pcal$, we also denote for each $\vec{\nu}=(\nu_1,\nu_2,\ldots,\nu_K)\in\{\emptyset,\neg\}^K$ by $\Bcal(\Pcal,\vec{\nu})$\phantomsection\label{not50} the unique subset of $\IZ/m\IZ$ that is the solution set of the system (\ref{addedSignsEq}) (this solution set is a block of $\Pcal$ as long as it is non-empty).
\end{enumerate}
\end{definition}

\begin{remark}\label{sArithRem}
There are significantly fewer arithmetic partitions of $\IZ/m\IZ$ than there are partitions in total. Indeed, the total number of (set) partitions of $\IZ/m\IZ$ is the Bell number $B_m$\phantomsection\label{not51}, which satisfies
\[
B_m\sim\frac{1}{\sqrt{m}}\left(\frac{m}{W(m)}\right)^{m+\frac{1}{2}}\exp\left(\frac{m}{W(m)}-m-1\right)
\]
as $m\to\infty$, where $W(m)\sim\log{m}$\phantomsection\label{not52} is the Lambert W function (see \cite[Section 1.14, Problem 9]{Lov93a}). In particular, as $m\to\infty$,
\[
\log{B_m} \sim -\frac{1}{2}\log{m}+\left(m+\frac{1}{2}\right)(\log{m}-\log{W(m)})+\frac{m}{W(m)}-m-1 \sim m\log{m}.
\]
On the other hand, every arithmetic partition of $\IZ/m\IZ$ is spanned by a selection of congruences of the form $x\equiv \bfrak\Mod{\afrak}$ where $\afrak$ ranges over the positive divisors of $m$, and $\bfrak\in\{0,1,\ldots,\afrak-1\}$. Because the total number of such congruences is $\sigma(m)$\phantomsection\label{not53} (the sum of all positive divisors of $m$), it follows that the number of arithmetic partitions of $\IZ/m\IZ$ is at most $2^{\sigma(m)}$, and so its natural logarithm is at most
\[
\log{2}\cdot\sigma(m)\leq\log{2}(\e^{\gamma}+\epsilon)m\log\log{m},
\]
where $\gamma$\phantomsection\label{not54} denotes the Euler-Mascheroni constant and the second bound follows from a result of Robin \cite{Rob84a}.
\end{remark}

Let us return to our index $d$ generalized cyclotomic mapping $f$ of $\IF_q$. We recall that $s=(q-1)/d$ denotes the order (size) of the index $d$ subgroup $C$ of $\IF_q^{\ast}$. The aforementioned partitions $\Pcal_i$ of the cosets $C_i$ are constructed as arithmetic partitions of $\IZ/s\IZ$, with which $C_i$ is to be identified via the bijection $\iota_i$ introduced above. They have the property that for vertices $x,y\in C_i$ chosen from a common block $\Bcal(\Pcal_i,\vec{\nu})$ of $\Pcal_i$, one has $\Tree_{\Gamma_f}(x)\cong\Tree_{\Gamma_f}(y)$, and this common isomorphism type is denoted by $\Tree_i(\Pcal_i,\vec{\nu})$\phantomsection\label{not55}. The constructions of the partitions $\Pcal_i$ and of the associated rooted tree isomorphism types $\Tree_i(\Pcal_i,\vec{\nu})$, which are carried out in detail in Subsection \ref{subsec3P3}, are based on two crucial tools:
\begin{itemize}
\item the elementary result Lemma \ref{masterLem} from Subsection \ref{subsec2P2}; and
\item an explicit understanding, developed in Subsection \ref{subsec3P2} but also based on earlier theory developed in Subsection \ref{subsec2P1}, of the structures of rooted trees in the induced subgraph $\Gamma_{\per}$\phantomsection\label{not56} of $\Gamma_f$ on the union of all cosets $C_i$ where $i$ is $\overline{f}$-periodic.
\end{itemize}

Once the $\Pcal_i$ and $\Tree_i(\Pcal_i,\vec{\nu})$ have been constructed explicitly, in order to understand the isomorphism type of the connected component of $\Gamma_f$ with representative periodic vertex $r$, one needs to understand how the cycle moves through the various blocks of the respective coset partitions. Of course, if the cycle length $l$ of $r$ under $f$ is small, one can just enumerate the points on the cycle by brute force, check in which blocks they lie and spell out the corresponding cyclic sequence of rooted trees; this is what we do at the end of the example in Subsection \ref{subsec4P2}. However, if $l$ is large, then one can obtain a more concise description of the cyclic rooted tree sequence via a certain tuple of arithmetic partitions, the blocks of which represent intersections of the cycle of $r$ with blocks of the involved arithmetic partitions $\Pcal_i$. For details on this, see Subsection \ref{subsec3P4}, which builds on Subsection \ref{subsec2P4}.

Here is an overview of our approach for understanding $\Gamma_f$.

\begin{enumerate}
\item Determine the induced function $\overline{f}:\{0,1,\ldots,d\}\rightarrow\{0,1,\ldots,d\}$, and rewrite $f$ into a collection of affine functions that map between $d+1$ sets $C_i$, each of the form $\IZ/s\IZ$ or $\{0\}$.
\item Compute a CRL-list $\Lcal$ for $f$ as specified in Subsection \ref{subsec3P1}, which is based on the results for affine maps of finite cyclic groups from Subsection \ref{subsec2P3}.
\item For each $i\in\{0,1,\ldots,d-1\}$, compute the arithmetic partition $\Pcal_i$ and associated rooted tree isomorphism types $\Tree_i(\Pcal_i,\vec{\nu})$, as well as the isomorphism type of $\Tree_{\Gamma_f}(0_{\IF_q})$, as specified in Subsection \ref{subsec3P3}. This requires the theory developed in Subsections \ref{subsec2P1} and \ref{subsec3P2}.
\item For each $(r,l)\in\Lcal$, understand the associated cyclic sequence of rooted tree isomorphism types along the cycle of $r$ under $f$, either
\begin{itemize}
\item by listing elements on the cycle of $r$ by brute force, then looking up in which blocks of the relevant arithmetic partitions they lie, or
\item by following the approach from Subsection \ref{subsec3P4}, which relies on Subsection \ref{subsec2P4}.
\end{itemize}
\end{enumerate}

In Section \ref{sec5}, where we give a detailed algorithmic complexity analysis, we treat the procedures described in Steps (2)--(4) each as a separate algorithm to be analyzed. We note that in general, $f$ has too many cycles in order for it to be possible to spell out a CRL-list of $f$ element-wise if the procedure is to be efficient (i.e., subexponential in $\log{q}$); one can, however, obtain a concise parametrization of a CRL-list of $f$ efficiently. Likewise, the approach described in point (4) can be carried out for each given pair $(r,l)$ individually in an efficient manner for asymptotically almost all finite fields $\IF_q$, but it is not clear in general how to obtain a \enquote{global} understanding of $\Gamma_f$ efficiently. In fact, the number of distinct isomorphism types of connected components of $\Gamma_f$ might be super-polynomial in $\log{q}$ even for fixed $d$ (cf.~Problem \ref{isomorphismTypesProb}), so one would first need to come up with a compact way of parametrizing those isomorphism types. Still, as we will see in Subsection \ref{subsec5P3}, for some special cases of generalized cyclotomic mappings $f$ of $\IF_q$, there are algorithms for describing $\Gamma_f$ as a whole which are efficient for all or at least for \enquote{most} $q$ (in an asymptotic density sense). In particular, in those cases, it can be efficiently decided whether the functional graphs of two given generalized cyclotomic mappings of $\IF_q$ are isomorphic.

Section \ref{sec6} concludes the paper with a list of open problems for further research. For the reader's convenience, an extensive index of the notation and terminology appearing in this paper is given in Tables \ref{termNotTableShort} and \ref{termNotTable} in the Appendix (at the very end of the paper).

\section{Preparations}\label{sec2}

In this section, we prove some auxiliary results that are used when discussing the details of our algorithm in Section \ref{sec3}.

\subsection{Functional graphs of affine maps of finite groups}\label{subsec2P1}

In this subsection, we derive some results on functional graphs of affine maps $A:x\mapsto ax+b$\phantomsection\label{not57}\phantomsection\label{not58}\phantomsection\label{not59} of finite cyclic groups $\IZ/m\IZ$. We note that these graphs were studied earlier by Deng \cite{Den13a}, and we use several of Deng's results and ideas here, as pointed out where appropriate. However, for the reader's convenience, we aim to keep our exposition self-contained. We also observe (at the end of the subsection) that these results can in fact be generalized to arbitrary finite groups. First, we consider the following concept.

\begin{deffinition}\label{tensorProductDef}
Let $(\Gamma_j)_{j\in I}$\phantomsection\label{not60} be a family of digraphs. Their \emph{tensor product}\phantomsection\label{term35}, written $\bigotimes_{j\in I}{\Gamma_j}$\phantomsection\label{not61}, is the digraph with vertex set $\prod_{j\in I}{\V(\Gamma_j)}$ having an arc $(y_j)_{j\in I}\rightarrow(z_j)_{j\in I}$\phantomsection\label{not62} if and only if for each $j\in I$, there is an arc $y_j\rightarrow z_j$ in $\Gamma_j$.
\end{deffinition}

This concept corresponds to Deng's product graph from \cite[formula (1) in Section 2]{Den13a}.

\begin{remmark}\label{tensorProductRem}
If $(g_j)_{j\in I}$ is a family of functions $g_j:X_j\rightarrow X_j$, and if $\bigotimes_{j\in I}{g_j}$\phantomsection\label{not63} denotes the function
\[
\prod_{j\in I}{X_j}\rightarrow\prod_{j\in I}{X_j},\quad (y_j)_{j\in I}\mapsto (g_j(y_j))_{j\in I},
\]
then
\[
\Gamma_{\bigotimes_{j\in I}{g_j}}=\bigotimes_{j\in I}{\Gamma_{g_j}}.
\]
\end{remmark}

Due to Remark \ref{tensorProductRem}, the tensor product of digraphs is a useful tool when studying functional graphs of affine maps (in particular of endomorphisms) of finite cyclic groups $\IZ/m\IZ$. Indeed, if we factor $m=p_1^{v_1}\cdots p_K^{v_K}$\phantomsection\label{not56P5}\phantomsection\label{not64}, then for each given affine map $A:x\mapsto ax+b$ of $\IZ/m\IZ$ and each $j\in\{1,2,\ldots,K\}$, we may consider the reduction $A_j$ of $A$ modulo $p_j^{v_j}$, which is the affine map $A_j:x\mapsto ax+b$ of $\IZ/p_j^{v_j}\IZ$. By the Chinese Remainder Theorem, $\IZ/m\IZ$ is in a natural isomorphism with $\prod_{j=1}^K{\IZ/p_j^{v_j}\IZ}$, and under this isomorphism, $A$ corresponds to $\bigotimes_{j=1}^K{A_j}$. Hence we obtain the following, which is \cite[Theorem 2]{Den13a}.

\begin{lemmma}\label{affineTensorProdLem}
Let $m=p_1^{v_1}\cdots p_K^{v_K}$ be a positive integer with displayed factorization into pairwise coprime prime powers. Let $A:x\mapsto ax+b$ be an affine map of $\IZ/m\IZ$, and denote by $A_j$ the reduction of $A$ modulo $p_j^{v_j}$ for $j=1,2,\ldots,K$. Then $\Gamma_A\cong\bigotimes_{j=1}^K{\Gamma_{A_j}}$.
\end{lemmma}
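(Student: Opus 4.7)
The plan is to reduce the statement to a direct application of Remark \ref{tensorProductRem} via the Chinese Remainder Theorem, essentially formalizing the paragraph preceding the lemma. First I would let $\varphi:\IZ/m\IZ\to\prod_{j=1}^K\IZ/p_j^{v_j}\IZ$ denote the CRT ring isomorphism sending $x\bmod m$ to $(x\bmod p_j^{v_j})_{j=1}^K$. Since $\varphi$ is a ring isomorphism, it commutes with addition and multiplication, and in particular
\[
\varphi(ax+b)=(ax+b\bmod p_j^{v_j})_{j=1}^K=(A_j(x\bmod p_j^{v_j}))_{j=1}^K=\Bigl(\bigotimes_{j=1}^K A_j\Bigr)(\varphi(x)).
\]
Thus $\varphi$ conjugates $A$ into $\bigotimes_{j=1}^K A_j$.

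Next I would observe the general fact that conjugation by a bijection induces a digraph isomorphism between the corresponding functional graphs: if $g:X\to X$ and $h:Y\to Y$ satisfy $\psi\circ g=h\circ\psi$ for some bijection $\psi:X\to Y$, then $\psi$ is an isomorphism $\Gamma_g\to\Gamma_h$, because the arc $x\to g(x)$ is sent to $\psi(x)\to\psi(g(x))=h(\psi(x))$. Applied to $\psi=\varphi$, $g=A$, $h=\bigotimes_{j=1}^K A_j$, this gives
\[
\Gamma_A\cong\Gamma_{\bigotimes_{j=1}^K A_j}.
\]

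Finally I would invoke Remark \ref{tensorProductRem} with the family $(A_j)_{j=1,2,\ldots,K}$ to rewrite the right-hand side as $\bigotimes_{j=1}^K\Gamma_{A_j}$, yielding $\Gamma_A\cong\bigotimes_{j=1}^K\Gamma_{A_j}$, as required.

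There is really no hard part here; the proof is a bookkeeping exercise. The only thing to be careful about is to make explicit that the CRT map is not merely a bijection but a ring isomorphism, so that the affine expression $ax+b$ transports componentwise to $(ax+b)\bmod p_j^{v_j}$ in each factor. Once this is stated, both the functional-graph isomorphism step and the identification with the tensor product are immediate from the general tools already laid out in Definition \ref{tensorProductDef} and Remark \ref{tensorProductRem}.
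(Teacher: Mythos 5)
Your proof is correct and takes essentially the same approach as the paper, which proves the lemma in the paragraph preceding its statement: the CRT isomorphism transports $A$ to $\bigotimes_{j=1}^K A_j$, and Remark \ref{tensorProductRem} then identifies $\Gamma_{\bigotimes_j A_j}$ with $\bigotimes_j\Gamma_{A_j}$. You simply make explicit the intermediate step that conjugating by a bijection induces a functional-graph isomorphism, which the paper leaves tacit and instead cites Deng's Theorem 2.
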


Lemma \ref{affineTensorProdLem} allows us to reduce many arguments concerning functional graphs of affine maps of $\IZ/m\IZ$ to the case where $m=p^v$ is a prime power. We note the following interesting dichotomy (see also \cite[Lemma 3]{Den13a}).

\begin{propposition}\label{primaryDichotomyProp}
Let $m=p^v$ be a prime power, and let $A:x\mapsto ax+b$ be an affine map of $\IZ/m\IZ$.
\begin{enumerate}
\item If $p\mid a$, then $A$ has exactly one periodic point $x$ (a fixed point necessarily), and $\Gamma_A$ is obtained from $\Tree_{\Gamma_A}(x)$ by adding a loop to the root $x$.
\item If $p\nmid a$, then $A$ is a permutation of $\IZ/m\IZ$, whence $\Gamma_A$ is a disjoint union of directed cycles.
\end{enumerate}
\end{propposition}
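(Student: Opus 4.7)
The plan is to dispatch the two parts using the elementary dichotomy that every element of $\IZ/p^v\IZ$ is either a unit (when coprime to $p$) or nilpotent (when divisible by $p$). Part (2) is the easier case: if $p\nmid a$, then $a$ is a unit modulo $p^v$, so $A$ admits the two-sided inverse $y\mapsto a^{-1}(y-b)=a^{-1}y-a^{-1}b$, which is itself affine. Hence $A$ is a permutation of $\IZ/p^v\IZ$, every vertex of $\Gamma_A$ has both in-degree and out-degree one, and the functional graph of any permutation of a finite set is a disjoint union of directed cycles.

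For part (1), I would start from the explicit iterate formula
\[
A^n(x)=a^n x+\bigl(a^{n-1}+a^{n-2}+\cdots+a+1\bigr)b,
\]
proved by a routine induction on $n$. If $p\mid a$, then $a^v$ is divisible by $p^v$ (write $a=pa'$ and expand), so $a^n\equiv 0\pmod{p^v}$ for every $n\geq v$. Consequently $A^n$ is a constant function on $\IZ/p^v\IZ$ whenever $n\geq v$; let $c\in\IZ/p^v\IZ$ denote its common value. To identify $c$ as a fixed point I would compute $A^{v+1}(x)$ in two different ways: on the one hand, $A^{v+1}(x)=A(A^v(x))=A(c)$, and on the other hand, $A^{v+1}(x)=A^v(A(x))=c$ (since $A^v$ is constant). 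Comparing gives $A(c)=c$.

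Uniqueness of the periodic point is then immediate: if $y$ is periodic under $A$ with period $l$, then $y=A^{lk}(y)$ for every $k\geq1$, and choosing $k$ large enough that $lk\geq v$ forces $y=c$. Since $A^v$ sends every vertex to $c$, every vertex is eventually absorbed into $c$ within at most $v$ iterations, so $\Gamma_A$ consists of a single connected component, and this component is precisely $\Tree_{\Gamma_A}(c)$ with the loop at $c$ reinstated (in accordance with the convention from Definition \ref{treeAboveDef} that removes the loop at a fixed root). The main potential obstacle is purely bookkeeping — lining up the two ways of composing $A$ with $A^v$ to extract that $c$ is fixed — but there is no real difficulty once the iterate formula is in place.
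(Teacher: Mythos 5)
The paper states this proposition without giving a proof of its own, merely referring the reader to Deng's Lemma~3 in~\cite{Den13a}; so there is nothing to compare against directly. Your argument is correct and self-contained, and it is close to what one would expect the standard proof to be. Part~(2) is the routine unit-implies-bijection observation. In part~(1), the two key steps both check out: $p\mid a$ gives $a^n\equiv 0\pmod{p^v}$ for $n\geq v$, so $A^n$ is constant for such $n$, and evaluating $A^{v+1}$ both as $A\circ A^v$ and as $A^v\circ A$ correctly forces that constant to be a fixed point $c$. Uniqueness of the periodic point then follows by pushing the period past $v$. Finally, since $A^v$ is the constant map at $c$, every vertex is an iterated pre-image of $c$, so the vertex set of $\Tree_{\Gamma_A}(c)$ is all of $\IZ/p^v\IZ$ and $\Gamma_A$ is recovered from that tree by adding back the loop at $c$ (as stipulated by the convention in Definition~\ref{treeAboveDef}(2)). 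This is a clean, elementary proof that supplies the argument the paper only cites.
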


We would like to use these reduction ideas to prove the following theorem.

\begin{theoremm}\label{allTreesIsomorphicTheo}
Let $A:x\mapsto ax+b$ be an affine map of the finite cyclic group $\IZ/m\IZ$. Then all trees above periodic vertices in $\Gamma_A$ are isomorphic to each other. In fact, they are all isomorphic to any tree above a periodic vertex in $\Gamma_{\mu_a}$, where $\mu_a$\phantomsection\label{not65} is the endomorphism $x\mapsto ax$ of $\IZ/m\IZ$.
\end{theoremm}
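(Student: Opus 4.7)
The plan is to reduce to the structure of affine maps on prime-power cyclic groups via the tensor product decomposition of Lemma~\ref{affineTensorProdLem}, and then show by a coordinate-projection argument that the tree above any periodic vertex of $\Gamma_A$ is determined entirely by one distinguished factor, on which $A$ is conjugate to $\mu_a$ by a shift.

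Concretely, I would write $m = MN$ where $M = \prod_{p\mid\gcd(a,m)} p^{v_p(m)}$ and $N = m/M$, so that $\gcd(M,N)=1$, every prime dividing $M$ divides $a$, and every prime dividing $N$ is coprime to $a$. By the Chinese Remainder Theorem combined with (iterated applications of) Lemma~\ref{affineTensorProdLem}, $\Gamma_A \cong \Gamma_{A^{(M)}} \otimes \Gamma_{A^{(N)}}$, where $A^{(M)}$ and $A^{(N)}$ denote the reductions of $A$ modulo $M$ and $N$; the same decomposition applies to $\mu_a$. Proposition~\ref{primaryDichotomyProp}(2) applied prime-power-wise shows that $A^{(N)}$ and $\mu_a^{(N)}$ are permutations of $\IZ/N\IZ$, so $\Gamma_{A^{(N)}}$ and $\Gamma_{\mu_a^{(N)}}$ are disjoint unions of directed cycles. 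Proposition~\ref{primaryDichotomyProp}(1) applied prime-power-wise shows that $A^{(M)}$ has a unique periodic point, namely the fixed point $x_M^* = b(1-a)^{-1}\bmod M$; this is well-defined because every prime of $M$ divides $a$, hence does not divide $a-1$, so $1-a$ is a unit mod $M$. A direct computation then verifies that the shift $y \mapsto y + x_M^*$ conjugates $\mu_a^{(M)}$ to $A^{(M)}$, giving a digraph isomorphism $\Gamma_{A^{(M)}} \cong \Gamma_{\mu_a^{(M)}}$ under which $x_M^* \leftrightarrow 0$.

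The periodic vertices of $\Gamma_A$ are therefore exactly $\{x_M^*\} \times \IZ/N\IZ$. For any periodic $x = (x_M^*, y_N)$, I would show that the coordinate projection $\pi:(z_M,z_N)\mapsto z_M$ is an isomorphism of rooted trees from $\Tree_{\Gamma_A}(x)$ onto $\Tree_{\Gamma_{A^{(M)}}}(x_M^*)$. Two observations make this work: first, the pre-period of $(z_M,z_N)$ under $A$ equals the pre-period $k := k_M(z_M)$ of $z_M$ under $A^{(M)}$, because every vertex of $\Gamma_{A^{(N)}}$ is already periodic; second, the first periodic point on the forward orbit is $(x_M^*, (A^{(N)})^k(z_N))$, so membership in the tree above $x$ forces $z_N = (A^{(N)})^{-k}(y_N)$, which is well-defined because $A^{(N)}$ is invertible. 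This makes $\pi$ a bijection, and the identity $k_M(A^{(M)}(z_M)) = k-1$ (for $z_M \neq x_M^*$) then gives $A(z_M,z_N) = (A^{(M)}(z_M),(A^{(N)})^{-k+1}(y_N)) = (A^{(M)}(z_M), (A^{(N)})^{-k_M(A^{(M)}(z_M))}(y_N))$, so $\pi$ preserves edges.

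Combining, $\Tree_{\Gamma_A}(x) \cong \Tree_{\Gamma_{A^{(M)}}}(x_M^*) \cong \Tree_{\Gamma_{\mu_a^{(M)}}}(0)$ for every periodic $x$ of $A$; the same chain applied to $\mu_a$ (which shares the same $M$ and $N$ and whose fixed point on the $M$-factor is $0$) gives $\Tree_{\Gamma_{\mu_a}}(x') \cong \Tree_{\Gamma_{\mu_a^{(M)}}}(0)$ for every periodic $x'$ of $\mu_a$. Hence all trees above periodic vertices in both graphs are isomorphic to this common model. The main subtlety is the verification that $\pi$ is edge-preserving: the non-trivial entanglement between the two factors is compensated exactly by the identity $k_M(A^{(M)}(z_M)) = k_M(z_M)-1$, and keeping track of this pre-period bookkeeping is the technical core of the argument.
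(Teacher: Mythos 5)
Your proof is correct, and it takes a genuinely different and arguably more streamlined route than the paper does. The paper does not give a standalone proof of Theorem~\ref{allTreesIsomorphicTheo} at all: it instead proves the stronger Theorem~\ref{allTreesIsomorphicStrongTheo} (rigid procreation of $\Gamma_A^{\ast}$ plus the formula $\proc_k(x)=|\ker^{(k)}(\mu_a):\ker^{(k-1)}(\mu_a)|$) by first establishing the prime-power case via a case distinction (Lemma~\ref{allTreesIsomorphicLem}), then invoking Proposition~\ref{cyclicTensorProductProp}(2) to propagate rigid procreation and procreation numbers through the tensor product over all prime-power factors. You instead partition the prime factors of $m$ just once, into the factor $M$ absorbing the primes dividing $a$ (where $A^{(M)}$ has a unique fixed point $x_M^*$ and is shift-conjugate to $\mu_a^{(M)}$, exactly as in the paper's later Lemma~\ref{affineFixedPointLem}) and the complementary factor $N$ (where $A^{(N)}$ is a permutation), and then build the tree isomorphism explicitly by projection. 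Your coordinate-projection argument and the bookkeeping $k_M(A^{(M)}(z_M))=k_M(z_M)-1$ is a correct re-derivation of exactly the content of Proposition~\ref{cyclotomicFormEq}-adjacent Proposition~\ref{cyclicTensorProductProp}(4) (which the paper proves but uses only for Corollary~\ref{perNilCor}, not for this theorem), so you could have cited that proposition and shortened the core step considerably. The trade-off is clear: your route yields an explicit digraph isomorphism between the trees, while the paper's route is heavier but also produces the procreation-number formula that is essential for the computational results in Subsection~\ref{subsec3P2} and beyond. Both proofs are valid.
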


We note that it was proved by Sha \cite[Corollary 3.4]{Sha12a} that all trees above periodic vertices in $\Gamma_{\mu_a}$ are isomorphic to each other. The statement of Theorem \ref{allTreesIsomorphicTheo} itself is implicit in Deng's proof of \cite[Theorem 11]{Den13a}.

Using Proposition \ref{primaryDichotomyProp}, we can derive the following partial result swiftly, in the proof of which we use the notation
\[
\nu_p^{(v)}(n):=\min\{v,\nu_p(n)\}\phantomsection\label{not66}
\]
where $\nu_p(n)$\phantomsection\label{not67} is the \emph{$p$-adic valuation of $n$}, i.e., the exponent of $p$ in the prime power factorization of the integer $n$, defined to be $\infty$ if $n=0$.

\begin{lemmma}\label{allTreesIsomorphicLem}
Theorem \ref{allTreesIsomorphicTheo} holds when $m=p^v$ is a prime power.
\end{lemmma}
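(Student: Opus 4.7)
The plan is to split into the two cases given by Proposition \ref{primaryDichotomyProp} and, in the nontrivial case, exhibit an explicit digraph isomorphism $\Gamma_{\mu_a}\to\Gamma_A$ via a translation.

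First, if $p\nmid a$, then both $A$ and $\mu_a$ are permutations of $\IZ/p^v\IZ$, so in each of $\Gamma_A$ and $\Gamma_{\mu_a}$ every vertex is periodic. The tree above any periodic vertex is therefore the trivial single-vertex rooted tree in both graphs, and the statement is immediate.

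Next, suppose $p\mid a$. Then $a-1\equiv -1\Mod{p}$, so $a-1$ is a unit in $\IZ/p^v\IZ$, which means there is a unique $x_0\in\IZ/p^v\IZ$ with $(1-a)x_0=b$, i.e.\ $A(x_0)=ax_0+b=x_0$. Similarly, $\mu_a$ has the unique fixed point $0$. By Proposition \ref{primaryDichotomyProp}(1), $x_0$ is the only periodic point of $A$ and $0$ is the only periodic point of $\mu_a$; thus the first assertion of the theorem (that all trees above periodic vertices in $\Gamma_A$ are isomorphic to each other) holds trivially, and what remains is to show $\Tree_{\Gamma_A}(x_0)\cong\Tree_{\Gamma_{\mu_a}}(0)$.

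For this, define the translation $\phi\colon\IZ/p^v\IZ\to\IZ/p^v\IZ$ by $\phi(y)=y+x_0$. This is a bijection with $\phi(0)=x_0$, and using $ax_0+b=x_0$ one computes
\[
A(\phi(y))=a(y+x_0)+b=ay+(ax_0+b)=ay+x_0=\phi(\mu_a(y)),
\]
so $\phi$ conjugates $\mu_a$ to $A$. Consequently $\phi$ induces a digraph isomorphism $\Gamma_{\mu_a}\to\Gamma_A$ sending $0$ to $x_0$; deleting the loops at the two roots yields the desired isomorphism $\Tree_{\Gamma_{\mu_a}}(0)\cong\Tree_{\Gamma_A}(x_0)$. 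There is essentially no obstacle here; the only point to verify is the invertibility of $a-1$ that guarantees the existence of the fixed point $x_0$ which serves as the translation parameter.
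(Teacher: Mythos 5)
Your proof is correct and takes a genuinely different route from the paper. The paper splits on whether $A$ has a fixed point, i.e.\ on whether $\nu_p^{(v)}(a-1)\le\nu_p^{(v)}(b)$, and in the fixed-point case appeals to Deng's fixed-point criterion (their Lemma 4), the conjugation Lemma~\ref{affineFixedPointLem}, and Sha's Corollary~3.4. You instead split on the dichotomy of Proposition~\ref{primaryDichotomyProp}, whether $p\mid a$, which makes both branches nearly content-free: if $p\nmid a$, both $A$ and $\mu_a$ are permutations of $\IZ/p^v\IZ$, so every vertex is periodic and every tree above a periodic vertex is a single point; if $p\mid a$, then $a-1\equiv-1\Mod{p}$ is automatically a unit, so the fixed point $x_0$ exists (for free, without needing the valuation comparison), there is only one periodic point in each graph so the pairwise-isomorphism claim is vacuous, and the translation $y\mapsto y+x_0$ is an explicit conjugation from $\mu_a$ to $A$. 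What your split buys is self-containedness: you never need Sha's result (the paper needs it because its first branch also contains $p\nmid a$ situations where $\Gamma_{\mu_a}$ has many periodic vertices), and you re-derive for this special case what the paper later records in general as Lemma~\ref{affineFixedPointLem}. What the paper's split buys is that the valuation condition $\nu_p^{(v)}(a-1)\le\nu_p^{(v)}(b)$ is exactly the invariant it reuses verbatim in the proof of the full Theorem~\ref{allTreesIsomorphicStrongTheo}, so its phrasing here sets up the later argument more directly.
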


\begin{proof}
By \cite[Lemma 4]{Den13a}, if $\nu_p^{(v)}(a-1)\leq\nu_p^{(v)}(b)$, then $A$ has a fixed point, which leads to a digraph isomorphism between $\Gamma_A$ and $\Gamma_{\mu_a}$ (cf.~also our Lemma \ref{affineFixedPointLem}). The result is thus clear by \cite[Corollary 3.4]{Sha12a}.

On the other hand, if $\nu_p^{(v)}(b)<\nu_p^{(v)}(a-1)$, then $a\equiv1\Mod{p}$, which implies that $p\nmid a$. Therefore, by Proposition \ref{primaryDichotomyProp}(2), all rooted trees above periodic vertices in $\Gamma_A$ are trivial (i.e., they are isomorphic to a single vertex without arcs), and so are those trees in $\Gamma_{\mu_a}$, as required.
\end{proof}

Of course, we could now derive Theorem \ref{allTreesIsomorphicTheo} in its full strength by observing that the property \enquote{all rooted trees above periodic vertices are isomorphic} is preserved under taking tensor products of functional graphs. We can, however, obtain an even more detailed result with some extra work, which we carry out. This requires some concepts and results from the first author's paper \cite{Bor17a}, in which the structure of the trees above periodic vertices in functional graphs of finite group endomorphisms was characterized (thus extending Sha's result \cite[Corollary 3.4]{Sha12a}).

\begin{deffinition}\label{childProcDef}
Let $\Gamma=(V,E)$\phantomsection\label{not68}\phantomsection\label{not69} be a finite digraph, and let $x\in V$.
\begin{enumerate}
\item The \emph{dual digraph}\phantomsection\label{term36} $\Gamma^{\ast}$\phantomsection\label{not70} is obtained from $\Gamma$ by inverting each arc; formally, $\Gamma^{\ast}=(V,E^{-1})$ where $E^{-1}=\{(y,z)\in V^2: (z,y)\in E\}$\phantomsection\label{not71} is the inverse relation of $E$.
\item A vertex $y\in V$ such that $\Gamma$ has an edge $x\rightarrow y$ is a \emph{successor} or \emph{child of $x$}\phantomsection\label{term37}.
\item For each positive integer $k$, the \emph{$k$-th procreation number of $x$}\phantomsection\label{term38}, written $\proc_k(x)=\proc^{(\Gamma)}_k(x)$\phantomsection\label{not72}, is the number of childen $y$ of $x$ such that there is a length $k-1$ directed path $(w_1,w_2,\ldots,w_k)$ in $\Gamma$ with $w_1=y$ (in this situation, we also say that \emph{$y$ has (at least) $k-1$ successor generations}\phantomsection\label{term39}).
\item We say that $\Gamma$ has \emph{rigid procreation}\phantomsection\label{term40} if for all $y,z\in V$ and all positive integers $k$ with $\proc_k(y),\proc_k(z)>0$, one has $\proc_k(y)=\proc_k(z)$.
\end{enumerate}
\end{deffinition}

Using the notation of Definition \ref{childProcDef}, we note that $\proc_1(x)$ is simply the number of all childen of $x$ (i.e., the out-degree of $x$), that $\proc_2(x)$ is the number of childen of $x$ that have childen themselves, etc. Rigid procreation means that all vertices with children must have the same number of children (though it is fine for vertices without children to exist), that all vertices with at least one \enquote{grandchild} must have the same number of children that have a child (in particular the same number of grandchildren), etc. The following fact was noted but not proved in \cite[Remark before Theorem 3]{Bor17a}, and we prove it here (after the proof of Lemma \ref{rigidLem}) for the reader's convenience.

\begin{propposition}\label{rigidProp}
Let $\Gamma$ be a finite digraph that is a functional graph (i.e., all vertices of $\Gamma$ have out-degree $1$), say $\Gamma=\Gamma_g$. If the dual digraph $\Gamma^{\ast}$ has rigid procreation, then for any two $g$-periodic vertices $x$ and $y$, we have $\Tree_{\Gamma}(x)\cong\Tree_{\Gamma}(y)$. Moreover, the common rooted tree isomorphism type above periodic vertices $x$ in $\Gamma$ is determined by the procreation number sequence $(\proc_k(x))_{k\geq1}$ alone (i.e., it is the same in any finite functional graph with rigid procreation and the same procreation number sequence of periodic vertices).
\end{propposition}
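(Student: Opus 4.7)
The plan is to show that the procreation number sequence of $\Gamma^{\ast}$ is in fact common to all periodic vertices of $\Gamma=\Gamma_g$, and then to reconstruct the tree above any such vertex from that sequence alone. For the first point, I would observe that if $x$ is $g$-periodic then the $g$-cycle through $x$ provides, after passing to the dual $\Gamma^{\ast}$, a directed path of arbitrary length starting at $x$; hence $\proc_k^{(\Gamma^{\ast})}(x)\geq 1$ for every $k\geq 1$. By rigid procreation, the values $c_k:=\proc_k^{(\Gamma^{\ast})}(x)$ are then independent of the choice of the periodic vertex $x$, and because a child of $x$ in $\Gamma^{\ast}$ with a length $k$ forward path necessarily has one of length $k-1$, the sequence $(c_k)_{k\geq 1}$ is non-increasing with $c_k\geq 1$ throughout.

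For a $g$-transient vertex $v$, write $h(v)$ for the length of the longest directed path in $\Gamma^{\ast}$ starting at $v$; note $h(v)$ is finite, since otherwise a pre-image chain of $v$ in $\Gamma$ would eventually repeat, forcing $v$ to be periodic. I would then prove by induction on $j\geq 0$ the auxiliary claim that any transient vertex $v$ with $h(v)=j$ roots a $\Gamma^{\ast}$-subtree whose isomorphism type is a fixed rooted tree $T_j$ determined by $(c_1,\ldots,c_j)$. The base case $j=0$ is the single vertex. For the inductive step, rigid procreation together with $h(v)=j$ gives $\proc_k^{(\Gamma^{\ast})}(v)=c_k$ for $1\leq k\leq j$ and $\proc_k^{(\Gamma^{\ast})}(v)=0$ for $k>j$; so $v$ has exactly $c_1$ children, of which $c_k-c_{k+1}$ have subtree height $k-1$ for $1\leq k\leq j-1$, while $c_j$ have subtree height $j-1$. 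Each such child roots a subtree isomorphic to the already-constructed $T_{k-1}$ or $T_{j-1}$ by the inductive hypothesis, so $T_j$ is well-defined and depends only on $(c_1,\ldots,c_j)$.

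Finally, to describe $\Tree_{\Gamma}(x)$ for $x$ periodic, I would note that its children (in the tree) are precisely the transient $g$-pre-images of $x$. The periodic predecessor $w$ of $x$ is a child of $x$ in $\Gamma^{\ast}$ with infinite forward paths (traverse the cycle backward), so $w$ contributes exactly $1$ to every $c_k$; this leaves $c_1-1$ transient children of $x$, and among them the number with subtree height at least $j$ is $c_{j+1}-1$, so the number with subtree height exactly $j$ is $(c_{j+1}-1)-(c_{j+2}-1)=c_{j+1}-c_{j+2}$ (a count that vanishes once $c_k$ stabilizes at $1$, giving a finite tree). Each such transient child roots a copy of $T_j$ by the claim. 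This description settles both the assertion that $\Tree_{\Gamma}(x)\cong\Tree_{\Gamma}(y)$ for any two periodic $x,y$ inside a fixed $\Gamma$ and the stronger statement that the common isomorphism type depends only on $(c_k)_{k\geq 1}$.

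The main friction point I anticipate is bookkeeping rather than combinatorics: the numbers $c_k$ count all $\Gamma^{\ast}$-children of $x$ with at least $k-1$ forward generations, including the periodic predecessor that $\Tree_{\Gamma}(x)$ explicitly excludes, so one must keep straight transient vs.\ total and subtree height exactly $j$ vs.\ at least $j$. The underlying reasoning is otherwise a routine level-by-level induction supported by the rigid procreation hypothesis.
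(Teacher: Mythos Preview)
Your proposal is correct and follows essentially the same approach as the paper: the paper first proves an auxiliary lemma (Lemma~\ref{rigidLem}) stating that two finite rooted trees whose duals have rigid procreation, the same height, and the same root procreation numbers are isomorphic, and then assembles $\Tree_\Gamma(x)$ from these pieces exactly as you do (with the same $c_{j+1}-c_{j+2}$ count of transient children of subtree height $j$, after subtracting the contribution of the unique periodic predecessor). Your inductive construction of the trees $T_j$ is precisely the content of that lemma specialized to the situation at hand, so the two arguments differ only in packaging.
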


Readers interested in how the isomorphism type of $\Tree_{\Gamma}(x)$ can be derived from $(\proc_k(x))_{k\geq1}$ for periodic vertices $x$ if $\Gamma^{\ast}$ has rigid procreation can find the details of this in Subsection \ref{subsec4P1}. Before proving Proposition \ref{rigidProp}, we extend the notation $\Tree_{\Gamma}(x)$, which was already defined for finite functional graphs $\Gamma$ in Section \ref{sec1}, to the case where $\Gamma$ is a finite directed rooted tree $\Delta$ (with all arcs oriented toward the root) and prove a lemma.

\begin{deffinition}\label{treeAboveDef2}
Let $\Delta$\phantomsection\label{not73} be a finite directed rooted tree, with root $\rt(\Delta)$\phantomsection\label{not74}. We observe that all vertices except $\rt(\Delta)$ have out-degree $1$, and we let $g$ be the unique function $\V(\Delta)\setminus\{\rt(\Delta)\}\rightarrow \V(\Delta)$ such that $\Delta$ has an arc $x\rightarrow g(x)$ for each vertex $x\not=\rt(\Delta)$. For each $x\in\V(\Delta)$, we define $\Tree_{\Delta}(x)$, the so-called \emph{tree above $x$ in $\Delta$}, as the induced subgraph of $\Delta$ on the set
\[
\{x\}\cup\{y\in\V(\Delta): y\not=x\text{, and }g^k(y)=x\text{ for some }k=k(y)\geq1\}.
\]
\end{deffinition}

In the statement of the following lemma and beyond, we denote the height of a finite directed rooted tree $\Delta$ by $\height(\Delta)$\phantomsection\label{not75}.

\begin{lemmma}\label{rigidLem}
Let $\Delta_1$ and $\Delta_2$ be finite directed rooted trees. Moreover, we assume that $\Delta_1$ and $\Delta_2$ have the same height\phantomsection\label{term41}, that the dual digraphs $\Delta_1^{\ast}$ and $\Delta_2^{\ast}$ both have rigid procreation, and that $\proc_h^{(\Delta_1^{\ast})}(\rt(\Delta_1))=\proc_h^{(\Delta_2^{\ast})}(\rt(\Delta_2))$ for $1\leq h\leq\height(\Delta_1)=\height(\Delta_2)$. Then $\Delta_1$ and $\Delta_2$ are isomorphic.
\end{lemmma}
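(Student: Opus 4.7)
The plan is to induct on the common height $H=\height(\Delta_1)=\height(\Delta_2)$. The base case $H=0$ is trivial, as both trees are single vertices. For the inductive step, I will match up the children of the two roots in a way that preserves subtree isomorphism type.

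For each $i\in\{1,2\}$, let me first partition the set of children of $\rt(\Delta_i)$ in $\Delta_i^{\ast}$ (equivalently, the in-neighbors of $\rt(\Delta_i)$ in $\Delta_i$) according to the height of the subtree above them in $\Delta_i$. The number of such children whose subtree has height at least $k-1$ equals $\proc_k^{(\Delta_i^{\ast})}(\rt(\Delta_i))$ by definition, so the number with subtree height exactly $h'$ equals $\proc_{h'+1}^{(\Delta_i^{\ast})}(\rt(\Delta_i))-\proc_{h'+2}^{(\Delta_i^{\ast})}(\rt(\Delta_i))$ (with the convention that $\proc_{H+1}^{(\Delta_i^{\ast})}(\rt(\Delta_i))=0$). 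By the hypothesis equating root procreation numbers, these counts agree between $\Delta_1$ and $\Delta_2$ for every $h'\in\{0,1,\dots,H-1\}$. Thus it suffices to prove that if $c_1$ is a child of $\rt(\Delta_1)$ in $\Delta_1^{\ast}$ and $c_2$ is a child of $\rt(\Delta_2)$ in $\Delta_2^{\ast}$ with $\height(\Tree_{\Delta_1}(c_1))=\height(\Tree_{\Delta_2}(c_2))=h'$, then $\Tree_{\Delta_1}(c_1)\cong\Tree_{\Delta_2}(c_2)$; one then assembles the sought isomorphism $\Delta_1\to\Delta_2$ by freely pairing up children with matching subtree heights.

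To establish the isomorphism $\Tree_{\Delta_1}(c_1)\cong\Tree_{\Delta_2}(c_2)$, I want to apply the inductive hypothesis to these two rooted trees of common height $h'\leq H-1$. Their duals inherit rigid procreation from $\Delta_i^{\ast}$, since descendants in the sub-digraph agree with descendants in $\Delta_i^{\ast}$, so each procreation number $\proc_k$ of a vertex in the sub-digraph equals its value in the full $\Delta_i^{\ast}$. The remaining condition is the equality of procreation numbers at the roots, i.e., $\proc_k^{(\Delta_1^{\ast})}(c_1)=\proc_k^{(\Delta_2^{\ast})}(c_2)$ for $1\leq k\leq h'$. Here the crucial point is that for each $k\leq h'$, both $\proc_k^{(\Delta_i^{\ast})}(c_i)$ and $\proc_k^{(\Delta_i^{\ast})}(\rt(\Delta_i))$ are positive: the former because $c_i$ has $h'\geq k$ successor generations in $\Delta_i^{\ast}$, the latter because $\rt(\Delta_i)$ has $H\geq h'+1>k$ successor generations. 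Rigid procreation of $\Delta_i^{\ast}$ then forces both to equal the common positive value of $\proc_k$ in $\Delta_i^{\ast}$, so $\proc_k^{(\Delta_i^{\ast})}(c_i)=\proc_k^{(\Delta_i^{\ast})}(\rt(\Delta_i))$. Combining with the hypothesis $\proc_k^{(\Delta_1^{\ast})}(\rt(\Delta_1))=\proc_k^{(\Delta_2^{\ast})}(\rt(\Delta_2))$ yields $\proc_k^{(\Delta_1^{\ast})}(c_1)=\proc_k^{(\Delta_2^{\ast})}(c_2)$, as needed.

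The main conceptual step, and the one I expect to require the most care in write-up, is this last manoeuvre: using rigid procreation in each $\Delta_i^{\ast}$ separately to collapse the procreation numbers at any child of the root down to the procreation numbers at the root itself, and then invoking the root-level hypothesis to transfer information across the two trees. Once this is in place, the induction closes cleanly and the bijection between children extends to a rooted-tree isomorphism $\Delta_1\cong\Delta_2$.
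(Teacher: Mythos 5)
Your proposal is correct and takes essentially the same route as the paper's own proof: induction on the common height, the count $\proc_{h'+1}-\proc_{h'+2}$ of children whose subtree has height exactly $h'$, and rigid procreation to force the procreation numbers of any child of the root down to those of the root itself, so that the induction hypothesis can be applied. You merely spell out more explicitly the positivity checks that let rigid procreation be invoked, whereas the paper leaves those implicit.
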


\begin{proof}
We proceed by induction on the common height of $\Delta_1$ and $\Delta_2$. If $\height(\Delta_1)=0$, then both $\Delta_1$ and $\Delta_2$ consist of a single vertex without any arcs and thus are isomorphic. Now we assume that $\height(\Delta_1)\geq1$ and that the statement holds for all smaller heights. We note that $\Delta_1$ and $\Delta_2$ are isomorphic if and only if the following equality of multisets holds, where $[\Gamma]_{\cong}$\phantomsection\label{not76} denotes the isomorphism type of the finite digraph $\Gamma$:
\begin{align}\label{multisetEq}
\notag &\{[\Tree_{\Delta_1}(y_1)]_{\cong}: y_1\text{ is a child of }\rt(\Delta_1)\text{ in }\Delta_1^{\ast}\}= \\
&\{[\Tree_{\Delta_2}(y_2)]_{\cong}: y_2\text{ is a child of }\rt(\Delta_2)\text{ in }\Delta_2^{\ast}\}.
\end{align}
It is thus our goal to prove equality (\ref{multisetEq}). Let us fix $h\in\{0,1,2,\ldots,\height(\Delta_1)-1\}$. The number of children $y_1$ of $\rt(\Delta_1)$ in $\Delta_1^{\ast}$ such that $\Tree_{\Delta_1}(y_1)$ has height exactly $h$ is
\[
\proc^{(\Delta_1^{\ast})}_{h+1}(\rt(\Delta_1))-\proc^{(\Delta_1^{\ast})}_{h+2}(\rt(\Delta_1)).
\]
As for children $y_2$ of $\rt(\Delta_2)$ in $\Delta_2^{\ast}$ such that $\Tree_{\Delta_2}(y_2)$ has height exactly $h$, one finds analogously that their number is
\[
\proc^{(\Delta_2^{\ast})}_{h+1}(\rt(\Delta_2))-\proc^{(\Delta_2^{\ast})}_{h+2}(\rt(\Delta_2))=\proc^{(\Delta_1^{\ast})}_{h+1}(\rt(\Delta_1))-\proc^{(\Delta_1^{\ast})}_{h+2}(\rt(\Delta_1)).
\]
Moreover, for each child $y_1$ of $\rt(\Delta_1)$ in $\Delta_1^{\ast}$ such that $\Tree_{\Delta_1}(y_1)$ has height exactly $h$, the first $h$ procreation numbers of $y_1$ in $(\Tree_{\Delta_1}(y_1))^{\ast}$ are the same as those of $\rt(\Delta_1)$ in $\Delta_1^{\ast}$, since $\Delta_1^{\ast}$ has rigid procreation. In particular, by the induction hypothesis and for fixed $h$, all digraphs $\Tree_{\Delta_1}(y_1)$ where $y_1$ is a child of $\rt(\Delta_1)$ in $\Delta_1^{\ast}$ with exactly $h$ successor generations in $\Delta_1^{\ast}$ are isomorphic, their isomorphism type $\Ifrak^{(1)}_h$\phantomsection\label{not77} being determined by the first $h$ procreation numbers of $\rt(\Delta_1)$ in $\Delta_1^{\ast}$. An analogous statement holds with $\Delta_2$ and $\rt(\Delta_2)$ in place of $\Delta_1$ and $\rt(\Delta_1)$, say with isomorphism type $\Ifrak^{(2)}_h$. But by assumption, the procreation numbers of $\rt(\Delta_1)$ in $\Delta_1^{\ast}$ and of $\rt(\Delta_2)$ in $\Delta_2^{\ast}$ are the same, whence $\Ifrak^{(1)}_h=\Ifrak^{(2)}_h$ for each $h$. This shows that the two multisets in formula (\ref{multisetEq}) are the same, each consisting of exactly $\proc^{(\Delta_1^{\ast})}_{h+1}(\rt(\Delta_1))-\proc^{(\Delta_1^{\ast})}_{h+2}(\rt(\Delta_1))$ copies of $\Ifrak^{(1)}_h$ for each $h=0,1,\ldots,\height(\Delta_1)-1$.
\end{proof}

\begin{proof}[Proof of Proposition \ref{rigidProp}]
First of all, in order for the assertion to make sense, we observe that the sequence $(\proc^{(\Gamma^{\ast})}_k(x))_{k\geq1}$ does not depend on the choice of periodic vertex $x$. Indeed, if $y$ is another periodic vertex, then it is possible to form arbitrarily long directed paths in $\Gamma^{\ast}$ starting at either of $x$ or $y$ by going along the respective cycle. This implies that $\proc^{(\Gamma^{\ast})}_k(x),\proc^{(\Gamma^{\ast})}_k(y)>0$ for each $k$, and thus $\proc^{(\Gamma^{\ast})}_k(x)=\proc^{(\Gamma^{\ast})}_k(y)$ since $\Gamma^{\ast}$ has rigid procreation.

Let us write $\Gamma=\Gamma_g$ for a suitably chosen function $g:\V(\Gamma)\rightarrow\V(\Gamma)$, and fix a $g$-periodic vertex $x$. We note that $\Tree_{\Gamma}(y)=\Tree_{\Tree_{\Gamma}(x)}(y)$ for each $y\in\V(\Tree_{\Gamma}(x))$. We observe that
\[
\proc^{(\Tree_{\Gamma}(x)^{\ast})}_k(x)=\proc^{(\Gamma^{\ast})}_k(x)-1
\]
for each $k\geq1$; this is because exactly one of the children of $x$ counted by $\proc^{(\Gamma^{\ast})}_k(x)$ is periodic and hence must be ignored in the procreation number in $\Tree_{\Gamma}(x)^{\ast}$. In particular, for each $h\geq0$, the number of children $y$ of $x$ in $\Tree_{\Gamma}(x)^{\ast}$ such that $\Tree_{\Gamma}(y)$ has height exactly $h$ is
\[
\proc^{(\Tree_{\Gamma}(x)^{\ast})}_{h+1}(x)-\proc^{(\Tree_{\Gamma}(x)^{\ast})}_{h+2}(x)=\proc^{(\Gamma^{\ast})}_{h+1}(x)-\proc^{(\Gamma^{\ast})}_{h+2}(x).
\]
Moreover, for each child $y$ of $x$ in $\Tree_{\Gamma}(x)^{\ast}$, the fact that $\Gamma^{\ast}$ has rigid procreation implies that $\Tree_{\Gamma}(y)^{\ast}$ has rigid procreation, and, more specifically, whenever $\proc^{(\Tree_{\Gamma}(y)^{\ast})}_k(z)>0$ for some $z\in \V(\Tree_{\Gamma}(y))$, one has $\proc^{(\Tree_{\Gamma}(y)^{\ast})}_k(z)=\proc^{\Gamma^{\ast}}_k(z)=\proc^{\Gamma^{\ast}}_k(x)$. Lemma \ref{rigidLem} thus implies that the multiset of isomorphism types
\[
\{[\Tree_{\Gamma}(y)]_{\cong}: g(y)=x, y\text{ is }g\text{-transient}\},
\]
which determines the isomorphism type of $\Tree_{\Gamma}(x)$, is in turn entirely determined by the procreation number sequence ($\proc^{(\Gamma^{\ast})}_k(x))_{k\geq1}$, which is what we needed to prove.
\end{proof}

In view of Proposition \ref{rigidProp}, the following result, which is \cite[Theorem 2]{Bor17a}, both implies that the rooted trees above periodic vertices in $\Gamma_{\mu_a}$, the functional graph of the endomorphism $x\mapsto ax$ of $\IZ/m\IZ$, are pairwise isomorphic, and characterizes the corresponding rooted tree isomorphism type.

\begin{theoremm}\label{allTreesIsomorphicKnownTheo}
Let $m$ be a positive integer, and let $\mu_a:x\mapsto ax$, be an endomorphism of the cyclic group $\IZ/m\IZ$. The dual functional graph $(\Gamma_{\mu_a})^{\ast}$ has rigid procreation, and for each $k\in\IN^+$ and each periodic vertex $x$ of $\Gamma_{\mu_a}$, one has $\proc_k(x)=|\ker^{(k)}(\mu_a):\ker^{(k-1)}(\mu_a)|$\phantomsection\label{not77P25}, where $\ker^{(j)}(\mu_a):=\{y\in\IZ/m\IZ:(\mu_a)^j(y)=a^jy=0\}$\phantomsection\label{not77P5}.
\end{theoremm}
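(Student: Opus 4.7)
The plan is to exploit that $\mu_a$ is a group endomorphism of $\IZ/m\IZ$, so the iterated preimage sets $\mu_a^{-k}(x)$ are either empty or cosets of the iterated kernels $\ker^{(k)}(\mu_a)$. This reduces the computation of the procreation numbers in $(\Gamma_{\mu_a})^{\ast}$ to a straightforward fiber-counting argument.

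First I would unpack the definition of $\proc_k$ in the dual graph: a child of $x$ in $(\Gamma_{\mu_a})^{\ast}$ is any $y$ with $\mu_a(y)=x$, and such a $y$ admits a length $k-1$ directed path $(y=w_1,w_2,\ldots,w_k)$ in $(\Gamma_{\mu_a})^{\ast}$, which unwinds to the chain $\mu_a(w_{j+1})=w_j$, precisely when $y=\mu_a^{k-1}(w_k)$ for some $w_k$, i.e.\ when $y\in\im(\mu_a^{k-1})$. Consequently,
\[
\proc_k(x)=|\mu_a^{-1}(x)\cap\im(\mu_a^{k-1})|.
\]

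The core computation is to recognize $\mu_a^{-1}(x)\cap\im(\mu_a^{k-1})$ as the image of the restricted map $\mu_a^{k-1}\colon\mu_a^{-k}(x)\to\IZ/m\IZ$: every $y$ in the intersection is $\mu_a^{k-1}(w)$ for some $w$, and then $x=\mu_a(y)=\mu_a^k(w)$ forces $w\in\mu_a^{-k}(x)$; conversely, each such $w$ produces an admissible $y$. Since $\mu_a^{k-1}$ is a group homomorphism, the fibers of this restricted map have constant cardinality $|\ker^{(k-1)}(\mu_a)|$, so provided $\mu_a^{-k}(x)$ is nonempty (in which case it is a coset of $\ker^{(k)}(\mu_a)$),
\[
\proc_k(x)=\frac{|\mu_a^{-k}(x)|}{|\ker^{(k-1)}(\mu_a)|}=|\ker^{(k)}(\mu_a):\ker^{(k-1)}(\mu_a)|.
\]

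To conclude, I would note that whenever $\proc_k(x)>0$, the witness $y=\mu_a^{k-1}(w_k)$ with $\mu_a(y)=x$ gives $x=\mu_a^k(w_k)\in\im(\mu_a^k)$, so $\mu_a^{-k}(x)$ is indeed nonempty and the displayed formula applies. The resulting value depends only on $k$, not on $x$, which is exactly rigid procreation for $(\Gamma_{\mu_a})^{\ast}$. For a $\mu_a$-periodic $x$, the relation $x=\mu_a^{jk}(x)\in\im(\mu_a^k)$ for any period multiple $jk$ ensures $\mu_a^{-k}(x)\neq\emptyset$ for every $k$, yielding the explicit formula unconditionally on periodic points. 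No substantial obstacle is anticipated; the only mild subtleties are the $k=1$ boundary (handled by the convention $\ker^{(0)}(\mu_a)=\{0\}$, which makes $\proc_1(x)=|\ker(\mu_a)|$, consistent with $|\mu_a^{-1}(x)|$ for $x\in\im(\mu_a)$) and keeping the empty/nonempty dichotomy for $\mu_a^{-k}(x)$ straight.
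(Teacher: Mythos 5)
Your proof is correct. The paper itself does not prove this theorem --- it cites it as \cite[Theorem 2]{Bor17a} --- but it does prove the more general Theorem \ref{rigidTheo} (affine maps of arbitrary finite groups), and your argument is genuinely different in organization from that one. The paper's proof of Theorem \ref{rigidTheo} proceeds by induction on $k$: it establishes rigid procreation one generation at a time, computes the count of endpoints of length-$k$ directed paths starting at a vertex $z$ (which is $|\ker^{(k)}(\varphi)|$ when nonzero, and also $\prod_{j=1}^{k}\proc_j(z)$ by the inductive hypothesis), and extracts $\proc_k$ from the telescoping quotient. Your argument is a one-shot computation: you recognize $\proc_k(x)=|\mu_a^{-1}(x)\cap\im(\mu_a^{k-1})|$ as the image of the restriction $\mu_a^{k-1}\colon\mu_a^{-k}(x)\to\IZ/m\IZ$, observe that the domain is a coset of $\ker^{(k)}(\mu_a)$ and each nonempty fiber is a coset of $\ker^{(k-1)}(\mu_a)$, and read off the index directly. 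This avoids the induction entirely and gives rigid procreation as an afterthought (the formula manifestly does not depend on $x$); it also generalizes cleanly to the affine setting of Theorem \ref{rigidTheo} if one replaces $\mu_a$ by $A=\varphi\rho_{\rrm}(b)$ and uses that $A^{-k}$-fibers are cosets of $\ker^{(k)}(\varphi)$. One tiny expository wrinkle: the phrase \enquote{$x=\mu_a^{jk}(x)$ for any period multiple $jk$} is awkward; what you mean is that if $\pi$ is the period of $x$ and $n\pi\geq k$, then $x=\mu_a^{n\pi}(x)=\mu_a^{k}\bigl(\mu_a^{n\pi-k}(x)\bigr)\in\im(\mu_a^k)$. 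That is a phrasing issue only, not a gap.
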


In view of this, Theorem \ref{allTreesIsomorphicTheo} is clear once we have proved the following result, which allows us more generally to derive the isomorphism type of $\Gamma_A$ from the isomorphism types of the functional graphs $\Gamma_{A_j}$ of the reductions of $A$ modulo the prime power factors $p_j^{v_j}$ of $m$.

\begin{propposition}\label{cyclicTensorProductProp}
Let $g_j:X_j\rightarrow X_j$ for $j=1,2$ be functions on finite sets.
\begin{enumerate}
\item We have $\per(g_1\otimes g_2)=\per(g_1)\times\per(g_2)$. In particular, if $g_1$ and $g_2$ each have precisely one periodic point, then so does $g_1\otimes g_2$.
\item We assume that the dual functional graphs $\Gamma_{g_1}^{\ast}$ and $\Gamma_{g_2}^{\ast}$ have rigid procreation. Then the dual functional graph
\[
\Gamma_{g_1\otimes g_2}^{\ast}\cong\left(\Gamma_{g_1}\otimes\Gamma_{g_2}\right)^{\ast}=\left(\Gamma_{g_1}\right)^{\ast}\otimes\left(\Gamma_{g_2}\right)^{\ast}
\]
has rigid procreation, and if $y_j$ is a periodic point of $g_j$ for $j=1,2$, then $\vec{y}=(y_1,y_2)$ is a periodic point of $g_1\otimes g_2$ and $\proc_k(\vec{y})=\proc_k(y_1)\cdot \proc_k(y_2)$.
\item Let us denote by $\divideontimes$\phantomsection\label{not78} the unique $\IQ$-bilinear product of polynomials in $\IQ[x_n: n\in\IN^+]$ such that
\[
(x_1^{e_1}\cdots x_N^{e_N})\divideontimes(x_1^{e'_1}\cdots x_{N'}^{e'_{N'}})=\prod_{1\leq n\leq N,1\leq n'\leq N'}{x_n^{e_n}\divideontimes x_{n'}^{e_{n'}}}
\]
\phantomsection\label{not79}
and
\[
x_n^e\divideontimes x_{n'}^{e'}=x_{\lcm(n,n')}^{ee'\gcd(n,n')}.
\]
We assume that each $g_j$ is a permutation of the respective set $X_j$. Then $g_1\otimes g_2$ is a permutation of $X_1\times X_2$, and its cycle type can be computed as the $\divideontimes$-product of the cycle types of $g_1$ and $g_2$.
\item We assume that $g_1$ has precisely one periodic point $y_1$ (a fixed point, necessarily) and that $g_2$ is a permutation of $X_2$. Then the induced subgraph of $g_1\otimes g_2$ on $\per(g_1\otimes g_2)$ is isomorphic to $\Gamma_{g_2}$, and for each $\vec{y}\in\per(g_1\otimes g_2)$, one has $\Tree_{\Gamma_{g_1\otimes g_2}}(\vec{y})\cong\Tree_{\Gamma_{g_1}}(y_1)$.
\end{enumerate}
\end{propposition}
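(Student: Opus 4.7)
The plan is to exploit the arc-wise factorization $\Gamma_{g_1 \otimes g_2} = \Gamma_{g_1} \otimes \Gamma_{g_2}$ from Remark \ref{tensorProductRem} throughout. For item (1), $(y_1, y_2)$ is periodic under $g_1 \otimes g_2$ iff $(g_1 \otimes g_2)^n(y_1, y_2) = (y_1, y_2)$ for some $n \geq 1$, iff $g_1^n(y_1) = y_1$ and $g_2^n(y_2) = y_2$ for a common $n$; taking $n$ to be the least common multiple of the individual periods, this is equivalent to $y_j \in \per(g_j)$ for each $j$. For item (2), a length-$(k{-}1)$ directed path in $\Gamma_{g_1 \otimes g_2}^{\ast}$ starting at a child $(z_1, z_2)$ of $(y_1, y_2)$ exists iff such paths start at each coordinate $z_j$ in $\Gamma_{g_j}^{\ast}$ individually, since arcs split coordinate-wise. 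Hence $\proc_k((y_1, y_2)) = \proc_k(y_1)\cdot\proc_k(y_2)$, and rigidity transfers: if two such product procreation numbers are both positive, then each factor is positive, so each pair agrees by rigidity of $\Gamma_{g_j}^{\ast}$, and the products coincide.

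For item (3), bilinearity of $\divideontimes$ together with the decomposition of any permutation into disjoint cycles reduces the claim to the case where $g_1$ is a single $n$-cycle and $g_2$ is a single $n'$-cycle. In that case, the orbit of any $(y_1, y_2)$ under $g_1 \otimes g_2$ returns to the starting point after exactly $\lcm(n, n')$ steps (since this is the least $k \geq 1$ with both $n \mid k$ and $n' \mid k$), so the $nn'$ points of $X_1 \times X_2$ partition into $nn'/\lcm(n, n') = \gcd(n, n')$ orbits of length $\lcm(n, n')$, matching $x_n \divideontimes x_{n'} = x_{\lcm(n, n')}^{\gcd(n, n')}$.

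For item (4), I would first apply item (1): since $g_2$ is a permutation, $\per(g_1 \otimes g_2) = \{y_1\} \times X_2$, and the restriction of $g_1 \otimes g_2$ to this set acts as $(y_1, z_2) \mapsto (y_1, g_2(z_2))$, yielding the asserted isomorphism with $\Gamma_{g_2}$. For the tree statement, fix $(y_1, y_2) \in \per(g_1 \otimes g_2)$ and, for each $z_1 \in X_1$, let $k(z_1) \geq 0$ be the smallest integer with $g_1^{k(z_1)}(z_1) = y_1$ (well-defined, because $y_1$ is the unique periodic point of $g_1$ and hence every orbit eventually arrives at $y_1$). The plan is to construct a digraph isomorphism
\[
\Phi\colon \V(\Tree_{\Gamma_{g_1}}(y_1)) \longrightarrow \V(\Tree_{\Gamma_{g_1 \otimes g_2}}((y_1, y_2))), \qquad z_1 \longmapsto (z_1, g_2^{-k(z_1)}(y_2)).
\]
Bijectivity rests on the observation that a transient vertex $(z_1, z_2)$ (necessarily $z_1 \neq y_1$) belongs to the tree above $(y_1, y_2)$ iff its $g_1 \otimes g_2$-orbit first enters the cycle $\{y_1\} \times X_2$ at the vertex $(y_1, y_2)$; this entry occurs after $k(z_1)$ steps at the point $(y_1, g_2^{k(z_1)}(z_2))$, forcing $z_2 = g_2^{-k(z_1)}(y_2)$ uniquely. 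Arc-preservation is immediate from the identity $k(g_1(z_1)) = k(z_1) - 1$ for $z_1 \neq y_1$, which gives $\Phi(g_1(z_1)) = (g_1 \otimes g_2)(\Phi(z_1))$.

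The main obstacle lies in the bookkeeping of item (4): transient vertices of $\Gamma_{g_1 \otimes g_2}$ must be sorted into the trees above the various periodic vertices $(y_1, y'_2)$ lying on the common cycle $\{y_1\} \times X_2$, and one must verify that $\Phi$ lands in the tree above the specified $(y_1, y_2)$ rather than a neighboring one. Once it is recognized that the second coordinate $z_2$ is forced by $z_1$ and the target $y_2$ (thanks to invertibility of $g_2$), items (1)--(3) and the remaining verifications in item (4) amount to straightforward unwinding of the definitions.
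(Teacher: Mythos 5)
Your arguments for (1), (2) and (4) follow the same route as the paper; in particular, your map $\Phi\colon z_1\mapsto(z_1,g_2^{-k(z_1)}(y_2))$ with $k(z_1)=\pperl_{g_1}(z_1)$ is exactly the digraph isomorphism the paper exhibits (the paper's formula has a typo, $\pperl_{g_2}(z_1)$ for $\pperl_{g_1}(z_1)$, which you have silently corrected), and your analysis of where the orbit of a transient point first meets the cycle $\{y_1\}\times X_2$ is the right way to verify bijectivity. For (3) the paper merely cites Wei--Xu, whereas you supply a self-contained argument: reduce to $g_1$ a single $n$-cycle and $g_2$ a single $n'$-cycle, observe that every orbit of $g_1\otimes g_2$ then has length $\lcm(n,n')$, and count $nn'/\lcm(n,n')=\gcd(n,n')$ orbits, matching $x_n\divideontimes x_{n'}=x_{\lcm(n,n')}^{\gcd(n,n')}$. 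One small imprecision there: the reduction to single cycles is not powered by the $\IQ$-\emph{bilinearity} of $\divideontimes$ (cycle types are monomials, never sums), but by the multiplicativity of cycle types under disjoint union of permutations combined with the prescribed multiplicative behavior of $\divideontimes$ on monomials, i.e., $(\prod_n x_n^{e_n})\divideontimes(\prod_{n'}x_{n'}^{e'_{n'}})=\prod_{n,n'}(x_n^{e_n}\divideontimes x_{n'}^{e'_{n'}})$. With that word replaced, the argument is complete and is a genuine improvement over the bare citation in the paper.
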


\begin{proof}
Since $g_1\otimes g_2$ is the component-wise application of $g_1$ and $g_2$ on $X_1\times X_2$, it is clear that a point $(y_1,y_2)\in X_1\times X_2$ is periodic under $g_1\otimes g_2$ if and only if $y_j$ is periodic under $g_j$ for $j=1,2$, which settles statement (1) as well as the first assertion on $\vec{y}$ in statement (2).

For the rest of statement (2), we proceed as follows. In order to see that $\left(\Gamma_{g_1}\right)^{\ast}\otimes\left(\Gamma_{g_2}\right)^{\ast}$ has rigid procreation, let $k$ be a positive integer, and let $\vec{y}=(y_1,y_2)$ and $\vec{z}=(z_1,z_2)$ be points in $X_1\times X_2$ which have at least $k$ successor generations each in that graph. This is equivalent to each of $y_1,y_2,z_1,z_2$ having at least $k$ successor generations in the respective graph $\Gamma_{g_1}^{\ast}$ or $\Gamma_{g_2}^{\ast}$. It follows that $\proc_k(y_1)=\proc_k(z_1)$ and $\proc_k(y_2)=\proc_k(z_2)$. Now, for $\vec{w}=(w_1,w_2)\in X_1\times X_2$\phantomsection\label{not80}, the procreation number $\proc_k(\vec{w})$ counts the number of children $\vec{w'}=(w'_1,w'_2)$ of $\vec{w}$ in $\left(\Gamma_{g_1}\right)^{\ast}\otimes\left(\Gamma_{g_2}\right)^{\ast}$ that have at least $k-1$ successor generations. But $\vec{w'}$ has at least $k-1$ successor generations if and only if each $w'_j$ is a child of $w_j$ in $\Gamma_{g_j}^{\ast}$ that has at least $k-1$ successor generations. Therefore, $\proc_k(\vec{w})=\proc_k(w_1)\cdot\proc_k(w_2)$. In particular,
\[
\proc_k(\vec{y})=\proc_k(y_1)\proc_k(y_2)=\proc_k(z_1)\proc_k(z_2)=\proc_k(\vec{z}),
\]
as required.

For statement (3), see \cite[Theorem 2.4 and its proof]{WX93a}.

For statement (4), we observe that this is implicit in \cite[Theorem 1(4)]{Bor17a}, but since it is not proved in detail there, let us do so here. By statement (1), we know that $\vec{z}=(z_1,z_2)\in X_1\times X_2$ is a periodic point of $g_1\otimes g_2$ if and only if $z_j\in\per(g_j)$ for $j=1,2$. Hence, the periodic points of $g_1\otimes g_2$ are in bijection with those of $g_2$ via $y_2\mapsto(y_1,y_2)$, and this bijection preserves cycle lengths. Therefore, the asserted isomorphism $\Gamma_{g_2}\cong\Gamma_{(g_1\otimes g_2)_{\mid\per(g_1\otimes g_2)}}$ is clear. Finally, it is not hard to check that for each $y_2\in\per(g_2)$, the function $X_1\rightarrow X_1\times X_2$, $z_1\mapsto (z_1,((g_2)_{\mid\per(g_2)})^{-\pperl_{g_2}(z_1)}(y_2))$, is a digraph isomorphism between $\Tree_{\Gamma_{g_1}}(y_1)$ and $\Tree_{\Gamma_{g_1\otimes g_2}}((y_1,y_2))$.
\end{proof}

We are now ready to prove Theorem \ref{allTreesIsomorphicTheo}. In fact, we prove the following stronger version of it.

\begin{theoremm}\label{allTreesIsomorphicStrongTheo}
Let $m$ be a positive integer, and let $A:x\mapsto ax+b$ be an affine map of the cyclic group $\IZ/m\IZ$. The dual functional graph $(\Gamma_A)^{\ast}$ has rigid procreation, and for each $k\in\IN^+$ and each periodic vertex $x$ of $\Gamma_A$, one has $\proc_k(x)=|\ker^{(k)}(\mu_a):\ker^{(k-1)}(\mu_a)|$. In particular, each rooted tree above a periodic vertex in $\Gamma_A$ is isomorphic to each rooted tree above a periodic vertex in $\Gamma_{\mu_a}$.
\end{theoremm}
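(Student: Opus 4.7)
The plan is to leverage the prime-power reduction furnished by Lemma \ref{affineTensorProdLem} together with the tensor-product analysis of Proposition \ref{cyclicTensorProductProp}(2), bootstrapping from Theorem \ref{allTreesIsomorphicKnownTheo}, which already handles the pure endomorphism case.

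First factor $m = p_1^{v_1} \cdots p_K^{v_K}$; Lemma \ref{affineTensorProdLem} yields $\Gamma_A \cong \bigotimes_{j=1}^K \Gamma_{A_j}$ and analogously $\Gamma_{\mu_a} \cong \bigotimes_{j=1}^K \Gamma_{\mu_{a_j}}$, where $A_j$ and $\mu_{a_j}$ are the reductions modulo $p_j^{v_j}$. It therefore suffices to verify rigid procreation and the procreation-number formula prime-power by prime-power: Proposition \ref{cyclicTensorProductProp}(2) will then transport rigid procreation to $(\Gamma_A)^{\ast}$ and multiply the per-coordinate procreation numbers, while the Chinese Remainder Theorem identifies $\ker^{(k)}(\mu_a)$ with $\prod_j \ker^{(k)}(\mu_{a_j})$, so that $\prod_j |\ker^{(k)}(\mu_{a_j}):\ker^{(k-1)}(\mu_{a_j})|$ collapses to the global index $|\ker^{(k)}(\mu_a):\ker^{(k-1)}(\mu_a)|$.

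For the prime-power case $m = p^v$, I would split along Proposition \ref{primaryDichotomyProp}. If $p \nmid a$, then both $A$ and $\mu_a$ are permutations, so every periodic vertex has a unique predecessor (its cyclic neighbor) and $\proc_k(x) = 1$ for all $k \geq 1$ in either dual graph; correspondingly $\mu_a$ is an automorphism of $\IZ/p^v\IZ$, whence $\ker^{(k)}(\mu_a) = \{0\}$ and $|\ker^{(k)}(\mu_a):\ker^{(k-1)}(\mu_a)| = 1$. If $p \mid a$, then $\nu_p^{(v)}(a-1) = 0 \leq \nu_p^{(v)}(b)$, so the first horn of Deng's \cite[Lemma 4]{Den13a}, already invoked in the proof of Lemma \ref{allTreesIsomorphicLem}, produces a digraph isomorphism $\Gamma_A \cong \Gamma_{\mu_a}$; rigid procreation and the procreation-number formula then transfer directly from Theorem \ref{allTreesIsomorphicKnownTheo}.

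The final clause follows from Proposition \ref{rigidProp}: since both $(\Gamma_A)^{\ast}$ and $(\Gamma_{\mu_a})^{\ast}$ have rigid procreation and share the same procreation-number sequence at periodic vertices, the common rooted-tree isomorphism type above periodic vertices must coincide for the two graphs. The main obstacle is the $p \mid a$ case, where one must confirm that Deng's lemma genuinely yields a functional-graph isomorphism $\Gamma_A \cong \Gamma_{\mu_a}$ (rather than merely matching selected invariants); once this is secured, Theorem \ref{allTreesIsomorphicKnownTheo} imports the rigid procreation and the kernel-index formula wholesale, and the tensor-product assembly reduces to routine bookkeeping via the CRT decomposition of the kernels.
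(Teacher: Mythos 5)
Your proof is correct and follows essentially the same route as the paper: CRT reduction via Lemma \ref{affineTensorProdLem}, a prime-power case analysis, reassembly via Proposition \ref{cyclicTensorProductProp}(2), and the "In particular" clause via Proposition \ref{rigidProp} together with Theorem \ref{allTreesIsomorphicKnownTheo}. The only divergence is in the prime-power dichotomy: you split on $p \mid a$ versus $p \nmid a$ (noting that $p \mid a$ forces $\nu_p^{(v)}(a-1)=0\leq\nu_p^{(v)}(b)$, so Deng's fixed-point criterion applies, and that $p\nmid a$ makes both $A$ and $\mu_a$ permutations with trivial procreation numbers and trivial kernels), whereas the paper splits directly on $\nu_{p}^{(v)}(a-1)\leq\nu_p^{(v)}(b)$ versus $>$. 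The two dichotomies carve up the cases differently but cover the same ground using the same lemmas, so this is a cosmetic variant rather than a different method; your version has the small virtue of subsuming the paper's second case and part of its first case into a single observation where $\proc_k\equiv 1$ trivially.
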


We note that in the situation of Theorem \ref{allTreesIsomorphicStrongTheo}, one has $|\ker^{(j)}(\mu_a)|=\gcd(a^j,m)$ for all $j\in\IN_0$, making the computation of the procreation numbers (and thus the understanding of the isomorphism types of rooted trees above periodic vertices) easy.

\begin{proof}[Proof of Theorem \ref{allTreesIsomorphicStrongTheo}]
The \enquote{In particular} statement is clear by Proposition \ref{rigidProp} and Theorem \ref{allTreesIsomorphicKnownTheo}, so we focus on the proof of the main statement.

As above, we factor $m=p_1^{v_1}\cdots p_K^{v_K}$ and denote by $A_j$ the reduction of $A$ modulo $p_j^{v_j}$. We claim that for each $j=1,2,\ldots,K$, the dual functional graph $(\Gamma_{A_j})^{\ast}$ has rigid procreation, and that for each $A_j$-periodic vertex $x\in\IZ/p_j^{v_j}\IZ$, the procreation number sequence of $x$ in $(\Gamma_{A_j})^{\ast}$ agrees with that of a periodic vertex in the dual functional graph of the reduction $(\mu_a)_j$ of $\mu_a$ modulo $p_j^{v_j}$. Indeed, following the proof of Lemma \ref{allTreesIsomorphicLem}, this is clear both if $\nu_{p_j}^{(v_j)}(a-1)\leq\nu_{p_j}^{(v_j)}(b)$ (where the digraphs $(\Gamma_{A_j})^{\ast}$ and $(\Gamma_{(\mu_a)_j})^{\ast}$ are actually isomorphic as a whole) and if $\nu_{p_j}^{(v_j)}(a-1)>\nu_{p_j}^{(v_j)}(b)$ (where both $(\Gamma_{A_j})^{\ast}$ and $(\Gamma_{(\mu_a)_j})^{\ast}$ are disjoint unions of directed cycles).

Now, we recall that $\Gamma_A=\bigotimes_{j=1}^K{\Gamma_{A_j}}$ and $\Gamma_{\mu_a}=\bigotimes_{j=1}^K{\Gamma_{(\mu_a)_j}}$. In both tensor products, the duals of the factors indexed by $j$ have rigid procreation with the same procreation number sequences of periodic vertices. Therefore, by Proposition \ref{cyclicTensorProductProp}(2), the same applies to $(\Gamma_A)^{\ast}$ versus $(\Gamma_{\mu_a})^{\ast}$, and this settles the main statement by virtue of the formula for procreation numbers in Theorem \ref{allTreesIsomorphicKnownTheo}.
\end{proof}

We also note the following consequence of Proposition \ref{cyclicTensorProductProp}(1), which will become important later.

\begin{lemmma}\label{periodicCharLem}
Let $m$ be a positive integer, let $a,b\in\IZ/m\IZ$, and let $L$ be a positive integer that is so large that $\gcd(a^L,m)=\prod_{p\mid\gcd(a,m)}{p^{\nu_p(m)}}$ (for example, $\mpe(m):=\max_p{\nu_p(m)}\leq\lfloor\log_2{m}\rfloor$\phantomsection\label{not81} is a valid choice for $L$). Then $y\in\IZ/m\IZ$ is a periodic point of the affine map $A:z\mapsto az+b$ of $\IZ/m\IZ$ if and only if
\[
y\equiv\sum_{t=0}^{L-1}{a^t}\cdot b\Mod{\gcd(a^L,m)}.
\]
\end{lemmma}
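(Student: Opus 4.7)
The plan is to split the modulus $m$ into two parts: an \enquote{$a$-divisible part} $m' := \gcd(a^L,m) = \prod_{p\mid\gcd(a,m)} p^{\nu_p(m)}$ and its complement $m'' := m/m'$. By construction $\gcd(m'',a) = 1$ while every prime divisor of $m'$ divides $a$. By the Chinese Remainder Theorem, $\IZ/m\IZ \cong \IZ/m'\IZ \times \IZ/m''\IZ$, and the affine map $A$ splits as $A' \otimes A''$, where $A'$ and $A''$ are the reductions of $A$ modulo $m'$ and $m''$ respectively. Applying Proposition \ref{cyclicTensorProductProp}(1), the periodic points of $A$ correspond precisely to pairs of periodic points of $A'$ and $A''$.

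On the component $\IZ/m''\IZ$, Proposition \ref{primaryDichotomyProp}(2) (applied primary-factor-wise via Lemma \ref{affineTensorProdLem}) tells us that $A''$ is a permutation, so every element of $\IZ/m''\IZ$ is periodic; this places no constraint on $y$ modulo $m''$. On the component $\IZ/m'\IZ$, Proposition \ref{primaryDichotomyProp}(1) (applied to each prime-power factor of $m'$) yields that $A'$ has exactly one periodic point, which is a fixed point $y_0 \in \IZ/m'\IZ$. Therefore $y \in \IZ/m\IZ$ is $A$-periodic if and only if $y \equiv y_0 \pmod{m'}$.

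It remains to identify $y_0$ with the expression $\sum_{t=0}^{L-1} a^t \cdot b$ modulo $m'$. The key observation is that every prime $p$ dividing $m'$ also divides $a$, so $\gcd(1-a,m') = 1$, i.e., $1-a$ is a unit modulo $m'$. Multiplying $\sum_{t=0}^{L-1} a^t$ by $1-a$ produces the telescoping identity $(1-a)\sum_{t=0}^{L-1}a^t = 1 - a^L$, and by the defining property of $L$ we have $m' \mid a^L$, so $(1-a)\sum_{t=0}^{L-1}a^t \equiv 1 \pmod{m'}$. Multiplying by $b$ gives $(1-a)\sum_{t=0}^{L-1}a^t b \equiv b \pmod{m'}$, which is exactly the fixed-point equation $A(y_0) = y_0$; hence $y_0 \equiv \sum_{t=0}^{L-1}a^t b \pmod{m'}$ as claimed.

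Finally, for the parenthetical assertion about $\mpe(m)$, I would note that $\gcd(a^L,m) = \prod_{p\mid\gcd(a,m)} p^{\nu_p(m)}$ is equivalent to $L\cdot\nu_p(a) \geq \nu_p(m)$ for each prime $p\mid\gcd(a,m)$; since $\nu_p(a) \geq 1$ for such $p$, any $L \geq \mpe(m) = \max_p \nu_p(m)$ works, and the inequality $\mpe(m) \leq \lfloor\log_2 m\rfloor$ is immediate from $2^{\nu_p(m)} \leq p^{\nu_p(m)} \leq m$. No step in this plan is genuinely hard; the one place to be careful is confirming that $(1-a)$ is invertible modulo $m'$ (but not modulo $m$ in general), so that the telescoping calculation actually pins down the fixed point rather than only witnessing it implicitly.
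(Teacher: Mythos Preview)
Your proof is correct, and the overall structure (CRT splitting, permutation on the $a$-coprime part, unique periodic point on the $a$-divisible part) matches the paper exactly. The one genuine difference is how you identify the unique periodic point modulo $\gcd(a^L,m)$: you observe directly that $1-a$ is a unit modulo $\gcd(a^L,m)$ and verify via the telescoping identity $(1-a)\sum_{t=0}^{L-1}a^t = 1-a^L \equiv 1$ that $\sum_{t=0}^{L-1}a^t b$ satisfies the fixed-point equation. The paper instead argues that $A^L(0)=\sum_{t=0}^{L-1}a^t b$ is periodic by invoking Theorem \ref{allTreesIsomorphicTheo} (the trees above periodic vertices in $\Gamma_A$ and $\Gamma_{\mu_a}$ are isomorphic, hence have the same height $\leq L$, so $A^L(z)$ is periodic for all $z$). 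Your route is more elementary and self-contained, avoiding the dependence on the tree-isomorphism result; the paper's route is slightly more conceptual and generalizes to other settings where an explicit inverse of $1-a$ might not be available. A minor notational remark: your $m',m''$ are swapped relative to the paper's convention in Subsection \ref{subsec2P3}, but this is harmless.
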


\begin{proof}
By Proposition \ref{cyclicTensorProductProp}(1) and the Chinese Remainder Theorem, a point $y\in\IZ/m\IZ$ is periodic under $A$ if and only if the reduction $y_p$ of $y$ modulo $p^{\nu_p(m)}$ is periodic under the reduction $A_p$ of $A$ modulo $p^{\nu_p(m)}$ for each prime $p\mid m$. But if $p\nmid a$, then by Proposition \ref{primaryDichotomyProp}(2), $A_p$ is a permutation of $\IZ/p^{\nu_p(m)}\IZ$, so $y_p$ is periodic under $A_p$. Therefore, only the primes $p\mid\gcd(a,m)$ actually give a restriction on $y$. For those primes $p$, we know by Proposition \ref{primaryDichotomyProp}(1) that $A_p$ has precisely one periodic point. It follows that if $y_0\in\IZ/m\IZ$ is a periodic point of $A$, then the periodic points $y$ of $A$ are characterized by the congruence $y\equiv y_0\Mod{\gcd(a^L,m)}$. Therefore, it only remains to prove that $\sum_{t=0}^{L-1}{a^t}\cdot b$ is a periodic point of $A$.

Now, using the same argument with $b=0$ so that it applies to $\mu_a:z\mapsto az$, we see that periodic points $y$ of $\mu_a$ are characterized by the congruence $y\equiv 0\Mod{\gcd(a^L,m)}$. In particular, $(\mu_a)^L(z)=a^Lz$ is periodic under $\mu_a$ for each $z\in\IZ/m\IZ$. Since the trees above periodic vertices in the functional graphs of $\mu_a$ and $A$ are isomorphic (by Theorem \ref{allTreesIsomorphicTheo}) and thus have the same height, it follows that $A^L(z)$ is periodic under $A$ for each $z\in\IZ/m\IZ$. In particular, $A^L(0)=\sum_{t=0}^{L-1}{a^t}\cdot b$ is periodic under $A$, as we needed to show.
\end{proof}

This concludes our results for cyclic groups. To round this subsection off, we put in some extra work to generalize Theorem \ref{allTreesIsomorphicStrongTheo} to arbitrary finite groups. First, we need to clarify what we mean by \enquote{affine map} in general. In what follows, for a fixed group $G$ we denote by $\rho_{\rrm}:G\rightarrow\Sym(G)$, $x\mapsto(y\mapsto yx)$\phantomsection\label{not84}, the so-called \emph{right-regular representation of $G$ on itself}\phantomsection\label{termRightReg} (for each $x\in G$, the function value $\rho_{\rrm}(x)\in\Sym(G)$ is the right-multiplication by $x$ on $G$). Analogously, $\rho_{\l}$\phantomsection\label{not85} denotes the \emph{left-regular representation of $G$ on itself}\phantomsection\label{termLeftReg}, whose function values are the left-multiplications on $G$ by fixed elements of $G$. As is common in group theory, we write the composition of a function $g:X\rightarrow Y$ with a function $g':Y\rightarrow Z$ as the product $gg':X\rightarrow Z$\phantomsection\label{notComp} (a synonymous notation is $g'\circ g$). When using this notation for composition, applications of functions to arguments are commonly written using exponents: $x^g$\phantomsection\label{notExp} instead of $g(x)$, so that $x^{gg'}=(x^g)^{g'}$.

\begin{deffinition}\label{affineMapDef}
Let $G$\phantomsection\label{not82} be a group. An \emph{affine map of $G$}\phantomsection\label{term42} is a function $G\rightarrow G$ of the form $\varphi\rho_{\rrm}(b): x\mapsto x^{\varphi}b$\phantomsection\label{not83} for some fixed element $b\in G$ and group endomorphism $\varphi$ of $G$.
\end{deffinition}

\begin{remmark}\label{affineMapRem}
We note the following concerning the concept of an affine map.
\begin{enumerate}
\item Since $\varphi\rho_{\rrm}(1_G)=\varphi$, affine maps are generalizations of group endomorphisms.
\item The affine maps of a given group $G$ form a monoid of functions on $G$, as they are composed via the formula
\[
\varphi\rho_{\rrm}(b)\cdot\varphi'\rho_{\rrm}(b)=\varphi\varphi'\rho_{\rrm}(b^{\varphi'}b').
\]
\item Alternatively, one could define affine maps through left-multiplication by a constant after application of a group endomorphism $\varphi$, i.e., as compositions $\varphi\rho_{\l}(b)=\rho_{\l}(b)\circ\varphi$. This leads to the same class of functions, as
\[
(\rho_{\l}(b)\circ\varphi)(x)=b\varphi(x)=bx^{\varphi}=bx^{\varphi}b^{-1}b=x^{\varphi\conj(b^{-1})}b=x^{\varphi\conj(b^{-1})\cdot\rho_{\rrm}(b)},
\]
where $\conj(g)$\phantomsection\label{not86} is the inner automorphism (conjugation) $x\mapsto g^{-1}xg$ of $G$.
\end{enumerate}
\end{remmark}

We briefly review some more concepts and results from \cite{Bor17a}.

\begin{deffinition}\label{perNilDef}
Let $G$ be a finite group and $\varphi$ an endomorphism of $G$. The \emph{hyperkernel of $\varphi$}\phantomsection\label{term43}, written $\nil(\varphi)$\phantomsection\label{not87}, consists of those $x\in G$ such that $\varphi^n(x)=1_G$ for some $n=n(x)\in\IN_0$.
\end{deffinition}

\begin{remmark}\label{perNilRem}
The notation $\nil(\varphi)$ stems from the fact that this is the largest $\varphi$-invariant subgroup of $G$ of which the corresponding restriction of $\varphi$ is a nilpotent endomorphism (i.e., an endomorphism that becomes trivial if composed with itself sufficiently often).
\end{remmark}

In the following theorem, we use the notation $G=H\ltimes N$\phantomsection\label{notSDP} to express that the group $G$ is the \emph{(internal) semidirect product of $H$ and $N$}\phantomsection\label{termSDP}, which means that $H$ is a subgroup of $G$, that $N$ is a normal subgroup of $G$, and that one has $H\cap N=\{1_G\}$ and $HN=\{hn: h\in H, N\in N\}=G$. In this situation, each element of $G$ can be written as a product $hn$ for $h\in H$ and $n\in N$ \emph{in a unique way}, and one may multiply elements of $G$ via the formula $(hn)\cdot(h'n')=hh'n^{h'}n'$ where $n^{h'}=n^{\conj(h')}=(h')^{-1}nh'$.

\begin{theoremm}\label{perNilTheo}
Let $G$ be a finite group, and $\varphi$ an endomorphism of $G$. Then $G=\per(\varphi)\ltimes\nil(\varphi)$.
\end{theoremm}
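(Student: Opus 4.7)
The plan is to reduce the statement to the familiar Fitting-type decomposition for a single endomorphism power. Since $G$ is finite, the descending chain $G\supseteq\varphi(G)\supseteq\varphi^2(G)\supseteq\cdots$ and the ascending chain $\{1_G\}\subseteq\ker(\varphi)\subseteq\ker(\varphi^2)\subseteq\cdots$ must both stabilize, so I can fix an $N\geq0$ with $\varphi^N(G)=\varphi^{N+k}(G)$ and $\ker(\varphi^N)=\ker(\varphi^{N+k})$ for every $k\geq0$. The first step I would carry out is to identify $\per(\varphi)=\varphi^N(G)$ and $\nil(\varphi)=\ker(\varphi^N)$. The equality $\nil(\varphi)=\ker(\varphi^N)$ is immediate from the definition of $\nil(\varphi)$ and the stabilization of kernels.

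For $\per(\varphi)=\varphi^N(G)$ I would argue two inclusions. If $x\in\per(\varphi)$ with $\varphi^m(x)=x$, then $x=\varphi^{mk}(x)\in\varphi^{mk}(G)$ for every $k$, and for $mk\geq N$ this lies in $\varphi^N(G)$. Conversely, once $x\in\varphi^N(G)$, the restriction of $\varphi$ to $\varphi^N(G)$ is a self-map, and it is surjective because $\varphi(\varphi^N(G))=\varphi^{N+1}(G)=\varphi^N(G)$; hence it is a permutation of the finite set $\varphi^N(G)$, which forces every element of $\varphi^N(G)$ to be $\varphi$-periodic.

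With these identifications in hand, $\per(\varphi)$ is a subgroup (being the image of the endomorphism $\varphi^N$) and $\nil(\varphi)$ is a normal subgroup (being the kernel of $\varphi^N$). Triviality of the intersection is a short chase: if $\varphi^m(x)=x$ and $\varphi^n(x)=1_G$, then choosing $k$ with $km\geq n$ yields $x=\varphi^{km}(x)=\varphi^{km-n}(\varphi^n(x))=1_G$. To conclude that $\per(\varphi)\cdot\nil(\varphi)=G$, I would combine the first isomorphism theorem for $\varphi^N$, which gives $|G|=|\ker(\varphi^N)|\cdot|\varphi^N(G)|=|\nil(\varphi)|\cdot|\per(\varphi)|$, with the standard product formula $|\per(\varphi)\cdot\nil(\varphi)|=|\per(\varphi)|\cdot|\nil(\varphi)|/|\per(\varphi)\cap\nil(\varphi)|$, which is valid here because $\nil(\varphi)$ is normal and hence the product is a subgroup.

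I do not anticipate a real obstacle; the only conceptually delicate point is the reverse inclusion $\varphi^N(G)\subseteq\per(\varphi)$, where one must be careful to use that a surjective self-map of a finite set is a bijection. Everything else is bookkeeping with images, kernels and orders. Once the identifications $\per(\varphi)=\varphi^N(G)$ and $\nil(\varphi)=\ker(\varphi^N)$ are established, the four properties required for $G=\per(\varphi)\ltimes\nil(\varphi)$ follow in one line each.
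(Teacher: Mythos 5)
Your proof is correct. The paper's own proof is essentially a citation: it points the reader to the first author's earlier paper for the facts that $\per(\varphi)$ is a subgroup and $\nil(\varphi)$ a normal subgroup, and then invokes Caranti's group-theoretic Fitting lemma for the decomposition itself. What you have written is a self-contained proof of precisely that Fitting-type result: once you identify $\per(\varphi)$ with the eventually-stable image $\varphi^N(G)$ and $\nil(\varphi)$ with the eventually-stable kernel $\ker(\varphi^N)$, the subgroup and normal-subgroup properties come for free from being an image and a kernel, the triviality of the intersection is a short chase, and the product equals $G$ by the first isomorphism theorem together with the product-order formula. The delicate point you flagged yourself -- that $\varphi$ restricted to $\varphi^N(G)$ is a surjective self-map of a finite set and hence a bijection, so every element there is $\varphi$-periodic -- is indeed the crux of the reverse inclusion $\varphi^N(G)\subseteq\per(\varphi)$, and you handle it correctly. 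In short, your argument trades the paper's external references for a complete elementary argument; what it buys is self-containedness at the modest cost of a few extra lines of bookkeeping.
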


\begin{proof}
See \cite[proof of Theorem 1(1--3)]{Bor17a}. We note that after observing that $\per(\varphi)$ is a subgroup and $\nil(\varphi)$ is a normal subgroup of $G$, the rest of the statement follows easily from a group-theoretic version of Fitting's lemma that was proved by Caranti, see \cite[Theorem 4.2]{Car13a}.
\end{proof}

Theorem \ref{perNilTheo} has an interesting consequence concerning the functional graph $\Gamma_{\varphi}$, which was originally stated as \cite[Theorem 1(4)]{Bor17a} and is easy to prove using Proposition \ref{cyclicTensorProductProp} with $X_1=\per(\varphi)$ and $X_2=\nil(\varphi)$.

\begin{corrollary}\label{perNilCor}
Let $G$ be a finite group, and $\varphi$ an endomorphism of $G$. Then
\[
\Gamma_{\varphi}=\Gamma_{\varphi_{\mid\per(\varphi)}}\otimes\Gamma_{\varphi_{\mid\nil(\varphi)}},
\]
and, consequently, for each $x\in\per(\varphi)$, one has $\Tree_{\Gamma_{\varphi}}(x)\cong\Tree_{\Gamma_{\varphi_{\mid\nil(\varphi)}}}(1_G)$, a rooted tree that can be obtained from $\Gamma_{\varphi_{\mid\nil(\varphi)}}$ by deleting the unique loop of the latter at $1_G$.
\end{corrollary}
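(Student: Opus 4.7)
My plan is to deduce the corollary almost immediately from the semidirect product decomposition in Theorem \ref{perNilTheo} combined with Proposition \ref{cyclicTensorProductProp}, via the obvious observation that $\varphi$ acts ``componentwise'' on $\per(\varphi)\times\nil(\varphi)$.

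First, I would record two preliminary facts that are needed to pass from the \emph{group-theoretic} semidirect product to a \emph{dynamical} direct product. Namely, both $\per(\varphi)$ and $\nil(\varphi)$ are $\varphi$-invariant: $\per(\varphi)$ is $\varphi$-invariant because $\varphi$ permutes the periodic points, and $\nil(\varphi)$ is $\varphi$-invariant because if $\varphi^n(x)=1_G$ then $\varphi^n(\varphi(x))=\varphi(1_G)=1_G$. Moreover, $\varphi_{\mid\per(\varphi)}$ is a permutation of $\per(\varphi)$ (again because every periodic point has a unique periodic preimage on its cycle), and $\varphi_{\mid\nil(\varphi)}$ has the single periodic point $1_G$ (a fixed point, and the only one, by definition of the hyperkernel).

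Second, I would invoke Theorem \ref{perNilTheo} to write every $g\in G$ uniquely as $g=pn$ with $p\in\per(\varphi)$ and $n\in\nil(\varphi)$, and consider the bijection
\[
\Phi:G\longrightarrow\per(\varphi)\times\nil(\varphi),\quad pn\mapsto(p,n).
\]
The key computation is that, since $\varphi$ is a homomorphism and both factors are $\varphi$-invariant,
\[
\varphi(pn)=\varphi(p)\varphi(n)\in\per(\varphi)\cdot\nil(\varphi),
\]
so $\Phi$ conjugates $\varphi$ into the componentwise map $\varphi_{\mid\per(\varphi)}\otimes\varphi_{\mid\nil(\varphi)}$. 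Applying Remark \ref{tensorProductRem} then yields the digraph identity
\[
\Gamma_\varphi=\Gamma_{\varphi_{\mid\per(\varphi)}}\otimes\Gamma_{\varphi_{\mid\nil(\varphi)}},
\]
which is the first assertion. I want to emphasise that the twisted multiplication inherent in the semidirect product plays no role here, because functional graphs see only the underlying set and the function $\varphi$, not the group operation.

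Finally, for the ``consequently'' clause, I would apply Proposition \ref{cyclicTensorProductProp}(4) with $g_1:=\varphi_{\mid\nil(\varphi)}$ (which has unique periodic point $y_1=1_G$, by the first paragraph) and $g_2:=\varphi_{\mid\per(\varphi)}$ (a permutation). That proposition immediately gives $\Tree_{\Gamma_\varphi}(x)\cong\Tree_{\Gamma_{\varphi_{\mid\nil(\varphi)}}}(1_G)$ for every periodic vertex $x$, and the description as $\Gamma_{\varphi_{\mid\nil(\varphi)}}$ with the loop at $1_G$ deleted follows from Definition \ref{treeAboveDef}(2), since every non-identity element of $\nil(\varphi)$ is $\varphi$-transient and therefore contributes to the tree above $1_G$. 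There is no real obstacle; the only point that requires a moment's attention is the ``componentwise'' observation underlying $\Phi$, which I would state explicitly to make clear that the non-trivial semidirect multiplication does not interfere.
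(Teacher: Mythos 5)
Your proof is correct and follows the same route the paper sketches: apply the decomposition $G=\per(\varphi)\ltimes\nil(\varphi)$ from Theorem \ref{perNilTheo} and then invoke Proposition \ref{cyclicTensorProductProp}(4). The paper's remark suggests using the proposition with $X_1=\per(\varphi)$ and $X_2=\nil(\varphi)$; you correctly swap these so that $g_1$ is the restriction to $\nil(\varphi)$ (unique fixed point) and $g_2$ is the restriction to $\per(\varphi)$ (permutation), which is the labeling actually required by part (4), and you supply the details the paper leaves implicit (the $\varphi$-invariance of both factors, that the semidirect twist is invisible to the dynamics, and that $1_G$ is the sole periodic point in $\nil(\varphi)$).
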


The following result extends Theorem \ref{allTreesIsomorphicStrongTheo} to arbitrary finite groups.

\begin{theoremm}\label{rigidTheo}
Let $G$ be a finite group, $b\in G$, and $\varphi$ an endomorphism of $G$. Then $\Gamma_{\varphi\rho_{\rrm}(b)}^{\ast}$ has rigid procreation. Moreover, the sequence of procreation numbers $(\proc_k(x))_{k\geq1}$ of any periodic vertex $x$ in $\Gamma_{\varphi\rho_{\rrm}(b)}^{\ast}$ is the same as that of a periodic vertex in $\Gamma_{\varphi}^{\ast}$.
\end{theoremm}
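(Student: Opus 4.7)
The plan is to compute the procreation numbers $\proc_k(x)$ of $\Gamma_A^{\ast}$ (where $A := \varphi\rho_{\rrm}(b)$) explicitly and show that they take at most two values, namely $0$ and $[\ker(\varphi^k):\ker(\varphi^{k-1})]$, the latter being independent of $b$ and attained at every periodic vertex. Once this is established, rigid procreation is immediate, and the comparison with $\Gamma_\varphi^{\ast}$ reduces to the special case $b = 1_G$ of the same computation.

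First I would iterate $A$: a straightforward induction on $k$ yields $A^k(x) = x^{\varphi^k} \cdot c_k$, where $c_0 := 1_G$ and $c_k := b^{\varphi^{k-1}} b^{\varphi^{k-2}} \cdots b^{\varphi} b$ for $k \geq 1$. Because $\varphi^k$ is a group homomorphism, the equation $A^k(y) = A^k(z)$ reduces to $(yz^{-1})^{\varphi^k} = 1_G$ (the constants $c_k$ cancel on the right), and hence, for any fixed $z_0$ with $A^k(z_0) = x$, the preimage $A^{-k}(x)$ is the right coset $\ker(\varphi^k)\cdot z_0$, of cardinality $|\ker(\varphi^k)|$; if no such $z_0$ exists (equivalently, $xc_k^{-1} \notin \varphi^k(G)$), then $A^{-k}(x) = \emptyset$.

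Next I would describe $\proc_k(x)$ as the cardinality of the image of the map $A^{k-1}\colon A^{-k}(x) \to A^{-1}(x)$, since a child $y$ of $x$ in $\Gamma_A^{\ast}$ has at least $k-1$ successor generations precisely when $y \in A^{k-1}(G)$. Assuming $A^{-k}(x) \neq \emptyset$ and choosing $z_0 \in A^{-k}(x)$, the fiber of this map over $A^{k-1}(z_0)$ consists of those $z \in \ker(\varphi^k)\cdot z_0$ satisfying $(zz_0^{-1})^{\varphi^{k-1}} = 1_G$, i.e., the coset $\ker(\varphi^{k-1})\cdot z_0$ (using $\ker(\varphi^{k-1}) \subseteq \ker(\varphi^k)$). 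Therefore $\proc_k(x) = [\ker(\varphi^k):\ker(\varphi^{k-1})]$ whenever $A^{-k}(x) \neq \emptyset$, and $\proc_k(x) = 0$ otherwise.

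Rigid procreation of $\Gamma_A^{\ast}$ now follows at once: if $\proc_k(x), \proc_k(y) > 0$, then both equal $[\ker(\varphi^k):\ker(\varphi^{k-1})]$. For a periodic vertex $x$ of $A$, walking backwards along its cycle shows $A^{-k}(x) \neq \emptyset$ for every $k$, so $\proc_k(x) = [\ker(\varphi^k):\ker(\varphi^{k-1})]$; specializing the same calculation to $b = 1_G$ (where $A = \varphi$ and each $c_k = 1_G$) gives the identical formula for periodic vertices of $\Gamma_\varphi^{\ast}$. The main obstacle is the non-abelian coset bookkeeping for $A^{-k}(x)$ and for the fibers of $A^{k-1}|_{A^{-k}(x)}$, but the clean cancellation of the $c_k$ constants whenever one compares two values of $A^k$ is precisely what makes this tractable.
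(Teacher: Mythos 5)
Your proof is correct and takes a genuinely different, more direct route than the paper's. Both arguments share the key iteration formula $A^k(x) = \varphi^k(x)\cdot\bigl(\varphi^{k-1}(b)\cdots\varphi(b)\,b\bigr)$ and the observation that $A^{-k}(x)$ is either empty or a right coset of $\ker(\varphi^k)$, but after that the paths diverge. The paper proceeds by induction on $k$: it establishes that the number of endpoints of length-$k$ directed paths from a vertex $z$ with at least $k$ successor generations equals both $|\ker(\varphi^k)|$ and $\prod_{j=1}^k\proc_j(z)$, from which rigid procreation and the explicit formula $\proc_k = |\ker(\varphi^k):\ker(\varphi^{k-1})|$ follow by dividing consecutive product identities. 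You instead identify $\proc_k(x)$ directly as the cardinality of the image of $A^{k-1}$ restricted to $A^{-k}(x)$, and compute that cardinality by a fiber count: the fibers are cosets of $\ker(\varphi^{k-1})$ inside the coset $\ker(\varphi^k)\cdot z_0$, so the image has size $|\ker(\varphi^k):\ker(\varphi^{k-1})|$. This bypasses the induction entirely and delivers the closed form for $\proc_k(x)$ at the outset, making both conclusions of the theorem (rigidity, and independence of the constant $b$) immediate. The only point you state implicitly rather than explicitly is that \emph{every} fiber, not just the one over $A^{k-1}(z_0)$, has size $|\ker(\varphi^{k-1})|$; the identical coset computation with $z_0$ replaced by any other preimage shows this, so the gap is cosmetic. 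On balance your direct argument is cleaner here; the paper's inductive form is the one that generalizes readily to the coset-cycle setting of Theorem~\ref{cosetPermTheo}, where a single map $A$ is replaced by a product of different affine maps along an $\overline{f}$-cycle, which is likely why the authors chose it.
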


\begin{proof}
We prove by induction on $k\geq1$ that if $x,y\in G$ each have at least $k$ successor generations in $\Gamma_{\varphi\rho_{\rrm}(b)}^{\ast}$, then $\proc_k(x)=\proc_k(y)$ in $\Gamma_{\varphi\rho_{\rrm}(b)}^{\ast}$. For the induction base, $k=1$, we observe that for $z\in\{x,y\}$, one has
\begin{align*}
\proc_1(z)&=\#\text{ children of }z\text{ in }\Gamma_{\varphi\rho_{\rrm}(b)}^{\ast} \\
&=|\{w\in G: w^{\varphi\rho_{\rrm}(b)}=z\}|=|\{w\in G: w^{\varphi}=zb^{-1}\}|.
\end{align*}
Since $\{w\in G: w^{\varphi}=zb^{-1}\}$ is either empty or a coset of $\ker{\varphi}$, it follows that $\proc_1(x)=\proc_1(y)=|\ker{\varphi}|$ whenever $\proc_1(x),\proc_1(y)>0$, as required.

Now we assume that $k\geq2$ and that the statement holds up to $k-1$. Each of the two vertices $x$ and $y$ has at least $n$ successor generations in $\Gamma_{\varphi\rho_{\rrm}(b)}^{\ast}$ for each $n\in\{1,2,\ldots,k-1\}$, whence by the induction hypothesis, one has $\proc_n(x)=\proc_n(y)$ for $n=1,2,\ldots,k-1$. Now, for each $z\in G$, the number of endpoints of directed paths of length $k$ in $\Gamma_{\varphi\rho_{\rrm}(b)}^{\ast}$ starting at $z$ is
\begin{align*}
|\{w\in G: w^{\left((\varphi\rho_{\rrm}(b))^k\right)}=z\}|&=|\{w\in G: \varphi^k(w)\cdot \varphi^{k-1}(b)\varphi^{k-2}(b)\cdots \varphi(b)b=z\}| \\
&=|\{w\in G: \varphi^k(w)=zb^{-1}\varphi(b)^{-1}\cdots\varphi^{k-1}(b)^{-1}\}|,
\end{align*}
and the set $\{w\in G: \varphi^k(w)=zb^{-1}\varphi(b)^{-1}\cdots\varphi^{k-1}(b)^{-1}\}$ is either empty or a coset of
\[
\ker^{(k)}(\varphi):=\{w\in G: \varphi^{k}(w)=1_G\}.
\]
Hence, $x$ and $y$ have the same number of endpoints of length $k$ directed paths in $\Gamma_{\varphi\rho_{\rrm}(b)}^{\ast}$ starting at them, namely $|\ker^{(k)}(\varphi)|$. Using the induction hypothesis in its general form (which basically states that $\Gamma_{\varphi\rho_{\rrm}(b)}^{\ast}$ has rigid procreation \enquote{for $k-1$ generations}), an easy induction on $n=1,2,\ldots,k-1$ shows that for each vertex $w\in G$ with at least $n$ successor generations in $\Gamma_{\varphi\rho_{\rrm}(b)}^{\ast}$, the number of endpoints of directed paths in $\Gamma_{\varphi\rho_{\rrm}(b)}^{\ast}$ of length $n$ starting at $w$ is $\prod_{j=1}^n{\proc_j(w)}$. In particular, if $w$ is a child of $z\in\{x,y\}$ that has at least $k-1$ successor generations, then the number of endpoints of length $k-1$ paths starting at $w$ is $\prod_{j=1}^{k-1}{\proc_j(w)}$, which is equal to $\prod_{j=1}^{k-1}{\proc_j(z)}$ by the induction hypothesis. Since the number of such children $w$ of $z$ is $\proc_k(z)$, we conclude that
\begin{align*}
\prod_{j=1}^k{\proc_j(z)}&=\proc_k(z)\cdot\prod_{j=1}^{k-1}{\proc_j(z)} \\
&=\#\text{ endpoints of length }k\text{ paths starting at }z=|\ker^{(k)}(\varphi)|.
\end{align*}
Because $z\in\{x,y\}$ is arbitrary, it follows that
\[
\prod_{j=1}^k{\proc_j(x)}=\prod_{j=1}^k{\proc_j(y)},
\]
and since $\proc_j(x)=\proc_j(y)>0$ for $j=1,2,\ldots,k-1$, this allows us to conclude that $\proc_k(x)=\proc_k(y)$, as required.

Concerning the claim that the procreation numbers of $\Gamma_{\varphi\rho_{\rrm}(b)}^{\ast}$ and $\Gamma_{\varphi}^{\ast}$ are the same, the above argument shows that for any positive integer $k$ and any periodic vertex $x$ of $\Gamma_{\varphi\rho_{\rrm}(b)}^{\ast}$, we have
\[
\prod_{j=1}^k{\proc_j(x)}=|\ker^{(k)}(\varphi)|.
\]
It follows that
\[
\proc_k(x)=|\ker^{(k)}(\varphi):\ker^{(k-1)}(\varphi)|
\]
for each $k\in\IN^+$ (we note that $\ker^{(0)}(\varphi)=\{1_G\}$). Therefore, the procreation number sequence of a periodic vertex of $\Gamma_{\varphi\rho_{\rrm}(b)}^{\ast}$ only depends on $\varphi$, not on $b$, and for $b:=1_G$, one has $\Gamma_{\varphi\rho_{\rrm}(b)}^{\ast}=\Gamma_{\varphi}^{\ast}$.
\end{proof}

\subsection{The Master Lemma}\label{subsec2P2}

From the introduction, we recall the notion of an $m$-CC (short for \enquote{$m$-congruential condition}), which we defined as a condition of the form $\nu(x\equiv\bfrak\Mod{\afrak})$ with $\afrak\mid m$ and $\nu\in\{\emptyset,\neg\}$. In this subsection, we consider systems formed from $m$-CCs in one common variable. Such a system is \emph{consistent}\phantomsection\label{term44} if it has an integer solution, and two such systems are \emph{equivalent}\phantomsection\label{term45} if they have the same solution set in $\IZ$ (or, equivalently, in $\IZ/m\IZ$). The solution set in $\IZ/m\IZ$ of a consistent system of $m$-CCs is a block of the associated arithmetic partition of $\IZ/m\IZ$ (see Definition \ref{sArithDef}). We note the following fundamental result on systems of $m$-congruences, which is a well-known generalization of the Chinese Remainder Theorem.

\begin{propposition}\label{sCongProp}
Let $m$ be a positive integer. We consider a system of $m$-congruences, of the form
\begin{align}\label{sCongEq}
\notag x&\equiv \bfrak_1\Mod{\afrak_1} \\
\notag x&\equiv \bfrak_2\Mod{\afrak_2} \\
\notag &\vdots \\
x&\equiv \bfrak_K\Mod{\afrak_K}
\end{align}
The following statements are equivalent.
\begin{enumerate}
\item System (\ref{sCongEq}) is consistent.
\item For all $1\leq j<k\leq K$, one has $\gcd(\afrak_j,\afrak_k)\mid \bfrak_j-\bfrak_k$.
\item Any pair of $m$-congruences in system (\ref{sCongEq}) form a consistent system.
\item System (\ref{sCongEq}) is equivalent to a single $m$-congruence, of the form
\[
x\equiv \bfrak\Mod{\lcm(\afrak_1,\afrak_2,\ldots,\afrak_K)}.
\]
\end{enumerate}
In particular, if system (\ref{sCongEq}) is consistent, then its number of solutions modulo $m$ is equal to $m/\lcm(\afrak_1,\afrak_2,\ldots,\afrak_K)$ and is, therefore, independent of the $\bfrak_j$.
\end{propposition}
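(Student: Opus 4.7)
The plan is to establish the cyclic chain of implications $(4)\Rightarrow(1)\Rightarrow(3)\Rightarrow(2)\Rightarrow(4)$, and then derive the final counting statement from $(4)$. The implication $(4)\Rightarrow(1)$ is immediate, since any single $m$-congruence admits $\bfrak$ itself as a solution. For $(1)\Rightarrow(3)$, any solution to the full system (\ref{sCongEq}) is in particular a simultaneous solution to any pair of its congruences. For $(3)\Rightarrow(2)$, I would prove the two-congruence case directly: if $x\equiv \bfrak_j\Mod{\afrak_j}$ and $x\equiv \bfrak_k\Mod{\afrak_k}$ admit a common solution $x$, then subtracting gives $\bfrak_j-\bfrak_k=(x-\bfrak_k)-(x-\bfrak_j)$, which is divisible by both $\afrak_j$ and $\afrak_k$, hence by $\gcd(\afrak_j,\afrak_k)$; conversely, if $\gcd(\afrak_j,\afrak_k)\mid \bfrak_j-\bfrak_k$, writing $\gcd(\afrak_j,\afrak_k)=u\afrak_j+v\afrak_k$ via B\'ezout and setting $\delta:=(\bfrak_j-\bfrak_k)/\gcd(\afrak_j,\afrak_k)$ yields the explicit common solution $\bfrak_j-u\afrak_j\delta=\bfrak_k+v\afrak_k\delta$.

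The real content is $(2)\Rightarrow(4)$, which I would establish by induction on $K$. The base case $K=1$ is trivial, and the case $K=2$ is the two-congruence result just sketched, together with the observation that once a common solution $\bfrak$ exists, the set of all common solutions is exactly the congruence class $\bfrak\Mod{\lcm(\afrak_1,\afrak_2)}$ (because an integer lies in both $\bfrak+\afrak_1\IZ$ and $\bfrak+\afrak_2\IZ$ iff it lies in $\bfrak+\lcm(\afrak_1,\afrak_2)\IZ$). For the inductive step, I would merge the first two congruences into a single congruence $x\equiv \bfrak_{12}\Mod{\lcm(\afrak_1,\afrak_2)}$ and then check that the reduced system (with this new congruence in place of the first two) still satisfies the pairwise compatibility $(2)$, so that the induction hypothesis applies.

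The main obstacle is precisely verifying that pairwise compatibility is preserved under merging: I need $\gcd(\lcm(\afrak_1,\afrak_2),\afrak_k)\mid \bfrak_{12}-\bfrak_k$ for each $k\geq 3$. The key tool is the distributive identity $\gcd(\lcm(\afrak_1,\afrak_2),\afrak_k)=\lcm(\gcd(\afrak_1,\afrak_k),\gcd(\afrak_2,\afrak_k))$, which holds for positive integers. Since $\bfrak_{12}\equiv \bfrak_i\Mod{\afrak_i}$ for $i=1,2$, one has
\[
\bfrak_{12}-\bfrak_k=(\bfrak_{12}-\bfrak_i)+(\bfrak_i-\bfrak_k),
\]
and $\gcd(\afrak_i,\afrak_k)$ divides both summands (the first because $\gcd(\afrak_i,\afrak_k)\mid\afrak_i\mid \bfrak_{12}-\bfrak_i$, the second by hypothesis $(2)$). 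Hence $\gcd(\afrak_i,\afrak_k)\mid \bfrak_{12}-\bfrak_k$ for both $i=1,2$, and their lcm, which equals $\gcd(\lcm(\afrak_1,\afrak_2),\afrak_k)$ by the identity above, also divides $\bfrak_{12}-\bfrak_k$, closing the induction. Throughout, the fact that each $\afrak_j\mid m$ (so that also $\lcm(\afrak_1,\ldots,\afrak_K)\mid m$) is needed to ensure the resulting single $m$-congruence is genuinely an $m$-congruence.

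For the final assertion, once system (\ref{sCongEq}) is shown equivalent to $x\equiv \bfrak\Mod{\lcm(\afrak_1,\ldots,\afrak_K)}$ with $\lcm(\afrak_1,\ldots,\afrak_K)\mid m$, the solution set modulo $m$ is simply the coset $\bfrak+\lcm(\afrak_1,\ldots,\afrak_K)\IZ/m\IZ$, which has cardinality $m/\lcm(\afrak_1,\ldots,\afrak_K)$, manifestly independent of $\bfrak$ and hence of the $\bfrak_j$.
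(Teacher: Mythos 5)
Your proof is correct, but it takes a genuinely different route from the paper's. The paper's proof of this proposition is essentially a citation: it refers the reader to a textbook (Kumanduri--Romero, Theorem~3.3.4) for the equivalence $(1)\Leftrightarrow(2)$ and the implication $(1)\Rightarrow(4)$, notes that $(4)\Rightarrow(1)$ is trivial, and then derives $(1)\Leftrightarrow(3)$ by applying the already-established $(1)\Leftrightarrow(2)$ to each pair of congruences in the system. You, by contrast, give a fully self-contained argument structured as the cyclic chain $(4)\Rightarrow(1)\Rightarrow(3)\Rightarrow(2)\Rightarrow(4)$. The substantive step is your inductive proof of $(2)\Rightarrow(4)$, where two congruences are merged and the preservation of pairwise compatibility is checked using B\'ezout's identity for the base case and the distributive law $\gcd(\lcm(\afrak_1,\afrak_2),\afrak_k)=\lcm(\gcd(\afrak_1,\afrak_k),\gcd(\afrak_2,\afrak_k))$ for the inductive step. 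This is the standard inductive proof of the generalized Chinese Remainder Theorem, and it is correct. What the paper's approach buys is brevity; what yours buys is a self-contained argument that makes the underlying mechanism (merging via gcd/lcm lattice distributivity) explicit, and as a byproduct produces the explicit solution formula in the two-congruence case. One minor remark: the paper proves the equivalence $(1)\Leftrightarrow(3)$ from $(1)\Leftrightarrow(2)$ in a way that works in either direction, whereas your chain only gives $(1)\Rightarrow(3)$ directly and recovers $(3)\Rightarrow(1)$ via $(3)\Rightarrow(2)\Rightarrow(4)\Rightarrow(1)$; that is of course logically fine but slightly less transparent as to why $(3)$ is equivalent to $(2)$.
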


\begin{proof}
For the equivalence \enquote{(1)$\Leftrightarrow$(2)} and the implication \enquote{(1)$\Rightarrow$(4)}, see \cite[Theorem 3.3.4 on p.~78]{KR98a}, for example. Moreover, the implication \enquote{(4)$\Rightarrow$(1)} is trivial. As for the equivalence \enquote{(1)$\Leftrightarrow$(3)}, we note that by the already established equivalence \enquote{(1)$\Leftrightarrow$(2)}, applied to the system
\begin{align}\label{sCongEq2}
\notag x &\equiv \bfrak_j\Mod{\afrak_j} \\
x &\equiv \bfrak_k\Mod{\afrak_k},
\end{align}
that system is consistent if and only if the single divisibility $\gcd(\afrak_j,\afrak_k)\mid \bfrak_j-\bfrak_k$ holds. But statement (3) just demands that system (\ref{sCongEq2}) be consistent for all $1\leq j<k\leq K$, which is therefore equivalent to statement (2), and thus to statement (1).
\end{proof}

Proposition \ref{sCongProp} is the basis for proving the following lemma, which is crucial for our recursive approach for understanding the rooted trees in Subsection \ref{subsec3P3}.

\begin{lemmma}\label{masterLem}(Master Lemma)
Let $m$ be a positive integer, and let $\Pcal$ be an arithmetic partition of $\IZ/m\IZ$. Moreover, let $a,b\in\IZ/m\IZ$, and consider the affine map $A:x\mapsto ax+b$ of $\IZ/m\IZ$. There is an arithmetic partition $\Pcal'$ of $\IZ/m\IZ$ with $\AC(\Pcal')\leq\AC(\Pcal)+1$ such that if $x,y\in\IZ/m\IZ$ are from a common block of $\Pcal'$, and if $B$\phantomsection\label{not88} is a block of $\Pcal$, then $|A^{-1}(\{x\})\cap B|=|A^{-1}(\{y\})\cap B|$.

More specifically, if $\Pcal=\Pfrak(x\equiv \bfrak_j\Mod{\afrak_j}: j=1,2,\ldots,K)$, then
\[
\Pfrak'(\Pcal,A):=
\Pfrak
\begingroup
\setlength\arraycolsep{2pt}
\left(
\begin{array}{lll}
x & \equiv & a\bfrak_1+b\Mod{\gcd(a\afrak_1,m)} \\
x & \equiv & a\bfrak_2+b\Mod{\gcd(a\afrak_2,m)} \\
& \vdots & \\
x & \equiv & a\bfrak_K+b\Mod{\gcd(a\afrak_K,m)} \\
x &\equiv & b\Mod{\gcd(a,m)}
\end{array}
\right)
\endgroup
\]
is\phantomsection\label{not89} a valid choice for $\Pcal'$, and a formula for $|A^{-1}(\{x\})\cap B|$ in terms of the $\Pcal$-block $B$ and the unique $\Pfrak'(\Pcal,A)$-block $B'$ containing $x$ can be obtained as follows: write $B=\Bcal(\Pcal,\vec{\nu})$ and $B'=\Bcal(\Pfrak'(\Pcal,A),\vec{\nu'})$ with $\vec{\nu}=(\nu_1,\ldots,\nu_K)\in\{\emptyset,\neg\}^K$ and $\vec{\nu'}=(\nu'_1,\ldots,\nu'_{K+1})\in\{\emptyset,\neg\}^{K+1}$. We introduce the following notations.
\begin{itemize}
\item $J_-(\vec{\nu}):=\{j\in\{1,2,\ldots,K\}: \nu_j=\neg\}$\phantomsection\label{not90} and (analogously) $J_-(\vec{\nu'}):=\{j'\in\{1,2,\ldots,K+1\}: \nu'_{j'}=\neg\}$;
\item $J_+(\vec{\nu}):=\{1,2,\ldots,K\}\setminus J_-(\vec{\nu})$\phantomsection\label{not91};
\item For $J\subseteq J_-(\vec{\nu})$\phantomsection\label{not92}, we denote by $E(\vec{\nu},J)$\phantomsection\label{not93} the condition \enquote{For all $j_1,j_2\in J_+(\vec{\nu})\cup J$: $\gcd(\afrak_{j_1},\afrak_{j_2})\mid \bfrak_{j_1}-\bfrak_{j_2}$.}.
\end{itemize}
Then for each $x\in\Bcal(\Pfrak'(\Pcal,A),\vec{\nu'})$, the intersection size $|A^{-1}(\{x\})\cap\Bcal(\Pcal,\vec{\nu})|$ is equal to
\[
\sum_{J\subseteq J_-(\vec{\nu})}{(-1)^{|J|}\kappa(\vec{\nu},\vec{\nu'},J)}=:\sigma_{\Pcal,A}(\vec{\nu},\vec{\nu'})
\]
\phantomsection\label{not94}
where
\begin{align*}
&\kappa_{\Pcal,A}(\vec{\nu},\vec{\nu'},J):= \\
&\delta_{\nu'_{K+1}=\emptyset}\cdot\delta_{E(\vec{\nu},J)}\cdot\delta_{(J_+(\vec{\nu})\cup J)\cap J_-(\vec{\nu'})=\emptyset}\cdot\frac{m}{\lcm(\frac{m}{\gcd(a,m)},\afrak_j: j\in J_+(\vec{\nu})\cup J)},
\end{align*}
the\phantomsection\label{not95} three deltas being Kronecker deltas.
\end{lemmma}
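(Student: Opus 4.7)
The plan is to compute $|A^{-1}(\{x\})\cap\Bcal(\Pcal,\vec{\nu})|$ directly by inclusion--exclusion over the negative $m$-CCs defining $\Bcal(\Pcal,\vec{\nu})$, reducing the problem to counting solutions of systems that feature only \emph{positive} congruences together with the equation $A(z)=x$, and then invoking Proposition~\ref{sCongProp}. Writing
\[
|A^{-1}(\{x\})\cap\Bcal(\Pcal,\vec{\nu})|=\sum_{J\subseteq J_-(\vec{\nu})}(-1)^{|J|}N(J),
\]
the task becomes to show that each term $N(J)$---the number of $z\in\IZ/m\IZ$ with $A(z)=x$ and $z\equiv\bfrak_j\Mod{\afrak_j}$ for all $j\in J_+(\vec{\nu})\cup J$---equals $\kappa_{\Pcal,A}(\vec{\nu},\vec{\nu'},J)$.

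First, I would encode the equation $A(z)=x$ (i.e., $az\equiv x-b\Mod{m}$) as a congruence in $z$. A prime-by-prime analysis via Proposition~\ref{primaryDichotomyProp}(1) and Lemma~\ref{affineTensorProdLem} shows that this equation is solvable precisely when $x\equiv b\Mod{\gcd(a,m)}$, and that, when solvable, its solutions in $\IZ/m\IZ$ form a single congruence class modulo $m/\gcd(a,m)$, of the form $z\equiv z_0(x)\Mod{m/\gcd(a,m)}$. In particular, if $\nu'_{K+1}=\neg$, then $N(J)=0$ for every $J$, which is exactly what the factor $\delta_{\nu'_{K+1}=\emptyset}$ enforces. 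Otherwise, $N(J)$ counts the solutions modulo $m$ of the system consisting of $z\equiv z_0(x)\Mod{m/\gcd(a,m)}$ together with the positive congruences $z\equiv\bfrak_j\Mod{\afrak_j}$ for $j\in J_+(\vec{\nu})\cup J$. Proposition~\ref{sCongProp} reduces consistency to pairwise compatibility and, once consistency is established, gives the count as $m/\lcm(m/\gcd(a,m),\afrak_j:j\in J_+(\vec{\nu})\cup J)$, matching the leading fraction in $\kappa_{\Pcal,A}(\vec{\nu},\vec{\nu'},J)$. The pairwise compatibilities among the $\bfrak_j$ are precisely the divisibilities in $E(\vec{\nu},J)$, accounting for $\delta_{E(\vec{\nu},J)}$.

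The main obstacle, and the only genuine calculation, is identifying the remaining compatibility conditions---namely, those between $z\equiv z_0(x)\Mod{m/\gcd(a,m)}$ and each $z\equiv\bfrak_j\Mod{\afrak_j}$---with the positive forms of the first $K$ congruences defining $\Pfrak'(\Pcal,A)$. To do this, I would write $d=\gcd(a,m)$ and $a=da'$, $m=dm'$ with $\gcd(a',m')=1$; then the defining relation $az_0(x)\equiv x-b\Mod{m}$ rearranges to $x-a\bfrak_j-b\equiv a(z_0(x)-\bfrak_j)\Mod{m}$. Using the identity $\gcd(a\afrak_j,m)=d\cdot\gcd(\afrak_j,m')$ (a consequence of $\gcd(a',m')=1$) together with $\gcd(a',\gcd(\afrak_j,m'))=1$ allows one to cancel the $a'$-factor and establish the equivalence
\[
\gcd(\afrak_j,m/\gcd(a,m))\mid z_0(x)-\bfrak_j\ \Longleftrightarrow\ x\equiv a\bfrak_j+b\Mod{\gcd(a\afrak_j,m)}.
\]
This identifies the condition $(J_+(\vec{\nu})\cup J)\cap J_-(\vec{\nu'})=\emptyset$ with the consistency requirement for the full system in $z$, producing the factor $\delta_{(J_+(\vec{\nu})\cup J)\cap J_-(\vec{\nu'})=\emptyset}$ and completing the formula.

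The complexity bound $\AC(\Pcal')\leq\AC(\Pcal)+1$ is then immediate: one takes $\Pcal'=\Pfrak'(\Pcal,A)$ where $\Pcal$ is presented by a \emph{minimal} spanning system of $\AC(\Pcal)$ $m$-congruences, whereupon the definition of $\Pfrak'(\Pcal,A)$ exhibits a spanning system of $\AC(\Pcal)+1$ $m$-congruences for $\Pcal'$. Since the explicit formula $\sigma_{\Pcal,A}(\vec{\nu},\vec{\nu'})$ depends on $x$ only through the $\Pfrak'(\Pcal,A)$-block of $x$, this $\Pcal'$ witnesses the existential part of the statement as well.
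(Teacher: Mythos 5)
Your proposal is correct and follows essentially the same route as the paper's proof: inclusion--exclusion over the negated congruences, reduction of each term $N(J)$ to counting solutions of a system of $m$-congruences plus the single congruence $az\equiv x-b\Mod{m}$, and then an appeal to Proposition~\ref{sCongProp} to split consistency into pairwise conditions that are read off as the three Kronecker deltas. The only stylistic divergence is in how the key pairwise-consistency equivalence is verified: you first put $az\equiv x-b\Mod{m}$ into the standard form $z\equiv z_0(x)\Mod{m/\gcd(a,m)}$ and then cancel the unit factor $a'=a/\gcd(a,m)$ modulo $m'=m/\gcd(a,m)$, whereas the paper argues directly with the form $ay+b\equiv x\Mod{m}$ by parametrizing $y=\bfrak_j+k\afrak_j$ and solving for $k$; both are elementary and equivalent. (Your citation of Proposition~\ref{primaryDichotomyProp} and Lemma~\ref{affineTensorProdLem} for the solvability criterion of $az\equiv x-b\Mod{m}$ is harmless but heavier than needed---this is just the standard fact that a linear congruence $az\equiv c\Mod{m}$ is solvable iff $\gcd(a,m)\mid c$, with solutions forming one class modulo $m/\gcd(a,m)$.)
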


We call a number of the form $\sigma_{\Pcal,A}(\vec{\nu},\vec{\nu'})$ a \emph{distribution number of $\Pcal$ (under $A$)}\phantomsection\label{term46}.

\begin{proof}[Proof of Lemma \ref{masterLem}]
To verify that $\Pcal':=\Pfrak'(\Pcal,A)$ has the desired property, we set
\begin{itemize}
\item $M_j:=\{x\in\IZ/m\IZ: x\equiv \bfrak_j\Mod{\afrak_j}\}$ for $j=1,2,\ldots,K$;
\item $M_+(\vec{\nu}):=\bigcap_{j\in J_+(\vec{\nu})}{M_j}$;
\end{itemize}
and note that $B=M_+(\vec{\nu})\cap\bigcap_{j\in J_-(\vec{\nu})}{M_j^c}$ (the superscript $c$ denoting set complementation in $\IZ/m\IZ$\phantomsection\label{not96}). Therefore, by the inclusion-exclusion principle, the intersection size $|A^{-1}(\{x\})\cap B|$ we are looking for is equal to
\[
\sum_{J\subseteq J_-(\vec{\nu})}{(-1)^{|J|}|(M_+(\vec{\nu})\cap A^{-1}(\{x\}))\cap\bigcap_{j\in J}{M_j}|}.
\]
It is thus our goal to argue that the value of this sum is equal to $\sigma_{\Pcal,A}(\vec{\nu},\vec{\nu'})$ (in particular, it is independent of the choice of $x\in B'=\Bcal(\Pcal',\vec{\nu'})$). We do so by arguing that, in fact, the intersection size
\[
|(M_+(\vec{\nu})\cap A^{-1}(\{x\}))\cap\bigcap_{j\in J}{M_j}|
\]
is equal to $\kappa_{\Pcal,A}(\vec{\nu},\vec{\nu'},J)$, for each $J\subseteq J_-(\vec{\nu})$. Now, writing $J_+(\vec{\nu})\cup J=\{j_1,j_2,\ldots,j_N\}$, the intersection $(M_+(\vec{\nu})\cap A^{-1}(\{x\}))\cap\bigcap_{j\in J}{M_j}$ is the solution set in $\IZ/m\IZ$ of the following system in the variable $y$:
\begin{align}\label{summandSystemEq}
\notag y&\equiv \bfrak_{j_1}\Mod{\afrak_{j_1}} \\
\notag y&\equiv \bfrak_{j_2}\Mod{\afrak_{j_2}} \\
\notag &\vdots \\
\notag y&\equiv \bfrak_{j_N}\Mod{\afrak_{j_N}} \\
ay+b&\equiv x\Mod{m}.
\end{align}
Now, the single congruence $ay+b\equiv x\Mod{m}$ is solvable in $y$ if and only if $\gcd(a,m)\mid x-b$, i.e., if and only if $x\equiv b\Mod{\gcd(a,m)}$. This is one of the spanning congruences for $\Pcal'$, whence its truth value is constant for all $x\in B'$. If that congruence is false for all $x\in B'$ (equivalently, if $\nu'_{K+1}=\neg$), then system (\ref{summandSystemEq}) is always false, whence $|(M_+(\vec{\nu})\cap A^{-1}(\{x\}))\cap\bigcap_{j\in J}{M_j}|=0$ for all $J\subseteq J_-(\vec{\nu})$, independently of $x$. This explains the first Kronecker delta in the definition of $\kappa_{\Pcal,A}(\vec{\nu},\vec{\nu'},J)$.

We may thus henceforth assume that $\nu'_{K+1}=\emptyset$, i.e., that $x\equiv b\Mod{\gcd(a,m)}$ for all $x\in B'$. Then the congruence $ay+b\equiv x\Mod{m}$ can be equivalently rewritten into
\begin{equation}\label{replacementEq}
y\equiv \inv_{m/\gcd(a,m)}\left(\frac{a}{\gcd(a,m)}\right)\cdot\frac{x-b}{\gcd(a,m)}\Mod{\frac{m}{\gcd(a,m)}},
\end{equation}
where $\inv_n(x)$\phantomsection\label{not97} denotes the multiplicative inverse of the unit $x$ modulo $n$. If we replace the congruence $ay+b\equiv x\Mod{m}$ in the system (\ref{summandSystemEq}) by the equivalent congruence (\ref{replacementEq}), then the resulting system consists entirely of $m$-congruences. By Proposition \ref{sCongProp}, there are only two possibilities for the number of solutions modulo $m$ of this system: either the system is inconsistent and, thus, has $0$ solutions, or it has $m/L$ solutions, where $L$ is the least common multiple of the moduli that occur. Neither of these two expressions for the number of solutions depends on $x$, so we aim to show that it does not depend on the choice of $x\in B'$ which of the two cases occurs.

Now, Proposition \ref{sCongProp} also implies that system (\ref{summandSystemEq}) is consistent if and only if any pair of conditions in it is consistent. It thus suffices to argue that for no pair of conditions in system (\ref{summandSystemEq}) does the consistency of the system formed from those two conditions depend on the choice of $x\in B'$. If both of those conditions are distinct from the congruence $ay+b\equiv x\Mod{m}$, then they are of the forms $y\equiv \bfrak_j\Mod{\afrak_j}$ and $y\equiv \bfrak_k\Mod{\afrak_k}$, for suitable $j,k\in J_+(\vec{\nu})\cup J$, and by Proposition \ref{sCongProp}, those two conditions form a consistent system if and only if $\gcd(\afrak_j,\afrak_k)\mid \bfrak_j-\bfrak_k$. Equivalently, the system obtained from (\ref{summandSystemEq}) by deleting the single congruence $ay+b\equiv x\Mod{m}$ is consistent if and only if the condition $E(\vec{\nu},J)$ holds, which explains the second Kronecker delta in the definition of $\kappa_{\Pcal,A}(\vec{\nu},\vec{\nu'},J)$.

It remains to consider two-condition subsystems of (\ref{summandSystemEq}) of the form
\begin{align}\label{twoConditionEq}
\notag ay+b &\equiv x\Mod{m} \\
y &\equiv\bfrak_j\Mod{\afrak_j}
\end{align}
for some $j\in J_+(\vec{\nu})\cup J$. We claim that system (\ref{twoConditionEq}) is consistent if and only if $x\equiv a\bfrak_j+b\Mod{\gcd(a\afrak_j,m)}$. Indeed, if system (\ref{twoConditionEq}) is consistent, then there is a $k\in\IZ$ such that some $y=\bfrak_j+k\afrak_j$ satisfies the first condition of the system. That is, one then has
\[
x\equiv ay+b=a\bfrak_j+ka\afrak_j+b\Mod{m}.
\]
In particular,
\[
x\equiv a\bfrak_j+ka\afrak_j+b\equiv a\bfrak_j+b\Mod{\gcd(a\afrak_j,m)},
\]
as required. On the other hand, let us assume that $x\equiv a\bfrak_j+b\Mod{\gcd(a\afrak_j,m)}$. Then we can write $x=a\bfrak_j+b+k'\gcd(a\afrak_j,m)$ for some $k'\in\IZ$. We need to verify that there is an integer $k$ such that for $y=\bfrak_j+k\afrak_j$, one has
\[
ay+b=a\bfrak_j+ka\afrak_j+b\equiv x=a\bfrak_j+b+k'\gcd(a\afrak_j,m)\Mod{m},
\]
which is equivalent to $ka\afrak_j\equiv k'\gcd(a\afrak_j,m)\Mod{m}$. And indeed, this is solvable in $k$, because $\gcd(a\afrak_j,m)\mid k'\gcd(a\afrak_j,m)$.

Now, because $x\equiv a\bfrak_j+b\Mod{\gcd(a\afrak_j,m)}$ is the $j$-th spanning congruence of $\Pcal'$, it follows that if $\nu'_j=\neg$ (equivalently, if $j\in J_-(\vec{\nu'})$), then the intersection $(M_+(\vec{\nu})\cap A^{-1}(\{x\}))\cap\bigcap_{j\in J}{M_j}$ is empty whenever $j\in J_+(\vec{\nu})\cup K$ as well, which explains the third Kronecker delta in the definition of $\kappa_{\Pcal,A}(\vec{\nu},\vec{\nu'},J)$.

If all three Kronecker deltas in the definition of $\kappa_{\Pcal,A}(\vec{\nu},\vec{\nu'},J)$ are $1$, then our argumentation shows that system (\ref{summandSystemEq}) is consistent and is thus equivalent to a single $m$-congruence by an application of Proposition \ref{sCongProp} (we recall that the last congruence in system (\ref{summandSystemEq}) may be replaced by the equivalent $m$-congruence (\ref{replacementEq})), the modulus of which is the least common multiple $L$ of the moduli involved in the $m$-congruence forms of the conditions in system (\ref{summandSystemEq}). It follows that the size of the solution set of system (\ref{summandSystemEq}) then is
\[
\frac{m}{L}=\frac{m}{\lcm\left(\frac{m}{\gcd(a,m)},\afrak_j: j\in J_+(\vec{\nu})\cup J\right)}.
\]
Therefore, our technical parameter $\kappa_{\Pcal,A}(\vec{\nu},\vec{\nu'},J)$ indeed always agrees with the intersection size $|(M_+(\vec{\nu})\cap A^{-1}(\{x\}))\cap\bigcap_{j\in J}{M_j}|$, and this concludes the proof.
\end{proof}

\subsection{CRL-lists of affine maps of finite cyclic groups}\label{subsec2P3}

We view $\IZ/m\IZ$, where $m\in\IN^+$, as a ring with underlying set $\{0,1,\ldots,m-1\}$ and modular addition and modular multiplication as the ring operations. In particular, if $m_1\leq m_2$ are positive integers, then we have an inclusion of sets $\IZ/m_1\IZ\subseteq\IZ/m_2\IZ$. We may also view integers outside of the range $\{0,1,\ldots,m-1\}$ as elements of $\IZ/m\IZ$, via reduction modulo $m$ (identifying $x\in\IZ$ with $x\bmod{m}$). We remind the reader of the notation $\nu_p^{(v)}(x):=\min\{v,\nu_p(x)\}$ for $p$ prime, $v\in\IN_0$ and $x\in\IZ$, originally introduced after Theorem \ref{allTreesIsomorphicTheo}.

As was mentioned in the introduction, the construction of a CRL-list (in the sense of Definition \ref{crlListDef}) for a generalized cyclotomic mapping of $\IF_q$ can be reduced to the corresponding problem for affine maps of finite cyclic groups, which we solve in this subsection. We also observed in the introduction that determining a CRL-list for a function $g:X\rightarrow X$ with $X$ finite is generally a harder problem than the determination of the cycle type of $g_{\mid\per(g)}$, and we would like to give an overview of the history of the latter problem for the case where $g$ is an affine permutation of a finite cyclic group.

Ahmad \cite{Ahm69a} determined the cycle structure of \emph{automorphisms} of finite cyclic groups. The cycle index of the group of affine permutations of a finite cyclic group $\IZ/m\IZ$ (which is a polynomial that encodes how many affine permutations of each given cycle type there are) was described by Wei and Xu \cite{WX93a}. In their paper, they gave the formulas for the case where $m$ is a prime power without proof, referring to the two-page research announcement \cite{WGY88a} by Wei, Gao and Yang. Unfortunately, while Bors and Wang were working on \cite{BW22b}, they were unable to find \cite{WGY88a} through an online search, which led them to derive those formulas independently as \cite[Theorem 4.8]{BW22b}, based on a precise description of the cycle type of a given affine permutation $A:x\mapsto ax+b$ of $\IZ/p^v\IZ$ in terms of $a$ and $b$, stated as \cite[Proposition 4.7]{BW22b}. This latter result is as of now, to the authors' knowledge, the only accessible reference that lists those cycle types explicitly, in a tabular form. While working on the current paper, the authors realized that \cite[Proposition 4.7]{BW22b} could also have been easily derived from Deng's results \cite[Lemmas 4 and 7]{Den13a} and Ahmad's result \cite[Theorem 1]{Ahm69a}.

Let us now turn to the determination of CRL-lists. Let $m$ be a positive integer, and let $a,b\in\IZ$. We consider the affine map $A:x\mapsto ax+b$ of $\IZ/m\IZ$. From Proposition \ref{primaryDichotomyProp}, we know the following.
\begin{itemize}
\item The reduction of $A$ modulo $m':=\prod_{p\mid m,p\nmid a}{p^{\nu_p(m)}}$\phantomsection\label{not98} is an affine permutation of $\IZ/m'\IZ$.
\item The reduction of $A$ modulo $m'':=\prod_{p\mid\gcd(a,m)}{p^{\nu_p(m)}}$\phantomsection\label{not99} has exactly one periodic point $\ffrak''$\phantomsection\label{not100} in $\IZ/m''\IZ$, which we know explicitly thanks to Lemma \ref{periodicCharLem}.
\end{itemize}
Now, the restriction of the projection
\[
\IZ/m\IZ\cong \IZ/m'\IZ\times\IZ/m''\IZ\rightarrow\IZ/m'\IZ
\]
to $\IZ/m'\IZ\times\{\ffrak''\}$ is bijective; we denote by $\Lambda$\phantomsection\label{not101} its inverse function $\IZ/m'\IZ\rightarrow\IZ/m\IZ$. Then, if $\Lcal'$ is a CRL-list of the reduction of $A$ modulo $m'$, the set $\{(\Lambda(r),l): (r,l)\in\Lcal'\}$ is a CRL-list of $A$. This reduces the problem to the special case where $A$ is an affine \emph{permutation} of a finite cyclic group, which we henceforth assume.

In order to understand CRL-lists of affine permutations of finite cyclic groups, it is helpful to proceed in several steps:
\begin{enumerate}
\item First, we determine a CRL-list for each \emph{group automorphism} of each finite \emph{primary} cyclic group (i.e., $\IZ/p^v\IZ$).
\item Next, we extend this to arbitrary affine permutations of finite \emph{primary} cyclic groups.
\item Finally, we use the Chinese Remainder Theorem and some extra ideas to construct a CRL-list of any affine permutation $A$ of each finite cyclic group from CRL-lists of the reductions of $A$ modulo the various prime powers $p^{\nu_p(m)}$.
\end{enumerate}
Before tackling Step (1) properly, we prove the following useful lemma.

\begin{lemmma}\label{crlListPowerLem}
Let $X$ be a finite set, $\psi\in\Sym(X)$, $\Lcal$ a CRL-list of $\psi$, and $n\in\IZ$. Then the following is a CRL-list of $\psi^n$:
\[
\left\{\left(\psi^j(r),\frac{l}{\gcd(n,l)}\right): (r,l)\in\Lcal, j=0,1,\ldots,\gcd(n,l)-1\right\}.
\]
In particular, if $\gcd(n,\ord(\psi))=1$\phantomsection\label{not101P5}, then $\Lcal$ is also a CRL-list of $\psi^n$.
\end{lemmma}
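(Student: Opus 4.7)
The plan is to reduce the assertion to a single $\psi$-cycle, since the cycles of $\psi$ partition $X$ and are stable under $\psi^n$. Concretely, I will take a representative $(r,l)\in\Lcal$ and describe exactly how the $\psi$-cycle $\Ocal_r=\{r,\psi(r),\ldots,\psi^{l-1}(r)\}$ decomposes into $\psi^n$-cycles.

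The key observation is that the $\psi^n$-orbit of $\psi^j(r)$ inside $\Ocal_r$ equals $\{\psi^{j+kn}(r):k\in\IZ\}$, and since the exponents $j+kn\bmod l$ range precisely over the coset $j+\gcd(n,l)\IZ$ modulo $l$, this orbit has exactly $l/\gcd(n,l)$ elements and coincides with $\{\psi^{j+k\gcd(n,l)}(r):k=0,1,\ldots,l/\gcd(n,l)-1\}$. Hence the restriction of $\psi^n$ to $\Ocal_r$ breaks up into exactly $\gcd(n,l)$ cycles of common length $l/\gcd(n,l)$, and the elements $\psi^0(r),\psi^1(r),\ldots,\psi^{\gcd(n,l)-1}(r)$ lie in pairwise distinct cosets of $\gcd(n,l)\IZ$ modulo $l$ and thus are representatives of these $\gcd(n,l)$ cycles, one per cycle. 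This gives both required properties on $\Ocal_r$: the listed first entries are a system of representatives for the $\psi^n$-cycles inside $\Ocal_r$, and the second entries are their correct cycle lengths.

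Summing (i.e., taking the disjoint union) over all $(r,l)\in\Lcal$ then yields a system of representatives for all $\psi^n$-cycles on all of $X$, since the $\psi$-cycles partition $X$ (every point is $\psi$-periodic as $X$ is finite and $\psi$ is a permutation), and $\psi^n$ preserves each such cycle. This establishes that the displayed set is a CRL-list of $\psi^n$.

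For the \enquote{In particular} statement: if $\gcd(n,\ord(\psi))=1$, then since every cycle length $l$ of $\psi$ divides $\ord(\psi)=\lcm\{l:(r,l)\in\Lcal\}$, we have $\gcd(n,l)\mid\gcd(n,\ord(\psi))=1$, hence $\gcd(n,l)=1$ for every $(r,l)\in\Lcal$. Then the index $j$ only takes the value $0$, $\psi^0(r)=r$, and $l/\gcd(n,l)=l$, so the constructed CRL-list collapses back to $\Lcal$. No step is genuinely subtle here; the only thing to be careful about is verifying that the proposed representatives $\psi^j(r)$ for $j=0,\ldots,\gcd(n,l)-1$ indeed lie in distinct $\psi^n$-orbits, which is immediate from the coset description above.
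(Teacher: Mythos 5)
Your proof is correct and takes essentially the same approach as the paper's: reduce to a single $\psi$-cycle, identify the $\psi^n$-orbits within it as the cosets of $\gcd(n,l)\IZ$ modulo $l$ (equivalently, of the subgroup of $\IZ/l\IZ$ generated by $n$), count and size them accordingly, and observe that $\psi^0(r),\ldots,\psi^{\gcd(n,l)-1}(r)$ hit each coset once. The treatment of the ``In particular'' statement also matches.
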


\begin{proof}
Each cycle of $\psi^n$ is contained in a cycle of $\psi$, and for any given cycle $\zeta$\phantomsection\label{not102} of $\psi$ of length $l$, the $\psi^n$-cycles into which $\zeta$ decomposes correspond to the cosets of the subgroup of $\IZ/l\IZ$ generated by $n+l\IZ$. Since the additive order of $n$ modulo $l$ is $l/\gcd(n,l)$, it follows that $\zeta$ decomposes into $\gcd(n,l)$ cycles of $\psi^n$, each of length $l/\gcd(n,l)$. If $r$ is the representative of $\zeta$ from $\Lcal$, then for each $\psi^n$-cycle $\zeta'$ contained in $\zeta$, the elements on $\zeta'$ are just those of the form $\psi^t(r)$ for $t\in k+\gcd(n,l)\IZ$, for some $k=k(\zeta')\in\IZ$. It follows that the $\gcd(n,l)$ elements $\psi^j(r)$ of (the support set of) $\zeta$ for $j=0,1,\ldots,\gcd(n,l)-1$ lie on pairwise distinct $\psi^n$-cycles and thus form a system of representatives for the $\psi^n$-cycles contained in $\zeta$. This proves the main statement of the lemma. The \enquote{In particular} statement follows because the equality $\gcd(n,\ord(\psi))=1$ is equivalent to \enquote{$\gcd(n,l)=1$ for each cycle length $l$ of $\psi$}.
\end{proof}

We are now ready to specify a CRL-list for each automorphism of each finite primary cyclic group. In the proof of the following lemma and beyond, we use the notation $H\leq G$\phantomsection\label{notSubgroup} for \enquote{$H$ is a subgroup of $G$}, and $\langle g_1,g_2,\ldots,g_n\rangle$\phantomsection\label{notGenerate} to denote the subgroup of the group $G$ generated by the elements $g_1,g_2,\ldots,g_n\in G$.

\begin{lemmma}\label{crlListAutomorphismLem}
Let $p$ be a prime, $v\in\IN^+$, and $a\in\IZ$ with $p\nmid a$. If $p$ is odd, let $\rfrak$\phantomsection\label{not103} be a fixed primitive root modulo $p^v$, and let $\phi$\phantomsection\label{not104} denote Euler's totient function. Table \ref{crlListAutomorphismTable} provides a CRL-list $\Lcal(p^v,a)$\phantomsection\label{not105} of the automorphism $\mu_a:x\mapsto ax$ of $\IZ/p^v\IZ$.
\end{lemmma}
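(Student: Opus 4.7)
The plan is to partition $\IZ/p^v\IZ$ into $\mu_a$-invariant layers indexed by $p$-adic valuation, reduce each layer to a multiplication-by-$a$ problem on a smaller unit group $(\IZ/p^w\IZ)^{\ast}$, and then exploit the explicit structure of that unit group to spell out representatives and cycle lengths.

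First, for each $k\in\{0,1,\ldots,v-1\}$ the layer $L_k:=\{x\in\IZ/p^v\IZ:\nu_p(x)=k\}$ is $\mu_a$-invariant because $p\nmid a$, and the map $u\mapsto p^k u$ is a bijection $(\IZ/p^{v-k}\IZ)^{\ast}\to L_k$ intertwining multiplication by $a$ on the source with $\mu_a$ restricted to $L_k$ on the target. Together with the single fixed point $0\in\IZ/p^v\IZ$, which contributes the trivial CRL-entry $(0,1)$, this reduces the construction of $\Lcal(p^v,a)$ to producing, for each $w\in\{1,2,\ldots,v\}$, a CRL-list for multiplication by $a$ on $(\IZ/p^w\IZ)^{\ast}$, and then multiplying every representative by $p^{v-w}$ to lift it back into $\IZ/p^v\IZ$.

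For $p$ odd (and also for $p=2$, $w\leq 2$), the unit group $(\IZ/p^w\IZ)^{\ast}$ is cyclic of order $\phi(p^w)$, generated in the odd case by the reduction of $\rfrak$ modulo $p^w$. Writing $a\equiv\rfrak^{t}\Mod{p^w}$ identifies $\mu_a$ with $\mu_{\rfrak}^{t}$, and since $\mu_{\rfrak}$ consists of a single cycle of length $\phi(p^w)$ through $1$, Lemma \ref{crlListPowerLem} yields directly the CRL-list
\[
\bigl\{(\rfrak^{j}\bmod p^w,\ \phi(p^w)/\gcd(t,\phi(p^w)))\ :\ j=0,1,\ldots,\gcd(t,\phi(p^w))-1\bigr\},
\]
with common cycle length $\ord_{p^w}(a)=\phi(p^w)/\gcd(t,\phi(p^w))$.

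For the genuinely noncyclic case $p=2$, $w\geq 3$, I would use the classical decomposition $(\IZ/2^w\IZ)^{\ast}=\langle-1\rangle\times\langle 5\rangle\cong\IZ/2\IZ\times\IZ/2^{w-2}\IZ$ and write $a\equiv(-1)^{\epsilon}5^{t}\Mod{2^w}$. Under this isomorphism $\mu_a$ becomes the translation $(\alpha,\beta)\mapsto(\alpha+\epsilon,\beta+t)$ on the additive group $\IZ/2\IZ\times\IZ/2^{w-2}\IZ$, whose orbits are the cosets of $\langle(\epsilon,t)\rangle$, a subgroup of order $\lcm(\ord(\epsilon),\ord(t))$; pulling a transversal back through the isomorphism produces explicit representatives of the form $(-1)^{\epsilon'}5^{t'}\bmod 2^w$. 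The main technical obstacle will be this $p=2$, $w\geq 3$ case: the two-factor structure forces a subcase split according to $\epsilon$ and $\nu_2(t)$, and one has to match the resulting orbit count and cycle length to the closed-form entries of Table \ref{crlListAutomorphismTable}. Once these entries are verified and the per-layer CRL-lists are rescaled by the appropriate $p^{v-w}$ and assembled, the claimed CRL-list $\Lcal(p^v,a)$ falls out.
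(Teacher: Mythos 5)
Your layer decomposition by $p$-adic valuation (identifying the set of elements of valuation $k$ with $(\IZ/p^{v-k}\IZ)^{\ast}$ via $u\mapsto p^k u$) is implicit in the paper but you make it the organizing principle; for $p$ odd the two proofs are essentially the same computation, both reducing to a primitive root and invoking Lemma \ref{crlListPowerLem}, and a one-line gcd identity (that $\gcd(t,\phi(p^w))=\gcd(\gcd(t,\phi(p^v)),\phi(p^w))$ since $\phi(p^w)\mid\phi(p^v)$) confirms that your per-layer representatives coincide with those of Table \ref{crlListAutomorphismTable}. The genuine divergence is at $p=2$, $w\geq 3$. The paper is concrete: it computes the orbits of $\mu_5$ directly via commutative diagrams that push the question to the moduli $8$ and $2^v$, extends to $a\equiv1\Mod{4}$ through Lemma \ref{crlListPowerLem}, and then treats $a\equiv3\Mod{4}$ by writing $\mu_a=\mu_{-1}\circ\mu_{-a}$ and tracking how $\mu_{-1}$ fuses the cycles of $\mu_{-a}$ in pairs. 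You instead pass to $(\IZ/2^w\IZ)^{\ast}\cong\IZ/2\IZ\times\IZ/2^{w-2}\IZ$ and view $\mu_a$ as translation by $(\epsilon,t)$, whose orbits are the cosets of $\langle(\epsilon,t)\rangle$; this is the more structural argument and treats $\epsilon\in\{0,1\}$ uniformly, avoiding the paper's ad hoc fusion step. What it costs is exactly the bookkeeping you flag at the end: exhibiting a transversal of $\langle(\epsilon,t)\rangle$ whose pullback yields the explicit $\pm5^{j}$-type representatives of the table. That is routine but not automatic, particularly since for $\epsilon=1$ the subgroup $\langle(1,t)\rangle$ does not split as a product across the two factors, so the transversal must be built by hand and the case split on $\nu_2(t)$ you anticipate is indeed what remains. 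In short, the paper's route buys explicit representatives with no further unwinding, and yours buys a cleaner structural explanation at the cost of one more step of matching against Table \ref{crlListAutomorphismTable}.
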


\begin{longtable}[h]{|c|c|}\hline
Case for $p$ and $a$ & elements of $\Lcal(p^v,a)$ \\ \hline
$p>2$ & \thead{$\left(\rfrak^jp^t,\frac{\phi(p^{v-t})}{\gcd\left(\frac{\phi(p^v)}{\ord(a)},\phi(p^{v-t})\right)}\right)$ for \\ $t=0,1,\ldots,v$ and $j=0,1,\ldots,\gcd\left(\frac{\phi(p^v)}{\ord(a)},\phi(p^{v-t})\right)-1$.} \\ \hline
$p=2$, $a\equiv1\Mod{4}$ & \thead{$(0,1)$, $(2^{v-1},1)$; \\ $\left(5^j2^t,\frac{2^{v-t-2}}{\gcd\left(\frac{2^{v-2}}{\ord(a)},2^{v-t-2}\right)}\right)$, $\left(-5^j2^t,\frac{2^{v-t-2}}{\gcd\left(\frac{2^{v-2}}{\ord(a)},2^{v-t-2}\right)}\right)$ for \\ $t=0,1,\ldots,v-2$ and $j=0,1,\ldots,\gcd\left(\frac{2^{v-2}}{\ord(a)},2^{v-t-2}\right)-1$.} \\ \hline
$p=2$, $a\equiv3\Mod{4}$ & \thead{$(0,1)$, $(2^{v-1},1)$; \\ $(j\cdot\ord(-a),2)$ for $j=1,2,\ldots,\frac{2^{v-1}}{\ord(-a)}-1$; \\ $\left(5^j2^t,\frac{\ord(-a)}{2^t}\right)$, $\left(-5^j2^t,\frac{\ord(-a)}{2^t}\right)$ for \\ $t=0,1,\ldots,\log_2(\ord(-a))-1$ and \\ $j=0,1,\ldots,\frac{2^{v-2}}{\ord(-a)}-1$.}  \\ \hline
\caption{CRL-lists of automorphisms of finite primary cyclic groups.}
\label{crlListAutomorphismTable}
\end{longtable}

\begin{proof}[Proof of Lemma \ref{crlListAutomorphismLem}]
First, we assume that $p>2$. If $a$ is a primitive root modulo $p^v$, then the cyclic group $\langle a\rangle=(\IZ/p^v\IZ)^{\ast}\cong\Aut(\IZ/p^v\IZ)$\phantomsection\label{not106}\phantomsection\label{not107} acts transitively on each subset of $\IZ/p^v\IZ$ consisting of all elements of a given additive order. Indeed, on the one hand, automorphisms of $\IZ/p^v\IZ$ must preserve the additive order of elements, and conversely, if $x,y\in\IZ/p^v\IZ$ are of the same order, then they are multiples of each other. Hence $y=z\cdot x$ for some $z\in(\IZ/p^v\IZ)^{\ast}$. Since $z$ is a power of $a$, the transitivity assertion follows. We conclude that if $a$ is a primitive root modulo $p^v$, then $\Lcal(p^v,a)$ may be chosen as $\{(p^t,\phi(p^{v-t})): t=0,1,\ldots,v\}$, which matches with Table \ref{crlListAutomorphismTable}.

For general $a$, we note that $a$ and $\rfrak^{\phi(p^v)/\ord(a)}$ are powers of each other, whence by the \enquote{In particular} of Lemma \ref{crlListPowerLem}, we may assume without loss of generality that $a=\rfrak^{\phi(p^v)/\ord(a)}$. The claim now follows by applying the main statement of Lemma \ref{crlListPowerLem} with $n:=\phi(p^v)/\ord(a)$ and $\Lcal:=\{(p^t,\phi(p^{v-t})): t=0,1,\ldots,v\}$.

Now we assume that $p=2$. First, let us discuss the case $a=5$. The automorphism $\mu_5:x\mapsto 5x$, like any automorphism of $\IZ/2^v\IZ$, fixes the unique elements $0$ and $2^{v-1}$ of additive orders $1$ and $2$, respectively. It also fixes the order $4$ elements $2^{v-2}$ and $3\cdot 2^{v-2}=-2^{v-2}$. Moreover, we claim that for each $t'\in\{3,4,\ldots,v\}$, the automorphism $\mu_5$ has exactly two cycles on the elements of $\IZ/2^v\IZ$ of additive order $2^{t'}$, both of length $2^{t'-2}$ and spanned by $2^{v-t'}$ and $-2^{v-t'}$, respectively. Indeed, this is clear for $t'=3$ and $t'=v$; for the latter, we use that the multiplicative order of $5=1+2^2$ modulo $2^v$ is $2^{v-2}$, that $\langle 5\rangle\leq(\IZ/2^v\IZ)^{\ast}$ acts semiregularly (i.e., such that no element of that group except the neutral element $1$ admits fixed points in that action) on the set of generators (i.e., elements of additive order $2^v$) of $\IZ/2^v\IZ$, and that $1\not\equiv-1\Mod{4}$. For each other value of $t'$, denoting by $\aord(x)$ the additive order of $x$ modulo $2^v$, it follows from the commutativity of the diagram
\begin{center}
\begin{tikzpicture}
\matrix (m) [matrix of math nodes,row sep=3em,column sep=4em,minimum width=2em]
  {
     \{x\in\IZ/2^v\IZ: \aord(x)=2^v\} & \{x\in\IZ/2^v\IZ: \aord(x)=2^v\} \\
     \{x\in\IZ/2^v\IZ: \aord(x)=2^{t'}\} & \{x\in\IZ/2^v\IZ: \aord(x)=2^{t'}\} \\};
  \path[-stealth]
    (m-1-1) edge node[above] {$x\mapsto 5x$} (m-1-2)
    (m-1-1) edge node [left] {$x\mapsto 2^{v-t'}x$} (m-2-1)
		(m-1-2) edge node [right] {$x\mapsto 2^{v-t'}x$} (m-2-2)
		(m-2-1) edge node [below] {$x\mapsto 5x$} (m-2-2);
\end{tikzpicture}
\end{center}
\phantomsection\label{not108}
that $x\mapsto 5x$ has \emph{at most} two cycles on the set of order $2^{t'}$ elements, namely the ones spanned by $2^{v-t'}\cdot1=2^{v-t'}$ and $2^{v-t'}\cdot(-1)=-2^{v-t'}$. Likewise, the commutativity of the diagram
\begin{center}
\begin{tikzpicture}
\matrix (m) [matrix of math nodes,row sep=3em,column sep=4em,minimum width=2em]
  {
     \{x\in\IZ/2^v\IZ: \aord(x)=2^{t'}\} & \{x\in\IZ/2^v\IZ: \aord(x)=2^{t'}\} \\
     \{x\in\IZ/2^v\IZ: \aord(x)=8\} & \{x\in\IZ/2^v\IZ: \aord(x)=8\} \\};
  \path[-stealth]
    (m-1-1) edge node[above] {$x\mapsto 5x$} (m-1-2)
    (m-1-1) edge node [left] {$x\mapsto 2^{t'-3}x$} (m-2-1)
		(m-1-2) edge node [right] {$x\mapsto 2^{t'-3}x$} (m-2-2)
		(m-2-1) edge node [below] {$x\mapsto 5x$} (m-2-2);
\end{tikzpicture}
\end{center}
implies that $2^{v-t'}$ and $-2^{v-t'}$ lie on \emph{distinct} cycles of $x\mapsto 5x$. This shows that $\Lcal(2^v,5)$ can be chosen as indicated in Table \ref{crlListAutomorphismTable}.

As for other values of $a$, if $a\equiv1\Mod{4}$, then $a$ is congruent to a power of $5$ modulo $2^v$, and the choice for $\Lcal(2^v,a)$ specified in Table \ref{crlListAutomorphismTable} can be derived from the one for $\Lcal(2^v,5)$ using Lemma \ref{crlListPowerLem} (analogously to the end of the argument for $p>2$ above).

It remains to deal with the case $a\equiv3\Mod{4}$. Then $-a\equiv1\Mod{4}$. We view the automorphism $\mu_a$ of $\IZ/2^v\IZ$ as the composition of the automorphisms $\mu_{-a}$ and $\mu_{-1}$. For each $t'\in\{0,1,\ldots,v\}$, we want to understand the cycles of $\mu_a$ on the set of elements $x$ of additive order $2^{t'}$, and we do so by distinguishing some cases for $t'$.
\begin{itemize}
\item If $t'\in\{0,1\}$ (i.e., $x\in\{0,2^{v-1}\}$), then $x$, being the only element in $\IZ/2^v\IZ$ of its additive order, is fixed by $\mu_a$.
\item If $t'\in\{2,3,\ldots,v-\log_2(\ord(-a))\}$, then $\mu_{-a}$ fixes each element of order $2^{t'}$; this can be seen by using the formula for \enquote{$a\equiv1\Mod{4}$} in Table \ref{crlListAutomorphismTable} and noting that if $x\in\{a^j2^t,-a^j2^t\}$, then $2^{t'}=\aord(x)=2^{v-t}$. It follows that the restriction of $\mu_a$ to set of order $2^{t'}$ elements is the same as that of $\mu_{-1}$. Therefore,
\[
\left\{(j\cdot\ord(-a),2): j=1,2,\ldots,\frac{2^{v-1}}{\ord(-a)}-1\right\},
\]
which is a CRL-list of the restriction of $\mu_{-1}$ to the set of elements of $\IZ/2^v\IZ$ with order in $\{2^2,2^3,\ldots,2^{v-\log_2(\ord(-a))}\}$, is also a CRL-list of the corresponding restriction of $\mu_a$.
\item Finally, if $t'>v-\log_2(\ord(-a))$, then all cycles of $\mu_{-a}$ on the set of order $2^{t'}$ elements in $\IZ/2^v\IZ$ are of even length (in fact, their length is a nontrivial power of $2$). For a fixed $j\in\{0,1,\ldots,\gcd(\frac{2^{v-2}}{\ord(-a)},2^{t'-2})-1\}=\{0,1,\ldots,\frac{2^{v-2}}{\ord(-a)}-1\}$, we consider the two cycles of $\mu_{-a}$ spanned by $5^j2^{v-t'}$ and $-5^j2^{v-t'}$, respectively. These cycles are distinct (according to the case \enquote{$a\equiv1\Mod{4}$} in Table \ref{crlListAutomorphismTable}, applied to $-a$), both have length
\[
\frac{2^{t'-2}}{\gcd(\frac{2^{v-2}}{\ord(-a)},2^{t'-2})}=\frac{2^{t'-2}}{2^{v-2-\log_2(\ord(-a))}}=2^{t'-v+\log_2(\ord(-a))}
\]
and are images of each other under $\mu_{-1}$. In fact, since $\mu_{-a}$ and $\mu_{-1}$ commute, we have $\mu_{-1}(\mu_{-a}^n(\pm5^j2^{v-t'}))=\mu_{-a}^n(\mu_{-1}(\pm5^j2^{v-t'}))$ for each $n\in\IZ$, which leads to the following diagrammatic picture of how the cycles are matched under $\mu_{-1}$, setting $k:=2^{t'-v+\log_2(\ord(-a))-1}$, so that $k$ is half of the common cycle length of $\pm5^j2^{v-t'}$ under $\mu_{-a}$:
\begin{center}
\begin{tikzpicture}
\matrix (m) [matrix of math nodes,row sep=3em,column sep=1em]
  {
     5^j2^{v-t'} & (-a)5^j2^{v-t'} & \cdots & (-a)^{2k-1}5^j2^{v-t'} & 5^j2^{v-t'} \\
     -5^j2^{v-t'} & -(-a)5^j2^{v-t'} & \cdots & -(-a)^{2k-1}5^j2^{v-t'} & -5^j2^{v-t'} \\};
  \path[-stealth]
    (m-1-1) edge node[above] {$\mu_{-a}$} (m-1-2)
		(m-1-2) edge node[above] {$\mu_{-a}$} (m-1-3)
		(m-1-3) edge node[above] {$\mu_{-a}$} (m-1-4)
		(m-1-4) edge node[above] {$\mu_{-a}$} (m-1-5)
		(m-2-1) edge node[above] {$\mu_{-a}$} (m-2-2)
		(m-2-2) edge node[above] {$\mu_{-a}$} (m-2-3)
		(m-2-3) edge node[above] {$\mu_{-a}$} (m-2-4)
		(m-2-4) edge node[above] {$\mu_{-a}$} (m-2-5)
    (m-1-1) edge node [left] {$\mu_{-1}$} (m-2-1)
		(m-1-2) edge node [left] {$\mu_{-1}$} (m-2-2)
		(m-1-3) edge node [left] {$\mu_{-1}$} (m-2-3)
		(m-1-5) edge node [left] {$\mu_{-1}$} (m-2-5);
\end{tikzpicture}
\end{center}
It follows that $\mu_a=\mu_{-1}\circ\mu_{-a}$ decomposes into two cycles on the union of the support sets of the above two cycles of $\mu_{-a}$: one is spanned by $5^j2^{v-t'}$ and consists of the \enquote{even elements} $5^j2^{v-t'},(-a)^25^j2^{v-t'},(-a)^45^j2^{v-t'},\ldots$ of the upper cycle as well as the \enquote{odd elements} of the lower cycle, whereas the other is spanned by $-5^j2^{v-t'}$ and consists of the \enquote{odd elements} of the upper and \enquote{even elements} of the lower cycle. If we let $t'$ and $j$ run through their respective range, the corresponding cycle pairs partition the set of all elements of additive order larger than $\frac{2^v}{\ord(-a)}$. Together with the observations for smaller values of $t'$ from above, we obtain a CRL-list for $\mu_a$, which can be easily checked to coincide with the one specified in Table \ref{crlListAutomorphismTable} (we note that $t'=v-t$ in the notation of that table).
\end{itemize}
\end{proof}

Now we tackle Step (2) in our plan for this subsection, i.e., working out a CRL-list for every affine permutation $A$ of every finite primary cyclic group. A useful observation, which we explain in more detail, is that the case where $A$ has a fixed point can be reduced to the automorphism case (i.e., to Lemma \ref{crlListAutomorphismLem}). This idea appears in \cite[proof of Lemma 4]{Den13a}, which was concerned with cyclic groups, but it can easily be extended to general groups. We remind the reader that we write $\rho_{\rrm}$ for the right-regular representation of a group (that must be clear from context) on itself.

\begin{lemmma}\label{affineFixedPointLem}
Let $G$ be a group, $b\in G$, $\alpha$ an automorphism of $G$, $A$ the affine permutation $x\mapsto x^{\alpha}b$ of $G$, and $\ffrak\in G$ a fixed point of $A$. Then $A=\rho_{\rrm}(\ffrak)^{-1}\alpha\rho_{\rrm}(\ffrak)$. In particular, $\rho_{\rrm}(\ffrak)$ is a digraph isomorphism from $\Gamma_{\alpha}$ to $\Gamma_A$.
\end{lemmma}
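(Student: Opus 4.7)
The plan is to establish the conjugation formula $A=\rho_{\rrm}(\ffrak)^{-1}\alpha\rho_{\rrm}(\ffrak)$ by a direct computation on an arbitrary argument, using only the hypothesis that $\ffrak$ is a fixed point of $A$ (which rewrites as $\ffrak^{\alpha}=\ffrak b^{-1}$), and then to read off the digraph isomorphism statement essentially for free from the formula.

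For the first assertion, I would fix $x\in G$ and evaluate the right-hand side using the paper's left-to-right composition convention. Since $\rho_{\rrm}(\ffrak)^{-1}=\rho_{\rrm}(\ffrak^{-1})$, applying $\rho_{\rrm}(\ffrak)^{-1}\alpha\rho_{\rrm}(\ffrak)$ to $x$ gives $((x\ffrak^{-1})^{\alpha})\ffrak$. Expanding with the fact that $\alpha$ is a homomorphism yields $x^{\alpha}(\ffrak^{-1})^{\alpha}\ffrak=x^{\alpha}(\ffrak^{\alpha})^{-1}\ffrak$. The fixed-point equation $\ffrak^{\alpha}b=\ffrak$ rearranges to $\ffrak^{\alpha}=\ffrak b^{-1}$, so $(\ffrak^{\alpha})^{-1}\ffrak=b\ffrak^{-1}\ffrak=b$, and the whole expression collapses to $x^{\alpha}b=x^A$. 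This establishes the asserted equality of maps on all of $G$.

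For the ``In particular'' part, I would rewrite $A=\rho_{\rrm}(\ffrak)^{-1}\alpha\rho_{\rrm}(\ffrak)$ (in the left-to-right convention) as the intertwining identity $\alpha\rho_{\rrm}(\ffrak)=\rho_{\rrm}(\ffrak)A$, i.e., for every $x\in G$, $(\alpha(x))\ffrak=A(x\ffrak)$. Since $\rho_{\rrm}(\ffrak)$ is a bijection of $G$, it is a bijection on vertex sets $\V(\Gamma_{\alpha})\to\V(\Gamma_A)$, and the displayed identity shows that an arc $x\to\alpha(x)$ of $\Gamma_{\alpha}$ is mapped to the arc $x\ffrak\to A(x\ffrak)$ of $\Gamma_A$; conversely, the inverse $\rho_{\rrm}(\ffrak^{-1})$ sends arcs of $\Gamma_A$ to arcs of $\Gamma_{\alpha}$ by the same identity applied to $x\ffrak^{-1}$ in place of $x$. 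Hence $\rho_{\rrm}(\ffrak)$ is a digraph isomorphism from $\Gamma_{\alpha}$ onto $\Gamma_A$.

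I do not anticipate any real obstacle: the entire argument is a short manipulation of the definition of an affine map once the translate $\ffrak^{\alpha}=\ffrak b^{-1}$ is available, and the main subtlety is simply to be careful with the left-to-right composition convention fixed in the paper (so that $\rho_{\rrm}(\ffrak)^{-1}\alpha\rho_{\rrm}(\ffrak)$ means ``first multiply by $\ffrak^{-1}$ on the right, then apply $\alpha$, then multiply by $\ffrak$ on the right''). No finiteness, no special structure of $G$, and no special properties of $\alpha$ beyond being a homomorphism are needed, which is consistent with the lemma being stated for an arbitrary group.
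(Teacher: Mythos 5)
Your proof is correct and takes the same route as the paper: a direct evaluation of $x^{\rho_{\rrm}(\ffrak)^{-1}\alpha\rho_{\rrm}(\ffrak)}=x^{\alpha}(\ffrak^{-1})^{\alpha}\ffrak$ followed by the fixed-point identity $(\ffrak^{-1})^{\alpha}\ffrak=b$. For the ``In particular'' part the paper simply invokes the standard fact that conjugation by $\psi'\in\Sym(X)$ sends the functional graph of $\psi$ to that of $(\psi')^{-1}\psi\psi'$ via the map $x\mapsto x^{\psi'}$; your argument spells out exactly this observation for $\psi'=\rho_{\rrm}(\ffrak)$, so the two are the same in substance.
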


\begin{proof}
The equality $x^A=x$ is equivalent to $b=(\ffrak^{-1})^{\alpha}\ffrak$. For each $x\in G$, we have
\[
x^{\rho_{\rrm}(\ffrak)^{-1}\alpha\rho_{\rrm}(\ffrak)}=(x\ffrak^{-1})^{\alpha\rho_{\rrm}(\ffrak)}=x^{\alpha}(\ffrak^{-1})^{\alpha}\ffrak=x^{\alpha}b=x^A,
\]
as required. The \enquote{In particular} statement follows because of the well-known (and easy to verify) fact that for each set $X$ and all $\psi,\psi'\in\Sym(X)$, the permutation $\psi$ maps $x\in X$ to $y\in X$ if and only if its $\psi'$-conjugate $(\psi')^{-1}\psi\psi'$ maps $x^{\psi'}$ to $y^{\psi'}$.
\end{proof}

Lemma \ref{affineFixedPointLem} is interesting for us because of the following elementary observation.

\begin{lemmma}\label{crlListConjugateLem}
Let $X$ be a finite set, and let $\psi_1,\psi_2\in\Sym(X)$ be conjugate permutations, say $\psi_2=(\psi')^{-1}\psi_1\psi'$. If $\Lcal$ is a CRL-list for $\psi_1$, then $\{(r^{\psi'},l): (r,l)\in\Lcal\}$ is a CRL-list for $\psi_2$.
\end{lemmma}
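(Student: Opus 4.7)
The plan is to exploit the well-known principle that conjugation in the symmetric group is precisely the operation that relabels points according to the conjugating permutation, so it carries cycles of $\psi_1$ bijectively to cycles of $\psi_2$ of the same length. Concretely, I will verify this claim and then check the two defining conditions of a CRL-list in turn.

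First, I will establish the identity $(x^{\psi'})^{\psi_2^n} = (x^{\psi_1^n})^{\psi'}$ for every $x \in X$ and every $n \in \IZ$. This follows by a direct computation using $\psi_2 = (\psi')^{-1}\psi_1\psi'$ and the associativity of composition: for $n=1$ one has
\[
(x^{\psi'})^{\psi_2} = x^{\psi' \cdot (\psi')^{-1}\psi_1\psi'} = x^{\psi_1\psi'} = (x^{\psi_1})^{\psi'},
\]
and an easy induction (or, equivalently, noting $\psi_2^n = (\psi')^{-1}\psi_1^n\psi'$) handles general $n$. This identity immediately implies that $x^{\psi'}$ has the same $\psi_2$-cycle length as $x$ has $\psi_1$-cycle length, and that the image under $\psi'$ of the $\psi_1$-cycle through $x$ is exactly the $\psi_2$-cycle through $x^{\psi'}$.

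Since $\psi_1$ and $\psi_2$ are permutations of the finite set $X$, every element of $X$ is periodic under either permutation, so $\per(\psi_1) = \per(\psi_2) = X$. Now let $\Lcal$ be a CRL-list of $\psi_1$. By condition (1) of Definition \ref{crlListDef}, the set $\{r : (r,l)\in\Lcal\}$ is a system of representatives for the $\psi_1$-cycles on $X$. Because $\psi'$ is a bijection of $X$ and, by the paragraph above, pushes the partition of $X$ into $\psi_1$-cycles forward to the partition of $X$ into $\psi_2$-cycles, the image set $\{r^{\psi'} : (r,l)\in\Lcal\}$ is a system of representatives for the $\psi_2$-cycles. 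Moreover, by the length-preserving part of the same observation, the second coordinate $l$ of each original pair $(r,l)$ correctly records the $\psi_2$-cycle length of $r^{\psi'}$, so condition (2) of Definition \ref{crlListDef} is also satisfied.

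There is no real obstacle here; the only point requiring a moment's care is keeping the exponential composition convention straight, so that the conjugation formula is applied in the correct order. Once the basic identity $(x^{\psi'})^{\psi_2} = (x^{\psi_1})^{\psi'}$ is in place, both CRL-list conditions follow transparently, and the proof is essentially a one-line verification after the setup.
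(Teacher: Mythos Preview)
Your proof is correct and follows essentially the same approach as the paper: both rely on the standard fact that conjugation in $\Sym(X)$ carries cycles of $\psi_1$ bijectively to cycles of $\psi_2$ of the same length. The paper's proof is a one-liner citing this fact (as already noted at the end of the proof of Lemma~\ref{affineFixedPointLem}), whereas you spell out the key identity $(x^{\psi'})^{\psi_2} = (x^{\psi_1})^{\psi'}$ and verify both CRL-list conditions explicitly; the underlying idea is identical.
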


\begin{proof}
This is clear because $(y_0,y_1,\ldots,y_{l-1})$ is a cycle of $\psi_1$ if and only if
\[
(y_0^{\psi'},y_1^{\psi'},\ldots,y_{l-1}^{\psi'})
\]
is a cycle of $\psi_2$ (see the last sentence in the proof of Lemma \ref{affineFixedPointLem}).
\end{proof}

Through combining Lemmas \ref{affineFixedPointLem} and \ref{crlListConjugateLem}, we get the next result.

\begin{lemmma}\label{crlListFixedPointLem}
Let $G$ be a finite group, $b\in G$, $\alpha$ an automorphism of $G$, $A$ the affine permutation $x\mapsto x^{\alpha}b$ of $G$, and $\ffrak\in G$ a fixed point of $A$. If $\Lcal$ is a CRL-list of $\alpha$, then $\{(r\ffrak,l): (r,l)\in\Lcal\}$ is a CRL-list of $A$.
\end{lemmma}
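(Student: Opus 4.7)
The plan is to obtain this as an immediate corollary of the two preceding lemmas. The key observation is that Lemma \ref{affineFixedPointLem} expresses $A$ as an explicit conjugate of $\alpha$, namely $A = \rho_{\rrm}(\ffrak)^{-1}\alpha\rho_{\rrm}(\ffrak)$, so that $\alpha$ and $A$ are conjugate permutations of $G$ via $\psi' := \rho_{\rrm}(\ffrak) \in \Sym(G)$. This places us squarely in the setting of Lemma \ref{crlListConjugateLem}, which describes how CRL-lists transform under conjugation.

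The steps I would carry out are therefore as follows. First, I would invoke Lemma \ref{affineFixedPointLem} to produce the identification $A = \rho_{\rrm}(\ffrak)^{-1}\alpha\rho_{\rrm}(\ffrak)$ (this uses that $\ffrak$ is a fixed point of $A$). Second, I would apply Lemma \ref{crlListConjugateLem} with $\psi_1 := \alpha$, $\psi_2 := A$ and $\psi' := \rho_{\rrm}(\ffrak)$, noting that for any $r \in G$ one has $r^{\psi'} = r^{\rho_{\rrm}(\ffrak)} = r\ffrak$ by the very definition of the right-regular representation. This immediately yields that $\{(r\ffrak, l) : (r,l) \in \Lcal\}$ is a CRL-list of $A$, as required.

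There is essentially no obstacle here: the result is a direct synthesis of the two preceding lemmas, with the only thing to verify being the cosmetic translation between the exponent notation $r^{\rho_{\rrm}(\ffrak)}$ used in Lemma \ref{crlListConjugateLem} and the multiplicative notation $r\ffrak$ used in the statement to be proved. In particular, no use of the finiteness of $G$ is needed beyond what is already built into the definition of a CRL-list, and no use of the group structure is made beyond the definition of $\rho_{\rrm}$.
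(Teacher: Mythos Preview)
Your proposal is correct and takes essentially the same approach as the paper, which simply states that the lemma follows by combining Lemmas \ref{affineFixedPointLem} and \ref{crlListConjugateLem} without giving any further details. Your write-up makes explicit exactly the identifications $\psi_1=\alpha$, $\psi_2=A$, $\psi'=\rho_{\rrm}(\ffrak)$ and the translation $r^{\rho_{\rrm}(\ffrak)}=r\ffrak$ that the paper leaves to the reader.
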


We are now ready to construct a CRL-list for each affine permutation of each finite primary cyclic group.

\begin{propposition}\label{crlListPrimaryProp}
Let $p$ be a prime, $v\in\IN^+$, and $a,b\in\IZ$ with $p\nmid a$. Table \ref{crlListPrimaryTable} provides a CRL-list $\Lcal(p^v,a,b)$\phantomsection\label{not109} for the affine permutation $A:x\mapsto ax+b$ of $\IZ/p^v\IZ$, using the following conditional notations.
\begin{itemize}
\item If $p$ is odd, we denote by $\rfrak$ a fixed primitive root modulo $p^v$.
\item If $\nu_p^{(v)}(b)\geq\nu_p^{(v)}(a-1)$, we set
\[
\ffrak:=-\frac{b}{p^{\nu_p^{(v)}(a-1)}}\cdot\inv_{p^{v-\nu_p^{(v)}(a-1)}}\left(\frac{a-1}{p^{\nu_p^{(v)}(a-1)}}\right).
\]
\end{itemize}
\end{propposition}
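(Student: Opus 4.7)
The natural dichotomy for the proof is whether $A$ has a fixed point in $\IZ/p^v\IZ$. Since $A(\ffrak) = \ffrak$ is equivalent to $(a-1)\ffrak \equiv -b \pmod{p^v}$, and $\gcd(a-1, p^v) = p^{\nu_p^{(v)}(a-1)}$, this linear congruence is solvable iff $\nu_p^{(v)}(b) \geq \nu_p^{(v)}(a-1)$. The two conditional branches in the statement (and presumably the two types of rows in Table \ref{crlListPrimaryTable}) would be stratified by this criterion.

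In the fixed-point case, my first step would be to verify directly that the explicit $\ffrak$ displayed in the statement is in fact fixed by $A$: writing $c := \nu_p^{(v)}(a-1)$, the quotient $(a-1)/p^c$ is a unit modulo $p^{v-c}$ by the very definition of $c$, so the multiplicative inverse in the formula for $\ffrak$ is well-defined, and a direct substitution, using that $p^c \mid b$ holds precisely in this case, yields $(a-1)\ffrak + b \equiv 0 \pmod{p^v}$. Once $\ffrak$ is known to be a fixed point, Lemma \ref{crlListFixedPointLem} applies with $G = \IZ/p^v\IZ$ and $\alpha = \mu_a$, reducing the construction of $\Lcal(p^v, a, b)$ to translating the CRL-list $\Lcal(p^v, a)$ of the automorphism $\mu_a$ (provided by Lemma \ref{crlListAutomorphismLem}) by $\ffrak$.

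When $A$ has no fixed point, we necessarily have $a \equiv 1 \pmod p$, and my plan would be to pass to a suitable iterate. Concretely, $A^n(x) = a^n x + s_n b$ with $s_n := 1 + a + \cdots + a^{n-1}$, so $A^n$ has a fixed point iff $\nu_p^{(v)}(s_n b) \geq \nu_p^{(v)}(a^n - 1)$, a valuation inequality which is readily solved for the smallest such $n$ in terms of $\nu_p(a-1)$, $\nu_p(b)$, and (when $p = 2$) the multiplicative order of $-a$. For this $n$, the fixed-point case just dispatched yields a CRL-list of $A^n$, and a CRL-list of $A$ can then be reconstructed by inverting the construction of Lemma \ref{crlListPowerLem}: each $A$-cycle of length $l$ splits into exactly $\gcd(n,l)$ cycles of $A^n$, all of length $l/\gcd(n,l)$, so after identifying which $A^n$-representatives lie on a common $A$-orbit (via application of $A, A^2, \ldots$) one can amalgamate them and recover the $A$-cycles.

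The main technical obstacle will be the $p = 2$, $a \equiv 3 \pmod 4$ branch, where the multiplicative structures of $\langle a\rangle$ and $\langle -a\rangle$ in $(\IZ/2^v\IZ)^{\ast}$ interact asymmetrically with the additive accumulation of $b$, forcing a case split parallel to the one already visible in Lemma \ref{crlListAutomorphismLem}. Beyond this bookkeeping, the proof reduces to a routine (if lengthy) match between the output of the reduction procedure described above and the explicit formulas listed in Table \ref{crlListPrimaryTable}, carried out divisibility class by divisibility class.
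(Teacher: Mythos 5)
Your treatment of the fixed-point case (Cases 1, 7, 8 of Table \ref{crlListPrimaryTable}) is exactly the paper's: verify directly that $\ffrak$ is a fixed point of $A$, then conjugate by $\rho_{\rrm}(\ffrak)$ via Lemma \ref{affineFixedPointLem} and Lemma \ref{crlListFixedPointLem} to transport the known CRL-list of $\mu_a$ from Lemma \ref{crlListAutomorphismLem}. Your plan for the no-fixed-point cases (Cases 2, 9, 10), however, has a real gap that makes the proposed reduction collapse.

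You propose to pass to the smallest iterate $A^n$ possessing a fixed point and then "invert" Lemma \ref{crlListPowerLem} to recover a CRL-list of $A$. The problem is that in each of Cases 2, 9 and 10, \emph{every} $A$-cycle has the same length (as the eventual answer in Table \ref{crlListPrimaryTable} shows, and as one can verify beforehand via a lifting-the-exponent computation: if $\nu_p^{(v)}(b)<\nu_p^{(v)}(a-1)$ with $a\equiv1\Mod{p}$ and $p>2$, resp.\ $a\equiv1\Mod{4}$ for $p=2$, then $\nu_p(a^n-1)-\nu_p(s_nb)$ remains equal to $\nu_p(a-1)-\nu_p(b)>0$ until the cap at $v$ kicks in). Consequently $A^n$ has a fixed point if and only if every element is a fixed point, i.e.\ if and only if $A^n=\id$, so the smallest admissible $n$ is $\ord(A)$ and the "fixed-point case just dispatched" you invoke yields the trivial CRL-list $\{(x,1):x\in\IZ/p^v\IZ\}$ with $p^v$ entries. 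Regrouping these into $A$-cycles is then exactly the problem you started with, and the "routine match" with Table \ref{crlListPrimaryTable} you defer in your last paragraph is where the entire content of these cases resides. The paper bypasses the iterate entirely: writing $k:=\nu_p^{(v)}(b)$, it observes that $A(x)=ax+b\equiv x\Mod{p^k}$ (since $a\equiv1\Mod{p^k}$ and $p^k\mid b$), so $0,1,\ldots,p^k-1$ lie on pairwise distinct cycles, and then imports the cycle \emph{count} $p^k$ from the earlier tabulation of $\CT(A)$ in \cite[Tables 3 and 4]{BW22b} to conclude these exhaust a transversal. (Case 10 requires one further step, conjugating the mod-$2^{v-t}$ picture into a fixed-point-free involution $x\mapsto -x+b$ and reading off its transpositions.) You would need to supply this congruence observation and the external cycle count; without them the iterate route does not terminate usefully.
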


\begin{longtable}[h]{|c|c|c|}\hline
No. & Case for $p^v,a,b$ & elements of $\Lcal(p^v,a,b)$ \\ \hline
1 & \thead{$p>2$, \\ $\nu_p^{(v)}(b)\geq\nu_p^{(v)}(a-1)$} & \thead{$\left(\rfrak^jp^t+\ffrak,\frac{\phi(p^{v-t})}{\gcd\left(\frac{\phi(p^v)}{\ord(a)},\phi(p^{v-t})\right)}\right)$ \\ for $t=0,1,\ldots v$ and \\ $j=0,1,\ldots,\gcd\left(\frac{\phi(p^v)}{\ord(a)},\phi(p^{v-t})\right)-1$.} \\ \hline
2 & \thead{$p>2$, \\ $\nu_p^{(v)}(b)<\nu_p^{(v)}(a-1)$} & $\left(j,p^{v-\nu_p^{(v)}(b)}\right)$ for $j=0,1,\ldots,p^{\nu_p^{(v)}(b)}-1$. \\ \hline
3 & $p=2$, $v\leq2$, $a=1$ & $(j,\aord(b))$ for $j=0,1,\ldots,\frac{2^v}{\aord(b)}-1$. \\ \hline
4 & $p^v=4$, $a=3$, $b=0$ & $(0,1)$, $(1,2)$, $(2,1)$. \\ \hline
5 & $p^v=4$, $a=3$, $b=2$ & $(0,2)$, $(1,1)$, $(3,1)$. \\ \hline
6 & $p^v=4$, $a=3$, $2\nmid b$ & $(0,2)$, $(2,2)$. \\ \hline
7 & \thead{$p=2$, $v\geq3$, \\ $\nu_2^{(v)}(b)\geq\nu_2^{(v)}(a-1)$, \\ $a\equiv1\Mod{4}$} & \thead{$(\ffrak,1)$, $(2^{v-1}+\ffrak,1)$; \\ $\left(5^j2^t+\ffrak,\frac{2^{v-t-2}}{\gcd\left(\frac{2^{v-2}}{\ord(a)},2^{v-t-2}\right)}\right)$, $\left(-5^j2^t+\ffrak,\frac{2^{v-t-2}}{\gcd\left(\frac{2^{v-2}}{\ord(a)},2^{v-t-2}\right)}\right)$ \\ for $t=0,1,\ldots,v-2$ and \\ $j=0,1,\ldots,\gcd\left(\frac{2^{v-2}}{\ord(a)},2^{v-t-2}\right)-1$.}  \\ \hline
8 & \thead{$p=2$, $v\geq3$, \\ $\nu_2^{(v)}(b)\geq\nu_2^{(v)}(a-1)$, \\ $a\equiv3\Mod{4}$} & \thead{$(\ffrak,1)$, $(2^{v-1}+\ffrak,1)$; \\ $(j\cdot\ord(-a)+\ffrak,2)$ for $j=1,2,\ldots,\frac{2^{v-1}}{\ord(-a)}-1$; \\ $\left(5^j2^t+\ffrak,\frac{\ord(-a)}{2^t}\right)$, $\left(-5^j2^t+\ffrak,\frac{\ord(-a)}{2^t}\right)$ \\ for $t=0,1,\ldots,\log_2(\ord(-a))-1$ \\ and $j=0,1,\ldots,\frac{2^{v-2}}{\ord(-a)}-1$.} \\ \hline
9 & \thead{$p=2$, $v\geq3$, \\ $\nu_2^{(v)}(b)<\nu_2^{(v)}(a-1)$, \\ $a\equiv1\Mod{4}$} & $\left(j,2^{v-\nu_2^{(v)}(b)}\right)$ for $j=0,1,\ldots,2^{\nu_2^{(v)}(b)}-1$. \\ \hline
10 & \thead{$p=2$, $v\geq3$, \\ $\nu_2^{(v)}(b)<\nu_2^{(v)}(a-1)$, \\ $a\equiv3\Mod{4}$} & $(b\cdot j,2\ord(-a))$ for $j=0,2,3,4,\ldots,\frac{2^{v-1}}{\ord(-a)}$. \\ \hline
\caption{CRL-lists of affine permutations of finite primary cyclic groups.}
\label{crlListPrimaryTable}
\end{longtable}

\begin{proof}
First, we observe that if $\nu_p^{(v)}(b)\geq\nu_p^{(v)}(a-1)$, then $\ffrak$ is a fixed point of $A$. Indeed, $x\in\IZ/p^v\IZ$ is a fixed point of $A$ if and only if
\begin{align*}
&ax+b\equiv x\Mod{p^v} \Leftrightarrow (a-1)x\equiv-b\Mod{p^v}  \\
&\Leftrightarrow \frac{a-1}{p^{\nu_p^{(v)}(a-1)}}x\equiv-\frac{b}{p^{\nu_p^{(v)}(a-1)}}\Mod{p^{v-\nu_p^{(v)}(a-1)}} \\
&\Leftrightarrow x\equiv-\frac{b}{p^{\nu_p^{(v)}(a-1)}}\cdot\inv_{p^{v-\nu_p^{(v)}(a-1)}}\left(\frac{a-1}{p^{\nu_p^{(v)}(a-1)}}\right)=\ffrak\Mod{p^{v-\nu_p^{(v)}(a-1)}}.
\end{align*}
The form of the CRL-list for $A$ specified in cases 1, 7 and 8 in Table \ref{crlListPrimaryTable} thus follows from Lemma \ref{crlListFixedPointLem} and the corresponding CRL-list for $\mu_a$, read off from Table \ref{crlListAutomorphismTable}. Moreover, cases 3--6 in Table \ref{crlListPrimaryTable} are easy to check separately. It remains to justify the specified CRL-list in cases 2, 9 and 10 in Table \ref{crlListPrimaryTable}, which we do now.
\begin{itemize}
\item Case 2: We note that in this case, $a\equiv1\Mod{p}$ necessarily. The units modulo $p^v$ that are congruent to $1$ modulo $p$ form the unique, cyclic Sylow $p$-subgroup of $(\IZ/p^v\IZ)^{\ast}$, of order $p^{v-1}$. For each $t\in\{0,1,\ldots,v-1\}$, the unit $1+p^{v-t}$ has order $p^t$, and thus all order $p^t$ units modulo $p^v$ are powers of $1+p^{v-t}$ with exponent coprime to $p$ and vice versa. Therefore, using the \enquote{In particular} statement of Lemma \ref{crlListPowerLem} and that $\ord(A)$ is a power of $p$, we may assume without loss of generality that $a=1+p^{v-t}$ for some $t\in\{0,1\ldots,v-1\}$. We observe that $v-t=\nu_p^{(v)}(a-1)$, and thus $v-t>\nu_p^{(v)}(b)$ by the case assumptions. For each $x\in\IZ/p^v\IZ$, we have $A(x)=ax+b=(1+p^{v-t})x+b\equiv x\Mod{p^{\nu_p^{(v)}(b)}}$. Hence, the elements $0,1,\ldots,p^{\nu_p^{(v)}(b)}-1$ lie on pairwise distinct cycles of $A$. On the other hand, by \cite[Table 3]{BW22b}, $A$ has exactly $\frac{p^v}{\aord(b)}=p^{\nu_p^{(v)}(b)}$ cycles, each of length $\aord(b)=p^{v-\nu_p^{(v)}(b)}$, so the said elements are representatives for all cycles of $A$ and $\{(j,p^{v-\nu_p^{(v)}(b)}): j=0,1,\ldots,p^{\nu_p^{(v)}(b)}-1\}$ is a CRL-list of $A$, as required.
\item Case 9: This can be dealt with similarly to Case 2. We observe that the unit $5=1+2^2$ has multiplicative order $2^{v-2}$ and generates an index $2$ cyclic subgroup of $(\IZ/2^v\IZ)^{\ast}$, which consists precisely of those units that are congruent to $1$ modulo $4$. For each $t\in\{0,1,\ldots,v-2\}$, the unit $1+2^{v-t}$ lies in this subgroup and has order $2^t$, so any unit of order $2^t$ that is congruent to $1$ modulo $4$ is a power of $1+2^{v-t}$ with odd exponent and vice versa. Using the \enquote{In particular} statement of Lemma \ref{crlListPowerLem} and that $\ord(A)$ is a power of $2$, we may assume without loss of generality that $a=1+2^{v-t}$ for some $t\in\{0,1,\ldots,v-2\}$, and the remainder of this argument is analogous to the one for case 2, resulting in $\{(j,2^{v-\nu_2^{(v)}(b)}): j=0,1,\ldots,2^{\nu_2^{(v)}(b)}-1\}$ being a CRL-list of $A$.
\item Case 10: Due to $-a\equiv1\Mod{4}$, we may assume without loss of generality that $-a=1+2^{v-t}$ for some $t\in\{0,1,\ldots,v-2\}$ (see the argument for case 9). Let $A'$ be the affine function $x\mapsto -x+b$ of $\IZ/2^{v-t}\IZ$. For each $x\in\IZ/2^v\IZ$, we have $A(x)=-(1+2^{v-t})x+b\equiv -x+b\Mod{2^{v-t}}$. This means that elements of $\IZ/2^{v-t}\IZ$ that lie on different cycles of $A'$ also lie on different cycles of $A$ (we remind the reader that $\IZ/2^{v-t}\IZ\subseteq\IZ/2^v\IZ$ by our convention on the underlying set of $\IZ/m\IZ$ stated at the beginning of this subsection). Now, $A'$ is an involution without fixed points (because $2\nmid b$) and thus consists of $2^{v-t-1}$ transpositions. But by \cite[Table 4]{BW22b}, $A$ has exactly $2^{v-t-1}$ cycles. Indeed, if $a=-5^n$, then $2^t=\ord(-a)=2^{v-2-\nu_2^{(v-2)}(n)}$, and therefore $t=v-2-\nu_2^{(v-2)}(n)$, whence the cycle number $2^{1+\nu_2^{(v-2)}(n)}$ specified in \cite[Table 4]{BW22b} equals $2^{v-t-1}$. Therefore, any set of of representatives for the cycles of $A'$ on $\IZ/2^{v-t}\IZ$ is also a set of representatives for the cycles of $A$ on $\IZ/2^v\IZ$, all of which are of length $2^{t+1}$ by \cite[Table 4]{BW22b}. Thus, in order to find a CRL-list for $A$, it suffices to find cycle representatives for $A'$. To that end, we first assume that $b=1$. Then every cycle (i.e., transposition) of $A'$ on $\IZ/2^{v-t}\IZ$ contains exactly one element from the \enquote{left half} $\{1,2,\ldots,2^{v-t-1}\}$ and one from the \enquote{right half} $\{2^{v-t-1}+1,2^{v-t-1}+2,\ldots,2^{v-t}-1,2^{v-t}\equiv 0\}$. It follows that $\{0,2,3,4,\ldots,2^{v-t-1}\}$ is a set of representatives for the cycles of $A'$, and this matches with the CRL-list for $A$ specified in Table \ref{crlListPrimaryTable}. For general $b$, we observe that
\[
A'=(x\mapsto -x+b)=(x\mapsto b^{-1}x)\cdot(x\mapsto -x+1)\cdot(x\mapsto bx),
\]
whence Lemma \ref{crlListConjugateLem} allows us to conclude that
\[
b\cdot\{0,2,3,4,\ldots,2^{v-t-1}\}=\{0,2b,3b,4b,\ldots,2^{v-t-1}b\}
\]
is a set of representatives for the cycles of $A'$, as required.
\end{itemize}
\end{proof}

Now that we know a CRL-list for each affine permutation of each finite \emph{primary} cyclic group, let us discuss how to deal with general finite cyclic groups. Through identifying the group $\IZ/m\IZ$ with the direct product $\prod_{p\mid m}{\IZ/p^{\nu_p(m)}\IZ}$ via the Chinese Remainder Theorem, we can view any affine permutation $A:x\mapsto ax+b$ of $\IZ/m\IZ$ as the \enquote{function tensor product} $\bigotimes_{p\mid m}A_p$, where $A_p$ is the \emph{reduction of $A$ modulo $p^{\nu_p(m)}$}, i.e., the affine permutation $x\mapsto ax+b$ of $\IZ/p^{\nu_p(m)}\IZ$, as introduced in Remark \ref{tensorProductRem}. That is, $A$ becomes the component-wise application of its reductions $A_p$ under this identification. This leads to the following, more general problem, which we solve next.

\begin{problemm}\label{crlListTensorProb}
Given finite sets $X_1,X_2,\ldots,X_n$, permutations $\psi_j\in\Sym(X_j)$ for $j=1,2,\ldots,n$, and a CRL-list $\Lcal_j$ of $\psi_j$ for $j=1,2,\ldots,n$, construct a CRL-list $\Lcal$ of $\psi:=\bigotimes_{j=1}^n{\psi_j}\in\Sym(\prod_{j=1}^n{X_j})$.
\end{problemm}

For the rest of this subsection, we use the notation fixed in Problem \ref{crlListTensorProb}. We denote by $\Lcal_j^{(1)}\subseteq X_j$\phantomsection\label{not110} the set of first entries of the pairs in $\Lcal_j$ (i.e., the set of cycle representatives of $\psi_j$ exhibited by $\Lcal_j$), and for $r\in\Lcal_j^{(1)}$, we denote by $r^{\langle\psi_j\rangle}$\phantomsection\label{not111} the orbit of $r$ under the action of the permutation group $\langle\psi_j\rangle$ (i.e., the set of points on the $\psi_j$-cycle of $r$).

For each $\vec{r}=(r_1,r_2,\ldots,r_n)\in\prod_{j=1}^n{\Lcal_j^{(1)}}$, we set $B_{\vec{r}}:=\prod_{j=1}^n{r_j^{\langle\psi_j\rangle}}$. These sets $B_{\vec{r}}$ form a partition of $\prod_{j=1}^n{X_j}$, and each set $B_{\vec{r}}$ is a union of cycles of $\psi$. Therefore, it suffices to find a CRL-list $\Lcal_{\vec{r}}$ of the restriction $\psi_{\mid B_{\vec{r}}}$ for each $\vec{r}$, then set $\Lcal:=\bigcup_{\vec{r}}{\Lcal_{\vec{r}}}$.

Let us thus assume that $\vec{r}$ is fixed. For $j=1,2,\ldots,n$, we denote by $l_j=l_j(\vec{r})$ the $\psi_j$-cycle length of $r_j$. Every cycle of $\psi$ on $B_{\vec{r}}$ has length $l_{\vec{r}}:=\lcm(l_1,l_2,\ldots,l_n)$, and there are exactly $\left(\prod_{j=1}^n{l_j}\right)/l_{\vec{r}}$ such cycles (see also \cite[Lemma 2.1]{WX93a}). It remains to find representatives for them.

We consider the bijection
\[
\iota_{\vec{r}}:\prod_{j=1}^n{\IZ/l_j\IZ}\rightarrow B_{\vec{r}}, (k_1,k_2,\ldots,k_n)\mapsto (\psi_1^{k_1}(r_1),\psi_2^{k_2}(r_2),\ldots,\psi_n^{k_n}(r_n)).
\]
If\phantomsection\label{not112} we identify $B_{\vec{r}}$ with $\prod_{j=1}^n{\IZ/l_j\IZ}$ via this bijection, then the action of $\psi$ on $B_{\vec{r}}$ turns into that of the function
\[
\sfrak_{\vec{r}}:\prod_{j=1}^n{\IZ/l_j\IZ}\rightarrow\prod_{j=1}^n{\IZ/l_j\IZ}, (k_1,k_2,\ldots,k_n)\mapsto (k_1+1,k_2+1,\ldots,k_n+1),
\]
each\phantomsection\label{not113} displayed addition being modulo the corresponding $l_j$, of course. So it suffices to find a set of representatives for the cycles of $\sfrak_{\vec{r}}$ on $\prod_{j=1}^n{\IZ/l_j\IZ}$, then map that set under $\iota_{\vec{r}}$. In order to describe a particular set of cycle representatives for $\sfrak_{\vec{r}}$ neatly, we introduce the following auxiliary concepts.

\begin{deffinition}\label{admissibleGoodDef}
We denote by $\pi(l_{\vec{r}})$\phantomsection\label{not114} the set of all prime divisors of $l_{\vec{r}}$.
\begin{enumerate}
\item A function $\Ical:\pi(l_{\vec{r}})\rightarrow\{1,2,\ldots,n\}$\phantomsection\label{not115} is an \emph{$\vec{r}$-admissible indexing function}\phantomsection\label{term47} if for each $p\in\pi(l_{\vec{r}})$, we have $\nu_p(l_{\Ical(p)})=\max\{\nu_p(l_j): j=1,2,\ldots,n\}$.
\item If $\Ical$ is an $\vec{r}$-admissible indexing function, then a tuple
\[
(k_1,k_2,\ldots,k_n)\in\prod_{j=1}^n{\IZ/l_j\IZ}
\]
is \emph{$\Ical$-good}\phantomsection\label{term48} if $k_{\Ical(p)}\equiv 0\Mod{p^{\nu_p(l_{\Ical(p)})}}$ for each $p\in\pi(l_{\vec{r}})$.
\item For each $\vec{r}$-admissible indexing function $\Ical$, we denote by $\Good_{\vec{r}}(\Ical)$\phantomsection\label{not116} the set of all $\Ical$-good tuples in $\prod_{j=1}^n{\IZ/l_j\IZ}$.
\end{enumerate}
\end{deffinition}

The following result solves Problem \ref{crlListTensorProb}.

\begin{propposition}\label{crListTensorProp}
Let $\Ical$ be an $\vec{r}$-admissible indexing function. Then $\Good_{\vec{r}}(\Ical)$ is a set of representatives for the cycles of $\sfrak_{\vec{r}}$ on $\prod_{j=1}^n{\IZ/l_j\IZ}$. Equivalently,
\[
\iota_{\vec{r}}(\Good_{\vec{r}}(\Ical))\times\{l_{\vec{r}}\}
\]
is a CRL-list for $\psi_{\mid B_{\vec{r}}}$, and so
\[
\bigcup_{\vec{r}\in\prod_{j=1}^n{\Lcal^{(1)}_j}}{\left(\iota_{\vec{r}}(\Good_{\vec{r}}(\Ical))\times\{l_{\vec{r}}\}\right)}
\]
is a CRL-list for $\psi$.
\end{propposition}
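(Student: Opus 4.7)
The plan is to establish the first (main) assertion: that $\Good_{\vec{r}}(\Ical)$ is a set of representatives for the orbits of $\sfrak_{\vec{r}}$ on $\prod_{j=1}^n{\IZ/l_j\IZ}$. The remaining assertions then follow formally: the second is obtained by transporting the statement through the bijection $\iota_{\vec{r}}$ (which conjugates $\sfrak_{\vec{r}}$ to $\psi_{\mid B_{\vec{r}}}$ and makes every cycle of the latter have common length $l_{\vec{r}}$), and the third by taking the union over $\vec{r}\in\prod_j{\Lcal_j^{(1)}}$, noting that the sets $B_{\vec{r}}$ partition $\prod_j{X_j}$ and are each $\psi$-invariant.

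For the main claim, I would show that every $\sfrak_{\vec{r}}$-orbit contains exactly one $\Ical$-good element. Given any $(k_1,\ldots,k_n)\in\prod_{j=1}^n{\IZ/l_j\IZ}$, its orbit under $\sfrak_{\vec{r}}$ is the image of the map $\IZ/l_{\vec{r}}\IZ\rightarrow\prod_{j=1}^n{\IZ/l_j\IZ}$, $t\mapsto(k_1+t,\ldots,k_n+t)$ (with the $j$-th component reduced modulo $l_j$). This map is well-defined because $l_j\mid l_{\vec{r}}$ for each $j$, and it is a bijection onto the orbit since the orbit has cardinality exactly $l_{\vec{r}}$ (as recalled just before the proposition). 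So it suffices to exhibit a unique $t\in\IZ/l_{\vec{r}}\IZ$ for which $(k_1+t,\ldots,k_n+t)$ is $\Ical$-good.

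The $\Ical$-goodness condition on the shifted tuple unfolds into the system of congruences
\[
t\equiv -k_{\Ical(p)}\Mod{p^{\nu_p(l_{\Ical(p)})}}, \qquad p\in\pi(l_{\vec{r}}).
\]
The moduli are powers of pairwise distinct primes and hence pairwise coprime, and the admissibility of $\Ical$ gives $\nu_p(l_{\Ical(p)})=\max_j{\nu_p(l_j)}$ for each $p\in\pi(l_{\vec{r}})$, so their product equals $\lcm(l_1,\ldots,l_n)=l_{\vec{r}}$. By the Chinese Remainder Theorem, this system has a unique solution $t$ modulo $l_{\vec{r}}$, which is precisely what is needed.

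The only real obstacle is bookkeeping: one has to check carefully that the CRT moduli multiply to exactly $l_{\vec{r}}$ (so that the uniqueness range matches the orbit length), and that the passage through $\iota_{\vec{r}}$ followed by unioning over $\vec{r}$ genuinely yields a CRL-list of $\psi$ in the sense of Definition \ref{crlListDef}. Neither step should require anything beyond the setup already in place; in particular, the count $|\Good_{\vec{r}}(\Ical)|=\left(\prod_{j=1}^n{l_j}\right)/l_{\vec{r}}$, which matches the known number of $\sfrak_{\vec{r}}$-cycles, provides a useful consistency check on the CRT computation.
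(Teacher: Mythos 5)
Your proof is correct and takes a mildly different route from the paper's. The paper first counts the $\Ical$-good tuples (via CRT, obtaining $\prod_j l_j / l_{\vec{r}}$, which matches the known cycle count) and then separately proves injectivity of the map from good tuples to cycles, by showing that a translation $t$ carrying one good tuple to another must be divisible by every $p^{\nu_p(l_{\Ical(p)})}$, hence by $l_{\vec{r}}$. You instead work orbit-by-orbit: for a given base point, you parametrize the orbit by $\IZ/l_{\vec{r}}\IZ$ and reduce the goodness condition on the shifted tuple to a system of congruences in $t$ whose moduli are the pairwise coprime prime powers $p^{\nu_p(l_{\Ical(p)})}$, multiplying to exactly $l_{\vec{r}}$ by admissibility, so CRT hands you existence and uniqueness of a good representative in a single stroke. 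Both proofs use the same arithmetic facts (admissibility forces $\prod_p p^{\nu_p(l_{\Ical(p)})}=l_{\vec{r}}$, the orbit length is $l_{\vec{r}}$, and CRT), but your version is more self-contained in that it does not need to invoke the cycle count as a separate fact and then argue surjectivity by counting; it establishes the bijection directly. The trade-off is that the paper's structure ("count both sides, then show distinctness") reuses the already-stated formula for the number of cycles and is mildly shorter to write. Your passage through $\iota_{\vec{r}}$ and the union over $\vec{r}$ to reach the CRL-list of $\psi$ is handled correctly and matches the paper's framing.
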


\begin{proof}
By definition and the Chinese Remainder Theorem, the number of $\Ical$-good tuples in $\prod_{j=1}^n{\IZ/l_j\IZ}$ is
\[
\frac{\prod_{j=1}^n{l_j}}{\prod_{p\in\pi(l_{\vec{r}})}{p^{\nu_p(l_{\Ical(p)})}}}=\frac{|\prod_{j=1}^n{\IZ/l_j\IZ}|}{l_{\vec{r}}},
\]
which is also the number of cycles of $\sfrak_{\vec{r}}$ on $\prod_{j=1}^n{\IZ/l_j\IZ}$. Hence, it suffices to show that different $\Ical$-good tuples lie on distinct cycles of $\sfrak_{\vec{r}}$. Let $\vec{k}=(k_1,k_2,\ldots,k_n)$ and $\vec{k'}=(k'_1,k'_2,\ldots,k'_n)$ be $\Ical$-good tuples that lie on the same cycle of $\sfrak_{\vec{r}}$. This means that there is a $t\in\IZ$ such that $k_j+t\equiv k'_j\Mod{l_j}$ for each $j=1,2,\ldots,n$. Now, let $p\in\pi(l_{\vec{r}})$. Since $t\equiv k'_{\Ical(p)}-k_{\Ical(p)}\Mod{l_{\Ical(p)}}$ and $\vec{k},\vec{k'}$ are $\Ical$-good, it follows that $t\equiv0\Mod{p^{\nu_p(l_{\Ical(p)})}}$. Because this holds for every $p\in\pi(l_{\vec{r}})$, we conclude that
\[
\lcm(l_1,l_2,\ldots,l_n)=l_{\vec{r}}=\prod_{p\in\pi(l_{\vec{r}})}{p^{\nu_p(l_{\Ical(p)})}}
\]
divides $t$, whence $k_j\equiv k'_j\Mod{l_j}$ for each $j=1,2,\ldots,n$. This means that $\vec{k}=\vec{k'}$, as required.
\end{proof}

\subsection{Affine discrete logarithms and cycle lengths}\label{subsec2P4}

Let $m\geq1$ be an integer, and let $a,b\in\IZ/m\IZ$ with $\gcd(a,m)=1$. We consider the affine permutation $A:x\mapsto ax+b$ of $\IZ/m\IZ$. Given $x,y\in\IZ/m\IZ$, we set
\[
\log_A^{(m)}(x,y):=
\begin{cases}
\infty, & \text{if there is no }k\in\IZ\text{ with }A^k(x)=y, \\
\min\{k\in\IN_0: A^k(x)=y\}, & \text{otherwise}.
\end{cases}
\]
\phantomsection\label{not117}
In this short subsection, we discuss how to compute $\log_A^{(m)}(x,y)$ and the cycle length of $x$ under $A$, which is closely related, as it is the minimal \emph{positive} integer $k$ such that $A^k(x)=x$ (while $\log_A^{(m)}(x,x)=0$). It is not surprising that modular discrete logarithms play an important role in this, because they are a special case of the notion $\log_A^{(m)}(x,y)$. Namely, the discrete logarithm of $x\in(\IZ/m\IZ)^{\ast}$ modulo $m$ with base $a\in(\IZ/m\IZ)^{\ast}$, written $\log_a^{(m)}(x)$\phantomsection\label{not118}, is equal to $\log_{\mu_a}^{(m)}(1,x)$.

In order to discuss the computational details, we make a case distinction.
\begin{itemize}
\item First, we assume that $a\equiv1\Mod{m}$, a simple case for which no discrete logarithms need to be computed. Indeed, one then has $A^k(x)=x+kb\equiv y\Mod{m}$ if and only if $kb\equiv y-x\Mod{m}$. That last congruence is solvable in $k$ if and only if $\gcd(b,m)\mid y-x$, in which case the congruence is equivalent to
\begin{equation}\label{canceledCong}
\frac{b}{\gcd(b,m)}k\equiv\frac{y-x}{\gcd(b,m)}\Mod{\frac{m}{\gcd(b,m)}}
\end{equation}
and has the minimal solution
\[
\left(\frac{y-x}{\gcd(b,m)}\cdot\inv_{m/\gcd(b,m)}\left(\frac{b}{\gcd(b,m)}\right)\right)\bmod{\frac{m}{\gcd(b,m)}} = \log_A^{(m)}(x,y).
\]
We note that in case $x=y$, the minimal \emph{positive} solution of congruence (\ref{canceledCong}), and thus the cycle length of $x$ under $A$ modulo $m$, is $m/\gcd(b,m)$.
\item Now we assume that $a\not\equiv1\Mod{m}$. Then $m>1$, and $a\not\equiv0\Mod{m}$ due to $\gcd(a,m)=1$, so we may assume that as an integer, $a>1$. We have $A^k(x)=y$ if and only if
\begin{align}\label{discreteLogEq1}
\notag &a^kx+\frac{a^k-1}{a-1}b\equiv y\Mod{m} \\
\notag \Leftrightarrow &a^k(a-1)x+(a^k-1)b\equiv (a-1)y\Mod{(a-1)m} \\
\Leftrightarrow &a^k((a-1)x+b)\equiv (a-1)y+b\Mod{(a-1)m}.
\end{align}
In order for congruence (\ref{discreteLogEq1}) to be solvable in $k$, it is necessary that $(a-1)x+b$ and $(a-1)y+b$ have the same additive order modulo $(a-1)m$, i.e., that
\begin{equation}\label{discreteLogEq2}
\gcd((a-1)x+b,(a-1)m)=\gcd((a-1)y+b,(a-1)m)=:\dfrak.
\end{equation}
If\phantomsection\label{not119} condition (\ref{discreteLogEq2}) is satisfied, then congruence (\ref{discreteLogEq1}) is equivalent to
\[
a^k\cdot\frac{(a-1)x+b}{\dfrak}\equiv\frac{(a-1)y+b}{\dfrak}\Mod{\frac{(a-1)m}{\dfrak}},
\]
i.e., to
\begin{equation}\label{canceledCong2}
a^k\equiv\frac{(a-1)y+b}{\dfrak}\cdot\inv_{(a-1)m/\dfrak}\left(\frac{(a-1)x+b}{\dfrak}\right)\Mod{\frac{(a-1)m}{\dfrak}},
\end{equation}
which shows that
\[
\log_A^{(m)}(x,y)=\log_a^{((a-1)m/\dfrak)}\left(\frac{(a-1)y+b}{\dfrak}\cdot\inv_{(a-1)m/\dfrak}\left(\frac{(a-1)x+b}{\dfrak}\right)\right),
\]
with the convention that $\log_a^{(m)}(x)=\infty$ if $x$ is not a power of $a$ modulo $m$. If $x=y$, then the right-hand side in congruence (\ref{canceledCong2}) simplifies to $1$, whence the cycle length of $x$ under $A$ equals the multiplicative order of $a$ modulo $(a-1)m/\dfrak$.
\end{itemize}
The upshot of this discussion is that $\log_A^{(m)}(x,y)$ and the cycle length of $x$ under $A$ modulo $m$ can be computed efficiently if one has efficient algorithms for computing discrete logarithms and multiplicative orders of units in $(\IZ/m\IZ)^{\ast}$. Hence, $\log_A^{(m)}(x,y)$ can be computed efficiently on a quantum computer. Indeed, Shor showed that such computers admit efficient algorithms both for computing discrete logarithms and for integer factorization \cite{Sho94a}, the latter of which is sufficient to compute element orders in $(\IZ/m\IZ)^{\ast}$ efficiently; in fact, all one needs for that is an explicit factorization of the Euler totient function value $\phi(m)$, see also the proof of Lemma \ref{complexitiesLem2}(2).

\section{Functional graphs of generalized cyclotomic mappings}\label{sec3}

Let $f$ be a generalized cyclotomic mapping of $\IF_q$ of index $d$. From the introduction, we recall our notation $C_i$ for $i\in\{0,1,\ldots,d\}$, where $C_d=\{0_{\IF_q}\}$, and $C_i=\omega^iC$ for $i<d$ is a coset of the index $d$ subgroup $C$ of $\IF_q^{\ast}=\langle\omega\rangle$. Moreover, we recall that for each $i\in\{0,1,\ldots,d-1\}$, we have a natural bijection $\iota_i:\IZ/s\IZ\rightarrow C_i=\omega^iC$, $x\mapsto\omega^{i+dx}$, by virtue of which we view $C_i$ as a copy of $\IZ/s\IZ$. As long as $f$ does not map $C_i$ constantly to $C_d=\{0\}$, this allows us to view the restriction $f_{\mid C_i}$ as an affine function $A_i:\IZ/s\IZ\rightarrow\IZ/s\IZ$. Finally, we recall the induced function $\overline{f}:\{0,1,\ldots,d\}\rightarrow\{0,1,\ldots,d\}$ (the unique function such that $f(C_i)\subseteq C_{\overline{f}(i)}$ for each $i$).

Our goal in this section is to describe methods through which the isomorphism type of the functional graph $\Gamma_f$ can be understood, following the approach outlined in the introduction.

\subsection{Periodic points and CRL-lists}\label{subsec3P1}

Understanding the periodic points and finding a CRL-list of $f$ can be reduced to the corresponding tasks for affine maps of finite cyclic groups. We observe that periodic points of $f$ are necessarily contained in \enquote{periodic blocks} (i.e., blocks $C_i$ such that $i$ is periodic under $\overline{f}$). We assume that it is an easy task (due to $d$ being sufficiently small) to find a CRL-list $\overline{\Lcal}$\phantomsection\label{not120} for $\overline{f}$. We determine the periodic points of $f$ according to the \enquote{block cycle} of $\overline{f}$ they lie on, so let $(i,\ell)\in\overline{\Lcal}$.

If $i=d$, then $\ell=1$, and the only point to consider is the field element $0$, which is by definition periodic of cycle length $1$ under $f$. We note the contribution $\Lcal_d:=\{(0,1)\}$ to the CRL-list $\Lcal$ of $f$ we are building.

Now we assume that $i<d$, and that the cycle of $i$ under $\overline{f}$ is $(i_0,i_1,\ldots,i_{\ell-1})$\phantomsection\label{not121}\phantomsection\label{not122} with $i_0=i$. A point $x\in C_i$ is periodic under $f$ if and only if it is periodic under the iterate $f^{\ell}$, which stabilizes $C_i$ and acts on the corresponding copy of $\IZ/s\IZ$ via the affine map $\Acal_i:=A_{i_0}A_{i_1}\cdots A_{i_{\ell-1}}$\phantomsection\label{not123}. In other words, the periodic points of $f$ in $C_i$ are in bijection (via $\iota_i$) to the periodic points of $\Acal_i$ in $\IZ/s\IZ$, and are thus characterized by Lemma \ref{periodicCharLem}. We note that the set of periodic points of $f$ in a different coset $C_{i_t}$ of the same $\overline{f}$-cycle is simply the iterated set image $f^t(\per(f_{\mid C_i}))$. Moreover, the cycle length of a periodic point $x\in C_i$ under $f$ is the $\ell$-fold of its cycle length under $\Acal_i$. It follows that if $\Lcal'_i\subseteq\IZ/s\IZ\times\IN^+$\phantomsection\label{not124} is a CRL-list of $\Acal_i$ (which we can determine as described in Subsection \ref{subsec2P3}), then $\Lcal_i:=\{(\iota_i(r),\ell\cdot l): (r,l)\in\Lcal'_i\}$\phantomsection\label{not125} is CRL-list of the restriction of $f$ to the entire \enquote{coset cycle spanned by $C_i$} (i.e., to the set $\bigcup_{t=0}^{\ell-1}{C_{i_t}}$). In summary, we obtain the following proposition.

\begin{propposition}\label{crlListConstructProp}
Let $\overline{\Lcal}$ be a CRL-list for $\overline{f}$. We set $\Lcal_d:=\{(0_{\IF_q},1)\}$. Moreover, for $i<d$ with $(i,\ell)\in\overline{\Lcal}$,  we define $\Lcal_i$ as follows. Let $(i_0,i_1,\ldots,i_{\ell-1})$ with $i_0=i$ be the $\overline{f}$-cycle of $i$, and let $\Lcal'_i$ be a CRL-list of the affine map $\Acal_i=A_{i_0}A_{i_1}\cdots A_{i_{\ell-1}}$ of $\IZ/s\IZ$. Then we set $\Lcal_i:=\{(\iota_i(r),\ell\cdot l): (r,l)\in\Lcal'_i\}$. With this definition of $\Lcal_i$ for each $(i,\ell)\in\overline{\Lcal}$, we have that
\[
\Lcal:=\bigcup_{(i,\ell)\in\overline{\Lcal}}{\Lcal_i}.
\]
is a CRL-list of $f$.
\end{propposition}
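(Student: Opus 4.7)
The plan is to verify the two defining properties of a CRL-list for $\Lcal$ (Definition \ref{crlListDef}) by partitioning $\per(f)$ according to the $\overline{f}$-cycle on which the relevant index lies, and then handling each $\overline{f}$-cycle separately. First I would observe that if $x\in\per(f)$, then the sequence of blocks $C_{\overline{f}^n(i(x))}$ (where $i(x)$ denotes the index of the coset containing $x$) must eventually be periodic, forcing $i(x)$ itself to be $\overline{f}$-periodic. Hence $\per(f)$ is a disjoint union, over the $\overline{f}$-cycles $(i_0,i_1,\ldots,i_{\ell-1})$ enumerated by $\overline{\Lcal}$ (including the trivial cycle on $d$), of the sets $\per(f)\cap\bigcup_{t=0}^{\ell-1}C_{i_t}$, and each of these sets is $f$-invariant. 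Thus it suffices to check, for each $(i,\ell)\in\overline{\Lcal}$, that $\Lcal_i$ is a CRL-list for the restriction of $f$ to $\bigcup_{t=0}^{\ell-1}C_{i_t}$ (where in the case $i=d$ this is trivial).

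For the case $i<d$, I would carry out the following key steps in order. First, since $f$ maps $C_{i_t}$ into $C_{i_{t+1}}$ cyclically, a point $x\in C_{i_0}$ is $f$-periodic if and only if it is $f^\ell$-periodic. Via the bijection $\iota_{i_0}$, the restriction $f^\ell_{\mid C_{i_0}}$ corresponds to $\Acal_{i_0}=A_{i_0}A_{i_1}\cdots A_{i_{\ell-1}}$, so $\iota_{i_0}$ restricts to a bijection between $\per(\Acal_{i_0})\subseteq\IZ/s\IZ$ and $\per(f)\cap C_{i_0}$. Second, each $f$-cycle in $\bigcup_{t=0}^{\ell-1}C_{i_t}$ meets $C_{i_0}$ in a single $f^\ell$-cycle, and conversely each $f^\ell$-cycle in $C_{i_0}$ is obtained by intersecting a unique $f$-cycle with $C_{i_0}$. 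This gives a bijection between $f$-cycles in $\bigcup_{t=0}^{\ell-1}C_{i_t}$ and $\Acal_{i_0}$-cycles in $\IZ/s\IZ$, and under this bijection, a set of cycle representatives for $\Acal_{i_0}$ pushes forward via $\iota_{i_0}$ to a set of cycle representatives for $f$ on $\bigcup_{t=0}^{\ell-1}C_{i_t}$. This verifies property (1) of Definition \ref{crlListDef} for $\Lcal_i$.

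For property (2), I would argue the cycle-length relation: if $r\in C_{i_0}$ has cycle length $l$ under $\Acal_{i_0}$, then its cycle length $l^\ast$ under $f$ satisfies $l^\ast\mid\ell\cdot l$ and also $\ell\mid l^\ast$ (the latter because the $f$-cycle of $r$ meets each of $C_{i_0},\ldots,C_{i_{\ell-1}}$), whence $l^\ast=\ell\cdot l'$ for some $l'\mid l$; but $(f^\ell)^{l'}(r)=r$ then forces $l\mid l'$, so $l'=l$ and $l^\ast=\ell\cdot l$, matching the second entry in the pairs of $\Lcal_i$. Taking the union of the $\Lcal_i$ over $(i,\ell)\in\overline{\Lcal}$ then yields a CRL-list for $f$, since we have covered each $f$-cycle on $\per(f)$ exactly once with the correct cycle length recorded.

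The only step that requires a bit of care is the cycle-length bookkeeping for $i<d$, namely the equality $l^\ast=\ell\cdot l$; everything else is a straightforward bookkeeping argument, relying on the $f$-invariance of $\bigcup_{t=0}^{\ell-1}C_{i_t}$ and on the fact that $\iota_{i_0}$ conjugates $f^\ell_{\mid C_{i_0}}$ to $\Acal_{i_0}$, which is immediate from the definition of $\Acal_i$ given in the paragraph preceding the proposition.
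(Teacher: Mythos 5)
Your proof is correct and follows the same approach as the paper's discussion in Subsection \ref{subsec3P1}: partition $\per(f)$ by the $\overline{f}$-cycle of the coset index, reduce each block cycle to the $\Acal_i$-level via the restriction of $f^\ell$ to $C_i$ under $\iota_i$, and push the CRL-list forward. The divisibility argument you give for the cycle-length relation $l^\ast=\ell\cdot l$ spells out a step the paper asserts without proof, but this is a matter of detail rather than a different route.
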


\subsection{The induced subgraph on the periodic cosets}\label{subsec3P2}

Our next goal is to understand the trees $\Tree_{\Gamma_f}(x)$ in $\Gamma_f$ above periodic points $x$ of $f$, in the sense of Definition \ref{treeAboveDef}. We remind the reader that $\Tree_{\Gamma_f}(x)$ is defined for arbitrary vertices $x$ of $\Gamma_f$, not just periodic ones. In general, it is advantageous to take a recursive approach, understanding $\Tree_{\Gamma_f}(x)$ for vertices $x$ according to their depth in $\Gamma_f$, starting with leaves and working toward periodic vertices, which are at the end of the recursion. Before we carry this out, however, we must understand the induced subgraph $\Gamma_{\per}$ of $\Gamma_f$ on the union of all periodic blocks $C_i$ (i.e., blocks where $i$ is periodic under $\overline{f}$) as a stepping stone.

We observe that $\Gamma_{\per}$ is the functional graph of the restriction $f_{\per}$\phantomsection\label{not126} of $f$ to the union of all periodic blocks. Just like $f$ has the induced function $\overline{f}$ on the index set $\{0,1,\ldots,d\}$, the restriction $f_{\per}$ has the induced function $\overline{f_{\per}}$\phantomsection\label{not127}, which is the restriction of $\overline{f}$ to its set of periodic points. Hence, $\overline{f_{\per}}$ is a permutation of its domain of definition, a fact that is important for our argument.

Similarly to the situation described in Proposition \ref{rigidProp}, if we know, for a given periodic vertex $x$ of $\Gamma_{\per}=\Gamma_{f_{\per}}$, that each $\Tree_{\Gamma_{\per}}(y)$, where $y$ is a child of $x$ in $\Gamma_{\per}^{\ast}$, has rigid procreation, and we know the first $h=h(y)$ procreation numbers of each child $y$ where $h$ is the height of $\Tree_{\Gamma_{\per}}(y)$, then this characterizes the isomorphism type of each $\Tree_{\Gamma_{\per}}(y)$, and thus of $\Tree_{\Gamma_{\per}}(x)$, uniquely. And indeed, while $\Gamma_{\per}^{\ast}$ itself need not have rigid procreation in the more general setting we are considering here, the trees we just referred to do have it. More specifically, we have the following result (in which the exclusion of $i=d$ is without loss of generality, because $\Tree_{\Gamma_{\per}}(0_{\IF_q})$ is trivial anyway).

\begin{theoremm}\label{cosetPermTheo}
Let $f_{\per}$ and $\overline{f_{\per}}$ be as above. Moreover, let $i\in\dom(\overline{f_{\per}})=\per(\overline{f})$ with $i<d$, and let $(i_0,i_1,\ldots,i_{\ell-1})$ be the cycle of $i=i_0$ under $\overline{f_{\per}}$. We extend the notation $i_t$ to arbitrary $t\in\IZ$ by reducing $t$ modulo $\ell$ (so that, for instance, $i_{\ell}=i_0$). For $t=0,1,\ldots,\ell-1$, say $A_{i_t}:z\mapsto \alpha_{i_t}z+\beta_{i_t}$\phantomsection\label{not128}\phantomsection\label{not129} is the affine map of $\IZ/s\IZ$ that describes how $f_{\per}$ (or, equivalently, $f$) maps from $C_{i_t}$ to $C_{i_{t+1}}$, and let $\varphi_{i_t}:=\mu_{\alpha_{i_t}}:z\mapsto \alpha_{i_t}z$\phantomsection\label{not130}, be the associated group endomorphism of $\IZ/s\IZ$. Then the following holds for any positive integer $k$. If $x\in C_i$ has at least $k$ successor generations in $\Gamma_{\per}^{\ast}$ (we note that those successor generations need not be entirely contained in $C_i$), then
\[
\proc^{(\Gamma_{\per}^{\ast})}_k(x)=\left|\ker\left(\prod_{j=0}^{k-1}{\varphi_{i_{-k+j}}}\right):\ker\left(\prod_{j=0}^{k-2}{\varphi_{i_{-k+j}}}\right)\right|=\frac{\gcd\left(\prod_{j=0}^{k-1}{\alpha_{i_{-k+j}}},s\right)}{\gcd\left(\prod_{j=0}^{k-2}{\alpha_{i_{-k+j}}},s\right)},
\]
independently of $x$.
\end{theoremm}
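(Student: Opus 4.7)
The plan is to adapt the induction argument of Theorem~\ref{rigidTheo} to the current setting. Concretely, I would induct on $k\geq 1$ to prove simultaneously that (a) the asserted formula holds for $\proc_k^{(\Gamma_\per^\ast)}(x)$, and (b) the value is independent of the choice of $x\in C_i$ having at least $k$ successor generations in $\Gamma_\per^\ast$. The key observation is that the children of $x$ in $\Gamma_\per^\ast$ all lie in $C_{i_{-1}}$ (since $\overline{f_\per}$ is a permutation of $\per(\overline{f})$, so the only $f_\per$-preimages of a point in $C_{i_0}$ come from the unique block $C_{i_{-1}}$ that $\overline{f_\per}$ sends to $C_{i_0}$), and more generally the length-$k$ directed paths in $\Gamma_\per^\ast$ starting at $x$ correspond to the iterated $f_\per$-preimages of $x$ in $C_{i_{-k}}$.

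The engine of the proof is to count these length-$k$ paths in two ways. On the one hand, via the identifications $\iota_{i_0}$ and $\iota_{i_{-k}}$, they correspond to preimages of $\iota_{i_0}^{-1}(x)\in\IZ/s\IZ$ under the composite affine map
\[
\Acal := A_{i_{-k}}A_{i_{-k+1}}\cdots A_{i_{-1}} : \IZ/s\IZ\to\IZ/s\IZ,
\]
whose linear coefficient is $\prod_{j=0}^{k-1}\alpha_{i_{-k+j}}$. Since $x$ has at least $k$ successor generations, this preimage set is non-empty and therefore a coset of $\ker(\prod_{j=0}^{k-1}\varphi_{i_{-k+j}})=\ker(\mu_{\prod_{j=0}^{k-1}\alpha_{i_{-k+j}}})$, of size $\gcd(\prod_{j=0}^{k-1}\alpha_{i_{-k+j}},s)$. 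On the other hand, the number of such paths equals
\[
\sum_{w\in f_\per^{-1}(x)} \bigl|\,f_\per^{-(k-1)}(w)\,\bigr|,
\]
and exactly $\proc_k^{(\Gamma_\per^\ast)}(x)$ of these children $w$ have non-empty $(k-1)$-st preimage set.

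For each such $w\in C_{i_{-1}}$, the induction hypothesis applies (shifting the cycle by one position): the relevant product of $\alpha$'s becomes $\prod_{j=0}^{k-2}\alpha_{i_{-1-(k-1)+j}}=\prod_{j=0}^{k-2}\alpha_{i_{-k+j}}$, so $|f_\per^{-(k-1)}(w)|=\gcd(\prod_{j=0}^{k-2}\alpha_{i_{-k+j}},s)$, a value independent of the particular $w$. (The base case $k=1$ reduces to the preimage count of $x$ under $A_{i_{-1}}$, which is $\gcd(\alpha_{i_{-1}},s)$, matching the formula.) Equating the two counts yields
\[
\proc_k^{(\Gamma_\per^\ast)}(x) \cdot \gcd\Bigl(\textstyle\prod_{j=0}^{k-2}\alpha_{i_{-k+j}},s\Bigr) = \gcd\Bigl(\textstyle\prod_{j=0}^{k-1}\alpha_{i_{-k+j}},s\Bigr),
\]
which is the asserted ratio formula; the first equality in the theorem (in terms of kernel indices) is then immediate from $|\ker(\mu_a)|=\gcd(a,s)$ in $\IZ/s\IZ$.

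The main technical obstacle is bookkeeping the backward indexing along the $\overline{f_\per}$-cycle and the shift that occurs when passing from $x$ to a child $w\in C_{i_{-1}}$; once this is set up carefully, the inductive step is a routine double-counting. A minor point worth verifying is that the preimage of a non-empty set under an affine map of $\IZ/s\IZ$ is indeed a coset of the kernel of its linear part (so that all fibers have the same size $\gcd(\alpha,s)$), which is standard but should be invoked explicitly to justify the step from "non-emptiness" (given by the successor-generation hypothesis) to the exact count.
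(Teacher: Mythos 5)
Your proof is correct and follows essentially the same approach as the paper's: induction on $k$, with the inductive step counting length-$k$ paths from $x$ in $\Gamma_{\per}^\ast$ in two ways (as the size of a non-empty affine preimage, hence a coset of the relevant kernel; and as $\proc_k(x)$ times the common count of length-$(k-1)$ paths from each contributing child in $C_{i_{-1}}$). One very small imprecision: where you write that the induction hypothesis gives $|f_{\per}^{-(k-1)}(w)| = \gcd(\prod_{j=0}^{k-2}\alpha_{i_{-k+j}},s)$, what the induction hypothesis literally supplies is the procreation numbers $\proc_h(w)$ for $h<k$; you either need the telescoping product $\prod_{j=0}^{k-2}\proc_{i_{-1-j},k-1-j}$ (which is what the paper does), or -- more directly -- the same non-empty-preimage-is-a-coset argument you already used for $x$, applied to the composite $A_{i_{-k}}\cdots A_{i_{-2}}$ at $w$, which does not actually need the induction hypothesis at all.
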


\begin{proof}
This theorem can be seen as a generalization of Theorem \ref{rigidTheo} (which corresponds to the case $\ell=1$), and likewise, its proof is a generalization of that of Theorem \ref{rigidTheo}. We proceed by induction on $k$. For $k=1$, we observe that $C_{\overline{f_{\per}}^{-1}(i)}=C_{i_{-1}}$ is the unique coset which $f_{\per}$ maps to $C_i$. Hence
\[
\proc_1^{(\Gamma_{\per}^{\ast})}(x)=\#\text{ children of }x\text{ in }\Gamma_{\per}^{\ast}=|\{y\in\IZ/s\IZ: A_{i_{-1}}(y)=x\}|=|\ker(\varphi_{i_{-1}})|,
\]
which implies the statement for $k=1$ since an empty product of group endomorphisms is by definition the identity function $\id$.

Now we assume that $k\geq2$ and that the statement holds up to $k-1$ for points in $C_{i_t}$ where $t\in\IZ$ is arbitrary. For $h=1,2,\ldots,k-1$ and $t\in\IZ$, we denote by $\proc_{i_t,h}$\phantomsection\label{not131} the common procreation number $\proc^{(\Gamma_{\per}^{\ast})}_h(y)$ for all vertices $y\in C_{i_t}$ with at least $h$ successor generations in $\Gamma_{\per}^{\ast}$. For each $h=1,2,\ldots,k-1$, the number of endpoints of paths of length $h$ in $\Gamma_{\per}^{\ast}$ starting at a vertex in $C_{i_t}$ with at least $h$ successor generations in $\Gamma_{\per}^{\ast}$ is $|\ker(\prod_{j=0}^{h-1}{\varphi_{i_{t-h+j}}})|$, and an easy induction on $h$ shows that it is also equal to $\prod_{j=0}^{h-1}{\proc_{i_{t-j},h-j}}$. Using this, it follows that for each $x\in C_i=C_{i_0}$ with at least $k$ successor generations, one has
\begin{align*}
&\proc^{(\Gamma_{\per}^{\ast})}_k(x)\cdot\left|\ker\left(\prod_{j=0}^{k-2}{\varphi_{i_{-k+j}}}\right)\right|=\proc^{(\Gamma_{\per}^{\ast})}_k(x)\cdot\prod_{j=0}^{k-2}{\proc_{i_{-j-1},k-1-j}}= \\
&(\#\text{ endpoints of paths of length }k\text{ starting at }x)=\left|\ker\left(\prod_{j=0}^{k-1}{\varphi_{i_{-k+j}}}\right)\right|,
\end{align*}
from which the asserted formula for $\proc^{(\Gamma_{\per}^{\ast})}_k(x)$ follows readily.
\end{proof}

The following example highlights some properties that do \emph{not} hold in general.

\begin{exxample}\label{cosetPermEx}
Let $q=13$, $d=2$ (thus $s=6$), and $\overline{f}=(0,1)(2)$, so that $\Gamma_f=\Gamma_{\per}$. Moreover, we assume that $A_0(z)=z$ and $A_1(z)=2z$. Then $f_{\mid\IF_q^{\ast}}$ has the following functional graph, in which we denote the point in $C_i$ corresponding to $z\in\IZ/6\IZ$ by $(z,i)$:
\begin{center}
\begin{tikzpicture}
\node (p1) at (0,0) {$(\overline{0},0)$};
\node (p2) at (2,0) {$(\overline{0},1)$};
\node (c11) at (0,1) {$(\overline{3},1)$};
\node (c12) at (0,2) {$(\overline{3},0)$};
\node (p3) at (4,0) {$(\overline{2},0)$};
\node (p4) at (6,0) {$(\overline{2},1)$};
\node (p5) at (8,0) {$(\overline{4},0)$};
\node (p6) at (10,0) {$(\overline{4},1)$};
\node (c31) at (4,1) {$(\overline{1},1)$};
\node (c32) at (4,2) {$(\overline{1},0)$};
\node (c51) at (8,1) {$(\overline{5},1)$};
\node (c52) at (8,2) {$(\overline{5},0)$};
\draw[->]
(p1) edge (p2)
(p2) edge[bend left=30] (p1)
(p3) edge (p4)
(p4) edge (p5)
(p5) edge (p6)
(p6) edge[bend left=30] (p3)
(c11) edge (p1)
(c12) edge (c11)
(c31) edge (p3)
(c32) edge (c31)
(c51) edge (p5)
(c52) edge (c51);
\end{tikzpicture}
\end{center}
We note the following.
\begin{itemize}
\item The rooted trees above periodic vertices in $C_0$ are \emph{not} isomorphic to the rooted trees above periodic vertices in $C_1$.
\item Transient vertices in $C_1$ have strictly larger tree height in $\Gamma_f$ than periodic vertices in $C_1$. More specifically, the transient vertices in $C_1$ are just those with tree height $1$, the periodic vertices are those with tree height $0$ in $\Gamma_f$.
\item The set of possible tree heights in $\Gamma_f$ above vertices in $C_0$ is $\{0,2\}$, which is \emph{not} an integer interval.
\end{itemize}
\end{exxample}

\subsection{The rooted trees}\label{subsec3P3}

We describe a recursive approach for understanding $\Tree_{\Gamma_f}(x)$ for each vertex $x\in\IF_q=\V(\Gamma_f)$. We proceed in three steps, according to the unique $i\in\{0,1,\ldots,d\}$ such that $x\in C_i$. Unless $i=d$, our goal is to find an arithmetic partition $\Pcal_i$ of $\IZ/s\IZ$, corresponding to a partition of $C_i$ via the bijection $\iota_i$ (that we also call an \emph{arithmetic partition of $C_i$}), such that for vertices $x\in C_i$ from a common block $\Bcal(\Pcal_i,\vec{\nu}^{(\Pcal_i)})$ of that partition, the isomorphism type of $\Tree_{\Gamma_f}(x)$ is constant, denoted by $\Tree_i(\Pcal_i,\vec{\nu}^{(\Pcal_i)})$. We also want to understand $\Tree_i(\Pcal_i,\vec{\nu}^{(\Pcal_i)})$ in terms of $\vec{\nu}^{(\Pcal_i)}$ explicitly, and verify that for fixed $d$, the maximum (arithmetic) complexity of $\Pcal_i$ (in the sense of Definition \ref{sArithDef}(3)) is in $O(d^2\mpe(q-1))\subseteq O(d^2\log{q})$, where $\mpe(m):=\max_p{\nu_p(m)}$ for $m\in\IN^+$ (and $p$ ranges over all primes). First, we introduce a few notations.
\begin{itemize}
\item For $M\subseteq\IF_q$\phantomsection\label{not132} and $x\in\IF_q$, the notation $\Tree_{\Gamma_f}(x,M)$\phantomsection\label{not133} denotes the digraph isomorphism type of the subgraph of $\Gamma_f$ that is a rooted tree with root $x$, obtained by attaching to that root all rooted trees $\Tree_{\Gamma_f}(y)$ where $y$ is an $f$-transient pre-image of $x$ with $y\in M$. We note that $\Tree_{\Gamma_f}(x,\IF_q)=\Tree_{\Gamma_f}(x)$.
\item Let $M\subseteq\IF_q$, and let us assume that $\Pcal$ is an arithmetic partition of $C_i$ with a fixed sequence of spanning congruences such that for $x\in C_i$, the rooted tree isomorphism type $\Tree_{\Gamma_f}(x,M)$ only depends on the block $\Bcal(\Pcal,\vec{\nu})$ of $\Pcal$ in which $x$ lies (but not on $x$ itself). Then we denote that isomorphism type by $\Tree_i(\Pcal,M,\vec{\nu})$\phantomsection\label{not134}. We also set $\Tree_i(\Pcal,\vec{\nu}):=\Tree_i(\Pcal,\IF_q,\vec{\nu})$\phantomsection\label{not135}.
\item If $\Ifrak_1,\Ifrak_2,\ldots,\Ifrak_N$ are isomorphism types of rooted trees, then their \emph{sum}\phantomsection\label{term49} $\Ifrak_1+\Ifrak_2+\cdots+\Ifrak_N$ is defined as the rooted tree isomorphism type obtained by glueing disjoint copies of the $\Ifrak_j$ together at their roots. This addition turns the class of rooted tree isomorphism types into a class-sized monoid, the neutral element of which is the trivial rooted tree isomorphism type (a single vertex without arcs).
\item If $\Ifrak$ is a rooted tree isomorphism type, we denote by $\Ifrak^+$\phantomsection\label{not136} the rooted tree isomorphism type obtained by connecting a copy of $\Ifrak$ to a new root via an arc from the old to the new root. For example, iterating this operation starting from the trivial rooted tree isomorphism type, one obtains those finite digraphs that are directed paths.
\item If $\Ifrak$ is a rooted tree isomorphism type and $n$ is a non-negative integer, we define the \emph{multiple}\phantomsection\label{term50} $n\cdot\Ifrak=n\Ifrak$ as follows recursively. $0\Ifrak$ is the trivial rooted tree isomorphism type, and $(n+1)\Ifrak:=n\Ifrak+\Ifrak$.
\item In view of the previous two bullet points, non-negative integer linear combinations $n_1\Ifrak_1+n_2\Ifrak_2+\cdots+n_N\Ifrak_N$ of rooted tree isomorphism types are well-defined.
\item If $\Xcal_1,\ldots\Xcal_n$\phantomsection\label{not137} are arithmetic partitions of $\IZ/m\IZ$, then $\bigwedge_{k=1}^n{\Xcal_k}$\phantomsection\label{not137P5} denotes the infimum\phantomsection\label{term51} of the $\Xcal_k$ in the lattice of all partitions of $\IZ/m\IZ$ (i.e., the roughest common refinement of the $\Xcal_k$). Equivalently, if a spanning $m$-congruence sequence is fixed for each $\Xcal_k$, then $\bigwedge_{k=1}^n{\Xcal_k}$ is the arithmetic partition of $\IZ/m\IZ$ that is spanned by the concatenation of those sequences.
\end{itemize}

For the first step of our approach, we consider the case where $i\not=d$ and $C_i$ is \emph{not} a periodic coset (let us call such cosets \emph{transient}). For this case, we proceed by recursion on the height of $\Tree_{\Gamma_{\overline{f}}}(i)$. The base of the recursion is when $i$ is a leaf in $\Gamma_{\overline{f}}$. Then $x\in C_i$ is a leaf in $\Gamma_f$, i.e., $x\notin\im(f)$\phantomsection\label{not138}. This means that $\Tree_{\Gamma_f}(x)$ consists of the single vertex $x$ and has no arcs. Therefore, we may choose $\Pcal_i$ as the trivial partition $\Pcal(\emptyset)=\{\IZ/m\IZ\}$ of $\IZ/m\IZ$, and $\Tree_i(\Pcal_i,\emptyset)$ as the trivial rooted tree isomorphism type. We note that $\AC(\Pcal_i)$, the complexity of $\Pcal_i$, is $0=\vfrak_i$\phantomsection\label{not139} where, for a general $j\in\{0,1,\ldots,d\}$, we set $\vfrak_j:=|\V(\Tree_{\Gamma_{\overline{f}}}(j))|-1$, the number of vertices that are strictly above $j$ in the corresponding subtree of $\Gamma_{\overline{f}}$.

Now we assume that $C_i$ is transient, that the height of $\Tree_{\Gamma_{\overline{f}}}(i)$ is $h\geq1$, and that all transient cosets where that height is less than $h$ have been \enquote{taken care of} via arithmetic partitions $\Pcal_j$ such that $\AC(\Pcal_j)\leq\vfrak_j$. In particular, if $\overline{f}^{-1}(\{i\})=\{j_1,j_2,\ldots,j_K\}$, then for each $t=1,2,\ldots,K$, we have an arithmetic partition $\Pcal_{j_t}$ of $C_{j_t}$ with an explicit sequence of spanning $s$-congruences of length $m_{j_t}\leq\vfrak_{j_u}$\phantomsection\label{not140} such that the isomorphism type $\Tree_{\Gamma_f}(y)$ is the same for all vertices $y\in C_{j_t}$ chosen from a common block $\Bcal(\Pcal_{j_t},\vec{\nu}^{(\Pcal_{j_t})})$ of $\Pcal_{j_t}$, and we understand each such isomorphism type $\Tree_{j_t}(\Pcal_{j_t},\vec{\nu}^{(\Pcal_{j_t})})$ explicitly. Now, because each pre-image $y$ of $x\in C_i$ under $f$ (i.e., each child of $x$ in $\Gamma_f^{\ast}$) must lie in one of the cosets $C_{j_t}$ for $t=1,2,\ldots,K$, it follows that
\[
\Tree_{\Gamma_f}(x)=\sum_{t=1}^K{\Tree_{\Gamma_f}(x,C_{j_t})}.
\]
Moreover, for fixed $t\in\{1,2,\ldots,K\}$, we can write
\[
\Tree_{\Gamma_f}(x,C_{j_t})=\sum_{\vec{\nu}^{(\Pcal_{j_t})}\in\{\emptyset,\neg\}^{m_{j_t}}}{(|f^{-1}(\{x\})\cap\Bcal(\Pcal_{j_t},\vec{\nu}^{(\Pcal_{j_t})})|\cdot\Tree_{j_t}(\Pcal_{j_t},\vec{\nu}^{(\Pcal_{j_t})})^+)}.
\]
Let us consider the arithmetic partition $\Pcal'_{j_t}:=\Pfrak'(\Pcal_{j_t},A_{j_t})$\phantomsection\label{not141} of $C_i$ with its explicit spanning $s$-congruence sequence of length $m_{j_t}+1$ from Lemma \ref{masterLem}. If $x$ lies in the block $\Bcal(\Pcal'_{j_t},\vec{\nu}^{(\Pcal'_{j_t})})$ for some fixed $\vec{\nu}^{(\Pcal'_{j_t})}\in\{\emptyset,\neg\}^{m_{j_t}+1}$, then
\[
|f^{-1}(\{x\})\cap\Bcal(\Pcal_{j_t},\vec{\nu}^{(\Pcal_{j_t})})|=\sigma_{\Pcal_{j_t},A_{j_t}}(\vec{\nu}^{(\Pcal_{j_t})},\vec{\nu}^{(\Pcal'_{j_t})})
\]
by Lemma \ref{masterLem}, and thus
\[
\Tree_{\Gamma_f}(x,C_{j_t})=\sum_{\vec{\nu}^{(\Pcal_{j_t})}\in\{\emptyset,\neg\}^{m_{j_t}}}{(\sigma_{\Pcal_{j_t},A_{j_t}}(\vec{\nu}^{(\Pcal_{j_t})},\vec{\nu}^{(\Pcal'_{j_t})})\cdot\Tree_{j_t}(\Pcal_{j_t},\vec{\nu}^{(\Pcal_{j_t})})^+)},
\]
independently of $x$ itself. Now, let us set $\Pcal_i:=\bigwedge_{t=1}^K{\Pcal'_{j_t}}$, viewed as an arithmetic partition of $C_i$ with a spanning sequence of length $m_i:=\sum_{t=1}^K{(m_{j_t}+1)}$. We can view each logical sign tuple $\vec{\nu}^{(\Pcal_i)}\in\{\emptyset,\neg\}^{m_i}$ as a concatenation of logical sign tuples $\vec{\nu}^{(\Pcal'_{j_t})}\in\{\emptyset,\neg\}^{m_{j_t}+1}$ for $t=1,2,\ldots,K$, and if $x\in C_i$ lies in the block $\Bcal(\Pcal_i,\vec{\nu}^{(\Pcal_i)})$ of $\Pcal_i$, then $x$ also lies in the block $\Bcal(\Pcal_{j_t},\vec{\nu}^{(\Pcal'_{j_t})})$ of $\Pcal'_{j_t}$ for each $t=1,2,\ldots,K$, whence
\[
\Tree_{\Gamma_f}(x)=\sum_{t=1}^K\sum_{\vec{\nu}^{(\Pcal_{j_t})}\in\{\emptyset,\neg\}^{m_{j_t}}}{(\sigma_{\Pcal_{j_t},A_{j_t}}(\vec{\nu}^{(\Pcal_{j_t})},\vec{\nu}^{(\Pcal'_{j_t})})\cdot\Tree_{j_t}(\Pcal_{j_t},\vec{\nu}^{(\Pcal_{j_t})})^+)},
\]
independently of $x$ itself, as required. Moreover, we note that
\[
\AC(\Pcal_i)\leq\sum_{t=1}^K{\AC(\Pcal'_{j_t})}\leq\sum_{t=1}^K{(\vfrak_{j_t}+1)}=\vfrak_i.
\]
In summary, we obtain the following result.

\begin{propposition}\label{transientCosetsProp}
For each $\overline{f}$-transient $i\in\{0,1,\ldots,d-1\}$, the arithmetic partition $\Pcal_i$ of $C_i$ together with an explicit spanning sequence of $s$-congruences of length $m_i$ and associated rooted tree isomorphism types $\Tree_i(\Pcal_i,\vec{\nu}^{(\Pcal_i)})$ for $\vec{\nu}^{(\Pcal_i)}\in\{\emptyset,\neg\}^{m_i}$ can be defined as follows by recursion on $h_i:=\height(\Tree_{\Gamma_{\overline{f}}}(i))$\phantomsection\label{not142}.
\begin{enumerate}
\item If $h_i=0$, we may set
\begin{enumerate}
\item $\Pcal_i:=\Pcal(\emptyset)$,
\item $m_i:=0$, and
\item $\Tree_i(\Pcal_i,\emptyset)$ to be the trivial rooted tree isomorphism type.
\end{enumerate}
\item If $h_i\geq1$, we let $\overline{f}^{-1}(\{i\})=\{j_1,j_2,\ldots,j_K\}$ and set $\Pcal'_{j_t}:=\Pfrak'(\Pcal_{j_t},A_{j_t})$ for $t=1,2,\ldots,K$. Then we define
\begin{enumerate}
\item $\Pcal_i:=\bigwedge_{t=1}^K{\Pcal'_{j_t}}$,
\item $m_i:=\sum_{t=1}^K{(m_{j_t}+1)}$, and
\item for $\vec{\nu}^{(\Pcal_i)}\in\{\emptyset,\neg\}^{m_i}$, viewed as the concatenation of the logical sign tuples $\vec{\nu}^{(\Pcal'_{j_t})}\in\{\emptyset,\neg\}^{m_{j_t}+1}$ for $t=1,2,\ldots,K$,
\begin{align*}
&\Tree_i(\Pcal_i,\vec{\nu}^{(\Pcal_i)}):= \\
&\sum_{t=1}^K\sum_{\vec{\nu}^{(\Pcal_{j_t})}\in\{\emptyset,\neg\}^{m_{j_t}}}{(\sigma_{\Pcal_{j_t},A_{j_t}}(\vec{\nu}^{(\Pcal_{j_t})},\vec{\nu}^{(\Pcal'_{j_t})})\cdot\Tree_{j_t}(\Pcal_{j_t},\vec{\nu}^{(\Pcal_{j_t})})^+)}.
\end{align*}
\end{enumerate}
\end{enumerate}
With this choice of $\Pcal_i$, we have
\[
\AC(\Pcal_i)\leq\vfrak_i:=|\V(\Tree_{\Gamma_{\overline{f}}}(i))|-1\leq d-1\in O(d)\subseteq O(d^2\mpe(q-1)).
\]
\end{propposition}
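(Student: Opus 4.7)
The plan is to prove the proposition by induction on $h_i = \height(\Tree_{\Gamma_{\overline{f}}}(i))$, simultaneously establishing that the constructions in cases (1) and (2) are well-defined, that $\Tree_{\Gamma_f}(x) \cong \Tree_i(\Pcal_i, \vec{\nu}^{(\Pcal_i)})$ whenever $x \in \Bcal(\Pcal_i, \vec{\nu}^{(\Pcal_i)})$, and that $\AC(\Pcal_i) \leq \vfrak_i \leq d - 1$. The two chief ingredients are the Master Lemma (Lemma \ref{masterLem}), which supplies $\Pfrak'(\Pcal, A)$ and the block-distribution counts $\sigma_{\Pcal, A}(\vec{\nu}, \vec{\nu'})$, and the elementary identity $\vfrak_i = \sum_{t=1}^K (\vfrak_{j_t} + 1)$ arising from the parent-child vertex count in a rooted tree.

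For the base case $h_i = 0$, the index $i$ is a leaf of $\Gamma_{\overline{f}}$ with $i < d$, so $\overline{f}^{-1}(\{i\}) = \emptyset$. Combined with $f(C_j) \subseteq C_{\overline{f}(j)}$ for all $j$ and $f(0_{\IF_q}) = 0_{\IF_q} \in C_d$, this forces $C_i$ to have no pre-images under $f$. Every $x \in C_i$ is then a source in $\Gamma_f$, so $\Tree_{\Gamma_f}(x)$ is trivial by Definition \ref{treeAboveDef}(1), and the choices $\Pcal_i = \Pcal(\emptyset)$, $m_i = 0$ clearly satisfy $\AC(\Pcal_i) = 0 = \vfrak_i$.

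For the inductive step $h_i \geq 1$, I first note that every $j_t \in \overline{f}^{-1}(\{i\})$ satisfies $j_t < d$ (as $\overline{f}(d) = d \neq i$), is $\overline{f}$-transient (its forward image $i$ is), and has $h_{j_t} < h_i$, so the inductive hypothesis applies. Applying the Master Lemma to each pair $(\Pcal_{j_t}, A_{j_t})$ yields $\Pcal'_{j_t} = \Pfrak'(\Pcal_{j_t}, A_{j_t})$, on whose blocks the intersection counts $|A_{j_t}^{-1}(\{x\}) \cap \Bcal(\Pcal_{j_t}, \vec{\nu}^{(\Pcal_{j_t})})|$ are constants equal to $\sigma_{\Pcal_{j_t}, A_{j_t}}(\vec{\nu}^{(\Pcal_{j_t})}, \vec{\nu}^{(\Pcal'_{j_t})})$. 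Setting $\Pcal_i := \bigwedge_t \Pcal'_{j_t}$ makes all of these constants simultaneously fixed on each block of $\Pcal_i$; partitioning the children of $x$ in $\Gamma_f^{\ast}$ first by the coset $C_{j_t}$ and then by the $\Pcal_{j_t}$-block, and invoking the inductive isomorphism uniformity of each $\Tree_{j_t}(\Pcal_{j_t}, \cdot)$, delivers the formula claimed in (2c). The complexity bound follows from $\AC(\Pcal_i) \leq m_i = \sum_t (m_{j_t} + 1) \leq \sum_t (\vfrak_{j_t} + 1) = \vfrak_i$, with $\vfrak_i \leq d - 1$ because $d$ lies in its own connected component of $\Gamma_{\overline{f}}$.

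The one place where care is needed is the bookkeeping identifying a block label $\vec{\nu}^{(\Pcal_i)} \in \{\emptyset, \neg\}^{m_i}$ with the concatenation of the induced sub-labels $\vec{\nu}^{(\Pcal'_{j_t})} \in \{\emptyset, \neg\}^{m_{j_t} + 1}$, each of which then pairs with an inner tuple $\vec{\nu}^{(\Pcal_{j_t})}$ to feed into a $\sigma_{\Pcal_{j_t}, A_{j_t}}$ coefficient. Once this three-layer correspondence is laid out explicitly, the constancy of $\Tree_{\Gamma_f}(x)$ across each $\Pcal_i$-block is automatic, and no further mathematical content is needed beyond a careful transcription of Lemma \ref{masterLem}.
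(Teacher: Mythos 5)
Your proposal takes essentially the same approach as the paper: recursion on $h_i$, with the Master Lemma supplying the partition $\Pfrak'(\Pcal_{j_t},A_{j_t})$ and the distribution numbers in the inductive step, and the identity $\vfrak_i = \sum_{t=1}^K(\vfrak_{j_t}+1)$ driving the complexity bound. One small slip near the end: $d$ does \emph{not} in general lie in its own connected component of $\Gamma_{\overline{f}}$ --- some $j<d$ may satisfy $\overline{f}(j)=d$ (whenever $a_j=0$), and in any case the forward $\overline{f}$-orbit of your transient $i$ may itself terminate at $d$, placing $i$ and $d$ in the same component. The correct reason that $d\notin\V(\Tree_{\Gamma_{\overline{f}}}(i))$, and hence $\vfrak_i\leq d-1$, is simply that $d$ is a fixed point of $\overline{f}$ and therefore $\overline{f}$-periodic, while $i\neq d$ and the vertices of $\Tree_{\Gamma_{\overline{f}}}(i)$ other than $i$ are iterated $\overline{f}$-pre-images of the transient vertex $i$, hence all $\overline{f}$-transient and all distinct from $d$.
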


The second step is to describe the isomorphism type of $\Tree_{\Gamma_f}(0_{\IF_q})$, which is similar to the recursion step for transient cosets above. Let $\overline{f}^{-1}(\{d\})=\{d,j_1,j_2,\ldots,j_K\}$ (we note that $K=0$ unless some coefficient $a_i$ in the cyclotomic form (\ref{cyclotomicFormEq}) of $f$ is $0$). The children of $0$ in $\Tree_{\Gamma_f}(0)^{\ast}$ are just the \emph{nonzero} children of $0$ in $\Gamma_f^{\ast}$, and each such child must lie in $C_{j_t}$ for some $t\in\{1,2,\ldots,K\}$. We observe that each $C_{j_t}$ is a transient coset, so by Proposition \ref{transientCosetsProp}, we already know a suitable arithmetic partition $\Pcal_{j_t}$ of $C_{j_t}$, with an explicit spanning sequence of length $m_{j_t}$, and have an explicit understanding of the rooted tree isomorphism types $\Tree_{j_t}(\Pcal_{j_t},\vec{\nu}^{(\Pcal_{j_t})})$ for $\vec{\nu}^{(\Pcal_{j_t})}\in\{\emptyset,\neg\}^{m_{j_t}}$. Moreover, all vertices in $C_{j_t}$ map to $0_{\IF_q}$ under $f$, so
\[
f^{-1}(\{0_{\IF_q}\})\cap\Bcal(\Pcal_{j_t},\vec{\nu}^{(\Pcal_{j_t})})=\Bcal(\Pcal_{j_t},\vec{\nu}^{(\Pcal_{j_t})})=\mathbf{0}^{-1}(\{0_{\IZ/s\IZ}\})\cap\Bcal(\Pcal_{j_t},\vec{\nu}^{(\Pcal_{j_t})})
\]
for all $\vec{\nu}^{(\Pcal_{j_t})}\in\{\emptyset,\neg\}^{m_{j_t}}$, where $\mathbf{0}:\IZ/s\IZ\rightarrow\IZ/s\IZ, z\mapsto 0=0z+0$. Hence, if we set $\Pcal'_{j_t}:=\Pfrak'(\Pcal_{j_t},\mathbf{0})$, which is, in its standard form from Lemma \ref{masterLem}, spanned by the single $s$-congruence $x\equiv0\Mod{s}$ repeated $m_{j_t}+1$ times, then
\[
|f^{-1}(\{0_{\IF_q}\})\cap\Bcal(\Pcal_{j_t},\vec{\nu}^{(\Pcal_{j_t})})|=\sigma_{\Pcal_{j_t},\mathbf{0}}(\vec{\nu}^{(\Pcal_{j_t})},(\emptyset,\ldots,\emptyset)).
\]
Because $\Tree_{\Gamma_f}(0_{\IF_q})$ is obtained by attaching $|f^{-1}(\{0_{\IF_q}\})\cap\Bcal(\Pcal_{j_t},\vec{\nu}^{(\Pcal_{j_t})})|$ copies of $\Tree_{j_t}(\Pcal_{j_t},\vec{\nu}^{(\Pcal_{j_t})})$ to a common root for each $t=1,2,\ldots,K$ and each $\vec{\nu}^{(\Pcal_{j_t})}\in\{\emptyset,\neg\}^{m_{j_t}}$, we obtain the following proposition.

\begin{propposition}\label{zeroBlockProp}
Let $\overline{f}^{-1}(\{d\})=\{d,j_1,j_2,\ldots,j_K\}$. For $t=1,2,\ldots,K$, let $\Pcal_{j_t}$, $m_{j_t}$ and $\Tree_{j_t}(\Pcal_{j_t},\vec{\nu}^{(\Pcal_{j_t})})$ for $t=1,2,\ldots,K$ be as in Proposition \ref{transientCosetsProp}. Moreover, we denote by $\mathbf{0}$ the constant 0 function $\IZ/s\IZ\rightarrow\IZ/s\IZ$. Then
\[
\Tree_{\Gamma_f}(0_{\IF_q})=\sum_{t=1}^K\sum_{\vec{\nu}^{(\Pcal_{j_t})}\in\{\emptyset,\neg\}^{m_{j_t}}}{(\sigma_{\Pcal_{j_t},\mathbf{0}}(\vec{\nu}^{(\Pcal_{j_t})},(\emptyset,\ldots,\emptyset))\cdot\Tree_{j_t}(\Pcal_{j_t},\vec{\nu}^{(\Pcal_{j_t})})^+)}.
\]
\end{propposition}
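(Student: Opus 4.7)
The plan is to formalize the informal reasoning that already precedes the statement. First, since $f(0_{\IF_q}) = 0_{\IF_q}$, the element $0_{\IF_q}$ is $f$-periodic (a fixed point), so by Definition \ref{treeAboveDef}(2) the tree $\Tree_{\Gamma_f}(0_{\IF_q})$ is built by attaching to the root $0_{\IF_q}$, for each $f$-transient $y \in \IF_q$ with $f(y) = 0_{\IF_q}$, the rooted tree $\Tree_{\Gamma_f}(y)$ together with an arc $y \rightarrow 0_{\IF_q}$. Using the $+$-operation and addition of rooted tree isomorphism types, this reads as
\[
\Tree_{\Gamma_f}(0_{\IF_q}) = \sum_{y \in f^{-1}(\{0_{\IF_q}\}) \setminus \{0_{\IF_q}\}} \Tree_{\Gamma_f}(y)^+.
\]

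Next, I would identify those $y$ explicitly. By the cyclotomic form (\ref{cyclotomicFormEq}), an element $y \in C_i$ with $i < d$ satisfies $f(y) = 0_{\IF_q}$ if and only if $a_i = 0$, which happens precisely for $i \in \{j_1, \ldots, j_K\}$. Each such coset $C_{j_t}$ is $\overline{f}$-transient (since $j_t \neq d$ but $\overline{f}(j_t) = d$ is an $\overline{f}$-fixed point), so every $y \in C_{j_t}$ is automatically $f$-transient, which lets me invoke Proposition \ref{transientCosetsProp} for $C_{j_t}$: it guarantees that $\Tree_{\Gamma_f}(y)$ depends only on the block of $\Pcal_{j_t}$ containing $y$, and on $\Bcal(\Pcal_{j_t}, \vec{\nu}^{(\Pcal_{j_t})})$ it equals $\Tree_{j_t}(\Pcal_{j_t}, \vec{\nu}^{(\Pcal_{j_t})})$. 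Partitioning the sum over the blocks of the $\Pcal_{j_t}$ therefore yields
\[
\Tree_{\Gamma_f}(0_{\IF_q}) = \sum_{t=1}^{K} \sum_{\vec{\nu}^{(\Pcal_{j_t})}} \bigl( |\Bcal(\Pcal_{j_t}, \vec{\nu}^{(\Pcal_{j_t})})| \cdot \Tree_{j_t}(\Pcal_{j_t}, \vec{\nu}^{(\Pcal_{j_t})})^+ \bigr).
\]

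It only remains to rewrite the block size $|\Bcal(\Pcal_{j_t}, \vec{\nu}^{(\Pcal_{j_t})})|$ as the distribution number $\sigma_{\Pcal_{j_t}, \mathbf{0}}(\vec{\nu}^{(\Pcal_{j_t})}, (\emptyset, \ldots, \emptyset))$. For this I would apply Lemma \ref{masterLem} to the arithmetic partition $\Pcal_{j_t}$ of $\IZ/s\IZ$ together with the (degenerate) affine map $\mathbf{0}: z \mapsto 0 \cdot z + 0$. Since $\mathbf{0}^{-1}(\{0\}) = \IZ/s\IZ$, we have $|\mathbf{0}^{-1}(\{0\}) \cap B| = |B|$ for any block $B$ of $\Pcal_{j_t}$. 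On the other hand, every modulus $\gcd(0 \cdot \afrak_k, s)$ and $\gcd(0, s)$ defining the spanning congruences of $\Pfrak'(\Pcal_{j_t}, \mathbf{0})$ collapses to $s$, so those congruences become $x \equiv 0 \Mod{s}$ (trivially true), and the only nonempty block of $\Pfrak'(\Pcal_{j_t}, \mathbf{0})$ is $\IZ/s\IZ$, which contains $0$. The Master Lemma therefore yields
\[
\sigma_{\Pcal_{j_t}, \mathbf{0}}(\vec{\nu}^{(\Pcal_{j_t})}, (\emptyset, \ldots, \emptyset)) = |\mathbf{0}^{-1}(\{0\}) \cap \Bcal(\Pcal_{j_t}, \vec{\nu}^{(\Pcal_{j_t})})| = |\Bcal(\Pcal_{j_t}, \vec{\nu}^{(\Pcal_{j_t})})|,
\]
and substituting this back into the previous display produces the formula of the proposition.

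The only mildly delicate point is checking that Lemma \ref{masterLem} behaves as expected in this degenerate case $a = b = 0$; nothing in its statement excludes it, but one should verify that the Kronecker-delta factors in the definition of $\kappa_{\Pcal_{j_t}, \mathbf{0}}(\vec{\nu}^{(\Pcal_{j_t})}, (\emptyset, \ldots, \emptyset), J)$ and the subsequent inclusion--exclusion sum indeed collapse to the plain block-size count. This is a routine verification rather than a genuine obstacle.
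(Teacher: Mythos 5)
Your proof is correct and matches the paper's approach in all essential respects: you decompose $\Tree_{\Gamma_f}(0_{\IF_q})$ over the $f$-transient pre-images of $0_{\IF_q}$, identify those as $\bigcup_{t=1}^K C_{j_t}$, invoke Proposition \ref{transientCosetsProp} for the transient cosets $C_{j_t}$, group by blocks of $\Pcal_{j_t}$, and re-express the block sizes as distribution numbers via the Master Lemma applied to the constant map $\mathbf{0}$.

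One correction to the \enquote{mildly delicate point} you flag. Your description of $\Pfrak'(\Pcal_{j_t},\mathbf{0})$ is off: since $\gcd(0\cdot\afrak_k,s)=\gcd(0,s)=s$, the spanning congruences are indeed all $x\equiv 0\Mod{s}$, but this congruence is satisfied only by $x=0\in\IZ/s\IZ$, not by every $x$ (it is trivially true only in the degenerate case $s=1$). Consequently $\Bcal(\Pfrak'(\Pcal_{j_t},\mathbf{0}),(\emptyset,\ldots,\emptyset))=\{0\}$, not $\IZ/s\IZ$; indeed this is forced, because if that block were all of $\IZ/s\IZ$, the Master Lemma would make $|\mathbf{0}^{-1}(\{x\})\cap B|$ constant over every $x\in\IZ/s\IZ$, yet $\mathbf{0}^{-1}(\{x\})=\emptyset$ for $x\neq 0$ while $\mathbf{0}^{-1}(\{0\})=\IZ/s\IZ$. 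Your computation is nonetheless salvaged because all it actually uses is that $0$ lies in the block indexed by $(\emptyset,\ldots,\emptyset)$, which is true, whence $\sigma_{\Pcal_{j_t},\mathbf{0}}(\vec{\nu}^{(\Pcal_{j_t})},(\emptyset,\ldots,\emptyset))=|\mathbf{0}^{-1}(\{0\})\cap\Bcal(\Pcal_{j_t},\vec{\nu}^{(\Pcal_{j_t})})|=|\Bcal(\Pcal_{j_t},\vec{\nu}^{(\Pcal_{j_t})})|$ exactly as you wrote, and the conclusion stands.
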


In the third and final step, we consider vertices $x$ from a \emph{periodic} coset $C_i$ (with $i<d$). Let us write $\overline{f}^{-1}(\{i\})=\{i',j_1,j_2,\ldots,j_K\}$, where $i'$\phantomsection\label{not143} is the unique $\overline{f}$-periodic pre-image of $i$ under $\overline{f}$. Hence, $C_{j_t}$ for $t=1,2,\ldots,K$ is a transient coset; let $\Pcal_{j_t}$ be the arithmetic partition of $C_{j_t}$ defined in Proposition \ref{transientCosetsProp}. Moreover, let $(i_0,i_1,\ldots,i_{\ell-1})$ be the cycle of $i=i_0$ under $\overline{f}$, and let us define $i_k:=i_{k\bmod{\ell}}$ for $k\in\IZ$ (in particular, $i'=i_{-1}$). Moreover, let $A_{i_k}:\IZ/s\IZ\rightarrow\IZ/s\IZ$, $x\mapsto \alpha_{i_k}x+\beta_{i_k}$, be the affine map that describes how $f$ maps from $C_{i_k}$ to $C_{i_{k+1}}$.

This case is more complicated, and we need to make a recursion by another parameter. As in Subsection \ref{subsec3P2}, we denote by $\Gamma_{\per}$ the induced subgraph of $\Gamma_f$ on the union of all periodic blocks $C_i$, i.e., the functional graph of $f_{\per}$. We remind the reader that we explicitly understand the trees above periodic vertices in $\Gamma_{\per}$ thanks to Theorem \ref{cosetPermTheo}. The idea is to proceed by recursion on a parameter called $\hfrak(x)$\phantomsection\label{not144}, which can range from $0$ up to the maximum height $H_i$\phantomsection\label{not145} of the rooted trees in $\Gamma_{\per}$ above periodic vertices in one of the cosets $C_{i_t}=C_{\overline{f}^t(i)}$ for $t=0,1,\ldots,\ell-1$. This parameter is defined as follows:
\begin{equation}\label{hDefEq}
\hfrak(x)=
\begin{cases}
\height(\Tree_{\Gamma_{\per}}(x))\in\{0,1,\ldots,H_i-1\}, & \text{if }x\text{ is }f\text{-transient}, \\
H_i, & \text{if }x\text{ is }f\text{-periodic}.
\end{cases}
\end{equation}
By Example \ref{cosetPermEx}, there is in general no relation between $\hfrak(x)$ and $\height(\Tree_{\Gamma_{\per}}(x))$ when $\hfrak(x)=H_i$ (i.e., when $x$ is $f$-periodic). Also, $\hfrak$ need not assume all values in $\{0,1,\ldots,H_i\}$ on a given coset $C_{i_t}$; in fact, $\hfrak(C_{i_t})$ need not even be an integer interval. None of this will be an issue for our approach, though.

We observe that $H_i+1$ is the smallest positive integer $k$ such that for all $t\in\IZ$, the common procreation number
\[
\proc_k^{(\Gamma_{\per}^{\ast})}(x)=\frac{\gcd(\prod_{j=0}^{k-1}{\alpha_{i_{t-k+j}}},s)}{\gcd(\prod_{j=0}^{k-2}{\alpha_{i_{t-k+j}}},s)}
\]
of all $f$-periodic vertices $x\in C_{i_t}$ is equal to $1$, which is (for each given $t$) equivalent to
\[
\gcd(\prod_{j=0}^{k-1}{\alpha_{i_{t-k+j}}},s)=\gcd(\prod_{j=0}^{k-2}{\alpha_{i_{t-k+j}}},s),
\]
and further to
\[
\prod_{p\mid\gcd(\alpha_{i_{t-1}},s)}{p^{\nu_p(s)}} \mid \prod_{j=0}^{k-2}{\alpha_{i_{t-k+j}}}.
\]
Setting $\overline{\alpha}_i:=\prod_{t=0}^{\ell-1}{\alpha_{i_t}}$\phantomsection\label{not146} (the linear coefficient of $\Acal_i$ in the notation of Subsection \ref{subsec3P1}), it is not difficult to see from this that
\begin{equation}\label{hiBoundEq}
H_i\leq\ell\cdot\max_{p\mid\gcd(\overline{\alpha}_i,s)}{\left\lceil\frac{\nu_p(s)}{\nu_p(\overline{\alpha}_i)}\right\rceil}\leq\ell\mpe(s)\leq d\mpe(q-1).
\end{equation}

Let us set $\Pcal'_{j_t}:=\Pfrak'(\Pcal_{j_t},A_{j_t})$ for $t=1,2,\ldots,K$, and $\Rcal_i:=\bigwedge_{t=1}^K{\Pcal'_{j_t}}$\phantomsection\label{not147}. We denote by $n_i$\phantomsection\label{not148} the length of the spanning congruence sequence for $\Rcal_i$ which we use (in general, $n_i=\sum_{t=1}^K{(m_{j_t}+1)}$, but in a concrete example, there may be repetitions among those congruences, allowing us to delete some of them). A simple observation is that as far as the transient coset contribution $\Tree_{\Gamma_f}(x,\bigcup_{t=1}^K{C_{j_t}})$ to $\Tree_{\Gamma_f}(x)$ is concerned, everything is as in Step 1.

\begin{propposition}\label{periodicCosetsTransientProp}
Let $i\in\{0,1,\ldots,d-1\}$ be $\overline{f}$-periodic, and let $j_1,j_2,\ldots,j_K$ be the $\overline{f}$-transient pre-images of $i$ under $\overline{f}$. Moreover, let $\Pcal'_{j_t}:=\Pfrak'(\Pcal_{j_t},A_{j_t})$ for $t=1,2,\ldots,K$ and $\Rcal_i:=\bigwedge_{t=1}^K{\Pcal'_{j_t}}$. Then the following hold.
\begin{enumerate}
\item For $x\in C_i$ and $t\in\{1,2,\ldots,K\}$, the isomorphism type $\Tree_{\Gamma_f}(x,C_{j_t})$ only depends on the $\Pcal'_{j_t}$-block $\Bcal(\Pcal'_{j_t},\vec{\nu}^{(\Pcal'_{j_t})})$ (for some $\vec{\nu}^{(\Pcal'_{j_t})}\in\{\emptyset,\neg\}^{m_{j_t}+1}$) in which $x$ lies. That isomorphism type is denoted by $\Tree_i(\Pcal'_{j_t},C_{j_t},\vec{\nu}^{(\Pcal'_{j_t})})$\phantomsection\label{not149} and can be computed according to the formula
\begin{align*}
&\Tree_i\left(\Pcal'_{j_t},C_{j_t},\vec{\nu}^{(\Pcal'_{j_t})}\right)= \\
&\sum_{\vec{\nu}^{(\Pcal_{j_t})}\in\{\emptyset,\neg\}^{m_{j_t}}}{\sigma_{\Pcal_{j_t},A_{j_t}}\left(\vec{\nu}^{(\Pcal_{j_t})},\vec{\nu}^{(\Pcal'_{j_t})}\right)\Tree_{j_t}\left(\Pcal_{j_t},\vec{\nu}^{(\Pcal_{j_t})}\right)^+}.
\end{align*}
\item For $x\in C_i$, the rooted tree isomorphism type of $\Tree_{\Gamma_f}(x,\bigcup_{t=1}^K{C_{j_t}})$ only depends on the $\Rcal_i$-block $\Bcal\left(\Rcal_i,\vec{\nu}^{(\Rcal_i)}\right)$ (for some $\vec{\nu}^{(\Rcal_i)}\in\{\emptyset,\neg\}^{n_i}$) in which $x$ lies. That isomorphism type is denoted by $\Tree_i(\Rcal_i,\bigcup_{t=1}^K{C_{j_t}},\vec{\nu}^{(\Rcal_i)})$\phantomsection\label{not150} and can be computed according to the formula
\[
\Tree_i\left(\Rcal_i,\bigcup_{t=1}^K{C_{j_t}},\vec{\nu}^{(\Rcal_i)}\right)=\sum_{t=1}^K{\Tree_i\left(\Pcal'_{j_t},C_{j_t},\vec{\nu}^{(\Pcal'_{j_t})}\right)^+}
\]
where $\vec{\nu}^{(\Pcal'_{j_t})}\in\{\emptyset,\neg\}^{m_{j_t}+1}$, for $t=1,2,\ldots,K$, is the unique logical sign tuple such that $\Bcal\left(\Rcal_i,\vec{\nu}^{(\Rcal_i)}\right)\subseteq\Bcal\left(\Pcal'_{j_t},\vec{\nu}^{(\Pcal'_{j_t})}\right)$; in the standard situation, where $n_i=\sum_{t=1}^K{(m_{j_t}+1)}$, the tuple $\vec{\nu}^{(\Rcal_i)}$ is simply the concatenation of the $\vec{\nu}^{(\Pcal'_{j_t})}$ for $t=1,2,\ldots,K$.
\end{enumerate}
\end{propposition}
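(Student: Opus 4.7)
The plan is to follow essentially the same template used in the recursion step of Proposition~\ref{transientCosetsProp}, with the crucial observation that every pre-image of $x \in C_i$ lying in a transient coset $C_{j_t}$ is itself $f$-transient (since $j_t \notin \per(\overline{f})$). This places all such pre-images squarely in the regime already handled by Proposition~\ref{transientCosetsProp}, to which I will appeal as an inductive input.

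For part (1), fix $x \in C_i$ and $t \in \{1,\ldots,K\}$. By definition, the rooted tree $\Tree_{\Gamma_f}(x, C_{j_t})$ has root $x$ and is obtained by attaching a copy of $\Tree_{\Gamma_f}(y)$ for each $f$-transient pre-image $y$ of $x$ with $y \in C_{j_t}$; since every element of $C_{j_t}$ is $f$-transient, this applies to every element of $f^{-1}(\{x\}) \cap C_{j_t}$. Partitioning this pre-image set by the blocks of $\Pcal_{j_t}$, and using Proposition~\ref{transientCosetsProp} to identify $\Tree_{\Gamma_f}(y) \cong \Tree_{j_t}(\Pcal_{j_t}, \vec{\nu}^{(\Pcal_{j_t})})$ whenever $y \in \Bcal(\Pcal_{j_t}, \vec{\nu}^{(\Pcal_{j_t})})$, I obtain
\[
\Tree_{\Gamma_f}(x, C_{j_t}) = \sum_{\vec{\nu}^{(\Pcal_{j_t})}} \bigl|f^{-1}(\{x\}) \cap \Bcal(\Pcal_{j_t}, \vec{\nu}^{(\Pcal_{j_t})})\bigr| \cdot \Tree_{j_t}(\Pcal_{j_t}, \vec{\nu}^{(\Pcal_{j_t})})^+,
\]
with the $^+$ encoding the arc $y \to x$ that reinstates $x$ as the root. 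Now the Master Lemma (Lemma~\ref{masterLem}), applied to the affine map $A_{j_t}$ on $\IZ/s\IZ$ and to the arithmetic partition $\Pcal_{j_t}$, both produces the arithmetic partition $\Pcal'_{j_t} = \Pfrak'(\Pcal_{j_t}, A_{j_t})$ of $C_i$ and guarantees that each intersection size depends on $x$ only through the $\Pcal'_{j_t}$-block $\Bcal(\Pcal'_{j_t}, \vec{\nu}^{(\Pcal'_{j_t})})$ containing it, with the explicit value $\sigma_{\Pcal_{j_t}, A_{j_t}}(\vec{\nu}^{(\Pcal_{j_t})}, \vec{\nu}^{(\Pcal'_{j_t})})$. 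Substituting simultaneously yields well-definedness of $\Tree_i(\Pcal'_{j_t}, C_{j_t}, \vec{\nu}^{(\Pcal'_{j_t})})$ and the formula claimed in (1).

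For part (2), the cosets $C_{j_1}, \ldots, C_{j_K}$ are pairwise disjoint, so the subtrees $\Tree_{\Gamma_f}(x, C_{j_t})$ meet only at the common root $x$, and their union in $\Gamma_f$ is precisely $\Tree_{\Gamma_f}(x, \bigcup_{t=1}^K C_{j_t})$. By part (1), each summand is governed by the unique $\Pcal'_{j_t}$-block containing $x$; since $\Rcal_i = \bigwedge_{t=1}^K \Pcal'_{j_t}$ is the roughest common refinement, every $\Rcal_i$-block lies inside exactly one $\Pcal'_{j_t}$-block for each $t$, so the entire assembly depends on $x$ only through its $\Rcal_i$-block. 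Recombining the per-$t$ expressions produced by part (1) then gives the formula asserted in (2).

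The main source of friction, in lieu of any serious mathematical obstacle, is purely bookkeeping: one must keep careful track of the correspondence between logical sign tuples $\vec{\nu}^{(\Rcal_i)}$ and the induced tuples $\vec{\nu}^{(\Pcal'_{j_t})}$ for each $t$, taking note of any collapses that occur when two different spanning congruences of distinct $\Pcal'_{j_t}$ happen to coincide, so that the length $n_i$ of the spanning sequence fixed for $\Rcal_i$ is correctly identified (and does not always equal $\sum_t (m_{j_t} + 1)$).
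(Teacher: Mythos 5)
Your argument is correct and is precisely the argument the paper intends -- the paper itself offers no separate proof, merely the remark before the proposition that ``as far as the transient coset contribution is concerned, everything is as in Step 1,'' and your proof spells that out: the pre-images of $x\in C_i$ lying in a transient coset $C_{j_t}$ are automatically $f$-transient, so the decomposition, the application of the Master Lemma (Lemma~\ref{masterLem}) via $\Pcal'_{j_t}=\Pfrak'(\Pcal_{j_t},A_{j_t})$, and the recombination over $t$ under the meet $\Rcal_i$ proceed exactly as in the paragraph preceding Proposition~\ref{transientCosetsProp}. You were also right to note the bookkeeping subtlety about $n_i$ possibly being smaller than $\sum_t(m_{j_t}+1)$ when spanning congruences collide.

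One thing you ought to have flagged explicitly rather than glossed over: your reasoning in part~(2) correctly shows $\Tree_{\Gamma_f}(x,\bigcup_{t=1}^K C_{j_t})=\sum_{t=1}^K \Tree_{\Gamma_f}(x,C_{j_t})$, i.e.\ a sum (glueing at the common root $x$) \emph{without} the outer $(\cdot)^+$. But the displayed formula in the statement carries a trailing $(\cdot)^+$ on each summand, which would insert a spurious additional layer between $x$ and its pre-images. That extra $^+$ is a typo in the paper: comparing with Proposition~\ref{transientCosetsProp}(2)(c), with Proposition~\ref{zeroBlockProp}, and with formula~(\ref{tree0Eq}) in the worked example (all of which have a single $^+$, applied only to the $\Tree_{j_t}$'s inside the inner sum) confirms that. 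Your derivation is the right one, but asserting without comment that it ``gives the formula asserted in (2)'' is imprecise given that what you actually derived differs from the printed display.
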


We remind the reader that we wish to proceed by recursion on the parameter $\hfrak(x)$ defined in (\ref{hDefEq}). This is motivated by Proposition \ref{periodicCosetsTransientProp}, because if $\hfrak(x)=0$, then $x$ has no transient children in $C_{i'}$, whence $\Tree_{\Gamma_f}(x)=\Tree_{\Gamma_f}(x,\bigcup_{t=1}^K{C_{j_t}})$. In general, we wish to construct an arithmetic partition $\Scal_{i,h}$\phantomsection\label{not151} of $C_i$ such that for vertices $x\in C_i$ with $\hfrak(x)=h$, the isomorphism type of $\Tree_{\Gamma_f}(x,C_{i'})$ only depends on the $\Scal_{i,h}$-block containing $x$ and is explicitly understood. Then we are basically done, because $\Tree_{\Gamma_f}(x)=\Tree_{\Gamma_f}(x,\bigcup_{t=1}^K{C_{j_t}})+\Tree_{\Gamma_f}(x,C_{i'})$.

In order to construct $\Scal_{i,h}$ and prove that it has the desired property, we need to introduce quite a few notations.
\begin{itemize}
\item For $h\in\{0,1,\ldots,H_i\}$, a vertex $x\in C_i=C_{i_0}$ has at least $h$ successor generations in $\Gamma_{\per}^{\ast}$ if and only if $x$ lies in the image of $A_{i_{-h}}A_{i_{-h+1}}\cdots A_{i_{-1}}$, which is the affine map $\Acal_{i,h}:z\mapsto \overline{\alpha}_{i,h}z+\overline{\beta}_{i,h}$\phantomsection\label{not152}\phantomsection\label{not153}\phantomsection\label{not154} of $\IZ/s\IZ$ where
\[
\overline{\alpha}_{i,h}=\prod_{t=1}^h{\alpha_{i_{-t}}}\text{ and }\overline{\beta}_{i,h}=\sum_{t=1}^h{\beta_{i_{-t}}\prod_{k=1}^{t-1}{\alpha_{i_{-k}}}}.
\]
That is, $x$ has at least $h$ successor generations in $\Gamma_{\per}^{\ast}$ if and only if it satisfies the following $s$-congruence, which we denote by $\theta_{i,h}(x)$\phantomsection\label{not155}:
\[
x\equiv\overline{\beta}_{i,h}\Mod{\gcd\left(\overline{\alpha}_{i,h},s\right)}.
\]
We observe that the modulus in $\theta_{i,0}(x)$ is $1$, so that congruence is trivial.
\item Next, we describe, for each $h\in\{0,1,\ldots,H_i\}$, a simple system $\Theta_{i,h}(x)$\phantomsection\label{not156} of at most two $s$-CCs such that for all $x\in C_i$, the equality $\hfrak(x)=h$ holds if and only if $\Theta_{i,h}(x)$ holds. If $h<H_i$, then the vertices $x\in C_i$ with $\hfrak(x)=h$ are just those $f$-transient $x\in C_i$ that have exactly $h$ successor generations in $\Gamma_{\per}^{\ast}$. It follows that for such $h$, one has $\hfrak(x)=h$ if and only if $\theta_{i,h}(x)$ and $\neg\theta_{i,h+1}(x)$ both hold. Now we assume that $h=H_i$. By definition of $\hfrak$, this happens if and only if $x$ is $f$-periodic, which is (by definition of $H_i$) equivalent to $x$ having at least $H_i$ successor generations in $\Gamma_{\per}^{\ast}$. Therefore, the condition $\theta_{i,H_i}(x)$ alone provides the desired characterization in this case.

Now, noting once more that $\theta_{i,0}(x)$ is trivial and may thus be omitted from any system of conditions containing it, we may define $\Theta_{i,h}(x)$ as follows:
\[
\Theta_{i,h}(x):=
\begin{cases}
\emptyset, & \text{if }H_i=(h=)0, \\
\neg\theta_{i,1}(x), & \text{if }h=0<H_i, \\
\theta_{i,h}(x)\wedge(\neg\theta_{i,h+1}(x)), & \text{if }0<h<H_i, \\
\theta_{i,H_i}(x), & \text{if }h=H_i>0.
\end{cases}
\]
\item If $\Theta(x)$\phantomsection\label{not157} is a system of $m$-CCs, then $\Pfrak(\Theta(x))$\phantomsection\label{not158} denotes the arithmetic partition of $\IZ/m\IZ$ spanned by the non-negated versions of the conditions in $\Theta(x)$. For example,
\[
\Pfrak
\begin{pmatrix}
x\equiv 4\Mod{6} \\
x\not\equiv 3\Mod{9} \\
x\equiv 0\Mod{2}
\end{pmatrix}
=
\Pfrak
\begin{pmatrix}
x\equiv 4\Mod{6} \\
x\equiv 3\Mod{9} \\
x\equiv 0\Mod{2}
\end{pmatrix}.
\]
We note that if $x\in\IZ/m\IZ$ is chosen from a fixed block of $\Pfrak(\Theta(x))$, then the truth value of each condition in $\Theta(x)$ is independent of $x$, and so is the truth value of $\Theta(x)$ itself. We also observe that for each $h\in\{0,1,\ldots,H_i\}$, the following equality holds for the systems $\Theta_{i,k}(x)$ of $s$-CCs defined in the previous bullet point:
\begin{equation}\label{systemsWedgeEq}
\bigwedge_{k=0}^h{\Pfrak(\Theta_{i,k}(x))}=\Pfrak(\theta_{i,j}(x): j=1,2,\ldots,\min\{h+1,H_i\}).
\end{equation}
We set $\Ucal_i:=\Pfrak(\theta_{i,k}(x): k=1,2,\ldots,H_i)$\phantomsection\label{not159}.
\item For $k\in\{0,1,\ldots,H_i\}$, we denote by $\vec{\xi}_{i,k}$\phantomsection\label{not160} the logical sign tuple $(\nu_1,\nu_2,\ldots,\nu_{H_i})$ with $\nu_t=\emptyset$ if and only if $t\leq k$. With this definition, if we view $\Ucal_i$ as an arithmetic partition of $C_i$, then the block $\Bcal(\Ucal_i,\vec{\xi}_{i,k})$ consists precisely of those $x\in C_i$ such that $\hfrak(x)=k$ (and every block of $\Ucal_i$ is of this form for some $k$).
\item Let $\Pcal=\Pfrak(x\equiv\bfrak_j\Mod{\afrak_j}: j=1,2,\ldots,K)$ be an arithmetic partition of $\IZ/m\IZ$, and let $A:z\mapsto az+b$, be an affine map of $\IZ/m\IZ$. In dependency of $\Pcal$ (actually, of the fixed sequence of spanning congruences for $\Pcal$, rather than $\Pcal$ itself) and $A$, we define another arithmetic partition $\lambda(\Pcal,A)$\phantomsection\label{not161} of $\IZ/m\IZ$ as follows: $\lambda(\Pcal,A):=\Pfrak(x\equiv a\bfrak_j+b\Mod{\gcd(a\afrak_j,m)}: j=1,2,\ldots,K)$. Here is a list of important facts concerning this notation (for which we have $m=s$ throughout):
\begin{itemize}
\item If $\Pcal$ is an arithmetic partition of $C_{i'}$, then
\[
\Pfrak'(\Pcal,A_{i'})=\lambda(\Pcal,A_{i'})\wedge\Pfrak(\theta_{i,1}(x)).
\]
\item For $h=0,1,\ldots,H_{i'}=H_i$, we have
\[
\lambda(\Pfrak(\theta_{i',h}(x)),A_{i'})=
\begin{cases}
\Pfrak(\theta_{i,h+1}(x)), & \text{if }h<H_i, \\
\Pfrak(\theta_{i,H_i}(x)), & \text{if }h=H_i.
\end{cases}
\]
Indeed, for $h<H_i$, this is immediate by the definition of $\theta_{i,h}(x)$ and the facts that $A_{i'}(\overline{\beta}_{i',h})=\overline{\beta}_{i,h+1}$ and $\gcd(a\gcd(a',s),s)=\gcd(aa',s)$ for all $a,a'\in\IZ$. Moreover, noting that our definition of $\theta_{i,h}(x)$ also makes sense if $h>H_i$, we have $\lambda(\Pfrak(\theta_{i',H_{i'}}(x)))=\Pfrak(\theta_{i,H_i+1}(x))$. However, for $x\in C_i$, the condition $\theta_{i,H_i+1}(x)$ holds if and only if $x$ has at least $H_i+1$ successor generations in $\Gamma_{\per}^{\ast}$, which is the case if and only if $x$ is periodic, i.e., if and only if $\theta_{i,H_i}(x)$ holds. Hence, the congruences $\theta_{i,H_i+1}(x)$ and $\theta_{i,H_i}(x)$ have the same solution set in $\IZ/s\IZ$, whence $\Pfrak(\theta_{i,H_i+1}(x))=\Pfrak(\theta_{i,H_i}(x))$. This concludes the proof of the above formulas for $\lambda(\Pfrak(\theta_{i',h}(x)),A_{i'})$.
\item By the previous two bullet points and equality (\ref{systemsWedgeEq}), applied with $i'$ in place of $i$, we have that
\[
\Pfrak'(\bigwedge_{k=0}^h{\Pfrak(\Theta_{i',k}(x))},A_{i'})=\bigwedge_{k=0}^{\min\{h+1,H_i\}}{\Theta_{i,k}(x)},
\]
and, in particular, $\Pfrak'(\Ucal_{i'},A_{i'})=\Ucal_i$.
\end{itemize}
\item Let $\Pcal$ be an arithmetic partition of $C_i$. We define the notation $\lambda_i^t(\Pcal)$, where $t$ is a non-negative integer, as follows recursively: $\lambda_i^0(\Pcal):=\Pcal$, and for $t\geq1$, we set $\lambda_i^t(\Pcal):=\lambda(\lambda_i^{t-1}(\Pcal),A_{t-1})$\phantomsection\label{not162}. In other words, $\lambda_i^t(\Pcal)$ is the arithmetic partition of $C_{i_t}$ obtained by pushing $\Pcal$ forward $t$ times along the $\overline{f}$-cycle of $i$ via the operation $\lambda$, using the appropriate affine function $A_{i_k}$ in each step.
\item For $h\in\{0,1,\ldots,H_i\}$, we introduce the following arithmetic partitions of $C_i$:
\begin{itemize}
\item $\Scal_{i,h}:=\bigwedge_{t=1}^h{\lambda_{i_{-t}}^t(\Rcal_{i_{-t}})}$;
\item $\Pcal_{i,h}:=\Rcal_i\wedge\Scal_{i,h}=\bigwedge_{t=0}^h{\lambda_{i_{-t}}^t(\Rcal_{i_{-t}})}$\phantomsection\label{not163};
\item $\Tcal_{i,h}:=\Scal_{i,h}\wedge\Ucal_i=\bigwedge_{t=1}^h{\lambda_{i_{-t}}^t(\Rcal_{i_{-t}})}\wedge\Ucal_i$\phantomsection\label{not164};
\item $\Qcal_{i,h}:=\Rcal_i\wedge\Tcal_{i,h}=\bigwedge_{t=0}^h{\lambda_{i_{-t}}^t(\Rcal_{i_{-t}})}\wedge\Ucal_i$\phantomsection\label{not165}.
\end{itemize}
The motivation for considering $\Scal_{i,h}$ and $\Pcal_{i,h}$ is that their blocks control the rooted tree isomorphism type of $\Tree_{\Gamma_f}(x,C_{i'})$ and $\Tree_{\Gamma_f}(x)$, respectively, for vertices $x\in C_i$ \emph{of $\hfrak$-value $h$} contained in them -- see Proposition \ref{periodicCosetsPeriodicProp} below. An explicit formula for $\Tree_{\Gamma_f}(x,C_{i'})$ in terms of the $\Scal_{i,h}$-block containing $x$ is also given in Proposition \ref{periodicCosetsPeriodicProp}, and that formula involves distribution numbers (as in Lemma \ref{masterLem} -- see the sentence after that lemma) of the partitions $\Qcal_{i',k}$ for $0\leq k<h$. The partitions of the form $\Tcal_{i,h}$ are not mentioned in the statement of Proposition \ref{periodicCosetsPeriodicProp}, but they play an important role in its proof due to the fact that for $h\geq1$, one has $\Tcal_{i,h}=\Pfrak'(\Qcal_{i',h-1},A_{i'})$.
\item We denote the concatenation of logical sign tuples $\vec{\nu}$ and $\vec{\nu}'$ by $\vec{\nu}\diamond\vec{\nu}'$\phantomsection\label{not166}.
\end{itemize}
We are now in a position to formulate in detail how the blocks $B$ of $\Scal_{i,h}$ affect the rooted trees above vertices $x\in B$ with $\hfrak(x)=h$.

\begin{propposition}\label{periodicCosetsPeriodicProp}
Let $h\in\{0,1,\ldots,H_i\}$, and let $x\in C_i$ with $\hfrak(x)=h$. Then the following hold.
\begin{enumerate}
\item The isomorphism type $\Tree_{\Gamma_f}(x,C_{i'})$ only depends on the block $\Bcal(\Scal_{i,h},\vec{\nu}^{(\Scal_{i,h})})$ of $\Scal_{i,h}$ in which $x$ lies and is denoted by $\Tree^{(h)}_i(\Scal_{i,h},C_{i'},\vec{\nu}^{(\Scal_{i,h})})$\phantomsection\label{not167}.
\item The isomorphism type $\Tree_{\Gamma_f}(x)$ only depends on the $\Pcal_{i,h}$-block $\Bcal(\Pcal_{i,h},\vec{\nu}^{(\Pcal_{i,h})})$ in which $x$ lies and is denoted by $\Tree^{(h)}_i(\Pcal_{i,h},\vec{\nu}^{(\Pcal_{i,h})})$\phantomsection\label{not168}.
\end{enumerate}
More specifically, for $h=0$, where $\Scal_{i,h}=\Pcal(\emptyset)$ and $\Pcal_{i,h}=\Rcal_i$, the rooted tree $\Tree^{(0)}_i(\Scal_{i,0},C_{i'},\emptyset)$ is trivial, and
\[
\Tree^{(0)}_i(\Pcal_{i,0},\vec{\nu}^{(\Pcal_{i,0})})=\Tree_i\left(\Rcal_i,\bigcup_{t=1}^K{C_{j_t}},\vec{\nu}^{(\Pcal_{i,0})}\right).
\]
For $h\geq1$, writing $\vec{\nu}^{(\Scal_{i,h})}=\diamond_{j=1}^h{\vec{o'_t}}$ with $\vec{o'_t}\in\{\emptyset,\neg\}^{n_{i_{-t}}}$, we have the following, where $\vec{o_t}$ for $t=0,1,\ldots,h-1$ is a variable ranging over $\{\emptyset,\neg\}^{n_{i_{-t-1}}}$:
\begin{align*}
&\Tree^{(h)}_i(\Scal_{i,h},C_{i'},\vec{\nu}^{(\Scal_{i,h})})= \\
&\sum_{k=0}^{h-1}\sum_{\vec{o_0},\ldots,\vec{o_k}}{\sigma_{\Qcal_{i',k},A_{i'}}(\diamond_{t=0}^k{\vec{o_t}}\diamond\vec{\xi}_{i',k},\diamond_{t=1}^{k+1}{\vec{o'_t}}\diamond\vec{\xi_{i,h}})\Tree^{(k)}_{i'}(\Pcal_{i',k},\diamond_{t=0}^k{\vec{o_t}})^+},
\end{align*}
and, viewing $\vec{\nu}^{(\Pcal_{i,h})}$ as the concatenation of $\vec{o'_0}\in\{\emptyset,\neg\}^{n_i}$ and $\vec{\nu}^{(\Scal_{i,h})}$, we have
\[
\Tree_i^{(h)}(\Pcal_{i,h},\vec{\nu}^{(\Pcal_{i,h})})=\Tree_i\left(\Rcal_i,\bigcup_{t=1}^K{C_{j_t}},\vec{o'_0}\right)+\Tree_i^{(h)}\left(\Scal_{i,h},C_{i'},\vec{\nu}^{(\Scal_{i,h})}\right).
\]
\end{propposition}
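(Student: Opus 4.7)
The plan is to prove both statements by strong induction on $h\in\{0,1,\ldots,H_i\}$ simultaneously for all periodic cosets $C_i$ on the given $\overline{f}$-cycle. For the base case $h=0$, the condition $\hfrak(x)=0$ forces $x$ to have no successor generation in $\Gamma_{\per}^{\ast}$; equivalently, $x$ has no $f$-pre-image in $C_{i'}$ at all, so $\Tree_{\Gamma_f}(x,C_{i'})$ is the trivial rooted tree and statement (1) holds (the sum is empty). Statement (2) then reduces to Proposition \ref{periodicCosetsTransientProp}(2) applied to $\Pcal_{i,0}=\Rcal_i$.

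For the inductive step $h\geq 1$, I would first partition the transient $f$-pre-images of $x$ in $C_{i'}$ by their $\hfrak$-value. A short walk-length argument (each pre-image $y$ with $\hfrak(y)=k$ prolongs every length-$k$ walk to a length-$(k+1)$ walk from $x$) shows that only values $k\in\{0,1,\ldots,h-1\}$ can occur, giving
\[
\Tree_{\Gamma_f}(x,C_{i'})=\sum_{k=0}^{h-1}\sum_{\substack{y\in C_{i'}:\ A_{i'}(y)=x\\ \hfrak(y)=k}}\Tree_{\Gamma_f}(y)^+.
\]
By the induction hypothesis for $C_{i'}$ at level $k<h$, the summand $\Tree_{\Gamma_f}(y)$ is constant as $y$ ranges over a fixed block of $\Qcal_{i',k}=\Pcal_{i',k}\wedge\Ucal_{i'}$ corresponding to $\vec{\nu}^{(\Ucal_{i'})}=\vec{\xi}_{i',k}$, and equals $\Tree_{i'}^{(k)}(\Pcal_{i',k},\diamond_{t=0}^k\vec{o_t})$. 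Counting such $y$ amounts to counting $|A_{i'}^{-1}(\{x\})\cap\Bcal(\Qcal_{i',k},\diamond_{t=0}^k\vec{o_t}\diamond\vec{\xi}_{i',k})|$, which by Lemma \ref{masterLem} depends only on the $\Pfrak'(\Qcal_{i',k},A_{i'})$-block containing $x$ and equals the corresponding distribution number $\sigma_{\Qcal_{i',k},A_{i'}}$.

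The technical heart of the argument is then the identity
\[
\Pfrak'(\Qcal_{i',k},A_{i'})=\bigwedge_{s=1}^{k+1}\lambda_{i_{-s}}^s(\Rcal_{i_{-s}})\wedge\Ucal_i,
\]
to be derived from the three properties collected in the bullets preceding the proposition: first, $\Pfrak'(\Pcal,A_{i'})=\lambda(\Pcal,A_{i'})\wedge\Pfrak(\theta_{i,1}(x))$; second, $\lambda(\cdot,A_{i'})$ distributes over meets (since meets of arithmetic partitions correspond to concatenating spanning sequences, and $\lambda$ acts sequence-wise); third, $\lambda(\lambda_{i'_{-t}}^t(\Rcal_{i'_{-t}}),A_{i'})=\lambda_{i_{-(t+1)}}^{t+1}(\Rcal_{i_{-(t+1)}})$ after using $i'=i_{-1}$, and $\Pfrak'(\Ucal_{i'},A_{i'})=\Ucal_i$. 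Once this identity is in hand, the refinement $k+1\leq h$ shows that $\Scal_{i,h}\wedge\Ucal_i$ refines $\Pfrak'(\Qcal_{i',k},A_{i'})$, so for $x$ with $\hfrak(x)=h$ in $\Bcal(\Scal_{i,h},\diamond_{t=1}^h\vec{o'_t})$ the relevant block of $\Pfrak'(\Qcal_{i',k},A_{i'})$ is exactly $\Bcal(\Pfrak'(\Qcal_{i',k},A_{i'}),\diamond_{t=1}^{k+1}\vec{o'_t}\diamond\vec{\xi}_{i,h})$. Substituting proves statement (1).

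Statement (2) is then immediate: the transient pre-images of $x$ split into those in $C_{i'}$ and those in $\bigcup_{t=1}^K C_{j_t}$, so $\Tree_{\Gamma_f}(x)=\Tree_{\Gamma_f}(x,\bigcup_{t=1}^K C_{j_t})+\Tree_{\Gamma_f}(x,C_{i'})$; by Proposition \ref{periodicCosetsTransientProp}(2) the first summand is $\Tree_i(\Rcal_i,\bigcup_{t=1}^K C_{j_t},\vec{o'_0})$, and by (1) the second is $\Tree_i^{(h)}(\Scal_{i,h},C_{i'},\vec{\nu}^{(\Scal_{i,h})})$. The hard part will be the meticulous bookkeeping needed to verify the displayed identity for $\Pfrak'(\Qcal_{i',k},A_{i'})$ and to match the indexing of the logical sign tuples $\vec{o_t}$, $\vec{o'_t}$, $\vec{\xi}_{i,h}$ across the partitions $\Pcal_{i',k}$, $\Qcal_{i',k}$ and $\Pfrak'(\Qcal_{i',k},A_{i'})$ in a way that produces exactly the formula stated; the rest is essentially bookkeeping over Lemma \ref{masterLem}.
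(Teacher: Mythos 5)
Your proposal is correct and takes essentially the same route as the paper: the paper also decomposes $\Tree_{\Gamma_f}(x)$ into the $\bigcup_t C_{j_t}$-part and the $C_{i'}$-part, groups the $f$-transient pre-images of $x$ in $C_{i'}$ by $\hfrak$-value, applies the recursion at the neighboring coset $i'$, and counts via Lemma \ref{masterLem} combined with the identity $\Tcal_{i,k+1}=\Pfrak'(\Qcal_{i',k},A_{i'})$, which the paper has already established in the bullet points preceding the proposition (so the ``technical heart'' you flag is in fact done there). The only small slip is in your base case: when $H_i=0$, $h=0$ means $x$ is $f$-\emph{periodic}, so $x$ does have a pre-image in $C_{i'}$ and has arbitrarily many successor generations in $\Gamma_{\per}^{\ast}$; what holds uniformly, and is all you need, is that $x$ has no $f$-\emph{transient} pre-images in $C_{i'}$.
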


\begin{proof}
The formulas for $h=0$ are clear because vertices $x\in C_i$ with $\hfrak(x)=0$ have no $f$-transient children in $C_{i'}$. We may thus assume that $h\geq1$. With regard to $\Tree^{(h)}_i(\Scal_{i,h},C_{i'},\vec{\nu}^{(\Scal_{i,h})})$, we note that if $x\in C_i$ lies in the block $\Bcal(\Scal_{i,h},\vec{\nu}^{(\Scal_{i,h})})$ of $\Scal_{i,h}$ and satisfies $\hfrak(x)=h$, then for each $k\in\{0,1,\ldots,h-1\}$, we have that $x$ lies in the block $\Bcal(\Tcal_{i,k+1},\diamond_{j=1}^{k+1}{\vec{o'_j}}\diamond\vec{\xi}_{i,h})$ of $\Tcal_{i,k+1}$. By definition, $\Tcal_{i,k+1}=\Pcal'(\Qcal_{i',k},A_{i'})$. Each ($f$-transient) pre-image $y$ of $x$ in $C_{i'}$ with $\hfrak(y)=k$ lies in some block of $\Qcal_{i',k}$ of the form $\Bcal(\Qcal_{i',k},\diamond_{j=0}^k{\vec{o_j}}\diamond\vec{\xi}_{i',k})$ where $\vec{o_j}\in\{\emptyset,\neg\}^{n_{i_{-j-1}}}$ for $j=0,1,\ldots,k$. Moreover, by Lemma \ref{masterLem}, the number of such pre-images in that block is
\[
\sigma_{\Qcal_{i',k},A_{i'}}(\diamond_{j=0}^k{\vec{o_j}}\diamond\vec{\xi}_{i',k},\diamond_{j=1}^{k+1}{\vec{o'_j}}\diamond\vec{\xi}_{i,h}).
\]
We observe further that each pre-image $y$ of $x$ in $C_{i'}$ which is contained in $\Bcal(\Qcal_{i',k},\diamond_{j=0}^k{\vec{o_j}}\diamond\vec{\xi}_{i',k})$ is also contained in $\Bcal(\Pcal_{i',k},\diamond_{j=0}^k{\vec{o_j}})$, whence $\Tree_{\Gamma_f}(y)\cong\Tree^{(k)}_{i'}(\Pcal_{i',k},\diamond_{j=0}^k{\vec{o_j}})$. This concludes the proof of the formula for $\Tree^{(h)}_i(\Scal_{i,h},C_{i'},\vec{\nu}^{(\Scal_{i,h})})$.

The formula for $\Tree_i^{(h)}(\Pcal_{i,h},\vec{\nu}^{(\Pcal_{i,h})})$ is clear because
\[
\Tree_{\Gamma_f}(x)=\Tree_{\Gamma_f}(x,\bigcup_{t=1}^K{C_{j_t}})+\Tree_{\Gamma_f}(x,C_{i'})
\]
for all $x\in C_i$.
\end{proof}

Now, let us set $\Pcal_i:=\Qcal_{i,H_i}=\Pcal_{i,H_i}\wedge\Ucal_i$. Putting everything together, we obtain the following concluding result for this subsection.

\begin{propposition}\label{periodicCosetsProp}
Let $\Bcal(\Pcal_i,\vec{\nu}^{(\Pcal_i)})$ be a block of $\Pcal_i$. We can view $\vec{\nu}^{(\Pcal_i)}$ as the concatenation $\diamond_{t=0}^{H_i}{\vec{o'_t}}\diamond\vec{\xi}$ where $\vec{o'_t}\in\{\emptyset,\neg\}^{n_{i_{-t}}}$ for $t=0,1,\ldots,H_i$, and $\vec{\xi}=\vec{\xi}_{i,h}$ for a unique $h\in\{0,1,\ldots,H_i\}$. Then for $x\in\Bcal(\Pcal_i,\vec{\nu}^{(\Pcal_i)})$, the isomorphism type of $\Tree_{\Gamma_f}(x)$ does not depend on $x$, is denoted by $\Tree_i(\Pcal_i,\vec{\nu}^{(\Pcal_i)})$ and given by the formula
\[
\Tree_i(\Pcal_i,\vec{\nu}^{(\Pcal_i)})=\Tree_i^{(h)}(\Pcal_{i,h},\diamond_{t=0}^h{\vec{o'_t}}).
\]
Moreover, we have
\[
\AC(\Pcal_i)\leq d^2\mpe(q-1)+d-1\in O(d^2\mpe(q-1)).
\]
\end{propposition}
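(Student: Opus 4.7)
The plan is to derive both parts of the proposition directly from the infrastructure of Subsection \ref{subsec3P3}, with no genuinely new idea required. For the first claim, I would apply Proposition \ref{periodicCosetsPeriodicProp}(2) at the appropriate $\hfrak$-level. Since $\Pcal_i = \Pcal_{i,H_i} \wedge \Ucal_i$ is a common refinement, each $\Pcal_i$-block lies inside a unique $\Ucal_i$-block and a unique $\Pcal_{i,H_i}$-block. The final subtuple $\vec{\xi} = \vec{\xi}_{i,h}$ of $\vec{\nu}^{(\Pcal_i)}$ encodes precisely that $\hfrak(x) = h$ on the block, while the coarsening $\Pcal_{i,h}$ of $\Pcal_{i,H_i}$, obtained by discarding the spanning congruences indexed by $t > h$, is cut out exactly by the initial subtuple $\diamond_{t=0}^{h} \vec{o'_t}$. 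Consequently, every $x \in \Bcal(\Pcal_i, \vec{\nu}^{(\Pcal_i)})$ both satisfies $\hfrak(x) = h$ and lies in $\Bcal(\Pcal_{i,h}, \diamond_{t=0}^h \vec{o'_t})$, and Proposition \ref{periodicCosetsPeriodicProp}(2) identifies $\Tree_{\Gamma_f}(x)$ with $\Tree_i^{(h)}(\Pcal_{i,h}, \diamond_{t=0}^h \vec{o'_t})$, as asserted.

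For the complexity bound I plan to exploit the subadditivity $\AC(\Pcal \wedge \Pcal') \leq \AC(\Pcal) + \AC(\Pcal')$, which is immediate from the definition of $\AC$ as the minimum number of spanning congruences (the concatenation of spanning lists spans the meet). Decomposing
\[
\Pcal_i = \bigwedge_{t=0}^{H_i} \lambda_{i_{-t}}^t(\Rcal_{i_{-t}}) \wedge \Ucal_i,
\]
this gives $\AC(\Pcal_i) \leq \sum_{t=0}^{H_i} \AC(\lambda_{i_{-t}}^t(\Rcal_{i_{-t}})) + \AC(\Ucal_i)$. The pushforward operation $\lambda$ preserves the number of spanning congruences, so $\AC(\lambda_{i_{-t}}^t(\Rcal_{i_{-t}})) \leq n_{i_{-t}}$. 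The key combinatorial input is the uniform bound $n_{i'} \leq d-1$ for every $\overline{f}$-periodic coset $i' < d$: by construction $n_{i'} \leq \sum_{j} (m_j + 1)$ with $j$ ranging over the transient $\overline{f}$-preimages of $i'$, and the inductive inequality $m_j + 1 \leq \vfrak_j + 1 = |\V(\Tree_{\Gamma_{\overline{f}}}(j))|$ from Proposition \ref{transientCosetsProp}, together with the fact that these subtrees are pairwise disjoint and union with $\{i'\}$ to form $\V(\Tree_{\Gamma_{\overline{f}}}(i'))$, yields $n_{i'} \leq |\V(\Tree_{\Gamma_{\overline{f}}}(i'))| - 1 \leq d - 1$ (the last inequality uses that the fixed point $d$ of $\overline{f}$ lies outside $\Tree_{\Gamma_{\overline{f}}}(i')$ for $i' < d$). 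Since $\Ucal_i$ is spanned by the $H_i$ congruences $\theta_{i,k}(x)$ for $k = 1, \ldots, H_i$, we have $\AC(\Ucal_i) \leq H_i$, and the bound $H_i \leq d \mpe(q-1)$ from inequality (\ref{hiBoundEq}) yields
\[
\AC(\Pcal_i) \leq (H_i + 1)(d - 1) + H_i = dH_i + d - 1 \leq d^2 \mpe(q-1) + d - 1,
\]
exactly matching the claimed estimate.

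Both halves of the argument are essentially bookkeeping once the preceding propositions are in hand, so I do not anticipate a genuine obstacle. The subtlest point is confirming that, under the concatenation convention chosen in the recursive construction of $\Pcal_i$, the initial subtuple $\diamond_{t=0}^h \vec{o'_t}$ genuinely parameterizes the relevant $\Pcal_{i,h}$-block; this is a purely notational verification, traceable through the explicit formulas $\Pcal_{i,h} = \bigwedge_{t=0}^h \lambda_{i_{-t}}^t(\Rcal_{i_{-t}})$ and $\Pcal_{i,H_i} = \bigwedge_{t=0}^{H_i} \lambda_{i_{-t}}^t(\Rcal_{i_{-t}})$.
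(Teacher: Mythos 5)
Your proposal is correct and follows essentially the same route as the paper's proof: containment of the $\Pcal_i$-block in the appropriate $\Ucal_i$- and $\Pcal_{i,h}$-blocks reduces the tree formula to Proposition~\ref{periodicCosetsPeriodicProp}(2), and the complexity bound follows from subadditivity of $\AC$ under $\wedge$ together with the bounds $\AC(\lambda_{i_{-t}}^t(\Rcal_{i_{-t}}))\leq n_{i_{-t}}\leq d-1$, $\AC(\Ucal_i)\leq H_i$, and $H_i\leq d\,\mpe(q-1)$. Your spelled-out derivation of $n_{i'}\leq d-1$ from the disjointness of the subtrees $\Tree_{\Gamma_{\overline{f}}}(j_t)$ and the exclusion of the fixed point $d$ is slightly more detailed than the paper, which asserts the bound $d-1$ without elaboration, but the argument is the one intended.
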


\begin{proof}
The block $\Bcal(\Pcal_i,\vec{\nu}^{(\Pcal_i)})$ is contained in $\Bcal(\Ucal_i,\xi_{i,h})$, whence $\hfrak(v)=h$. Additionally, $\Bcal(\Pcal_i,\vec{\nu}^{(\Pcal_i)})$ is contained in $\Bcal(\Pcal_{i,h},\diamond_{t=0}^h{\vec{o'_t}})$, so the asserted formula for $\Tree_i(\Pcal_i,\vec{\nu}^{(\Pcal_i)})$ is clear by Proposition \ref{periodicCosetsPeriodicProp}(2). For the complexity bound, we note that by definition of $\Pcal_i=\Pcal_{i,H_i}\wedge\Ucal_i$, we have
\begin{align*}
\AC(\Pcal_i) &\leq\AC(\Pcal_{i,H_i})+\AC(\Ucal_i)\leq\sum_{t=0}^{H_i}{\AC(\lambda_{i_{-t}}^t(\Rcal_{i_{-t}}))}+H_i\leq(H_i+1)(d-1)+H_i \\
&=H_id+d-1\leq d^2\mpe(q-1)+d-1,
\end{align*}
as required.
\end{proof}

\begin{remmark}\label{trivialBoundRem}
Omitting all explicit details which we worked out in this subsection, we basically proved that on each coset $C_i$, there is an arithmetic partition $\Pcal_i$ with $\AC(\Pcal_i)\in O(d^2\mpe(q-1))\subseteq O(d^2\log{q})$ which \enquote{controls} the rooted trees above vertices in $C_i$. Now, the trivial partition $\Tcal_s$\phantomsection\label{not169} of $C_i\cong\IZ/s\IZ$ which consists entirely of singleton blocks also \enquote{controls} the trees above its blocks, for trivial reasons. While it is preferable for our effective purposes to subsume as many isomorphic rooted trees under a common block as possible (and thus $\Pcal_i$ is in general preferable over $\Tcal_s$), it is an interesting question whether $\Tcal_s$ could \enquote{beat} $\Pcal_i$ at least as far as arithmetic complexity is concerned.

Let us discuss this problem for a general modulus $m\in\IN^+$ (not just $s=(q-1)/d$). We consider the factorization $m=p_1^{v_1}p_2^{v_2}\cdots p_K^{v_K}$ of $m$ into pairwise coprime prime powers. By adding logical signs to the $m$-CCs in the system consisting of $x\equiv b\Mod{p_j^{v_j}}$ for $j=1,2,\ldots,K$ and $b\in\{0,1,\ldots,p_j^{v_j}-2\}$, one can obtain each singleton subset of $\IZ/m\IZ$ as a block of the corresponding arithmetic partition. This shows that $\AC(\Tcal_m)\leq p_1^{v_1}+\cdots+p_K^{v_K}-K$. Of course, if $m$ is a prime power, then this bound is just $m-1$. On the other hand, if $m=p_1p_2\cdots p_K=p_K\#$ is a primorial, then because $p_K\#=\exp((1+o(1))K\log{K})$, and thus $\log{m}\sim K\log{K}$, i.e., recalling from Remark \ref{sArithRem} that $W$ denotes the Lambert W function, we have
\[
K\sim\frac{\log{m}}{W(\log{m})}\sim\frac{\log{m}}{\log\log{m}},
\]
and our bound implies that $\AC(\Tcal_m)$ is at most
\begin{align*}
p_1+\cdots+p_K-K &\sim p_1+\cdots+p_K\sim\frac{1}{2}n^2\log{n} \\
&\sim\frac{1}{2}\frac{\log^2{m}}{\log\log^2{m}}(\log\log{m}-\log\log\log{m})\sim\frac{1}{2}\frac{\log^2{m}}{\log\log{m}},
\end{align*}
which does not beat $O(d^2\log{m})$ for fixed $d$, and even less so $O(d^2\mpe(m))=O(d^2)$, noting that $\mpe(m)=\mpe(p_K\#)=1$. It is an interesting open question whether
\[
\liminf_{m\to\infty}{\frac{\AC(\Tcal_m)}{\log^2(m)/\log\log{m}}}>0,
\]
see also Question \ref{liminfQues}.
\end{remmark}

\subsection{Understanding the connected components}\label{subsec3P4}

Now we want to combine the theory developed thus far to understand the connected components of $\Gamma_f$ in their entirety. From the introduction, we recall our approach of associating a necklace of rooted tree isomorphism types with each connected component of $\Gamma_f$, which characterizes the digraph isomorphism type of that component.

Let $\Lcal$ be a CRL-list of $f$ (see Subsection \ref{subsec3P1} on how to construct $\Lcal$). We remind the reader that the first entries of the pairs in $\Lcal$ are representatives not only for the cycles of $f$, but also for the connected components of $\Gamma_f$. Let us fix $(r,l)\in\Lcal$. We aim to give a neat description of the cyclic sequence of rooted tree isomorphism types associated with the connected component of $\Gamma_f$ containing $r$.

We note that by our construction of $\Lcal$, if $r=0_{\IF_q}$, then $l=1$, and the connected component consists of a single rooted tree attached to the looped vertex $0_{\IF_q}$. We can determine that tree, $\Tree_{\Gamma_f}(0_{\IF_q})$, as described in Subsection \ref{subsec3P3}. The length $1$ cyclic sequence $[\Tree_{\Gamma_f}(0_{\IF_q})]$ determines the connected component of $0_{\IF_q}$ as a whole, and so we may henceforth assume that $r\not=0_{\IF_q}$, contained in a unique coset $C_i$ of $C$ in $\IF_q^{\ast}$.

We recall that $i$ is necessarily a periodic point of $\overline{f}$, and let $(i_0,i_1,\ldots,i_{\ell-1})$ with $i_0=i$ be its cycle. For general $t\in\IZ$, we set $i_t:=i_{t\bmod{\ell}}$. By Subsection \ref{subsec3P3}, on each coset $C_j$ of $C$ in $\IF_q^{\ast}$, we have an arithmetic partition $\Pcal_j$ such that the isomorphism type of $\Tree_{\Gamma_f}(x)$ is constant and explicitly understood for vertices $x\in C_j$ chosen from a common block $\Bcal(\Pcal_j,\vec{\nu})$ of $\Pcal_j$, and we denote the said isomorphism type by $\Tree_j(\Pcal_j,\vec{\nu})$.

Let us fix $t\in\{0,1,\ldots,\ell-1\}$ and recall (from the previous subsection) the notation $\Acal_{i_t}=A_{i_t}A_{i_{t+1}}\cdots A_{i_{t+\ell-1}}$ for the product of all affine maps along the cycle of $i_t$. Also, we recall that $f^{\ell}$ stabilizes $C_{i_t}$, and that the restriction $(f^{\ell})_{\mid C_{i_t}}$ corresponds to the affine map $\Acal_{i_t}$ of $\IZ/s\IZ$. Let us set $r'_{i_t}:=f^t(r)$\phantomsection\label{not170}. We have $\ell\mid l$, and the cycle of $r'_{i_t}$ under $\Acal_{i_t}$ is
\[
(r'_{i_t},\Acal_{i_t}(r'_{i_t}),\Acal_{i_t}^2(r'_{i_t}),\ldots,\Acal_{i_t}^{l/\ell-1}(r'_{i_t}))=(f^t(r),f^{t+\ell}(r),f^{t+2\ell}(r),\ldots,f^{t+((l/\ell)-1)\ell}(r)).
\]
We denote by $\Bcal_{i_t}$\phantomsection\label{not171} the function defined on $C_{i_t}$ that maps $x\in C_{i_t}$ to the unique block of $\Pcal_{i_t}$ containing $x$. Our next goal is to understand the block sequence
\[
(\Bcal_{i_t}(r'_{i_t}),\Bcal_{i_t}(\Acal_{i_t}(r'_{i_t})),\ldots,\Bcal_{i_t}(\Acal_{i_t}^{l/\ell-1}(r'_{i_t})))
\]
along the $\Acal_{i_t}$-cycle of $r'_{i_t}$ for each $t=0,1,\ldots,\ell-1$, because those sequences combine to the block sequence $(\Bcal_{i_n}(f^n(r)))_{n=0,1,\ldots,l-1}$, from which one can read off the cyclic sequence of rooted tree isomorphism types for the connected component of $\Gamma_f$ containing $r$.

Now, let us assume that $\Pcal_{i_t}=\Pfrak(x\equiv \bfrak_{i_t,j}\Mod{\afrak_{i_t,j}}: j=1,2,\ldots,m_{i_t})$\phantomsection\label{not173}\phantomsection\label{not174}. Understanding the block sequence $(\Bcal_{i_t}(\Acal_{i_t}^m(r'_{i_t})))_{m=0,1,\ldots,l/\ell-1}$ means understanding the truth values of the congruences $x\equiv \bfrak_{i_t,j}\Mod{\afrak_{i_t,j}}$ as $x$ ranges over the cycle of $r'_{i_t}$ under $\Acal_{i_t}$. We recall from the previous subsection that $\overline{\alpha}_{i_t}:=\prod_{t=0}^{\ell-1}{\alpha_{i_t}}$ is the linear coefficient of $\Acal_{i_t}$. Lemma \ref{periodicCharLem} implies that all periodic points of $\Acal_{i_t}$ (in particular all points on the cycle of $r'_{i_t}$ under $\Acal_{i_t}$) have one particular, explicitly known value modulo $\prod_{p\mid\gcd(\overline{\alpha}_{i_t},s)}{p^{\nu_p(s)}}$. This may cause some of the congruences $x\equiv \bfrak_{i_t,j}\Mod{\afrak_{i_t,j}}$ for $j\in\{1,2,\ldots,m_{i_t}\}$ to have constant truth value on all periodic points of $\Acal_{i_t}$. The remaining congruences can be dealt with as follows.

We compute $\log_{\Acal_{i_t}}^{(\afrak_{i_t,j})}(r'_{i_t},\bfrak_{i_t,j})=:\lfrak_{i_t,j}$\phantomsection\label{not175} (see Subsection \ref{subsec2P4} for this discrete log notation) and the cycle length $l_{i_t,j}$\phantomsection\label{not176} of $r'_{i_t}$ under $\Acal_{i_t}$ modulo $\afrak_{i_t,j}$. If $\lfrak_{i_t,j}=\infty$ (i.e., $\bfrak_{i_t,j}$ does not lie on the cycle of $r'_{i_t}$ under $\Acal_{i_t}$ modulo $\afrak_{i_t,j}$), then the congruence $x\equiv \bfrak_{i_t,j}\Mod{\afrak_{i_t,j}}$ is false for all $x$ on the $\Acal_{i_t}$-cycle of $r'_{i_t}$ modulo $s$. Otherwise, that congruence is true precisely for those $x=\Acal_{i_t}^y(r'_{i_t})$ for which $y\equiv \lfrak_{i_t,j}\Mod{l_{i_t,j}}$. Let us denote by $I_{i_t}$\phantomsection\label{not177} the set of those $j\in\{1,2,\ldots,m_{i_t}\}$ for which the truth value of $x\equiv \bfrak_{i_t,j}\Mod{\afrak_{i_t,j}}$ is constant along the $\Acal_{i_t}$-cycle of $r'_{i_t}$ modulo $s$. The block sequence $(\Bcal_{i_t}(\Acal_{i_t}^m(r'_{i_t})))_{m=0,1,\ldots,l/\ell-1}$ is determined by
\begin{itemize}
\item the information about the constant truth values along the $\Acal_{i_t}$-cycle of $r'_{i_t}$ modulo $s$ of the congruences $x\equiv \bfrak_{i_t,j}\Mod{\afrak_{i_t,j}}$ for $j\in I_{i_t}$, and
\item the arithmetic partition $\Pcal^{(i_t)}:=\Pfrak(y\equiv \lfrak_{i_t,j}\Mod{l_{i_t,j}}: j\notin I_{i_t})$\phantomsection\label{not178} of $\IZ/(l/\ell)\IZ$, which encodes the behavior of the truth values of the remaining congruences $x\equiv \bfrak_{i_t,j}\Mod{\afrak_{i_t,j}}$, for $j\notin I_{i_t}$, along the cycle.
\end{itemize}
Once the block sequence $(\Bcal_{i_t}(\Acal_{i_t}^m(r'_{i_t})))_{m=0,1,\ldots,l/\ell-1}$ has been understood that way for each $t$, the actual necklace that encodes the isomorphism type of the connected component of $\Gamma_f$ containing $r$ is given by the cyclic sequence
\begin{equation}\label{cyclicSeqEq}
[\Tree_{i_n}(\Pcal_{i_n},\Bcal_{i_n}(\Acal_{i_n}^{(n-(n\bmod{\ell}))/\ell}(r'_{i_n})))]_{n=0,1,\ldots,l-1}
\end{equation}
where, by abuse of notation, $\Tree_i(\Pcal_i,B)$\phantomsection\label{not179} is to be understood as $\Tree_i(\Pcal_i,\vec{\nu})$ for $B=\Bcal(\Pcal_i,\vec{\nu})$.

The connected components of $\Gamma_f$ associated with two different choices for $r$ are isomorphic if and only if the corresponding cyclic sequences (\ref{cyclicSeqEq}) are equal. We do note, however, that it does not appear obvious how to check this efficiently, just as it does not seem clear how to check efficiently whether two given arithmetic partitions are equal -- see Problems \ref{labeledProb1} and \ref{labeledProb2}.

\section{Computations and examples}\label{sec4}

The goal of this section is to illustrate the theory developed thus far through some concrete computations and examples/special cases. We start by introducing a useful notation that is used in Subsections \ref{subsec4P1} and \ref{subsec4P2}. We consider finite directed rooted trees (with all arcs oriented toward the root) that have non-negative integers as edge weights. An \emph{isomorphism}\phantomsection\label{term52} of such graphs is one of the underlying non-edge-weighted directed graphs that preserves the weights of arcs. With each isomorphism type $\Ifrak$ of such trees, we associate an isomorphism type $\Expand(\Ifrak)$\phantomsection\label{not180} of non-edge-weighted, finite directed rooted trees as follows. If $y_1,y_2,\ldots,y_n$ are the neighbors of the root $x$ of $\Ifrak$, and they have the (isomorphism types of) edge-weighted rooted trees $\Ifrak_1,\Ifrak_2,\ldots,\Ifrak_n$ attached to them and the edge joining $x$ and $y_j$ has weight $\wfrak_j\in\IN_0$\phantomsection\label{not181}, then $\Expand(\Ifrak)$ is defined recursively by taking a new root and attaching $\wfrak_j$ copies of $\Expand(\Ifrak_j)$ to it for each $j=1,2,\ldots,n$. For example, if $\Ifrak$ is
\begin{center}
\begin{tikzpicture}
\node (r) at (0,0) {};
\draw (r) circle [radius=1pt];
\node (c1) at (-1,1) {};
\draw (c1) circle [radius=1pt];
\node (c2) at (1,1) {};
\draw (c2) circle [radius=1pt];
\node (v) at (1,2) {};
\draw (v) circle [radius=1pt];
\path
(c1) edge[->] node[left] {2} (r)
(c2) edge[->] node[right] {1} (r)
(v) edge[->] node[right] {3} (c2);
\end{tikzpicture}
\end{center}
then $\Expand(\Ifrak)$ is
\begin{center}
\begin{tikzpicture}
\node (r) at (0,0) {};
\draw (r) circle [radius=1pt];
\node (c1) at (-2,1) {};
\draw (c1) circle [radius=1pt];
\node (c2) at (-1,1) {};
\draw (c2) circle [radius=1pt];
\node (c3) at (1,1) {};
\draw (c3) circle [radius=1pt];
\node (v1) at (0,2) {};
\draw (v1) circle [radius=1pt];
\node (v2) at (1,2) {};
\draw (v2) circle [radius=1pt];
\node (v3) at (2,2) {};
\draw (v3) circle [radius=1pt];
\path
(c1) edge[->] node {} (r)
(c2) edge[->] node {} (r)
(c3) edge[->] node {} (r)
(v1) edge[->] node {} (c3)
(v2) edge[->] node {} (c3)
(v3) edge[->] node {} (c3);
\end{tikzpicture}
\end{center}
By recursion on the height of $\Ifrak$, we define $\Ifrak$ to be \emph{simplified}\label{term53} as follows. The trivial isomorphism type $\Ifrak$ is simplified, and if $\Ifrak$ is of positive height, then $\Ifrak$ is simplified if all isomorphism types $\Ifrak'$ attached to the root of $\Ifrak$ (which are all of smaller height than $\Ifrak$) are simplified, pairwise distinct, and none of them is attached to the root with edge weight $0$. The simplified isomorphism types of finite edge-weighted directed rooted trees are in bijection with the isomorphism types of finite directed rooted trees via $\Expand$ (as can be easily proved by induction on the height). This allows us to define the \emph{simplified form}\phantomsection\label{term54} $\SF(\Ifrak)$\phantomsection\label{not182} of an arbitrary isomorphism type $\Ifrak$ of finite edge-weighted directed rooted trees as the unique simplified isomorphism type such that $\Expand(\SF(\Ifrak))=\Expand(\Ifrak)$. We write $\Ifrak\sim\Ifrak'$\phantomsection\label{not183} for $\SF(\Ifrak)=\SF(\Ifrak')$ (equivalently, $\Expand(\Ifrak)=\Expand(\Ifrak')$). The simplified form of $\Ifrak$ can be constructed explicitly from $\Ifrak$ in a simple recursion on the tree height (going through the $\Ifrak'$ attached to the root of $\Ifrak$, computing their simplified forms, and adding up edge weights that belong to the same $\SF(\Ifrak')$).

In Subsection \ref{subsec3P3}, we introduced a sum of isomorphism types of non-edge-weighted finite directed rooted trees (turning their class into a class-sized monoid), and there is a unique way to define a \emph{sum of simplified edge-weighted rooted tree isomorphism types}\phantomsection\label{term55} such that $\Expand$ becomes a monoid isomorphism (i.e., $\Expand(\Ifrak_1+\Ifrak_2)=\Expand(\Ifrak_1)+\Expand(\Ifrak_2)$). Explicitly, $\Ifrak_1+\Ifrak_2$ may be defined as follows. Pick a new root $x$ and consider the edge-weighted rooted trees $\Ifrak'$ that are attached to the root in $\Ifrak_1$ or $\Ifrak_2$. Let $\wfrak_j$ for $j=1,2$ be the weight of the arc that attaches $\Ifrak'$ to the root in $\Ifrak_j$ (treating $\wfrak_j$ as $0$ if such an arc does not exist). For each such $\Ifrak'$, attach a copy of $\Ifrak'$ to $x$ through an arc with weight $\wfrak_1+\wfrak_2$. For example,
\begin{center}
\begin{tikzpicture}
\node (r1) at (-3,0) {};
\draw (r1) circle [radius=1pt];
\node (a1) at (-4,1) {};
\draw (a1) circle [radius=1pt];
\node (b1) at (-2,1) {};
\draw (b1) circle [radius=1pt];
\node (c1) at (-2,2) {};
\draw (c1) circle [radius=1pt];
\path
(a1) edge[->] node[left] {$2$} (r1)
(b1) edge[->] node[right] {$2$} (r1)
(c1) edge[->] node[right] {$2$} (b1);
\node at (-1,1) {$+$};
\node (r2) at (1,0) {};
\draw (r2) circle [radius=1pt];
\node (a2) at (0,1) {};
\draw (a2) circle [radius=1pt];
\node (b2) at (2,1) {};
\draw (b2) circle [radius=1pt];
\node (c2) at (2,2) {};
\draw (c2) circle [radius=1pt];
\path
(a2) edge[->] node[left] {$1$} (r2)
(b2) edge[->] node[right] {$1$} (r2)
(c2) edge[->] node[right] {$3$} (b2);
\node at (3,1) {$=$};
\node (r3) at (5,0) {};
\draw (r3) circle [radius=1pt];
\node (a3) at (4,1) {};
\draw (a3) circle [radius=1pt];
\node (b3) at (6,1) {};
\draw (b3) circle [radius=1pt];
\node (c3) at (6,2) {};
\draw (c3) circle [radius=1pt];
\node (d3) at (5,1) {};
\draw (d3) circle [radius=1pt];
\node (e3) at (5,2) {};
\draw (e3) circle [radius=1pt];
\path
(a3) edge[->] node[left] {$3$} (r3)
(b3) edge[->] node[right] {$1$} (r3)
(c3) edge[->] node[right] {$3$} (b3)
(d3) edge[->] node[right] {$2$} (r3)
(e3) edge[->] node[right] {$2$} (d3);
\end{tikzpicture}
\end{center}
This addition can be extended to arbitrary isomorphism types of finite edge-weighted directed rooted trees by setting $\Ifrak_1+\Ifrak_2:=\SF(\Ifrak_1)+\SF(\Ifrak_2)$. Henceforth, we frequently drop the word \enquote{isomorphism type} (thus identifying a finite (edge-weighted) directed rooted tree with its isomorphism type) for the sake of simplicity.

\subsection{Rooted trees under rigid procreation}\label{subsec4P1}

Let $\Gamma=\Gamma_g$ be a finite functional graph such that $\Gamma^{\ast}$ has rigid procreation (see Definition \ref{childProcDef}(4)). Moreover, let $x\in\V(\Gamma)$ be $g$-periodic, and let $(\proc_k(x))_{k\geq1}$ be the sequence of procreation numbers of $x$ in $\Gamma^{\ast}$ (which is independent of $x$ due to rigid procreation). Proposition \ref{rigidProp} states that the isomorphism type of the rooted tree $\Tree_{\Gamma}(x)$, which also does not depend on the choice of $g$-periodic vertex $x$, is entirely determined by this sequence of procreation numbers. We would like to understand explicitly how that isomorphism type can be derived from $(\proc_k(x))_{k\geq1}$.

Since $\Gamma$ is finite, so is $H:=\height(\Tree_{\Gamma}(x))$. We observe that $x$ has $g$-transient children with at least $H-1$ successor generations in $\Gamma^{\ast}$, but no such children with at least $H$ successor generations. This means that $\proc_h(x)>1$ for $h=1,2,\ldots,H$, but $\proc_{H+1}(x)=1$; the unique $g$-periodic child of $x$ in $\Gamma^{\ast}$ has infinitely many successor generations, thus providing a contribution of $1$ to all procreation numbers. This allows us to read off $H$ from the sequence $(\proc_k(x))_{k\geq1}$ alone.

Now, Lemma \ref{rigidLem} implies that for all $g$-transient vertices $y\in\V(\Gamma)$ with a fixed tree height $h\in\{0,1,\ldots,H-1\}$ above them in $\Gamma$, the rooted tree isomorphism type $\Tree_{\Gamma}(y)$ is always the same. We recursively define a (not necessarily simplified) edge-weighted directed rooted tree $\Ifrak_h=\Ifrak_h((\proc_k(x))_{k\geq1})$ such that the said isomorphism type is $\Expand(\Ifrak_h)$. Clearly, the only choice for $\Ifrak_0$ is a single vertex without arcs. If $h\in\{1,2,\ldots,H-1\}$, then we define $\Ifrak_h$ as follows. We fix a new root, and
\begin{itemize}
\item for $k=0,1,\ldots,h-2$, we attach a copy of $\Ifrak_k$ to the new root with edge weight $\proc_{k+1}(x)-\proc_{k+2}(x)=:\wfrak_k$; and
\item we attach a copy of $\Ifrak_{h-1}$ to the new root with edge weight $\proc_h(x)$.
\end{itemize}
Here is a visual version of this definition.
\begin{center}
\begin{tikzpicture}
\node (r) at (0,0) {};
\draw (r) circle [radius=1pt];
\node at (-3.5,1) {$\Ifrak_h:=$};
\node (c0) at (-2.5,1) {};
\draw (c0) circle [radius=1pt];
\node (m0) at (-2.5,2.05) {$\Ifrak_0$};
\draw (m0) circle [x radius=15pt, y radius=30pt];
\node (c1) at (-1.3,1) {};
\draw (c1) circle [radius=1pt];
\node (m1) at (-1.3,2.05) {$\Ifrak_1$};
\draw (m1) circle [x radius=15pt, y radius=30pt];
\node (dots) at (-0.3,1.7) {$\cdots$};
\node (chM2) at (0.7,1) {};
\draw (chM2) circle [radius=1pt];
\node (mhM2) at (0.7,2.05) {$\Ifrak_{h-2}$};
\draw (mhM2) circle [x radius=15pt, y radius=30pt];
\node (chM1) at (3.5,1) {};
\draw (chM1) circle [radius=1pt];
\node (mhM1) at (3.5,2.05) {$\Ifrak_{h-1}$};
\draw (mhM1) circle [x radius=15pt, y radius=30pt];
\path
(c0) edge[->] node[left] {$\wfrak_0$} (r)
(c1) edge[->] node[right] {$\wfrak_1$} (r)
(chM2) edge[->] node[right] {$\wfrak_{h-2}$} (r)
(chM1) edge[->] (r);
\node (lastLabel) at (2.7,0.4) {$\proc_h(x)$};
\end{tikzpicture}
\end{center}
This definition of $\Ifrak_h$ does the job, because by the proof of Lemma \ref{rigidLem}, for each $k\in\{0,1,\ldots,h-1\}$, the number of children $z$ of $y$ in $\Gamma^{\ast}$ such that $\Tree_{\Gamma}(z)$ has height exactly $k-1$ (and hence is isomorphic to $\Ifrak_{k-1}$ by induction) is equal to
\[
\proc_{k+1}(y)-\proc_{k+2}(y)=
\begin{cases}
\proc_{k+1}(x)-\proc_{k+2}(x)=\wfrak_k, & \text{if }k<h-1, \\
\proc_h(x)-0=\proc_h(x), & \text{if }k=h-1.
\end{cases}
\]
For our fixed periodic vertex $x$, the determination of $\Tree_{\Gamma}(x)$ is analogous, but one must take into account that $x$ has a (unique) $g$-periodic child in $\Gamma^{\ast}$, which does not appear in $\Tree_{\Gamma}(x)$. This means that the weight with which $\Ifrak_k$ for $k\in\{0,1,\ldots,H-2\}$ is attached to the root of $\Ifrak_H$ is
\[
(\proc_{k+1}(x)-1)-(\proc_{k+2}(x)-1)=\wfrak_k,
\]
whereas the weight with which $\Ifrak_{H-1}$ is attached is $\proc_H(x)-1=\proc_H(x)-\proc_{H+1}(x)=:\wfrak_{H-1}$. In short, we obtain the following definition of $\Ifrak_H$ such that $\Expand(\Ifrak_H)\cong\Tree_{\Gamma}(x)$:
\begin{center}
\begin{tikzpicture}
\node (r) at (0,0) {};
\draw (r) circle [radius=1pt];
\node at (-3.5,1) {$\Ifrak_H:=$};
\node (c0) at (-2.5,1) {};
\draw (c0) circle [radius=1pt];
\node (m0) at (-2.5,2.05) {$\Ifrak_0$};
\draw (m0) circle [x radius=15pt, y radius=30pt];
\node (c1) at (-1.3,1) {};
\draw (c1) circle [radius=1pt];
\node (m1) at (-1.3,2.05) {$\Ifrak_1$};
\draw (m1) circle [x radius=15pt, y radius=30pt];
\node (dots) at (-0.3,1.7) {$\cdots$};
\node (chM2) at (0.7,1) {};
\draw (chM2) circle [radius=1pt];
\node (mhM2) at (0.7,2.05) {$\Ifrak_{H-2}$};
\draw (mhM2) circle [x radius=15pt, y radius=30pt];
\node (chM1) at (3.7,1) {};
\draw (chM1) circle [radius=1pt];
\node (mhM1) at (3.7,2.05) {$\Ifrak_{H-1}$};
\draw (mhM1) circle [x radius=15pt, y radius=30pt];
\path
(c0) edge[->] node[left] {$\wfrak_0$} (r)
(c1) edge[->] node[right] {$\wfrak_1$} (r)
(chM2) edge[->] node[right] {$\wfrak_{H-2}$} (r)
(chM1) edge[->] (r);
\node (lastLabel) at (2.8,0.4) {$\wfrak_{H-1}$};
\end{tikzpicture}
\end{center}
We can use similar ideas to describe, for each index $d$ generalized cyclotomic mapping $f$ of $\IF_q$, the rooted trees above non-zero periodic vertices in $\Gamma_{\per}$, the induced subgraph of $\Gamma_f$ on the union of all periodic blocks $C_i$ (in particular, we can obtain such a description for $\Gamma_f$ as a whole in case $\overline{f}$ is a permutation). Let $i\in\{0,1,\ldots,d-1\}$ be $\overline{f}$-periodic, with $\overline{f}$-cycle $(i_0,i_1,\ldots,i_{\ell-1})$ where $i_0=i$. For $t\in\IZ$, we set $i_t:=i_{t\bmod{\ell}}$. Theorem \ref{cosetPermTheo} states that for fixed $t\in\IZ$ and $h\in\IN^+$, if $x,y\in C_{i_t}$ each have at least $h$ successor generations in $\Gamma_{\per}^{\ast}$ (i.e., if $\min\{\proc^{(\Gamma_{\per}^{\ast})}_h(x),\proc^{(\Gamma_{\per}^{\ast})}_h(y)\}>0$), then $\proc_h^{(\Gamma_{\per}^{\ast})}(x)=\proc_h^{(\Gamma_{\per}^{\ast})}(y)$. This allows us to set $\proc_{i_t,h}:=\proc_h^{(\Gamma_{\per}^{\ast})}(x)$ for any $x\in C_{i_t}$ with at least $h$ successor generations in $\Gamma_{\per}^{\ast}$ (such as an $f$-periodic $x$); this notation agrees with the one used in the proof of Theorem \ref{cosetPermTheo}.

According to the comment before Theorem \ref{cosetPermTheo}, for periodic $x\in C_{i_t}$, the isomorphism type of $\Tree_{\Gamma_{\per}}(x)$ only depends on $i_t$ and the numbers $\proc_{i_{t'},h}$ for $h\geq1$ and $t'\in\IZ$ (i.e., it is independent of the choice of $x$). We describe how to read off this rooted tree from the data it depends on. We set
\[
\Hcal_{i_t}:=\min\{h\in\IN^+: \proc_{i_t,h}=1\}-1,
\]
the\phantomsection\label{not184} common height of the rooted trees in $\Gamma_{\per}$ above periodic vertices in $C_{i_t}$. Moreover, as in Subsection \ref{subsec3P3}, we let $H_i:=\max\{\Hcal_{i_t}: t=0,1,\ldots,\ell-1\}$, the maximum such tree height along the cycle of $i$. Then, in generalization of what was stated above, for all vertices $x\in C_{i_t}$, the isomorphism type of $\Tree_{\Gamma_{\per}}(x)$ only depends on $i_t$, the numbers $\proc_{i_{t'},h}$ and the $\hfrak$-value of $x$ (see formula (\ref{hDefEq}) in Subsection \ref{subsec3P3} for the definition of $\hfrak$). It should be noted that $\hfrak$ does not necessarily assume all of its possible values $0,1,\ldots,H_i$ on each coset $C_{i_t}$ (see Example \ref{cosetPermEx}), but this is not an issue for our construction.

We\phantomsection\label{latterHalfRef} recursively define edge-weighted rooted trees $\Ifrak_{i_t,h}$ such that the rooted tree in $\Gamma_{\per}$ above any $x\in C_{i_t}$ with $\hfrak(x)=h$ is isomorphic to $\Expand(\Ifrak_{i_t,h})$, a property that is certainly satisfied whenever there are no $x\in C_{i_t}$ of that $\hfrak$-value. For $k\in\IN_0$, we set $\wfrak_{i_t,k}:=\proc_{i_t,k+1}-\proc_{i_t,k+2}$. We define $\Ifrak_{i_t,0}$ to be the trivial rooted tree. If $h\in\{1,2,\ldots,H_i-1\}$ (we observe that vertices in $C_{i_t}$ of such an $\hfrak$-value are $f$-transient), then we set
\begin{center}
\begin{tikzpicture}
\node (r) at (0,0) {};
\draw (r) circle [radius=1pt];
\node at (-3.5,1) {$\Ifrak_{i_t,h}:=$};
\node (c0) at (-2,1) {};
\draw (c0) circle [radius=1pt];
\node (m0) at (-2,2.4) {$\Ifrak_{i_{t-1},0}$};
\draw (m0) circle [x radius=20pt, y radius=40pt];
\node (c1) at (-0.5,1) {};
\draw (c1) circle [radius=1pt];
\node (m1) at (-0.5,2.4) {$\Ifrak_{i_{t-1},1}$};
\draw (m1) circle [x radius=20pt, y radius=40pt];
\node (dots) at (0.5,2.4) {$\cdots$};
\node (chM2) at (1.5,1) {};
\draw (chM2) circle [radius=1pt];
\node (mhM2) at (1.5,2.4) {$\Ifrak_{i_{t-1},h-2}$};
\draw (mhM2) circle [x radius=20pt, y radius=40pt];
\node (chM1) at (5.2,1) {};
\draw (chM1) circle [radius=1pt];
\node (mhM1) at (5.2,2.4) {$\Ifrak_{i_{t-1},h-1}$};
\draw (mhM1) circle [x radius=20pt, y radius=40pt];
\path
(c0) edge[->] node[left] {$\wfrak_{i_t,0}$} (r)
(c1) edge[->] node[right] {$\wfrak_{i_t,1}$} (r)
(chM2) edge[->] node[right] {$\wfrak_{i_t,h-2}$} (r)
(chM1) edge[->] (r);
\node (lastLabel) at (3.1,0.3) {$\proc_{i_t,h}$};
\end{tikzpicture}
\end{center}
Finally, the rooted tree above any vertex in $C_{i_t}$ that is $f$-periodic (equivalently, which has $\hfrak$-value $H_i$) may be constructed as
\begin{center}
\begin{tikzpicture}
\node (r) at (0,0) {};
\draw (r) circle [radius=1pt];
\node at (-3.5,1) {$\Ifrak_{i_t,H_{i_t}}:=$};
\node (c0) at (-2.3,1) {};
\draw (c0) circle [radius=1pt];
\node (m0) at (-2.3,2.75) {$\Ifrak_{i_{t-1},0}$};
\draw (m0) circle [x radius=25pt, y radius=50pt];
\node (c1) at (-0.4,1) {};
\draw (c1) circle [radius=1pt];
\node (m1) at (-0.4,2.75) {$\Ifrak_{i_{t-1},1}$};
\draw (m1) circle [x radius=25pt, y radius=50pt];
\node (dots) at (0.8,2.75) {$\cdots$};
\node (chM2) at (2,1) {};
\draw (chM2) circle [radius=1pt];
\node (mhM2) at (2,2.75) {$\Ifrak_{i_{t-1},\Hcal_{i_t}-2}$};
\draw (mhM2) circle [x radius=25pt, y radius=50pt];
\node (chM1) at (8.2,1) {};
\draw (chM1) circle [radius=1pt];
\node (mhM1) at (8.2,2.75) {$\Ifrak_{i_{t-1},\Hcal_{i_t}-1}$};
\draw (mhM1) circle [x radius=25pt, y radius=50pt];
\path
(c0) edge[->] node[left] {$\wfrak_{i_t,0}$} (r)
(c1) edge[->] node[right] {$\wfrak_{i_t,1}$} (r)
(chM2) edge[->] (r)
(chM1) edge[->] (r);
\node (penultimateLabel) at (2,0.45) {$\wfrak_{i_t,\Hcal_{i_t}-2}$};
\node (lastLabel) at (4.2,0.2) {$\wfrak_{i_t,\Hcal_{i_t}-1}$};
\end{tikzpicture}
\end{center}

\subsection{An illustrative example}\label{subsec4P2}

In this subsection, we follow the approach from Section \ref{sec3} to derive the cyclic sequences of rooted tree isomorphism types that characterize the connected components of the functional graph of the following generalized cyclotomic mapping $f$ of $\IF_{2^8}$ of index $d=5$:
\[
f(x)=
\begin{cases}
0, & \text{if }x=0, \\
\omega^5x^9, & \text{if }x\in C_0, \\
x^3, & \text{if }x\in C_1, \\
x^{17}, & \text{if }x\in C_2, \\
\omega^3x^{34}, & \text{if }x\in C_3, \\
\omega^4x^9, & \text{if }x\in C_4,
\end{cases}
\]
where $\omega$ is any fixed primitive element of $\IF_{2^8}$ (the minimal polynomial of $\omega$ over $\IF_2$ is not relevant here). These cyclic sequences were also derived in our introduction from a drawing of $\Gamma_f$ (see the text passage between Definitions \ref{treeAboveDef} and \ref{sArithDef}), but the approach of Section \ref{sec3} is usually more computationally efficient (see Section \ref{sec5}, especially Theorem \ref{complexitiesTheo}).

We observe that if a generalized cyclotomic mapping of a finite field of known index is not given in the above cyclotomic form, but in polynomial form, then one must first convert it into cyclotomic form before one can apply our methods. An algorithm for doing so is \cite[Algorithm 1]{BW22b}.

Because $d=5$, we have $s=(2^8-1)/5=51=3\cdot17$. We view each coset $C_i$ as a copy of $\IZ/51\IZ$ via the bijection $\iota_i:\IZ/51\IZ\rightarrow C_i$, $k\mapsto \omega^{i+5k}$. Let us work out what the monomial formulas for the values of $f$ in the different cases become under this identification. For example, if $x\in C_4$, then $x=\omega^{4+5k}$ for some $k\in\IZ$, and
\[
f(x)=\omega^4x^9=\omega^{4+9\cdot 4+9\cdot 5k}=\omega^{0+5\cdot(9k+8)},
\]
which shows that $f$ maps $C_4$ to $C_0$ via the affine map $x\mapsto 9x+8$. In total, we obtain the following picture describing the mapping behavior of $f$ between the cosets $C_i$ when viewing them as copies of $\IZ/51\IZ$.
\begin{center}
\begin{tikzpicture}
\draw (0,0) circle [x radius=10pt, y radius=20pt];
\node at (0,0) {$C_0$};
\draw (-1,3) circle [x radius=10pt, y radius=20pt];
\node at (-1,3) {$C_3$};
\draw (1,3) circle [x radius=10pt, y radius=20pt];
\node at (1,3) {$C_4$};
\draw (-1,6) circle [x radius=10pt, y radius=20pt];
\node at (-1,6) {$C_1$};
\draw (1,6) circle [x radius=10pt, y radius=20pt];
\node at (1,6) {$C_2$};
\draw[->] (-1,5.2) -- (-1,3.8);
\node at (-1.8,4.5) {$x\mapsto 3x$};
\draw[->] (1,5.2) -- (1,3.8);
\node at (2.2,4.5) {$x\mapsto 17x+6$};
\draw[->] (-1,2.2) -- (-0.1,0.8);
\node at (-1.9,1.5) {$x\mapsto 34x+21$};
\draw[->] (1,2.2) -- (0.1,0.8);
\node at (1.8,1.5) {$x\mapsto 9x+8$};
\node (v) at (0.3,0) {};
\path
(v) edge [loop right] node {} (v);
\node at (2,0) {$x\mapsto 9x+1$};
\end{tikzpicture}
\end{center}
Until further notice, we put the concrete function $f$ from above aside and assume that, more generally, we have a finite field $\IF_q$ with $5\mid q-1$ and an index $5$ generalized cyclotomic mapping $f$ of $\IF_q$ which maps as follows between the five cosets of $C$ in $\IF_q^{\ast}$, viewed as copies of $\IZ/s\IZ$ (where $s=(q-1)/5$).
\begin{center}
\begin{tikzpicture}
\draw (0,0) circle [x radius=10pt, y radius=20pt];
\node at (0,0) {$C_0$};
\draw (-1,3) circle [x radius=10pt, y radius=20pt];
\node at (-1,3) {$C_3$};
\draw (1,3) circle [x radius=10pt, y radius=20pt];
\node at (1,3) {$C_4$};
\draw (-1,6) circle [x radius=10pt, y radius=20pt];
\node at (-1,6) {$C_1$};
\draw (1,6) circle [x radius=10pt, y radius=20pt];
\node at (1,6) {$C_2$};
\draw[->] (-1,5.2) -- (-1,3.8);
\node at (-2.6,4.5) {$A_1:x\mapsto \alpha_1x+\beta_1$};
\draw[->] (1,5.2) -- (1,3.8);
\node at (2.6,4.5) {$A_2:x\mapsto \alpha_2x+\beta_2$};
\draw[->] (-1,2.2) -- (-0.1,0.8);
\node at (-2.2,1.5) {$A_3:x\mapsto \alpha_3x+\beta_3$};
\draw[->] (1,2.2) -- (0.1,0.8);
\node at (2.2,1.5) {$A_4:x\mapsto \alpha_4x+\beta_4$};
\node (v) at (0.3,0) {};
\path
(v) edge [loop right] node {} (v);
\node at (2.4,0) {$A_0:x\mapsto \alpha_0x+\beta_0$};
\end{tikzpicture}
\end{center}
This allows us to describe $\Gamma_f$ in terms of those general coefficients $\alpha_i$ and $\beta_i$, which is more instructive; one can actually see the structure of formulas for relevant parameters, such as the moduli $\afrak_{i,j}$ and right-hand sides $\bfrak_{i,j}$ of the spanning congruences of $\Pcal_i$. Just as for our concrete generalized cyclotomic mapping from above, we assume that $\gcd(\alpha_0,s)=\gcd(\alpha_0^2,s)>1$, which means that the rooted trees attached to periodic vertices in the induced subgraph of $\Gamma_f$ on $C_0$ are of height $H_0=1$. We describe the arithmetic partitions $\Pcal_i=\Pfrak(x\equiv \bfrak_{i,j}\Mod{\afrak_{i,j}}: j=1,2,\ldots,m_i)$ and the associated rooted tree isomorphism type $\Tree_i(\Pcal_i,\vec{\nu}^{(\Pcal_i)})$ for each block $\Bcal(\Pcal_i,\vec{\nu}^{(\Pcal_i)})$ of $\Pcal_i$ for $\vec{\nu}^{(\Pcal_i)}\in\{\emptyset,\neg\}^{m_i}$.

The partitions $\Pcal_i$ and associated rooted trees are easily determined for $i=1,2,3,4$.
\begin{itemize}
\item For $i\in\{1,2\}$, every vertex in $C_i$ is a leaf in $\Gamma_f$, and so we may choose $\Pcal_1=\Pcal_2=\Pcal(\emptyset)$ (trivial partition with only one block). There is only one isomorphism type of rooted tree here, $\Tree_i(\Pcal_i,\emptyset)$ (with $\emptyset$ representing an empty sequence of logical signs, not the positive logical sign), and it consists of a single vertex without edges.
\item For $i\in\{3,4\}$, since $C_i$ is a transient coset (i.e., it does not lie on a cycle of cosets under $f$), the discussion in Subsection \ref{subsec3P3} shows that one can obtain $\Pcal_i$ simply as the lift $\Pfrak'(\Pcal_{i-2},A_{i-2})$. According to Lemma \ref{masterLem}, that lift is of the form $\Pcal_i=\Pfrak(x\equiv \beta_{i-2}\Mod{\gcd(\alpha_{i-2},s)})$. The significance of this single congruence is that it characterizes when $x\in C_i$ has at least one pre-image under $f$ in $C_{i-2}$. We note that if this is the case, then $x$ has exactly $\gcd(\alpha_{i-2},s)$ such pre-images, as they form a coset of the kernel of $z\mapsto \alpha_{i-2}z$ in $\IZ/s\IZ$. Hence, $\Tree_i(\Pcal_i,(\neg))$ is a single vertex without arcs, and $\Tree_i(\Pcal_i,(\emptyset))$ consists of a root with $\gcd(\alpha_{i-2},s)$ vertices attached to it.
\end{itemize}
In our discussion for $\Pcal_0$, rather than specify the rooted tree $\Tree_0(\Pcal_0,\vec{\nu}^{(\Pcal_0)})$ associated with a block $\Bcal(\Pcal_0,\vec{\nu}^{(\Pcal_0)})$ of $\Pcal_0$ itself, we specify a (not necessarily simplified isomorphism type of) finite edge-weighted directed rooted tree(s) $\Ifrak=\Ifrak(\Pcal_0,\vec{\nu}^{(\Pcal_0)})$ such that $\Tree_0(\Pcal_0,\vec{\nu}^{(\Pcal_0)})=\Expand(\Ifrak)$. But first, let us determine $\Pcal_0$ itself. We recall that $H_0=1$ by assumption. According to the general definition of $\Pcal_i$ for periodic $i$, which is just before Proposition \ref{periodicCosetsProp}, we have
\[
\Pcal_0=\Qcal_{0,1}=\Pcal_{0,1}\wedge\Ucal_0.
\]
Moreover, noting that $i=0$ lies on a cycle of $\overline{f}$ of length $1$ (so that $i_t=0$ for all $t\in\IZ$ in the notation of Subsection \ref{subsec3P3}), we conclude that
\[
\Pcal_{0,1}=\lambda_0^0(\Rcal_0)\wedge\lambda_0^1(\Rcal_0)=\Rcal_0\wedge\lambda(\Rcal_0,A_0).
\]
Now, $\Rcal_0$ is obtained as the infimum of the $\Pfrak'$-lifts of $\Pcal_3$ and $\Pcal_4$ to $C_0$. Using the notation $(n,m)$ in place of $\gcd(n,m)$ for simplicity, we conclude that
\begin{align}\label{r0Eq}
\notag \Rcal_0&=\Pfrak'(\Pcal_3,A_3)\wedge\Pfrak'(\Pcal_4,A_4)=
\Pfrak
\begingroup
\setlength\arraycolsep{2pt}
\left(
\begin{array}{lll}
x & \equiv & \alpha_3\beta_1+\beta_3\Mod{(\alpha_3(\alpha_1,s),s)} \\
x &\equiv & \beta_3\Mod{(\alpha_3,s)} \\
x & \equiv & \alpha_4\beta_2+\beta_4\Mod{(\alpha_4(\alpha_2,s),s)} \\
x & \equiv & \beta_4\Mod{(\alpha_4,s)}
\end{array}
\right)
\endgroup \\
&=\Pfrak
\begingroup
\setlength\arraycolsep{2pt}
\left(
\begin{array}{lll}
x & \equiv & \alpha_3\beta_1+\beta_3\Mod{(\alpha_1,s)(\alpha_3,\frac{s}{(\alpha_1,s)})} \\
x & \equiv & \beta_3\Mod{(\alpha_3,s)} \\
x & \equiv & \alpha_4\beta_2+\beta_4\Mod{(\alpha_2,s)(\alpha_4,\frac{s}{(\alpha_2,s)})} \\
x & \equiv & \beta_4\Mod{(\alpha_4,s)}
\end{array}
\right)
\endgroup,
\end{align}
and thus
\begin{align*}
\lambda(\Rcal_0,A_0)
&=\Pfrak
\begingroup
\setlength\arraycolsep{2pt}
\left(
\begin{array}{lll}
x & \equiv & \alpha_0\alpha_3\beta_1+\alpha_0\beta_3+\beta_0\Mod{(\alpha_0(\alpha_1,s)(\alpha_3,\frac{s}{(\alpha_1,s)}),s)} \\
x & \equiv & \alpha_0\beta_3+\beta_0\Mod{(\alpha_0(\alpha_3,s),s)} \\
x & \equiv & \alpha_0\alpha_4\beta_2+\alpha_0\beta_4+\beta_0\Mod{(\alpha_0(\alpha_2,s)(\alpha_4,\frac{s}{(\alpha_2,s)}),s)} \\
x & \equiv & \alpha_0\beta_4+\beta_0\Mod{(\alpha_0(\alpha_4,s),s)}
\end{array}
\right)
\endgroup \\
&=\Pfrak
\begingroup
\setlength\arraycolsep{2pt}
\left(
\begin{array}{lll}
x & \equiv & \alpha_0\alpha_3\beta_1+\alpha_0\beta_3+\beta_0\Mod{(\alpha_1,s)(\alpha_0(\alpha_3,\frac{s}{(\alpha_1,s)}),\frac{s}{(\alpha_1,s)})} \\
x & \equiv & \alpha_0\beta_3+\beta_0\Mod{(\alpha_0(\alpha_3,s),s)} \\
x & \equiv & \alpha_0\alpha_4\beta_2+\alpha_0\beta_4+\beta_0\Mod{(\alpha_2,s)(\alpha_0(\alpha_4,\frac{s}{(\alpha_2,s)}),\frac{s}{(\alpha_2,s)})} \\
x & \equiv & \alpha_0\beta_4+\beta_0\Mod{(\alpha_0(\alpha_4,s),s)}
\end{array}
\right)
\endgroup
\end{align*}
Moreover, by formula (\ref{systemsWedgeEq}) and the definition of $\Ucal_i$ just after it, we have
\[
\Ucal_0=\Pfrak(\theta_{0,1})=\Pfrak(x\equiv \beta_0\Mod{(\alpha_0,s)}).
\]
It follows that
\begin{equation}\label{p0Eq}
\Pcal_0=\Pfrak
\begingroup
\setlength\arraycolsep{2pt}
\left(
\begin{array}{lll}
x & \equiv & \alpha_3\beta_1+\beta_3\Mod{(\alpha_1,s)(\alpha_3,\frac{s}{(\alpha_1,s)})} \\
x & \equiv & \beta_3\Mod{(\alpha_3,s)} \\
x & \equiv & \alpha_4\beta_2+\beta_4\Mod{(\alpha_2,s)(\alpha_4,\frac{s}{(\alpha_2,s)})} \\
x & \equiv & \beta_4\Mod{(\alpha_4,s)} \\
x & \equiv & \alpha_0\alpha_3\beta_1+\alpha_0\beta_3+\beta_0\Mod{(\alpha_1,s)(\alpha_0(\alpha_3,\frac{s}{(\alpha_1,s)}),\frac{s}{(\alpha_1,s)})} \\
x & \equiv & \alpha_0\beta_3+\beta_0\Mod{(\alpha_0(\alpha_3,s),s)} \\
x & \equiv & \alpha_0\alpha_4\beta_2+\alpha_0\beta_4+\beta_0\Mod{(\alpha_2,s)(\alpha_0(\alpha_4,\frac{s}{(\alpha_2,s)}),\frac{s}{(\alpha_2,s)})} \\
x & \equiv & \alpha_0\beta_4+\beta_0\Mod{(\alpha_0(\alpha_4,s),s)} \\
x & \equiv & \beta_0\Mod{(\alpha_0,s)}
\end{array}
\right)
\endgroup.
\end{equation}

Now we turn to the determination of the rooted trees above vertices in any given block $B$ of $\Pcal_0$. More specifically, we have $B=\Bcal(\Pcal_0,\vec{\nu}^{(\Pcal_0)})$ where $\vec{\nu}^{(\Pcal_0)}=(\nu_1,\ldots,\nu_9)\in\{\emptyset,\neg\}^9$ is a tuple of logical signs for the nine spanning congruences of $\Pcal_0$. It is helpful to split $\vec{\nu}^{(\Pcal_0)}$ into segments; namely, in the notation of Proposition \ref{periodicCosetsProp}, we write $\vec{\nu}^{(\Pcal_0)}=\vec{o'_0}\diamond\vec{o'_1}\diamond\vec{\xi}$ where
\begin{itemize}
\item $\vec{o'_0}=(\nu_1,\nu_2,\nu_3,\nu_4)$ controls in which block $\Bcal(\Rcal_0,\vec{o'_0})$ of $\Rcal_0$ the block $\Bcal(\Pcal_0,\vec{\nu}^{(\Pcal_0)})$ of $\Pcal_0$ is contained. By Proposition \ref{periodicCosetsTransientProp}, knowing the logical signs in $\vec{o'_0}$ is enough to understand, uniformly for all $x\in\Bcal(\Pcal_0,\vec{\nu}^{(\Pcal_0)})$, the contribution $\Tree_{\Gamma_f}(x,C_3\cup C_4)=\Tree_0(\Rcal_0,C_3\cup C_4,\vec{o'_0})$ to $\Tree_{\Gamma_f}(x)$ that comes from those pre-images of $x$ that lie in $C_3\cup C_4$ (the union of all transient cosets that map to $C_0$).
\item $\vec{o'_1}=(\nu_5,\nu_6,\nu_7,\nu_8)$ controls in which block $\Bcal(\Scal_{0,1},\vec{o'_1})$ of $\Scal_{0,1}=\lambda(\Rcal_0,A_0)$ the $\Pcal_0$-block $\Bcal(\Pcal_0,\vec{\nu}^{(\Pcal_0)})$ is contained. By Proposition \ref{periodicCosetsPeriodicProp}, knowing the logical signs in $\vec{o'_1}$ is enough to understand, uniformly for all $x\in\Bcal(\Pcal_0,\vec{\nu}^{(\Pcal_0)})$ \emph{of $\hfrak$-value $1$}, i.e., which are $f$-periodic (or, equivalently here, which are non-leaves in $\Gamma_{\per}$), the contribution $\Tree_{\Gamma_f}(x,C_0)=\Tree_0^{(1)}(\Scal_{0,1},C_0,\vec{o'_1})$ to $\Tree_{\Gamma_f}(x)$ that comes from those pre-images of $x$ that lie in $C_0$ (the unique periodic coset that maps to $C_0$). We note that if $x\in C_0$ has $\hfrak$-value $0$, i.e., if $x$ is $f$-transient (or, equivalently here, if $x$ is a leaf in $\Gamma_{\per}$), then $\Tree_{\Gamma_f}(x,C_0)$ is trivial, because $x$ has no $f$-transient pre-images in $C_0$.
\item $\vec{\xi}=(\nu_9)$ controls the $\hfrak$-value of the vertices in $\Bcal(\Pcal_0,\vec{\nu}^{(\Pcal_0)})$; if $\nu_9=\neg$, then all of those vertices are leaves in $\Gamma_{\per}$ (i.e., their $\hfrak$-value is $0$), otherwise they all are periodic vertices (i.e., their $\hfrak$-value is $1=H_0$).
\end{itemize}

Let us be more specific about these different contributions to $\Tree_{\Gamma_f}(x)$ for $x\in\Bcal(\Pcal_0,\vec{\nu}^{(\Pcal_0)})$. We recall that by definition,
\[
\Rcal_0=\Pcal'_3\wedge\Pcal'_4=\Pfrak'(\Pcal_3,A_3)\wedge\Pfrak'(\Pcal_4,A_4),
\]
and note that $\vec{o'_0}$ can be written as the concatenation $\vec{\nu}^{(\Pcal'_3)}\diamond\vec{\nu}^{(\Pcal'_4)}$, with $\vec{\nu}^{(\Pcal'_3)}=(\nu_1,\nu_2)$, respectively $\vec{\nu}^{(\Pcal'_4)}=(\nu_3,\nu_4)$, controlling the containment of $\Bcal(\Pcal_0,\vec{\nu}^{(\Pcal_0)})$ in $\Pcal'_3$-blocks, respectively in $\Pcal'_4$-blocks. For $i\in\{3,4\}$, knowing the logical signs in $\vec{\nu}^{(\Pcal'_i)}$ is enough to understand, uniformly for all $x\in\Bcal(\Pcal_0,\vec{\nu}^{(\Pcal_0)})$, the contribution $\Tree_{\Gamma_f}(x,C_i)=\Tree_0(\Pcal'_i,C_i,\vec{\nu}^{(\Pcal'_i)})$ to $\Tree_{\Gamma_f}(x)$ that comes from those pre-images of $x$ that lie in $C_i$.

Of course, for each given $x\in C_0$, we have
\[
\Tree_{\Gamma_f}(x)=\Tree_{\Gamma_f}(x,C_0\cup C_3\cup C_4)=\sum_{i\in\{0,3,4\}}{\Tree_{\Gamma_f}(x,C_i)}.
\]
In view of what was said above about these three different contributions to $\Tree_{\Gamma_f}(x)$, we have the following formulas (which can also be derived from Propositions \ref{periodicCosetsTransientProp}(2) and \ref{periodicCosetsProp} as well as the last formula in Proposition \ref{periodicCosetsPeriodicProp}):
\begin{align}\label{tree0Eq}
\notag &\Tree_0(\Pcal_0,\vec{\nu}^{(\Pcal_0)})= \\
&\begin{cases}
\sum_{i=3}^4{\Tree_0(\Pcal'_i,C_i,\vec{\nu}^{(\Pcal'_i)})}, & \text{if }\nu_9=\neg, \\
\sum_{i=3}^4{\Tree_0(\Pcal'_i,C_i,\vec{\nu}^{(\Pcal'_i)})}+\Tree_0^{(1)}(\Scal_{0,1},C_0,\vec{o'_1}), & \text{if }\nu_9=\emptyset.
\end{cases}
\end{align}
In particular, the logical signs $\nu_5,\nu_6,\nu_7,\nu_8$ in $\vec{o'_1}$ are irrelevant for the value of $\Tree_0(\Pcal_0,\vec{\nu}^{(\Pcal_0)})$ if $\nu_9=\neg$.

Formula (\ref{tree0Eq}) allows us to split the task of determining $\Tree_0(\Pcal_0,\vec{\nu}^{(\Pcal_0)})$ into subtasks. First, we determine $\Tree_0(\Pcal'_i,C_i,\vec{\nu}^{(\Pcal'_i)})$ for $i=3,4$, which can be done uniformly. We note that
\[
\Pcal'_i=
\Pfrak
\begingroup
\setlength\arraycolsep{2pt}
\left(
\begin{array}{lll}
x & \equiv & \alpha_i\beta_{i-2}+\beta_i\Mod{(\alpha_i(\alpha_{i-2},s),s)} \\
x & \equiv & \beta_i\Mod{(\alpha_i,s)} \\
\end{array}
\right)
\endgroup.
\]
The two entries of $\vec{\nu}^{(\Pcal'_i)}=(\nu_{2i-5},\nu_{2i-4})$ are logical signs for those two congruences, and we need to distinguish cases according to their truth values. We could just work out $\Tree_0(\Pcal'_i,C_i,\vec{\nu}^{(\Pcal'_i)})$ in each case \enquote{mechanically} following Proposition \ref{periodicCosetsTransientProp}(1), using the formula for $\sigma_{\Pcal_i,A_i}(\vec{\nu}^{(\Pcal_i)},\vec{\nu}^{(\Pcal'_i)})$ from Lemma \ref{masterLem}. However, it is more instructive to derive them with direct arguments (inclined readers may still follow the formulaic approach themselves and compare).
\begin{itemize}
\item If $\nu_{2i-4}=\neg$, i.e., if $x\not\equiv \beta_i\Mod{(\alpha_i,s)}$, then $x$ simply has no pre-images in $C_i$, whence $\Tree_0(\Pcal'_i,C_i,\vec{\nu}^{(\Pcal'_i)})$ is a single vertex without arcs (the value of $\nu_{2i-5}$ is irrelevant here).
\item If $\nu_{2i-4}=\emptyset$, i.e., if $x\equiv \beta_i\Mod{(\alpha_i,s)}$, then $x$ has exactly $(\alpha_i,s)$ children in $C_i$ (which form a coset of the kernel of $z\mapsto \alpha_iz$). We need to determine the distribution of those children over the two blocks of $\Pcal_i=\Pfrak(x\equiv \beta_{i-2}\Mod{(\alpha_{i-2},s)})$, and that distribution is controlled by the truth value $\nu_{2i-5}$ of
\begin{equation}\label{theOtherEq}
x\equiv \alpha_i\beta_{i-2}+\beta_i\Mod{(\alpha_i(\alpha_{i-2},s),s)}.
\end{equation}
Indeed, following the proof of Lemma \ref{masterLem}, the pre-images $y$ of $x$ in $C_i$ that satisfy $y\equiv \beta_{i-2}\Mod{(\alpha_{i-2},s)}$ (we note that they are exactly those pre-images of $x$ in $C_i$ which are \emph{not} leaves in $\Gamma_f$) are characterized by the system of congruences
\begin{align*}
y &\equiv \beta_{i-2}\Mod{(\alpha_{i-2},s)} \\
\alpha_iy+\beta_i&\equiv x\Mod{s},
\end{align*}
which is (according to the proof of Lemma \ref{masterLem}) consistent if and only if congruence (\ref{theOtherEq}) holds, in which case the system is equivalent to a single congruence modulo
\[
\lcm\left((\alpha_{i-2},s),\frac{s}{(\alpha_i,s)}\right).
\]
Hence, if $\nu_{2i-5}=\emptyset$, i.e., if congruence (\ref{theOtherEq}) holds, then $x$ has exactly
\[
\frac{s}{\lcm\left((\alpha_{i-2},s),\frac{s}{(\alpha_i,s)}\right)}=\left(\frac{s}{(\alpha_{i-2},s)},(\alpha_i,s)\right)
\]
pre-images $y\in C_i$ with $y\equiv \beta_{i-2}\Mod{(\alpha_{i-2},s)}$, which are exactly those pre-images that lie in $\Bcal(\Pcal_i,(\emptyset))$. Otherwise, all pre-images of $x$ in $C_i$ are incongruent to $\beta_{i-2}$ modulo $(\alpha_{i-2},s)$ and thus lie in $\Bcal(\Pcal_i,(\neg))$. In view of the known value of $\Tree_i(\Pcal_i,(\nu))$ in terms of $\nu\in\{\emptyset,\neg\}$, we find that $\Tree_0(\Pcal'_i,C_i,\vec{\nu}^{(\Pcal'_i)})$ is the expanded version of the (not necessarily simplified) edge-weighted directed rooted tree specified in Table \ref{transientTreesTable}.
\end{itemize}

\begin{longtable}[h]{|c|c|}\hline
block of $\Pcal'_i=\Pfrak'(\Pcal_i,A_i)$ & associated $\Tree_{\Gamma_f}(x,C_i)$ \\ \hline
\thead{$\nu(x\equiv \alpha_i\beta_{i-2}+\beta_i\Mod{(\alpha_i(\alpha_{i-2},s),s)})$ \\
$x\not\equiv \beta_i\Mod{(\alpha_i,s)}$ \\
$\nu\in\{\emptyset,\neg\}$}
&
\begin{tikzpicture}
\node (r) at (0,0) {};
\draw (r) circle [radius=1pt];
\end{tikzpicture}
\\ \hline
\thead{$x\not\equiv \alpha_i\beta_{i-2}+\beta_i\Mod{(\alpha_i(\alpha_{i-2},s),s)}$ \\
$x\equiv \beta_i\Mod{(\alpha_i,s)}$ \\ \\}
&
\begin{tikzpicture}
\node (r) at (0,0) {};
\draw (r) circle [radius=1pt];
\node (c) at (0,1) {};
\draw (c) circle [radius=1pt];
\path
(c) edge[->] node[right] {$(\alpha_i,s)$} (r);
\end{tikzpicture}
\\ \hline
\thead{$x\equiv \alpha_i\beta_{i-2}+\beta_i\Mod{(\alpha_i(\alpha_{i-2},s),s)}$ \\
$x\equiv \beta_i\Mod{(\alpha_i,s)}$ \\ \\ \\}
&
\begin{tikzpicture}
\node (r) at (0,0) {};
\draw (r) circle [radius=1pt];
\node (c1) at (-1,1) {};
\draw (c1) circle [radius=1pt];
\node (c2) at (1,1) {};
\draw (c2) circle [radius=1pt];
\node (v) at (1,2) {};
\draw (v) circle [radius=1pt];
\path
(c1) edge[->] node[left] {$(\alpha_i,s)-(\frac{s}{(\alpha_{i-2},s)},(\alpha_i,s))$} (r)
(c2) edge[->] node[right] {$(\frac{s}{(\alpha_{i-2},s)},(\alpha_i,s))$} (r)
(v) edge[->] node[right] {$(\alpha_{i-2},s)$} (c2);
\end{tikzpicture}
\\ \hline
\caption{Rooted trees using only pre-images in the transient pre-image coset $C_i$ with $i\in\{3,4\}$.}
\label{transientTreesTable}
\end{longtable}
This settles the first two summands of $\Tree_0(\Pcal_0,\vec{\nu}^{(\Pcal_0)})$ in each of the two cases in formula (\ref{tree0Eq}). If $\nu_9=\neg$ (i.e., if the $\Pcal_0$-block in question consists of $f$-transient points), then these are all the summands in the formula, and one can obtain (an edge-weighted version of) $\Tree_0(\Pcal_0,\vec{\nu}^{(\Pcal_0)})$ simply by adding (the edge-weighted versions of) $\Tree_i(\Pcal'_i,\vec{\nu}^{(\Pcal'_i)})$ for $i\in\{3,4\}$, read off from Table \ref{transientTreesTable}. For example, if $\nu_j=\emptyset$ for $j=1,2,\ldots,8$ but $\nu_9=\neg$, then (an edge-weighted version of) $\Tree_0(\Pcal_0,\vec{\nu}^{(\Pcal_0)})$ is as follows, setting $\wfrak(i):=(\frac{s}{(\alpha_{i-2},s)},(\alpha_i,s))$ for $i=3,4$:
\begin{center}
\begin{tikzpicture}
\node (r1) at (-6,0) {};
\draw (r1) circle [radius=1pt];
\node (c11) at (-7,1) {};
\draw (c11) circle [radius=1pt];
\node (c12) at (-5,1) {};
\draw (c12) circle [radius=1pt];
\node (v1) at (-5,2) {};
\draw (v1) circle [radius=1pt];
\path
(c11) edge[->] node[left] {$(\alpha_3,s)-\wfrak(3)$} (r1)
(c12) edge[->] node[right] {$\wfrak(3)$} (r1)
(v1) edge[->] node[right] {$(\alpha_1,s)$} (c12);
\node at (-3,1) {$+$};
\node (r2) at (1,0) {};
\draw (r2) circle [radius=1pt];
\node (c21) at (0,1) {};
\draw (c21) circle [radius=1pt];
\node (c22) at (2,1) {};
\draw (c22) circle [radius=1pt];
\node (v2) at (2,2) {};
\draw (v2) circle [radius=1pt];
\path
(c21) edge[->] node[left] {$(\alpha_4,s)-\wfrak(4)$} (r2)
(c22) edge[->] node[right] {$\wfrak(4)$} (r2)
(v2) edge[->] node[right] {$(\alpha_2,s)$} (c22);
\node at (4,1) {$\sim$};
\node (r3) at (-2,-3) {};
\draw (r3) circle [radius=1pt];
\node (c31) at (-5,-2) {};
\draw (c31) circle [radius=1pt];
\node (c32) at (-2,-2) {};
\draw (c32) circle [radius=1pt];
\node (c33) at (1,-2) {};
\draw (c33) circle [radius=1pt];
\node (v31) at (-2,-1) {};
\draw (v31) circle [radius=1pt];
\node (v32) at (1,-1) {};
\draw (v32) circle [radius=1pt];
\path
(c31) edge[->] node[left] {$(\alpha_3,s)+(\alpha_4,s)-\wfrak(3)-\wfrak(4)$} (r3)
(c32) edge[->] node[right] {$\wfrak(3)$} (r3)
(c33) edge[->] node[below] {$\wfrak(4)$} (r3)
(v31) edge[->] node[right] {$(\alpha_1,s)$} (c32)
(v32) edge[->] node[right] {$(\alpha_2,s)$} (c33);
\end{tikzpicture}
\end{center}

On the other hand, if $\nu_9=\emptyset$ (so that all vertices in $\Bcal(\Pcal_0,\vec{\nu}^{(\Pcal_0)})$ are $f$-periodic), then we also need to compute the third summand in formula (\ref{tree0Eq}), $\Tree_0^{(1)}(\Scal_{0,1},C_0,\vec{o'_1})$, which expresses the contribution coming from $f$-transient pre-images in $C_0$. Following Proposition \ref{periodicCosetsPeriodicProp}, this can be done by studying the distribution of pre-images of any given point $x\in\Bcal(\Pcal_0,\vec{\nu}^{(\Pcal_0)})$ over certain blocks of the partition $\Qcal_{0,0}=\Rcal_0\wedge\Pfrak(x\equiv \beta_0\Mod{(\alpha_0,s)})$. More specifically, we note that each $f$-transient pre-image of $x$ is contained in a block of $\Qcal_{0,0}$ of the form $\Bcal(\Qcal_{0,0},\vec{o_0}\diamond(\neg))$ for some $\vec{o_0}\in\{\emptyset,\neg\}^4$ (and we also observe that each block of $\Qcal_{0,0}$ of this form consists entirely of $f$-transient points, due to the last logical sign being $\neg$). Being able to count the number of pre-images of $x$ in each such block of $\Qcal_{0,0}$ is enough to understand $\Tree_0^{(1)}(\Scal_{0,1},C_0,\vec{o'_1})$, because $\Bcal(\Qcal_{0,0},\vec{o_0}\diamond(\neg))\subseteq\Bcal(\Rcal_0,\vec{o_0})$ and we already understand the rooted trees above $f$-transient vertices in a given block $\Bcal(\Rcal_0,\vec{o'_0})=\Bcal(\Rcal_0,\vec{\nu}^{(\Pcal'_3)}\diamond\vec{\nu}^{(\Pcal'_4)})$ of $\Rcal_0$.

Now, let us observe that the distribution of the pre-images of any $f$-periodic point $x\in C_0$ over the blocks of $\Qcal_{0,0}$ is controlled by the values of $\nu_j$ for $j\in\{5,6,7,8\}$, i.e., by the block $\Bcal(\Scal_{0,1},\vec{o'_1})$ of $\Scal_{0,1}$ in which $x$ is contained. This is because $x\in\Bcal(\Tcal_{0,1},\vec{o'_1}\diamond(\emptyset))$ where $\Tcal_{0,1}=\Scal_{0,1}\wedge\Ucal_0=\Pfrak'(\Qcal_{0,0},A_0)$, a partition which does indeed control the distribution of pre-images of $x$ over the blocks of $\Qcal_{0,0}$ according to Lemma \ref{masterLem}. Applying this lemma here leads to the formula for $\Tree_0^{(1)}(\Scal_{0,1},C_0,\vec{o'_1})$ from Proposition \ref{periodicCosetsPeriodicProp}.

For example, the logical sign tuple $\vec{o_0}=(\neg,\neg,\emptyset,\emptyset)$ corresponds to the block $B:=\Bcal(\Rcal_0,\vec{o_0})$ of $\Rcal_0$. If we wish to count how many transient pre-images a vertex $x\in C_i$ with $x\equiv \alpha_0\Mod{(\alpha_0,s)}$ stemming from, say, the block $B':=\Bcal(\Scal_{0,1},\vec{o'_1})$ of $\Scal_{0,1}$ with $\vec{o'_1}=(\emptyset,\neg,\emptyset,\emptyset)$ has, 
then we need to compute
\[
\sigma_{\Qcal_{0,0},A_0}(\vec{o_0}\diamond(\neg),\vec{o'_1}\diamond(\emptyset)),
\]
which we do now to illustrate the method. To avoid confusion among readers, we note that the above expression does not perfectly match the notation used in Lemma \ref{masterLem}. Indeed, here we use spanning congruence sequences of length $5$ both for $\Qcal_{0,0}=\Rcal_0\wedge\Tcal_{0,0}=\Rcal_0\wedge\Ucal_0$ and for $\Tcal_{0,1}=\Pfrak'(\Qcal_{0,0},A_0)$. However, in Lemma \ref{masterLem}, it is assumed that we use the \enquote{standard format} of the spanning congruence sequence for $\Pfrak'(\Qcal_{0,0},A_0)$, which contains one congruence more than the sequence for $\Qcal_{0,0}$. This discrepancy occurs because we write $\Tcal_{0,1}$ as $\lambda(\Rcal_0,A_0)\wedge\Ucal_0$ -- in the \enquote{standard format}, it would instead be
\begin{align*}
&\lambda(\Qcal_{0,0},A_0)\wedge\Pfrak(x\equiv \beta_0\Mod{(\alpha_0,s)})= \\
&\lambda(\Rcal_0,A_0)\wedge\Pfrak(x\equiv \beta_0(1+\alpha_0)\Mod{(\alpha_0^2,s)})\wedge\Ucal_0,
\end{align*}
but we can omit the congruence $x\equiv \beta_0(1+\alpha_0)\Mod{(\alpha_0^2,s)}$, which is $\theta_{0,2}(x)$ in the notation of Subsection \ref{subsec3P3}, because (using that $H_0=1$) it is equivalent to $x\equiv \beta_0\Mod{(\alpha_0,s)}$, the unique spanning congruence $\theta_{0,1}(x)$ of $\Ucal_0$. In order to apply Lemma \ref{masterLem}, we put $\Tcal_{0,1}$ into the less concise standard format, which requires us to replace the logical sign sequence $\vec{o'_1}\diamond(\emptyset)=(\emptyset,\neg,\emptyset,\emptyset,\emptyset)$ for the block of $\Tcal_{0,1}$ by $\vec{\nu'}:=\vec{o'_1}\diamond(\emptyset,\emptyset)=(\emptyset,\neg,\emptyset,\emptyset,\emptyset,\emptyset)$ (i.e., we double the $\emptyset$ at the end), the $j$-th entry of which we denote by $\nu'_j$. The logical sign sequence for the block of $\Qcal_{0,0}$ remains $\vec{\nu}:=\vec{o_0}\diamond(\neg)=(\neg,\neg,\emptyset,\emptyset,\neg)$. Our goal now is to compute
\[
\sigma_{\Qcal_{0,0},A_0}(\vec{\nu},\vec{\nu'})
\]
strictly following Lemma \ref{masterLem}. For $j=1,\ldots,5$, we denote by $\overline{\afrak}_{j}$\phantomsection\label{not185}, respectively $\overline{\bfrak}_{j}$\phantomsection\label{not186}, the modulus, respectively right-hand side, of the $j$-th spanning congruence of $\Qcal_{0,0}=\Rcal_0\wedge\Ucal_0$. That is, for $j\in\{1,2,3,4\}$, the congruence $x\equiv\overline{\bfrak}_j\Mod{\overline{\afrak}_j}$ is the $j$-th displayed congruence in the formula for $\Pcal_0$, (\ref{p0Eq}). Moreover, $\overline{\afrak}_5=(\alpha_0,s)$ and $\overline{\bfrak}_5=\beta_0$. Using the notation from Lemma \ref{masterLem}, we observe that
\begin{itemize}
\item $J_-(\vec{\nu})=\{1,2,5\}$ (the set of indices $j\in\{1,\ldots,5\}$ such that the $j$-th entry of $\vec{o_0}\diamond(\neg)$ is $\neg$);
\item $J_+(\vec{\nu})=\{1,2,3,4,5\}\setminus J_-(\vec{\nu})=\{3,4\}$;
\item $J_-(\vec{\nu'})=\{2\}$;
\item for $J\subseteq J_-(\vec{\nu})=\{1,2,5\}$, the condition $E(\vec{\nu},J)$ demands: \enquote{For all $j_1,j_2\in\{3,4\}\cup J$: $\gcd(\overline{\afrak}_{j_1},\overline{\afrak}_{j_2})\mid \overline{\bfrak}_{j_1}-\overline{\bfrak}_{j_2}$}.
\end{itemize}
According to Lemma \ref{masterLem}, we have
\[
\sigma_{\Qcal_{0,0},A_0}(\vec{\nu},\vec{\nu'})=\sum_{J\subseteq J_-(\vec{\nu})}{(-1)^{|J|}\kappa_{\Qcal_{0,0},A_0}(\vec{\nu},\vec{\nu'},J)}
\]
where
\[
\kappa_{\Qcal_{0,0},A_0}(\vec{\nu},\vec{\nu'},J)=\delta_{\nu'_6=\emptyset}\cdot\delta_{E(\vec{\nu},J)}\cdot\delta_{(J_+(\vec{\nu})\cup J)\cap J_-(\vec{\nu'})=\emptyset}\cdot\frac{s}{\lcm(\frac{s}{\gcd(\alpha_0,s)},\overline{\afrak}_{0,j}: j\in J_+(\vec{\nu})\cup J)}.
\]
In this formula, the first Kronecker delta checks whether the last entry of $\vec{\nu'}$ is $\emptyset$, which is the case. The third Kronecker delta is $1$ if and only if $2\notin J$, which leaves the four possibilities $\emptyset,\{1\},\{5\},\{1,5\}$ for $J\subseteq J_-(\vec{\nu})=\{1,2,5\}$ for which $\kappa_{\Qcal_{0,0},A_0}(\vec{\nu},\vec{\nu'},J)$ is potentially nonzero. We conclude that
\begin{align*}
&\sigma_{\Qcal_{0,0},A_0}(\vec{\nu},\vec{\nu'})= \\
&\delta_{E(\vec{\nu},\emptyset)}\frac{s}{\lcm(\frac{s}{(\alpha_0,s)},\overline{\afrak}_3,\overline{\afrak}_4)}-\delta_{E(\vec{\nu},\{1\})}\frac{s}{\lcm(\frac{s}{(\alpha_0,s)},\overline{\afrak}_1,\overline{\afrak}_3,\overline{\afrak}_4)} \\
&-\delta_{E(\vec{\nu},\{5\})}\frac{s}{\lcm(\frac{s}{(\alpha_0,s)},\overline{\afrak}_3,\overline{\afrak}_4,\overline{\afrak}_5)}+\delta_{E(\vec{\nu},\{1,5\})}\frac{s}{\lcm(\frac{s}{(\alpha_0,s)},\overline{\afrak}_1,\overline{\afrak}_3,\overline{\afrak}_4,\overline{\afrak}_5)}
\end{align*}
is the number of $f$-transient children in $B=\Bcal(\Rcal_0,\vec{o_0})$ of each given $x\in B'=\Bcal(\Scal_{0,1},\vec{o'_1})$. Each of these children provides a copy of
\[
\Tree_0^{(0)}(\Pcal_{0,0},\vec{o_0})=\Tree_0(\Rcal_0,C_3\cup C_4,\vec{o_0})
\]
that is attached to the root of $\Tree_0^{(1)}(\Scal_{0,1},C_0,\vec{o'_1})$. If we carry this computation out for fixed $\vec{o'_1}$ and all possible values of $\vec{o_0}\in\{\emptyset,\neg\}^4$, then we obtain a \enquote{complete picture} of $\Tree_0^{(1)}(\Scal_{0,1},C_0,\vec{o'_1})$.

It is time to particularize the gained explicit understanding of the rooted trees back to the concrete example we started from. First, we deal with the rooted trees $\Tree_{\Gamma_f}(x,C_3\cup C_4)$ in terms of the blocks of $\Rcal_0$. By substituting $s=51,\alpha_0=9,\alpha_1=3,\alpha_2=17,\alpha_3=34,\alpha_4=9,\beta_0=1,\beta_1=0,\beta_2=6,\beta_3=21,\beta_4=8$ into formula (\ref{r0Eq}), we get
\[
\Rcal_0=
\Pfrak
\begingroup
\setlength\arraycolsep{2pt}
\left(
\begin{array}{lll}
x & \equiv & 21\Mod{51} \\
x & \equiv & 4\Mod{17} \\
x & \equiv & 11\Mod{51} \\
x & \equiv & 2\Mod{3}
\end{array}
\right)
\endgroup.
\]
There are dependencies between these congruences. For example, the first implies the second as well as the negations of the third and fourth. Table \ref{transientTreesConcreteTable} lists all $\vec{o'_0}\in\{\emptyset,\neg\}^4$ such that $\Bcal(\Rcal_0,\vec{o'_0})$ is nonempty, along with a description of the set $\Bcal(\Rcal_0,\vec{o'_0})$ and the (simplified edge-weighted form of the) corresponding rooted tree $\Tree_0(\Rcal_0,C_3\cup C_4,\vec{o'_0})$ above each point in $\Bcal(\Rcal_0,\vec{o'_0})$, obtained by adding the (edge-weighted forms of the) rooted trees $\Tree_0(\Pcal'_i,C_i,\vec{\nu'_i})$ for $i=3,4$ read off from Table \ref{transientTreesTable}.
\begin{longtable}[h]{|c|c|c|}\hline
$\vec{o'_0}$ & $B:=\Bcal(\Rcal_0,\vec{o'_0})$ & $\Tree_{\Gamma_f}(x,C_3\cup C_4)$ for $x\in B$ \\ \hline
$\begin{pmatrix}\neg \\ \neg \\ \neg \\ \neg\end{pmatrix}$
&
$\{x\in\IZ/51\IZ: x\not\equiv2\Mod{3}\}\setminus\{4,21\}$
&
\begin{tikzpicture}
\node (r) at (0,0) {};
\draw (r) circle [radius=1pt];
\end{tikzpicture}
\\ \hline
$\begin{pmatrix}\neg \\ \emptyset \\ \neg \\ \neg\end{pmatrix}$
&
$\{4\}$
&
\begin{tikzpicture}
\node (r) at (0,0) {};
\draw (r) circle [radius=1pt];
\node (c) at (0,1) {};
\draw (c) circle [radius=1pt];
\path
(c) edge[->] node[right] {$17$} (r);
\end{tikzpicture}
\\ \hline
$\begin{pmatrix}\emptyset \\ \emptyset \\ \neg \\ \neg\end{pmatrix}$
&
$\{21\}$
&
\begin{tikzpicture}
\node (r) at (0,0) {};
\draw (r) circle [radius=1pt];
\node (c) at (0,1) {};
\draw (c) circle [radius=1pt];
\node (v) at (0,2) {};
\draw (v) circle [radius=1pt];
\path
(c) edge[->] node[right] {$17$} (r)
(v) edge[->] node[right] {$3$} (c);
\end{tikzpicture}
\\ \hline
$\begin{pmatrix}\neg \\ \neg \\ \neg \\ \emptyset\end{pmatrix}$
&
$\{x\in\IZ/51\IZ: x\equiv2\Mod{3}\}\setminus\{11,38\}$
&
\begin{tikzpicture}
\node (r) at (0,0) {};
\draw (r) circle [radius=1pt];
\node (c) at (0,1) {};
\draw (c) circle [radius=1pt];
\path
(c) edge[->] node[right] {$3$} (r);
\end{tikzpicture}
\\ \hline
$\begin{pmatrix}\neg \\ \neg \\ \emptyset \\ \emptyset\end{pmatrix}$
&
$\{11\}$
&
\begin{tikzpicture}
\node (r) at (0,0) {};
\draw (r) circle [radius=1pt];
\node (c) at (0,1) {};
\draw (c) circle [radius=1pt];
\node (v) at (0,2) {};
\draw (v) circle [radius=1pt];
\path
(c) edge[->] node[right] {$3$} (r)
(v) edge[->] node[right] {$17$} (c);
\end{tikzpicture}
\\ \hline
$\begin{pmatrix}\neg \\ \emptyset \\ \neg \\ \emptyset\end{pmatrix}$
&
$\{38\}$
&
\begin{tikzpicture}
\node (r) at (0,0) {};
\draw (r) circle [radius=1pt];
\node (c) at (0,1) {};
\draw (c) circle [radius=1pt];
\path
(c) edge[->] node[right] {$20$} (r);
\end{tikzpicture}
\\ \hline
\caption{Rooted trees using only pre-images in transient pre-image cosets.}
\label{transientTreesConcreteTable}
\end{longtable}

Now we turn to the description of $\Tree_{\Gamma_f}(x,C_0)$ for $f$-periodic $x\in C_0$ in terms of the $\Scal_{0,1}$-block $\Bcal(\Scal_{0,1},\vec{o'_1})$ in which $x$ lies. First, we substitute our concrete values of $s$ and the $\alpha_i$ and $\beta_i$ into the four spanning congruences for $\Scal_{0,1}$ to get that
\[
\Scal_{0,1}=
\Pfrak
\begingroup
\setlength\arraycolsep{2pt}
\left(
\begin{array}{lll}
x & \equiv & 37\Mod{51} \\
x & \equiv & 37\Mod{51} \\
x & \equiv & 49\Mod{51} \\
x & \equiv & 1\Mod{3}
\end{array}
\right)
\endgroup.
\]
It is not necessary to strictly follow the computations described in Lemma \ref{masterLem} (as outlined above) to gain a complete understanding of the trees here -- we give a conceptual argument instead.

We note that by Lemma \ref{periodicCharLem}, the points in $\IZ/51\IZ$ that are periodic under $A_0:x\mapsto 9x+1$ are just those that are congruent to $1$ modulo $3$ (the unique fixed point of $A_0$ modulo $3=\gcd(\alpha_0^L,s)$ where $L$ is as in Lemma \ref{periodicCharLem}). Hence, the last spanning congruence of $\Scal_{0,1}$ is always true for those $x$ we are considering. We observe that it is just a coincidence that the spanning sequence for $\Scal_{0,1}$ contains the characterizing congruence for periodic vertices -- in general, this information needs to be added \enquote{externally} if one wants to control it via the partition blocks, using the partition $\Tcal_{0,1}=\Scal_{0,1}\wedge\Ucal_0$ instead.

Since the children of $x$ in $C_0$ form a coset of the kernel $17\IZ/51\IZ$ of $z\mapsto 9z$, it follows that $x$ has precisely three pre-images in $C_0$, one in each congruence class modulo $3$. But the pre-image $y$ of $x$ in $C_0$ with $y\equiv1\Mod{3}$ is $f$-periodic and hence does not occur in $\Tree_{\Gamma_f}(x,C_0)$. We also note the following.
\begin{itemize}
\item For $x=37$, the remaining two, $f$-transient pre-images are $21$ and $38$.
\item For $x=49$ the $f$-transient pre-images are $11$ and $45$.
\item For all other $x\equiv1\Mod{3}$, there is one $f$-transient pre-image each in the two \enquote{generic} blocks $\Bcal(\Rcal_0,(\neg,\neg,\neg,\neg))$ and $\Bcal(\Rcal_0,(\neg,\neg,\neg,\emptyset))$ of $\Rcal_0$; this is because all other blocks except $\{4\}$ have already been \enquote{used up}, and $4\equiv1\Mod{3}$.
\end{itemize}
From Table \ref{transientTreesConcreteTable}, we can read off $\Tree_{\Gamma_f}(y,C_3\cup C_4)=\Tree_{\Gamma_f}(y)$ for each of the two $f$-transient pre-images $y$ of $x$ in $C_0$, thus obtaining the shape of $\Tree_{\Gamma_f}(x,C_0)$ specified in Table \ref{periodicTreesConcreteTable}.

\begin{longtable}[h]{|c|c|c|}\hline
$\vec{o'_1}$ & $B':=\Bcal(\Scal_{0,1},\vec{o'_1})$ & $\Tree_{\Gamma_f}(x,C_0)$ for periodic $x\in B'$ \\ \hline
$\begin{pmatrix}\neg \\ \neg \\ \neg \\ \neg\end{pmatrix}$
&
$\{x\in\IZ/51\IZ: x\not\equiv1\Mod{3}\}$
&
n/a (no periodic $x$ in this block)
\\ \hline
$\begin{pmatrix}\neg \\ \neg \\ \neg \\ \emptyset\end{pmatrix}$
&
$\{x\in\IZ/51\IZ: x\equiv1\Mod{3}\}\setminus\{37,49\}$
&
\begin{tikzpicture}
\node (r) at (0,0) {};
\draw (r) circle [radius=1pt];
\node (c1) at (-1,1) {};
\draw (c1) circle [radius=1pt];
\node (c2) at (1,1) {};
\draw (c2) circle [radius=1pt];
\node (v) at (1,2) {};
\draw (v) circle [radius=1pt];
\path
(c1) edge[->] node[left] {$1$} (r)
(c2) edge[->] node[right] {$1$} (r)
(v) edge[->] node[right] {$3$} (c2);
\end{tikzpicture}
\\ \hline
$\begin{pmatrix}\emptyset \\ \emptyset \\ \neg \\ \emptyset\end{pmatrix}$
&
$\{37\}$
&
\begin{tikzpicture}
\node (r) at (0,0) {};
\draw (r) circle [radius=1pt];
\node (c1) at (-1,1) {};
\draw (c1) circle [radius=1pt];
\node (c2) at (1,1) {};
\draw (c2) circle [radius=1pt];
\node (v1) at (-1,2) {};
\draw (v1) circle [radius=1pt];
\node (v2) at (1,2) {};
\draw (v2) circle [radius=1pt];
\node (w) at (-1,3) {};
\draw (w) circle [radius=1pt];
\path
(c1) edge[->] node[left] {$1$} (r)
(c2) edge[->] node[right] {$1$} (r)
(v1) edge[->] node[left] {$17$} (c1)
(v2) edge[->] node[right] {$20$} (c2)
(w) edge [->] node[left] {$3$} (v1);
\end{tikzpicture}
\\ \hline
$\begin{pmatrix}\neg \\ \neg \\ \emptyset \\ \emptyset\end{pmatrix}$
&
$\{49\}$
&
\begin{tikzpicture}
\node (r) at (0,0) {};
\draw (r) circle [radius=1pt];
\node (c1) at (-1,1) {};
\draw (c1) circle [radius=1pt];
\node (c2) at (1,1) {};
\draw (c2) circle [radius=1pt];
\node (v) at (1,2) {};
\draw (v) circle [radius=1pt];
\node (w) at (1,3) {};
\draw (w) circle [radius=1pt];
\path
(c1) edge[->] node[left] {$1$} (r)
(c2) edge[->] node[right] {$1$} (r)
(v) edge[->] node[right] {$3$} (c2)
(w) edge [->] node[left] {$17$} (v);
\end{tikzpicture}
\\ \hline
\caption{Rooted trees using only pre-images in $C_0$.}
\label{periodicTreesConcreteTable}
\end{longtable}

Finally, our formula (\ref{tree0Eq}) leads us to a tabular list of $\Tree_{\Gamma_f}(x)$, which we specify in Table \ref{fullTreesConcreteTable}, where we also introduce the notation $\Ifrak_n$ for $n=1,2,3,4$ to denote the different isomorphism types of $\Tree_{\Gamma_f}(x)$ for periodic $x$. Additionally, we define $\Ifrak_0$ to denote the isomorphism type of the trivial rooted tree, consisting of a single vertex without arcs.

\begin{longtable}[h]{|c|c|}\hline
block $B$ of $\Pcal_0$ & $\Tree_{\Gamma_f}(x)$ for $x\in B$ \\ \hline
$\{x\in\IZ/51\IZ: x\equiv1\Mod{3}\}\setminus\{4,37,49\}$
& 
\begin{tikzpicture}
\node (r) at (0,0) {};
\draw (r) circle [radius=1pt];
\node (c1) at (-1,1) {};
\draw (c1) circle [radius=1pt];
\node (c2) at (1,1) {};
\draw (c2) circle [radius=1pt];
\node (v) at (1,2) {};
\draw (v) circle [radius=1pt];
\path
(c1) edge[->] node[left] {$1$} (r)
(c2) edge[->] node[right] {$1$} (r)
(v) edge[->] node[right] {$3$} (c2);
\node at (2,1) {$=:\Ifrak_1$};
\end{tikzpicture}
\\ \hline
$\{4\}$
&
\begin{tikzpicture}
\node (r) at (0,0) {};
\draw (r) circle [radius=1pt];
\node (c1) at (-1,1) {};
\draw (c1) circle [radius=1pt];
\node (c2) at (1,1) {};
\draw (c2) circle [radius=1pt];
\node (v) at (1,2) {};
\draw (v) circle [radius=1pt];
\path
(c1) edge[->] node[left] {$18$} (r)
(c2) edge[->] node[right] {$1$} (r)
(v) edge[->] node[right] {$3$} (c2);
\node at (2,1) {$=:\Ifrak_2$};
\end{tikzpicture}
\\ \hline
$\{37\}$
&
\begin{tikzpicture}
\node (r) at (0,0) {};
\draw (r) circle [radius=1pt];
\node (c1) at (-1,1) {};
\draw (c1) circle [radius=1pt];
\node (c2) at (1,1) {};
\draw (c2) circle [radius=1pt];
\node (v1) at (-1,2) {};
\draw (v1) circle [radius=1pt];
\node (v2) at (1,2) {};
\draw (v2) circle [radius=1pt];
\node (w) at (-1,3) {};
\draw (w) circle [radius=1pt];
\path
(c1) edge[->] node[left] {$1$} (r)
(c2) edge[->] node[right] {$1$} (r)
(v1) edge[->] node[left] {$17$} (c1)
(v2) edge[->] node[right] {$20$} (c2)
(w) edge [->] node[left] {$3$} (v1);
\node at (2,1) {$=:\Ifrak_3$};
\end{tikzpicture}
\\ \hline
$\{49\}$
&
\begin{tikzpicture}
\node (r) at (0,0) {};
\draw (r) circle [radius=1pt];
\node (c1) at (-1,1) {};
\draw (c1) circle [radius=1pt];
\node (c2) at (1,1) {};
\draw (c2) circle [radius=1pt];
\node (v) at (1,2) {};
\draw (v) circle [radius=1pt];
\node (w) at (1,3) {};
\draw (w) circle [radius=1pt];
\path
(c1) edge[->] node[left] {$1$} (r)
(c2) edge[->] node[right] {$1$} (r)
(v) edge[->] node[right] {$3$} (c2)
(w) edge [->] node[left] {$17$} (v);
\node at (2,1) {$=:\Ifrak_4$};
\end{tikzpicture}
\\ \hline
$B\subseteq\{x\in\IZ/51\IZ:x\not\equiv1\Mod{3}\}$
&
$\Tree_{\Gamma_f}(x,C_3\cup C_4)$ (see Table \ref{transientTreesConcreteTable})
\\ \hline
\caption{The rooted trees $\Tree_{\Gamma_f}(x)$ for $x\in C_0$.}
\label{fullTreesConcreteTable}
\end{longtable}

Now that we have a full understanding of the rooted trees above vertices in $\Gamma_f$, let us turn to the determination of the $f$-periodic points and to the construction of a CRL-list for $f$. Following the discussion in Subsection \ref{subsec3P1}, the periodic points of $f$ are the field element $0$ as well as all periodic points of $f$ in the unique periodic coset $C_0$, which we already identified above (using Lemma \ref{periodicCharLem}) to be precisely those $x\in\IZ/51\IZ$ with $x\equiv1\Mod{3}$, so there are $51/3=17$ periodic points in $C_0$. With such a small number of periodic points, it would be easy to just determine the cycle structure and a CRL-list by brute force, but we would still like to proceed as described in Subsection \ref{subsec3P1} (and Subsection \ref{subsec2P3}, on which Subsection \ref{subsec3P1} builds) to illustrate the method.

First, we observe that the nontrivial prime powers of the form $p^{\nu_p(51)}$ are just $3$ and $17$. By the approach from Subsection \ref{subsec2P3}, we need to determine a CRL-list of each \emph{bijective} reduction of $A_0$ modulo $p^{\nu_p(51)}$, i.e., here only for the reduction of $A_0$ modulo $17$. We can read off such a CRL-list from Table \ref{crlListPrimaryTable}. More specifically, since $\nu_{17}^{(1)}(1)=0\geq 0=\nu_{17}^{(1)}(9-1)$, and since $\ord(9)$, the multiplicative order of $9$ modulo $17$, is $8$, case 1 in that table with
\[
\rfrak:=3\text{ and }\ffrak:=-\frac{1}{17^0}\cdot\inv_{17}\left(\frac{8}{17^0}\right)=-15=2
\]
tells us that
\[
\{(3^017^0+2,8),(3^117^0+2,8),(3^017^1+2,1)\}=\{(3,8),(5,8),(2,1)\}
\]
is a CRL-list of $A_0$ modulo $17$. Modulo $3$, the only periodic point of $A_0$ is $1$, so in order to get a CRL-list for $A_0$ modulo $51$, we just map the first entries of the above CRL-list modulo $17$ under the function $\Lambda:\IZ/17\IZ\rightarrow\IZ/51\IZ$ with $\Lambda(x)\equiv x\Mod{17}$ and $\Lambda(x)\equiv1\Mod{3}$, which leads to the following CRL-list of $A_0$ modulo $51$:
\[
\{(37,8),(22,8),(19,1)\}.
\]
We can now describe the isomorphism types of the four connected components of $\Gamma_f$ as cyclic sequences (necklaces, isomorphism types of necklace graphs) of finite directed rooted trees simply by enumerating the elements on the cycles of $f$ by iteration, then looking up the associated rooted tree isomorphism types in Table \ref{fullTreesConcreteTable}. We note that this is a brute-force approach that is not viable when the number of $f$-periodic points is large and should then be replaced by the approach described in Subsection \ref{subsec3P4} instead.
\begin{itemize}
\item The connected component of the field element $0$ is a single vertex with a loop, corresponding to the following cyclic sequence of rooted tree isomorphism types: $[\Ifrak_0]$.
\item Because the cycle of $22\in\IZ/51\IZ$ under $A_0$ is $(22,46,7,13,16,43,31,25)$, the connected component of the field element $\iota_0(22)=\omega^{5\cdot 22}=\omega^{110}$ is represented by the cyclic sequence $[\Ifrak_1,\Ifrak_1,\Ifrak_1,\Ifrak_1,\Ifrak_1,\Ifrak_1,\Ifrak_1,\Ifrak_1]$.
\item Because the cycle of $37\in\IZ/51\IZ$ under $A_0$ is $(37,28,49,34,1,10,40,4)$, the connected component of the field element $\omega^{185}$ is represented by the cyclic sequence $[\Ifrak_3,\Ifrak_1,\Ifrak_4,\Ifrak_1,\Ifrak_1,\Ifrak_1,\Ifrak_1,\Ifrak_2]$.
\item Finally, because the cycle of $19$ is simply $(19)$, the connected component of the field element $\omega^{95}$ is represented by $[\Ifrak_1]$.
\end{itemize}
As a quick sanity check, we note that $|\V(\Ifrak_1)|=6$, $|\V(\Ifrak_2)|=23$, $|\V(\Ifrak_3)|=91$, and $|\V(\Ifrak_4)|=57$, so the vertex numbers of the three connected components in $\IF_{2^8}^{\ast}$ add up to $14\cdot 6+23+91+57=255$, as they should. One may also verify that these are the same cyclic sequences that were given in our introduction.

\subsection{Special case: All \texorpdfstring{$A_i$}{Ai} are permutations}\label{subsec4P3}

Let $f$ be an index $d$ generalized cyclotomic mapping of $\IF_q$, given in cyclotomic form (\ref{cyclotomicFormEq}). Let us assume that for each $i=0,1,\ldots,d-1$, we have
\[
\gcd(r_i,s)=\gcd(r_i,\frac{q-1}{d})=1.
\]
An important class of functions to which this applies are the index $d$ cyclotomic mappings of $\IF_q$ \emph{of first order} (i.e., those generalized cyclotomic mappings for which all $r_i$ are equal to $1$).

By our comments between Remark \ref{crlListRem} and Definition \ref{treeAboveDef}, the affine map  $A_i$ of $\IZ/s\IZ$, which encodes the restriction $f_{\mid C_i}$ in case $a_i\not=0$, is of the form $z\mapsto r_iz+\text{const}$. Our assumption on the $r_i$ is therefore equivalent to demanding that for each $i$ such that $a_i\not=0$ (and thus $A_i$ is well-defined), the function $A_i$ is an affine \emph{permutation} of $\IZ/s\IZ$.

Our goal is to describe the functional graph $\Gamma_f$, which turns out to be particularly easy. Let us start with the rooted trees.

\begin{lemmma}\label{allPermutationsLem}
Let $x\in\IF_q=\V(\Gamma_f)$.
\begin{enumerate}
\item If $x\not=0$, and if $i$ denotes the unique index in $\{0,1,\ldots,d-1\}$ such that $x\in C_i$, then $\Tree_{\Gamma_f}(x)$ is isomorphic to $\Ifrak_i:=\Tree_{\Gamma_{\overline{f}}}(i)$\phantomsection\label{not187}.
\item If $x=0$, then $\Tree_{\Gamma_f}(x)=\sum_{i\in\overline{f}^{-1}(\{d\})\setminus\{d\}}{s\Ifrak_i^+}$.
\end{enumerate}
\end{lemmma}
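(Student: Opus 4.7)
The plan is to exploit the hypothesis that $f$ behaves as a bijection on every non-constant coset-to-coset map, so that the block-level dynamics of $\overline{f}$ completely determine the rooted tree structure of $\Gamma_f$. The key preliminary observation is that for $i\in\{0,1,\ldots,d-1\}$, the condition $\overline{f}(i)\ne d$ is equivalent to $a_i\ne 0$, in which case $A_i$ is a well-defined affine \emph{permutation} of $\IZ/s\IZ$. Hence for any $x\in C_j$ with $j\ne d$, the children of $x$ in $\Gamma_f^{\ast}$ are in canonical bijection with $\overline{f}^{-1}(\{j\})$: each coset $C_i$ with $\overline{f}(i)=j$ contributes exactly one pre-image of $x$, and no pre-images come from the constant cosets (which map into $\{0\}$, not into $C_j$).

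For part (1), I would proceed by induction on $h:=\height(\Tree_{\Gamma_{\overline{f}}}(i))$, uniformly over $x\in C_i$. When $h=0$, the index $i$ is a leaf in $\Gamma_{\overline{f}}$, so $\overline{f}^{-1}(\{i\})=\emptyset$ and $x$ has no pre-images under $f$; both $\Tree_{\Gamma_f}(x)$ and $\Ifrak_i$ collapse to a single vertex. For the inductive step, if $\overline{f}^{-1}(\{i\})=\{j_1,\ldots,j_K\}$, then the canonical bijection above produces a unique $f$-transient pre-image $y_t\in C_{j_t}$ of $x$ for each $t$, and the induction hypothesis yields $\Tree_{\Gamma_f}(y_t)\cong\Ifrak_{j_t}$; summing over $t$ gives $\Tree_{\Gamma_f}(x)=\sum_t\Ifrak_{j_t}^+ =\Ifrak_i$. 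The one case requiring a separate check is when $i$ is $\overline{f}$-periodic: then every element of $C_i$ is $f$-periodic (because the cycle composition $\Acal_i$ is a bijection of $\IZ/s\IZ$), the unique $f$-periodic pre-image of $x$ lies in $C_{i_{-1}}$ and corresponds to the unique $\overline{f}$-periodic pre-image $i_{-1}$ of $i$, and both are excluded from their respective trees by Definition \ref{treeAboveDef}(2); the remaining transient pre-images still match up in the canonical way.

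For part (2), the pre-images of $x=0$ under $f$ are $0$ itself (which is excluded from $\Tree_{\Gamma_f}(0)$ as the periodic pre-image) together with every element of every coset $C_i$ with $i\in\overline{f}^{-1}(\{d\})\setminus\{d\}$; each such $i$ is automatically $\overline{f}$-transient, and each $y\in C_i$ is $f$-transient with $\Tree_{\Gamma_f}(y)\cong\Ifrak_i$ by part (1). Attaching all $s$ copies of $\Ifrak_i$ to the new root $0$ via arcs for each such $i$ yields the stated formula. The only mildly delicate point in the whole argument is the periodic-root bookkeeping in part (1), namely verifying that the omitted periodic pre-image on the $f$-side corresponds \emph{exactly} to the omitted periodic pre-image on the $\overline{f}$-side; but this is immediate from the fact that the $f$-cycle through $x$ projects along $x\mapsto\iota^{-1}$-then-coset to the $\overline{f}$-cycle through $i$, so bookkeeping matches on both sides.
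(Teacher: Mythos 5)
Your argument is correct and follows essentially the same strategy as the paper: induction on $h=\height(\Tree_{\Gamma_{\overline{f}}}(i))$, using the bijectivity of the affine maps $A_i$ to set up the coset-by-coset correspondence between $f$-pre-images of $x$ and $\overline{f}$-pre-images of $i$, with the periodic pre-image in $C_{i_{-1}}$ excluded from both trees. One small imprecision worth tidying: the base-case claim that $h=0$ forces $\overline{f}^{-1}(\{i\})=\emptyset$ (and that $i$ is a leaf) holds only when $i$ is $\overline{f}$-transient — for $\overline{f}$-periodic $i$ with $h=0$, the pre-image $i_{-1}$ is still present — but your ``separate check'' paragraph supplies exactly the missing bookkeeping, matching the argument the paper carries out via contradiction in its own base case.
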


\begin{proof}
Statement (1) can be proved by induction on $h(x):=\height(\Tree_{\Gamma_{\overline{f}}}(i))$. If $h(x)=0$, then all $\overline{f}$-pre-images of $i$ (if any) are $\overline{f}$-periodic. In particular, $x$ has no $f$-transient pre-images under $f$, because each such pre-image would need to lie in a coset $C_j$ where $j$ is an $\overline{f}$-transient pre-image of $i$ under $\overline{f}$. Indeed, otherwise, $i$, having an $\overline{f}$-periodic pre-image under $\overline{f}$, is $\overline{f}$-periodic itself. By assumption, we can pick an $f$-transient pre-image $y$ of $x$ under $f$ in $C_{i'}$, where $i'$ is the unique $\overline{f}$-periodic pre-image of $i$ under $\overline{f}$. If $\ell$ denotes the cycle length of $i$ under $\overline{f}$, then $\Acal_i=A_{i_0}A_{i_1}\cdots A_{i_{\ell-1}}$ represents the restriction of $f^{\ell}$ to $C_i$. Because each $A_{i_t}$ is bijective, so is $\Acal_i$; in other words, every point in $C_i$ is periodic under $\Acal_i$ and thus under $f$ (following the discussion in Subsection \ref{subsec3P1}). In particular, $x$ is $f$-periodic, say with cycle length $l$. Therefore, $f^{l-1}(x)$ is an $f$-pre-image of $x$ in $C_{i'}$, as is $y$. Because $A_{i'}$, which represents the restriction $f_{\mid C_{i'}}:C_{i'}\rightarrow C_i$, is injective, it follows that $f^{l-1}(x)=y$, whence $y$ is $f$-periodic, contradicting our assumption. The upshot of this discussion is that if $h(x)=0$, then $\Tree_{\Gamma_f}(x)$ is trivial, as is $\Ifrak_i$.

Now we assume that $h(x)\geq1$. Let $j_1,j_2,\ldots,j_K$ be the distinct $\overline{f}$-transient pre-images of $i$ under $\overline{f}$. By the argument from the previous paragraph, each $f$-transient pre-image of $x$ under $f$ must lie in one of the cosets $C_{j_t}$ for $t=1,2,\ldots,K$, and since $A_{j_t}$ is bijective for each $t$, it follows that $x$ has precisely one (transient) pre-image $c_{j_t}\in C_{j_t}$ for each $t=1,2,\ldots,K$. Therefore, using the induction hypothesis,
\[
\Tree_{\Gamma_f}(x)=\sum_{t=1}^K{\Tree_{\Gamma_f}(c_{j_t})^+}=\sum_{t=1}^K{\Ifrak_{j_t}^+}=\sum_{t=1}^K{\Tree_{\Gamma_{\overline{f}}}(j_t)^+}=\Tree_{\Gamma_{\overline{f}}}(i).
\]

For statement (2), let $j_1,j_2,\ldots,j_K$ be the distinct $\overline{f}$-transient children of $d$ in $\Gamma^{\ast}_{\overline{f}}$. Equivalently, the $j_t$ are the distinct elements of $\overline{f}^{-1}(\{d\})\setminus\{d\}$. The $f$-transient children of $0_{\IF_q}$ in $\Gamma_f^{\ast}$ are precisely the points in $\bigcup_{t=1}^K{C_{j_t}}$. Using statement (1), it follows that
\[
\Tree_{\Gamma_f}(0_{\IF_q})=\sum_{t=1}^K\sum_{y\in C_{j_t}}{\Tree_{\Gamma_f}(y)^+}\cong\sum_{t=1}^K\sum_{y\in C_{j_t}}{\Ifrak_{j_t}^+}=\sum_{t=1}^K{s\Ifrak_{j_t}^+},
\]
as required.
\end{proof}

Because $\Tree_{\Gamma_f}(x)$ for $x\not=0$ only depends on the coset $C_i$ in which $x$ lies and can be read off directly from $\Gamma_{\overline{f}}$, we only need to know $\Gamma_{\overline{f}}$ and the cycle structure of $f$ on each coset union $U_i:=\bigcup_{t=0}^{\ell-1}{C_{i_t}}$\phantomsection\label{not188}, where $(i_0,i_1,\ldots,i_{\ell-1})$ with $i=i_0$ is the $\overline{f}$-cycle of $i$, in order to understand the isomorphism type of $\Gamma_f$. This can be achieved using analogous ideas to the ones for the determination of CRL-lists in Subsection \ref{subsec3P1}.

Let us set $\Acal_i:=A_{i_0}A_{i_1}\cdots A_{i_{\ell-1}}$. Then $\Acal_i$ is an affine permutation of $\IZ/s\IZ$, and its cycle type $\CT(\Acal_i)$ can be read off from \cite[Tables 3 and 4]{BW22b}. Moreover,
\[
\CT(f_{\mid U_i})=\BU_{\ell}(\CT(\Acal_i))
\]
where $\BU_{\ell}$\phantomsection\label{not189}, the so-called \emph{$\ell$-blow-up function}\phantomsection\label{term56}, is the unique $\IQ$-algebra endomorphism of $\IQ[x_n: n\in\IN^+]$ with $\BU_{\ell}(x_n)=x_{\ell n}$ for all $n\in\IN^+$. Say
\[
\CT(f_{\mid U_i})=x_1^{e_1}x_2^{e_2}\cdots x_{\ell s}^{e_{\ell s}}.
\]
Then, viewing isomorphism types of functional graphs as multisets of cyclic sequences (necklaces) of isomorphism types of finite directed rooted trees (with each such sequence encoding the isomorphism type of one connected component), we have the following:
\[
\Gamma_f^{(i)}:=\Gamma_{f\mid U_i}=\bigsqcup_{1\leq l\leq \ell s, \ell\mid l}\bigsqcup_{n=1}^{e_l}{\{\diamond_{m=1}^{l/\ell}{[\Ifrak_{i_0},\Ifrak_{i_1},\ldots,\Ifrak_{i_{\ell-1}}]}\}},
\]
and\phantomsection\label{not190}, if $\overline{\Lcal}$ is a CRL-list for $\overline{f}$, then
\[
\Gamma_f=\bigsqcup_{(i,\ell)\in\overline{\Lcal}}{\Gamma_f^{(i)}}\sqcup\{[\sum_{j\in\overline{f}^{-1}(\{d\})\setminus\{d\}}{s\Ifrak_j}]\}.
\]

\section{Algorithmic complexity analysis}\label{sec5}

The aim of this section is to describe algorithms for understanding important aspects of the structure of functional graphs of generalized cyclotomic mappings of finite fields in detail and analyze their complexities. In Subsection \ref{subsec5P1}, we set the ground by describing our computational model, the so-called dual model, in detail and introducing some important auxiliary concepts and results. We note that this dual model consists of carefully keeping track of three distinct parameters -- the bit operations, elementary quantum gates and conversions from bits to qubits and vice versa -- separately, which is, to the authors' knowledge, a novel approach and may be of independent, wider interest for readers working in quantum complexity analysis. Subsection \ref{subsec5P2} consists of the proof of Theorem \ref{complexitiesTheo}, which provides complexity bounds for three fundamental algorithmic problems and may be considered the main result of this section. As mentioned in the introduction, it is an open problem how to encode the overall structure of the functional graph of a generalized cyclotomic mapping compactly; in particular, these results do \emph{not} provide an efficient general algorithm for deciding whether the functional graphs of two given generalized cyclotomic mappings are isomorphic. However, in Subsection \ref{subsec5P3}, we discuss four special cases in which this isomorphism problem can be solved efficiently.

\subsection{Framework and auxiliary results}\label{subsec5P1}

Throughout this section, we assume that $f$ is an index $d$ generalized cyclotomic mapping of $\IF_q$, given in cyclotomic form (\ref{cyclotomicFormEq}), where either
\begin{itemize}
\item each $a_i$ is specified as the field element $0$ or as a power of a common, unknown primitive element $\omega$ of $\IF_q$, or
\item we explicitly know the minimal polynomial $P(T)$\phantomsection\label{not190P5}\phantomsection\label{not190P75} over the prime subfield $\IF_p$ of such an $\omega$, and the $a_i$ are represented as elements of $\IF_p[T]/(P(T))$.
\end{itemize}
The main goal in this section is to analyze the complexities of the following algorithmic problems.
\begin{itemize}
\item Problem 1: Given $f$, compute a compact parametrization of a CRL-list $\Lcal$ of $f$ (we note that $|\Lcal|$ equals the number of cycles of $f$ on its periodic points, which may be superpolynomial in $\log{q}$, so we want to avoid listing $\Lcal$ element-wise).
\item Problem 2: Given $f$, compute a partition-tree register of $f$ in the sense of Definition \ref{partTreeRegDef} below.
\item Problem 3: Given $f$, a partition-tree register of $f$, and a pair $(r,l)$ such that $r\in\IF_q$ if $f$-periodic and $l$ is the cycle length of $r$ under $f$, compute a compact description of the cyclic sequence of rooted tree isomorphism types from formula (\ref{cyclicSeqEq}) (which characterizes the digraph isomorphism type of the connected component of $\Gamma_f$ that contains $r$).
\end{itemize}

A partition-tree register of $f$ is a standardized way of storing information about the arithmetic partitions $\Pcal_i$ constructed in Subsection \ref{subsec3P3} and the rooted trees associated with their blocks. To define it, we first introduce the following auxiliary concept.

\begin{deffinition}\label{recTreeDescListDef}
A \emph{recursive tree description list}\phantomsection\label{term57} is a finite sequence $(\Dfrak_n)_{n=0,1,\ldots,N}$\phantomsection\label{not191} of sets that has an associated (unique) ordered sequence $(\Ifrak_n)_{n=0,1,\ldots,N}$ of pairwise distinct, finite rooted tree isomorphism types such that the following hold.
\begin{enumerate}
\item $\Ifrak_0$ is the trivial rooted tree isomorphism type, and $\Dfrak_0=\emptyset$.
\item For $n\geq1$, each rooted tree attached in $\Ifrak_n$ to the root of $\Ifrak_n$ is isomorphic to $\Ifrak_m$ for some $m\in\{0,1,\ldots,n-1\}$. Moreover, $\Dfrak_n$ is the set of all pairs $(m,k_m)$ where $m\in\{0,1,\ldots,n-1\}$ is an index for which $\Ifrak_m$ is attached to the root of $\Ifrak_n$ at least once, and $k_m$ is the multiplicity with which it is attached.
\end{enumerate}
\end{deffinition}

In a recursive tree description list, each set $\Dfrak_n$ can be viewed as a compact description of $\Ifrak_n$, referring to the rooted trees attached to the root of $\Ifrak_n$ with their (earlier) indices $m$, rather than their full descriptions. The idea of encoding isomorphism types of rooted trees via numbers (\enquote{tree indices}) to get more compact descriptions of larger rooted trees is not new; it appears, for example, in the decision algorithm for isomorphism of directed rooted trees described in \cite[Example 3.2 on p.~84]{AHU75a}. In contrast to that algorithm, which is linear in the number of vertices, we do not list tree indices $m$ repeatedly, but rather, we specify their multiplicities $k_m$. In situations such as ours, where entire sets (here: arithmetic partition blocks) of vertices can be dealt with simultaneously, this modification is crucial to ensure the efficiency of our algorithms relative to their smaller input length (which lies in $O(d\log{q})$). In implementations, we assume that each $\Dfrak_n$ is represented by an array (ordered list) of pairs $(m,k_m)$, sorted by increasing $m$. Moreover, $m$ and $k_m$, both of which are at most $q$, are to be represented by bit strings of length $\lfloor\log_2{q}\rfloor+1$ (please note, however, that we use other conventions for the related notion of a type-I tree register, introduced in Definition \ref{treeRegDef}(1)). We observe that with these conventions, all bit strings representing an element $(m,k_m)$ of $\Dfrak_n$ (for some $n$) have the same length, and the ordering of the elements of $\Dfrak_n$ by increasing $m$ corresponds to the lexicographic ordering of those bit string encodings.

Equipped with the concept of a recursive tree description list, we can define partition-tree registers of generalized cyclotomic mappings of finite fields as follows, using notations introduced in Subsection \ref{subsec3P3}. We note that in this algorithmic section, we frequently identify arithmetic partitions with specific spanning congruence sequences of them.

\begin{deffinition}\label{partTreeRegDef}
Let $f$ be an index $d$ generalized cyclotomic mapping of $\IF_q$. If $i\in\{0,1,\ldots,d-1\}$ is $\overline{f}$-periodic, we recall that $i_t$ for $t\in\IZ$ denotes the unique $\overline{f}$-periodic index in $\{0,1,\ldots,d-1\}$ such that $(\overline{f}_{\mid\per(\overline{f})})^t(i)=i_t$. A \emph{partition-tree register of $f$}\phantomsection\label{term58} is an ordered pair of the form
\[
((\Zcal_i)_{i=0,1,\ldots,d-1},((\Dfrak_n,(S_{n,i})_{i=0,1,\ldots,d}))_{n=0,1,\ldots,N})
\]
such\phantomsection\label{not192}\phantomsection\label{not193} that the following hold.
\begin{enumerate}
\item For each $i=0,1,\ldots,d-1$, $\Zcal_i$ is the following.
\begin{enumerate}
\item If $i$ is $\overline{f}$-transient, then $\Zcal_i=\Pcal_i$, given through a spanning congruence sequence of length $m_i\in\IN_0$.
\item If $i$ is $\overline{f}$-periodic, then $\Zcal_i$ is an $(H_i+2)$-tuple $(\Xcal_{i,h})_{h=-1,0,\ldots,H_i}$ such that
\begin{enumerate}
\item $\Xcal_{i,-1}=(\theta_{i,h}(x))_{h=1,2,\ldots,H_i}$, and
\item $\Xcal_{i,h}$ for $h=0,1,\ldots,H_i$ is (a spanning congruence sequence for) the arithmetic partition $\lambda_{i_{-h}}^h(\Rcal_{i_{-h}})$, of length $n_{i_{-h}}\in\IN_0$.
\end{enumerate}
\end{enumerate}
\item The sequence $(\Dfrak_n)_{n=0,1,\ldots,N}$ is a recursive tree description list, with associated rooted tree isomorphism type sequence $(\Ifrak_n)_{n=0,1,\ldots,N}$, such that the $\Ifrak_n$ are just those rooted tree isomorphism types that are of one of the forms
\begin{enumerate}
\item $\Tree_i(\Pcal_i,\vec{\nu}^{(\Pcal_i)})$ for some $\overline{f}$-transient $i$ and some $\vec{\nu}^{(\Pcal_i)}\in\{\emptyset,\neg\}^{m_i}$ such that $\Bcal(\Pcal_i,\vec{\nu}^{(\Pcal_i)})\not=\emptyset$;
\item $\Tree_{\Gamma_f}(0_{\IF_q})$; or
\item $\Tree_i^{(h)}(\Pcal_{i,h},\vec{\nu}^{(\Pcal_{i,h})})$ for some $\overline{f}$-periodic $i\not=d$, some $h\in\{0,1,\ldots,H_i\}$ and some $\vec{\nu}^{(\Pcal_{i,h})}\in\{\emptyset,\neg\}^{n_{i_0}+n_{i_{-1}}+\cdots+n_{i_{-h}}}$ such that $\Bcal(\Qcal_{i,h},\vec{\nu}^{(\Pcal_{i,h})}\diamond\vec{\xi}_{i,h})$, which is the set of all points in $\Bcal(\Pcal_{i,h},\vec{\nu}^{(\Pcal_{i,h})})$ of $\hfrak$-value $h$, is nonempty.
\end{enumerate}
\item The objects $S_{n,i}$ satisfy the following.
\begin{enumerate}
\item If $i$ is $\overline{f}$-transient, then
\[
S_{n,i}=\{\vec{\nu}^{(\Pcal_i)}\in\{\emptyset,\neg\}^{m_i}: \Bcal(\Pcal_i,\vec{\nu}^{(\Pcal_i)})\not=\emptyset\text{ and }\Tree_i(\Pcal_i,\vec{\nu}^{(\Pcal_i)})\cong \Ifrak_n\}.
\]
\item If $i=d$, then $S_{n,i}=S_{n,d}\in\{\emptyset,\neg\}$ is the logical sign associated with the truth value of the isomorphism relation $\Tree_{\Gamma_f}(0_{\IF_q})\cong \Ifrak_n$.
\item If $i\not=d$ is $\overline{f}$-periodic, then $S_{n,i}=(S_{n,i,h})_{h=0,1,\ldots,H_i}$ where
\begin{align*}
S_{n,i,h}=\{&\vec{\nu}^{(\Pcal_{i,h})}\in\{\emptyset,\neg\}^{n_{i_0}+n_{i_{-1}}+\cdots+n_{i_{-h}}}: \\
&\Bcal(\Qcal_{i,h},\vec{\nu}^{(\Pcal_{i,h})}\diamond\vec{\xi}_{i,h})\not=\emptyset\text{ and }\Tree_i^{(h)}(\Pcal_{i,h},\vec{\nu}^{(\Pcal_{i,h})})\cong \Ifrak_n\}.
\end{align*}
\end{enumerate}
\end{enumerate}
\end{deffinition}

In implementations, we assume that each set $S_{n,i}$ for $\overline{f}$-transient $i$, as well as each set $S_{n,i,h}$ for $\overline{f}$-periodic $i<d$ and $h\in\{0,1,\ldots,H_i\}$, is represented by a lexicographically ordered array of bit strings, where a bit $0$ stands for $\emptyset$ and a bit $1$ stands for $\neg$. We note that while a partition-tree register for $f$ does not explicitly mention the partitions $\Pcal_i=\Qcal_{i,H_i}$ for $\overline{f}$-periodic indices $i<d$, it is easy to read off their spanning congruence sequences and associated rooted trees from the register. Namely,
\begin{itemize}
\item the concatenation of the congruence sequences in $\Zcal_i$ spans $\Pcal_i$; and
\item by Proposition \ref{periodicCosetsProp}, the rooted tree associated with a block $\Bcal(\Pcal_i,\vec{\nu}^{(\Pcal_i)})$ of $\Pcal_i$ is of the form $\Tree_i^{(h)}(\Pcal_{i,h},\vec{\nu}^{(\Pcal_{i,h})})$ for suitable $h\in\{0,1,\ldots,H_i\}$ and $\vec{\nu}^{(\Pcal_{i,h})}\in\{\emptyset,\neg\}^{n_{i_0}+n_{i_{-1}}+\cdots+n_{i_{-h}}}$. The relevant parameters $h$ and $\vec{\nu}^{(\Pcal_{i,h})}$ can be read off from the logical sign tuple $\vec{\nu}^{(\Pcal_i)}$ that characterizes the block of $\Pcal_i$.
\end{itemize}

Before we proceed with the actual complexity analysis of Problems 1--3, we make some comments, starting with a discussion of our computational model.

As was already hinted at in Subsection \ref{subsec2P4}, in order to even stand a chance of achieving polynomial runtime for our algorithms, we need quantum computers at least for certain subtasks, such as whenever a modular multiplicative order or a discrete logarithm needs to be computed. That being said, we only relegate certain well-defined tasks, for which efficient quantum algorithms are already known, to quantum computers, while the rest of our algorithms can be performed on a classical computer. Therefore, we use the following two computational models:
\begin{itemize}
\item a bit operation model with several kinds of queries for the tasks for which no efficient algorithms are known on a classical computer (such as integer factorizations or discrete logarithm computations). In this model, which we henceforth refer to as the \emph{query model}\phantomsection\label{term59} (and an algorithm in that model is a \emph{query algorithm}\phantomsection\label{term60}), the complexity is measured as a tuple that tracks the amount of bit operations used outside queries and the amount of times each kind of query is called for;
\item a model in which classical and quantum computers are used in tandem and can \enquote{feed} their outputs to each other; we refer to algorithms built like that as \emph{dual algorithms}\phantomsection\label{term61}, and to the model as the \emph{dual model}\phantomsection\label{term62}.
\end{itemize}
For the quantum side of the computations in the dual model, we specifically use the quantum \emph{circuit} model, so whenever we speak of \emph{quantum complexity}\phantomsection\label{term63}, we mean quantum \emph{(elementary) gate} complexity. All quantum algorithms which we use are based on Shor's seminal paper \cite{Sho94a}, and all of them are \emph{Las Vegas algorithms}\phantomsection\label{term64}, i.e., their runtime on a given input varies randomly, and their specified bit operation cost, gate complexity and number of conversions from bits to qubits and vice versa are to be understood as \emph{expected} values. This also means that as a whole, all of our dual algorithms are Las Vegas algorithms, and all parts of their specified complexities are expected values only.

On the other hand, for the \enquote{classical} side of the computations in either model, we use a bit operation model based on random memory access in the vein of \cite[Section 1.2]{AHU75a}, in which memory access takes $O(N)$ bit operations if $N$ is the bit length of the address (index) of the memory register that needs to be accessed. For example, accessing the stored value of a variable $y_k$ where $k$ is a non-negative integer takes $O(\log{k})$ bit operations -- the entire memory address consists of a bit encoding for the letter \enquote{$y$} (which is assumed to be of length $O(1)$), concatenated with the standard binary representation of $k$. We thus assume that \enquote{jumping} to a place in memory after its address has been scanned is free. In addition to accessing memory registers by reading in their addresses, we also assume that we can save certain positions within a register through placing pointers (of which we have a finite amount, though we do not specify a concrete bound on their number), which enables us to jump back to that specific position (bit) in memory at a cost of $O(1)$ bit operations. Moreover, we assume that it takes $O(1)$ bit operations to move to a neighboring position in memory, including to the next entry of an array. We refer to the classical part of our complexity as \emph{classical complexity}\phantomsection\label{term65} or (synonymously) \emph{bit operations}\phantomsection\label{term66}.

Now, it is well-known (see e.g.~\cite[Subsection IV.3]{Wat12a}) that each classical circuit has an equivalent quantum circuit in which the number of elementary gates is only larger by at most a constant factor. Based on this, it may seem tempting to just use circuits for both kinds of computations in the dual model, so that the classical part could be subsumed (without changing the Landau $O$-class of the gate complexity) in the quantum part, and it appears that this is the usual approach for quantum complexity analysis. For example, in \cite[Sections 7.3 and 7.4]{KLM07a}, the complexity analysis of specific quantum algorithms (i.e., those that do not involve operations in a black-box group) only consists of counting the involved number of quantum gates, while the bit operation (or classical gate) cost of pre- and post-processing is ignored. For the algorithms in \cite[Sections 7.3 and 7.4]{KLM07a}, this is perfectly fine, as that classical cost is a big-$O$ of the quantum gate count regardless of whether bit operations or classical gates are used for measurement. However, our algorithms do involve a significantly larger classical cost than quantum gates, as can already be seen in the complexity bounds from Lemma \ref{complexitiesLem2}; we note that while these are essentially algorithms from \cite[Sections 7.3 and 7.4]{KLM07a}, they do end up with a relatively large classical cost if one wants them to be Las Vegas algorithms (due to the use of the AKS primality test). Hence, keeping track of the classical cost (whether bit operations or classical gates) and quantum gates separately seems natural, especially since the actual time cost of each quantum gate in a large-scale physical implementation of a quantum computer is not known at this point.

As for why we use bit operations (and not gates) in the classical part of our computations, we note that our algorithms for solving Problems 2 and 3 involve a copious amount of \enquote{bookkeeping}, i.e., memory access, and in any of the two circuit models, memory access is generally costly. Indeed, let us assume that, say, in the classical circuit model, we wish to access the value of a previously computed variable $y_k\in\{0,1\}$, where the index $k\in\{1,\ldots,N\}$ is also a result of an earlier computation. When building the circuit, we do not know a priori which of the associated $N$ wires carries the relevant information, and so this needs to be processed via a subcircuit that takes as input those $N$ wires and the $O(\log{N})$ wires carrying (the bit representation of) $k$. But each elementary gate only accepts $O(1)$ input bits, so the said subcircuit performing the memory access must consist of at least $cN$ elementary gates for some constant $c>0$, as opposed to the $O(\log{N})$ cost of memory access for the analogous problem in our chosen bit operation model.

When communication between the classical and quantum part of a dual algorithm happens, classical bit strings $\vec{\xfrak}\in\{0,1\}^N$ need to be converted into the corresponding qubit registers $|\vec{\xfrak}\rangle$\phantomsection\label{not194} and vice versa. In our algorithms dealing with generalized cyclotomic mappings of $\IF_q$, the bit length $N$ is in $O(\log{q})$ for each such conversion. Because it is not clear how costly such conversions are, it is of interest to count them separately (for both conversion directions together) in what we call the \emph{conversion complexity}\phantomsection\label{term67} of the corresponding dual algorithm. The copying of converted information over to the next classical computer or quantum circuit respectively, as well as the measurement taken at the end of a quantum circuit, are considered a part of the respective conversion process, and we do not track their cost separately. For standardization purposes, we assume that both the original input and final output of a dual algorithm are classical bit strings. In particular, the complexity of a pure quantum algorithm that is viewed as a dual algorithm involves two conversions (one each at the beginning and end of the algorithm) in addition to the quantum gate count.

Let us also talk about Grover's quantum algorithm for unstructured database search from \cite{Gro96a}. This algorithm is famous for providing a quadratic speedup over the classical linear search algorithm, and given the aforementioned copious amount of bookkeeping in our algorithms, it seems natural to use it. However, there are some subtleties to take into account here, which ultimately led the authors to decide against the inclusion of Grover's algorithm in our analysis. The usual complexity analysis for Grover's algorithm assumes that the list to be searched (or rather, the associated characteristic function $\chi$\phantomsection\label{not195} for the piece of information we want to find in the list) is given as a certain unitary operator $U_{\chi}$\phantomsection\label{not196}, called \emph{phase inversion}\phantomsection\label{term68}, which is subsequently used as a part of the quantum circuit for the algorithm and treated as an oracle. The celebrated Grover complexity of $O(\sqrt{N})$ for searching a list of length $N$ refers to the number of times $U_{\chi}$ (and another, so-called phase shift operator) is applied before the final measurement. However, we are interested in gate complexities, so the gate complexity of $U_{\chi}$ needs to be included as an additional factor. Now, $\chi$ could be any function $\{0,1,\ldots,N-1\}\rightarrow\{0,1\}$ (the oft-used assumption that $\chi(j)=1$ for a unique index $j$ does not apply to our case), which we may also view as a (partial) Boolean function in $n=\lfloor\log_2{N}\rfloor$ variables. This means that in order for the quantum (gate) complexity of Grover's algorithm to \enquote{beat} the bit operation complexity of linear search, the worst-case quantum gate complexity of an $n$-variable Boolean function would need to be in $o(2^{n/2})$, and it is not clear whether this holds. We do note that it is known that the worst-case \emph{classical} gate complexity of a Boolean function in $n$ variables is of order of magnitude $2^n/n$ (Shannon, \cite[Theorems 6 and 7 on pp.~76f.]{Sha49a}), and that it is only a certain power away from the worst-case quantum gate complexity of an $n$-variable Boolean function (Beals et al., \cite{BBCMW01a}).

We observer that our treatment of quantum algorithms in the dual model is idealized in the sense that we ignore the possibility of errors due to hardware failure and quantum noise. Like many authors, we do so relying on the celebrated Quantum Threshold Theorem, the morale of which is that once the failure rate per elementary gate can be pushed beneath a certain, constant threshold, arbitrarily robust quantum algorithms can be constructed at little extra cost compared to their idealized counterparts. This theorem dates back to a paper of Shor \cite{Sho96a}, though the version stated there is weaker than what the theorem is known as today. Several variants of the stronger version (depending on the error model used) were proved independently by Aharonov and Ben-Or \cite{AB08a}, Knill, Laflamme and Zurek \cite{KLZ98a}, and Kitaev \cite{Kit03a}, respectively. The survey \cite{Got10a}, in which the theorem is stated as Theorem 10, provides a unified proof of it.

The preceding discussion motivates the following definition of the notions of algorithmic complexity which our results in this section refer to.

\begin{deffinition}\label{complexitiesDef}
We introduce the following concepts and notations.
\begin{enumerate}
\item We denote by $\{0,1\}^{<\infty}$\phantomsection\label{not197} the set of all finite bit strings. Formally,
\[
\{0,1\}^{<\infty}=\bigcup_{n\in\IN_0}{\{0,1\}^n}.
\]
\item An \emph{algorithmic problem}\phantomsection\label{term69} is a function $\Lfrak$\phantomsection\label{not198} defined on a subset $\Lfrak_{\inn}$\phantomsection\label{not199} of $\{0,1\}^{<\infty}$ and mapping each bit string $\vec{\xfrak}\in\Lfrak_{\inn}$ to some non-empty finite subset $\Lfrak(\vec{\xfrak})\subseteq\{0,1\}^{<\infty}$.
\item In the situation of statement (2), the elements of $\Lfrak_{\inn}$ are called the \emph{admissible inputs for $\Lfrak$}\phantomsection\label{term70}, and for each $\vec{\xfrak}\in\Lfrak_{\inn}$, the elements of $\Lfrak(\vec{\xfrak})$ are called the \emph{admissible outputs for $\xfrak$ (with respect to $\Lfrak$)}\phantomsection\label{term71}.
\item Let $\Lfrak$ be an algorithmic problem, and let $y,y_1,y_2,\ldots,y_n$ be non-negative real parameters associated with the admissible inputs for $\Lfrak$; formally, $y$ and the $y_j$ are functions $\Lfrak_{\inn}\rightarrow\left[0,\infty\right)$.
\begin{enumerate}
\item A tuple $\vec{\Ccal}^{(\qry)}=(\Ccal_{\class},\Ccal_{\fdl},\Ccal_{\mdl},\Ccal_{\mord},\Ccal_{\prt})$\phantomsection\label{not200}\phantomsection\label{not201}\phantomsection\label{not202}\phantomsection\label{not203}\phantomsection\label{not204}\phantomsection\label{not205}\phantomsection\label{not206} each entry of which is a function $\left[0,\infty\right)^n\rightarrow\left[0,\infty\right)$ is called a \emph{$y$-bounded query complexity of $\Lfrak$ (with respect to $y_1,\ldots,y_n$)}\phantomsection\label{term72} if there is a query algorithm which on each input $\vec{\xfrak}\in\Lfrak_{\inn}$ produces an admissible output for $\vec{\xfrak}$ using
\begin{itemize}
\item $O(\Ccal_{\class}(y_1(\vec{\xfrak}),y_2(\vec{\xfrak}),\ldots,y_n(\vec{\xfrak})))$ bit operations outside the queries listed below;
\item $O(\Ccal_{\fdl}(y_1(\vec{\xfrak}),y_2(\vec{\xfrak}),\ldots,y_n(\vec{\xfrak})))$ queries to compute a discrete logarithm in a finite field of size at most $y(\vec{\xfrak})$;
\item $O(\Ccal_{\mdl}(y_1(\vec{\xfrak}),y_2(\vec{\xfrak}),\ldots,y_n(\vec{\xfrak})))$ queries to compute, for given $x,z\in(\IZ/m\IZ)^{\ast}$ where $m<y(\vec{\xfrak})^2$, the modular discrete logarithms $\log_x^{(k)}(z)$ where $k\in\{m,p^{\nu_p(m)}: p\mid m\}$, outputting a list consisting of the pair $(m,\log_x^{(m)}(z))$ and the quadruples $(p,\nu_p(m),p^{\nu_p(m)},\log_x^{(p^{\nu_p(m)})}(z))$ for all primes $p\mid m$;
\item $O(\Ccal_{\mord}(y_1(\vec{\xfrak}),y_2(\vec{\xfrak}),\ldots,y_n(\vec{\xfrak})))$ queries to compute, for a given unit $x\in(\IZ/m\IZ)^{\ast}$ where $m<y(\vec{\xfrak})^2$, the multiplicative orders $\ord_k(x)$ where $k\in\{m,p^{\nu_p(m)}: p\mid m\}$, outputting a list consisting of the pair $(m,\ord_m(x))$ and the quadruples $(p,\nu_p(m),p^{\nu_p(m)},\ord_{p^{\nu_p(m)}}(x))$ for all primes $p\mid m$;
\item $O(\Ccal_{\prt}(y_1(\vec{\xfrak}),y_2(\vec{\xfrak}),\ldots,y_n(\vec{\xfrak})))$ queries to find a primitive root $\rfrak^{(p)}$ modulo each odd prime power divisor $p^{\nu_p(m)}>1$ for some integer $m<y(\vec{\xfrak})$, outputting the corresponding list of quadruples $(p,\nu_p(m),p^{\nu_p(m)},\rfrak^{(p)})$.
\end{itemize}
\item A \emph{$y$-bounded Las Vegas dual complexity for $\Lfrak$}\phantomsection\label{term73} is a triple
\[
\vec{\Ccal}^{(\LV)}=(\Ccal_{\class},\Ccal_{\quant},\Ccal_{\conv})
\]
each\phantomsection\label{not207}\phantomsection\label{not208}\phantomsection\label{not209} entry of which is a function $\left[0,\infty\right)^n\rightarrow\left[0,\infty\right)$ such that there is an (idealized) dual algorithm which on each input $\vec{\xfrak}\in\Lfrak_{\inn}$ terminates after an expected number of
\begin{itemize}
\item $O(\Ccal_{\class}(y_1(\vec{\xfrak}),y_2(\vec{\xfrak}),\ldots,y_n(\vec{\xfrak})))$ bit operations,
\item $O(\Ccal_{\quant}(y_1(\vec{\xfrak}),y_2(\vec{\xfrak}),\ldots,y_n(\vec{\xfrak})))$ elementary quantum gates, and
\item $O(\Ccal_{\conv}(y_1(\vec{\xfrak}),y_2(\vec{\xfrak}),\ldots,y_n(\vec{\xfrak})))$ conversions of bit strings of length in $O(\log{y(\vec{\xfrak})})$ into qubit registers and of length $O(\log{y(\vec{\xfrak})})$ qubit registers into bit strings,
\end{itemize}
producing an admissible output for $\vec{\xfrak}$.
\end{enumerate}
\end{enumerate}
\end{deffinition}

In our algorithms, the value of $y(\vec{\xfrak})$ from Definition \ref{complexitiesDef} is always equal to the corresponding field size $q$. While our definition of the query model does not explicitly include integer factorization queries, they are subsumed in either of modular discrete logarithm queries or multiplicative order queries. Indeed, in order to factor $m\in\IN^+$ with $m<y(\vec{\xfrak})^2$, one can simply make the query to compute the multiplicative order modulo $m$ of $x:=1\in(\IZ/m\IZ)^{\ast}$. The resulting output consists of the pairs $(k,1)$ with $k\in\{m,p^{\nu_p(m)}: p\mid m\}$, from which it is straightforward to read off the prime factorization of $m$. Likewise, one could make a modular discrete logarithm query with $x:=y:=1$.

The assumption from Definition \ref{complexitiesDef}(2) that each admissible input for $\Lfrak$ should only have finitely many admissible outputs is without loss of generality for our analysis. It simplifies the formulation of Lemma \ref{queryComplexityLem} below, which is straightforward to prove and basically states that query complexities behave additively with respect to composition of algorithmic problems, which is defined as follows. If $\Lfrak$ and $\Lfrak'$ are algorithmic problems such that $\Lfrak(\vec{\xfrak})\subseteq\Lfrak'_{\inn}$ for each $\vec{\xfrak}\in\Lfrak_{\inn}$, then the \emph{composition of $\Lfrak$ and $\Lfrak'$}\phantomsection\label{term74}, written $\Lfrak\Lfrak'$\phantomsection\label{not210} or $\Lfrak'\circ\Lfrak$\phantomsection\label{not211}, is the algorithmic problem with input set $\Lfrak_{\inn}$ that is defined via $(\Lfrak'\circ\Lfrak)(\vec{\xfrak}):=\Lfrak'(\Lfrak(\vec{\xfrak}))$ (the element-wise image of the set $\Lfrak(\vec{\xfrak})$ under the function $\Lfrak'$).

\begin{lemmma}\label{queryComplexityLem}
Let $\Lfrak$ and $\Lfrak'$ be algorithmic problems such that $\Lfrak(\vec{\xfrak})\subseteq\Lfrak'_{\inn}$ for each $\vec{\xfrak}\in\Lfrak_{\inn}$, and let $y,y_1,y_2,\ldots,y_n$, respectively $y',y'_1,y'_2,\ldots,y'_{n'}$ be non-negative real parameters that are associated with the admissible inputs of $\Lfrak$, respectively $\Lfrak'$. For $j'=1,2,\ldots,n'$, we define
\begin{align*}
y_{n+j'}:\Lfrak_{\inn}&\rightarrow\left[0,\infty\right), \\
\vec{\xfrak}&\mapsto\max\{y'_{j'}(\vec{\yfrak}): \vec{\yfrak}\in\Lfrak(\vec{\xfrak})\}.
\end{align*}
Because the composition $\Lfrak\Lfrak'$ has input set $\Lfrak_{\inn}$, we may view each $y_j$ for $1\leq j\leq n+n'$ as a parameter for $\Lfrak\Lfrak'$. Moreover, we define
\begin{align*}
y^+:\Lfrak_{\inn}&\rightarrow\left[0,\infty\right), \\
\vec{\xfrak}&\mapsto\max\{y(\vec{\xfrak}),y'(\vec{\yfrak}): \vec{\yfrak}\in\Lfrak(\vec{\xfrak})\}.
\end{align*}
Finally, we let $\vec{\Ccal}^{(\qry)}$, respectively $\vec{\Ccal'}^{(\qry)}$, be a $y$-bounded query complexity for $\Lfrak$ with respect to $y_1,\ldots,y_n$, respectively a $y'$-bounded query complexity for $\Lfrak'$ with respect to $y'_1,\ldots,y'_{n'}$. For $k=1,\ldots,5$, we denote by $\Ccal_k$, respectively $\Ccal'_k$, the $k$-th entry of $\vec{\Ccal}^{(\qry)}$, respectively $\vec{\Ccal'}^{(\qry)}$. Then, defining
\begin{align*}
\Ccal^+_k:\left[0,\infty\right)^{n+n'}&\rightarrow\left[0,\infty\right), \\
(z_1,\ldots,z_{n+n'})&\mapsto\Ccal_k(z_1,\ldots,z_n)+\Ccal'_k(z_{n+1},\ldots,z_{n+n'}),
\end{align*}
the tuple $\vec{\Ccal^+}^{(\qry)}:=(\Ccal^+_1,\ldots,\Ccal^+_5)$ is a $y^+$-bounded query complexity for $\Lfrak\Lfrak'$ with respect to $y_1,\ldots,y_{n+n'}$.
\end{lemmma}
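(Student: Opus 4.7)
The plan is to build the algorithm for $\Lfrak\Lfrak'$ by simply concatenating (a) an algorithm $\Acal$ witnessing that $\vec{\Ccal}^{(\qry)}$ is a $y$-bounded query complexity for $\Lfrak$, and (b) an algorithm $\Acal'$ witnessing that $\vec{\Ccal'}^{(\qry)}$ is a $y'$-bounded query complexity for $\Lfrak'$. Concretely, on input $\vec{\xfrak}\in\Lfrak_{\inn}=(\Lfrak\Lfrak')_{\inn}$, first run $\Acal$ on $\vec{\xfrak}$ to obtain some admissible output $\vec{\yfrak}\in\Lfrak(\vec{\xfrak})$; by the compatibility hypothesis $\Lfrak(\vec{\xfrak})\subseteq\Lfrak'_{\inn}$, the string $\vec{\yfrak}$ is an admissible input for $\Lfrak'$, so we may then run $\Acal'$ on $\vec{\yfrak}$ to produce some $\vec{\zfrak}\in\Lfrak'(\vec{\yfrak})\subseteq(\Lfrak'\circ\Lfrak)(\vec{\xfrak})$, which we output.

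The cost analysis is then the main (and only nontrivial) point. For each $k\in\{1,2,3,4,5\}$, the number of bit operations (if $k=1$) or of queries of the corresponding type (if $k\in\{2,3,4,5\}$) used by $\Acal$ on input $\vec{\xfrak}$ is in $O(\Ccal_k(y_1(\vec{\xfrak}),\ldots,y_n(\vec{\xfrak})))$, and all of these queries are admissible with field size bound $y(\vec{\xfrak})\leq y^+(\vec{\xfrak})$. Correspondingly, the cost of $\Acal'$ on input $\vec{\yfrak}$ is in $O(\Ccal'_k(y'_1(\vec{\yfrak}),\ldots,y'_{n'}(\vec{\yfrak})))$, with queries admissible for the bound $y'(\vec{\yfrak})\leq y^+(\vec{\xfrak})$; since the sets $\Lfrak(\vec{\xfrak})$ are finite by Definition \ref{complexitiesDef}(2), the maxima defining $y_{n+1}(\vec{\xfrak}),\ldots,y_{n+n'}(\vec{\xfrak})$ and $y^+(\vec{\xfrak})$ exist, and using monotonicity of the Landau $O$-classes under termwise upper bounds,
\[
\Ccal'_k(y'_1(\vec{\yfrak}),\ldots,y'_{n'}(\vec{\yfrak}))=O(\Ccal'_k(y_{n+1}(\vec{\xfrak}),\ldots,y_{n+n'}(\vec{\xfrak}))).
\]
Adding the two contributions yields $O(\Ccal^+_k(y_1(\vec{\xfrak}),\ldots,y_{n+n'}(\vec{\xfrak})))$ for the respective cost in the concatenated algorithm, and all queries issued are admissible for the uniform bound $y^+(\vec{\xfrak})$. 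This shows that $\vec{\Ccal^+}^{(\qry)}$ is indeed a $y^+$-bounded query complexity for $\Lfrak\Lfrak'$ with respect to $y_1,\ldots,y_{n+n'}$, completing the proof.

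There is no genuine obstacle here; the only point requiring a modicum of care is keeping the dependence of the second-stage cost on the specific intermediate output $\vec{\yfrak}$ separate from the worst-case bound in terms of the input $\vec{\xfrak}$, which is precisely why the parameters $y_{n+j'}$ and $y^+$ are defined via the maxima over $\Lfrak(\vec{\xfrak})$, and why the finiteness clause in Definition \ref{complexitiesDef}(2) is built into the framework.
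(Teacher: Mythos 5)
The paper does not actually spell out a proof of Lemma~\ref{queryComplexityLem}; it simply calls it ``straightforward to prove'' and moves on. Your proposal fills that in with exactly the argument the authors evidently have in mind: run the algorithm witnessing $\vec{\Ccal}^{(\qry)}$ on $\vec{\xfrak}$, feed its output $\vec{\yfrak}\in\Lfrak(\vec{\xfrak})\subseteq\Lfrak'_{\inn}$ to the algorithm witnessing $\vec{\Ccal'}^{(\qry)}$, and add the costs. That is correct, and your observation that the finiteness clause in Definition~\ref{complexitiesDef}(2) is precisely what makes the maxima $y_{n+j'}(\vec{\xfrak})$ and $y^+(\vec{\xfrak})$ well-defined is also the right thing to highlight.

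One point of precision, though: the step where you pass from the actual second-stage cost $\Ccal'_k(y'_1(\vec{\yfrak}),\ldots,y'_{n'}(\vec{\yfrak}))$ to $O(\Ccal'_k(y_{n+1}(\vec{\xfrak}),\ldots,y_{n+n'}(\vec{\xfrak})))$ is not a property of Landau classes. What it actually requires is that each $\Ccal'_k$ be nondecreasing in each of its arguments, so that $y'_{j'}(\vec{\yfrak})\leq y_{n+j'}(\vec{\xfrak})$ entails the corresponding inequality of function values. Definition~\ref{complexitiesDef}(4) does not impose monotonicity on the entries of a query complexity, so strictly speaking this is an unstated hypothesis -- one shared by the paper's own framework, and automatically satisfied by every concrete complexity function (powers of $\log q$, of $d$, etc.) that appears later. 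It would be cleaner to say explicitly that one may assume, without loss of generality, that the complexity functions are monotone (otherwise replace $\Ccal'_k$ by $\sup\{\Ccal'_k(z'_1,\ldots,z'_{n'}): z'_{j'}\leq z_{j'}\text{ for all }j'\}$, which does not change the set of algorithms it bounds), rather than gesture at ``monotonicity of the Landau $O$-classes,'' which does not name the needed property. With that cosmetic fix your argument is complete and coincides with the paper's intended one.
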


On the other hand, for each given algorithmic problem $\Lfrak$ and non-negative real parameter $y$ associated with the admissible inputs for $\Lfrak$, a $y$-bounded Las Vegas dual complexity for $\Lfrak$ can be derived from a $y$-bounded query complexity for $\Lfrak$ as long as $y$ can be bounded in terms of the other parameters $y_j$; see Lemma \ref{LVComplexityLem} below.

As usual, when specifying complexities in a concrete situation, we identify functions with their defining terms. For example, if $d$ and $q$ are the relevant parameters associated with our inputs, we may specify a $q$-bounded Las Vegas dual complexity as
\[
(d^3\log{q},d\log^{2+o(1)}{q},\log^{1+o(1)}{q}),
\]
rather than introduce names for the functions in the three components. In this context, we also note that $\log^k{x}$ always denotes the arithmetic power $(\log{x})^k$, \emph{not} the function value at $x$ of the $k$-fold iterate of $\log$. We always spell iterated logarithms out ($\log\log$, $\log\log\log$, etc.).

The following lemma, which is used throughout this section, provides the complexities of some fundamental algorithmic problems.

\begin{lemmma}\label{complexitiesLem}
The following hold.
\begin{enumerate}
\item Addition and subtraction of integers of absolute value less than $m$, as well as addition and subtraction modulo $m$ cost $O(\log{m})$ bit operations each.
\item Addition and subtraction in the finite field $\IF_q$ cost $O(\log{q})$ bit operations each.
\item Multiplication of positive integers less than $m$, multiplication modulo $m$ and division of positive integers less than $m$ with remainder each cost $O(\log^{1+o(1)}{m})$ bit operations.
\item Let $x\in(\IZ/m\IZ)^{\ast}$. The computation of $\inv_m(x)$, the multiplicative inverse of $x$ modulo $m$, costs $O(\log^{1+o(1)}{m})$ bit operations.
\item Multiplication, multiplicative inversion and division in the finite field $\IF_q$ each cost $O(\log^{1+o(1)}{q})$ bit operations.
\item Let $x\in\IZ/m\IZ$ and $e\in\IN_0$. The computation of the power $x^e$ modulo $m$ costs $O(\log(e)\log^{1+o(1)}{m})$ bit operations.
\item Let $x\in\IF_q$ and $e\in\IN_0$. The computation of $x^e$ costs $O(\log(e)\log^{1+o(1)}{q})$ bit operations.
\item The computations of the $\gcd$ and $\lcm$ of two positive integers that are at most $m$ cost $O(\log^{1+o(1)}{m})$ bit operations each.
\item Checking deterministically whether a given positive integer $m$ is a prime costs $O(\log^{6+o(1)}{m})$ bit operations.
\item An array of $n$ bit strings, each of length $k$, can be lexicographically sorted within $O(kn\log{n})$ bit operations.
\item We assume given two lexicographically sorted arrays of bit strings (not necessarily all of the same length) and consider the algorithmic problem of finding the lexicographically sorted version of their concatenation (i.e., the problem of \emph{merging}\phantomsection\label{termMerge} those sorted arrays). For $j=1,2$, say the $j$-th array has $N_j$ entries, and the sum of the bit lengths of the strings stored in it is $l^{(j)}_{\total}$. Then those two sorted arrays can be merged within $O(N_1+N_2+l^{(1)}_{\total}+l^{(2)}_{\total})$ bit operations (and thus within $O(l^{(1)}_{\total}+l^{(2)}_{\total})$ bit operations if all bit strings in question are non-empty).
\end{enumerate}
\end{lemmma}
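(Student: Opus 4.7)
The proof proposal is to verify each of the eleven items by invoking (or briefly sketching) standard algorithms from the computer-arithmetic literature, since every claim here is a classical complexity bound. For (1) and (2), schoolbook addition and subtraction of two $N$-bit integers requires a single linear sweep with a carry/borrow register, costing $O(N)$ bit operations; addition in $\IF_q = \IF_p[T]/(P(T))$ is componentwise in the coefficient vector of length $\lceil\log_p q\rceil$, giving total cost $O(\log q)$. For (10), lexicographic sorting of $n$ bit strings of common length $k$ is achieved by standard merge sort, which uses $O(n\log n)$ comparisons, each cost $O(k)$; for (11), the classical two-pointer merging procedure scans both arrays once, reading each bit exactly once, so the cost is linear in the total input bit length (the additive $N_1 + N_2$ handles the small case when all bit strings are empty, where one still pays for the array-traversal bookkeeping).

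For (3), I would invoke the recent Harvey--van der Hoeven integer multiplication algorithm, which multiplies two $N$-bit integers in time $M(N) = O(N\log N) \subseteq N^{1+o(1)}$ on a multitape Turing machine (and, a fortiori, in our random-access bit-operation model); division with remainder is obtained by Newton iteration for the reciprocal in $O(M(N))$ bit operations, and modular reduction is a special case. Setting $N = \lceil\log_2 m\rceil$ yields the stated bound $O(\log^{1+o(1)} m)$. For (5), I would represent $\IF_q$ as $\IF_p[T]/(P(T))$ and reduce to polynomial arithmetic, again using the fast Schönhage--Strassen-type convolution over $\IF_p$ combined with Kronecker substitution so that all of multiplication, reduction modulo $P(T)$, and (via extended Euclid on polynomials) inversion come out to $O(\log^{1+o(1)} q)$ bit operations. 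Claims (6) and (7) then follow from the standard square-and-multiply (repeated squaring) scheme, which performs $O(\log e)$ modular or field multiplications.

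For (4) and (8), the key is the half-GCD / recursive GCD algorithm of Knuth--Schönhage (see, e.g., Stehlé--Zimmermann), which reduces integer GCD and extended GCD of two $N$-bit integers to $O(M(N)\log N)$ bit operations; since $M(N) = N^{1+o(1)}$, the nested logarithm is absorbed into the $o(1)$ exponent and the total bound is $O(\log^{1+o(1)} m)$. The LCM then comes from $\lcm(a,b) = ab/\gcd(a,b)$ at the same cost, and the modular inverse $\inv_m(x)$ is read off from the extended GCD output. For (9), I would cite the AKS primality test of Agrawal--Kayal--Saxena together with the subsequent improvements by Lenstra--Pomerance, which give a deterministic polynomial-time algorithm whose bit-operation cost is $\tilde{O}(\log^6 m)$, i.e., $O(\log^{6+o(1)} m)$.

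The only part that is even slightly nontrivial is ensuring that every algorithm remains within the random-access bit-operation model of Subsection \ref{subsec5P1}; however, all of the references cited above are stated in multitape Turing-machine or word-RAM models whose step counts translate up to logarithmic factors that are comfortably absorbed in the $o(1)$ exponent. Consequently the main obstacle, if any, is purely bibliographic---namely identifying the correct modern reference for each subroutine (Harvey--van der Hoeven for fast multiplication, Stehlé--Zimmermann for fast GCD, Lenstra--Pomerance for AKS) and checking that the stated exponents match; no new ideas are required.
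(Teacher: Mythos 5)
Your proposal is correct and takes essentially the same overall approach as the paper---every item is discharged by citing or sketching the standard fast-arithmetic algorithm, and the references you name (Harvey--van der Hoeven for multiplication, Newton iteration for division, square-and-multiply for powering, AKS/Lenstra--Pomerance for primality) are the same ones, or equivalent substitutes, to the ones the paper uses. There is one place where your wording is too loose to actually yield the stated bound, however. In item (5) you invoke ``extended Euclid on polynomials'' for inversion in $\IF_q=\IF_p[T]/(P(T))$. The \emph{naive} extended Euclidean algorithm on degree-$n$ polynomials over $\IF_p$, with $n=\log_p q$, costs $\Theta(n^2)$ field operations, i.e.\ $\Theta\big(\log^2 q/\log^{1-o(1)}p\big)$ bit operations; for small $p$ (e.g.\ $p=2$) this is $\Theta(\log^2 q)$, not $O(\log^{1+o(1)}q)$. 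To get the claimed bound one must use a fast half-GCD / Knuth--Sch\"{o}nhage recursion for polynomial GCD. You do cite exactly this idea for integers in items (4) and (8), but you need to make explicit that the polynomial inversion in (5) also uses the fast variant. The paper handles this carefully by running the HGCD procedure of Aho--Hopcroft--Ullman and then multiplying out the $O(\log n)$ cofactor matrices it produces to recover the B\'{e}zout coefficient. Two further cosmetic differences, neither affecting correctness: the paper performs polynomial multiplication directly with fast $\IF_p[T]$-convolution algorithms rather than Kronecker substitution (both give $\log^{1+o(1)}q$), and for item (10) it analyzes the bucket/radix lexicographic sort of Aho--Hopcroft--Ullman together with an explicit accounting of memory-address processing costs in the paper's bit-operation RAM model (which is where the $\log n$ factor arises), rather than merge sort, though both algorithms achieve $O(kn\log n)$.
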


\begin{proof}
For statement (1), it is well-known (and easy to check) that using the schoolbook algorithms for addition and subtraction yields the specified complexities.

For statement (2), let $q=p^m$. We refer to \cite[Table 2.8 on p.~84]{MOV97a}, and note that the only $(\IZ/p\IZ)$-operations involved in an addition/subtraction in $\IF_q$ are modular additions/subtractions. Therefore, statement (1) implies that the cost of addition and subtraction in $\IF_q$ is in $O(m\log{p})=O(\log{q})$, as required.

For statement (3), it follows from the Sch{\"o}nhage-Strassen algorithm \cite{SS71a} or the (slightly faster) algorithm \cite{HH21a} by Harvey and van der Hoeven that the multiplication of two positive integers less than $m$ costs $O(\log^{1+o(1)}{m})$ bit operations. Moreover, integer division with remainder also costs $O(\log^{1+o(1)}{m})$ bit operations if the Newton-Raphson algorithm is used for it; see \cite[Subsection 1.3]{Agr09a}. Multiplication modulo $m$ can be done by performing a (non-modular) multiplication of the two integers in question (resulting in a number with $O(\log\left(m^2\right))=O(\log{m})$ bits), followed by a modular reduction (which is a part of division with remainder). In total, multiplication modulo $m$ thus also only requires $O(\log^{1+o(1)}{m})$ bit operations.

For statement (4), we note that inversion modulo $m$ can be done with the Extended Euclidean Algorithm, which takes $O(\log^{1+o(1)}{m})$ bit operations when using an accelerated variant of it due to Sch{\"o}nhage (based on earlier ideas of Knuth) \cite{Sch71a}; see also \cite{WP03a}, which provides a generalization of this and may be more accessible due to being written in English.

For statement (5), we note that $\IF_q$ is given as $(\IZ/p\IZ)[T]/(P(T))$ where $P(T)$ is a monic primitive irreducible polynomial of degree $m:=\log_p{q}$. In order to multiply two elements $Q_1(T)+(P(T))$ and $Q_2(T)+(P(T))$ of $\IF_q$, one computes the polynomial product $Q_1(T)Q_2(T)\in(\IZ/p\IZ)[T]$, then determines its remainder upon division by $P(T)$. As observed in \cite[second paragraph in Section 2]{GP01a}, fast methods for multiplication of polynomials over $\IZ/p\IZ$ of degree at most $n$, as well as for divisions with remainder of such polynomials, take $O(n^{1+o(1)})$ operations (additions, subtractions, multiplications, multiplicative inversions) in $\IZ/p\IZ$. This corresponds to a bit operation cost of $O(n^{1+o(1)}\log^{1+o(1)}{p})$ by statements (1) and (3). It follows that the computation of $Q_1(T)Q_2(T)$, and the subsequent computation of its remainder modulo $P(T)$, both take $O(m^{1+o(1)}\log^{1+o(1)}{p})=O(\log^{1+o(1)}{q})$ bit operations, as needed for the asserted complexity bound on multiplication in $\IF_q$ to hold.

Now, because a division in $\IF_q$ consists of a multiplicative inversion followed by a multiplication, it suffices to argue that the bit operation cost of multiplicative inversion in $\IF_q$ is in $O(\log^{1+o(1)}{q})$ to conclude the proof of this statement. Assuming that $P(T)\nmid Q(T)$, the multiplicative inversion of $Q(T)$ modulo $P(T)$ may be performed by writing $1=\gcd(P(T),Q(T))$ as a $(\IZ/p\IZ)[T]$-linear combination of $P(T)$ and $Q(T)$, and reducing the scalar of $Q(T)$ in this linear combination by $P(T)$. The algorithm described in \cite[Section 8.9]{AHU75a} uses $O(\log^{1+o(1)}(p^m)\cdot\log{m})=O(\log^{1+o(1)}{q})$ bit operations to compute $\gcd(P(T),Q(T))$ (according to \cite[Theorem 8.19]{AHU75a}). In the process, one may store the $(2\times 2)$-matrices $R_1,R_2,\ldots,R_K$ (with coefficients in $(\IZ/p\IZ)[T]$) that are output (in the listed order) by the $K\in O(\log{m})$ calls of the HGCD procedure from \cite[Fig.~8.7 on p.~304]{AHU75a}. Let $P_0(T)=P(T),P_1(T)=Q(T),P_2(T),\ldots,P_N(T)=\gcd(P(T),Q(T))=1$ be the successive remainders appearing in the classical, \enquote{slow} version of the Euclidean algorithm applied to $(P(T),Q(T))$, and for $t\in\{0,1,\ldots,N-1\}$, let $Q_t(T)$ be the quotient of the polynomial division of $P_t(T)$ by $P_{t+1}(T)$, which satisfies $\deg{Q_t(T)}=\deg{P_t(T)}-\deg{P_{t+1}(T)}$. By \cite[statement on p.~303 that the output of HGCD is of the form $R_{0j}$, and definition of $R_{ij}$ before Example 8.10 on p.~302]{AHU75a}, each of the matrices $R_j$ is a product of matrices of the form
\[
\begin{pmatrix}
0 & 1 \\
1 & -Q_t(T)
\end{pmatrix}
\]
for pairwise distinct $t$. Therefore, each entry of $R_j(T)$ is a polynomial in $(\IZ/p\IZ)[T]$ of degree at most
\[
\sum_{t=0}^{N-1}{\deg{Q_t(T)}}=\sum_{t=0}^{N-1}{(\deg{P_t(T)}-\deg{P_{t+1}(T)})}=\deg{P(T)}-\deg{1}=\deg{P(T)}.
\]
Moreover, by \cite[Lemma 8.5(a)]{AHU75a}, for each $k\in\{1,2,\ldots,K\}$, one has
\[
R_kR_{k-1}\cdots R_1\cdot
\begin{pmatrix}
P(T) \\
Q(T)
\end{pmatrix}
=
\begin{pmatrix}
P_j(T) \\
P_{j+1}(T)
\end{pmatrix}
\]
for some $j=j(k)\in\{0,1,\ldots,N-1\}$, and specifically
\[
R_KR_{K-1}\cdots R_1\cdot
\begin{pmatrix}
P(T) \\
Q(T)
\end{pmatrix}
=
\begin{pmatrix}
P_{N-1}(T) \\
P_N(T)
\end{pmatrix}
=
\begin{pmatrix}
P_{N-1}(T) \\
1
\end{pmatrix}.
\]
This latter equality yields an expression of $1$ as a linear combination of $P(T)$ and $Q(T)$, in which the (reduction modulo $P(T)$ of the) scalar of $Q(T)$ is equal to the (reduction modulo $P(T)$ of the) lower right coefficient of the $(2\times 2)$-matrix $R_KR_{K-1}\cdots R_1$. It takes $O(K)\subseteq O(\log{m})$ additions, multiplications and divisions with remainder in $(\IZ/p\IZ)[T]$ to compute this matrix product, which also corresponds to a bit operation cost of $O(\log{m}\cdot\log^{1+o(1)}{p^m})=O(\log^{1+o(1)}{q})$, as required.

For statement (6), we note that using \enquote{Square and Multiply}, the power $x^e\mod{m}$ can be computed with $O(\log{e})$ multiplications modulo $m$, so statement (3) yields the claim.

For statement (7), the proof is analogous to the one for statement (6), but using statement (5) in place of statement (3).

For statement (8), one may use the Euclidean Algorithm to compute a greatest common divisor and refer to \cite
{Sch71a} or \cite{WP03a}. Moreover, $\lcm(x,y)=xy/\gcd(x,y)$, so statement (3) completes the proof of this claim.

For statement (9), the asserted complexity is achieved by a variant of the AKS primality test devised by Lenstra and Pomerance, see \cite{AKS04a} and \cite{LP11a}.

For statement (10), we refer the reader to \cite[Algorithm 3.1 on pp.~78f.]{AHU75a}, observing that the variable $m$ from that algorithm has the value $2$ in our situation. We note that while \cite[Theorem 3.1 on p.~79]{AHU75a} states that this algorithm costs $O(kn)$ bit operations, this uses an assumption which our computational model does not share. Namely, \cite[Algorithm 3.1 on pp.~78f.]{AHU75a} uses a \enquote{pointer} for each bit string, which must not be confused with the way we use that word. In our model, a pointer is a short-cut to jump to a previously saved point in memory using only $O(1)$ bit operations, and we may only use $O(1)$ of these pointers (i.e., the number of pointers used must not tend to $\infty$ as the input length tends to $\infty$). On the other hand, in \cite[Algorithm 3.1 on pp.~78f.]{AHU75a}, the word \enquote{pointer} appears to denote what we would call the memory address of the respective bit string. It is stated explicitly in \cite[Algorithm 3.1 on pp.~78f.]{AHU75a} that the authors of that book assume that a pointer in their sense can be processed (i.e., stored and used to jump to the respective bit string) within $O(1)$ bit operations. However, in our model, since $n$ of these memory addresses are needed, it takes $O(\log{n})$ bit operations to process an address, which leads to the additional factor $\log{n}$ in our cost.

For statement (11), we observe that it is easy to prove that the merging algorithm \cite[Algorithm M on p.~158]{Knu98a} achieves this complexity, as long as pointers (in our sense of the word) are used to immediately jump back to saved positions in the arrays, which avoids additional logarithmic factors in the complexity. In fact, in the discussion from \cite[p.~159]{Knu98a}, it is stated that the achieved complexity is in $O(N_1+N_2)$, but this uses the assumption that each stored bit string has constantly bounded bit length (causing $l^{(j)}_{\total}\in O(N_j)$ for $j=1,2$).
\end{proof}

The next lemma essentially provides the Las Vegas dual complexities of the query problems from our query model. It is used to translate query complexities into Las Vegas dual complexities; see Lemma \ref{LVComplexityLem} below.

\begin{lemmma}\label{complexitiesLem2}
The following hold.
\begin{enumerate}
\item The prime factorization of the positive integer $m$ can be performed with a Las Vegas dual algorithm with $m$-bounded complexity $(\log^{7+o(1)}{m},\log^{3+o(1)}{m},\log{m})$.
\item The computation of the multiplicative order of $x\in(\IZ/m\IZ)^{\ast}$ can be performed with a Las Vegas dual algorithm with (expected) $m$-bounded dual complexity $(\log^{7+o(1)}{m},\log^{3+o(1)}{m},\log{m})$.
\item Let $m$ be a positive integer. For $x,y\in(\IZ/m\IZ)^{\ast}$, the modular discrete logarithm $\log_x^{(m)}(y)$ can be computed with a Las Vegas dual algorithm with $m$-bounded complexity $(\log^{7+o(1)}{m},\log^{3+o(1)}{m},\log{m})$.
\item Let $q$ be a prime power. For $x,y\in\IF_q^{\ast}$, the discrete logarithm $\log_x(y)$ can be computed with a Las Vegas dual algorithm with $q$-bounded complexity
\[
(\log^{3+o(1)}{q},\log^{3+o(1)}{q},\log{q}).
\]
\item Let $p$ be an odd prime, and $k$ a positive integer. On input $(p,k)$, a primitive root modulo $p^k$ can be found with a Las Vegas dual algorithm with $p$-bounded complexity $(\log^{7+o(1)}{p},\log^{3+o(1)}{p},\log{p})$.
\end{enumerate}
\end{lemmma}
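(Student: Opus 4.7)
The plan is to realize each of the five procedures as a Las Vegas dual algorithm whose quantum subroutines are drawn from Shor's paper \cite{Sho94a}---namely order-finding in $(\IZ/m\IZ)^{\ast}$ and discrete logarithm in a cyclic group of known order---and whose classical side combines the fast arithmetic from Lemma \ref{complexitiesLem} with the AKS primality test. Throughout, I would track separately the expected number of classical bit operations, the total count of elementary quantum gates across all calls of Shor's circuits, and the number of bit-to-qubit (and qubit-to-bit) conversions happening at the boundary between the classical and quantum parts.

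For Part (1), I would run Shor's factoring algorithm iteratively, each iteration extracting at least one new prime factor from a call of quantum order-finding together with classical GCD and modular-exponentiation post-processing; when a candidate factor is produced, the AKS test of Lemma \ref{complexitiesLem}(9) certifies its primality. Each order-finding call costs $\log^{2+o(1)}{m}$ quantum gates and two conversions of $O(\log{m})$-length bit strings, while the dominant classical cost is the $O(\log m)$ AKS calls at $\log^{6+o(1)}{m}$ bit operations each, giving the $\log^{7+o(1)}{m}$ total. Part (2) reduces to Part (1): factor $m$, then run order-finding once modulo each prime power $p^{\nu_p(m)}$, and combine via $\lcm$. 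Part (4) is the lightest, since $\IF_q^{\ast}$ is cyclic of known order $q-1$: Shor's discrete log algorithm applies directly, no factorization of $q-1$ is required, and the classical pre/post-processing reduces to a polylogarithmic number of field arithmetic operations (Lemma \ref{complexitiesLem}(5)--(7)), which yields the $\log^{3+o(1)}{q}$ classical bound.

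For Part (3), since $(\IZ/m\IZ)^{\ast}$ need not be cyclic, I would first factor $m$ via Part (1) and use the Chinese Remainder Theorem to reduce the computation of $\log_x^{(m)}(y)$ to the prime-power moduli $p^{\nu_p(m)}$. Each group $(\IZ/p^v\IZ)^{\ast}$ is cyclic for odd $p$ (and for $p=2$, $v\leq 2$), so after obtaining a generator via Part (5) one may apply Shor's discrete-log subroutine and recover the log of $y$ relative to $x$ by solving one extra discrete log equation; the exceptional case $p^v=2^v$ with $v\geq 3$ is handled by expressing units as $\pm 5^j$ and solving the resulting $2\times 2$ system. Non-existence of $\log_x^{(m)}(y)$ is detected when any local component turns out inconsistent. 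For Part (5), the key observation is that it suffices to find a primitive root modulo $p$: if $g \in (\IZ/p\IZ)^{\ast}$ has order $p-1$, then either $g$ or $g+p$ is a primitive root modulo $p^k$ for every $k\geq 1$, and a single modular exponentiation modulo $p^2$ distinguishes the two cases. I would then factor $p-1$ via Part (1) and sample random $x \in (\IZ/p\IZ)^{\ast}$, accepting $x$ once $x^{(p-1)/\ell}\not\equiv 1 \Mod{p}$ has been verified for every prime divisor $\ell$ of $p-1$; the density $\phi(p-1)/(p-1)\gg 1/\log\log p$ of primitive roots implies an expected $O(\log\log p)$ trials, each trial consisting of $O(\omega(p-1))$ modular exponentiations at $\log^{2+o(1)}{p}$ bit operations apiece by Lemma \ref{complexitiesLem}(6), so the factorization step from Part (1) dominates.

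The main obstacle will be the disaggregated bookkeeping of the three complexity components, because Shor's algorithms are traditionally quoted with a single aggregate runtime. I would need to pass through the construction of the underlying quantum circuits carefully and count only genuinely quantum operations toward $\Ccal_{\quant}$, while simultaneously exhibiting that each Las Vegas iteration requires only $O(1)$ bit-qubit conversions in expectation so that the $\log m$ (resp.\ $\log p$, $\log q$) total conversion count remains tight across the polylogarithmically-many iterations. A secondary subtlety is that the $\log^{7+o(1)}$ classical bound in Parts (1)--(3) and (5) is driven by the AKS primality tests used to certify the factors returned by Shor's algorithm; Part (4) escapes this cost precisely because the order $q-1$ of $\IF_q^{\ast}$ is known a priori and no such certification is required.
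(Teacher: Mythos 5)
Your overall framework matches the paper: Shor's order-finding and discrete logarithm as the quantum subroutines, AKS as the Las Vegas certifier on the classical side, fast arithmetic from Lemma~\ref{complexitiesLem}, and a disaggregated three-component accounting. However, there are two genuine gaps.

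The main one is part (4). Writing that ``Shor's discrete log algorithm applies directly, no factorization of $q-1$ is required'' glosses over the fact that the Las Vegas version of Shor's subroutine --- the paper uses ``Method 1'' from \cite[pp.~244f.]{KLM07a} --- requires the order of the \emph{base} element as input. For arbitrary $x\in\IF_q^{\ast}$, $\ord(x)$ is unknown and cannot be computed without factoring $q-1$, which would push the classical cost back up to $\log^{7+o(1)}q$. The paper resolves this by reducing to the canonical primitive element $\omega$ of known order $q-1$: one computes $\log_{\omega}(x)$ and $\log_{\omega}(y)$ quantumly with base $\omega$, then solves the linear congruence $k\cdot\log_{\omega}(x)\equiv\log_{\omega}(y)\Mod{q-1}$ for $k=\log_x(y)$, with solvability detected by a gcd divisibility check. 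You should also note that ``Method 1'' is iterated $O(\log\ord(\omega))=O(\log q)$ times, which is what drives the quantum component of (4) to $\log^{3+o(1)}q$ rather than $\log^{2+o(1)}q$. A smaller gap is in part (1): Shor/Miller's reduction fails on prime powers, so a classical perfect-power detection subroutine must be inserted before AKS is applied to the root; you only invoke AKS on ``candidate factors,'' which leaves the prime-power case unaddressed.

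Elsewhere you depart from the paper but should still land in the right complexity class. In (2), the paper avoids quantum order-finding altogether once $m$ (and hence $\phi(m)$) is factored, determining $\nu_p(\ord(x))$ for each prime $p\mid\phi(m)$ by a classical binary search with modular exponentiations; your per-prime-power quantum order-finding would need a certification step (factoring the Monte Carlo output and verifying minimality), at which point the quantum call is redundant. In (3), the paper never computes a primitive root: it detects whether $y\in\langle x\rangle$ in $(\IZ/p^{\nu_p(m)}\IZ)^{\ast}$ by comparing $\ord(x)$ and $\ord(y)$ (divisibility suffices in a cyclic group) and then runs the discrete-log subroutine with base $x$ directly; your detour via part (5) and a $2\times 2$ system at $p=2$ is heavier but workable. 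In (5), the paper samples separately for each prime divisor of $p-1$ (expected $O(1)$ trials each) and multiplies the pieces, rather than sampling a full primitive root at once as you do; both are dominated by the cost of factoring $p-1$.
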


\begin{proof}
For the proofs of statements (1) and (2), we follow the approach described in \cite[Section 7.3]{KLM07a}, which is originally due to Miller \cite{Mil75a} and Shor \cite{Sho94a}. For this, we need to first analyze the complexity of the order-finding algorithm from \cite[p.~137]{KLM07a}, which uses quantum circuits combined with some classical post-processing. It should be noted that this algorithm admits absolute bounds on (i.e., not just expected values of) the different parts of its complexity, but the output is only correct with probability at least $0.399$, making this a \emph{Monte Carlo algorithm}\phantomsection\label{term75}, not the Las Vegas algorithm we wish to construct in the end.

First, we need to analyze the complexity of the continued fractions algorithm (it is mentioned in \cite[Theorem 7.1.7]{KLM07a} that this algorithm has polynomial complexity without specifying the degree). Let us assume given a floating point number (in binary format) that represents the rational number $k/2^n$ where $k\in\{0,1,\ldots,2^n-1\}$. Then there is a sequence of fractions
\[
\conv_j(k,2^n)=\frac{\num_j(k,2^n)}{\den_j(k,2^n)}
\]
for\phantomsection\label{not212}\phantomsection\label{not213}\phantomsection\label{not214} $j=1,2,\ldots,N\in O(n)$, the \emph{(principal) convergents of $k/2^n$}\phantomsection\label{term76}, which are optimal approximations of $k/2^n$ relative to the size of their denominators $\den_j(k,2^n)\leq 2^n$; for details, we refer the reader to \cite[Theorem 7.1.7 and Exercise 7.1.7]{KLM07a} and \cite[Chapter I]{Lan95a}. An important property which we need later is that any reduced integer fraction $y/z$ such that
\[
\left|\frac{k}{2^n}-\frac{y}{z}\right|\leq\frac{1}{2z^2}
\]
is one of the convergents of $k/2^n$; see \cite[Corollary 2 on p.~11]{Lan95a}. By \cite[Theorem 1 on p.~2]{Lan95a}, each of the two sequences $(\num_j(k,2^n))_{j=1,\ldots,N}$ and $(\den_j(k,2^n))_{j=1,\ldots,N}$ is defined through a simple recursion (involving $O(1)$ integer additions and multiplications in each recursion step) in terms of the so-called \emph{continued fractions coefficients of $k/2^n$}\phantomsection\label{term77}, which are just the integer quotient values in the divisions that occur upon applying the Euclidean algorithm to $(2^n,k)$. In view of \cite[Problem 31-2 posed on p.~937]{CLRS09a} (see also \cite{Che23a} for a worked out solution of this problem using a telescopic sum argument), one can compute and store the continued fraction coefficients of $k/2^n$ using $O(n^2)$ bit operations. Following that, the computation of $\num_j(k,2^n)$ and $\den_j(k,2^n)$ for all relevant $j$ takes another $O(n\cdot n^{1+o(1)})\subseteq O(n^{2+o(1)})$ bit operations by statements (1) and (3) of Lemma \ref{complexitiesLem}.

Having analyzed the continued fractions algorithm, let us now turn to the order-finding algorithm described in \cite[p.~137]{KLM07a}. In accordance with our notation, we assume that this algorithm is used to find the multiplicative order of $x$ modulo $m$ (in \cite{KLM07a}, the variable $a$, respectively $N$, is used in place of $x$, respectively $m$). The algorithm starts by computing $m':=\lceil 2\log{m}\rceil$, which takes $O(\log{m})$ bit operations. We observe that $\ord(x)$, the multiplicative order of $x$ modulo $m$, is at most $2^{(m'-1)/2}$. After this, we need to initialize two $m'$-qubit registers, which in our dual model formally takes $O(\log{m})$ bit operations for printing the length $m'$ bit strings $0\cdots 0$ and $0\cdots 01$, followed by $2\in O(1)$ conversions of these strings into the corresponding qubit registers $|0\rangle^{\otimes m'}$ and $|0\cdots 01\rangle$. Steps 4--6 of the algorithm in \cite[p.~137]{KLM07a} are applications of quantum circuits to those registers, which according to the analysis in \cite[pp.~138f.]{KLM07a} consist of $O(\log^{2+o(1)}{m})$ elementary gates. Next, the measurement described in Step 7 of \cite[p.~137]{KLM07a} corresponds to one more conversion in our model (this time from qubits to classical bits), and with high probability, it leads to a \enquote{good estimate} (see below) $k_1/2^{m'}$ of a random integer multiple $t/\ord_m(x)$ of $1/\ord_m(x)$, with $t\in\{0,1,\ldots,\ord_m(x)-1\}$. Considering Step 8 of \cite[p.~137]{KLM07a} next, we believe that there is a mistake in the formulation of this step, the first sentence of which should in our opinion read (using the notation from there) \enquote{Use the continued fractions algorithm to obtain integers $c_1\geq0$ and $r_1$ with $1\leq r_1\leq 2^{(n-1)/2}$ such that $|x_1/2^n-c_1/r_1|\leq 1/2^{n+1}$.} In any case, this is a formulation that works. Indeed, switching back to our notation, as long as the output $k_1/2^{m'}$ of Step 7 is a good estimate of $t/\ord(x)$ for some $t\in\{0,1,\ldots,\ord(x)-1\}$, it follows (by the definition of \enquote{good estimate} from \cite[beginning of Subsection 7.1.1]{KLM07a}, see in particular \cite[Theorem 7.1.4]{KLM07a}) that
\[
\left|\frac{t}{\ord(x)}-\frac{k_1}{2^{m'}}\right|\leq\frac{1}{2^{m'+1}}<\frac{1}{2^{m'}}\leq\frac{1}{2\ord(x)^2}
\]
whence, as noted above, we have $t/\ord(x)=\conv_j(k_1,2^{m'})$ for some $j$ by \cite[Corollary 2 on p.~11]{Lan95a}. By our above analysis of the continued fractions algorithm, one can thus find, using $O(\log^{2+o(1)}{m})$ bit operations, an index $j$ and associated values $\num_j(k_1,2^{m'})$ and $\den_j(k_1,2^{m'})$ with $\den_j(k_1,2^{m'})\leq 2^{(m'-1)/2}$ such that
\[
\left|\conv_j(k_1,2^{m'})-\frac{k_1}{2^{m'}}\right|=\left|\frac{\num_j(k_1,2^{m'})}{\den_j(k_1,2^{m'})}-\frac{k_1}{2^{m'}}\right|\leq\frac{1}{2^{m'+1}},
\]
unless we had bad luck with regard to the output of Step 7. If so, it makes sense to abandon the computations and output \enquote{FAIL}, as specified in \cite[Step 8 on p.~137]{KLM07a}. Now, it follows that
\begin{align*}
&\left|\frac{t}{\ord(x)}-\conv_j(k_1,2^{m'})\right|\leq\left|\frac{t}{\ord(x)}-\frac{k_1}{2^{m'}}\right|+\left|\frac{k_1}{2^{m'}}-\conv_j(k_1,2^{m'})\right| \\
\leq&\frac{1}{2^{m'+1}}+\frac{1}{2^{m'+1}}=\frac{1}{2^{m'}}\leq\min\left(\frac{1}{2\ord(x)^2},\frac{1}{2\den_j(k_1,2^{m'})^2}\right).
\end{align*}
Using \cite[Exercise 7.1.7(b)]{KLM07a}, this implies that $t/\ord(x)=\conv_j(k_1,2^{m'})$, as required for the correctness of the algorithm from \cite[p.~137]{KLM07a}. Step 9 of \cite[p.~137]{KLM07a} is just a repetition of Steps 1--8, hence does not make a difference for the $O$-class of the complexity. Finally, Steps 10 and 11 of \cite[p.~137]{KLM07a} take $O(\log^{2+o(1)}{m})$ bit operations by statements (6) and (8) of Lemma \ref{complexitiesLem}. In summary, the order-finding algorithm from \cite[p.~137]{KLM07a} may be viewed as a dual algorithm which, on input $x\in(\IZ/m\IZ)^{\ast}$ and $m$, outputs the multiplicative order of $x$ modulo $m$ with probability at least $39.9\%$ (see \cite[Theorem 7.3.2]{KLM07a}), and does so taking $O(\log^{2+o(1)}{m})$ bit operations and elementary quantum gates each, as well as $O(1)$ conversions of $O(\log{m})$-bit strings to $O(\log{m})$-qubit registers or vice versa. As noted in \cite[Theorem 7.3.2]{KLM07a} (and is clear from Step 11), unless the output of that algorithm is \enquote{FAIL}, it always outputs at least an integer multiple of $\ord(x)$. This concludes the preparation for the proofs of statements (1) and (2), which we tackle next.

For statement (1), we assume given a positive integer $m$. We wish to obtain the prime factorization of $m$. Formally, we wish to output the list of pairs $(p,\nu_p(m))$ where $p$ ranges over the prime divisors of $m$. First, we describe and analyze a deterministic (classical) algorithm that decides whether $m$ is a power of a single prime $p$ and, if so, outputs $(p,\nu_p(m))$; see also \cite[Exercise 7.3.3]{KLM07a}. This algorithm is, in turn, based on a deterministic routine that decides whether a given $m\in\IN^+$ is a power $n^k$ of a positive integer $n<m$ and, if so, outputs $(n,k)$ for the smallest possible value of $k\geq2$. We note that if $m=n^k$ for some $n\in\{1,2,\ldots,m-1\}$ and some $k\in\IN^+$, then $k\leq\lfloor\log_2{m}\rfloor$. Therefore, we loop over $k=2,3,\ldots,\lfloor\log_2{m}\rfloor$, and for each fixed value of $k$, we perform a binary search for $n$, using the strict monotonicity of the function $x\mapsto x^k$. More specifically, we initialize $n:=2$, and as long as $n^k<m$, we double $n$ until $n^k=m$ or $n^k>m$. In the latter case, we start a binary search between $\frac{n}{2}$ and $n$. With this approach, the values of the powers $n^k$ which we compute never exceed $2^{\lfloor\log_2{m}\rfloor}m\leq m^2$, whence each individual power computation in the process takes
\[
O(\log(k)\log^{1+o(1)}(m^2))=O(\log\log{m}\log^{1+o(1)}{m})=O(\log^{1+o(1)}{m})
\]
bit operations by statement (6) of Lemma \ref{complexitiesLem}. Because we loop over $O(\log{m})$ values of $k$, and for each $k$, the binary search for $n$ has $O(\log{m})$ iterations, it follows that it takes
\[
O(\log{m}\cdot\log{m}\cdot\log^{1+o(1)}{m})=O(\log^{3+o(1)}{m})
\]
bit operations to find the minimal working value of $k$ and associated $n=\sqrt[k]{m}$, or see that they do not exist.

As mentioned above, we can use this root-finding routine to check whether a given $m\in\IN^+$ is a prime power and, if so, write it as such -- let us describe how. First, by iterating the power-finding routine, one can write $m=n^k$ for the \emph{maximal} $k\in\{1,2,\ldots,\lfloor\log_2{m}\rfloor\}$ such that $m$ has an integer $k$-th root; this takes
\[
O\left(\sum_{t=0}^{\infty}{\log^{3+o(1)}\left(m^{1/2^t}\right)}\right)=O\left(\sum_{t=0}^{\infty}{\frac{1}{2^{3t}}}\cdot\log^{3+o(1)}{m}\right)=O(\log^{3+o(1)}{m})
\]
bit operations. The problem is then reduced to checking whether $n$ is a prime, which takes $O(\log^{6+o(1)}{n})\subseteq O(\log^{6+o(1)}{m})$ bit operations by statement (9) of Lemma \ref{complexitiesLem}. In summary, we have a deterministic routine with complexity in $O(\log^{6+o(1)}{m})$ for deciding whether $m$ is a prime power and, if so, writing it as such.

Shor's general Las Vegas dual approach for factoring $m\in\IN^+$ using reduction ideas of Miller is outlined in \cite[pp.~132f.]{KLM07a}. We start by splitting off the factor $2^{\nu_2(m)}$ from $m$. Because $m$ is given in its binary representation, this only takes $O(\log^{1+o(1)}{m})$ bit operations, accounting for $O(\nu_2(m))\subseteq O(\log{m})$ increases of a counter that remains in $O(\log{m})$ throughout (and thus has $O(\log\log{m})\subseteq O(\log^{o(1)}{m})$ bits). Now, we set $m':=m/2^{\nu_2(m)}$. In the rest of our proof of statement (1), we will only be dealing with \emph{odd} positive integers. We describe a Las Vegas routine that decides whether a given odd positive integer $n$ is a prime power, then does the following:
\begin{itemize}
\item if $n$ is a prime power, it writes $n$ as $p^{\nu_p(n)}$;
\item if $n$ is not a prime power, it finds a factorization of $n$ of the form $n=n'\cdot n''$ where $1<n',n''<n$.
\end{itemize}
We already described above how to decide whether $n=p^{\nu_p(n)}$ and, if so, write it as such using $O(\log^{6+o(1)}{n})$ bit operations, so we start by applying that routine and may henceforth assume that it returned that $n$ is \emph{not} a prime power. Following \cite[p.~133]{KLM07a}, we wish to draw an integer $x\in\{2,3,\ldots,n-1\}$ uniformly at random. Letting $N:=\lfloor\log_2(n-3)\rfloor+1$, we aim to draw the $N$-bit integer $y\in\{0,1,\ldots,n-3\}$ uniformly at random, then set $x:=y+2$. To draw $y$, we initialize an $N$-qubit register to $|0\rangle^{\otimes N}$, then pass it through an $N$-dimensional Hadamard circuit (with elementary gate complexity $N\in O(\log{n})$) to get a uniform superposition of all $N$-bit strings, so that a simple measurement returns (the binary representation of) a random integer in $\{0,1,\ldots,2^N-1\}$. The probability that this integer lies in the range for $y$ is at least $1/2$, so we only need to iterate this procedure an expected number of $O(1)$ times until we get a suitable value for $y$, using $O(\log{n})$ bit operations and elementary quantum gates as well as $O(1)$ conversions to and from $O(\log{n})$-bit strings. Following that, we compute $x=y+2$ and $\gcd(x,n)$, taking $O(\log^{1+o(1)}{n})$ bit operations by statements (1) and (8) of Lemma \ref{complexitiesLem}. If $\gcd(x,n)>1$, we may output the factorization $n=n'\cdot n''$ with $n'=\gcd(x,n)$, taking just another $O(\log^{1+o(1)}{n})$ bit operations to compute $n''$ by division, and are done. Otherwise, $x$ is a (uniformly random) unit modulo $n$, and we proceed to apply the order-finding routine from \cite[p.~137]{KLM07a} to get an output $o$ which is either a number or the string \enquote{FAIL}, and is equal to $\ord(x)$ with probability at least $0.399$. If $o$ is \enquote{FAIL}, we repeat this routine on the same value of $x$ until we get an output that is actually a number (only $O(1)$ repetitions needed by expectancy). Then, if $o$ is \emph{not} even, we abandon this value of $x$ and choose $y$ anew (because we want $2\mid\ord(x)$, and even if $o$ may not be equal to $\ord(x)$, it is an integer multiple of $\ord(x)$, as was noted above) until we get an $x$ such that either $\gcd(x,n)>1$ or the associated alleged multiplicative order $o\in\IN^+$ is even. This, too, only requires an expected number of $O(1)$ attempts, because for a randomly selected $x\in(\IZ/n\IZ)^{\ast}$, the order of $x$ is even with probability at least $1/2$. We then compute $z:=x^{o/2}\bmod{n}$, taking $O(\log^{2+o(1)}{n})$ bit operations by statement (6) of Lemma \ref{complexitiesLem}. Because $n$ is not a prime power, we have $\gcd(z-1,n)>1$ with probability at least $1/2$, so after expectedly $O(1)$ more tries, we will indeed have found a nontrivial factorization of $n$. Taking into account the complexity of the order-finding routine from \cite[p.~137]{KLM07a} which we analyzed above, this process expectedly takes $O(\log^{6+o(1)}{n})$ bit operations, $O(\log^{2+o(1)}{n})$ elementary quantum gates, and $O(1)$ conversions to and from $O(\log{n})$-bit strings.

Let us now return to our problem of factoring the odd positive integer $m'=m/2^{\nu_2(m)}$. Through iteratively applying the factor-finding routine we just described, which needs to be applied $O(\log{m})$ times, statement (1) follows (the complexity of some necessary deterministic post-processing, such as adding the exponents of primes appearing in multiple obtained factors, is clearly subsumed under $O(\log^{7+o(1)}{m})$).

For statement (2), we assume given a modulus $m$ and a unit $x\in(\IZ/m\IZ)^{\ast}$, and we wish to give a \emph{Las Vegas} algorithm that computes $\ord(x)$. For this, we first factor $m$, using the $m$-bounded Las Vegas dual complexity $(\log^{7+o(1)}{m},\log^{3+o(1)}{m},\log{m})$ by statement (1). Knowing the factorization of $m$ allows us to compute the Euler totient function value $\phi(m)$ and a factorization thereof within the same $m$-bounded Las Vegas dual complexity. Now, for each prime $p\mid\phi(m)$, we can work out $\nu_p(\ord(x))$ as the smallest $v_p\in\{0,1,\ldots,\nu_p(\phi(m))\}$ such that
\[
x^{p^{v_p}\phi(m)_{p'}}\equiv1\Mod{m},
\]
where $\phi(m)_{p'}:=\phi(m)/p^{\nu_p(\phi(m))}$\phantomsection\label{not215}. More specifically, we can use a binary search for finding $\nu_p(\ord(x))$, which according to statement (6) of Lemma \ref{complexitiesLem} results in a cost of
\[
O(\log\log{m}\cdot\log^{2+o(1)}{m})=O(\log^{2+o(1)}{m})
\]
bit operations for finding $\nu_p(\ord(x))$ for a single $p$, hence of $O(\log^{3+o(1)}{m})$ bit operations for finding all of these valuations. Finally, we compute $\ord(x)$ itself as the product of all prime powers $p^{\nu_p(\ord(x))}$ where $p$ ranges over the prime divisors of $\phi(m)\in\ord(x)\IZ$. This takes another $O(\log^{3+o(1)}{m})$ bit operations, thus proving statement (2).


For the proofs of statements (3) and (4), we need some preparations again. In \cite[Section 7.4]{KLM07a}, a general approach for computing discrete logarithms, working for elements chosen from any black-box group $G$ with a unique encoding of each element, is discussed. For given $x,y\in G$ such that $y=x^t$ for some $t\in\IZ$ and the order of $x$ in $G$ is known, this approach returns with high probability the unique $t\in\{0,1,\ldots,\ord(x)-1\}$ such that $y=x^t$, which is called the \emph{discrete logarithm (in $G$) of $y$ with base $x$}\phantomsection\label{term78}, written $\log_x(y)$\phantomsection\label{not216}. However, if $y$ is \emph{not} a power of $x$ in $G$, it seems that this approach does not provide a means of confirming this \emph{with certainty}, as is required for the Las Vegas algorithms we desire. This means that in addition to the discrete logarithm algorithm from \cite[Section 7.4]{KLM07a}, we need a Las Vegas routine for checking whether $y$ is a power of $x$ in the first place. We analyze these algorithms one after the other, starting with the routine for computing $\log_x(y)$ in case $y$ is a power of $x$ for $G=(\IZ/m\IZ)^{\ast}$ or $G=\IF_q^{\ast}$. As in Subsection \ref{subsec2P4}, we extend the notation $\log_x(y)$ to arbitrary $x,y\in G$ by setting $\log_x(y):=\infty$ if $y$ is \emph{not} a power of $x$.

An important observation is that the discrete logarithm algorithm described in \cite[p.~144]{KLM07a} only works if $\ord(x)$ is a prime (see \cite[second paragraph after formula (7.4.2.) on p.~143]{KLM07a}). For the general case, we follow \enquote{Method 1} from \cite[pp.~244f., starting after Corollary A.2.2]{KLM07a}. We start by setting $m_0:=1$ and $\rem_0:=0$. Then, certainly, $\log_x(y)\equiv\rem_0\Mod{m_0}$. The aim is to recursively define integers $m_1\mid m_2\mid\cdots\mid m_N=\ord(x)$ and $\rem_1,\rem_2,\ldots,\rem_N$ with $\rem_N=\log_x(y)$ such that $\log_x(y)\equiv\rem_k\Mod{m_k}$ throughout. As noted in \cite[p.~244, right after Corollary A.2.2]{KLM07a}, running the algorithm from \cite[p.~144]{KLM07a} allows us to work out $m_{k+1}$ and $\rem_{k+1}$ from $m_k<\ord(x)$ and $\rem_k$ with high probability (and no risk of getting incorrect values for them, only \enquote{FAIL}). In each step, this involves
\begin{itemize}
\item $O(1)$ arithmetic operations covered in statements (1)--(8) of Lemma \ref{complexitiesLem}, which account for $O(\log^{2+o(1)}{m})$ bit operations if $G=(\IZ/m\IZ)^{\ast}$, respectively for $O(\log^{2+o(1)}{q})$ bit operations if $G=\IF_q^{\ast}$;
\item $O(\log^{2+o(1)}{m})$, respectively $O(\log^{2+o(1)}{q})$, elementary quantum gates; and
\item $O(1)$ conversions to and from $O(\log{m})$-bit, respectively $O(\log{q})$-bit, strings.
\end{itemize}
As noted in \cite[p.~245]{KLM07a}, the number $N$ of iterations of this loop is in $O(\log{\ord(x)})$, and thus in $O(\log{m})$, respectively $O(\log{q})$. Therefore, we can compute $\log_x(y)$ in case it is not $\infty$ and $\ord(x)$ is known using the following $m$-bounded, respectively $q$-bounded, Las Vegas dual complexity:
\begin{itemize}
\item $(\log^{3+o(1)}{m},\log^{3+o(1)}{m},\log{m})$ if $G=(\IZ/m\IZ)^{\ast}$;
\item $(\log^{3+o(1)}{q},\log^{3+o(1)}{q},\log{q})$ if $G=\IF_q^{\ast}$.
\end{itemize}
This concludes our preparation for the proofs of statements (3) and (4).

For statement (3), we assume that $x,y\in(\IZ/m\IZ)^{\ast}$ are given. In order to compute $\log_x(y)$, we first check whether $y$ is a power of $x$ in the first place. We start by factoring $m$, which takes $m$-bounded Las Vegas dual complexity
\[
(\log^{7+o(1)}{m},\log^{3+o(1)}{m},\log{m})
\]
by statement (1). For a prime divisor $p$ of $m$, we set $v_p:=\nu_p(m)$ and $m_p:=p^{v_p}$\phantomsection\label{not217}. We wish to compute $\log_x^{(m_p)}(y)$, the discrete logarithm modulo $m_p$ of $y$ with base $x$, for each prime $p\mid m$, and in the following two paragraphs, we describe how to do so. We use the notation $\ord_n(z)$\phantomsection\label{not218} to denote the multiplicative order of $z$ modulo $n$.

First, we assume that $p>2$. We compute $\ord_{m_p}(x)$ and $\ord_{m_p}(y)$, taking $m_p$-bounded Las Vegas dual complexity $(\log^{7+o(1)}{m_p},\log^{3+o(1)}{m_p},\log{m_p})$ by statement (2). Because the unit group $(\IZ/m_p\IZ)^{\ast}$ is cyclic, we have that $y$ is a power of $x$ modulo $m_p$ if and only if $\ord_{m_p}(y)$ divides $\ord_{m_p}(x)$. By statement (3) of Lemma \ref{complexitiesLem}, it only takes $O(\log^{1+o(1)}{m_p})$ bit operations to check this. If this divisibility does \emph{not} hold, then $\log_x^{(m_p)}(y)$ is $\infty$, and so is $\log_x^{(m)}(y)$, so we are done. Otherwise, we compute $\log_x^{(m_p)}(y)$ using the Las Vegas routine from \cite[p.~144 and Appendix A.2]{KLM07a}, which takes $m_p$-bounded Las Vegas dual complexity $(\log^{3+o(1)}{m_p},\log^{3+o(1)}{m_p},\log{m_p})$; we note that at this point, we do know $\ord_{m_p}(x)$ because it was computed beforehand.

Now we assume that $p=2$. We proceed in a similar manner to when $p>2$, namely by first checking whether $\log_x^{(m_2)}(y)$ is $\infty$ and, if not, computing its precise integer value at the cost of an $m_2$-bounded Las Vegas dual complexity of
\[
(\log^{3+o(1)}{m_2},\log^{3+o(1)}{m_2},\log{m_2}),
\]
or $(\log^{7+o(1)}{m_2},\log^{3+o(1)}{m_2},\log{m_2})$ if $\ord_{m_2}(x)$ has not been computed at that point. Checking whether $\log_x^{(m_2)}(y)=\infty$ is a bit more complicated than for $p>2$, though, because $(\IZ/m_2\IZ)^{\ast}$ is not necessarily cyclic. We may assume that $m_2>2$ (otherwise, $\log_x^{(m_2)}(y)$ is simply equal to $1$), and we distinguish some cases.
\begin{itemize}
\item If $x\equiv y\equiv1\Mod{4}$, which only takes $O(1)$ bit operations to check because $x$ and $y$ are given in binary, then $x$ and $y$ both lie in the cyclic subgroup of $(\IZ/m_2\IZ)^{\ast}$ generated by the unit $5$ (which is equal to the unit $1$ if $m_2=4$). Therefore, just as for $p>2$, we have that $y$ is a power of $x$ if and only if $\ord_{m_2}(y)\mid\ord_{m_2}(x)$.
\item If $x\equiv1\Mod{4}$ and $y\equiv3\Mod{4}$, then $y$ cannot be a power of $x$ modulo $m_2$.
\item If $x\equiv3\Mod{4}$ and $y\equiv1\Mod{4}$, then $y$ is a power of $x$ modulo $m_2$ if and only if $y$ is a power of $x^2$ modulo $m_2$. Because $x^2\equiv1\Mod{4}$, we conclude that $y$ is a power of $x$ modulo $m_2$ if and only if $\ord_{m_2}(y)\mid\ord_{m_2}(x^2)$.
\item If $x\equiv y\equiv3\Mod{4}$, then $y$ is a power of $x$ modulo $m_2$ if and only if $-y$ is a power with odd exponent of $-x$ modulo $m_2$. Therefore, in order to check whether $y$ is a power of $x$ modulo $m_2$, we first check whether $\ord_{m_2}(-y)\mid\ord_{m_2}(-x)$. If not, then $y$ is certainly \emph{not} a power of $x$ modulo $m_2$. Otherwise, we compute $\log_{-x}^{(m_2)}(-y)$ and check whether it is odd.
\end{itemize}
In summary, since
\[
\sum_{p\mid m}{\log^k{m_p}}\in O(\log^k{m})
\]
for all real exponents $k\geq1$, we conclude that computing $\log_x^{(m_p)}(y)$ for each prime $p\mid m$ takes $m$-bounded Las Vegas dual complexity $(\log^{7+o(1)}{m},\log^{3+o(1)}{m},\log{m})$ for all $p$ together. As noted above, if any of these \enquote{primary discrete logarithms} is $\infty$, then $y$ is \emph{not} a power of $x$ modulo $m$, i.e., $\log_x^{(m)}(y)=\infty$, and we are done. Otherwise, noting that for each integer $t$, we have
\[
y\equiv x^t\Mod{m}\text{ if and only if }y\equiv x^t\Mod{m_p}\text{ for all primes }p\mid m,
\]
we find that $\log_x^{(m)}(y)<\infty$ if and only if the system of congruences
\[
t\equiv\log_x^{(m_p)}(y)\Mod{\ord_{m_p}(x)}\text{ for all primes }p\mid m
\]
in the variable $t$ is consistent, in which case $\log_x^{(m)}(y)$ is its unique solution between $0$ and $\lcm\left(\left\{\ord_{m_p}(x): p\mid m\right\}\right)-1=\ord_m(x)-1$. We already computed the right-hand sides and moduli of this system of congruences, except possibly $\ord_{m_2}(x)$, which takes $m$-bounded Las Vegas dual complexity $(\log^{7+o(1)}{m},\log^{3+o(1)}{m},\log{m})$ to compute. Following that, we can check the consistency of this system using the equivalence of statements (1) and (2) in Proposition \ref{sCongProp}, which requires $O(\log{m})$ subtractions, gcd computations and divisions of integers less than $m$, hence can be done using $O(\log^{2+o(1)}{m})$ bit operations. Should the system be consistent, the integer value of $\log_x^{(m)}(y)$ may be determined by solving the system (which is deterministic and can certainly be done with $O(\log^{7+o(1)}{m})$ bit operations, but we do not go into the details of this here), or by computing $\ord_m(x)$ and running the aforementioned routine from \cite[p.~144 and Appendix A.2]{KLM07a} for another $m$-bounded Las Vegas dual complexity of $(\log^{7+o(1)}{m},\log^{3+o(1)}{m},\log{m})$. This concludes the proof of statement (3).

For statement (4), we assume that $x,y\in\IF_q^{\ast}$ are given. In accordance with the two formats (specified at the beginning of this section) in which generalized cyclotomic mappings may be given, we consider two distinct versions of the computational problem of finding $\log_x(y)$:
\begin{itemize}
\item version 1: $x,y$ are given as powers of a common, unspecified primitive element $\omega$ of $\IF_q$;
\item version 2: a primitive irreducible polynomial $P(T)$ over $\IZ/p\IZ$ of degree $\log_p{q}$ is known, and $\IF_q$ is to be viewed as $(\IZ/p\IZ)[T]/(P(T))$, with $x,y$ given as elements of this quotient ring in standard form (i.e., as polynomials over $\IZ/p\IZ$ in the variable $T$ and of degree less than $\log_p{q}$).
\end{itemize}
In either scenario, we show that one can check whether $y$ is a power of $x$ and, if so, work out the integer value of $\log_x(y)$ using $q$-bounded Las Vegas dual complexity $(\log^{3+o(1)}{q},\log^{3+o(1)}{q},\log{q})$ altogether. Indeed, we note that for any given primitive element $\omega\in\IF_q$, the function $\log_{\omega}:\IF_q^{\ast}\rightarrow\IZ/(q-1)\IZ$ is a group isomorphism. In the first version of the computational problem, an (unspecified) value for $\omega$ was already fixed, and in the second version, we set $\omega:=T$ (viewed as an element of $(\IZ/p\IZ)[T]/(P(T))$). In either case, we can work out $\log_{\omega}(x)$ and $\log_{\omega}(y)$; in the first scenario, $x$ and $y$ are literally given as powers of $\omega$, and in the second scenario, we apply the routine from \cite[p.~144 and Appendix A.2]{KLM07a} to compute $\log_{\omega}(x)$ and $\log_{\omega}(y)$, which works because $x$ and $y$ are powers of $\omega$ and we know that $\ord(\omega)=q-1$. In either case, those two discrete logarithms can be computed with $q$-bounded Las Vegas dual complexity $(\log^{3+o(1)}{q},\log^{3+o(1)}{q},\log{q})$ (in fact, in the first scenario, they only need to be copied from the input, requiring $O(\log{q})$ bit operations only).

After finding $\log_{\omega}(x)$ and $\log_{\omega}(y)$, the problem is reduced to checking whether $\log_{\omega}(y)$ is a multiple of $\log_{\omega}(x)$ in $\IZ/(q-1)\IZ$, i.e., whether
\begin{equation}\label{logarithmizedEq}
\log_{\omega}(y)\equiv k\cdot\log_{\omega}(x)\Mod{q-1}
\end{equation}
for some $k\in\IZ$. But this is the case if and only if $\gcd(\log_{\omega}(x),q-1)\mid\log_{\omega}(y)$, which can be checked using $O(\log^{1+o(1)}{q})$ bit operations. If so, then $\log_x(y)$ is the unique solution $k\in\{0,1,\ldots,q-2\}$ of congruence (\ref{logarithmizedEq}), that is, modulo $q-1$
\[
\log_x(y)=\frac{\log_{\omega}(y)}{\gcd(\log_{\omega}(x),q-1)}\cdot\inv_{\frac{q-1}{\gcd(\log_{\omega}(x),q-1)}}\left(\frac{\log_{\omega}(x)}{\gcd(\log_{\omega}(x),q-1)}\right),
\]
which can be evaluated with a final batch of $O(\log^{1+o(1)}{q})$ bit operations. This concludes the proof of statement (4).

For statement (5), we first aim to find a primitive root modulo $p$. A polynomial-time probabilistic classical algorithm for doing so is \cite[Algorithm 1]{DD06a}, which is a refinement of an earlier algorithm by Bach \cite{Bac97a}, itself based on Itoh's idea of using partial factorizations of $p-1$ to find a primitive root with high probability. We follow this approach, but since we can factor $p-1$ completely, our situation is easier. We start by factoring $p-1$, taking $p$-bounded Las Vegas dual complexity $(\log^{7+o(1)}{p},\log^{3+o(1)}{p},\log{p})$ by statement (1). Say $p-1=\prod_{j=1}^K{p_j^{v_j}}$ is the said factorization. For each prime divisor $p_j$ of $p-1$, we wish to find a unit $y_j\in(\IZ/p\IZ)^{\ast}=\{1,2,\ldots,p-1\}$ such that $p_j^{v_j}$ divides $\ord_p(y_j)$. Equivalently, $y_j$ should \emph{not} be a $p_j$-th power in $\IZ/p\IZ$. The proportion of units that satisfy this is $1-\frac{1}{p_j}\geq\frac{1}{2}$, so if we pick $y_j\in(\IZ/p\IZ)^{\ast}$ at random, then check whether $y_j^{(p-1)/p_j}\not\equiv1\Mod{p}$, it only takes an expected number of $O(1)$ tries until we succeed at finding $y_j$. As in the proof of statement (1), we perform this random drawing of $y_j$ using a simple quantum circuit consisting of a $(\lfloor\log_2(p-2)\rfloor+1)$-qubit Hadamard gate. This means that the expected $p$-bounded Las Vegas dual complexity of finding $y_j$ is $(\log^{2+o(1)}{p},\log{p},1)$ for a single $j$, and $(\log^{3+o(1)}{p},\log^2{p},\log{p})$ for all $j=1,2,\ldots,K$ together. Once the $y_j$ have been found, the unit
\[
\rfrak:=\prod_{j=1}^K{y_j^{(p-1)/p_j^{v_j}}}\in(\IZ/p\IZ)^{\ast},
\]
which can be computed with an additional $O(\log^{3+o(1)}{p})$ bit operations, is a primitive root modulo $p$. Indeed, the $j$-th factor in this product has order $p_j^{v_j}$, and because the numbers $p_j^{v_j}$ are pairwise coprime and the group $(\IZ/p\IZ)^{\ast}$ is abelian, this entails that $\ord_p(\rfrak)=\prod_{j=1}^K{p_j^{v_j}}=p-1$.

From $\rfrak$, it is not difficult to construct a primitive root $\rfrak^+$ modulo $p^k$. Indeed, if $k=1$, we just set $\rfrak^+:=\rfrak$. Moreover, a primitive root modulo $p^2$ is a primitive root modulo $p^k$ for each $k\geq2$, and either $\rfrak$ or $\rfrak+p$ is a primitive root modulo $p^2$. Hence, if $k>1$, we simply check whether $\rfrak^{p-1}\equiv1\Mod{p^2}$, taking $O(\log^{2+o(1)}{p^2})=O(\log^{2+o(1)}{p})$ bit operations. If so, we set $\rfrak^+:=\rfrak+p$, otherwise we set $\rfrak^+:=\rfrak$. This concludes the proof of statement (5) and of Lemma \ref{complexitiesLem2} as a whole.
\end{proof}

Our first application of Lemma \ref{complexitiesLem2} is the following aforementioned result on converting a query complexity into a Las Vegas dual complexity.

\begin{lemmma}\label{LVComplexityLem}
Let $\Lfrak$ be an algorithmic problem, and let $y,y_1,\ldots,y_n$ be non-negative real parameters associated with the admissible inputs for $\Lfrak$ such that
\[
y(\vec{\xfrak})\leq h(y_1(\vec{\xfrak}),\ldots,y_n(\vec{\xfrak}))
\]
for all $\vec{\xfrak}\in\Lfrak_{\inn}$, where $h$ is a fixed function $\left[0,\infty\right)^n\rightarrow\left[0,\infty\right)$. Moreover, let
\[
\vec{\Ccal}^{\qry}=(\Ccal_{\class},\Ccal_{\fdl},\Ccal_{\mdl},\Ccal_{\mord},\Ccal_{\prt})
\]
be a $y$-bounded query complexity of $\Lfrak$ with respect to $y_1,\ldots,y_n$. Then
\[
\vec{\Ccal}=(\Ccal'_{\class},\Ccal_{\quant},\Ccal_{\conv}),
\]
with $\Ccal'_{\class},\Ccal_{\quant},\Ccal_{\conv}:\left[0,\infty\right)^n\rightarrow\left[0,\infty\right)$ as defined below, is a $y$-bounded Las Vegas dual complexity of $\Lfrak$. We write $\vec{z}$ shorthand for $(z_1,z_2,\ldots,z_n)\in\left[0,\infty\right)^n$.
\begin{align*}
&\Ccal'_{\class}(\vec{z}):=\Ccal_{\class}(\vec{z})+\log^{7+o(1)}(h(\vec{z}))\cdot(\Ccal_{\mdl}(\vec{z})+\Ccal_{\mord}(\vec{z})+\Ccal_{\prt}(\vec{z})) \\
&+\log^{3+o(1)}(h(\vec{z}))\Ccal_{\fdl}(\vec{z}); \\
&\Ccal_{\quant}(\vec{z}):=\log^{3+o(1)}(h(\vec{z}))\cdot(\Ccal_{\mdl}(\vec{z})+\Ccal_{\fdl}(\vec{z})+\Ccal_{\mord}(\vec{z})+\Ccal_{\prt}(\vec{z})); \\
&\Ccal_{\conv}(\vec{z}):=\log(h(\vec{z}))\cdot(\Ccal_{\mdl}(\vec{z})+\Ccal_{\fdl}(\vec{z})+\Ccal_{\mord}(\vec{z})+\Ccal_{\prt}(\vec{z})).
\end{align*}
\end{lemmma}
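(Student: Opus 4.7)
The plan is to take a query algorithm $\mathcal{A}$ achieving the given query complexity $\vec{\Ccal}^{(\qry)}$ for $\Lfrak$ and transform it into a dual algorithm $\mathcal{A}'$ by replacing each query with the appropriate Las Vegas dual subroutine from Lemma \ref{complexitiesLem2}. First I would fix an input $\vec{\xfrak}\in\Lfrak_{\inn}$ and set $\vec{z}:=(y_1(\vec{\xfrak}),\ldots,y_n(\vec{\xfrak}))$ and $Y:=y(\vec{\xfrak})\leq h(\vec{z})$. Each modular discrete logarithm query or multiplicative order query issued by $\mathcal{A}$ operates on a modulus $m<Y^2$; by the first request in the query output (or by running an additional factorization), one can also produce the prime power decomposition of $m$ needed to answer the full query, and then invoke Lemma \ref{complexitiesLem2}(3) (respectively (2)) on $m$ and on each $p^{\nu_p(m)}$. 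Since $\log{m}\in O(\log{Y})\subseteq O(\log h(\vec{z}))$ and $\sum_{p\mid m}\log^k{p^{\nu_p(m)}}\in O(\log^k{m})$ for $k\geq 1$, each such query is implementable with expected dual cost $(\log^{7+o(1)}h(\vec{z}),\log^{3+o(1)}h(\vec{z}),\log h(\vec{z}))$. The same bound applies to primitive root queries via Lemma \ref{complexitiesLem2}(5), while finite field discrete logarithm queries are handled by Lemma \ref{complexitiesLem2}(4) at the cheaper expected cost $(\log^{3+o(1)}h(\vec{z}),\log^{3+o(1)}h(\vec{z}),\log h(\vec{z}))$.

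The next step is to sum the expected costs. Outside the queries, $\mathcal{A}$ uses $O(\Ccal_{\class}(\vec{z}))$ bit operations and no quantum resources; this contributes only to the classical component of $\vec{\Ccal}$. Each of the $O(\Ccal_{\mdl}(\vec{z})+\Ccal_{\mord}(\vec{z})+\Ccal_{\prt}(\vec{z}))$ \enquote{expensive} queries adds expected $\log^{7+o(1)}h(\vec{z})$ bit operations, $\log^{3+o(1)}h(\vec{z})$ quantum gates and $\log h(\vec{z})$ conversions, while each of the $O(\Ccal_{\fdl}(\vec{z}))$ finite field logarithm queries adds expected $\log^{3+o(1)}h(\vec{z})$ to both classical and quantum components and $\log h(\vec{z})$ to the conversion component. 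By linearity of expectation over the (deterministic) number of query invocations in the query algorithm, the total expected resource usage of $\mathcal{A}'$ is exactly $(\Ccal'_{\class}(\vec{z}),\Ccal_{\quant}(\vec{z}),\Ccal_{\conv}(\vec{z}))$ as defined in the statement.

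Correctness of $\mathcal{A}'$ is inherited from that of $\mathcal{A}$: each query subroutine from Lemma \ref{complexitiesLem2} is a Las Vegas algorithm, so upon termination it produces the correct answer with certainty, and hence every run of $\mathcal{A}'$ that terminates produces an admissible output for $\vec{\xfrak}$. The only point I foresee as potentially delicate is ensuring that the linearity-of-expectation argument is legitimate: this is fine here because the number of queries posed by the underlying query algorithm $\mathcal{A}$ is bounded by a deterministic function of $\vec{\xfrak}$ (it is a non-probabilistic query algorithm), so the total expected cost decomposes cleanly into a finite sum of expected per-query costs, each controlled by Lemma \ref{complexitiesLem2}. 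This completes the plan.
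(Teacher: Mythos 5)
Your proposal is correct and mirrors the paper's own proof: both simply replace each of the five kinds of queries with the corresponding Las Vegas dual subroutines from Lemma \ref{complexitiesLem2}, bound the per-query cost using $m<y(\vec{\xfrak})^2\leq h(\vec{z})^2$ and the observation $\sum_{p\mid m}\log^t(p^{\nu_p(m)})\in O(\log^t{m})$, and then sum over the (deterministic) number of queries. Your explicit remark that linearity of expectation applies because the query count is a non-random function of the input is a small but welcome clarification that the paper leaves implicit; apart from the slightly confusing phrasing ``by the first request in the query output (or by running an additional factorization)'' — you of course must run the factorization yourself, since your dual algorithm is what produces the query output — the argument is sound.
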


\begin{proof}
This follows easily from Lemma \ref{complexitiesLem2} and the definitions of the involved concepts. For example, when computing $\Ccal'_{\class}(\vec{z})$, we not only have to take into account the bit operations spent outside the special queries, which are represented by the summand $\Ccal_{\class}(\vec{z})$, but also those coming from the queries, using the algorithms discussed in the proofs of Lemma \ref{complexitiesLem2} to fulfill those queries. For example, on input $\vec{\xfrak}$, each of the $O(\Ccal_{\fdl}(y_1(\vec{\xfrak}),\ldots,y_n(\vec{\xfrak})))$ finite field discrete logarithm queries needed in the course of computing an admissible output for $\vec{\xfrak}$ is about computing a discrete logarithm in a finite field of size at most $h(y_1(\vec{\xfrak}),\ldots,y_n(\vec{\xfrak}))$. Therefore, by Lemma \ref{complexitiesLem2}(4), these \enquote{fdl queries} together account for
\[
O(\Ccal_{\fdl}(y_1(\vec{\xfrak}),\ldots,y_n(\vec{\xfrak}))\cdot\log^{3+o(1)}(h(y_1(\vec{\xfrak}),\ldots,y_n(\vec{\xfrak}))))
\]
bit operations, whence the inclusion of the summand $\Ccal_{\fdl}(\vec{z})\cdot\log^{3+o(1)}(h(\vec{z}))$ in the definition of $\Ccal'_{\class}(\vec{z})$. For the other three kinds of queries, one can use Lemma \ref{complexitiesLem2} together with the fact that
\[
\sum_{p\mid m}{\log^t\left(p^{\nu_p(m)}\right)}\in O(\log^t{m})
\]
for each positive integer $m$ and each real exponent $t\geq1$. For example, for a \enquote{prt query}, we need to find a primitive root modulo $p^{\nu_p(m)}$ for each odd prime $p$ dividing $m$. We do so by first factoring $m$, then applying the algorithm from Lemma \ref{complexitiesLem2}(5). By Lemma \ref{complexitiesLem2}(1,5), the number of bit operations needed in the process is in
\begin{align*}
&O\left(\log^{7+o(1)}{m}+\sum_{2<p\mid m}{\log^{7+o(1)}{p}}\right)\subseteq O\left(\log^{7+o(1)}{m}+\sum_{p\mid m}{\log^{7+o(1)}\left(p^{\nu_p(m)}\right)}\right) \\
=\,&O(\log^{7+o(1)}{m})\subseteq O(\log^{7+o(1)}{y(\vec{\xfrak})})\subseteq O(\log^{7+o(1)}{h(y_1(\vec{\xfrak}),\ldots,y_n(\vec{\xfrak}))}),
\end{align*}
which also subsumes the cost of computing $p^{\nu_p(m)}$ from $p$ and $\nu_p(m)$ for all $p$ by Lemma \ref{complexitiesLem}(6). The number of elementary quantum gates, respectively of bit-qubit conversions, needed in the process may be dealt with analogously. Moreover, an analogous approach works for \enquote{mdl queries} and \enquote{mord queries}, where one also needs to factor $m$ (see Lemma \ref{complexitiesLem2}(1)) and (due to the upper bound of $y(\vec{\xfrak})^2$ on $m$ from Definition \ref{complexitiesDef}(4,a)) ends up with an argument of $h(y_1(\vec{\xfrak}),\ldots,y_n(\vec{\xfrak}))^2$ in the logarithm power, but this may be replaced by $h(y_1(\vec{\xfrak}),\ldots,y_n(\vec{\xfrak}))$ without changing the $O$-class of the overall expression.
\end{proof}

In view of Lemma \ref{LVComplexityLem}, we mostly work with query complexities from here on, only converting them to Las Vegas dual complexities in some main results. In order to solve the three algorithmic problems on index $d$ generalized cyclotomic mappings $f$ from the beginning of this section using the theory developed in this paper, we first need to compute the induced function $\overline{f}:\{0,1,\ldots,d\}\rightarrow\{0,1,\ldots,d\}$ and, for each $i\in\{0,1,\ldots,d-1\}$ such that the coefficient $a_i$ in the cyclotomic form (\ref{cyclotomicFormEq}) of $f$ is non-zero, we need to compute the affine map $A_i$ of $\IZ/s\IZ$ that encodes the restriction $f_{\mid C_i}:C_i\rightarrow C_{\overline{f}(i)}$ under the identification of $C_j$ with $\IZ/s\IZ$ via the bijection $\iota_j$ described in our introduction. Our next goal is to analyze the query complexity of these tasks.

\begin{propposition}\label{faiProp}
Given $f$, one can compute the induced function $\overline{f}$ and the associated affine maps $A_i$ with $q$-bounded query complexity
\[
(d\log^{1+o(1)}{q},d,0,0,0).
\]
\end{propposition}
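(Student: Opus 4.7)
The plan is to process the coset indices one at a time. For each $i \in \{0,1,\ldots,d-1\}$, first read the coefficient $a_i$: if $a_i = 0$, set $\overline{f}(i) := d$ and record that no affine map $A_i$ exists. Otherwise, obtain the exponent $e_i \in \{0,1,\ldots,q-2\}$ such that $a_i = \omega^{e_i}$, then set $\overline{f}(i)$ to be the unique element of $\{0,1,\ldots,d-1\}$ with $d \mid e_i + r_i i - \overline{f}(i)$, i.e., $\overline{f}(i) := (e_i + r_i i) \bmod d$. Finally, compute $\beta_i := (e_i + r_i i - \overline{f}(i))/d \bmod s$ and store the affine map $A_i : x \mapsto r_i x + \beta_i$. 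At the end, also set $\overline{f}(d) := d$.

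The only step whose cost depends on the input format is the recovery of $e_i$. If $a_i$ is already given as a power of $\omega$, then $e_i$ is read directly from the input at no cost. If instead $\IF_q$ is presented as $\IF_p[T]/(P(T))$ with $P$ primitive, we take $\omega := T$ (so that the minimal polynomial assumption guarantees $\omega$ is a primitive element) and obtain $e_i = \log_\omega(a_i)$ via a single finite field discrete logarithm query in a field of size $q$. Since $i$ ranges over at most $d$ values, this contributes at most $d$ such queries in total, matching the second entry of the asserted tuple $(d\log^{1+o(1)} q, d, 0, 0, 0)$. Observe that no modular discrete logarithm, modular order, or primitive root query is ever invoked, because once $e_i$ is in hand everything else is elementary integer arithmetic.

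For the remaining (non-query) bit operation count, fix an index $i$ and observe that the quantities $r_i i$, $e_i + r_i i$, its reduction modulo $d$, the subtraction of $\overline{f}(i)$, the integer division by $d$, and the reduction modulo $s$ all involve integers of size $O(qd) \subseteq O(q^2)$. By Lemma \ref{complexitiesLem}(1)(3)(8) each such operation costs $O(\log^{1+o(1)} q)$ bit operations, and only a bounded number of them are needed per $i$. Summing over the $d$ indices gives the classical bound $d \log^{1+o(1)} q$.

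There is no serious obstacle here; the only thing to double-check is that the recipe above is self-contained, in particular that we never need to reduce $e_i$ modulo $q-1$ (we do not, since $\overline{f}(i)$ depends only on $e_i + r_i i \bmod d$ and $\beta_i$ only on the resulting integer divided by $d$ and taken modulo $s$), and that factorization of $q-1$ or any primitive root computation is avoided. Both points hold by inspection of the formulas, so the announced query complexity $(d\log^{1+o(1)} q, d, 0, 0, 0)$ is achieved.
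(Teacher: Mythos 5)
Your argument is correct and follows essentially the same route as the paper's proof: read off $e_i=\log_\omega(a_i)$ (directly or via one fdl query per nonzero $a_i$), compute $\overline{f}(i)=(e_i+r_ii)\bmod d$ and $\beta_i=(e_i+r_ii-\overline{f}(i))/d\bmod s$, with $O(1)$ arithmetic operations on $O(\log q)$-bit integers per index, giving the stated $(d\log^{1+o(1)}q,d,0,0,0)$. The explicit handling of the $a_i=0$ case and the remark that no modular-order or primitive-root queries arise are small clarifications beyond what the paper spells out, but the underlying computation and complexity accounting are identical.
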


\begin{proof}
With regard to $\overline{f}$, we know that $\overline{f}(d)=d$, so only the values $\overline{f}(i)$ for $i\in\{0,1,\ldots,d-1\}$ need to be computed. These are $O(d)$ cases. By our discussion in the introduction, we have $\overline{f}(i)=(e_i+r_ii)\bmod{d}$, where $e_i=\log_{\omega}(a_i)$, and by our assumptions from the beginning of this section, this discrete logarithm is either directly specified with $a_i$, or we compute it with a (finite) field discrete logarithm (fdl) query. After computing $e_i$, it takes another $O(\log^{1+o(1)}{q})$ bit operations by Lemma \ref{complexitiesLem}(1,3) to evaluate $(e_i+r_ii)\bmod{d}$ and thus compute $\overline{f}(i)$. In total, a $q$-bounded query complexity of computing $\overline{f}$ is
\[
(d\log^{1+o(1)}{q},d,0,0,0).
\]
Once $\overline{f}$ has been determined, the computation of the $A_i$ is easy; for each of the $O(d)$ values of $i$ in question, we note that $A_i(x)=\alpha_ix+\beta_i$ for all $x\in\IZ/s\IZ$, where $\alpha_i,\beta_i\in\IZ/s\IZ$ are constants. Computing $A_i$ just means computing $\alpha_i$ and $\beta_i$, and by the discussion in our introduction, we have
\[
\alpha_i=r_i,\quad \beta_i=\frac{e_i+r_ii-\overline{f}(i)}{d}\bmod{s}.
\]
We can directly read off $r_i$ from the definition (\ref{cyclotomicFormEq}) of $f$, and computing $\beta_i$ takes $O(\log^{1+o(1)}{q})$ bit operations by Lemma \ref{complexitiesLem}(1,3). Therefore, computing all $\alpha_i$ and $\beta_i$ after $\overline{f}$ has been worked out takes $O(d\log^{1+o(1)}{q})$ bit operations, and the result follows.
\end{proof}

With regard to Problem 3 from the beginning of this section, we note that solving this problem efficiently provides us with a quick understanding of each given connected component of $\Gamma_f$. While it would be more desirable to have an efficient algorithm that achieves a \emph{global} understanding, of the isomorphism types of all connected components of $\Gamma_f$, it is not even clear what the output of such an algorithm would look like. In Definition \ref{necklaceListDef}, we introduce the concept of a tree necklace list, which is a way to list the isomorphism types of all connected components of $\Gamma_f$ with their multiplicities. While such a tree necklace list is a compact encoding of the isomorphism type of $\Gamma_f$ in \emph{some} cases (e.g., the ones considered in Subsubsections \ref{subsubsec5P3P2} and \ref{subsubsec5P3P3}), it is not clear whether that is always the case; see also the discussion after Remark \ref{necklaceListRem}.

The following main result of this section provides $q$-bounded query and Las Vegas dual complexities of the three algorithmic problems from the beginning of this section. We recall that $\mpe(q-1)=\max_{p\mid q-1}{\nu_p(q-1)}$ denotes the maximum exponent of a prime in the prime factorization of $q-1$.

\begin{theoremm}\label{complexitiesTheo}
The following hold with regard to the three algorithmic problems from the beginning of this section.
\begin{enumerate}
\item Problem 1 has $q$-bounded query complexity
\[
(d^2\log^2{d}+d^2\log^{1+o(1)}{q}+d\log^{2+o(1)}{q},d,d\log{q},d,1)
\]
and $q$-bounded Las Vegas dual complexity
\[
(d^2\log^2{d}+d^2\log^{1+o(1)}{q}+d\log^{8+o(1)}{q},d\log^{4+o(1)}{q},d\log^2{q}).
\]
\item Problem 2 has $q$-bounded query complexity
\[
(d^3\mpe(q-1)2^{(3d^2+d)\mpe(q-1)+2d}\log^{1+o(1)}{q},d,0,0,0)
\]
and $q$-bounded Las Vegas dual complexity
\[
(d^3\mpe(q-1)2^{(3d^2+d)\mpe(q-1)+2d}\log^{1+o(1)}{q}+d\log^{3+o(1)}{q},d\log^{3+o(1)}{q},d\log{q}).
\]
Moreover, the computed tree-partition register can be chosen such that its underlying recursive tree description list is of length in $O(\min\{d2^{d^2\mpe(q-1)+d},q\})$, with each tree description from the list being itself a list of length in
\[
O(\min\{d2^{d^2\mpe(q-1)+d},q\}),
\]
each entry of which is an ordered pair of bit length in $O(\log{q})$.
\item Problem 3 has $q$-bounded query complexity
\begin{align*}
(&d\log^{2+o(1)}{q}+d^3\mpe(q-1)\log^{1+o(1)}{q}+d^3\mpe(q-1)2^{d^2\mpe(q-1)+d}\log{q}, \\
&d,d^3\mpe(q-1),d^3\mpe(q-1),0)
\end{align*}
and $q$-bounded Las Vegas dual complexity
\begin{align*}
(&d^3\mpe(q-1)\log^{7+o(1)}{q}+d^3\mpe(q-1)2^{d^2\mpe(q-1)+d}\log{q}, \\
&d^3\mpe(q-1)\log^{3+o(1)}{q},d^3\mpe(q-1)\log{q}).
\end{align*}
\end{enumerate}
\end{theoremm}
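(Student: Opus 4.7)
My plan is to treat the three problems as successive algorithmic layers built on the theory of Sections~\ref{sec2} and \ref{sec3}, establish query complexity bounds in each case by careful bookkeeping of the subroutines they invoke, and then convert those bounds to Las Vegas dual complexities by a single application of Lemma~\ref{LVComplexityLem} with $y=q$. All three algorithms begin with a call to Proposition~\ref{faiProp} to obtain $\overline{f}$ and the affine maps $A_i$, which accounts for the summand $d\log^{1+o(1)}{q}$ and the $d$ fdl queries that appear in each of the three asserted complexities.

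For Problem~1, I would compute a CRL-list $\overline{\Lcal}$ of $\overline{f}$ by brute force orbit enumeration on $\{0,1,\ldots,d\}$ (cost $O(d^2\log d)$ bit operations), then follow Proposition~\ref{crlListConstructProp}. For each $(i,\ell)\in\overline{\Lcal}$ with $i<d$, compose the $\ell$ affine maps to produce $\Acal_i$ ($O(\ell\log^{1+o(1)}{q})$ bit operations), reduce $\Acal_i$ modulo $s':=\prod_{p\mid s,p\nmid\alpha_i}p^{\nu_p(s)}$ and identify its unique periodic point on the $p\mid\gcd(\alpha_i,s)$ part via Lemma~\ref{periodicCharLem}, and finally invoke Subsection~\ref{subsec2P3}: this requires one prt query (subsumed in a single call to obtain primitive roots modulo all odd prime power divisors of $s$), $O(d)$ mdl queries to determine the cycle lengths appearing in Proposition~\ref{crListTensorProp}, and the tabular lookups in Tables~\ref{crlListAutomorphismTable} and \ref{crlListPrimaryTable}. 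The total bit-operation cost is dominated by the $O(d)$ compositions and the tensor-product bookkeeping, giving the asserted $q$-bounded query complexity.

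For Problem~2, I would traverse $\overline{f}$ in reverse topological order: first compute $\Pcal_i$ for $\overline{f}$-transient $i$ by the recursion of Proposition~\ref{transientCosetsProp}, then $\Tree_{\Gamma_f}(0_{\IF_q})$ via Proposition~\ref{zeroBlockProp}, then for each $\overline{f}$-periodic $i$ construct $\Pcal_{i,h}$ and $\Tcal_{i,h}$ for $h=0,\ldots,H_i$ using the lifts $\lambda_{i_{-t}}^t(\Rcal_{i_{-t}})$ and finally $\Pcal_i=\Qcal_{i,H_i}$ as in Proposition~\ref{periodicCosetsProp}. The arithmetic complexity bound $\AC(\Pcal_i)\leq d^2\mpe(q-1)+d-1$ means there are at most $2^{d^2\mpe(q-1)+d}$ candidate logical sign tuples per coset, each testable for non-emptiness via Proposition~\ref{sCongProp}\emph{(2)} in $O(d^4\mpe(q-1)^2\log^{1+o(1)}{q})$ bit operations (pairwise gcd consistency of moduli); for each non-empty block, the tree $\Tree_i(\Pcal_i,\vec{\nu})$ is computed from the recursive formulas via distribution numbers of Lemma~\ref{masterLem}, where the summation over $J\subseteq J_-(\vec{\nu})$ together with the summation over the $\vec{\nu}^{(\Pcal_{j_t})}$ ranges within Propositions~\ref{transientCosetsProp} and \ref{periodicCosetsProp} contributes the two extra factors of $2^{d^2\mpe(q-1)+d}$ that produce the exponent $3d^2\mpe(q-1)+2d$ in the claimed bound. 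Building the recursive tree description list requires deduplication: for each newly produced tree I would canonicalize by sorting its representation (Lemma~\ref{complexitiesLem}\emph{(10,11)}) and binary-search into the running list of previously recorded isomorphism types. The length bound on the description list follows from the obvious bound $N\leq\min\{\sum_i 2^{\AC(\Pcal_i)},q\}$.

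For Problem~3, I would use Subsection~\ref{subsec3P4}: locate $r$ in its coset $C_i$ (one fdl to obtain $\log_\omega(r)$), compute $r'_{i_t}=f^t(r)$ for $t=0,\ldots,\ell-1$, and for each of the $O(d^2\mpe(q-1))$ spanning congruences of the partitions $\Pcal_{i_t}$ along the cycle, invoke Subsection~\ref{subsec2P4} to obtain $\log_{\Acal_{i_t}}^{(\afrak_{i_t,j})}(r'_{i_t},\bfrak_{i_t,j})$ and the corresponding cycle length via one mdl and one mord query. Reading off the cyclic tree sequence~(\ref{cyclicSeqEq}) requires, in the worst case, enumerating blocks of $\Pcal_{i_t}$ to identify the one containing each $\Acal_{i_t}^m(r'_{i_t})$, which contributes the factor $2^{d^2\mpe(q-1)+d}$. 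The main obstacle throughout is the combinatorial blow-up in Problem~2, where one must simultaneously bound the arithmetic complexity of every $\Pcal_i$ (handled by the explicit estimate at the end of Proposition~\ref{periodicCosetsProp}) and organize the computation so that each non-empty block is visited only a bounded number of times during tree assembly and deduplication; once query complexities are in hand, the dual complexities follow mechanically from Lemma~\ref{LVComplexityLem} with $h(d,\log q):=q$.
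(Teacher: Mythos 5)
Your proposal follows essentially the same blueprint as the paper's proof: Problem~1 via Proposition~\ref{crlListConstructProp} and the CRL-list machinery of Subsection~\ref{subsec2P3}, Problem~2 via the three-stage recursion of Propositions~\ref{transientCosetsProp}, \ref{zeroBlockProp}, \ref{periodicCosetsProp}, Problem~3 via Subsection~\ref{subsec3P4}, and conversion to Las Vegas dual complexity by Lemma~\ref{LVComplexityLem}. However, the bookkeeping has two substantive gaps that would need repair before the stated bounds are actually established.

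For Problem~1, you claim only $O(d)$ mdl queries, but the cycle-length formulas in Table~\ref{crlListPrimaryTable} require a factorization of $p-1$ and of $\ord_{p^{\nu_p(s)}}(\overline\alpha_i)$ for each odd prime $p\mid s$, and there are $O(\log q)$ such primes per cycle representative $(i,\ell)\in\overline{\Lcal}$. The paper therefore obtains $O(d\log q)$ mdl queries; your $O(d)$ misses a factor of $\log q$, and nothing in your sketch indicates how the factorizations of $p-1$ could be amortized or how the factorization of the orders could be recovered without further queries. For Problem~2, your exponent factoring $3(d^2\mpe(q-1)+d)$ is not a valid upper bound: the distribution-number computation for $\Qcal_{i,h}$ runs over subsets $J$ of a spanning congruence sequence of length $(h+1)d+H_i$, not $(h+1)d$, and the extra $H_i\le d\mpe(q-1)$ congruences from $\Ucal_i$ contribute the $d\,\mpe(q-1)$ term in the exponent $(3d^2+d)\mpe(q-1)+2d$ that your account omits (your bound is actually smaller than the paper's whenever $\mpe(q-1)>2$, so it is not merely loose). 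In the same vein, testing non-emptiness of a block with negated conditions requires the full inclusion-exclusion sum of Lemma~\ref{masterLem} (hence a factor $2^{\AC}$), not merely the pairwise gcd checks of Proposition~\ref{sCongProp}, so the quoted $O(d^4\mpe(q-1)^2\log^{1+o(1)}q)$ cost for that substep is wrong on its own terms even though it does not change the overall $O$-class.
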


We prove Theorem \ref{complexitiesTheo} in Subsection \ref{subsec5P2}. Before that, we make some more comments on the complexities of the three algorithmic problems in Theorem \ref{complexitiesTheo}. A glaring question is how the inclusion of the parameter $\mpe(q-1)$ in statements (2) and (3) affects the complexity. At first glance, this seems rather bad, because generally $\mpe(q-1)\leq\lfloor\log_2(q-1)\sim\log_2{q}$, and this bound is attained whenever $q$ is a Fermat prime. Since in statements (2) and (3) of Theorem \ref{complexitiesTheo}, $\mpe(q-1)$ occurs in the exponent of a power with base $2$, this means that in the worst case, the given complexities for Problems 2 and 3 are exponential in the input length (which lies in $O(d\log{q})$) for fixed $d$.

That being said, it turns out that \enquote{most of the time}, $\mpe(q-1)$ is actually bounded from above by a suitably large constant, as the following result states. This result and its proof was kindly pointed out by MathOverflow user \enquote{Dr.~Pi} in a response to a question posted by the first author on MathOverflow\footnote{see \url{https://mathoverflow.net/questions/436134/average-value-of-the-prime-omega-function-omega-on-predecessors-of-prime-powe}}.

\begin{propposition}\label{mpeAvProp}
There is an absolute constant $c_{\mpe}>0$ such that for all $x\geq2$, one has
\[
\left(\sum_{q\leq x}{1}\right)^{-1}\sum_{q\leq x}{\mpe(q-1)}\leq c_{\mpe}
\]
where the variable $q$ ranges over prime powers. In particular, the following hold.
\begin{enumerate}
\item For each $\epsilon>0$, there is a constant $c_{\epsilon}>0$ such that for all prime powers $q$ except an asymptotic fraction of less than $\epsilon$, one has $\mpe(q-1)<c_{\epsilon}$.
\item Let $h:\left[0,\infty\right)\rightarrow\left[0,\infty\right)$ be a function such that $h(x)\to\infty$ as $x\to\infty$. Then for asymptotically almost all prime powers $q$, one has $\mpe(q-1)\leq h(q)$.
\end{enumerate}
\end{propposition}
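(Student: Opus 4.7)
The plan is to prove the uniform bound $\sum_{q \leq x,\, q \text{ prime power}} \mpe(q-1) = O(\pi(x))$ and then derive statements (1) and (2) as elementary Markov-type consequences. The key identity is
\[
\mpe(n) = \sum_{k=1}^{\infty} \mathbf{1}\!\left[\exists\, p \text{ prime} : p^k \mid n\right],
\]
which, upon swapping summations, reduces the problem to estimating, for each $k \geq 1$, the count of prime powers $q \leq x$ with $p^k \mid q-1$ for some prime $p$. Proper prime powers $q = p^m$ with $m \geq 2$ number only $O(\sqrt{x})$ and satisfy $\mpe(q-1) \leq \log_2 q$, contributing $O(\sqrt{x}\log x) = o(\pi(x))$ in aggregate, so they can be discarded; for primes, the $k=1$ term contributes exactly $\pi(x) + O(1)$.

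For each $k \geq 2$, a union bound yields $\sum_{p : p^k \leq x} \pi(x; p^k, 1)$, where $\pi(x; m, a)$ counts primes $\leq x$ in the residue class $a \pmod m$. I would invoke the Brun--Titchmarsh inequality $\pi(x; m, a) \leq 2x / (\phi(m)\log(x/m))$, valid for $1 \leq m < x$ with $\gcd(a,m) = 1$, and split the sum at $p^k = \sqrt{x}$. In the small range $p^k \leq \sqrt{x}$, combining $\log(x/p^k) \geq \tfrac{1}{2}\log x$ with $\phi(p^k) \geq p^k/2$ reduces the contribution across all $k \geq 2$ to a constant multiple of
\[
\frac{x}{\log x} \sum_{k \geq 2} \sum_p \frac{1}{p^k} = \frac{x}{\log x} \sum_p \frac{1}{p(p-1)} = O(\pi(x)),
\]
where the essential point is that restricting to $k \geq 2$ produces the convergent series $\sum_p 1/(p(p-1))$ rather than the divergent $\sum_p 1/p$. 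In the large range $p^k > \sqrt{x}$ (necessarily with $k \geq 2$), the trivial bound $\pi(x; p^k, 1) \leq x/p^k + 1 \leq \sqrt{x} + 1$, multiplied by the $O(\sqrt{x}/\log x)$ proper prime powers $p^k \leq x$ (dominated by $\pi(\sqrt{x})$ from the $k=2$ case), again yields $O(\pi(x))$, completing the main bound.

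For statement (1), Markov's inequality forces $c_\epsilon := \lceil c_{\mpe}/\epsilon \rceil + 1$ to work: if $\mpe(q-1) \geq c_\epsilon$ held for a fraction $\geq \epsilon$ of prime powers $q \leq x$ along some subsequence, the average would exceed $\epsilon c_\epsilon > c_{\mpe}$, contradicting the uniform bound. For statement (2), given $h \to \infty$ I would partition the exceptional set $A(x) = \#\{q \leq x : \mpe(q-1) > h(q)\}$ at a threshold $T > 0$: points with $h(q) \leq T$ form a finite set independent of $x$ (since $h$ diverges), while points with $h(q) > T$ satisfy $\mpe(q-1) > T$, contributing $O(\pi(x)/T)$ by Markov. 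Letting $x \to \infty$ and then $T \to \infty$ yields $A(x)/\pi(x) \to 0$. The main delicate step throughout is the Brun--Titchmarsh application in the small range, which crucially needs the exclusion of $k = 1$ to avoid the divergent prime reciprocal series; the large range then works out automatically thanks to the sparseness of proper prime powers, which is the reason the trivial bound suffices there.
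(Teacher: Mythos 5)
Your proof is correct, and the main bound is argued via a genuinely different decomposition from the paper's. You use the layer-cake identity $\mpe(n)=\sum_{k\geq 1}\mathbf{1}\!\left[\exists\,p:p^k\mid n\right]$ and swap the order of summation, so the heart of the matter becomes the uniformly convergent double sum $\sum_{k\geq 2}\sum_{p}p^{-k}=\sum_{p}1/(p(p-1))$ after one application of Brun--Titchmarsh in the small-modulus range. The paper instead groups primes by the exact value $v=\mpe(p-1)$ and, for each $v\geq 2$, proves a count of order $x/(2^{v}\log x)$ by running Brun--Titchmarsh with the same $\sqrt{x}$ threshold you use, then sums $\sum_{v}v/2^{v}$. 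The two routes rest on the same two ingredients (Brun--Titchmarsh plus the convergence of $\sum_p p^{-2}$), but your version sidesteps a subtle uniformity question the paper's phrasing elides: the stated per-$v$ bound $O(x/(2^{v}\log x))$ is not literally uniform in $v$ (for $v$ near $\log_2 x$ the large-modulus contribution $O(x^{5/6})$ exceeds it), and one must instead notice that the $O(x^{5/6})$ term, summed over all $v\leq\log_2 x$ with weight $v$, still contributes $o(x/\log x)$. Your layer-cake rearrangement avoids this bookkeeping entirely: the large-modulus range is handled once, by a crude $O(\sqrt{x}\cdot\sqrt{x}/\log x)$ count, rather than once per $v$. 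For statements (1) and (2) you spell out the Markov-inequality arguments that the paper only sketches; those details are correct (the threshold trick in (2), splitting at $h(q)\leq T$ versus $h(q)>T$ and letting $T\to\infty$ after $x\to\infty$, is exactly the right way to exploit $h(x)\to\infty$). What the paper's grouping buys that yours does not is the slightly finer information that the density of primes with $\mpe(p-1)=v$ decays geometrically in $v$; but for the average-value bound and its two corollaries, your argument is both sufficient and tidier.
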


\begin{proof}
We start by observing that the number of proper (i.e., non-prime) prime powers up to $x$ is asymptotically equivalent to $2x^{1/2}/\log{x}$. Indeed, the number of prime \emph{squares} up to $x$ is
\[
\pi\left(x^{1/2}\right)\sim\frac{x^{1/2}}{\log\left(x^{1/2}\right)}=\frac{2x^{1/2}}{\log{x}}.
\]
Moreover, a prime power $p^k\leq x$ with $k\geq3$ satisfies $k\leq\lfloor\log_2{x}\rfloor$, and for each fixed $k$, the number of such prime powers is at most $x^{1/k}\leq x^{1/3}$. Hence the number of all proper prime powers up to $x$ is
\[
\pi\left(x^{1/2}\right)+O\left(x^{1/3}\log{x}\right)\sim\frac{2x^{1/2}}{\log{x}}.
\]
This entails the following two things.
\begin{enumerate}
\item The number $\sum_{q\leq x}{1}$ of \emph{all} prime powers up to $x$ is asymptotically equivalent to $x/\log{x}$, same as $\pi(x)$.
\item In the sum $\sum_{q\leq x}{\mpe(q-1)}$, the total contribution stemming from \emph{proper} prime powers is at most
\[
O\left(\log{x}\cdot\frac{2x^{1/2}}{\log{x}}\right)=O\left(x^{1/2}\right)\subseteq o\left(\sum_{q\leq x}{1}\right).
\]
We may thus focus on the contribution $\sum_{p\leq x}{\mpe(p-1)}$ stemming from primes.
\end{enumerate}
For each $v=1,2,\ldots,\lfloor\log_2{x}\rfloor$, we give a $O$-bound on the number of primes $p\leq x$ with $\mpe(p-1)=v$. For $v=1$, we use the trivial bound
\[
O(\pi(x))=O\left(\frac{x}{\log{x}}\right)=O\left(\frac{x}{2\log{x}}\right).
\]
Now we assume that $v\geq 2$. In order to derive a bound for such $v$, we use the Brun-Titchmarsh Theorem in its stronger form proved by Montgomery and Vaughan \cite[Theorem 2]{MV73a}. This result states that for $\afrak\in\IN^+$, $\bfrak\in\IZ$ and each real $x>\afrak$, the number of primes $p\leq x$ with $p\equiv\afrak\Mod{\bfrak}$ is at most
\[
\frac{2x}{\phi(\afrak)\log(x/\afrak)}.
\]

Now, a prime $p\leq x$ with $\mpe(p-1)=v$ is congruent to $1$ modulo $\pfrak^v$ for some prime $\pfrak<x^{1/v}$. If $\pfrak^v\leq x^{1/2}$ is fixed, then the Brun-Titchmarsh Theorem implies that the number of primes $p\leq x$ with $p\equiv1\Mod{\pfrak^v}$ is at most
\[
\frac{2x}{\phi(\pfrak^v)\log(x/\pfrak^v)}\leq \frac{4x}{\pfrak^{v-1}(\pfrak-1)\log{x}}\in O\left(\frac{x}{\pfrak^v\log{x}}\right).
\]
On the other hand, if $x^{1/2}<\pfrak^v<x$, then the number of primes $p\leq x$ with $p\equiv1\Mod{\pfrak^v}$ is at most $x/\pfrak^v<x^{1/2}$. It follows that the number of primes $p\leq x$ with $\mpe(p-1)=v=2$ is in
\[
O\left(\frac{x}{\log{x}}\cdot\sum_{\pfrak\leq x^{1/4}}{\frac{4}{\pfrak(\pfrak-1)}}+x^{1/2}\cdot\frac{2x^{1/2}}{\log{x}}\right)=O\left(\frac{x}{\log{x}}\right)=O\left(\frac{x}{4\log{x}}\right)
\]
and, if $v>2$, that number is in
\[
O\left(\frac{x}{\log{x}}\cdot\sum_{\pfrak\leq x^{1/(2v)}}{\frac{4}{\pfrak^{v-1}(\pfrak-1)}}+x^{1/2}\cdot x^{1/3}\right)=O\left(\frac{x}{2^v\log{x}}\right).
\]
In summary, we have shown that for each $v=1,2,\ldots,\lfloor\log_2{x}\rfloor$, the number of primes $p\leq x$ with $\mpe(p-1)=v$ is in $O\left(x/(2^v\log{x})\right)$, and so
\[
\sum_{p\leq x}{\mpe(p-1)}\in O\left(\sum_{v=1}^{\lfloor\log_2{x}\rfloor}{\frac{vx}{2^v\log{x}}}\right)=O\left(\frac{x}{\log{x}}\sum_{v=1}^{\lfloor\log_2{x}\rfloor}{\frac{v}{2^v}}\right)=O\left(\frac{x}{\log{x}}\right),
\]
whence
\begin{align*}
\left(\sum_{q\leq x}{1}\right)^{-1}\sum_{q\leq x}{\mpe(q-1)} &\sim \left(\sum_{p\leq x}{1}\right)^{-1}\sum_{p\leq x}{\mpe(p-1)} \\
&\in O\left(\frac{1}{x/\log{x}}\cdot\frac{x}{\log{x}}\right)=O(1),
\end{align*}
which is the main statement of this proposition. The first \enquote{In particular} statement follows readily from this by observing that the quantity $\left(\sum_{q\leq x}{1}\right)^{-1}\sum_{q\leq x}{\mpe(q-1)}$ is the average value of $\mpe(q-1)$ on prime powers $q\leq x$. Finally, the second \enquote{In particular} statement is an easy consequence of the first.
\end{proof}

For applications, finite fields of characteristic $2$ are of particular interest. The authors are not aware of any rigorous results concerning the asymptotic behavior of $\mpe(2^v-1)$ as $v\to\infty$, but in Table \ref{mpeMersenneTable}, we provide an overview of the maximum and average values of $\mpe(2^v-1)$ for $v\in\{1,2,\ldots,K\}$, where $K\in\{100,200,\ldots,1000\}$. This was obtained using GAP \cite{GAP4} and information from the Cunningham project \cite{Wag22a}. More specifically, GAP appeared to have difficulties factoring $2^v-1$ for $v\in\{929,947,991\}$, but a quick consultation of the Cunningham factorization tables reveals that $\mpe(2^v-1)=1$ for each of these three values of $v$.

\begin{longtable}[h]{|c|c|c|}\hline
$K$ & $\max\{\mpe(2^v-1):1\leq v\leq K\}$ & $K^{-1}\sum_{v=1}^K{\mpe(2^v-1)}$ rounded \\ \hline
100 & 4 & 1.28 \\ \hline
200 & 5 & 1.325 \\ \hline
300 & 5 & 1.3267 \\ \hline
400 & 5 & 1.3325 \\ \hline
500 & 6 & 1.336 \\ \hline
600 & 6 & 1.3383 \\ \hline
700 & 6 & 1.3371 \\ \hline
800 & 6 & 1.34 \\ \hline
900 & 6 & 1.3389 \\ \hline
1000 & 6 & 1.341 \\ \hline
\caption{Maximum and average values of $\mpe(2^v-1)$.}
\label{mpeMersenneTable}
\end{longtable}

Based on this, we conjecture that the average value of $\mpe(2^v-1)$ for $1\leq v\leq x$ is always less than $2$, see Conjecture \ref{mpeConj}.

\subsection{Proof of Theorem \ref{complexitiesTheo}}\label{subsec5P2}

We give detailed descriptions of algorithms for solving Problems 1--3 and analyze their query complexities (their Las Vegas dual complexities specified in Theorem \ref{complexitiesTheo} follow readily using Lemma \ref{LVComplexityLem}). The amount of details we give should make it easy to implement these algorithms. In all three cases, we first need to compute $\overline{f}$ and the affine maps $A_i$, which takes query complexity $(d\log^{1+o(1)}{q},d,0,0,0)$ by Proposition \ref{faiProp}. We assume that this has already been done at the start of the discussion of each individual problem. Whenever a positive integer needs to be factored, we subsume this under an mdl query (counted in the third entry of a query complexity).

\subsubsection{Proof of statement (1)}\label{subsubsec5P2P1}

Quite a lot of notations are needed to provide this algorithm in full detail. For the reader's convenience, we print the names of those notations that are newly introduced in this discussion, as well as those of a few notations introduced earlier but rarely used since, in underlined form at the beginning of the respective paragraph where they first appear in this discussion. For the reading flow, these underlined parts need to be ignored. Of course, these notations are also catalogued in Table \ref{termNotTable} in the Appendix.

\underline{$\overline{\Lcal}$.} Before computing $\Lcal$ properly, we need to compute a CRL-list $\overline{\Lcal}$ for $\overline{f}$. Because $\overline{f}$ can be any function $\{0,1,\ldots,d\}\rightarrow\{0,1,\ldots,d\}$ with $\overline{f}(d)=d$, we use a general, brute-force algorithm for this, assuming that the indices $i\in\{0,1,\ldots,d\}$ are processed as non-negative integers in binary representation, with $\lfloor\log{d}\rfloor+1$ digits each. Going\phantomsection\label{beginningRef} through them to compute $\im(\overline{f})$ as a (not necessarily repetition-free) list of its elements takes $O(d\log{d})$ bit operations, and checking whether $\im(\overline{f})=\{0,1,\ldots,d\}$ uses $O(d\log^2{d})$ bit operations, for sorting $\im(\overline{f})$ according to Lemma \ref{complexitiesLem}(10), then checking that there are no repeated entries. If $\im(\overline{f})\not=\{0,1,\ldots,d\}$, we continue by computing $\im(\overline{f}^2)$ and checking whether $\im(\overline{f})=\im(\overline{f}^2)$, using another $O(d\log^2{d})$ bit operations, and so on. After $O(d)$ iterations of this, and thus after $O(d^2\log^2{d})$ bit operations in total, we have found the periodic point set $\per(\overline{f})$ as the first iterated image $\im(\overline{f}^n)$ such that $\im(\overline{f}^{n+1})=\im(\overline{f}^n)$. Finally, it takes another $O(d\log{d})$ bit operations to compute $\overline{\Lcal}$ through iteration of $\overline{f}$ on $\per(\overline{f})$ by brute force, and in the process, we can actually store each cycle of $\overline{f}$ in full, which will be useful shortly. In total, these computations require $O(d^2\log^2{d})$ bit operations.

\underline{$\Lcal_i, U_i, \para_i, Y_i$.} To compute the desired parametrization of $\Lcal$, we go through the elements $(i,\ell)\in\overline{\Lcal}$, with associated $\overline{f}$-cycle $(i_0,i_1,\ldots,i_{\ell-1})$, and compute a parametrization of a CRL-list $\Lcal_i$ of the restriction $f_{\mid U_i}$, where $U_i=\bigcup_{t=0}^{\ell-1}{C_{i_t}}$. This works because by Proposition \ref{crlListConstructProp}, $\Lcal$ is simply the (disjoint) union of those $\Lcal_i$. Specifically, we compute a formula that defines a bijective function $\para_i:Y_i\rightarrow\Lcal_i$\phantomsection\label{not220}\phantomsection\label{not221}, where $Y_i$ is a \enquote{simple} set depending on $i$. For $(i,\ell)=(d,1)$, which is dealt with outside the loop for the other pairs $(i,\ell)$, we have $\Lcal_d=\{(0_{\IF_q},1)\}$, and we set $Y_d:=\{(\emptyset,\emptyset)\}$ (to conform with the format the sets $Y_i$ for $i<d$ have -- each of the two $\emptyset$ is to be viewed as an empty tuple) and define $\para_d(\emptyset,\emptyset):=(0_{\IF_q},1)$. This only takes $O(\log{d})$ bit operations (not $O(1)$, because the index $d$ on the left-hand side of the definition needs to be spelled out).

\underline{$\rfrak_p, \Acal_i, \overline{\alpha}_i, \overline{\beta}_i, \para'_i, \Lcal'_i, \Pfrak_i$.} Next, we factor $s=(q-1)/d$ in a single $q$-bounded modular discrete logarithm (mdl) query (we remind the reader that we subsume factorizations under mdl queries). We also find a primitive root $\rfrak_p$\phantomsection\label{not236} modulo $p^{\nu_p(s)}$ for each odd prime divisor $p$ of $s$ using a single $q$-bounded primitive root (prt) query. Following that, we loop over the elements $(i,\ell)\in\overline{\Lcal}$ with $i<d$, and for each of them, we do the following. We compute
\[
\Acal_i:=A_{i_0}A_{i_1}\cdots A_{i_{\ell-1}},\quad \Acal_i(z)=\overline{\alpha}_iz+\overline{\beta}_i.
\]
This\phantomsection\label{not222} takes $O(d)$ multiplications of already computed affine maps, each of which costs $O(\log^{1+o(1)}{q})$ bit operations by Lemma \ref{complexitiesLem}(1,3) using the formula
\[
(z\mapsto \alpha z+\beta)(z\mapsto \alpha'z+\beta')=(z\mapsto \alpha\alpha'z+\alpha'\beta+\beta').
\]
Hence, in total, the computation of $\Acal_i$ takes $O(d\log^{1+o(1)}{q})$ bit operations. Our next goal is to compute a parametrization $\para'_i:Y_i\rightarrow\Lcal'_i$\phantomsection\label{not223} of a CRL-list $\Lcal'_i$ for $\Acal_i$, from which $\para_i:Y_i\rightarrow\Lcal_i$ is obtained simply by stretching all second entries (cycle lengths) of images of $\para'_i$ by the factor $\ell$. As preparations for an upcoming loop over the prime divisors of $s$, we initialize $\Pfrak_i:=\emptyset$ (ultimately, $\Pfrak_i$\phantomsection\label{not228} will be a list of those prime divisors of $s$ that do \emph{not} divide $\overline{\alpha}_i$). We also compute $\ord_{p^{\nu_p(s)}}(\overline{\alpha}_i)$ for each prime divisor $p$ of $s$, requiring a single $q$-bounded multiplicative order (mord) query.

\underline{$\kappa_p, \overline{\Acal}_{i,p}, \para'_{i,p}, Y_{i,p}, \Lcal'_{i,p}$} Next, we loop over the prime divisors $p$ of $s$, and for each of them, we do the following. First, we check whether $p\mid\overline{\alpha}_i$, and if so, we skip to the next value of $p$. Otherwise, we add $p$ to $\Pfrak_i$, then read off $\kappa_p:=\nu_p(s)$\phantomsection\label{not229} and $p^{\kappa_p}$ from the factorization of $s$ computed earlier. Following that, we compute $\overline{\Acal}_{i,p}:=\Acal_i\bmod{p^{\kappa_p}}$\phantomsection\label{not224} (that is, we compute $\overline{\alpha}_i\bmod{p^{\kappa_p}}$ and $\overline{\beta}_i\bmod{p^{\kappa_p}}$), which takes $O(\log^{1+o(1)}{q})$ bit operations by Lemma \ref{complexitiesLem}(3). We note that since $p$ does not divide $\overline{\alpha}_i$, the function $\overline{\Acal}_{i,p}$ is an affine \emph{permutation} of $\IZ/p^{\kappa_p}\IZ$, and from our Table \ref{crlListPrimaryTable}, we can read off a compact parametrization $\para'_{i,p}:Y_{i,p}\rightarrow\Lcal'_{i,p}$\phantomsection\label{not225}\phantomsection\label{not226}\phantomsection\label{not227} of a CRL-list $\Lcal'_{i,p}$ of $\overline{\Acal}_{i,p}$ in which all specified cycle lengths are fully factored. The details of this are given in Table \ref{crlListParTable} below; each numbered row of that table corresponds to the case with the same number in Table \ref{crlListPrimaryTable}. The following paragraph introduces some more notation, which is used in Table \ref{crlListPrimaryTable} and needs to be computed before one is able to print a description of $\para'_{i,p}$.

\underline{$\kappa_{i,p}, \nfrak_p, \pfrak_{p,k}, v_{p,k}, v'_{i,p,k}, v'_{i,p}, v''_{i,2}, \rfrak_p, \ffrak_{i,p}$.} Recalling that $\nu_p^{(v)}(m):=\min\{\nu_p(m),v\}$, we set
\[
\kappa_{i,p}:=\nu_p^{(\kappa_p)}(\overline{\beta}_i)=\nu_p^{(\kappa_p)}(\overline{\beta}_i\bmod{p^{\kappa_p}}),
\]
which\phantomsection\label{not230} can be computed using $O(\kappa_p)=O(\log{p^{\kappa_p}})$ integer divisions by $p$, resulting in a bit operation cost of $O(\log^{2+o(1)}{p^{\kappa_p}})$. If $p>2$, we next compute factorizations of $p-1$ and of $\ord_{p^{\kappa_p}}(\overline{\alpha}_i)$ using $2\in O(1)$ mord queries. We spell these factorizations out as follows:
\[
p-1=\prod_{k=1}^{\nfrak_p}{\pfrak_{p,k}^{v_{p,k}}}
\]
and\phantomsection\label{not231}\phantomsection\label{not232}\phantomsection\label{not233}
\[
\ord_{p^{\kappa_p}}(\overline{\alpha}_i)=\prod_{k=1}^{\nfrak_p}{\pfrak_{p,k}^{v'_{i,p,k}}}\cdot p^{v'_{i,p}}.
\]
We\phantomsection\label{not234}\phantomsection\label{not235} note that some of the exponents $v'_{i,p,k}$ or $v'_{i,p}$ may be $0$. On the other hand, if $p=2$ (where $\nfrak_p=0$), we write $\ord_{2^{\kappa_2}}(\overline{\alpha}_i)=2^{v'_{i,2}}$, which matches with the notation for $p>2$ above, and $\ord_{2^{\kappa_2}}(-\overline{\alpha}_i)=2^{v''_{i,2}}$\phantomsection\label{not235P5}. Usually, $v''_{i,2}=v'_{i,2}$ as $\ord_{2^{\kappa_2}}(\overline{\alpha}_i)=\ord_{2^{\kappa_2}}(-\overline{\alpha}_i)$, but if $\overline{\alpha}_i\equiv\pm1\Mod{2^{\kappa_2}}$, then $v'_{i,2}\in\{0,1\}$ and $v''_{i,2}=1-v_{i,2}$. Finally, regardless of whether or not $p>2$, we check whether $\overline{\Acal}_{i,p}$ has a fixed point, i.e., whether
\[
\gcd(\overline{\alpha}_i-1,p^{\kappa_p})=\gcd((\overline{\alpha}_i\bmod{p^{\kappa_p}})-1,p^{\kappa_p}) \mid \overline{\beta_i}\bmod{p^{\kappa_p}},
\]
which can be done using $O(\log^{1+o(1)}{p^{\kappa_p}})$ bit operations. We store this information, and whenever $\overline{\Acal}_{i,p}$ has a fixed point, we compute one, denoted by $\ffrak_{i,p}$\phantomsection\label{not237}, via the formula in Proposition \ref{crlListPrimaryProp}, taking another $O(\log^{1+o(1)}{p^{\kappa_p}})$ bit operations.

\underline{$u, u', \vec{u}$.} We are now ready to give the tabular definition of the bijective parametrization $\para'_{i,p}:Y_{i,p}\rightarrow\Lcal'_{i,p}$ of a CRL-list $\Lcal'_{i,p}$ of $\Acal'_{i,p}$. We note that the set $Y_{i,p}$ always has one of the following two forms, which will be important later on.
\begin{itemize}
\item $Y_{i,p}$ is an integer interval, a general element of which is denoted by $u$\phantomsection\label{not238}; or
\item the elements of $Y_{i,p}$ are pairs $(u,u')$ of integers, where $u$ ranges over an integer interval, and for each fixed value of $u$, the second entry $u'$\phantomsection\label{not239} also ranges over an integer interval.
\end{itemize}
To have a uniform notation, we may also denote an element of $Y_{i,p}$ by $\vec{u}$\phantomsection\label{not240} in either case. For example, to derive the formulas in the first case of Table \ref{crlListParTable}, we apply the first case in Table \ref{crlListPrimaryTable}, with $v:=\kappa_p$, $t:=u$, $j:=u'$, $a:=\overline{\alpha}_i$ and $b:=\overline{\beta}_i$. Then the range for $u$ is clear from the the range for $t$ in Table \ref{crlListPrimaryTable}. Concerning the asserted range for $u'$, we note that
\[
\frac{\phi(p^{\kappa_p})}{\ord_{p^{\kappa_p}}(\overline{\alpha}_i)}=\prod_{k=1}^{\nfrak_p}{\pfrak_{p,k}^{v_{p,k}-v'_{i,p,k}}}\cdot p^{\kappa_p-1-v'_{i,p}}
\]
and
\[
\phi(p^{\kappa_p-u})=\left((p-1)p^{\kappa_p-u-1}\right)^{\delta_{[u<\kappa_p]}}=\left(\prod_{k=1}^{\nfrak_p}{\pfrak_{p,k}^{v_{p,k}}}\cdot p^{\kappa_p-u-1}\right)^{\delta_{[u<\kappa_p]}},
\]
from which it can be deduced that
\[
\gcd\left(\frac{\phi(p^{\kappa_p})}{\ord_{p^{\kappa_p}}(\overline{\alpha}_i)},\phi(p^{\kappa_p-u})\right)=\left(\prod_{k=1}^{\nfrak_p}{\pfrak_{p,k}^{v_{p,k}-v'_{i,p,k}}}\right)^{\delta_{[u<\kappa_p]}}\cdot p^{\min\{\kappa_p-1-v'_{i,p},\delta_{[u<\kappa_p]}(\kappa_p-u-1)\}},
\]
as required. Finally, the formula for the cycle lengths (second entries of $\para'_{i,p}(\vec{u})$) in case 1 holds because
\begin{align*}
&\frac{\phi(p^{\kappa_p-u})}{\gcd\left(\frac{\phi(p^{\kappa_p})}{\ord_{p^{\kappa_p}}(\overline{\alpha}_i)},\phi(p^{\kappa_p-u})\right)}=\frac{\left(\prod_{k=1}^{\nfrak_p}{\pfrak_{p,k}^{v_{p,k}}}\cdot p^{\kappa_p-u-1}\right)^{\delta_{[u<\kappa_p]}}}{\left(\prod_{k=1}^{\nfrak_p}{\pfrak_{p,k}^{v_{p,k}-v'_{i,p,k}}}\right)^{\delta_{[u<\kappa_p]}}\cdot p^{\min\{\kappa_p-1-v'_{i,p},\delta_{[u<\kappa_p]}(\kappa_p-u-1)\}}} \\
&=\left(\prod_{k=1}^{\nfrak_p}{\pfrak_{p,k}^{v'_{i,p,k}}}\right)^{\delta_{[u<\kappa_p]}}\cdot p^{\delta_{[u<\kappa_p]}(\kappa_p-u-1-\min\{\kappa_p-1-v'_{i,p},\kappa_p-u-1\})} \\
&=\left(\prod_{k=1}^{\nfrak_p}{\pfrak_{p,k}^{v'_{i,p,k}}}\right)^{\delta_{[u<\kappa_p]}}\cdot p^{\delta_{[u<\kappa_p]}\max\{v'_{i,p}-u,0\}}
\end{align*}
The other cases in Table \ref{crlListParTable} can be dealt with analogously. To prevent confusion among readers, we note that in Cases 7 and 8 of Table \ref{crlListPrimaryTable}, the specified CRL-list consists of several disjoint parts with different formulas. Because we want $u$ to range over an integer interval, these have been slightly rearranged and \enquote{glued together} here. For example, in Case 7 here, the ranges $\{0,1,\ldots,\kappa_2-2\}$ and $\{-\kappa_2+1,-\kappa_2+2,\ldots,-1\}$ for $u$ correspond, respectively, to the parts with representative elements $5^j2^t+\ffrak$ and $-5^j2^t+\ffrak$ in Case 7 of Table \ref{crlListPrimaryTable}. In the latter of the two segments, the range for $u$ is not equal to the corresponding range for $t$ in Table \ref{crlListPrimaryTable}, which explains the variable substitution $u\rightarrow -u-1$ although the corresponding formulas for cycle lengths in Table \ref{crlListPrimaryTable} are the same. In Case 10, we use $1$ (rather than $0$) as an admissible value for $u$, also to turn the range for $u$ into an integer interval.
\begin{longtable}[h]{|c|c|c|c|}\hline
\centering
No. & $u$ & $u'$ & $\para'_{i,p}(\vec{u})$ \\ \hline
1 & $0,\ldots,\kappa_p$ & \thead{$0,\ldots,\left(\prod_{k=1}^{\nfrak_p}{\pfrak_{p,k}^{v_{p,k}-v'_{i,p,k}}}\right)^{\delta_{[u<\kappa_p]}}\cdot$ \\ $p^{\min\{\kappa_p-1-v'_{i,p},\delta_{[u<\kappa_p]}(\kappa_p-u-1)}\}-1$} & \thead{$(\rfrak_p^{u'}p^u+\ffrak_{i,p},$ \\ $\left(\prod_{k=1}^{\nfrak_p}{\pfrak_{p,k}^{v'_{i,p,k}}}\right)^{\delta_{[u<\kappa_p]}}\cdot$ \\ $p^{\delta_{[u<\kappa_p]}\max(v'_{i,p}-u,0)})$} \\ \hline
2 & $0,\ldots,p^{\kappa_{i,p}}-1$ & n/a & $(u,p^{\kappa_p-\kappa_{i,p}})$ \\ \hline
3 & $0,\ldots,2^{\kappa_{i,2}}-1$ & n/a & $(u,2^{\kappa_2-\kappa_{i,2}})$ \\ \hline
4 & $0,1,2$ & n/a & $(u,-u^2+2u+1)$ \\ \hline
5 & $0,1,2$ & n/a & $(2^u-1,\frac{1}{2}u^2-\frac{3}{2}u+2)$ \\ \hline
6 & $0,1$ & n/a & $(2u,2)$ \\ \hline
7 & $-\kappa_2,\ldots,\kappa_2-1$ & \thead{if $u\in\{-\kappa_2,\kappa_2-1\}$: $0$; \\ otherwise: $0,\ldots,2^{\kappa_2-2-\max(v'_{i,2},u)}-1$} & \thead{if $u=\kappa_2-1$: $(\ffrak_{i,2},1)$; \\ if $u=-\kappa_2$: $(2^{\kappa_2-1}+\ffrak_{i,2},1)$; \\ if $0\leq u<\kappa_2-1$: \\ $(5^{u'}2^u+\ffrak_{i,2},2^{\max(v'_{i,2}-u,0)})$; \\ if $-\kappa_2<u<0$: \\ $(-5^{u'}2^u+\ffrak_{i,2},2^{\max(v'_{i,2}+u+1,0)})$.} \\ \hline
8 & $-v''_{i,2},\ldots,v''_{i,2}$ & \thead{if $u=v''_{i,2}$: $0,\ldots,2^{\kappa_2-v''_{i,2}-1}$; \\ otherwise: $0,\ldots,2^{\kappa_2-v''_{i,2}-2}-1$.} & \thead{if $u=v''_{i,2}$ and $u'\in\{0,2^{\kappa_2-v''_{i,2}-1}\}$: \\ $(u'2^{v''_{i,2}}+\ffrak_{i,2},1)$; \\ if $u=v''_{i,2}$ and $0<u'<2^{\kappa_2-v''_{i,2}-1}$: \\ $(u'2^{v''_{i,2}}+\ffrak_{i,2},2)$; \\ if $0\leq u<v''_{i,2}$: \\ $(5^{u'}2^u+\ffrak_{i,2},2^{v''_{i,2}-u})$; \\ if $u<0$: \\ $(5^{u'}2^{-u-1}+\ffrak_{i,2},2^{v''_{i,2}+u+1})$.} \\ \hline
9 & $0,\ldots,2^{\kappa_{i,2}}-1$ & n/a & $(u,2^{\kappa_2-\kappa_{i,2}})$ \\ \hline
10 & $1,\ldots,2^{\kappa_2-v''_{i,2}-1}$ & n/a & \thead{if $u=1$: $(0,2^{v''_{i,2}+1})$; \\ otherwise: $(\overline{\beta}_iu,2^{v''_{i,2}+1})$.} \\ \hline
\caption{Explicit parametrizations of CRL-lists of affine permutations of finite primary cyclic groups.}
\label{crlListParTable}
\end{longtable}
Our algorithm prints and stores the parametric description of $\para'_{i,p}(\vec{u})$ for all primes $p\mid s$ with $p\nmid\overline{\alpha}_i$. For a given $p$, this parametric description takes $O(\log{p^{\kappa_p}})$ bits to store (as follows by observing that it takes $O(\log{n})$ bits to print the prime factorization of $n\in\IN^+$), and so all descriptions together can be stored using $O(\log{s})\subseteq O(\log{q})$ bits.

\underline{$\para''_i, \Lcal''_i, \Acal'_i, s'_i, \vec{\overline{u}}, \vec{u}_p, u_p, u'_p, \overline{Y}_i, \proj_j, \vec{r}_i(\vec{\overline{u}}), r_{i,p}(\vec{u}_p), B_{\vec{r}_i(\vec{\overline{u}})}, \Ical_{i,\vec{\overline{u}}}$.} Next, based on the parametrizations $\para'_{i,p}$ of the CRL-lists $\Lcal'_{i,p}$ of $\overline{\Acal}_{i,p}$ for $p\in\Pfrak_i$, we construct a parametrization $\para''_i:Y_i\rightarrow\Lcal''_i$\phantomsection\label{not241}\phantomsection\label{not242} of a CRL-list $\Lcal''_i$ for $\Acal'_i:=\Acal_i\bmod{\prod_{p\in\Pfrak_i}{p^{\kappa_p}}}$\phantomsection\label{not243}. We start by setting $s'_i:=\prod_{p\in\Pfrak_i}{p^{\kappa_p}}$\phantomsection\label{not244}, which takes $O(\log^{2+o(1)}{q})$ bit operations to compute, carrying out $|\Pfrak_i|\in O(\log{q})$ integer multiplications, each with a bit operation cost in $O(\log^{1+o(1)}{q})$ (we note that the powers $p^{\kappa_p}$ themselves do not need to be computed, as they are specified, alongside the pairs $(p,\kappa_p)$, in the output of the mdl query that gave the factorization of $s$). We follow the approach described at the end of Subsection \ref{subsec2P3}. More specifically, we identify $\IZ/s'_i\IZ$ with $\prod_{p\in\Pfrak_i}{\IZ/p^{\kappa_p}\IZ}$, and $\Acal'_i$ with $\bigotimes_{p\in\Pfrak_i}{\overline{\Acal}_{i,p}}$. We consider tuples $\vec{\overline{u}}=(\vec{u}_p)_{p\in\Pfrak_i}\in\prod_{p\in\Pfrak_i}{Y_{i,p}}=:\overline{Y}_i$\phantomsection\label{not245}\phantomsection\label{not246}\phantomsection\label{not247}. We can either write $\vec{u}_p=u_p$\phantomsection\label{not248} or $\vec{u}_p=(u_p,u'_p)$\phantomsection\label{not249}. For $j=1,2$, we denote by $\proj_j$\phantomsection\label{not250} the (class-sized) function that maps an ordered pair to its $j$-th entry. Associated with each parameter tuple $\vec{\overline{u}}\in \overline{Y}_i$, we have the tuple
\[
\vec{r}_i(\vec{\overline{u}}):=(r_{i,p}(\vec{u}_p))_{p\in\Pfrak_i}:=(\proj_1(\para'_{i,p}(\vec{u}_p)))_{p\in\Pfrak_i}
\]
of\phantomsection\label{not251}\phantomsection\label{not252} associated cycle representatives of the $\overline{\Acal}_{i,p}$. By our discussion at the end of Subsection \ref{subsec2P3}, these tuples $\vec{r}_i(\vec{\overline{u}})$ parametrize the blocks $B_{\vec{r}_i(\vec{\overline{u}})}$ of a certain partition of $\prod_{p\in\Pfrak_i}{\IZ/p^{\kappa_p}\IZ}$, each block of which is a union of cycles of $\Acal'_i$. In terms of $\vec{\overline{u}}$, we wish to explicitly describe a CRL-list for the restriction of $\Acal'_i$ to $B_{\vec{r}_i(\vec{\overline{u}})}$. For this, we need to exhibit an $\vec{r}_i(\vec{\overline{u}})$-admissible indexing function $\Ical_{i,\vec{\overline{u}}}$\phantomsection\label{not253} in the sense of Definition \ref{admissibleGoodDef}(1) and understand its associated set of good tuples (in the sense of Definition \ref{admissibleGoodDef}(2)).

\underline{$l_{i,p,\vec{u}_p}, l_{i,\vec{\overline{u}}}, \Pfrak'_i$.} Now, following the definition of an admissible indexing function, the domain of definition of $\Ical_{i,\vec{\overline{u}}}$ is the set of all primes that divide at least one of the component cycle lengths $l_{i,p,\vec{u}_p}:=\proj_2(\para'_{i,p}(\vec{u}_p))$\phantomsection\label{not254} for $p\in\Pfrak_i$, or, equivalently, that divide $l_{i,\vec{\overline{u}}}:=\lcm\{l_{i,p,\vec{u}_p}: p\in\Pfrak_i\}$\phantomsection\label{not255}, of which we compute a parametric definition of bit length in $O(\log^{1+o(1)}{q})$ for later use by scanning the displayed parametric factorizations of the $l_{i,p,\vec{u}_p}$, taking $O(\log^{2+o(1)}{q})$ bit operations. By definition, the domain of $\Ical_{i,\vec{\overline{u}}}$ is a subset of $\Pfrak'_i:=\Pfrak_i\cup\pi(\prod_{p\in\Pfrak_i}{(p-1)})$\phantomsection\label{not256}. We compute $\Pfrak'_i$ as a list (with $O(\log{q})$ entries), and this computation consists of $O(\log{q})$ containment checks each involving $O(1)$ copying processes of bit strings of length in $O(\log{q})$, and $O(\log{q})$ bit comparisons and scans of memory addresses each of length in $O(\log\log{q})$. Hence, we may compute $\Pfrak'_i$ using $O(\log^{2+o(1)}{q})$ bit operations. In our algorithmic approach, we treat $\Ical_{i,\vec{\overline{u}}}$ as a function whose domain of definition is all of $\Pfrak'_i$; the additional primes $\pfrak$\phantomsection\label{not257} are those which do not divide any component cycle length, hence occur with valuation $0$ in each component, and the value $\Ical_{i,\vec{\overline{u}}}(\pfrak)$ may be chosen arbitrarily in $\Pfrak_i$. We note that this change does not affect the associated notion of good tuples and ensures that the domain of $\Ical_{i,\vec{\overline{u}}}$ does not depend on $\vec{\overline{u}}$.

For each $\pfrak\in\Pfrak'_i$, the value $\Ical_{i,\vec{\overline{u}}}(\pfrak)$ is a prime $p'\in\Pfrak_i$ (thought of as an index for a component of $\vec{\overline{u}}$) such that $\nu_{\pfrak}(l_{i,p',\vec{\overline{u}}})$ is maximal among all $\nu_{\pfrak}(l_{i,p,\vec{u}_p})$ for $p\in\Pfrak_i$. We recall that we have already worked out explicit factorizations of the positive integers $l_{i,p,\vec{u}_p}$ in terms of $\vec{u}_p$ (see Table \ref{crlListParTable}). If $\pfrak\notin\Pfrak_i$, then for each $p\in\Pfrak_i$, the value of $\nu_{\pfrak}(l_{i,p,\vec{u}_p})$ is constant, not depending on $\vec{u}_p$, and a scan along the length $O(\log{q})$ parametric description, combined with comparisons of the relevant exponents $\nu_{\pfrak}(l_{i,p,\vec{u}_p})$, each of which has bit length in $O(\log\log{q})$, lets us pick a suitable value for $\Ical_{i,\vec{\overline{u}}}(\pfrak)$. For a given $\pfrak\in\Pfrak'_i\setminus\Pfrak_i$, this process requires $O(\log^{1+o(1)}{q})$ bit operations, and carrying it out for all $\pfrak\in\Pfrak'_i\setminus\Pfrak_i$ takes $O(\log^{2+o(1)}{q})$ bit operations.

\underline{$\Jcal_{i,p,k}, \mfrak_{i,p}, \Jcal'_{i,p,k}$.} We still need to discuss the approach when $\pfrak\in\Pfrak_i$. Even then, $\nu_{\pfrak}(l_{i,p,\vec{u}_p})$ does not depend on $\vec{u}_p$ unless $p=\pfrak$, in which case one of the following applies.
\begin{itemize}
\item $\nu_p(l_{i,p,\vec{\overline{u}}})$ also does not depend on $\vec{u}_p$ (see e.g.~case 2 in Table \ref{crlListParTable}), and we can compute a constant value for $\Ical_{i,\vec{\overline{u}}}(p)$ as described above.
\item $\nu_p(l_{i,p,\vec{\overline{u}}})$ does depend on $\vec{u}_p$, in the following way: It only depends on $u_p$ (not $u'_p$), and one can partition the range for $u_p$ into at most five subintervals $\Jcal_{i,p,1},\ldots,\Jcal_{i,p,\mfrak_{i,p}}$\phantomsection\label{not258}\phantomsection\label{not259} (case 8 in Table \ref{crlListParTable} does require $\mfrak_{i,p}=5$) such that in case $u_p\in \Jcal_{i,p,k}$ for a fixed $k\in\{1,\ldots,\mfrak_{i,p}\}$, the value of $\nu_p(l_{i,p,\vec{\overline{u}}})$ is either constant, or given by a linear expression in $u_p$, or given by an expression that is the maximum among a linear expression in $u_p$ and $0$. This allows us to specify a subinterval (in fact, an initial or terminal segment) $\Jcal'_{i,p,k}$\phantomsection\label{not260} of $\Jcal_{i,p,k}$ (with constant boundary points) such that $\Ical_{i,\vec{\overline{u}}}(p)$ may be chosen as $p$ if $u_p\in \Jcal'_{i,p,k}$, whereas $\Ical_{i,\vec{\overline{u}}}(p)$ must be chosen as a different constant value in $\Pfrak_i$ (the same for each $k$) if $u_p\in \Jcal_{i,p,k}\setminus\Jcal'_{i,p,k}$. For each given $p$, writing down an explicit definition of $\Ical_{i,\vec{\overline{u}}}(p)$ (which consists of a case distinction with at most two cases) requires us to scan the parametric descriptions of the component images $\para'_{i,p}(\vec{u}_p)$ and perform some low-cost computations such as additions or subtractions between exponents of primes (which are numbers of bit length in $O(\log\log{q})$). For all relevant values of $\pfrak$ together, this can be done using $O(\log^{2+o(1)}{q})$ bit operations.
\end{itemize}
We note that for each given $\pfrak\in\Pfrak'_i$, the parametric definition of $\Ical_{i,\vec{\overline{u}}}$ which we just derived has bit length in $O(\log{q})$. Therefore, and because the domain $\Pfrak'_i$ of $\Ical_{i,\vec{\overline{u}}}$ has size in $O(\log{q})$, it takes $O(\log^2{q})$ bits to store the parametric definitions of all function values of $\Ical_{i,\vec{\overline{u}}}$.

\underline{$\Ical_i, \Pfrak_{i,p,\vec{\overline{u}}}, \vec{k}, k_p, \dfrak_{i,p,\vec{\overline{u}}}, \vec{k'}, k'_p, K'_{i,\vec{\overline{u}}}$.} Before we proceed with our argument, we need to introduce another notation. For $\pfrak\in\Pfrak'_i$, if $\Ical_{i,\vec{\overline{u}}}(\pfrak)$ only assumes one distinct value as $\vec{\overline{u}}$ ranges over $\overline{Y}_i$, we set $\Ical_i(\pfrak):=\Ical_{i,\vec{\overline{u}}}(\pfrak)$ for any $\vec{\overline{u}}\in\overline{Y}_i$. On the other hand, if $\Ical_{i,\vec{\overline{u}}}(\pfrak)$ assumes two distinct values, one of which is $\pfrak$, we let $\Ical_i(\pfrak)$ be the unique element of $\{\Ical_{i,\vec{\overline{u}}}(\pfrak): \vec{\overline{u}}\in\overline{Y}_i\}$ that is distinct from $\pfrak$. This defines a function $\Ical_i:\Pfrak'_i\rightarrow\Pfrak_i$\phantomsection\label{not261} that is independent of $\vec{\overline{u}}$ and can be easily derived from the parametric definitions of the function values $\Ical_{i,\vec{\overline{u}}}(\pfrak)$ (taking $O(\log^{2+o(1)}{q})$ bit operations). Using the function $\Ical_i$, we can give the following compact parametric definition of the pre-image of a singleton subset of $\Pfrak_i$ under $\Ical_{i,\vec{\overline{u}}}$:
\begin{align}\label{PfrakipuEq}
\notag &\Pfrak_{i,p,\vec{\overline{u}}}:=\Ical^{-1}_{i,\vec{\overline{u}}}(\{p\})= \\
&\left(\Ical_i^{-1}(\{p\})\setminus\{\pfrak\in\Pfrak_i\setminus\{p\}: u_{\pfrak}\in\bigcup_{k=1}^{\mfrak_{i,\pfrak}}{\Jcal'_{i,p,k}}\}\right)\cup\{\pfrak\in\{p\}: u_{\pfrak}\in\bigcup_{k=1}^{\mfrak_{i,\pfrak}}{\Jcal'_{i,p,k}}\}.
\end{align}
We\phantomsection\label{not262} note that our algorithm is merely producing this defining formula for $\Pfrak_{i,p,\vec{\overline{u}}}$ for each $p\in\Pfrak_i$, which is harmless complexity-wise -- even when spelling $\Ical_i^{-1}(\{p\})$ out explicitly in each case, this can be done using $O(\log^{1+o(1)}{q})$ bit operations and storage space per $p$, hence $O(\log^{2+o(1)}{q})$ bit operations and storage space altogether. One could also try to provide a case-distinction definition of $\Pfrak_{i,p,\vec{\overline{u}}}$ where each case corresponds to a constant value of $\Pfrak_{i,p,\vec{\overline{u}}}$, but this breaks the complexity, as one needs to go through $2^{O(\log{q})}$ cases in general. Likewise, it is easy to check that producing each parametric definition described in the rest of this argument takes $O(\log^{2+o(1)}{q})$ bit operations if one is careful enough about how to spell those parametrizations out.

Having these explicit definitions of the pre-images $\Pfrak_{i,p,\vec{\overline{u}}}$ is important because they are needed to set up a parametrization of the $\Ical_{i,\vec{\overline{u}}}$-good tuples. We recall from above the notation $l_{i,p,\vec{u}_p}$ for the cycle length of the representative $r_{i,p}(\vec{u}_p)$ in the $p$-indexed component of $\vec{r}_i(\vec{\overline{u}})$. An $\Ical_{i,\vec{\overline{u}}}$-good tuple is a tuple $\vec{k}=(k_p)_{p\in\Pfrak_i}$\phantomsection\label{not262P5}\phantomsection\label{not262P75} with $k_p\in\IZ/l_{i,p,\vec{u}_p}\IZ=\{0,1,\ldots,l_{i,p,\vec{u}_p}-1\}$ such that $k_p$ is divisible by
\[
\dfrak_{i,p,\vec{\overline{u}}}:=\prod_{\pfrak\in \Pfrak_{i,p,\vec{\overline{u}}}}{\pfrak^{\nu_{\pfrak}(l_{i,\vec{\overline{u}}})}}.
\]
We\phantomsection\label{not263} can compute a parametric definition of $\dfrak_{i,p,\vec{\overline{u}}}$ and $l_{i,p,\vec{u}_p}/\dfrak_{i,p,\vec{\overline{u}}}$ using $O(\log^{1+o(1)}{q})$ bit operations and storage space per $p$, hence $O(\log^{2+o(1)}{q})$ bit operations and storage space altogether. Moreover, we can parametrize the set of $\Ical_{i,\vec{\overline{u}}}$-good tuples as follows\phantomsection\label{not264}\phantomsection\label{not264P5}\phantomsection\label{not265}:
\[
\Good_{\vec{r}_i(\vec{\overline{u}})}(\Ical_{i,\vec{\overline{u}}})=\{(k'_p\dfrak_{i,p,\vec{\overline{u}}})_{p\in\Pfrak_i}: \vec{k'}=(k'_p)_{p\in\Pfrak_i}\in\prod_{p\in\Pfrak_i}{\IZ/\frac{l_{i,p,\vec{u}_p}}{\dfrak_{i,p,\vec{\overline{u}}}}\IZ}=:K'_{i,\vec{\overline{u}}}\}.
\]

\underline{$r'_{i,\vec{\overline{u}}}(\vec{k'})$.} Now, for each $\vec{k'}=(k'_p)_{p\in\Pfrak_i}\in K'_{i,\vec{\overline{u}}}$ and its associated $\Ical_{i,\vec{\overline{u}}}$-good tuple $(k'_p\dfrak_{i,p,\vec{\overline{u}}})_{p\in\Pfrak_i}$, we have the cycle representative $\left(\overline{\Acal}_{i,p}^{k'_p\dfrak_{i,p,\vec{\overline{u}}}}(r_{i,p}(\vec{u}_p))\right)_{p\in\Pfrak_i}$ of $\Acal'_i$, or rather, of the permutation $\bigotimes_{p\in\Pfrak_i}{\overline{\Acal}_{i,p}}$ identified with it, in $\prod_{p\in\Pfrak_i}{\IZ/p^{\kappa_p}\IZ}$. Literally, $\Acal'_i$ is defined as an affine permutation of $\IZ/s'_i\IZ=\IZ/\prod_{p\in\Pfrak_i}{p^{\kappa_p}}\IZ$. Therefore, the actual cycle representative of $\Acal'_i$ associated with $\vec{k'}$ is
\[
r'_{i,\vec{\overline{u}}}(\vec{k'}):=\sum_{p\in\Pfrak_i}{\overline{\Acal}_{i,p}^{k'_p\dfrak_{i,p,\vec{\overline{u}}}}(r_{i,p}(\vec{u}_p))\frac{s'_i}{p^{\kappa_p}}\inv_{p^{\kappa_p}}\left(\frac{s'_i}{p^{\kappa_p}}\right)},
\]
the\phantomsection\label{not266} unique element of $\IZ/s'_i\IZ$ that is congruent to $\overline{\Acal}_{i,p}^{k'_p\dfrak_{i,p,\vec{\overline{u}}}}(r_{i,p}(\vec{u}_p))$ modulo $p^{\kappa_p}$ for each $p\in\Pfrak_i$. We note that the expression $\overline{\Acal}_{i,p}^{k'_p\dfrak_{i,p,\vec{\overline{u}}}}(r_{i,p}(\vec{u_p}))$ can be spelled out explicitly as follows.
\[
\overline{\Acal}_{i,p}^{k'_p\dfrak_{i,p,\vec{\overline{u}}}}(r_{i,p}(\vec{u}_p))=
\begin{cases}
r_{i,p}(\vec{u}_p)+k'_p\dfrak_{i,p,\vec{\overline{u}}}\overline{\beta}_i, & \text{if }\overline{\alpha}_i=1, \\
\overline{\alpha}_i^{k'_p\dfrak_{i,p,\vec{\overline{u}}}}r_{i,p}(\vec{u}_p)+\overline{\beta}_i\frac{\overline{\alpha}_i^{k'_p\dfrak_{i,p,\vec{\overline{u}}}}-1}{\overline{\alpha}_i-1}, & \text{otherwise},
\end{cases}
\]
where the fraction in the second case is to be understood as an integer division, but the expression as a whole represents an element of $\IZ/p^{\kappa_p}\IZ$ (one needs to identify the integer value with its reduction modulo $p^{\kappa_p}$). It takes $O(\log^{2+o(1)}{q})$ bit operations and storage space to compute and store the parametric definition of $r'_{i,\vec{\overline{u}}}$.

\underline{$L_i$.} At last, we can now provide the parametric definitions for the CRL-list $\Lcal''_i$ of $\Acal'_i$ and, subsequently, for the CRL-list $\Lcal'_i$ of $\Acal_i$. Namely, $Y_i:=\bigcup_{\vec{\overline{u}}\in\overline{Y}_i}{\left(\{\vec{\overline{u}}\}\times K'_{i,\vec{\overline{u}}}\right)}$, and for $(\vec{\overline{u}},\vec{k'})\in Y_i$, we set
\[
\para''_i(\vec{\overline{u}},\vec{k'}):=(r'_{i,\vec{\overline{u}}}(\vec{k'}),l_{i,\vec{\overline{u}}}).
\]
Then $\Lcal''_i=\{\para''_i(\vec{\overline{u}},\vec{k'}): (\vec{\overline{u}},\vec{k'})\in Y_i\}$. In order to obtain $\para'_i$ and $\Lcal'_i$, we simply need to lift the first entries of elements of $\Lcal''_i$ (images of $\para''_i$) from $\IZ/s'_i\IZ$ to $\IZ/s\IZ$ such that the reduction modulo $s/s'_i$ of each lift is the unique periodic point of $\Acal_i\bmod{(s/s'_i)}$ in $\IZ/(s/s'_i)\IZ$. By Lemma \ref{periodicCharLem}, we can compute that periodic point as follows. Let $L_i$\phantomsection\label{not267} denote the smallest non-negative integer such that
\[
\gcd\left(\alpha_i^{L_i},s\right)=\prod_{p\mid\gcd(\alpha_i,s)}{p^{\kappa_p}},
\]
which satisfies
\[
L_i=\max_{p\mid\gcd(\overline{\alpha}_i,s)}{\left\lceil\frac{\nu_p(s)}{\nu_p(\overline{\alpha}_i)}\right\rceil}=\max_{p\mid\gcd(\overline{\alpha}_i,s)}{\left\lceil\frac{\nu_p(s)}{\nu_p(\overline{\alpha}_i\bmod{p^{\nu_p(s)}})}\right\rceil}\leq\mpe(s)\in O(\log{q})
\]
and may be found by computing, for each $p\mid\gcd(\overline{\alpha}_i,s)$, the value $\overline{\alpha}_i\bmod{p^{\nu_p(s)}}$ (for $\nu_p(s)$, one consults the factorization of $s$ computed above), then finding $\nu_p(\overline{\alpha}_i\bmod{p^{\nu_p(s)}})$ with a binary search between $0$ and $\nu_p(s)$ (each step of which involves a power and a gcd computation). Altogether, this costs
\[
O\left(\log{q}\cdot\left(\log^{1+o(1)}{q}+\sum_{p\mid\gcd(\overline{\alpha}_i,s)}{\left(\log\log{q}\cdot\log^{2+o(1)}{p^{\nu_p(s)}}\right)}\right)\right)=O(\log^{2+o(1)}{q})
\]
bit operations by Lemma \ref{complexitiesLem}(6,8). The unique periodic point of $\Acal_i\bmod{(s/s'_i)}$ is the reduction of
\[
\sum_{z=0}^{L_i-1}{\overline{\alpha}_i^z\overline{\beta}_i}=
\begin{cases}
L_i\overline{\beta}_i, & \text{if }\overline{\alpha}_i=1, \\
\frac{\overline{\alpha}_i^{L_i}-1}{\overline{\alpha}_i-1}\overline{\beta}_i, & \text{otherwise},
\end{cases}
\]
modulo $s/s'_i$ and may be computed in $O(\log^{2(1+o(1))}{q})=O(\log^{2+o(1)}{q})$ bit operations using that $\overline{\alpha}_i^{L_i}$ is of bit length in $O(\log^2{q})$. We obtain the following formula for $\para'_i$ (which has the domain of definition $Y_i$, same as $\para''_i$) such that $\Lcal'_i=\{\para'_i(\vec{\overline{u}},\vec{k'}): (\vec{\overline{u}},\vec{k'})\in Y_i\}$:
\[
\para'_i(\vec{\overline{u}},\vec{k'})=\left(r'_{i,\vec{\overline{u}}}(\vec{k'})\frac{s}{s'_i}\inv_{s'_i}\left(\frac{s}{s'_i}\right)+\sum_{z=0}^{L_i-1}{\overline{\alpha}_i^z\overline{\beta}_i}s'_i\inv_{s/s'_i}(s'_i),l_{i,\vec{\overline{u}}}\right).
\]
Printing this parametric definition of $\Lcal'_i$ takes $O(\log^{2+o(1)}{q})$ bits of storage space. As mentioned before, the (bijective) parametrization $\para_i:Y_i\rightarrow\Lcal_i$ of the CRL-list $\Lcal_i$ of $f_{\mid U_i}$ can be obtained by stretching the second entries of the images of $\para'_i$ by the factor $\ell=\ell_i$ (the $\overline{f}$-cycle length of $i$). That is,
\[
\para_i(\vec{\overline{u}},\vec{k'})=\left(r'_{i,\vec{\overline{u}}}(\vec{k'})\frac{s}{s'_i}\inv_{s'_i}\left(\frac{s}{s'_i}\right)+\sum_{z=0}^{L_i-1}{\overline{\alpha}_i^z\overline{\beta}_i}s'_i\inv_{s/s'_i}(s'_i),\ell\cdot l_{i,\vec{\overline{u}}}\right)
\]
and $\Lcal_i=\{\para_i(\vec{\overline{u}},\vec{k'}): (\vec{\overline{u}},\vec{k'})\in Y_i\}$.

Finally, the expression $\para_i(\vec{\overline{u}},\vec{k'})$ where
\begin{itemize}
\item $i\in\proj_1(\overline{\Lcal})$;
\item $\vec{\overline{u}}\in\overline{Y}_i$ (with $\overline{Y}_d:=\{\emptyset\}$); and
\item $\vec{k'}\in K'_{i,\vec{\overline{u}}}$ (with $K'_{d,\emptyset}:=\{\emptyset\}$)
\end{itemize}
forms the desired bijective parametrization of a CRL-list $\Lcal$ of $f$, which can be pasted together from the results of earlier computations using $O(d\log^{2+o(1)}{q})$ bit operations.

In what follows, we conclude this subsubsection with an overview of the steps of this algorithm. At the end of the description of each step, we specify its query complexity (QC); in the case of a loop, this is obtained component-wise by computing the sum of the entries in the corresponding components of the query complexities of the iteration steps of the loop, if applicable replacing the resulting expression by a simpler one that generates the same $O$-class, and multiplying it with a $O$-bound on the number of iterations of the loop. It follows from this overview that the query complexity of Problem 1 is as specified in statement (1) of Theorem \ref{complexitiesTheo}, and the formula for the Las Vegas dual complexity follows from this and Lemma \ref{LVComplexityLem}.

\begin{denumerate}[label=\arabic*]
\item Compute the induced function $\overline{f}$ on $\{0,1,\ldots,d\}$ and the affine maps $A_i$ of $\IZ/s\IZ$.

QC: $(d\log^{1+o(1)}{q},d,0,0,0)$.
\item Compute a CRL-list $\overline{\Lcal}$ for $\overline{f}$, storing the cycles of $\overline{f}$ in full in the process.

QC: $(d^2\log^2{d},0,0,0,0)$.
\item Compute and store the parametrization $\para_d:Y_d\rightarrow\Lcal_d$ where $Y_d=\{(\emptyset,\emptyset)\}$ and $\para_d(\emptyset,\emptyset)=(0_{\IF_q},1)$.

QC: $(\log{d},0,0,0,0)$.
\item Compute and factor $s=(q-1)/d$.

QC: $(\log^{1+o(1)}{q},0,1,0,0)$.
\item Find a primitive root $\rfrak_p$ modulo $p^{\nu_p(s)}$ for each odd prime $p\mid s$.

QC: $(\log{q},0,0,0,1)$.
\item For each $(i,\ell)\in\overline{\Lcal}\setminus\{(d,1)\}$, with associated $\overline{f}$-cycle $(i_0,i_1,\ldots,i_{\ell-1})$ which was already computed in Step 2, do the following.

QC: $(d^2\log^{1+o(1)}{q}+d\log^{2+o(1)}{q},0,d\log{q},d,0)$.
\begin{denumerate}[label=6.\arabic*]
\item Compute the forward cycle product $\Acal_i=A_{i_0}A_{i_1}\cdots A_{i_{\ell-1}}: z\mapsto\overline{\alpha}_iz+\overline{\beta}_i$.

QC: $(d\log^{1+o(1)}{q},0,0,0,0)$.
\item Initialize $\Pfrak_i:=\emptyset$.

QC: $(\log{d},0,0,0,0)$.
\item Compute $\ord_{p^{\nu_p(s)}}(\overline{\alpha}_i)$ for each prime $p\mid s$.

QC: $(\log{q},0,0,1,0)$.
\item For each prime $p\mid s$, do the following.

QC: $(\log^{2+o(1)}{q},0,\log{q},0,0)$.
\begin{denumerate}[label=6.4.\arabic*]
\item Check whether $p\mid\overline{\alpha}_i$, and if not, skip to the next $p$.

QC: $(\log^{1+o(1)}{q},0,0,0,0)$.
\item Add $p$ to $\Pfrak_i$ as a new element.

QC: $(\log{q},0,0,0,0)$.
\item Read off $\kappa_p=\nu_p(s)$ and $p^{\kappa_p}$ from the factorization of $s$ computed in Step 4.

QC: $(\log{q},0,0,0,0)$.
\item Compute $\overline{\Acal}_{i,p}=\Acal_i\mod{p^{\kappa_p}}$, i.e., compute $\overline{\alpha}_i\bmod{p^{\kappa_p}}$ and $\overline{\beta}_i\bmod{p^{\kappa_p}}$.

QC: $(\log^{1+o(1)}{q},0,0,0,0)$.
\item Compute $\kappa_{i,p}:=\nu_p^{(\kappa_p)}(\overline{\beta}_i)=\nu_p^{(\kappa_p)}(\overline{\beta}_i\bmod{p^{\kappa_p}})$, using $O(\kappa_p)$ divisions by $p$ and increasing a counter.

QC: $(\log^{2+o(1)}{p^{\kappa_p}},0,0,0,0)$.
\item If $p>2$ then do the following.

QC: $(\log{q}+\log^2{p^{\kappa_p}},0,1,0,0)$.
\begin{denumerate}[label=6.4.6.\arabic*]
\item Compute factorizations of $p-1$ and of $\ord_{p^{\kappa_p}}(\overline{\alpha}_i)$:
\[
p-1=\prod_{k=1}^{\nfrak_p}{\pfrak_{p,k}^{v_{p,k}}}\text{ and }\ord_{p^{\kappa_p}}(\overline{\alpha}_i)=\prod_{k=1}^{\nfrak_p}{\pfrak_{p,k}^{v'_{i,p,k}}}\cdot p^{v'_{i,p}}.
\]
QC: $(\log{q},0,1,0,0)$.
\item Check whether $\overline{\Acal}_{i,p}$ has a fixed point and, if so, store this information and compute a fixed point $\ffrak_{i,p}$ of it according to Proposition \ref{crlListPrimaryProp}. The check can be done by testing whether $\gcd\left(\left(\overline{\alpha}_i\bmod{p^{\kappa_p}}\right)-1,p^{\kappa_p}\right)$ divides $\overline{\beta}_i\bmod{p^{\kappa_p}}$.

QC: $(\log^{1+o(1)}{p^{\kappa_p}},0,0,0,0)$.
\end{denumerate}
\item Else do the following.
\begin{denumerate}[label=6.4.7.\arabic*]
\item Compute $v'_{i,2}=\nu_2(\ord_{2^{\kappa_2}}(\overline{\alpha}_i))$ and $v''_{i,2}=\nu_2(\ord_{2^{\kappa_2}}(-\overline{\alpha}_i))$. To avoid making another mord query, we note that $v''_{i,2}=v'_{i,2}$ unless $\overline{\alpha}_i\equiv\pm1\Mod{2^{\kappa_2}}$, in which case $v''_{i,2}=1-v'_{i,2}$.

QC: $(\log^{2+o(1)}{2^{\kappa_2}},0,0,0,0)$.
\item Check whether $\overline{\Acal}_{i,2}$ has a fixed point and, if so, store this information and compute a fixed point $\ffrak_{i,2}$ of it (cf.~Step 6.4.6.2).

QC: $(\log^{1+o(1)}{2^{\kappa_2}},0,0,0,0)$.
\end{denumerate}
\item Spell out a definition of the bijective parametrization $\para'_{i,p}:Y_{i,p}\rightarrow\Lcal'_{i,p}$ of a CRL-list $\Lcal'_{i,p}$ of $\overline{\Acal}_{i,p}$ in which all specified cycle lengths are fully factored, referring to Table \ref{crlListParTable}. This requires checking which of the cases from Table \ref{crlListPrimaryTable} applies, and we stored part of the information relevant for this in Steps 6.4.6.2 and 6.4.7.2. We note that a general element of $Y_{i,p}$ is denoted by $\vec{u}_p$ and is either equal to $u_p$ or $(u_p,u'_p)$ where $u_p$ and $u'_p$ are integer parameters, with $u_p$ ranging over a fixed interval, and $u'_p$ ranging over an interval for each fixed value of $u_p$ (with explicit formulas for the interval bounds in terms of $u_p$).

QC: $(\log{p^{\kappa_p}},0,0,0,0)$.
\end{denumerate}
\item Compute $s'_i=\prod_{p\in\Pfrak_i}{p^{\kappa_p}}$.

QC: $(\log^{2+o(1)}{q},0,0,0,0)$.
\item Compute a parametric definition of $l_{i,\vec{\overline{u}}}$, the cycle length of $\Acal'_i$ (or rather, of the permutation $\bigotimes_{p\in\Pfrak_i}{\overline{\Acal}_{i,p}}$ on $\prod_{p\in\Pfrak_i}{\IZ/p^{\kappa_p}\IZ}$ identified with it) on the point $\vec{r}_i(\vec{\overline{u}})$ represented by $\vec{\overline{u}}=(\vec{u}_p)_{p\in\Pfrak_i}$. This can be done through scanning the parametric definitions of the fully factored cycle lengths
\[
l_{i,p,\vec{u}_p}=\proj_2(\para'_{i,p}(\vec{u}_p)),
\]
of which $l_{i,\vec{\overline{u}}}$ is the least common multiple, and performing low-cost operations on numbers of bit length in $O(\log\log{q})$.

QC: $(\log^{2+o(1)}{q},0,0,0,0)$.
\item Set $\Pfrak'_i:=\Pfrak_i\cup\pi(\prod_{p\in\Pfrak_i}{(p-1)})$, using $O(\log{q})$ containment checks each involving $O(1)$ copying processes of bit strings of length $O(\log{q})$, and $O(\log{q})$ bit comparisons and scans of memory addresses each of length $O(\log\log{q})$.

QC: $(\log^{2+o(1)}{q},0,0,0,0)$.
\item For $\pfrak\in\Pfrak'_i$, compute a parametric definition of the function value $\Ical_{i,\vec{\overline{u}}}(\pfrak)$ of the $\vec{r}_i(\vec{\overline{u}})$-admissible indexing function $\Ical_{i,\vec{\overline{u}}}$. This definition consists of a case distinction with at most two cases (and constant value of $\Ical_{i,\vec{\overline{u}}}(\pfrak)$ in each case). For $\pfrak\notin\Pfrak_i$, there is always only one case, and for $\pfrak\in\Pfrak_i$, the cases depend on the containment of $u_{\pfrak}$ in a union of certain intervals (at most five such intervals per $\pfrak$). Moreover, whenever there are two cases, one of them corresponds to $\Ical_{i,\vec{\overline{u}}}(\pfrak)=\pfrak$. Whenever there is only one case, set $\Ical_i(\pfrak):=\Ical_{i,\vec{\overline{u}}}(\pfrak)$ for any given $\vec{\overline{u}}$, otherwise let $\Ical_i(\pfrak)$ be the unique element of $\Ical_{i,\vec{\overline{u}}}(\pfrak)$ that is distinct from $\pfrak$.

QC: $(\log^{2+o(1)}{q},0,0,0,0)$.
\item For $p\in\Pfrak_i$, compute a parametric description of  the pre-image set $\Pfrak_{i,p,\vec{\overline{u}}}:=\Ical^{-1}_{i,\vec{\overline{u}}}(\{p\})$, using formula (\ref{PfrakipuEq}). In this parametric description, the inclusion of primes $\pfrak\in\Pfrak'_i\setminus\Pfrak_i$ in $\Pfrak_{i,p,\vec{\overline{u}}}$ is independent of $\vec{\overline{u}}$, whereas primes $p'\in\Pfrak_i$ each have a condition, in terms of a disjunction of bounds on $u_{p'}$ corresponding to the intervals mentioned in Step 6.8, for whether $p'\in\Pfrak_{i,p,\vec{\overline{u}}}$.

QC: $(\log^{2+o(1)}{q},0,0,0,0)$.
\item Based on Step 6.9, compute parametric descriptions of
\[
\dfrak_{i,p,\vec{\overline{u}}}=\prod_{\pfrak\in\Pfrak_{i,p,\vec{\overline{u}}}}{\pfrak^{\nu_{\pfrak}(l_{i,\vec{\overline{u}}})}}=\prod_{\pfrak\in\Pfrak_{i,p,\vec{\overline{u}}}}{\pfrak^{\nu_{\pfrak}(l_{i,p,\vec{u}_p})}}
\]
and $l_{i,p,\vec{\overline{u}}}/\dfrak_{i,p,\vec{\overline{u}}}$ for each $p\in\Pfrak_i$. In view of Step 6.9, this can be achieved using suitable Kronecker deltas in the exponents.

QC: $(\log^{2+o(1)}{q},0,0,0,0)$.
\item Compute the parametric description
\[
r_{i,\vec{\overline{u}}}(\vec{k'})=\sum_{p\in\Pfrak_i}{\overline{\Acal}_{i,p}^{k'_p\dfrak_{i,p,\vec{\overline{u}}}}(r_{i,p}(\vec{u}_p))\frac{s'_i}{p^{\kappa_p}}\inv_{p^{\kappa_p}}\left(\frac{s'_i}{p^{\kappa_p}}\right)}
\]
of the cycle representative $r_{i,\vec{\overline{u}}}(\vec{k'})$ of $\Acal_{i'}$ associated with the parameter tuple $(\vec{\overline{u}},\vec{k'})$ where $\vec{\overline{u}}\in\overline{Y}_i$ and
\[
\vec{k'}=(k'_p)_{p\in\Pfrak_i}\in K'_{i,\vec{\overline{u}}}=\prod_{p\in\Pfrak_i}{\IZ/(l_{i,p,\vec{u}_p}/\dfrak_{i,p,\vec{\overline{u}}})\IZ}.
\]
In this expression, the (inexplicit) affine map iterate value $\overline{\Acal}_{i,p}^{k'_p\dfrak_{i,p,\vec{\overline{u}}}}(r_{i,p}(\vec{u}_p))$ is to be substituted with the explicit formula
\[
\overline{\Acal}_{i,p}^{k'_p\dfrak_{i,p,\vec{\overline{u}}}}(r_{i,p}(\vec{u}_p))=
\begin{cases}
r_{i,p}(\vec{u}_p)+k'_p\dfrak_{i,p,\vec{\overline{u}}}\overline{\beta}_i, & \text{if }\overline{\alpha}_i=1, \\
\overline{\alpha}_i^{k'_p\dfrak_{i,p,\vec{\overline{u}}}}r_{i,p}(\vec{u}_p)+\overline{\beta}_i\frac{\overline{\alpha}_i^{k'_p\dfrak_{i,p,\vec{\overline{u}}}}-1}{\overline{\alpha}_i-1}, & \text{otherwise}.
\end{cases}
\]
QC: $(\log^{2+o(1)}{q},0,0,0,0)$.
\item Compute
\[
L_i=\max_{p\mid\gcd(\overline{\alpha}_i,s)}{\left\lceil\frac{\nu_p(s)}{\nu_p(\overline{\alpha}_i)}\right\rceil}=\max_{p\mid\gcd(\overline{\alpha}_i,s)}{\left\lceil\frac{\nu_p(s)}{\nu_p(\overline{\alpha}_i\bmod{p^{\nu_p(s)}}})\right\rceil},
\]
the smallest non-negative integer such that $\gcd(\overline{\alpha}_i^{L_i},s)=\prod_{p\mid\gcd(\overline{\alpha}_i,s)}{p^{\kappa_p}}$. To do so, for each $p\mid\gcd(\overline{\alpha}_i,s)$, compute $\overline{\alpha}_i\bmod{p^{\nu_p(s)}}$ with a division, then find $\nu_p(\overline{\alpha}_i\bmod{p^{\nu_p(s)}})$ with a binary search between $0$ and $\nu_p(s)$.

QC: $(\log^{2+o(1)}{q},0,0,0,0)$
\item Compute the parametric description
\[
\para_i(\vec{\overline{u}},\vec{k'})=\left(r'_{i,\vec{\overline{u}}}(\vec{k'})\frac{s}{s'_i}\inv_{s'_i}\left(\frac{s}{s'_i}\right)+\sum_{z=0}^{L_i-1}{\overline{\alpha}_i^z\overline{\beta}_i}s'_i\inv_{s/s'_i}(s'_i),\ell\cdot l_{i,\vec{\overline{u}}}\right)
\]
of the element of $\Lcal_i$ (a CRL-list of $f_{\mid U_i}$ where $U_i=\bigcup_{t=0}^{\ell-1}{C_{i_t}}$) associated with $(\vec{\overline{u}},\vec{k'})$.

QC: $(\log^{2+o(1)}{q},0,0,0,0)$.
\end{denumerate}
\item Output the parametric description $\para_i(\vec{\overline{u}},\vec{k'})$ of the element of $\Lcal$ (a CRL-list of $f$) associated with $(i,\vec{\overline{u}},\vec{k'})$ where $i\in\proj_1(\overline{\Lcal})$, $\vec{\overline{u}}\in\overline{Y}_i$ and $\vec{k'}\in K'_{i,\vec{\overline{u}}}$ (with the convention that $\overline{Y}_d=\{\emptyset\}$ and $K'_{d,\emptyset}=\{\emptyset\}$), then halt.

QC: $(d\log^{2+o(1)}{q},0,0,0,0)$.
\end{denumerate}

\subsubsection{Proof of statement (2)}\label{subsubsec5P2P2}

We note that the only part of our algorithm for Problem 2 where a quantum computer is required is at the beginning, when $\overline{f}$ and the $A_i$ need to be computed. The rest of the algorithm, which we describe henceforth, uses bit operations only.

In addition to computing $\overline{f}$ and the $A_i$, and as at the beginning of the proof of statement (1), we need to compute the different \enquote{layers} of indices $i\in\{0,1,\ldots,d-1\}$ according to their containment in the iterated images of $\overline{f}$, requiring $O(d^2\log^2{d})$ bit operations overall.

We follow the approach from Subsection \ref{subsec3P3}, proceeding in three successive steps.

\underline{Step 1: transient $i$.} We aim to compute
\begin{itemize}
\item $\Zcal_i=\Pcal_i$ for all $\overline{f}$-transient $i\in\{0,1,\ldots,d-1\}$;
\item a list of rooted tree descriptions $(\Dfrak_0,\Dfrak_1,\ldots,\Dfrak_{N_1})$ that covers all isomorphism types of rooted trees of the form $\Tree_i(\Pcal_i,\vec{\nu}^{(\Pcal_i)})$ for $\overline{f}$-transient $i\in\{0,1,\ldots,d-1\}$ and logical sign tuples $\vec{\nu}^{(\Pcal_i)}\in\{\emptyset,\neg\}^{m_i}$ such that $\Bcal(\Pcal_i,\vec{\nu}^{(\Pcal_i)})\not=\emptyset$; and
\item the corresponding logical sign tuple data $S_{n,i}$.
\end{itemize}

At any given point in the algorithm (not just in this step), the set of all $n\in\IN_0$ for which $\Dfrak_n$ is defined is an initial segment $\{0,1,\ldots,N'\}$ of $\IN_0$, denoted by $\Ncal$\phantomsection\label{not268} (a variable that gets updated throughout the process).

In order to carry out the computations listed above, we proceed by recursion on $h_i=\height(\Tree_{\Gamma_{\overline{f}}}(i))$. First, we assume that $h_i=0$. Then, in accordance with Subsection \ref{subsec3P3}, we set $\Pcal_i:=\Pfrak(\emptyset)$ for all such $i$, introduce the trivial rooted tree isomorphism type $\Ifrak_0$ via its description $\Dfrak_0:=\emptyset$, and set $S_{0,i}:=\{\emptyset\}$ (with $\emptyset$ to be viewed as the empty logical sign tuple), while all $S_{n,i}$ for values $n>0$ introduced later will be defined as the empty set. This settles the case $h_i=0$.

Now we assume that $h_i=h>0$, and that all transient indices $j$ with $h_j<h$ have been taken care of. The first thing we need to do for each given $i$ is to find the $\overline{f}$-preimages $j_1,j_2,\ldots,j_K$ of $i$, which requires $O(d\log{d})$ bit operations. Following that, we compute a spanning congruence sequence for $\Pcal_i=\bigwedge_{t=1}^K{\Pfrak'(\Pcal_{j_t},A_{j_t})}$. This involves simple arithmetic operations (including gcd computations) and requires
\[
O\left(\sum_{t=1}^K{m_{j_t}\log^{1+o(1)}{q}}\right)\subseteq O(d\log^{1+o(1)}{q})
\]
bit operations. Subsequently, we go through the logical sign tuples $\vec{\nu}^{(\Pcal_i)}\in\{\emptyset,\neg\}^{m_i}$ in lexicographic order, check whether $\Bcal(\Pcal_i,\vec{\nu}^{(\Pcal_i)})\not=\emptyset$, and if so, compute a compact description $\Dfrak$ of $\Tree_i(\Pcal_i,\vec{\nu}^{(\Pcal_i)})$. For checking whether the block $\Bcal(\Pcal_i,\vec{\nu}^{(\Pcal_i)})$ is non-empty, we note that by the argument before Proposition \ref{zeroBlockProp}, the cardinality $|\Bcal(\Pcal_i,\vec{\nu}^{(\Pcal_i)})|$ is equal to the distribution number $\sigma_{\Pcal_i,\mathbf{0}}(\vec{\nu}^{(\Pcal_i)},(\emptyset,\ldots,\emptyset))$ where $\mathbf{0}$ is the constant zero function $\IZ/s\IZ\rightarrow\IZ/s\IZ$. To see how costly the computation of this distribution number is, we refer to the following lemma.

\begin{lemmmma}\label{distNumLem}
Let $\Pcal$ be an arithmetic partition of $\IZ/m\IZ$, given by an explicit spanning $m$-congruence sequence of length $k\in\IN^+$. Moreover, let $A$ be an affine function $\IZ/m\IZ\rightarrow\IZ/m\IZ$. Then for any given logical sign tuples $\vec{\nu}^{(\Pcal)}$ and $\vec{\nu}^{(\Pcal')}$, of length $k$ and $k+1$, respectively, it takes $O(k2^k\log^{1+o(1)}{m})$ bit operations to compute the single distribution number $\sigma_{\Pcal,A}(\vec{\nu}^{(\Pcal)},\vec{\nu}^{(\Pcal')})$.
\end{lemmmma}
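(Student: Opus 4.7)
The plan is to apply the explicit formula from Lemma \ref{masterLem},
$$\sigma_{\Pcal,A}(\vec{\nu}^{(\Pcal)},\vec{\nu}^{(\Pcal')}) = \sum_{J\subseteq J_-(\vec{\nu}^{(\Pcal)})}{(-1)^{|J|}\kappa_{\Pcal,A}(\vec{\nu}^{(\Pcal)},\vec{\nu}^{(\Pcal')},J)},$$
directly, by iterating over the at most $2^k$ subsets $J$ of $J_-(\vec{\nu}^{(\Pcal)})\subseteq\{1,2,\ldots,k\}$. The main obstacle is that the Kronecker delta $\delta_{E(\vec{\nu}^{(\Pcal)},J)}$ is a conjunction over $O(k^2)$ pairwise divisibility conditions, so evaluating it from scratch for each $J$ would contribute $k^2\cdot 2^k$ bit operations, which may exceed the target when $k$ is large compared to $\log m$. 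I would dodge this with a one-time precomputation combined with a Gray-code traversal.

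As a precomputation step, I would build a $k\times k$ boolean table $T$ storing, for each $1\le j_1 < j_2 \le k$, the truth value of $\gcd(\afrak_{j_1},\afrak_{j_2})\mid \bfrak_{j_1}-\bfrak_{j_2}$. By parts (1), (3) and (8) of Lemma \ref{complexitiesLem}, each entry costs $O(\log^{1+o(1)} m)$ bit operations; since $k^2\le k\cdot 2^k$, the resulting total of $O(k^2\log^{1+o(1)} m)$ already fits within the target $O(k\cdot 2^k\log^{1+o(1)} m)$.

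Next, I would enumerate the $2^k$ subsets $J\subseteq J_-(\vec{\nu}^{(\Pcal)})$ in Gray-code order, so that successive subsets differ in exactly one element $j_0$, maintaining a running counter $c_E$ equal to the number of pairs in $J_+(\vec{\nu}^{(\Pcal)})\cup J$ whose precomputed divisibility fails (so that $E(\vec{\nu}^{(\Pcal)},J)$ holds iff $c_E=0$). Updating $c_E$ when $j_0$ is flipped only involves the $O(k)$ table lookups $T_{j_0,j_1}$ for $j_1\in (J_+(\vec{\nu}^{(\Pcal)})\cup J)\setminus\{j_0\}$, costing $O(k)$ bit operations per step; the third Kronecker delta in $\kappa_{\Pcal,A}$ is tested in $O(k)$ bit operations by scanning; and if all three deltas equal $1$, then computing $L:=\lcm(m/\gcd(a,m),\afrak_j: j\in J_+(\vec{\nu}^{(\Pcal)})\cup J)$ takes $O(k\log^{1+o(1)} m)$ bit operations by iterating lcm over at most $k+1$ integers of bit length $O(\log m)$ (Lemma \ref{complexitiesLem}(3,8)), after which $m/L$ is obtained in a further $O(\log^{1+o(1)} m)$ bit operations. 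Finally, updating the running sum $\Sigma$, which has bit length $O(k+\log m)$ throughout since it is a signed sum of at most $2^k$ integers each bounded by $m$ in absolute value, contributes another $O(k+\log m)$ per step. The per-subset total is $O(k\log^{1+o(1)} m)$, so the loop contributes $O(k\cdot 2^k\log^{1+o(1)} m)$ bit operations, which absorbs the precomputation and yields the claimed bound.
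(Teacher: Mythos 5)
Your proposal is correct, and it is in fact tighter than what the paper's proof writes down. The paper asserts only ``$O(k)$ integer divisibility checks following a gcd computation and subtraction'' per subset $J$ when evaluating $\delta_{E(\vec{\nu}^{(\Pcal)},J)}$; read literally, this is imprecise, since $E(\vec{\nu}^{(\Pcal)},J)$ quantifies over all pairs $j_1,j_2\in J_+(\vec{\nu}^{(\Pcal)})\cup J$, and a fresh evaluation per subset would cost $O(k^2\log^{1+o(1)}m)$ --- a factor of $k$ too much. The intended but unstated mechanism in the paper is presumably the incremental reduction afforded by Proposition~\ref{sCongProp}: process the congruences indexed by $J_+(\vec{\nu}^{(\Pcal)})\cup J$ one at a time, collapsing the current system to a single equivalent $m$-congruence via the implication (1)$\Rightarrow$(4) at each step (or detecting inconsistency and halting), which does yield $O(k)$ pairwise checks per subset. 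Your precomputed $k\times k$ truth table plus Gray-code traversal with a running failure counter is an equally valid and arguably more transparent device: it sidesteps the congruence-collapsing trick entirely, makes the amortization explicit, and isolates the genuine per-subset bottleneck as the $\lcm$ computation (which both proofs handle identically in $O(k\log^{1+o(1)}m)$ bit operations). Neither route changes the final bound, but yours spells out \emph{why} $O(k)$ work per subset suffices for the $E$-condition rather than leaving it implicit; the running sum $\Sigma$ having bit length $O(k+\log m)$ is also a necessary point that the paper glosses over with ``majorizes the cost of adding these summands up.''
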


\begin{proof}
According to the formula in Lemma \ref{masterLem}, computing $\sigma_{\Pcal,A}(\vec{\nu}^{(\Pcal)},\vec{\nu}^{(\Pcal')})$ requires us to add up the summands $(-1)^{|J|}\kappa_{\Pcal,A}(\vec{\nu}^{(\Pcal)},\vec{\nu}^{(\Pcal')},J)$ for all $J\subseteq J_-(\vec{\nu}^{(\Pcal)})$, and there are $O(2^k)$ such summands. Computing a single such summand consists of
\begin{itemize}
\item a simple look-up of the last component of $\vec{\nu}^{(\Pcal')}$ (bit operation cost: $O(\log{k})$ for scanning the corresponding memory address);
\item $O(k)$ integer divisibility checks following a gcd computation and subtraction, of total bit operation cost $O(k\log^{1+o(1)}{m})$ by Lemma \ref{complexitiesLem}(1,3,8);
\item checking whether the two subsets $J_+(\vec{\nu}^{(\Pcal)})\cup J$ and $J_-(\vec{\nu}^{(\Pcal')})$ of $\{1,2,\ldots,k\}$ are disjoint, which involves look-ups of entries of $\vec{\nu}^{(\Pcal)}$ and $\vec{\nu}^{(\Pcal')}$ and takes $O(k)$ bit operations in total if pointers are used; and
\item performing $O(k)$ gcd computations, integer divisions and lcm computations, of total complexity $O(k\log^{1+o(1)}{m})$.
\end{itemize}
Thus, computing all summands $(-1)^{|J|}\kappa_{\Pcal,A}(\vec{\nu}^{(\Pcal_{j_t})},\vec{\nu}^{(\Pcal'_{j_t})},J)$ takes $O(k2^k\log^{1+o(1)}{m})$ bit operations, which majorizes the cost of adding these summands up and is thus also the complexity of computing $\sigma_{\Pcal,A}(\vec{\nu}^{(\Pcal)},\vec{\nu}^{(\Pcal')})$.
\end{proof}

In particular, computing $|\Bcal(\Pcal_i,\vec{\nu}^{(\Pcal_i)})|$ to check whether that block of $\Pcal_i$ is empty costs $O(m_i2^{m_i}\log^{1+o(1)}{q})\subseteq O(d2^d\log^{1+o(1)}{q})$ bit operations.

Let us now assume that $\Bcal(\Pcal_i,\vec{\nu}^{(\Pcal_i)})$ turned out to be non-empty. Then we wish to compute a compact description $\Dfrak$ of $\Tree_i(\Pcal_i,\vec{\nu}^{(\Pcal_i)})$. To do so, we write $\vec{\nu}^{(\Pcal_i)}=\diamond_{t=1}^K{\vec{\nu}^{(\Pcal'_{j_t})}}$ with $\vec{\nu}^{(\Pcal'_{j_t})}\in\{\emptyset,\neg\}^{m_{j_t}+1}$. By Proposition \ref{transientCosetsProp}, we may set
\[
\Dfrak:=\left\{\left(n,\sum_{t=1}^K\sum_{\vec{\nu}^{(\Pcal_{j_t})}\in S_{n,j_t}}{\sigma_{\Pcal_{j_t},A_{j_t}}(\vec{\nu}^{(\Pcal_{j_t})},\vec{\nu}^{(\Pcal'_{j_t})})}\right): n\in\Ncal\right\}\setminus(\IN\times\{0\}).
\]
We note that the range of the summation index in Proposition \ref{transientCosetsProp} includes logical sign tuples $\vec{\nu}^{(\Pcal_{j_t})}$ for which $\Bcal(\Pcal_{j_t},\vec{\nu}^{(\Pcal_{j_t})})$ is empty, but these may be ignored (as we do here), because all corresponding distribution numbers $\sigma_{\Pcal_{j_t},A_{j_t}}\left(\vec{\nu}^{(\Pcal_{j_t})},\vec{\nu}^{(\Pcal'_{j_t})}\right)$ are $0$. According to Lemma \ref{distNumLem}, computing a single one of the distribution numbers $\sigma_{\Pcal_{j_t},A_{j_t}}\left(\vec{\nu}^{(\Pcal_{j_t})},\vec{\nu}^{(\Pcal'_{j_t})}\right)$ takes
\[
O(2^{m_{j_t}}m_{j_t}\log^{1+o(1)}{q}),
\]
bit operations. Observing that for each fixed $t\in\{1,2,\ldots,K\}$, one has
\[
\bigcup\{S_{n,j_t}: n\in\Ncal\}\subseteq\{\emptyset,\neg\}^{m_{j_t}},
\]
we end up with a total bit operation cost of
\[
O\left(\sum_{t=1}^K{(2^{m_t}\cdot 2^{m_{j_t}}m_{j_t}\log^{1+o(1)}{q})}\right)\subseteq O(d4^d\log^{1+o(1)}{q})
\]
for computing $\Dfrak$, which majorizes the cost of checking whether $\Bcal(\Pcal_i,\vec{\nu}^{(\Pcal_i)})\not=\emptyset$. Also, if we go through the numbers $n\in\Ncal$ in increasing order when computing $\Dfrak$, the array representing $\Dfrak$ has its elements ordered by increasing $n$, as it should.

Next, we need to check whether the rooted tree $\Ifrak$ described by $\Dfrak$ occurs among the rooted trees $\Ifrak_n$, described by $\Dfrak_n$, which have already been introduced. The number of those trees is at most the total number of distinct (non-empty) blocks in all $\Pcal_j$ where $j$ is $\overline{f}$-transient, and that number is in $O(\min\{d2^d,q\})$. Since each $\Dfrak_n$ as well as $\Dfrak$ is a lexicographically sorted list of length in $O(\min\{d2^d,q\})$ every entry of which is a bit string of length in $O(\log{q})$, it takes $O(d^24^d\log{q})$ bit operations to check whether $\Dfrak=\Dfrak_n$ for some $n$. Should that be the case, we add $\vec{\nu}^{(\Pcal_i)}$ to $S_{n,i}$ as a new element (at the end of the array, which leads to that array being lexicographically ordered). Otherwise, we create $\Dfrak$ as a new tree description $\Dfrak_{n'}$, where $n'=\max{\Ncal}+1$, and initialize
\[
S_{n',j}:=
\begin{cases}
\{\vec{\nu}^{(\Pcal_i)}\}, & \text{if }j=i, \\
\emptyset, & \text{otherwise}.
\end{cases}
\]
For a given $\overline{f}$-transient $i$ such that $h_i=h$, this loop takes
\[
O(2^{m_i}(d4^d\log^{1+o(1)}{q}+d^24^d\log{q}))
\]
bit operations. Now, distinct indices $i$ with $h_i=h$ have disjoint iterated pre-image sets under $\overline{f}$, whence the sum of the numbers $m_i$ for all such $i$ is at most $d$. Therefore, we get a total bit operation cost of
\[
O(d8^d\log^{1+o(1)}{q}+d^28^d\log{q})
\]
for dealing with all $i$ such that $h_i$ has a given value. Dealing with all $\overline{f}$-transient $i$ in total takes
\[
O(d^28^d\log^{1+o(1)}{q}+d^38^d\log{q})
\]
bit operations.

\underline{Step 2: $i=d$.} Now that the $\overline{f}$-transient indices $i$ have been taken care of, one can compute a description $\Dfrak$ of $\Tree_{\Gamma_f}(0_{\IF_q})$ following Proposition \ref{zeroBlockProp}, which is similar to a single iteration of the loop in Step 1 and takes $O(d4^d\log^{1+o(1)}{q})$ bit operations. Afterward, we check whether $\Dfrak$ occurs among the existing descriptions $\Dfrak_n$ (introduced in Step 1), which (analogously to Step 1) takes $O(d^24^d\log{q})$ bit operations. If so, we set $S_{n,d}:=\emptyset$ (positive logical sign) for the corresponding unique $n$, and $S_{m,d}:=\neg$ for all other $m$. If not, we introduce $\Dfrak$ as a new tree description $\Dfrak_{n'}$ where $n'=\max{\Ncal}+1$, and set $S_{n',d}:=\emptyset$ and $S_{m,d}:=\neg$ for all $m<n'$. The overall complexity of this step is majorized by the one of Step 1.

\underline{Step 3: $\overline{f}$-periodic $i<d$.} Finally, we discuss how to deal with $\overline{f}$-periodic indices $i\in\{0,1,\ldots,d-1\}$. Let us use the notation from Step 3 in Subsection \ref{subsec3P3}. For instance, $(i_0,i_1,\ldots,i_{\ell-1})$ is the $\overline{f}$-cycle of $i=i_0$, and we have $i_t=i_{t\bmod{\ell}}$ for $t\in\IZ$ as well as $i'=i_{-1}$.

We begin by computing $H_i$, the maximum tree height in $\Gamma_f$ above periodic vertices in cosets of the form $C_{i_t}$ for $t\in\IZ$, for each $\overline{f}$-periodic $i$. Following the argument in Subsection \ref{subsec3P3}, we recall that $H_i\leq\ell\mpe(s)\leq\ell\lfloor\log_2{s}\rfloor$. Moreover, if we denote for fixed $k\in\IZ$ by $h'_{i,k}$\phantomsection\label{not269} the smallest positive integer $h'$ such that
\begin{equation}\label{twoGcdsEq}
\gcd\left(\prod_{t=0}^{h'-1}{\alpha_{i_{k-h'+t}}},s\right)=\gcd\left(\prod_{t=0}^{h'-2}{\alpha_{i_{k-h'+t}}},s\right),
\end{equation}
then $H_i=\max\{h'_{i,k}: k=0,1,\ldots,\ell-1\}-1$. Before we enter a loop over $k$ to find $h'_{i,k}$, we compute $\overline{\alpha}_i=\prod_{t=0}^{\ell-1}{\alpha_{i_t}}\bmod{s}$, taking $O(\ell\log^{1+o(1)}{s})\subseteq O(d\log^{1+o(1)}{q})$ bit operations. We then enter the loop over $k=0,1,\ldots,\ell-1$. For each $k$, we aim to find the correct value of $h'_{i,k}$ using a binary search in the range between $1$ and $\ell\lfloor\log_2{s}\rfloor+1$. Let us assume that we fixed a tentative value $h'$. Then for $H'\in\{h'-1,h'-2\}$, we have
\[
\prod_{t=0}^{H'}{\alpha_{i_{k-h'+t}}}=\left(\overline{\alpha}_i\right)^{\lfloor(H'+1)/\ell\rfloor}\cdot\prod_{t=0}^{(H'+1)\bmod{\ell}}{\alpha_{i_{k-h'+t}}},
\]
which can be computed modulo $s$ for both values of $H'$ using
\[
O(\log\log{s}\log^{1+o(1)}{s}+\ell\log^{1+o(1)}{s})=O(\ell\log^{1+o(1)}{s})\subseteq O(d\log^{1+o(1)}{q})
\]
bit operations. Following that, we compute and compare the two gcds from Equation (\ref{twoGcdsEq}), which takes $O(\log^{1+o(1)}{s})\subseteq O(\log^{1+o(1)}{q})$ bit operations. This binary search has $O(\log(\ell\log{s}))\subseteq O(\log{d}+\log\log{q})$ iterations per $k$, and there are $\ell\in O(d)$ values of $k$ to deal with. In total, the computation of $H_i$ takes
\[
O(d\cdot(\log{d}+\log\log{q})\cdot d\log^{1+o(1)}{q})=O(d^2\log{d}\log^{1+o(1)}{q}).
\]
bit operations for each individual $i$, and
\[
O(d^3\log{d}\log^{1+o(1)}{q})
\]
bit operations for all $\overline{f}$-periodic $i<d$ together.

After finding all $H_i$, we aim to compute $\Zcal_i=(\Xcal_{i,h})_{h=-1,0,\ldots,H_i}$ for each $\overline{f}$-periodic $i$. We do so by computing $\Xcal_{i,h}$ for all $\overline{f}$-periodic $i$ together successively for $h=-1,0,\ldots,\Hfrak:=\max\{H_i: i\in\per(\overline{f})\setminus\{d\}\}$\phantomsection\label{not270} (for each fixed value of $h$, we skip those $i$ such that $h>H_i$). Now, $\Xcal_{i,-1}=(\theta_{i,h}(x))_{h=1,2,\ldots,H_i}$ consists of $H_i$ congruences, and according to the definition of $\theta_{i_t,h}(x)$, these congruences can be computed recursively using simple arithmetic in each step. Per $i$, this takes
\[
O((H_i+1)\log^{1+o(1)}{q})\subseteq O((d\mpe(s)+1)\log^{1+o(1)}{q})\subseteq O(d\mpe(q-1)\log^{1+o(1)}{q})
\]
bit operations, and for all $\overline{f}$-periodic $i$ together, it takes $O(d^2\mpe(q-1)\log^{1+o(1)}{q})$ bit operations to compute $\Xcal_{i,-1}$. The computation of $\Xcal_{i,0}=\Rcal_i$ is analogous to the one of $\Zcal_j=\Pcal_j$ for $\overline{f}$-transient $j$ (see Step 1), taking $O(d\log^{1+o(1)}{q})$ bit operations per $i$, and $O(d^2\log^{1+o(1)}{q})$ for all $\overline{f}$-periodic $i$ together. Following that, the spanning congruence sequence for
\[
\Xcal_{i,h}=\lambda_{i_{-h}}^h(\Rcal_{i_{-h}})=\lambda(\lambda_{i_{-h}}^{h-1}(\Rcal_{i_{-h}}),A_{i_{-1}})=\lambda(\Xcal_{i_{-1},h-1},A_{i_{-1}})
\]
is obtained recursively for $h=1,2,\ldots,H_i$ through processing the one for $\Xcal_{i_{-1},h-1}$ using simple arithmetic, again taking complexity $O(d^2\log^{1+o(1)}{q})$ in each step for all $i$ together. Overall, the bit operation cost of computing $\Zcal_i$ after each $H_i$ has been worked out is in
\begin{align*}
&O(d^2\mpe(q-1)\log^{1+o(1)}{q}+(d\mpe(s)+1)\cdot d^2\log^{1+o(1)}{q}) \\
\subseteq &O(d^3\mpe(q-1)\log^{1+o(1)}{q})
\end{align*}
for all $\overline{f}$-periodic $i<d$ together.

Finally, we need to
\begin{itemize}
\item extend the list of rooted tree descriptions $\Dfrak_n$ produced in Steps 1 and 2 to its final version, which additionally contains descriptions of all rooted tree isomorphism types of the form $\Tree_i^{(h)}(\Pcal_{i,h},\vec{\nu}^{(\Pcal_{i,h})})$ where $i<d$ is $\overline{f}$-periodic, $h\in\{0,1,\ldots,H_i\}$, $\vec{\nu}^{(\Pcal_{i,h})}\in\{\emptyset,\neg\}^{n_{i_{-h}}+n_{i_{-h+1}}+\cdots+n_{i_0}}$, and $\Bcal(\Qcal_{i,h},\vec{\nu}^{(\Pcal_{i,h})}\diamond\vec{\xi}_{i,h})\not=\emptyset$; and
\item compute the associated logical sign sets $S_{n,i,h}$.
\end{itemize}

We do so recursively in the same manner as before, i.e., computing successively for $h=0,1,\ldots,\Hfrak$ the relevant data for \emph{all} corresponding $\overline{f}$-periodic $i<d$ together. For $h=0$, where $\Pcal_{i,h}=\Rcal_i$, this is basically identical to the corresponding computations in Step 1 and has the same overall bit operation cost, in $O(d^28^d\log^{1+o(1)}{q}+d^38^d\log{q})$.

Now we assume that $h>0$ and that all smaller values have been taken care of. We loop (in lexicographic order) over the logical sign tuples $\vec{\nu}^{(\Pcal_{i,h})}$ with $\sum_{\tfrak=0}^h{n_{i_{-\tfrak}}}$ entries, noting that there are $O(2^{(h+1)d})$ such tuples. For each tuple $\vec{\nu}^{(\Pcal_{i,h})}$, we first check whether $\Bcal(\Qcal_{i,h},\vec{\nu}^{(\Pcal_{i,h})}\diamond\vec{\xi}_{i,h})$ is non-empty. Because the length of the spanning congruence sequence for $\Qcal_{i,h}$ which we use here is in $O((h+1)d+H_i)$, Lemma \ref{distNumLem} implies that this check takes
\begin{align*}
&O(((h+1)d+H_i)2^{(h+1)d+H_i}\log^{1+o(1)}{q}) \\
\subseteq &O(d^2\mpe(q-1)2^{d^2\mpe(q-1)+d\mpe(q-1)+d}\log^{1+o(1)}{q})
\end{align*}
bit operations. If this block of $\Qcal_{i,h}$ is indeed non-empty, we need to compute a compact description $\Dfrak$ of $\Tree_i(\Pcal_{i,h},\vec{\nu}^{(\Pcal_{i,h})})$. Let $j_1,j_2,\ldots,j_K$ be the $\overline{f}$-transient pre-images of $i$ under $\overline{f}$ (which take $O(d\log{d})$ bit operations to determine). Writing $\vec{\nu}^{(\Pcal_{i,h})}=\diamond_{\tfrak=0}^h{\vec{o'_{\tfrak}}}$ with $\vec{o'_{\tfrak}}\in\{\emptyset,\neg\}^{n_{i_{-\tfrak}}}$, and $\vec{o'_0}=\diamond_{t=1}^K{\vec{\nu}^{(\Pcal'_{j_t})}}$ with $\vec{\nu}^{(\Pcal'_{j_t})}\in\{\emptyset,\neg\}^{m_{j_t}+1}$, we may choose $\Dfrak$ as follows according to Propositions \ref{periodicCosetsTransientProp} and \ref{periodicCosetsPeriodicProp}:
\begin{align*}
\Dfrak:=&\{(m,\sum_{t=1}^K\sum_{\vec{\nu}^{(\Pcal_{j_t})}\in S_{m,j_t}}{\sigma_{\Pcal_{j_t},A_{j_t}}\left(\vec{\nu}^{(\Pcal_{j_t})},\vec{\nu}^{(\Pcal'_{j_t})}\right)} \\
&+\sum_{k=0}^{h-1}\sum_{\vec{\nu}^{(\Pcal_{i',k})}\in S_{m,i',k}}{\sigma_{\Qcal_{i',k},A_{i'}}\left(\vec{\nu}^{(\Pcal_{i',k})}\diamond\vec{\xi}_{i',k},\diamond_{\tfrak=1}^{k+1}{\vec{o'_{\tfrak}}}\diamond\vec{\xi_{i,h}}\right)}):m\in\Ncal\} \\
&\setminus(\IN\times\{0\}).
\end{align*}
Computing the first sum,
\[
\sum_{t=1}^K\sum_{\vec{\nu}^{(\Pcal_{j_t})}\in S_{m,j_t}}{\sigma_{\Pcal_{j_t},A_{j_t}}\left(\vec{\nu}^{(\Pcal_{j_t})},\vec{\nu}^{(\Pcal'_{j_t})}\right)},
\]
\emph{for all $m\in\Ncal$ together} is analogous to the corresponding argument in Step 1 (also applying Lemma \ref{distNumLem} with $k:=m_{j_t}$ and $m:=q$) and takes $O(d4^d\log^{1+o(1)}{q})$ bit operations. As for the complexity of computing the second sum,
\[
\sum_{k=0}^{h-1}\sum_{\vec{\nu}^{(\Pcal_{i',k})}\in S_{m,i',k}}{\sigma_{\Qcal_{i',k},A_{i'}}\left(\vec{\nu}^{(\Pcal_{i',k})}\diamond\vec{\xi}_{i',k},\diamond_{\tfrak=1}^{k+1}{\vec{o'_{\tfrak}}}\diamond\vec{\xi}_{i,h}\right)},
\]
we note that for fixed $k$ and $\vec{\nu}^{(\Pcal_{i',k})}$, Lemma \ref{distNumLem} with $k:=(k+1)d+H_i$ and $m:=q$ implies that computing the single distribution number
\[
\sigma_{\Qcal_{i',k},A_{i'}}(\vec{\nu}^{(\Pcal_{i',k})}\diamond\vec{\xi}_{i',k},\diamond_{\tfrak=1}^{k+1}{\vec{o'_{\tfrak}}}\diamond\vec{\xi}_{i,h})
\]
takes
\[
O(2^{(k+1)d+H_i}((k+1)d+H_i)\log^{1+o(1)}{q})
\]
bit operations, whence
\begin{align*}
&O(2^{(k+1)d}\cdot 2^{(k+1)d+H_i}((k+1)d+H_i)\log^{1+o(1)}{q}) \\
=&O(2^{2(k+1)d+H_i}((k+1)d+H_i)\log^{1+o(1)}{q})
\end{align*}
bit operations are needed for computing all of these numbers for a fixed $k$ \emph{and all $m$ together}. Computing the second sum in its entirety for all $m$ together takes
\[
O\left(\sum_{k=0}^{h-1}{2^{2(k+1)d+H_i}((k+1)d+H_i)\log^{1+o(1)}{q}}\right)\subseteq O(2^{2hd+H_i}(hd+H_i)\log^{1+o(1)}{q}),
\]
bit operations, which majorizes the overall bit operation cost for computing the first sum, and thus is also the cost of computing $\Dfrak$.

Next, we need to check if $\Dfrak$ occurs among the already introduced descriptions $\Dfrak_n$ (for $n\in\Ncal$). The number $|\Ncal|$ of these descriptions is at most the sum of the numbers of distinct (nonempty) blocks in arithmetic partitions of one of the forms
\begin{itemize}
\item $\Pcal_j$ where $j$ is $\overline{f}$-transient, or
\item $\Pcal_{j,k}$ where $j<d$ is $\overline{f}$-periodic and $k\leq h$,
\end{itemize}
and that sum is in
\[
O\left(d2^d+d\sum_{k=0}^h{2^{(k+1)d}}\right)=O(d2^{(h+1)d}).
\]
Moreover, each of the descriptions $\Dfrak_n$ for $n\in\Ncal$ as well as $\Dfrak$ is a lexicographically sorted list of length in $O(\min\{d2^{(h+1)d},q\})$ each entry of which is a bit string of length in $O(\log{q})$, so it takes $O(d^24^{(h+1)d}\log{q})$ bit operations to check if $\Dfrak=\Dfrak_n$ for some $n\in\Ncal$. If so, we add $\vec{\nu}^{(\Pcal_{i,h})}$ to $S_{n,i,h}$ as a new element. Otherwise, we create $\Dfrak$ as a new tree description $\Dfrak_{n'}$, where $n'=\max{\Ncal}+1$, and initialize
\[
S_{n',j,h}:=
\begin{cases}
\{\vec{\nu}^{(\Pcal_{i,h})}\}, & \text{if }j=i, \\
\emptyset, & \text{otherwise}.
\end{cases}
\]

For a given $\overline{f}$-periodic $i<d$, this loop has a bit operation cost in
\begin{align*}
&O(2^{(h+1)d}\cdot(2^{2hd+H_i}(hd+H_i)\log^{1+o(1)}{q}+4^{(h+1)d}d^2\log{q})) \\
&\subseteq O(2^{3hd+H_i+d}(hd+H_i)\log^{1+o(1)}{q}+8^{(h+1)d}d^2\log{q}),
\end{align*}
and doing this for all such $i$ for a fixed value of $h$ costs
\[
O(d(hd+\Hfrak)2^{3hd+\Hfrak+d}\log^{1+o(1)}{q}+d^38^{(h+1)d}\log{q}).
\]
bit operations. In total, the bit operation cost of Step 3 is in
\begin{align*}
&O(d^3\mpe(q-1)\log^{1+o(1)}{q}+d^28^d\log^{1+o(1)}{q}+d^38^d\log{q}\\
&+\sum_{h=1}^{\Hfrak}{(d(hd+\Hfrak)2^{3hd+\Hfrak+d}\log^{1+o(1)}{q}+d^38^{(h+1)d}\log{q})}) \\
\subseteq &O(d^3\mpe(q-1)\log^{1+o(1)}{q}+d^28^d\log^{1+o(1)}{q}+d^38^d\log{q} \\
&+d(d+1)\Hfrak 2^{3\Hfrak d+\Hfrak+d}\log^{1+o(1)}{q}+d^38^{(\Hfrak+1)d}\log{q}) \\
\subseteq &O(d^3\mpe(q-1)2^{(3d^2+d)\mpe(q-1)+d}\log^{1+o(1)}{q}+d^38^{(d\mpe(q-1)+1)d}\log{q}) \\
\subseteq &O(d^3\mpe(q-1)2^{(3d^2+d)\mpe(q-1)+2d}\log^{1+o(1)}{q}),
\end{align*}
which majorizes the costs of Steps 1 and 2 and thus is the overall bit operation cost of the algorithm for Problem 2, as asserted. The claims on the length $|\Ncal|$ of the constructed recursive tree description list, as well as on the memory costs of the individual rooted tree descriptions, can be deduced as follows. Through applying the above bound on $|\Ncal|$ that holds throughout the $h$-th iteration of the loop with $h=\Hfrak\leq d\mpe(q-1)$, we get
\[
|\Ncal|\in O(d2^{(\Hfrak+1)d})\subseteq O(d2^{(d\mpe(q-1)+1)d})=O(d2^{d^2\mpe(q-1)+d}).
\]
Moreover, $|\Ncal|\leq q$ because by construction, each rooted tree isomorphism type in the associated recursive tree description list is of the form $\Tree_{\Gamma_f}(x)$ for some $x\in\IF_q=\V(\Gamma_f)$. Each individual tree description $\Dfrak_n$ is a set consisting of pairs of the form $(m,k)$ where the first entries $m$ are pairwise distinct elements of $\Ncal$, whence the length of $\Dfrak_n$ as a list is at most $|\Ncal|\in O(\min\{d2^{d^2\mpe(q-1)+d},q\})$. Finally, the bit cost of storing an individual pair $(m,k)$ is in $O(\log{q})$, as required.

We conclude this subsubsection with a detailed overview of the steps of this algorithm in the form of pseudocode, using the same format as at the end of Subsubsection \ref{subsubsec5P2P1}.

\begin{denumerate}[label=\arabic*]
\item Compute the induced function $\overline{f}$ on $\{0,1,\ldots,d\}$ and the affine maps $A_i$ of $\IZ/s\IZ$.

QC: $(d\log^{1+o(1)}{q},d,0,0,0)$.
\item For $i\in\{0,1,\ldots,d-1\}$, let
\[
h_i:=
\begin{cases}
\height(\Tree_{\Gamma_{\overline{f}}}(i)), & \text{if }i\text{ is }\overline{f}\text{-transient}, \\
\infty, & \text{otherwise}.
\end{cases}
\]
For each attainable value $h$ of $h_i$, compute the associated list $\Layer_h$\phantomsection\label{not271} of indices $i$.

QC: $(d^2\log^2{d},0,0,0,0)$.
\item Set $\Ncal:=\emptyset$.

QC: $(1,0,0,0,0)$.
\item Set $\vec{\Dfrak}:=\emptyset$.

QC: $(1,0,0,0,0)$.
\item Set $\vec{\imath}:=\emptyset$.

QC: $(1,0,0,0,0)$.
\item For $h=0,1,\ldots,\max\{h_i: i\in\{0,1,\ldots,d-1\}\setminus\per(\overline{f})\}$, do the following.

QC: $(d^28^d\log^{1+o(1)}{q}+d^38^d\log{q},0,0,0,0)$.
\begin{denumerate}[label=6.\arabic*]
\item If $h=0$, then do the following.
\begin{denumerate}[label=6.1.\arabic*]
\item Set $N':=0$.

QC: $(1,0,0,0,0)$.
\item Add $N'$ to $\Ncal$ as a new element.

QC: $(1,0,0,0,0)$.
\item Set $\Dfrak_0:=\emptyset$, and add it to $\vec{\Dfrak}$ as a new element.

QC: $(1,0,0,0,0)$.
\item For each $i\in\Layer_0$, do the following.

QC: $(d\log{d},0,0,0,0)$.
\begin{denumerate}[label=6.1.4.\arabic*]
\item Set $\Zcal_i:=\Pcal_i:=\Pfrak(\emptyset)$ and $m_i:=0$.

QC: $(\log{d},0,0,0,0)$.
\item Set $S_{0,i}:=\{\emptyset\}$.

QC: $(\log{d},0,0,0,0)$.
\item Add $i$ to $\vec{\imath}$ as a new element.

QC: $(\log{d},0,0,0,0)$.
\end{denumerate}
\end{denumerate}
\item Else do the following.
\begin{denumerate}[label=6.2.\arabic*]
\item For each $i\in\Layer_h$, do the following.

QC: $(d8^d\log^{1+o(1)}{q}+d^28^d\log{q},0,0,0,0)$.
\begin{denumerate}[label=6.2.1.\arabic*]
\item Compute the $\overline{f}$-pre-images $j_1,j_2,\ldots,j_K$ of $i$.

QC: $(d\log{d},0,0,0,0)$.
\item Compute a spanning congruence sequence, of length $m_i$, for $\Zcal_i:=\Pcal_i:=\bigwedge_{t=1}^K{\Pfrak'(\Pcal_{j_t},A_{j_t})}$.

QC: $(d\log^{1+o(1)}{q},0,0,0,0)$.
\item For each $n\in\Ncal$, do the following.

QC: $(d2^d\log{q},0,0,0,0)$.
\begin{denumerate}[label=6.2.1.3.\arabic*]
\item Initialize $S_{n,i}:=\emptyset$.

QC: $(\log{q},0,0,0,0)$.
\end{denumerate}
\item For each $\vec{\nu}^{(\Pcal_i)}\in\{\emptyset,\neg\}^{m_i}$, do the following.

QC: $(2^{m_i}(d4^d\log^{1+o(1)}{q}+d^24^d\log{q}),0,0,0,0)$.
\begin{denumerate}[label=6.2.1.4.\arabic*]
\item Check whether $|\Bcal(\Pcal_i,\vec{\nu}^{(\Pcal_i)})|=\sigma_{\Pcal_i,\mathbf{0}}(\vec{\nu}^{(\Pcal_i)},(\emptyset,\ldots,\emptyset))=0$, and if so, skip to the next tuple $\vec{\nu}^{(\Pcal_i)}$.

QC: $(d2^d\log^{1+o(1)}{q},0,0,0,0)$.
\item Writing $\vec{\nu}^{(\Pcal_i)}=\diamond_{t=1}^K{\vec{\nu}^{(\Pcal'_{j_t})}}$ with $\vec{\nu}^{(\Pcal'_{j_t})}\in\{\emptyset,\neg\}^{m_{j_t}+1}$, compute
\[
\Dfrak:=\{(n,\sum_{t=1}^K\sum_{\vec{\nu}^{(\Pcal_{j_t})}\in S_{n,j_t}}{\sigma_{\Pcal_{j_t},A_{j_t}}(\vec{\nu}^{(\Pcal_{j_t})},\vec{\nu}^{(\Pcal'_{j_t})})}): n\in\Ncal\}\setminus(\IN\times\{0\}),
\]
a compact description of $\Tree_i(\Pcal_i,\vec{\nu}^{(\Pcal_i)})$.

QC: $(d4^d\log^{1+o(1)}{q},0,0,0,0)$.
\item Check whether $\Dfrak=\Dfrak_n$ for some (unique) $n\in\Ncal$, and store this information (the truth value and, if applicable, $n$).

QC: $(d^24^d\log{q},0,0,0,0)$.
\item If $\Dfrak=\Dfrak_n$ for some $n\in\Ncal$, then do the following.
\begin{denumerate}[label=6.2.1.4.4.\arabic*]
\item Add $\vec{\nu}^{(\Pcal_i)}$ to $S_{n,i}$ as a new element.

QC: $(\log{q}+d,0,0,0,0)$.
\end{denumerate}
\item Else do the following.
\begin{denumerate}[label=6.2.1.4.5.\arabic*]
\item Set $N':=N'+1$, and add it to $\Ncal$ as a new element.

QC: $(\log{q},0,0,0,0)$.
\item Set $\Dfrak_{N'}:=\Dfrak$, and add it to $\vec{\Dfrak}$ as a new element.

QC: $(d2^d\log{q},0,0,0,0)$.
\item For $j\in\vec{\imath}$, do the following.

QC: $(d\log{q},0,0,0,0)$.
\begin{denumerate}[label=6.2.1.4.5.3.\arabic*]
\item Set $S_{N',j}:=\emptyset$.

QC: $(\log{q},0,0,0,0)$.
\end{denumerate}
\item Set $S_{N',i}:=\{\vec{\nu}^{(\Pcal_i)}\}$.

QC: $(\log{q}+d,0,0,0,0)$.
\end{denumerate}
\end{denumerate}
\item Add $i$ to $\vec{\imath}$ as a new element.

QC: $(\log{d},0,0,0,0)$.
\end{denumerate}
\end{denumerate}
\end{denumerate}
\item Compute the $\overline{f}$-transient $\overline{f}$-pre-images $j_1,j_2,\ldots,j_K$ of $d$.

QC: $(d\log{d},0,0,0,0)$.
\item Compute
\[
\Dfrak:=\{(n,\sum_{t=1}^K\sum_{\vec{\nu}^{(\Pcal_{j_t})}\in S_{n,j_t}}{\sigma_{\Pcal_{j_t},\mathbf{0}}(\vec{\nu}^{(\Pcal_{j_t})},(\emptyset,\ldots,\emptyset))}): n\in\Ncal\}\setminus(\IN\times\{0\}),
\]
a compact description of $\Tree_{\Gamma_f}(0_{\IF_q})$.

QC: $(d4^d\log^{1+o(1)}{q},0,0,0,0)$.
\item Check whether $\Dfrak=\Dfrak_n$ for some (unique) $n\in\Ncal$, and store this information (the truth value and, if applicable, $n$).

QC: $(d^24^d\log{q},0,0,0,0)$.
\item If $\Dfrak=\Dfrak_n$ for some $n\in\Ncal$, then do the following.
\begin{denumerate}[label=10.\arabic*]
\item Set $S_{n,d}:=\emptyset$, and $S_{m,d}:=\neg$ for all $m\in\Ncal\setminus\{n\}$.

QC: $(d2^d\log{q},0,0,0,0)$.
\end{denumerate}
\item Else do the following.
\begin{denumerate}[label=11.\arabic*]
\item Set $N':=N'+1$, and add it to $\Ncal$ as a new element.

QC: $(\log{q},0,0,0,0)$.
\item Set $\Dfrak_{N'}:=\Dfrak$, and add it to $\vec{\Dfrak}$ as a new element.

QC: $(d2^d\log{q},0,0,0,0)$.
\item For $j\in\vec{\imath}=\{0,1,\ldots,d-1\}\setminus\per(\overline{f})$, do the following.

QC: $(d\log{q},0,0,0,0)$.
\begin{denumerate}[label=11.3.\arabic*]
\item Set $S_{N',j}:=\emptyset$.

QC: $(\log{q},0,0,0,0)$.
\end{denumerate}
\item Set $S_{N',d}:=\emptyset$, and $S_{n,d}:=\neg$ for all $n\in\Ncal\setminus\{N'\}$.

QC: $(d2^d\log{q},0,0,0,0)$.
\end{denumerate}
\item For $i\in\per(\overline{f})\setminus\{d\}$, do the following.

QC: $(d^3\log{d}\log^{1+o(1)}{q},0,0,0,0)$.
\begin{denumerate}[label=12.\arabic*]
\item Compute $H_i$, the maximum tree height in $\Gamma_f$ above periodic vertices in cosets of the form $C_{i_t}$ where $t\in\IZ$. See the discussion above for details.

QC: $(d^2\log{d}\log^{1+o(1)}{q},0,0,0,0)$.
\end{denumerate}
\item Compute $\Hfrak:=\max\{H_i: i\in\per(\overline{f})\setminus\{d\}\}$.

QC: $(d\log{d},0,0,0,0)$.
\item For $h=-1,0,1,\ldots,\Hfrak$, do the following.

QC: $(d^3\mpe(q-1)\log^{1+o(1)}{q},0,0,0,0)$.
\begin{denumerate}[label=14.\arabic*]
\item If $h=-1$, then do the following.
\begin{denumerate}[label=14.1.\arabic*]
\item For $i\in\per(\overline{f})\setminus\{d\}$, do the following.

QC: $(d^2\mpe(q-1)\log^{1+o(1)}{q},0,0,0,0)$.
\begin{denumerate}[label=14.1.1.\arabic*]
\item Compute and store the $\overline{f}$-pre-images of $i$, including the periodic one, $i'$.

QC: $(d\log{d},0,0,0,0)$.
\item Compute $\Xcal_{i,-1}:=(\theta_{i,k}(x))_{k=1,2,\ldots,H_i}$.

QC: $(d\mpe(q-1)\log^{1+o(1)}{q},0,0,0,0)$.
\end{denumerate}
\end{denumerate}
\item Else do the following.
\begin{denumerate}[label=14.2.\arabic*]
\item If $h=0$, then do the following.
\begin{denumerate}[label=14.2.1.\arabic*]
\item For $i\in\per(\overline{f})\setminus\{d\}$, do the following.

QC: $(d^2\log^{1+o(1)}{q},0,0,0,0)$.
\begin{denumerate}[label=14.2.1.1.\arabic*]
\item Compute $\Xcal_{i,0}=\Rcal_i=\bigwedge_{t=1}^K{\Pfrak'(\Pcal_{j_t},A_{j_t})}$, with a spanning sequence of length $n_i$, where $j_1,j_2,\ldots,j_K$ are the $\overline{f}$-transient $\overline{f}$-pre-images of $i$ (this can be handled analogous to Steps 6.2.1.1 and 6.2.1.2).

QC: $(d\log^{1+o(1)}{q},0,0,0,0)$.
\end{denumerate}
\end{denumerate}
\item Else do the following.
\begin{denumerate}[label=14.2.2.\arabic*]
\item For $i\in\per(\overline{f})\setminus\{d\}$, do the following.

QC: $(d^2\log^{1+o(1)}{q},0,0,0,0)$.
\begin{denumerate}[label=14.2.2.1.\arabic*]
\item If $h\leq H_i$, then compute $\Xcal_{i,h}=\lambda(\Xcal_{i',h-1},A_{i'})$ (we observe that as a spanning congruence sequence, $\Xcal_{i,h}$ has length $n_{i'}$, same as $\Xcal_{i',h-1}$). Otherwise, skip to the next value of $i$.

QC: $(d\log^{1+o(1)}{q},0,0,0,0)$.
\end{denumerate}
\end{denumerate}
\end{denumerate}
\end{denumerate}
\item For $i\in\per(\overline{f})\setminus\{d\}$, do the following.

QC: $(d^3\mpe(q-1)\log{q},0,0,0,0)$.
\begin{denumerate}[label=15.\arabic*]
\item Set $\Zcal_i:=(\Xcal_{i,h})_{h=-1,0,\ldots,H_i}$.

QC: $(d^2\mpe(q-1)\log{q},0,0,0,0)$.
\end{denumerate}
\item For $h=0,1,\ldots,\Hfrak$, do the following.

QC: $(d^3\mpe(q-1)2^{(3d^2+d)\mpe(q-1)+2d}\log^{1+o(1)}{q},0,0,0,0)$.
\begin{denumerate}[label=16.\arabic*]
\item Initialize all sets $S_{n,i,h}$, where $n\in\Ncal$ and $i\in\per(\overline{f})\setminus\{d\}$ such that $H_i\geq h$, to be $\emptyset$.

QC: $(d^22^{hd},0,0,0,0)$.
\item If $h=0$, then do the following.
\begin{denumerate}[label=16.2.\arabic*]
\item Extend $\Ncal$ and the associated list $\vec{\Dfrak}$ of rooted tree descriptions $\Dfrak_n$ such that all rooted tree isomorphism types of the form $\Tree_i(\Pcal_{i,0},\vec{\nu}^{(\Pcal_{i,0})})=\Tree_i(\Rcal_i,\bigcup_{t=1}^K{C_{j_t}},\vec{\nu}^{(\Pcal_{i,0})})$ for $i\in\per(\overline{f})\setminus\{d\}$ are covered, and compute the associated logical sign tuple sets $S_{n,i,0}$. This is analogous to Step 6.2.1.4, but carried out for the $O(d)$ values of $i\in\per(\overline{f})\setminus\{d\}$ together.

QC: $(d^28^d\log^{1+o(1)}{q}+d^38^d\log{q},0,0,0,0)$.
\end{denumerate}
\item Else do the following.
\begin{denumerate}[label=16.3.\arabic*]
\item For each $i\in\per(\overline{f})\setminus\{d\}$, do the following.

QC: $(d(hd+\Hfrak)2^{3hd+\Hfrak+d}\log^{1+o(1)}{q}+d^38^{(h+1)d}\log{q},0,0,0,0)$.
\begin{denumerate}[label=16.3.1.\arabic*]
\item Check whether $h\leq H_i$. If not, skip to the next $i$.

QC: $(\log{q},0,0,0,0)$.
\item Recalling that $\Pcal_{i,h}=\bigwedge_{t=0}^h{\Xcal_{i,t}}$, do the following for each $\vec{\nu}^{(\Pcal_{i,h})}\in\{\emptyset,\neg\}^{n_{i_0}+n_{i_{-1}}+\cdots+n_{i_{-h}}}$.

QC: $((hd+H_i)2^{3hd+H_i+d}\log^{1+o(1)}{q}+d^28^{(h+1)d}\log{q},0,0,0,0)$.
\begin{denumerate}[label=16.3.1.2.\arabic*]
\item Recalling that $\Qcal_{i,h}=\Pcal_{i,h}\wedge\Ucal_i$, where $\Ucal_i=\Pfrak'(\theta_{i,k}(x): k=1,2,\ldots,H_i)$ (and the definition of $\vec{\xi}_{i,h}\in\{\emptyset,\neg\}^{H_i}$ from page \pageref{not160}), check whether
\[
|\Bcal(\Qcal_{i,h},\vec{\nu}^{(\Pcal_{i,h})}\diamond\vec{\xi}_{i,h})|=\sigma_{\Qcal_{i,h},\mathbf{0}}(\vec{\nu}^{(\Pcal_{i,h})}\diamond\vec{\xi}_{i,h},(\emptyset,\ldots,\emptyset))=0,
\]
and if so, skip to the next tuple $\vec{\nu}^{(\Pcal_{i,h})}$.

QC: $(d^2\mpe(q-1)2^{d^2\mpe(q-1)+d\mpe(q-1)+d}\log^{1+o(1)}{q},0,0,0,0)$.
\item Writing $\vec{\nu}^{(\Pcal_{i,h})}=\diamond_{t=0}^h{\vec{o'_t}}$ with $\vec{o'_t}\in\{\emptyset,\neg\}^{n_{i_{-t}}}$, and $\vec{o'_0}=\diamond_{t=1}^K{\vec{\nu}^{(\Pcal'_{j_t})}}$ with $\vec{\nu}^{(\Pcal'_{j_t})}\in\{\emptyset,\neg\}^{n_{j_t}+1}$ (where $j_1,j_2,\ldots,j_K$ are the $\overline{f}$-transient $\overline{f}$-pre-images of $i$, computed in Step 14.1.1.1), compute
\begin{align*}
&\Dfrak:= \\
&\{(m,\sum_{t=1}^K\sum_{\vec{\nu}^{(\Pcal_{j_t})}\in S_{m,j_t}}{\sigma_{\Pcal_{j_t},A_{j_t}}(\vec{\nu}^{(\Pcal_{j_t})},\vec{\nu}^{(\Pcal'_{j_t})})} \\
&+\sum_{k=0}^{h-1}\sum_{\vec{\nu}^{(\Pcal_{i',k})}\in S_{m,i',k}}{\sigma_{\Qcal_{i',k},A_{i'}}(\vec{\nu}^{(\Pcal_{i',k})}\diamond\vec{\xi}_{i',k},\diamond_{\tfrak=1}^{k+1}{\vec{o'_{\tfrak}}}\diamond\vec{\xi_{i,h}})}): \\
&m\in\Ncal\}\setminus(\IN\times\{0\}).
\end{align*}
QC: $(2^{2hd+H_i}(hd+H_i)\log^{1+o(1)}{q},0,0,0,0)$.
\item Check whether $\Dfrak=\Dfrak_n$ for some (unique) $n\in\Ncal$, and store this information (the truth value and, if applicable, $n$).

QC: $(d^24^{(h+1)d}\log{q},0,0,0,0)$.
\item If $\Dfrak=\Dfrak_n$ for some $n\in\Ncal$, then do the following.
\begin{denumerate}[label=16.3.1.2.4.\arabic*]
\item Add $\vec{\nu}^{(\Pcal_{i,h})}$ to $S_{n,i,h}$ as a new element.

QC: $(\log{q}+(h+1)d,0,0,0,0)$.
\end{denumerate}
\item Else do the following.
\begin{denumerate}[label=16.3.1.2.5.\arabic*]
\item Set $N':=N'+1$, and add it to $\Ncal$ as a new element.

QC: $(\log{q},0,0,0,0)$.
\item Set $\Dfrak_{N'}:=\Dfrak$, and add it to $\vec{\Dfrak}$ as a new element.

QC: $(d2^{(h+1)d}\log{q},0,0,0,0)$.
\item For $j\in\vec{\imath}=\{0,1,\ldots,d-1\}\setminus\per(\overline{f})$, do the following.

QC: $(d\log{q},0,0,0,0)$.
\begin{denumerate}[label=16.3.1.2.5.3.\arabic*]
\item Set $S_{N',j}:=\emptyset$.

QC: $(\log{q},0,0,0,0)$.
\end{denumerate}
\item Set $S_{N',d}:=\neg$.

QC: $(\log{q},0,0,0,0)$.
\item For $k=0,1,\ldots,h-1$, do the following.

QC: $(dh\log{q},0,0,0,0)$.
\begin{denumerate}[label=16.3.1.2.5.5.\arabic*]
\item For $j\in\per(\overline{f})\setminus\{d\}$, do the following.

QC: $(d\log{q},0,0,0,0)$.
\begin{denumerate}[label=16.3.1.2.5.5.1.\arabic*]
\item Set $S_{N',j,k}:=\emptyset$.

QC: $(\log{q},0,0,0,0)$.
\end{denumerate}
\end{denumerate}
\item For $j\in\per(\overline{f})\setminus\{d\}$, do the following.

QC: $(d\log{q}+(h+1)d^2,0,0,0,0)$.
\begin{denumerate}[label=16.3.1.2.5.6.\arabic*]
\item Set
\[
S_{N',j,h}:=
\begin{cases}
\{\vec{\nu}^{(\Pcal_{i,h})}\}, & \text{if }j=i, \\
\emptyset, & \text{otherwise}.
\end{cases}
\]
QC: $(\log{q}+(h+1)d,0,0,0,0)$.
\end{denumerate}
\end{denumerate}
\end{denumerate}
\end{denumerate}
\end{denumerate}
\end{denumerate}
\item For $n\in\Ncal$, do the following.

QC: $(d^4\mpe(q-1)4^{d^2\mpe(q-1)+d},0,0,0,0)$; please note that although the QC term for Step 16 only involves $d^3$, not $d^4$, it still majorizes this.
\begin{denumerate}[label=17.\arabic*]
\item For $i\in\per(\overline{f})\setminus\{d\}$, do the following.

QC: $(d^3\mpe(q-1)2^{d^2\mpe(q-1)+d},0,0,0,0)$.
\begin{denumerate}[label=17.1.\arabic*]
\item Set $S_{n,i}:=(S_{n,i,h})_{h=0,1,\ldots,H_i}$.

QC: $(d^2\mpe(q-1)2^{d^2\mpe(q-1)+d},0,0,0,0)$ for copying, using that
\begin{align*}
&\sum_{h=0}^{H_i}{2^{(h+1)d}(h+1)d}\in O((H_i+1)d2^{(H_i+1)d}) \\
\subseteq &O(d^2\mpe(q-1)2^{d^2\mpe(q-1)+d})
\end{align*}
\end{denumerate}
\end{denumerate}
\item Output the partition-tree register $((\Zcal_i)_{i=0,1,\ldots,d-1},((\Dfrak_n,(S_{n,i})_{i=0,1,\ldots,d}))_{n\in\Ncal})$, and halt.

QC: $(d^24^{d^2\mpe(q-1)+d}\log{q}+d^4\mpe(q-1)4^{d^2\mpe(q-1)+d},0,0,0,0)$ for copying, using that the bit storage cost of $\Xcal_{i,-1}=(\theta_{i,h}(x))_{h=1,2,\ldots,H_i}$ for fixed $i$ is in $O(H_i\log{q})\subseteq O(d\mpe(q-1)\log{q})$, the bit storage cost of $\Xcal_{i,h}$ for fixed $i$ and $h$ is in $O((h+1)d\log{q})$, the total bit storage cost of the recursive tree description list $(\Dfrak_n)_{n\in\Ncal}$ is in $O(|\Ncal|^2\log{q})\subseteq O(d^24^{d^2\mpe(q-1)+d}\log{q})$, the total bit storage cost of the $S_{n,i}$ for $n\in\Ncal$ and $i\in\{0,1,\ldots,d\}$ is in $O(d^4\mpe(q-1)4^{d^2\mpe(q-1)+d})$ (the bit operation cost of Step 17; the combined storage cost of the $S_{n,i}$ for $i\notin\per(\overline{f})$ or $i=d$ is majorized by that for $i\in\per(\overline{f})\setminus\{d\}$), and
\begin{align*}
O(&dH_i\log{q}+d\sum_{h=0}^{H_i}{(h+1)d\log{q}}+d^24^{d^2\mpe(q-1)+d}\log{q} \\
&+d^4\mpe(q-1)4^{d^2\mpe(q-1)+d}) \\
&\subseteq O(d^24^{d^2\mpe(q-1)+d}\log{q}+d^4\mpe(q-1)4^{d^2\mpe(q-1)+d}).
\end{align*}
\end{denumerate}

\subsubsection{Proof of statement (3)}\label{subsubsec5P2P3}

We follow the approach of Subsection \ref{subsec3P4}. If $r=0_{\IF_q}$, then we search for the unique $n\in\Ncal=\{0,1,\ldots,N\}$ such that $S_{n,d}=\emptyset$ (as opposed to $\neg$). This process takes $O(N)\subseteq O(d2^{d^2\mpe(q-1)+d})$ bit operations -- see statement (2) for this bound on $N$. Once $n$ has been found, one may simply read off the description $\Dfrak_n$ of $\Ifrak_n=\Tree_{\Gamma_f}(0_{\IF_q})$ and output the description $[\Dfrak_n]$ of the cyclic sequence in question, which takes $O(d2^{d^2\mpe(q-1)+d}\log{q})$ bit operations for copying.

Henceforth, we assume that $r\not=0_{\IF_q}$. We recall from the proof of statement (4) of Lemma \ref{complexitiesLem2} that there is a natural choice $\omega$ for a primitive element of $\IF_q$. We compute $\lfrak:=\log_{\omega}(r)$\phantomsection\label{not272} in a single fdl query (if $(r,l)$ is obtained as an element of the CRL-list of $f$ that is parametrized by the algorithm for Problem 1, then $r$ is literally specified as a power of $\omega$, and this computation may be skipped). Then we set $i:=\lfrak\bmod{d}$ (taking $O(\log^{1+o(1)}{q})$ bit operations to compute by Lemma \ref{complexitiesLem}(2)), so that $r\in C_i$, and determine the cycle $(i_0,i_1,\ldots,i_{\ell-1})$ of $i=i_0$ under $\overline{f}$, which costs $O(d\log{d})$ bit operations (see also the beginning of the argument for statement (1)). Moreover, we set
\[
r'_{i_0}=r'_i:=\iota_i^{-1}(r)=\frac{\lfrak-i}{d}\in\IZ/s\IZ\,\,\text{ and }\,\,r'_{i_t}:=A_{i_{t-1}}(r'_{i_{t-1}})\text{ for }t=1,2,\ldots,\ell-1,
\]
which takes $O(d\log^{1+o(1)}{q})$ bit operations altogether. Finally, for $t=0,1,\ldots,\ell-1$, we compute
\[
\Acal_{i_t}:=A_{i_t}A_{i_{(t+1)\bmod{\ell}}}\cdots A_{i_{(t+\ell-1)\bmod{\ell}}},
\]
which takes $O(d^2\log^{1+o(1)}{q})$ bit operations for all $t$ together (see also the beginning of the argument for statement (1)).

Until further notice, we assume that $t$ is fixed. In the notation of Subsection \ref{subsec3P4}, we have $\Acal_{i_t}(x)=\overline{\alpha}_{i_t}x+\overline{\beta}_{i_t}$. We compute $\gcd(\overline{\alpha}_{i_t},s)$, find the smallest non-negative integer $L_{i_t}$ such that $\gcd(\overline{\alpha}_{i_t}^{L_{i_t}},s)=\gcd(\overline{\alpha}_{i_t}^{L_{i_t}+1},s)=:s''_{i_t}$\phantomsection\label{not273}, compute $s'_{i_t}:=s/s''_{i_t}$ and $\ffrak_{i_t}:=\sum_{z=0}^{L_{i_t}-1}{\overline{\alpha}_{i_t}^z\overline{\beta}_{i_t}}\bmod{s''_{i_t}}$\phantomsection\label{not274} (the unique periodic point of the reduction of $\Acal_{i_t}$ modulo $s''_{i_t}$). Together, these tasks can be performed within $O(\log^{2+o(1)}{q})$ bit operations if binary search is used to find $L_{i_t}$ (see also the paragraph in the proof of statement (1) where $L_i$ is introduced).

As we observed in Subsection \ref{subsec3P4} already, because each point on the cycle of $r'_{i_t}$ under $\Acal_{i_t}$ (which corresponds to the cycle of $f^t(r)$ under $f^{\ell}$) is congruent to $\ffrak_{i_t}$ modulo $s''_{i_t}$, some of the truth values of the spanning congruences of $\Pcal_{i_t}=\Pfrak(x\equiv\bfrak_{i_t,j}\Mod{\afrak_{{i_t},j}}: j=1,2,\ldots,m_{i_t})$ may be constant along that cycle. More specifically,
\begin{itemize}
\item if $\ffrak_{i_t}\not\equiv\bfrak_{i_t,j}\Mod{\gcd(\afrak_{i_t,j},s''_{i_t})}$, then the $j$-th spanning congruence of $\Pcal_{i_t}$ is always false along the cycle. For such $j$, we set $\nu_j:=\neg$ and say that $j$ is \emph{of type I}\phantomsection\label{term79}.
\item if $\afrak_{i_t,j}\mid s''_{i_t}$ and $\ffrak_{i_t}\equiv\bfrak_{i_t,j}\Mod{\afrak_{i_t,j}}$, then the $j$-th spanning congruence of $\Pcal_{i_t}$ is always true along the cycle. For such $j$, we set $\nu_j:=\emptyset$ and say that $j$ is \emph{of type II}\phantomsection\label{term80}.
\end{itemize}
Going through the spanning congruences of $\Pcal_{i_t}$ and creating complete lists of those $j\in\{1,2,\ldots,m_{i_t}\}$ that are of type I, respectively of type II, takes $O(m_{i_t}\log^{1+o(1)}{q})\subseteq O(d^2\mpe(q-1)\log^{1+o(1)}{q})$ bit operations. For the bound $m_{i_t}\in O(d^2\mpe(q-1))$, we recall from Subsection \ref{subsec3P3} that $\Pcal_{i_t}=\Qcal_{i_t,H_{i_t}}=\bigwedge_{k=0}^{H_{i_t}}{\lambda_{i_{t-k}}^k(\Rcal_{i_{t-k}})}\wedge\Ucal_{i_t}$, and that the length of the \enquote{standard} spanning congruence sequence of $\lambda_{i_{t-k}}^k(\Rcal_{i_{t-k}})$ (stored in the partition-tree register as $\Xcal_{i_t,k}$) is $n_{i_{t-k}}\in O(d)$, while the one of $\Ucal_{i_t}=\Pfrak(\theta_{i_t,h}(x): h=0,1,\ldots,H_{i_t})$, which is stored as $\Xcal_{i_t,-1}$, has length $H_i$. Moreover, $H_i\in O(d\mpe(q-1))$.

Next, we compute the affine discrete logarithm $\lfrak_{i_t,j}:=\log^{(\afrak_{i_t,j})}_{\Acal_{i_t}}(r'_{i_t},\bfrak_{i_t,j})$ (as defined in Subsection \ref{subsec2P4}) for those $j\in\{1,2,\ldots,m_{i_t}\}$ that are neither of type I nor of type II, which takes at worst $O(m_{i_t})\subseteq O(d^2\mpe(q-1))$ mdl queries and $O(m_{i_t}\log^{1+o(1)}{q})\subseteq O(d^2\mpe(q-1)\log^{1+o(1)}{q})$ bit operations by the reduction argument in Subsection \ref{subsec2P4}. If $j\in\{1,2,\ldots,m_{i_t}\}$ is not of type I or II and $\lfrak_{i_t,j}=\infty$, then the $j$-th spanning congruence of $\Pcal_{i_t}$ is always false along the cycle of $r'_{i_t}$ under $\Acal_{i_t}$; for such $j$, we set $\nu_j:=\neg$ and say that $j$ is \emph{of type III}\phantomsection\label{term81}.

In the notation of Subsection \ref{subsec3P4}, the set $I_{i_t}$ consists precisely of those $j\in\{1,2,\ldots,m_{i_t}\}$ that are of one of the types I--III. Now, for each $j\in\{1,2,\ldots,m_{i_t}\}$ such that $j\notin I_{i_t}$, we compute the cycle length $l_{i_t,j}$ of $r'_{i_t}$ under $\Acal_{i_t}$ modulo $\afrak_{i_t,j}$ -- by the reduction argument of Subsection \ref{subsec2P4}, this takes $O(m_{i_t})\subseteq O(d^2\mpe(q-1))$ mord queries and $O(m_{i_t}\log^{1+o(1)}{q})\subseteq O(d^2\mpe(q-1)\log^{1+o(1)}{q})$ bit operations.

We recall that the $f$-cycle length $l$ of $r$ is given to us as part of the input. Associated with each $j\in\{1,2,\ldots,m_{i_t}\}\setminus I_{i_t}$, we have the $(l/\ell)$-congruence $y\equiv\lfrak_{i_t,j}\Mod{l_{i_t,j}}$, which holds precisely for those $y\in\IZ/(l/\ell)\IZ$ such that the $j$-th spanning congruence of $\Pcal_{i_t}$ becomes true when substituting $x:=\Acal_{i_t}^y(r'_{i_t})$. As in Subsection \ref{subsec3P4}, we set
\[
\Pcal^{(i_t)}:=\Pfrak(y\equiv\lfrak_{i_t,j}\Mod{l_{i_t,j}}: j\in\{1,2,\ldots,m_{i_t}\}\setminus I_{i_t}),
\]
an arithmetic partition of $\IZ/(l/\ell)\IZ$. The logical sign tuples which parametrize the blocks of $\Pcal^{(i_t)}$ are from the set $\{\emptyset,\neg\}^{\{1,2,\ldots,m_{i_t}\}\setminus I_{i_t}}$ and are of the form $\vec{\nu}=(\nu_j)_{j\in\{1,\ldots,m_{i_t}\}\setminus I_{i_t}}$. For such a tuple $\vec{\nu}$, we set $\vec{\nu}^+:=(\nu_j)_{j=1,2,\ldots,m_{i_t}}$\phantomsection\label{not275} (the positions indexed by $j\in I_{i_t}$ are filled with the constant logical signs for such $j$ that were defined above). Conversely, for $\vec{\nu'}\in\{\emptyset,\neg\}^{m_{i_t}}$, we denote by $\vec{\nu'}^-$\phantomsection\label{not276} the projection of $\vec{\nu'}$ to $\{\emptyset,\neg\}^{\{1,2,\ldots,m_{i_t}\}\setminus I_{i_t}}$.

It follows that if $y\in\IZ/(l/\ell)\IZ$ is contained in the block $\Bcal(\Pcal^{(i_t)},\vec{\nu})$ of $\Pcal^{(i_t)}$, then $\Acal_{i_t}^y(r'_{i_t})$ is always contained in the block $\Bcal(\Pcal_{i_t},\vec{\nu}^+)$ of $\Pcal_{i_t}$, whence $\Tree_{\Gamma_f}(f^{t+\ell y}(r))=\Ifrak_{n'_{i_t}(\vec{\nu})}$ where $n'_{i_t}(\vec{\nu})$\phantomsection\label{not277} is the unique $n\in\{0,1,\ldots,N\}$ such that $\vec{\nu}^+\in S_{i_t,n}$.

We remind the reader that we wish to output a compact description of the cyclic sequence of rooted tree isomorphism types that characterizes the connected component of $\Gamma_f$ containing $r$. In order to do so, we compute sets $\overline{S}_{n,i_t}$\phantomsection\label{not277P5} for $n\in\{0,1,\ldots,N\}$ (and our still fixed $t$) that are defined as follows. The elements of $\overline{S}_{n,i_t}$ are precisely those logical sign tuples $\vec{\nu}\in\{\emptyset,\neg\}^{I_{i_t}}$ for which $n'_{i_t}(\vec{\nu})=n$.

Let us describe how to compute these sets $\overline{S}_{n,i_t}$. We go through the $N+1\in O(d2^{d^2\mpe(q-1)+d})$ values for $n$ (see also the argument for $r=0_{\IF_q}$ at the beginning of this subsubsection), and for each $n$, we go through the $|S_{n,i_t,H_{i_t}}|$ logical sign tuples $\vec{\nu''}$ in $S_{n,i_t,H_{i_t}}$ (the last entry of $S_{n,i_t}$) in their listed order, which is lexicographic. For each $\vec{\nu''}$, we set $\vec{\nu'}:=\vec{\nu''}\diamond\vec{\xi}_{i_t,H_{i_t}}=(\nu'_j)_{j=1,2,\ldots,m_{i_t}}$. We check whether $\nu'_j=\nu_j$ for all $j\in I_{i_t}$ (which takes $O(m_{i_t}\log{q})\subseteq O(d^2\mpe(q-1)\log{q})$ bit operations). If not, we move on to the next $\vec{\nu''}$ and associated $\vec{\nu'}$, otherwise we add $\vec{\nu'}^-$ to $\overline{S}_{n,i_t}$ as a new element (again taking $O(m_{i_t}\log{q})\subseteq O(d^2\mpe(q-1)\log{q})$ bit operations, for copying). Because $N\in O(d2^{d^2\mpe(q-1)+d})$ and $\sum_{n=0}^N{|S_{n,i_t,H_{i_t}}|}\leq 2^{(H_{i_t}+1)d}\leq 2^{d^2\mpe(q-1)+d}$, and we only need $O(1)$ bit operations to deal with an $n$ for which $S_{n,i_t}=\emptyset$, we conclude that the overall bit operation cost of computing the sets $\overline{S}_{n,i_t}$ (for our fixed $t$) is in
\[
O(d2^{d^2\mpe(q-1)+d}+m_{i_t}2^{(H_{i_t}+1)d}\log{q})\subseteq O(d^2\mpe(q-1)2^{d^2\mpe(q-1)+d}\log{q}),
\]
and the constructed arrays representing those sets are lexicographically ordered.

Let us now finally \enquote{unfix} $t$ again. For all the $O(d)$ values of $t\in\{0,1,\ldots,\ell-1\}$ together, a $q$-bounded query complexity of all computations we described after agreeing to fix $t$ is
\begin{align}\label{lastOEq}
\notag (&d\log^{2+o(1)}{q}+d^3\mpe(q-1)\log^{1+o(1)}{q}+d^3\mpe(q-1)2^{d^2\mpe(q-1)+d}\log{q}, \\
&0,d^3\mpe(q-1),d^3\mpe(q-1),0).
\end{align}
Here, the first summand in the bit operation component, as well as the specified queries, cover the cost of everything except the computation of the sets $\overline{S}_{n,i_t}$ themselves, which is instead covered by the second summand in the bit operation component (without needing any queries). Considering the query complexities of all other involved computations (see the detailed steps of the algorithm below), we find that only the second entry, $0$, in formula (\ref{lastOEq}) needs to be replaced by $d$ in order to obtain a valid $q$-bounded query complexity of the entire algorithm for solving Problem 3.

As in the previous two subsubsections, we conclude with some pseudocode for this algorithm, which includes $q$-bounded query complexities for each step.

\begin{denumerate}[label=\arabic*]
\item If $r=0_{\IF_q}$, then search for the unique $n\in\Ncal=\{0,1,\ldots,N\}$ such that $S_{n,d}=\emptyset$, and output the following and halt: \enquote{The cyclic sequence of rooted tree isomorphism types which encodes the digraph isomorphism type of the connected component in question is $[\Dfrak_n]$.}

QC: $(d2^{d^2\mpe(q-1)+d}\log{q},0,0,0,0)$.
\item Compute the induced function $\overline{f}$ on $\{0,1,\ldots,d\}$ and the affine maps $A_i$ of $\IZ/s\IZ$.

QC: $(d\log^{1+o(1)}{q},d,0,0,0)$.
\item Let $\omega$ be the \enquote{natural} choice of primitive element of $\IF_q$ (see the proof of Lemma \ref{complexitiesLem2}(4)), and determine $\lfrak:=\log_{\omega}(r)$.

QC: $(\log{q},1,0,0,0)$.
\item Set $i:=\lfrak\bmod{d}$.

QC: $(\log^{1+o(1)}{q},0,0,0,0)$.
\item Determine the cycle $(i_0,i_1,\ldots,i_{\ell-1})$ of $i=i_0$ under $\overline{f}$.

QC: $(d\log{d},0,0,0,0)$.
\item Compute $r'_{i_0}:=\frac{\lfrak-i}{d}$ and $r'_{i_t}:=A_{i_{t-1}}(r'_{i_{t-1}})$ for $t=1,2,\ldots,\ell-1$.

QC: $(d\log^{1+o(1)}{q},0,0,0,0)$.
\item For $t=0,1,\ldots,\ell-1$, compute $\Acal_{i_t}:=A_{i_t}A_{i_{(t+1)\bmod{\ell}}}\cdots A_{i_{(t+\ell-1)\bmod{\ell}}}: x\mapsto\overline{\alpha}_{i_t}x+\overline{\beta}_{i_t}$.

QC: $(d^2\log^{1+o(1)}{q},0,0,0,0)$.
\item For each $t=0,1,\ldots,\ell-1$, do the following. QC:
\begin{align*}
(&d\log^{2+o(1)}{q}+d^3\mpe(q-1)\log^{1+o(1)}{q}+d^3\mpe(q-1)2^{d^2\mpe(q-1)+d}\log{q}, \\
&0,d^3\mpe(q-1),d^3\mpe(q-1),0).
\end{align*}
\begin{denumerate}[label=8.\arabic*]
\item Compute $\gcd(\overline{\alpha}_{i_t},s)$.

QC: $(\log^{1+o(1)}{q},0,0,0,0)$.
\item Find the smallest non-negative integer $L_{i_t}$ such that
\[
\gcd(\overline{\alpha}_{i_t}^{L_{i_t}},s)=\gcd(\overline{\alpha}_{i_t}^{L_{i_t}+1},s)=:s''_{i_t}.
\]
QC: $(\log^{2+o(1)}{q},0,0,0,0)$.
\item Compute $s'_{i_t}:=s/s''_{i_t}$.

QC: $(\log^{1+o(1)}{q},0,0,0,0)$.
\item Compute $\ffrak_{i_t}:=\sum_{z=0}^{L_{i_t}-1}{\overline{\alpha}_{i_t}^z\overline{\beta}_{i_t}}\bmod{s''_{i_t}}$. In doing so, do \emph{not} add up all the summands, but use the geometric sum formula
\[
\sum_{z=0}^{L_{i_t}-1}{\overline{\alpha}_{i_t}^z}=
\begin{cases}
\frac{\overline{\alpha}_{i_t}^{L_{i_t}}-1}{\overline{\alpha}_{i_t}-1}, & \text{if }\overline{\alpha}_{i_t}\not=1, \\
L_{i_t}\overline{\alpha}_{i_t}, & \text{if }\overline{\alpha}_{i_t}=1,
\end{cases}
\]
and the fact that $L_{i_t}\in O(\log{q})$.

QC: $(\log^{2+o(1)}{q},0,0,0,0)$.
\item Read off $\Pcal_{i_t}=\Pfrak(x\equiv\bfrak_{i_t,j}\Mod{\afrak_{i_t,j}}: j=1,2,\ldots,m_{i_t})$ from the given partition-tree register.

QC: $(d^2\mpe(q-1)\log{q},0,0,0,0)$.
\item For $j=1,2,\ldots,m_{i_t}$, do the following.

QC: $(d^2\mpe(q-1)\log^{1+o(1)}{q},0,d^2\mpe(q-1),d^2\mpe(q-1),0)$.
\begin{denumerate}[label=8.6.\arabic*]
\item Check whether $\ffrak_{i_t}\not\equiv\bfrak_{i_t,j}\Mod{\gcd(\afrak_{i_t,j},s''_{i_t})}$. If so, set $\nu_j:=\neg$, and $\Type_{\mathrm{I}}(j):=\mathrm{true}$, $\Type_{\mathrm{II}}(j):=\mathrm{false}$ and $\Type_{\mathrm{III}}(j):=\mathrm{false}$. If not, just set $\Type_{\mathrm{I}}(j):=\mathrm{false}$.

QC: $(\log^{1+o(1)}{q},0,0,0,0)$.
\item If $\Type_{\mathrm{I}}(j)=\mathrm{false}$, then check whether
\[
\afrak_{i_t,j}\mid s''_{i_t}\text{ and }\ffrak_{i_t}\equiv\bfrak_{i_t,j}\Mod{\afrak_{i_t,j}}.
\]
If so, set $\nu_j:=\emptyset$, and $\Type_{\mathrm{II}}(j):=\mathrm{true}$ and $\Type_{\mathrm{III}}(j):=\mathrm{true}$. If not, just set $\Type_{\mathrm{II}}(j):=\mathrm{false}$.

QC: $(\log^{1+o(1)}{q},0,0,0,0)$.
\item If $\Type_{\mathrm{I}}{j}=\Type_{\mathrm{II}}(j)=\mathrm{false}$, then compute $\lfrak_{i_t,j}:=\log_{\Acal_{i_t}}^{(\afrak_{i_t,j})}(r'_{i_t},\bfrak_{i_t,j})$, and check whether $\lfrak_{i_t,j}=\infty$. If so, set $\nu_j:=\neg$, and $\Type_{\mathrm{III}}(j):=\mathrm{true}$. Otherwise, just set $\Type_{\mathrm{III}}(j):=\mathrm{false}$.

QC: $(\log^{1+o(1)}{q},0,1,0,0)$.
\item If $\Type_{\mathrm{I}}(j)=\Type_{\mathrm{II}}(j)=\Type_{\mathrm{III}}(j)=\mathrm{false}$, then compute the cycle length $l_{i_t,j}$ of $r'_{i_t}$ under $\Acal_{i_t}$ modulo $\afrak_{i_t,j}$.

QC: $(\log^{1+o(1)}{q},0,0,1,0)$.
\end{denumerate}
\item Set
\begin{align*}
\Pcal^{(i_t)}:=\Pfrak(&y\equiv l_{i_t,j}\Mod{\lfrak_{i_t,j}}: 1\leq j\leq m_{i_t}, \\
&\Type_{\mathrm{I}}(j)=\Type_{\mathrm{II}}(j)=\Type_{\mathrm{III}}(j)=\mathrm{false}).
\end{align*}
QC: $(d^2\mpe(q-1)\log{q},0,0,0,0)$.
\item For each $n\in\Ncal=\{0,1,\ldots,N\}$, do the following.

QC: $(d^2\mpe(q-1)2^{d^2\mpe(q-1)+d}\log{q},0,0,0,0)$.
\begin{denumerate}[label=8.8.\arabic*]
\item Initialize the set $\overline{S}_{n,i_t}$ to be $\emptyset$.

QC: $(\log{q},0,0,0,0)$.
\item For each $\vec{\nu''}\in S_{n,i_t,H_{i_t}}$ (last entry of $S_{n,i_t}$ from the partition-tree register from the input), do the following.

QC: $(d^2\mpe(q-1)2^{d^2\mpe(q-1)+d}\log{q},0,0,0,0)$.
\begin{denumerate}[label=8.8.2.\arabic*]
\item Set $\vec{\nu'}:=\vec{\nu''}\diamond\vec{\xi}_{i_t,H_{i_t}}=(\nu'_j)_{j=1,2,\ldots,m_{i_t}}$.

QC: $(d\mpe(q-1)\log{q},0,0,0,0)$.
\item Check whether $\nu'_j=\nu_j$ for all $j\in\{1,2,\ldots,m_{i_t}\}$ such that one of $\Type_{\mathrm{I}}(j)$, $\Type_{\mathrm{II}}(j)$ or $\Type_{\mathrm{III}}(j)$ is $\mathrm{true}$. If so, add $\vec{\nu'}^-$ (which is $\vec{\nu'}$ with all components corresponding to one of the three types deleted) to $\overline{S}_{n,i_t}$ as a new element.

QC: $(d^2\mpe(q-1)\log{q},0,0,0,0)$.
\end{denumerate}
\end{denumerate}
\end{denumerate}
\item Output the following: \enquote{Consider an iterate $f^z(r)$ where $z\in\IZ/l\IZ$. Let $t:=z\bmod{\ell}$ and $y:=(z-t)/\ell\in\IZ/(l/\ell)\IZ$. Depending on $t$, we have the arithmetic partition $\Pcal^{(i_t)}$ of $\IZ/(l/\ell)\IZ$, and the block of $\Pcal^{(i_t)}$ in which $y$ is contained controls the isomorphism type of $\Tree_{\Gamma_f}(f^z(r))$. More precisely, for each $n\in\Ncal$, we have the set $\overline{S}_{n,i_t}$ of logical sign tuples such that $\Tree_{\Gamma_f}(f^z(r))=\Ifrak_n$ if and only if $y\in\Bcal(\Pcal^{(i_t)},\vec{\nu})$ for some $\vec{\nu}\in\overline{S}_{n,i_t}$. The arithmetic partitions $\Pcal^{(i_t)}$ and associated logical sign tuple sets $\overline{S}_{n,i_t}$ are as follows.}, followed by printing (the computed spanning congruence sequence of) $\Pcal^{(i_t)}$ and the sets $\overline{S}_{n,i_t}$ for each $t=0,1,\ldots,\ell-1$. Then halt.

QC: $(d^3\mpe(q-1)2^{d^2\mpe(q-1)+d}\log{q},0,0,0,0)$.
\end{denumerate}

\subsection{The isomorphism problem for functional graphs of generalized cyclotomic mappings}\label{subsec5P3}

In this subsection, we consider the algorithmic problem of deciding whether the functional graphs of two given generalized cyclotomic mappings of $\IF_q$ (each of a fixed index, but not necessarily both of the same index) are isomorphic digraphs. We note that thanks to Babai \cite{Bab18a} (see also Helfgott's expository article \cite{Hel19a}, or its English translation \cite{HBD17a}), a general algorithm for deciding the isomorphism of (di)graphs is known that is quasipolynomial in the number of vertices. Assuming that the graphs in question are given by specifying their edges (as ordered or unordered pairs of vertices) individually, this algorithm is quasipolynomial in the input length (one can assume without loss of generality that there are no isolated vertices, and thus that the number of vertices and the number of edges are within quadratic bounds of each other).

However, we do not specify our functional graphs $\Gamma_f$ in this form; rather, we specify $f$ in its cyclotomic form (\ref{cyclotomicFormEq}), so our input length is only in $O(d\log{q})$, while the vertex number is $|\IF_q|=q$. We believe that it is a hard problem to decide whether the functional graphs of two generalized cyclotomic mappings of $\IF_q$, each of an index that is at most $d$, are isomorphic (i.e., that this problem is not generally solvable in polynomial time in $\log{q}$ for fixed $d$). However, for some special cases, efficient algorithms can be developed, and it is the purpose of this subsection to present such special cases and the associated decision algorithms.

\subsubsection{Special case: Index 1}\label{subsubsec5P3P1}

Generalized cyclotomic mappings of $\IF_q$ of index $1$ are the same as monomial mappings $x\mapsto ax^r$, where $a\in\IF_q$ and $r\in\{1,\ldots,q-1\}$ (if one has $r_0=0$ in formula (\ref{cyclotomicFormEq}) with $d=1$, then one may replace it by $q-1$ to get a formula that works on all of $\IF_q$). To get the class of all monomial mappings of $\IF_q$, one must include the case \enquote{$r=0$} (in which $0_{\IF_q}$ is not necessarily fixed), and we do so in this subsubsection. Thanks to earlier work of Deng \cite{Den13a}, it is easy to decide whether two monomial mappings of $\IF_q$, say $f:x\mapsto ax^r$ and $f':x\mapsto a'x^{r'}$, have isomorphic functional graphs, as we explain below.

First, we note that if $f$ is constant, which happens if and only if $r=0$ or $a=0_{\IF_q}$, then $\Gamma_f\cong\Gamma_{f'}$ if and only if $f'$ is constant as well, i.e., if and only if $r'=0$ or $a'=0_{\IF_q}$. Checking whether this special case applies only takes $O(\log{q})$ bit operations (for scanning the values of $a,a',r,r'$).

We may thus assume that $r,r',a,a'$ all are non-zero. Because $a,a'\not=0_{\IF_q}$, we have $f(\IF_q^{\ast})\cup f'(\IF_q^{\ast})\subseteq\IF_q^{\ast}$. Following the procedure described in our introduction, we can associate with $f$, respectively $f'$, an affine map $A$, respectively $A'$, of $\IZ/(q-1)\IZ$ such that $\Gamma_f\cong\Gamma_{f'}$ if and only if $\Gamma_A\cong\Gamma_{A'}$. Computing $A$ and $A'$ has $q$-bounded query complexity $(\log^{1+o(1)}{q},1,0,0,0)$; beside some simple arithmetic, it requires the computation of the discrete logarithms of $a$ and $a'$ with base $\omega$ (the \enquote{natural choice} of primitive element of $\IF_q$ -- see the proof of Lemma \ref{complexitiesLem2}(4)).

In order to decide whether $\Gamma_A\cong\Gamma_{A'}$, we use the following result, which is a variant of \cite[Theorem 11]{Den13a}.

\begin{theoremmm}\label{dengTheo}
Let $m=p_1^{v_1}\cdots p_K^{v_K}$ be a positive integer with its prime factorization displayed. Let $a,a',b,b'\in\IZ/m\IZ$, and let us denote by $A$, respectively $A'$, the affine map $x\mapsto ax+b$, respectively $x\mapsto a'x+b'$, of $\IZ/m\IZ$. Moreover, we define
\begin{itemize}
\item $\Yfrak:=\{p_j: j\in\{1,2,\ldots,K\}, p_j\nmid a, A\bmod{p_j^{v_j}}\text{ has a fixed point}\}$\phantomsection\label{not278}, and analogously for $\Yfrak'$, with $a'$ and $A'$ in place of $a$ and $A$; and
\item $l$, respectively $l'$, to be the minimal cycle length of $A$, respectively $A'$, on its respective periodic points.
\end{itemize}
Then $\Gamma_A\cong\Gamma_{A'}$ if and only if both of the following hold:
\begin{enumerate}
\item $\gcd(a,m)=\gcd(a',m)$, and
\item $\Yfrak=\Yfrak'$, $l=l'$, $\ord_{p^{\nu_p(m)}}(a^l)=\ord_{p^{\nu_p(m)}}((a')^l)$ for all $p\in\Yfrak$, and if $2\in\Yfrak$ and $\nu_2(m)>1$, then $\ord_4(a^l)=\ord_4((a')^l)$.
\end{enumerate}
\end{theoremmm}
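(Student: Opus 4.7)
The plan is to reduce to the prime-power case via the tensor product decomposition from Subsection \ref{subsec2P1} and then separately match the tree and cycle pieces. Lemma \ref{affineTensorProdLem} gives $\Gamma_A \cong \bigotimes_{j=1}^K \Gamma_{A_j}$, where $A_j$ denotes the reduction of $A$ modulo $p_j^{v_j}$, and analogously for $A'$. Since the rooted trees above every periodic vertex of $\Gamma_A$ are isomorphic (Theorem \ref{allTreesIsomorphicStrongTheo}), the isomorphism class of $\Gamma_A$ reduces to the pair consisting of this common tree together with the cycle type $\CT(A_{\mid\per(A)})$. I will show that condition (1) matches the trees, and that condition (2), given condition (1), matches the cycle types.

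For the trees, Theorem \ref{allTreesIsomorphicStrongTheo} and Proposition \ref{rigidProp} reduce the tree isomorphism class to the sequence of procreation numbers $\proc_k = |\ker^{(k)}(\mu_a) : \ker^{(k-1)}(\mu_a)|$, where $|\ker^{(k)}(\mu_a)| = \gcd(a^k, m)$. Prime by prime one has $\nu_p(\gcd(a^k, m)) = \min(k\nu_p(a), \nu_p(m))$, and a short check shows that $k \mapsto \min(k\nu_p(a), \nu_p(m))$ agrees with the analogous sequence for $a'$ at every $k \geq 1$ iff they agree at $k=1$; consequently condition (1) is equivalent to $\gcd(a^k, m) = \gcd((a')^k, m)$ for all $k$, and thus to equality of the procreation sequences and of the common tree isomorphism types.

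For the cycle types, Proposition \ref{cyclicTensorProductProp}(1,3) gives $\CT(A_{\mid\per(A)}) = \divideontimes_{j=1}^{K} \CT((A_j)_{\mid\per(A_j)})$, reducing the problem to matching each primary factor. Three cases arise: (a) $p_j \mid a$, where Proposition \ref{primaryDichotomyProp}(1) yields the factor $x_1$; (b) $p_j \in \Yfrak$, where Lemma \ref{affineFixedPointLem} conjugates $A_j$ to the automorphism $\mu_a \bmod p_j^{v_j}$, whose cycle type is read off from Lemma \ref{crlListAutomorphismLem} (hence encoded by $\ord_{p_j^{v_j}}(a)$, with an extra bit for $a \bmod 4$ when $p_j=2$ and $v_j \geq 3$); and (c) $p_j \nmid a$ with no fixed point, where Cases 2, 9, 10 of Proposition \ref{crlListPrimaryProp} produce a uniform cycle type $x_{l_j}^{p_j^{v_j}/l_j}$ with a single common length $l_j$. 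Condition (1) together with $\Yfrak = \Yfrak'$ aligns the three index classes for $A$ and $A'$; the class-(c) lengths $l_j$ are each specific $p_j$-powers (or an explicit multiple thereof in Case 10), so $l = \lcm_{j \in (c)} l_j = l'$ forces $l_j = l_j'$ componentwise in class (c).

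The main obstacle lies in the class-(b) bookkeeping: after applying $\divideontimes$ with the uniform length-$l$ class-(c) cycles, the relevant invariant of $A_j$ for $p_j \in \Yfrak$ is not the full $\ord_{p_j^{v_j}}(a)$ but only $\ord_{p_j^{v_j}}(a^l)$, together with $\ord_4(a^l)$ when $p_j=2$ and $v_j \geq 3$ (which reflects the $\{\pm 1\} \times \langle 5 \rangle$ splitting of $(\IZ/2^{v_j}\IZ)^\ast$ used in Lemma \ref{crlListAutomorphismLem}). I expect the verification that these are precisely the surviving invariants under $\divideontimes$ to require a careful case analysis of how primary cycle lengths combine, mirroring the proof of \cite[Theorem 11]{Den13a} from which the present result is adapted.
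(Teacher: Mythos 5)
Your architecture — factor $\Gamma_A$ via Lemma \ref{affineTensorProdLem}, invoke Theorem \ref{allTreesIsomorphicStrongTheo} to reduce the isomorphism class to a pair (common rooted-tree type, cycle type), match trees via procreation numbers, and match cycle types via the $\divideontimes$-product over primary reductions — is sound, and your tree argument (the observation that the sequence $k\mapsto\min(k\nu_p(a),\nu_p(m))$ is determined by its value at $k=1$) is correct. However, you explicitly leave the cycle-type matching unfinished: you write that you ``expect the verification that these are precisely the surviving invariants under $\divideontimes$ to require a careful case analysis.'' That verification is exactly the content of the theorem; without it you have only reduced the statement to an assertion about how primary cycle types of affine maps combine, which is not obvious. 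In particular, when $p_j\in\Yfrak$ the primary cycle type is not a single length but a whole spectrum read off from Lemma \ref{crlListAutomorphismLem} (and Case 8 of Table \ref{crlListPrimaryTable} shows extra length-$2$ cycles arise when $a\equiv3\pmod4$), so one must actually check which invariants of $\ord_{p_j^{v_j}}(a)$ survive passage to the Wei--Xu product with the class-(c) factors and are recoverable from $\ord_{p_j^{v_j}}(a^l)$ alone. This is a real gap, not a routine omission.

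For comparison, the paper does not attempt an ab initio proof at all: it cites Deng's \cite[Theorem 11]{Den13a} as a black box (whose hypothesis requires $\ord_t(a^l)=\ord_t((a')^l)$ for \emph{every} divisor $t$ of $m$ whose prime divisors lie in $\Yfrak$), and then proves only the arithmetical equivalence that your putatively weaker condition (2) — equality of orders modulo $p^{\nu_p(m)}$ for $p\in\Yfrak$, plus modulo $4$ when relevant — implies Deng's full condition, via a short case analysis on whether $p>2$, using primitive roots and the $\{\pm1\}\times\langle5\rangle$ structure of $(\IZ/2^{\nu_2(m)}\IZ)^\ast$. If you want to complete your version, you should either carry out the $\divideontimes$ bookkeeping in full (essentially re-proving Deng's theorem), or, more economically, follow the paper's route and supply only the divisor-reduction argument that converts condition (2) into Deng's hypothesis.
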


\begin{proof}
This is the same as \cite[Theorem 11]{Den13a} except that in condition (2), we do not demand that $\ord_t(a^l)=\ord_t((a')^l)$ for \emph{all} divisors $t$ of $m$ whose prime divisors are in $\Yfrak$. However, our condition (2) is enough, because if it holds and $t=\prod_{p\in\Yfrak}{p^{v'_p}}$ divides $m$, then for all $p\in\Yfrak$, one has
\begin{equation}\label{dengEq}
\ord_{p^{v'_p}}(a^l)=\ord_{p^{v'_p}}((a')^l),
\end{equation}
and thus
\[
\ord_t(a^l)=\lcm_{p\in\Yfrak}{\ord_{p^{v'_p}}(a^l)}=\lcm_{p\in\Yfrak}{\ord_{p^{v'_p}}((a')^l)}=\ord_t((a')^l).
\]
To see that formula (\ref{dengEq}) holds, we make a case distinction.
\begin{itemize}
\item If $p>2$, let us fix a primitive root $\rfrak$ of $\IZ/p^{\nu_p(m)}\IZ$. Because $\ord_{p^{\nu_p(m)}}(a^l)=\ord_{p^{\nu_p(m)}}((a')^l)$, it follows that modulo $p^{\nu_p(m)}$, we have $a^l=\rfrak^k$ and $(a')^l=\rfrak^{k'}$ where $\gcd(k,\phi(p^{\nu_p(m)}))=\gcd(k',\phi(p^{\nu_p(m)}))$. Since $\phi(p^{v'_p})$ divides $\phi(p^{\nu_p(m)})$, it follows that $\gcd(k,\phi(p^{v'_p}))=\gcd(k',\phi(p^{v'_p}))$, whence $a^l$ and $(a')^l$ are also of the same multiplicative order modulo $p^{v'_p}$, as required.
\item If $p=2$, then formula (\ref{dengEq}) holds by assumption if $\nu_2(m)\leq 2$, so let us assume that $\nu_2(m)\geq3$. We may also assume that $v'_2<\nu_2(m)$, because there is nothing to show if $v'_2=\nu_2(m)$. By the structure of $\IZ/2^{\nu_2(m)}\IZ$, modulo $2^{\nu_2(m)}$ we can write $a^l=(-1)^{\epsilon}5^k$ and $(a')^l=(-1)^{\epsilon'}5^{k'}$ with $\epsilon,\epsilon'\in\{0,1\}$. Since $a^l$ and $(a')^l$ have (by assumption) the same multiplicative order modulo $4$, we infer that $\epsilon=\epsilon'$. It is not hard to see that if $\ord_{2^{\nu_2(m)}}(a^l)=\ord_{2^{\nu_2(m)}}((a')^l)\in\{1,2\}$, then $\ord_{2^{v'_2}}(a^l)=\ord_{2^{v'_2}}((a')^l)=1$. Indeed, this is clear if both orders modulo $2^{\nu_2(m)}$ are equal to $1$, and if both orders modulo $2^{\nu_2(m)}$ are equal to $2$, then $\ord_{2^{\nu_2(m)}}(5^k),\ord_{2^{\nu_2(m)}}(5^{k'})$ both divide $2$, whence due to $v'_2<\nu_2(m)$, one has $5^k\equiv 5^{k'}\equiv1\Mod{2^{v'_2}}$, as follows by comparing the multiplicative orders of $5$ modulo $2^{\nu_2(m)}$ and modulo $2^{v'_2}$, respectively. We may thus assume that the common multiplicative order modulo $2^{\nu_2(m)}$ of $a^l$ and $(a')^l$ is strictly greater than $2$. Then
\[
\ord_{2^{\nu_2(m)}}(5^k)=\ord_{2^{\nu_2(m)}}(a^l)=\ord_{2^{\nu_2(m)}}((a')^l)=\ord_{2^{\nu_2(m)}}(5^{k'}),
\]
and with an analogous argument to the one for \enquote{$p>2$}, we conclude that $\ord_{2^{v'_2}}(5^k)=\ord_{2^{v'_2}}(5^{k'})$. Therefore,
\begin{align*}
\ord_{2^{v'_2}}(a^l) &=\lcm(\ord_{2^{v'_2}}((-1)^{\epsilon}),\ord_{2^{v'_2}}(5^k))=\lcm(\ord_{2^{v'_2}}((-1)^{\epsilon}),\ord_{2^{v'_2}}(5^{k'})) \\
&=\ord_{2^{v'_2}}((a')^l),
\end{align*}
as required.
\end{itemize}
\end{proof}

Using Theorem \ref{dengTheo}, we can prove the following.

\begin{corrrollary}\label{dengCor}
Let $m$ be a positive integer, and let $A$ and $A'$ be affine maps of $\IZ/m\IZ$. Deciding whether $\Gamma_A\cong\Gamma_{A'}$ takes $m$-bounded query complexity
\[
(\log^{2+o(1)}{m},0,\log{m},1,0)
\]
and $m$-bounded Las Vegas dual complexity
\[
(\log^{8+o(1)}{m},\log^{4+o(1)}{m},\log^2{m}).
\]
\end{corrrollary}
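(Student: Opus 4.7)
The plan is to turn the characterization in Theorem \ref{dengTheo} into a decision algorithm, and to tally its cost via Lemmas \ref{complexitiesLem}, \ref{complexitiesLem2}, and \ref{LVComplexityLem}. First I would invoke a single mdl query to obtain the factorization $m = \prod_{j=1}^K p_j^{v_j}$ (with $K \in O(\log m)$), and check condition~(1) of Theorem \ref{dengTheo} by the single comparison $\gcd(a,m) = \gcd(a',m)$ at cost $O(\log^{1+o(1)} m)$ bit operations by Lemma \ref{complexitiesLem}(8). I would then assemble $\Yfrak$ and $\Yfrak'$ by looping over the $p_j$: the check $p_j \nmid a$ is a single modular reduction, and by the derivation of $\ffrak$ in Proposition \ref{crlListPrimaryProp}, the reduction $A \bmod p_j^{v_j}$ has a fixed point iff $\gcd(a-1,p_j^{v_j}) \mid b$. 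Across all $O(\log m)$ primes this stage contributes $O(\log^{2+o(1)} m)$ bit operations.

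For the minimum periodic cycle lengths $l$ and $l'$, I would use the tensor-product decomposition $\Gamma_A \cong \bigotimes_j \Gamma_{A_j}$ from Lemma \ref{affineTensorProdLem}, with $A_j := A \bmod p_j^{v_j}$: because $A$ acts componentwise, each cycle length of $A$ on $\per(A)$ is the $\lcm$ of cycle lengths of the $A_j$ on $\per(A_j)$. For $p_j \mid a$, Proposition \ref{primaryDichotomyProp}(1) gives $A_j$ a single fixed periodic point; for $p_j \in \Yfrak$, $A_j$ is a permutation containing a fixed point; in either situation one may legitimately pick cycle length $1$ in the $j$th factor. For the remaining primes (where $p_j \nmid a$ and $p_j \notin \Yfrak$), inspection of the fixed-point-free rows of Table \ref{crlListPrimaryTable} (cases 2, 3, 6, 9, and 10) shows that \emph{all} cycles of $A_j$ share a common length $l_j^*$, given in closed form by $p_j$, $v_j$, $\nu_{p_j}(b)$, $\nu_{p_j}(a-1)$, and, in case 10 only, the order $\ord_{2^{v_j}}(-a)$. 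Thus $l = \lcm\{l_j^* : p_j \nmid a,\ p_j \notin \Yfrak\}$, and analogously for $l'$. All the primary orders $\ord_{p_j^{v_j}}(\pm a)$ needed across the $O(\log m)$ primes are returned simultaneously by a single mord query, so this step uses $O(\log m)$ mdl queries (absorbing any auxiliary factorization work), $O(1)$ mord queries, and $O(\log^{2+o(1)} m)$ bit operations.

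With $l$ and $l'$ in hand, condition~(2) of Theorem \ref{dengTheo} is verified by comparing $l$ with $l'$, computing $a^l \bmod m$ and $(a')^l \bmod m$ at cost $O(\log^{2+o(1)} m)$ bit operations by Lemma \ref{complexitiesLem}(6), and invoking one further mord query to obtain, for both $a^l$ and $(a')^l$ simultaneously, the full lists of primary orders $\ord_{p^{\nu_p(m)}}(\cdot)$ needed for the comparison over $\Yfrak$; the auxiliary $\ord_4$ clause when $2 \in \Yfrak$ and $\nu_2(m) > 1$ is decided from the low bits of $a^l$ and $(a')^l$. Summing contributions gives exactly the claimed query complexity $(\log^{2+o(1)} m, 0, \log m, 1, 0)$, and the Las Vegas dual complexity follows by direct substitution into Lemma \ref{LVComplexityLem} with $h(z) = z$: the dominant summand $\log^{7+o(1)} z \cdot \log z$ yields $\log^{8+o(1)} z$ classical bit operations, $\log^{3+o(1)} z \cdot \log z = \log^{4+o(1)} z$ quantum gates, and $\log z \cdot \log z = \log^2 z$ conversions.

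The main obstacle in this plan is the analysis underpinning Step~3: one must verify that in every fixed-point-free case of a primary affine permutation, the cycle-length set is a singleton given by a closed-form expression involving only previously queried modular orders, so that no fdl or prt query is ever required---this is the origin of the two ``$0$'' entries in the claimed query complexity. Once that inspection is carried out against Table \ref{crlListPrimaryTable}, the rest of the argument is routine bookkeeping.
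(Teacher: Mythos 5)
Your proposal is correct and follows essentially the same route as the paper's proof: verify condition~(1) of Theorem~\ref{dengTheo} via gcd comparison, build $\Yfrak$ and $\Yfrak'$ by looping over the primes, compute the minimum periodic cycle lengths $l$ and $l'$ prime-power-wise and take an $\lcm$, verify condition~(2) via one mord query on $a^l$ and $(a')^l$, and then push the resulting query complexity through Lemma~\ref{LVComplexityLem}. The one point where you reason slightly differently is in justifying $l=\lcm_j l_j^{\min}$: the paper invokes the general fact that cycle lengths of a primary affine permutation are linearly ordered under divisibility, whereas you restrict attention to the fixed-point-free primes and observe (correctly) that in those rows of Table~\ref{crlListPrimaryTable} the cycle-length multiset is in fact a singleton, while the fixed-point cases contribute trivially. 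Both observations yield the same conclusion; yours is a touch more explicit about why the contributing cycle lengths are well-defined without further choice, while the paper's is more general and also covers the fixed-point cases uniformly. Your extra mord query for $\ord_{2^{v_2}}(-a)$ in Case~10 is harmless (still $O(1)$), though it can also be avoided entirely since $\ord_{2^{v_2}}(-a)=2^{\max(0,v_2-\nu_2(a+1))}$ is computable from $\nu_2(a+1)$ by bit operations alone, which is how the paper's pseudocode avoids any mord query in that step.
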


\begin{proof}
We use the notation from Theorem \ref{dengTheo}, including that $A(x)=ax+b$ and $A'(x)=a'x+b'$. We argue that the conditions given in Theorem \ref{dengTheo} can be verified within the specified $m$-bounded query complexity (the asserted Las Vegas dual complexity then follows readily using Lemma \ref{LVComplexityLem}). First, we compute and compare $\gcd(a,m)$ and $\gcd(a',m)$, which takes $O(\log^{1+o(1)}{m})$ bit operations by Lemma \ref{complexitiesLem}(8). Next, we factor $m=p_1^{v_1}\cdots p_K^{v_K}$ using a single $m$-bounded mdl query (see also the beginning of Subsection \ref{subsec5P2}).

Following that, we compute the sets $\Yfrak$ and $\Yfrak'$ and compare them. For this, we observe that $A\bmod{p_j^{v_j}}$, respectively $A'\bmod{p_j^{v_j}}$, has a fixed point if and only if the congruence $(a-1)x\equiv-b\Mod{p_j^{v_j}}$, respectively $(a'-1)x\equiv-b'\Mod{p_j^{v_j}}$, is solvable, which holds if and only if $\gcd(a-1,p_j^{v_j})\mid b$, respectively $\gcd(a'-1,p_j^{v_j})\mid b'$. To check whether this holds, we read off the binary representation of $p_j^{v_j}$ from the the output of the above mdl query, then carry out the relevant arithmetic in either case, which takes $O(\log^{1+o(1)}{m})$ bit operations for a single $j$ by Lemma \ref{complexitiesLem}(1,3,8). Because there are $O(\log{m})$ distinct values of $j$, it takes $O(\log^{2+o(1)}{m})$ bit operations altogether to compute $\Yfrak$ and $\Yfrak'$ and check whether they are equal.

If $\Yfrak=\Yfrak'$, we next compute the minimal cycle lengths $l$ and $l'$ and check if they are equal. It follows from our Table \ref{crlListPrimaryTable} (or \cite[Tables 3 and 4]{BW22b}, in which cycle types, not CRL-lists, of affine maps of finite primary cyclic groups are displayed and from which the cycle lengths can be read off more directly) that modulo a prime power, the cycle lengths of an affine permutation are linearly ordered under divisibility. Therefore, $l$, respectively $l'$, is the least common multiple of the smallest cycle lengths of $A$, respectively of $A'$, modulo the $p_j^{v_j}$ for those $j\in\{1,2,\ldots,K\}$ such that $p_j\nmid a$, respectively $p_j\nmid a'$. Those minimal cycle lengths can be computed according to our Table \ref{crlListPrimaryTable} (or \cite[Tables 3 and 4]{BW22b}). More specifically, we go through $j=1,2,\ldots,K$, and for each of these values, we do the following.
\begin{itemize}
\item We check whether $p_j\nmid a$, respectively $p_j\nmid a'$, taking $O(\log^{1+o(1)}{m})$ bit operations.
\item If so, we check which case in Table \ref{crlListPrimaryTable} (or \cite[Table 3 or 4 respectively]{BW22b}) applies, which also takes $O(\log^{1+o(1)}{m})$ bit operations.
\item Finally, we compute the minimal cycle length according to the case-specific formula and factor it, using one mdl query and $O(\log^{1+o(1)}{m})$ bit operations.
\end{itemize}
For all $j$ together, this process takes $m$-bounded query complexity
\[
(\log^{2+o(1)}{m},0,\log{m},0,0).
\]
Following this, we determine the least common multiple of the computed cycle lengths. These lengths are already factored, so one only needs to go through the $O(\log{m})$ primes dividing at least one of those cycle lengths (we note that each of those primes is a divisor of $p(p-1)$ for some prime $p\mid m$), and for each of them, we compute the largest exponent with which it occurs. For this, we need to scan the obtained factorization of the minimal cycle length modulo $p_j^{v_j}$, which is a bit string of length in $O(\log{p_j^{v_j}})$, for each $j$, and we need to compare the stored intermediate maximum with one of the prime exponents in it, which is a bit string of length in $O(\log\log{p_j^{v_j}})$. Altogether, the computation of $l$ and $l'$ takes $O(\log^{2+o(1)}{m})$ bit operations, and checking whether $l$ and $l'$ are equal takes a mere $O(\log{m})$ bit operations.

If $l=l'$, we next compute $a^l\bmod{m}$ and $(a')^l\bmod{m}$, taking $O(\log^{2+o(1)}{m})$ bit operations. Following that, for each $p\in\Yfrak$, we compute $\ord_{p^{\nu_p(m)}}(a^l)$ and $\ord_{p^{\nu_p(m)}}((a')^l)$, which can be done for all $p$ together with just two mord queries, and check if they are equal, which takes $O(\log{m})$ bit operations for all $p$ together. Finally, if necessary, we compute $\ord_4(a^l)$ and $\ord_4((a')^l)$ and check if they are equal, which can be done with $O(\log^{1+o(1)}{m})$ bit operations (no queries necessary).
\end{proof}

With regard to our application to generalized cyclotomic mappings of degree $1$, we note the following consequence of Corollary \ref{dengCor}.

\begin{corrrollary}\label{dengCor2}
Let $f$ and $f'$ be monomial mappings of $\IF_q$, given in polynomial form. Within $q$-bounded query complexity
\[
(\log^{2+o(1)}{q},1,\log{q},1,0),
\]
or $q$-bounded Las Vegas dual complexity
\[
(\log^{8+o(1)}{q},\log^{4+o(1)}{q},\log^2{q})
\]
one can decide whether $\Gamma_f\cong\Gamma_{f'}$.
\end{corrrollary}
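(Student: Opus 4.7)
The plan is to reduce the isomorphism problem for $\Gamma_f$ and $\Gamma_{f'}$ to the corresponding problem for functional graphs of affine maps of $\IZ/(q-1)\IZ$, so that Corollary~\ref{dengCor} becomes applicable. Write $f(x) = ax^r$ and $f'(x) = (a')x^{r'}$ with $a, a' \in \IF_q$ and $r, r' \in \{0, 1, \ldots, q-1\}$.

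First I would dispose of the degenerate cases in which $f$ or $f'$ is constant, i.e.~$r \in \{0\}$ or $a = 0_{\IF_q}$ (and similarly for $f'$). Whether this holds can be read off directly from the polynomial descriptions using $O(\log q)$ bit operations. If $f$ is constant, then $\Gamma_f$ consists of a loop at the constant value together with $q-1$ leaves all mapping to that vertex, and in particular $\Gamma_f \cong \Gamma_{f'}$ if and only if $f'$ is constant as well. So in the degenerate case I output the appropriate answer and halt.

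Henceforth I may assume $a, a', r, r' \neq 0$, so that $f$ and $f'$ both restrict to functions $\IF_q^{\ast} \to \IF_q^{\ast}$ while fixing $0_{\IF_q}$ with a loop. Since the loop at $0_{\IF_q}$ is an isolated connected component of $\Gamma_f$ (and likewise for $\Gamma_{f'}$), we have $\Gamma_f \cong \Gamma_{f'}$ iff the restrictions to $\IF_q^{\ast}$ have isomorphic functional graphs. Letting $\omega$ be the natural choice of primitive element of $\IF_q$ (see the proof of Lemma~\ref{complexitiesLem2}(4)) and writing $a = \omega^e$, $a' = \omega^{e'}$, the isomorphism $\log_{\omega}: \IF_q^{\ast} \to \IZ/(q-1)\IZ$ conjugates $f_{\mid \IF_q^{\ast}}$ to the affine map $A: y \mapsto ry + e$ of $\IZ/(q-1)\IZ$, and analogously for $f'$ with $A': y \mapsto r'y + e'$. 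Thus $\Gamma_f \cong \Gamma_{f'}$ iff $\Gamma_A \cong \Gamma_{A'}$. Computing $A$ and $A'$ requires only computing $e = \log_{\omega}(a)$ and $e' = \log_{\omega}(a')$, which takes $2 \in O(1)$ finite-field discrete logarithm queries and $O(\log^{1+o(1)} q)$ bit operations for some accompanying arithmetic; as a query complexity this is $(\log^{1+o(1)} q, 1, 0, 0, 0)$.

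Finally, I would invoke Corollary~\ref{dengCor} with modulus $m := q - 1$ to decide whether $\Gamma_A \cong \Gamma_{A'}$, which has $(q-1)$-bounded, hence $q$-bounded, query complexity $(\log^{2+o(1)} q, 0, \log q, 1, 0)$. Summing the query complexities of the three phases (degenerate check, reduction to affine maps, application of Corollary~\ref{dengCor}) yields the claimed $q$-bounded query complexity $(\log^{2+o(1)} q, 1, \log q, 1, 0)$; the claimed $q$-bounded Las Vegas dual complexity then follows by a direct application of Lemma~\ref{LVComplexityLem} with $h(z) := z$. There is no real obstacle here since each component has already been established; the only point to double-check is that passing from $f$ to $A$ genuinely preserves the isomorphism type of the functional graph (which holds because $\log_{\omega}$ is a bijection conjugating $f_{\mid \IF_q^{\ast}}$ to $A$) and that the isolated loop at $0_{\IF_q}$ contributes identically to $\Gamma_f$ and $\Gamma_{f'}$ in the non-degenerate case.
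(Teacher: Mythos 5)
Your proof is correct and follows essentially the same route as the paper's: dispose of the degenerate constant cases, logarithmize the non-constant monomial restrictions to obtain affine maps $A, A'$ of $\IZ/(q-1)\IZ$, invoke Corollary~\ref{dengCor}, and sum query complexities (with Lemma~\ref{LVComplexityLem} giving the Las Vegas dual complexity). The only cosmetic difference is that you spell out the cancellation of the isolated loop at $0_{\IF_q}$, which the paper treats implicitly; your minor overestimate of $\log^{1+o(1)}q$ bit operations for the logarithmization step (versus the paper's $\log q$) is harmless since it is dominated by the $\log^{2+o(1)}q$ from Corollary~\ref{dengCor}.
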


\begin{proof}
As explained at the beginning of this subsubsection, one first deals with the case where at least one of $f$ or $f'$ is constant through simple scans of the input, taking $O(\log{q})$ bit operations. If not, then $f(x)=ax^r$ and $f'(x)=a'x^{r'}$ where $a,a',r,r'$ all are non-zero. Under logarithmization, the restriction of $f$, respectively $f'$, to $\IF_q^{\ast}$ corresponds to the affine map $A(x)=rx+\log_{\omega}(a)$, respectively $A'(x)=r'x+\log_{\omega}(a')$, of $\IZ/(q-1)\IZ$, and computing these affine maps takes $q$-bounded query complexity $(\log{q},1,0,0,0)$. Finally, one applies the algorithm from the proof of Corollary \ref{dengCor} to check within $q$-bounded query complexity $(\log^{2+o(1)}{q},0,\log{q},1,0)$ whether $\Gamma_A\cong\Gamma_{A'}$, which is equivalent to $\Gamma_f\cong\Gamma_{f'}$.
\end{proof}

Comparing our Theorem \ref{dengTheo} with Deng's original version \cite[Theorem 11]{Den13a}, we note that the additional simplification of essentially only having to check the equality $\ord_t(a^l)=\ord_t((a')^l)$ for those divisors $t$ of $m$ that are of the form $p^{\nu_p(m)}$, as opposed to all divisors of $m$, is essential to achieve a polynomial complexity in the associated algorithm. This is because the number $\tau(m)$\phantomsection\label{not278P5} of distinct (positive) divisors of $m$ may be superpolynomial in $\log{m}$.

On the other hand, Dirichlet proved a result which implies that the average value of $\tau$ over the initial segment $\{1,2,\ldots,N\}$ of $\IN^+$ is asymptotically equivalent to $\log{N}$ \cite[Theorem 3.3]{Apo76a}, so in particular, the set of positive integers $m$ for which $\tau(m)\geq\log^{1+\epsilon}{m}$ has asymptotic density $0$ for each constant $\epsilon>0$, because otherwise, if the said asymptotic density is $\delta>0$, there are infinitely many $N\in\IN^+$ such that
\begin{align*}
N^{-1}\sum_{m\leq N}{\tau(m)}&\geq N^{-1}\sum_{m=1}^{\lfloor\delta N\rfloor}{\log^{1+\epsilon}{m}}\geq (2N)^{-1}\frac{\delta}{2}N\log^{1+\epsilon}\left(\frac{\delta}{2}N\right)=\frac{\delta}{4}\log^{1+\epsilon}\left(\frac{\delta}{2}N\right) \\
&\geq\frac{\delta}{8}\log^{1+\epsilon}{N}>>\log{N},
\end{align*}
a contradiction. Concerning the average value of $\tau(q-1)$ where $q$ ranges over prime powers, we have the following result, which is used in Subsubsection \ref{subsubsec5P3P2}. In this context, the authors would like to thank Ofir Gorodetsky, who kindly pointed out Halberstam's crucial paper \cite{Hal56a} and parts of the proof of Proposition \ref{tauAvProp} in an answer to a question posted by the first author on MathOverflow\footnote{see \url{https://mathoverflow.net/questions/436134/average-value-of-the-prime-omega-function-omega-on-predecessors-of-prime-powe}}.

\begin{proppposition}\label{tauAvProp}
For each $\epsilon>0$, there are constants $c_{\epsilon},c'_{\epsilon}>0$ such that the following hold for all but an asymptotic fraction of less than $\epsilon$ of all prime powers $q$:
\begin{enumerate}
\item $\mpe(q-1)<c_{\epsilon}$; and
\item the number of distinct prime divisors of $q-1$ is less than $c'_{\epsilon}\log\log{q}$.
\end{enumerate}
In particular, for all such prime powers $q$, one has $\tau(q-1)<\log^{c''_{\epsilon}}{q}$ where $c''_{\epsilon}:=\log(c_{\epsilon}+1)c'_{\epsilon}$.
\end{proppposition}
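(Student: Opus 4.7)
The plan rests on combining Proposition \ref{mpeAvProp}, already established, with a classical theorem of Halberstam \cite{Hal56a} giving an Erd\H{o}s--Kac-type statement for the number of distinct prime divisors of $p-1$ as $p$ ranges over primes. Once both ingredients are in hand, the ``in particular'' statement is an elementary calculation using the standard bound $\tau(n)\leq (\mpe(n)+1)^{\omega(n)}$, where $\omega(n)$ denotes the number of distinct prime divisors of $n$.

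More concretely, I would first apply Proposition \ref{mpeAvProp}(1) with $\epsilon/2$ in place of $\epsilon$ to obtain $c_\epsilon$ such that $\mpe(q-1)<c_\epsilon$ for all but an asymptotic fraction less than $\epsilon/2$ of prime powers $q$. For statement (2), I would first reduce from prime powers to primes using the observation (already exploited in the proof of Proposition \ref{mpeAvProp}) that the count of proper prime powers up to $x$ is in $O(x^{1/2})$, so they form a density-zero subset of all prime powers. Halberstam's theorem states
\[
\sum_{p\leq x}(\omega(p-1)-\log\log x)^2\ll\pi(x)\log\log x,
\]
and a Chebyshev-style inequality applied to this second-moment bound shows that for any fixed $K>1$, the number of primes $p\leq x$ with $\omega(p-1)>K\log\log x$ is in $O(\pi(x)/((K-1)^2\log\log x))=o(\pi(x))$. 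Choosing, say, $c'_\epsilon:=2$ and observing that $\log\log p\sim\log\log x$ for all but a negligible proportion of primes $p\leq x$, this yields that $\omega(q-1)<c'_\epsilon\log\log q$ fails on at most an asymptotic fraction less than $\epsilon/2$ of prime powers $q$. Taking a union bound over the two exceptional sets of density less than $\epsilon/2$ each finishes the main assertion.

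For the ``in particular'' statement, write $n:=q-1$ and observe that $\tau(n)=\prod_{p\mid n}(\nu_p(n)+1)\leq(\mpe(n)+1)^{\omega(n)}$; when both (1) and (2) hold for $q$, this yields
\[
\tau(q-1)\leq(c_\epsilon+1)^{c'_\epsilon\log\log q}=\exp\bigl(c'_\epsilon\log(c_\epsilon+1)\cdot\log\log q\bigr)=\log^{c''_\epsilon}q,
\]
with $c''_\epsilon=c'_\epsilon\log(c_\epsilon+1)$, exactly as claimed. The only real obstacle is citing Halberstam's theorem precisely in the form needed; the density reduction from prime powers to primes and the replacement of $\log\log x$ by $\log\log p$ for $p\leq x$ are routine.
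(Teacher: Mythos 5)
Your proposal is correct, but for statement (2) you take a genuinely different route from the paper. The paper invokes only Halberstam's \emph{first-moment} result from \cite{Hal56a}, namely that $(\sum_{p\leq x}1)^{-1}\sum_{p\leq x}\omega(p-1)\sim\log\log x$, and then argues by contradiction: if $\omega(q-1)\geq c\log\log q$ held for a positive proportion $\geq\epsilon$ of prime powers, one could lower-bound $\sum_{p\leq x}\omega(p-1)$ by summing over a suitable dyadic-type range of primes $p_m$ (using $p_m\geq\frac{1}{2}m\log m$), forcing $c\leq 64/\epsilon$. That yields a constant $c'_\epsilon$ of order $1/\epsilon$. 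You instead invoke the \emph{second-moment} (Tur\'an--Kubilius-type) estimate $\sum_{p\leq x}(\omega(p-1)-\log\log x)^2\ll\pi(x)\log\log x$ and apply Chebyshev, which shows that the proportion of primes $p\leq x$ with $\omega(p-1)>K\log\log x$ is $o(1)$ for \emph{every} fixed $K>1$. This gives a stronger conclusion --- $c'_\epsilon$ can be taken as a fixed constant such as $2$ (indeed any constant $>1$), independent of $\epsilon$ --- at the cost of needing the variance bound rather than just the mean. Both approaches share the same reduction from prime powers to primes (proper prime powers have density zero), and the ``In particular'' calculation is identical. The only caveat, which you yourself flag, is that you should confirm the precise variance estimate is available in \cite{Hal56a} in the stated form; the paper only vouches for the first-moment asymptotic as ``Theorem 1'' of that reference. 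Also be a bit careful in the Chebyshev step to pass cleanly from $\log\log x$ to $\log\log p$: you need to discard $p\leq x^\delta$ (negligible) and then use $\log\log p=\log\log x+O(1)$ on $[x^\delta,x]$, which is the standard fix you allude to.
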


\begin{proof}
Throughout this proof, the variable $q$ ranges over prime powers, while $p$ ranges over primes. Statement (1) is the same as Proposition \ref{mpeAvProp}(1). For statement (2), as usual, we denote by $\omega(m)$ the number of distinct prime divisors of $m\in\IN^+$. Halberstam proved that
\begin{equation}\label{halberstamEq}
\left(\sum_{p\leq x}{1}\right)^{-1}\sum_{p\leq x}{\omega(p-1)}\sim\log\log{x}
\end{equation}
as $x\to\infty$, see \cite[Theorem 1]{Hal56a}. Now, let $\epsilon>0$ be fixed, and let us assume that for some constant $c>0$, one has $\omega(q-1)\geq c\log\log{q}$ for an asymptotic fraction of at least $\epsilon$ of all prime powers $q$. We need to bound $c$ in terms of $\epsilon$ in order to prove statement (2). Now, because proper prime powers are a density $0$ subset of all prime powers (see the proof of Proposition \ref{mpeAvProp}), we conclude that also for an asymptotic fraction of at least $\epsilon$ of all \emph{primes} $p$, one has $\omega(p-1)\geq c\log\log{p}$. We fix a large enough $x\geq2$ such that for a fraction of at least $\epsilon/2$ of all primes $p\leq x$, one has $\omega(p-1)\geq c\log\log{p}$. Let us denote by $p_m$ the $m$-th prime number for $m\in\IN^+$. Because $p_m\geq\frac{1}{2}m\log{m}$ for large enough $m$, and the total number of primes $p\leq x$ is at least $x/(2\log{x})$, it follows that
\begin{align*}
\sum_{p\leq x}{\omega(p-1)} &\geq\sum_{m=1}^{\lfloor\epsilon x/(4\log{x})\rfloor}{c\log\log{p_m}}\geq\sum_{m=\lceil\epsilon x/(8\log{x})\rceil}^{\lfloor\epsilon x/(4\log{x})\rfloor}{c\log\log\left(\frac{1}{2}m\log{m}\right)} \\
&\geq\frac{\epsilon x}{16\log{x}}\cdot c\log\log\left(\frac{1}{2}\cdot\frac{\epsilon x}{8\log{x}}\cdot\log\left(\frac{\epsilon x}{8\log{x}}\right)\right),
\end{align*}
whence
\begin{align*}
&\left(\sum_{p\leq x}{1}\right)^{-1}\sum_{p\leq x}{\omega(p-1)} \\
\geq&\left(2\frac{x}{\log{x}}\right)^{-1}\cdot\frac{\epsilon x}{16\log{x}}\cdot c\log\log\left(\frac{1}{2}\cdot\frac{\epsilon x}{8\log{x}}\cdot\log\left(\frac{\epsilon x}{8\log{x}}\right)\right) \\
\sim&\frac{\epsilon c}{32}\log\log{x},
\end{align*}
which implies
\[
\left(\sum_{p\leq x}{1}\right)^{-1}\sum_{p\leq x}{\omega(p-1)}\geq\frac{\epsilon c}{64}\log\log{x}
\]
if $x$ is large enough. Hence, in order to not contradict Halberstam's (\ref{halberstamEq}), we must have $c\leq 64/\epsilon$, an upper bound on $c$ in terms of $\epsilon$, as required in order for statement (2) to hold.

Finally, for the \enquote{In particular} statement, we note that because $\tau(q-1)=(v_1+1)(v_2+1)\cdots(v_K+1)$ if $q-1=p_1^{v_1}p_2^{v_2}\cdots p_K^{v_K}$ is the prime factorization of $q-1$, one can bound $\tau(q-1)$ from above as follows:
\begin{align*}
\tau(q-1) &\leq(\mpe(q-1)+1)^{\omega(q-1)}\leq(c_{\epsilon}+1)^{c'_{\epsilon}\log\log{q}}=\exp(c'_{\epsilon}\log\log{q}\log(c_{\epsilon}+1)) \\
&=\left(\log{q}\right)^{\log(c_{\epsilon}+1)c'_{\epsilon}}.
\end{align*}
\end{proof}

We conclude this subsubsection with two batches of pseudocode. First, we give pseudocode for the algorithm that checks whether $\Gamma_A\cong\Gamma_{A'}$ where $A:x\mapsto ax+b$ and $A':x\mapsto a'x+b'$ are affine maps of $\IZ/m\IZ$ (see Corollary \ref{dengCor} and its proof). As in the previous subsection, we specify the ($m$-bounded) query complexity (QC) of each step.

\begin{denumerate}[label=\arabic*]
\item Compute $\gcd(a,m)$ and $\gcd(a',m)$, and check whether they are equal. If not, output \enquote{false} and halt.

QC: $(\log^{1+o(1)}{m},0,0,0,0)$.
\item Factor $m=p_1^{v_1}\cdots p_K^{v_k}$.

QC: $(\log{m},0,1,0,0)$.
\item For each $j=1,2,\ldots,K$, do the following.

QC: $(\log^{2+o(1)}{m},0,0,0,0)$.
\begin{denumerate}[label=3.\arabic*]
\item Check whether it is the case that $p_j\nmid a$ and $\gcd(a-1,p_j^{v_j})\mid b$. If so, set $\test_j:=\mathrm{true}$, otherwise set $\test_j:=\mathrm{false}$.

QC: $(\log^{1+o(1)}{m},0,0,0,0)$.
\item Check whether it is the case that $p_j\nmid a'$ and $\gcd(a'-1,p_j^{v_u})\mid b'$. If so, set $\test'_j:=\mathrm{true}$, otherwise set $\test'_j:=\mathrm{false}$.

QC: $(\log^{1+o(1)}{m},0,0,0,0)$.
\item If $\test_j\not=\test'_j$, then output \enquote{false} and halt.

QC: $(\log\log{m},0,0,0,0)$.
\end{denumerate}
\item For each $j=1,2,\ldots,K$, do the following.

QC: $(\log^{2+o(1)}{m},0,\log{m},0,0)$.
\begin{denumerate}[label=4.\arabic*]
\item Check whether $p_j\nmid a$. If not, set $\Test_j:=\mathrm{false}$ and skip to Step 4.4. Otherwise, set $\Test_j:=\mathrm{true}$.

QC: $(\log^{1+o(1)}{m},0,0,0,0)$.
\item Check which case in Table \ref{crlListPrimaryTable} (or \cite[Table 3 or 4 respectively]{BW22b}) applies to $A\bmod{p_j^{v_j}}$, using simple arithmetic.

QC: $(\log^{1+o(1)}{m},0,0,0,0)$.
\item Determine the minimal cycle length $l_j$ of $A\bmod{p_j^{v_j}}$ according to Table \ref{crlListPrimaryTable} (or \cite[Table 3 or 4 respectively]{BW22b}) and factor it.

QC: $(\log^{1+o(1)}{m},0,1,0,0)$.
\item Check whether $p_j\nmid a'$. If not, set $\Test'_j:=\mathrm{false}$ and skip to the next $j$. Otherwise, set $\Test'_j:=\mathrm{true}$.

QC: $(\log^{1+o(1)}{m},0,0,0,0)$.
\item Check which case in Table \ref{crlListPrimaryTable} (or \cite[Table 3 or 4 respectively]{BW22b}) applies to $A'\bmod{p_j^{v_j}}$, using simple arithmetic.

QC: $(\log^{1+o(1)}{m},0,0,0,0)$.
\item Determine the minimal cycle length $l'_j$ of $A'\bmod{p_j^{v_j}}$ according to Table \ref{crlListPrimaryTable} (or \cite[Table 3 or 4 respectively]{BW22b}) and factor it.

QC: $(\log^{1+o(1)}{m},0,1,0,0)$.
\end{denumerate}
\item Compute
\begin{align*}
l&:=\lcm(l_j: 1\leq j\leq K, \Test_j=\mathrm{true})\text{ and} \\
l'&:=\lcm(l'_j: 1\leq j\leq K, \Test'_j=\mathrm{true})
\end{align*}
using the factorizations of the $l_j$ and $l'_j$ computed in Steps 4.3 and 4.6 above.

QC: $(\log^{2+o(1)}{m},0,0,0,0)$.
\item Check whether $l=l'$. If not, output \enquote{false} and halt.

QC: $(\log{m},0,0,0,0)$.
\item Compute $a^l\bmod{m}$ and $(a')^l\bmod{m}$.

QC: $(\log^{2+o(1)}{m},0,0,0,0)$.
\item Compute $\ord_{p^{\nu_p(m)}}(a^l)$, respectively $\ord_{p^{\nu_p(m)}}((a')^l)$, for all primes $p\mid m$ such that $p\nmid a$, respectively $p\nmid a'$, in particular for all $p\in\Yfrak=\Yfrak'=\{p_j: \test_j=\mathrm{true}\}$.

QC: $(\log{m},0,0,1,0)$.
\item Check whether $\ord_{p^{\nu_p(m)}}(a^l)=\ord_{p^{\nu_p(m)}}((a')^l)$ for all $p\in\Yfrak$. If not, output \enquote{false} and halt.

QC: $(\log{m},0,0,0,0)$.
\item If $4\mid m$, do the following.
\begin{denumerate}[label=10.\arabic*]
\item Check whether $\ord_4(a^l)=\ord_4((a')^l)$. If not, output \enquote{false} and halt.

QC: $(\log^{1+o(1)}{m},0,0,0,0)$.
\item Output \enquote{true} and halt.

QC: $(1,0,0,0,0)$.
\end{denumerate}
\item Else do the following.
\begin{denumerate}[label=11.\arabic*]
\item Output \enquote{true} and halt.

QC: $(1,0,0,0,0)$.
\end{denumerate}
\end{denumerate}

Finally, we provide pseudocode for the algorithm that checks whether $\Gamma_f\cong\Gamma_{f'}$ for monomial mappings $f:x\mapsto ax^r$ and $f':x\mapsto a'x^{r'}$ of $\IF_q$ (where $a,a'\in\IF_q$ and $r,r'\in\{0,1,\ldots,q-1\}$).

\begin{denumerate}[label=\arabic*]
\item If $a=0_{\IF_q}$ or $r=0$, then do the following.
\begin{denumerate}[label=1.\arabic*]
\item Check whether it is the case that $a'=0_{\IF_q}$ or $r'=0$. If so, output \enquote{true} and halt. Otherwise, output \enquote{false} and halt.

QC: $(\log{q},0,0,0,0)$.
\end{denumerate}
\item Else do the following.
\begin{denumerate}[label=2.\arabic*]
\item Check whether it is the case that $a'=0_{\IF_q}$ or $r'=0$. If so, output \enquote{false} and halt.

QC: $(\log{q},0,0,0,0)$.
\item Compute $b:=\log_{\omega}(a)$ and $b':=\log_{\omega}(a')$, where $\omega$ is the \enquote{natural} choice of primitive element of $\IF_q$ (see the proof of Lemma \ref{complexitiesLem2}(4)). Denote by $A$, respectively $A'$, the affine map of $\IZ/(q-1)\IZ$ given by the formula $A(x)=rx+b$, respectively $A'(x)=r'x+b'$.

QC: $(\log{q},1,0,0,0)$.
\item Use the algorithm from above to check whether $\Gamma_A\cong\Gamma_{A'}$. If so, output \enquote{true} and halt. Otherwise, output \enquote{false} and halt.

QC: $(\log^{2+o(1)}{q},0,\log{q},1,0)$.
\end{denumerate}
\end{denumerate}

\subsubsection{Special case: Trees only depend on the coset}\label{subsubsec5P3P2}

In this subsubsection, we discuss two special classes of generalized cyclotomic mappings of $\IF_q$ such that if $f_1$ and $f_2$ each belong to one of those classes (not necessarily both to the same) and are of index $d_1$ and $d_2$ respectively, then it can be decided whether $\Gamma_{f_1}\cong\Gamma_{f_2}$ in a $q$-bounded query complexity each entry of which is polynomial in the parameters $\max\{d_1,d_2\}$, $\log{q}$ and $\tau(q-1)$ (the number of divisors of $q-1$). In particular, the $q$-bounded Las Vegas dual complexity of this problem is always subexponential in the input size $O(\max\{d_1,d_2\}\log{q})$, as $\tau(m)\in o(m^{\epsilon})$ for each $\epsilon>0$ \cite[p.~296]{Apo76a}. Moreover, in view of Proposition \ref{tauAvProp}, for each $\epsilon>0$, one has that for all but an asymptotic fraction of less than $\epsilon$ of all finite fields $\IF_q$, the said $q$-bounded Las Vegas dual complexity is polynomial (of a degree depending on $\epsilon$) in the input size.

The two classes of generalized cyclotomic mappings $f$ of $\IF_q$, say of index $d$, which we consider are as follows.
\begin{itemize}
\item \underline{Class 1}: $f$ maps each coset $C_i$ for $i\in\{0,1,\ldots,d-1\}$ either to $C_d=\{0_{\IF_q}\}$ or bijectively to $C_{\overline{f}(i)}$ (i.e., whenever the affine function $A_i$ of $\IZ/s\IZ$ is well-defined, it is a permutation of $\IZ/s\IZ$). If this happens, we say that $f$ is \emph{of special type I}\phantomsection\label{term82}. This is the same situation as in Subsection \ref{subsec4P3}.
\item \underline{Class 2}: The induced function $\overline{f}$ is a permutation of $\{0,1,\ldots,d\}$ (i.e., $f$ permutes the cosets of $C$). If this happens, we say that $f$ is \emph{of special type II}\phantomsection\label{term83}. Using the notation of Subsection \ref{subsec3P2}, this means that $\Gamma_f=\Gamma_{\per}$, and so the discussion from that subsection applies.
\end{itemize}
The crucial property which these two cases share is that the rooted trees above periodic vertices in $\Gamma_f$ only depend on the block $C_i$, for $i\in\{0,1,\ldots,d\}$, in which these vertices lie, as follows from Lemma \ref{allPermutationsLem} and Theorem \ref{cosetPermTheo}, respectively. This allows us to produce a comparatively compact description of the digraph isomorphism type of $\Gamma_f$. To that end, it is helpful to adapt the notion of a partition-tree register, introduced in Definition \ref{partTreeRegDef}, as follows.

\begin{defffinition}\label{treeRegDef}
Let $f$ be an index $d$ generalized cyclotomic mapping of $\IF_q$.
\begin{enumerate}
\item We assume that $f$ is of special type I, so that \emph{all} vertices (not just $f$-periodic ones) in a given block $C_i$ have isomorphic rooted trees above them in $\Gamma_f$. A \emph{type-I tree register for $f$}\phantomsection\label{term84} is an ordered sequence $((\Dfrak_n,S_n))_{n=0,1,\ldots,N}$\phantomsection\label{not279} such that
\begin{enumerate}
\item the sets $\Dfrak_n$ form a recursive tree description list, with associated rooted tree isomorphism types $\Ifrak_n$ (see Definition \ref{recTreeDescListDef}) and
\item for each $n$, the set $S_n$ is nonempty and consists precisely of those $i\in\{0,1,\ldots,d\}$ such that $\Tree_{\Gamma_f}(x)\cong\Tree_{\Gamma_{\overline{f}}}(i)\cong\Ifrak_n$ for any $x\in C_i$.
\end{enumerate}
\item We assume that $f$ is of special type II, so that for each $i\in\{0,1,\ldots,d-1\}$, the rooted tree isomorphism type $\Tree_{\Gamma_f}(x)$ for $x\in C_i$ only depends on $i$ and the $\hfrak$-value of $x$ (see Subsection \ref{subsec4P1}, page \pageref{latterHalfRef} onward). A \emph{type-II tree register for $f$}\phantomsection\label{term85} is an ordered sequence $((\Dfrak_n,S_n))_{n=0,1,\ldots,N}$ such that
\begin{enumerate}
\item the sets $\Dfrak_n$ form a recursive tree description list, with associated rooted tree isomorphism types $\Ifrak_n$;
\item the $\Ifrak_n$ are just those isomorphism types that occur among the rooted trees of the form $\Expand(\Ifrak_{i,h})$ for $i\in\{0,1,\ldots,d-1\}$ and $h\in\{0,1,\ldots,H_i\}$ (see Subsection \ref{subsec4P1}, page \pageref{latterHalfRef} onward, for the definition of the $\Ifrak_{i,h}$); and
\item for each $n$, one has $S_n=(\height(\Ifrak_n),S_{n,\trans},S_{n,\per})$\phantomsection\label{not280}\phantomsection\label{not281} where
\[
S_{n,\trans}=\{i\in\{0,1,\ldots,d-1\}: \height(\Ifrak_n)<H_i\text{ and }\Ifrak_n=\Expand(\Ifrak_{i,\height(\Ifrak_n)})\}
\]
and
\[
S_{n,\per}=\{i\in\{0,1,\ldots,d-1\}: \Ifrak_n=\Expand(\Ifrak_{i,H_i})\}.
\]
\end{enumerate}
\end{enumerate}
\end{defffinition}

In an implementation, we assume that the sets $S_n$ from Definition \ref{treeRegDef}(1), as well as the sets $S_{n,\trans}$ and $S_{n,\per}$ from Definition \ref{treeRegDef}(2), are represented by \emph{sorted} arrays each entry of which is a binary digit representation of a number $i\in\{0,1,\ldots,d\}$ with bit length exactly $\lfloor\log_2{d}\rfloor+1$. Moreover, in a type-I tree register, only one of the descriptions $\Dfrak_n$ corresponds to $\Tree_{\Gamma_f}(0_{\IF_q})$, and it is the only description in which the second entries of elements may be larger than $d$. For the sake of efficiency, we make the convention that all descriptions $\Dfrak_n$ except that one use $\lfloor\log_2{d}\rfloor+1$ digits for representing each entry $m$ or $k_m$ of an element $(m,k_m)$ of $\Dfrak_n$. On the other hand, in the description corresponding to $\Tree_{\Gamma_f}(0_{\IF_q})$, we use $\lfloor\log_2{d}\rfloor+1$ digits for the first entry $m$, and $\lfloor\log_2{q}\rfloor+1$ digits for the second entry $k_m$. In contrast to this, in a type-II tree register, we know that $m\leq d^2\mpe(q-1)+d\leq d^2\lfloor\log_2{q}\rfloor+d$ (see the first paragraph in the proof of Lemma \ref{treeRegLem}(4) below), while there is a priori no upper bound on $k_m$ other than the trivial one, $q$. Hence, in such a register, we use $\lfloor\log_2(d^2\lfloor\log_2{q}\rfloor+d)\rfloor+1$ digits for representing $m$, and $\lfloor\log_2{q}\rfloor+1$ digits for representing $k_m$. In either case, the entries of a given description $\Dfrak_n$ all have the same bit length, and different descriptions $\Dfrak_n$ use the same bit length for the first entries of their elements. As before, we assume that the array representing a given description $\Dfrak_n$ is lexicographically ordered (corresponding to the ordering of the elements $(m,k_m)$ of $\Dfrak_n$ by increasing $m$).

We observe that in Definition \ref{treeRegDef}(2,c), the set $S_{n,\trans}$ consists precisely of those $i\in\{0,1,\ldots,d-1\}$ such that $\Ifrak_n=\Tree_{\Gamma_f}(x)$ for all $x\in C_i$ with $\hfrak(x)=\height(\Ifrak_n)<H_i$, regardless of whether such $x$ exist; we recall from Example \ref{cosetPermEx} that not necessarily all values in $\{0,1,\ldots,H_i-1\}$ are assumed by $\hfrak$ on the $f$-transient points in a given coset $C_i$. We also remind the reader that $f$-transient $x\in C_i$ are characterized by the inequality $\hfrak(x)<H_i$, and that for all such $x$, one has $\height(\Tree_{\Gamma_f}(x))=\height(\Ifrak_{i,\hfrak(x)})=\hfrak(x)$; see the recursive definition of the $\Ifrak_{i,h}$ in Subsection \ref{subsec4P1}, page \pageref{latterHalfRef} onward. Moreover, the set $S_{n,\per}$ consists of those $i\in\{0,1,\ldots,d-1\}$ such that $\Ifrak_n$ is the unique isomorphism type $\Tree_{\Gamma_f}(x)=\Expand(\Ifrak_{i,H_i})$ for $f$-periodic $x\in C_i$ (and $\Expand(\Ifrak_{i,H_i})$ is \emph{not} necessarily of height $H_i$, but it is of height $\Hcal_i$).

Next, we discuss the following important lemma.

\begin{lemmmma}\label{treeRegLem}
Let $f$ be an index $d$ generalized cyclotomic mapping of $\IF_q$, given in cyclotomic form (\ref{cyclotomicFormEq}).
\begin{enumerate}
\item Checking whether $f$ is of special type I takes $q$-bounded query complexity
\[
(d\log^{1+o(1)}{q},d,0,0,0).
\]
\item If $f$ is of special type I, then a type-I tree register $\Rfrak=((\Dfrak_n,S_n))_{n=0,1,\ldots,N}$\phantomsection\label{not282} for $f$ with $N\in O(d)$ can be computed within $q$-bounded query complexity
\[
(d^3\log^2{d}+d\log^{1+o(1)}{q},d,0,0,0).
\]
\item Checking whether $f$ is of special type II has $q$-bounded query complexity
\[
(d\log^2{d}+d\log^{1+o(1)}{q},d,0,0,0).
\]
\item If $f$ is of special type II, then a type-II tree register $\Rfrak=((\Dfrak_n,S_n))_{n=0,1,\ldots,N}$ for $f$ with $N\in O(d^2\mpe(q-1))$ can be computed within $q$-bounded query complexity
\[
(d^5\log^2{d}\mpe(q-1)^3+d^5\mpe(q-1)^3\log{q}+d^2\mpe(q-1)\log^{1+o(1)}{q},d,0,0,0).
\]
In particular, it can be computed within $q$-bounded query complexity
\[
(d^{5+o(1)}\log^4{q},d,0,0,0).
\]
\end{enumerate}
\end{lemmmma}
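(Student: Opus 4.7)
The plan is to handle the four claims in order, each beginning with an application of Proposition \ref{faiProp} to compute $\overline{f}$ and the affine maps $A_i$; this step alone accounts for the $(d\log^{1+o(1)}{q}, d, 0, 0, 0)$ piece of each stated bound. For (1), I observe that $f$ is of special type I if and only if for each $i \in \{0,1,\ldots,d-1\}$ with $a_i \neq 0$, the linear coefficient $\alpha_i = r_i$ of $A_i$ is coprime to $s = (q-1)/d$. After Proposition \ref{faiProp}, I perform $O(d)$ gcd computations of cost $O(\log^{1+o(1)}{q})$ each by Lemma \ref{complexitiesLem}(8). For (3), I check whether $\overline{f}$ is a permutation of $\{0,1,\ldots,d\}$ by sorting a copy of its value list using Lemma \ref{complexitiesLem}(10) and scanning for duplicates, at a cost of $O(d\log^2 d)$ bit operations.

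For (2), I exploit Lemma \ref{allPermutationsLem}: for $x \in C_i$ with $i < d$ one has $\Tree_{\Gamma_f}(x) \cong \Tree_{\Gamma_{\overline{f}}}(i) =: \Ifrak_i$, and $\Tree_{\Gamma_f}(0_{\IF_q})$ is an explicit sum of shifted $\Ifrak_j$'s. I process the indices $i$ in order of increasing $h_i := \height(\Ifrak_i)$, which is computed once up-front in $O(d^2 \log^2 d)$ bit operations (by repeated iterated-image passes, analogous to Step 2 in Subsubsection \ref{subsubsec5P2P2}). At each $i$, I fetch its $\overline{f}$-preimages $j_1,\ldots,j_K$ from a pre-computed inverse table, group them by their already-assigned tree indices through a sort in $O(K \log^2 K)$ bit operations, and form the description $\Dfrak$ as the resulting list of $(\text{index},\text{multiplicity})$ pairs. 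I then compare $\Dfrak$ against each existing description $\Dfrak_n$; on a match, I append $i$ to $S_n$, and otherwise I introduce $\Dfrak_{N'+1} := \Dfrak$ with $S_{N'+1} := \{i\}$. Since each description has length in $O(d)$, each comparison costs $O(d\log d)$ bit operations, the number of indices to process is at most $d$, and $|\Ncal|$ stays in $O(d)$ throughout, the total bit operation cost is in $O(d \cdot d \cdot d \log d) = O(d^3 \log^2 d)$. Finally, $\Tree_{\Gamma_f}(0_{\IF_q})$ is added in an analogous way via Lemma \ref{allPermutationsLem}(2). The bound $N \in O(d)$ follows because distinct $\Dfrak_n$ correspond to distinct rooted tree isomorphism types in $\Gamma_{\overline{f}}$, of which there are at most $d+1$.

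For (4), I follow the recursive construction of the edge-weighted trees $\Ifrak_{i_t, h}$ from Subsection \ref{subsec4P1} (the part beginning on page \pageref{latterHalfRef}). For each $\overline{f}$-cycle $(i_0,\ldots,i_{\ell-1})$ and each $t$, I compute the procreation numbers $\proc_{i_t, h}$ for $h = 1, 2, \ldots, \Hcal_{i_t}+1$ via the gcd-of-products formula in Theorem \ref{cosetPermTheo}, updating a running product modulo $s$ as $h$ increases so that each new procreation number costs $O(\log^{1+o(1)}{q})$ bit operations. Because $\Hcal_{i_t} \leq \ell \mpe(s) \leq d \mpe(q-1)$, and there are $O(d)$ cosets to process, this phase contributes $O(d^2 \mpe(q-1) \log^{1+o(1)}{q})$ bit operations, matching the third term in the stated bound. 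Next, for each pair $(i_t, h)$ in order of increasing $h$, I assemble the edge-weighted description of $\Ifrak_{i_t, h}$ as a length-$O(d\mpe(q-1))$ list of (previous-tree-index, weight) pairs, simplify it into $\SF(\Ifrak_{i_t, h})$ by sorting and merging duplicate-index entries while deleting zero-weight ones (cost per simplification in $O(d \mpe(q-1) \log^2(d\mpe(q-1)))$), and then build the description of $\Expand(\Ifrak_{i_t, h})$ by substituting in the already-simplified descriptions of the $\Expand(\Ifrak_{i_{t-1}, k})$. Each such $\Expand$-description is then matched against the existing list of $\Dfrak_n$'s; there are $O(d^2 \mpe(q-1))$ pairs $(i_t, h)$, each requiring at most $O(d^2 \mpe(q-1))$ comparisons of cost $O(d\mpe(q-1) \log q)$, yielding the $O(d^5 \mpe(q-1)^3 \log q)$ term, while the simplification sorts accumulate to $O(d^5 \log^2 d \mpe(q-1)^3)$. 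The $N \in O(d^2 \mpe(q-1))$ bound is immediate.

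The hard part will be (4): ensuring that the conversion from edge-weighted descriptions of the $\Ifrak_{i_t, h}$ into non-edge-weighted descriptions in the standard recursive-tree-description format (as required for a type-II tree register) can be done level-by-level without ever blowing up the length of any individual $\Dfrak_n$ beyond $O(d \mpe(q-1))$ entries, so that both the sort-and-merge phase and the subsequent lookup comparisons remain within the asserted cost. The $h$-layered order of processing makes this work because $\Ifrak_{i_t, h}$ only references trees $\Ifrak_{i_{t-1}, k}$ with $k < h$, whose $\Expand$-descriptions have already been simplified and entered into the register.
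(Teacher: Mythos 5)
Your proposal follows essentially the same approach as the paper's proof for all four parts: parts (1) and (3) via the $\gcd(\alpha_i,s)=1$ test and the image check on $\overline{f}$ respectively, part (2) via a layer-by-layer sweep using Lemma \ref{allPermutationsLem}, and part (4) via the procreation-number recursion of Theorem \ref{cosetPermTheo} and Subsection \ref{subsec4P1}. The only imprecisions are in the accounting for part (4): the merging of duplicate-index entries during ``simplification'' is never actually needed, because the subtrees $\Expand(\Ifrak_{i_{t-1},k})$ attached to the root of $\Ifrak_{i_t,h}$ for $k=0,\ldots,h'-1$ have pairwise distinct heights and are thus automatically non-isomorphic (though deleting zero-weight pairs is genuinely required); and the $O(d^5\log^2{d}\,\mpe(q-1)^3)$ term is not attributable to simplification sorts (your stated per-simplification cost, summed over $O(d^2\mpe(q-1))$ pairs, gives a strictly smaller expression) --- in the paper it arises from the per-$k$ lookups of $\overline{n}_{i_{t-1},k}$, which require a linear scan through up to $|\Ncal|\in O(d^2\mpe(q-1))$ entries of the register for each of the $O(d\mpe(q-1))$ edges, over the $O(d^2\mpe(q-1))$ pairs $(i_t,h)$. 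Neither point affects the correctness of the algorithm you describe or the final bound, only the intermediate reasoning by which you reach it.
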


\begin{proof}
For statement (1), we first compute $\overline{f}$ and the affine maps $A_i:x\mapsto\alpha_ix+\beta_i$, which requires $q$-bounded query complexity $(d\log^{1+o(1)}{q},d,0,0,0)$ by Proposition \ref{faiProp}. We note that $f$ is of special type I if and only if $\gcd(\alpha_i,s)=1$ for all $i$ such that $A_i$ is well-defined (i.e., such that the coefficient $a_i\in\IF_q$ in the cyclotomic form (\ref{cyclotomicFormEq}) of $f$ is non-zero), which one can check with $O(d\log^{1+o(1)}{q})$ bit operations by Lemma \ref{complexitiesLem}(3,8).

For statement (2), we start by computing $\overline{f}$, the number $s=(q-1)/d$ and (as in Subsubsection \ref{subsubsec5P2P1}, page \pageref{beginningRef}) the \enquote{layers}
\[
\Layer_h:=
\begin{cases}
\im(\overline{f}^h)\setminus\im(\overline{f}^{h+1}), & \text{if }h\in\{0,1,\ldots,\overline{H}-1\}, \\
\im(\overline{f}^{\overline{H}})=\per(\overline{f}), & \text{if }h=\infty,
\end{cases}
\]
of $\overline{f}$ with respect to iteration in sorted form and without multiple entries, where $\overline{H}$\phantomsection\label{not283} is the maximum tree height in $\Gamma_{\overline{f}}$. Altogether, this takes $q$-bounded query complexity $(d\log^{1+o(1)}{q}+d^2\log^2{d},d,0,0,0)$. After this, we start building the register. At any given point during that process, we have at least a \enquote{partial register} as an intermediate result, which includes definitions of descriptions $\Dfrak_n$ of rooted trees $\Ifrak_n$ for all $n\in\Ncal$, an initial segment of $\IN_0$. Because each $n\in\Ncal$ has a non-empty subset $S_n$ of $\{0,1,\ldots,d\}$ associated with it and those sets are pairwise disjoint, we conclude that $|\Ncal|\leq d+1\in O(d)$. In particular, $N\in O(d)$ in the end, as asserted.

Now, to build the register, we do the following successively for $h=0,1,\ldots,\overline{H}-1,\infty$. We go through the indices $i\in\Layer_h$, and for each of them, we compute the pre-image set $\overline{f}^{-1}(\{i\})$ in sorted form, taking $O(d\log{d})$ bit operations per $i$. Let $j_1,j_2,\ldots,j_K$ be the \emph{$\overline{f}$-transient} pre-images of $i$ under $\overline{f}$; if $h<\infty$, those are simply all pre-images of $i$, and if $h=\infty$, one can determine them by additionally identifying the unique $\overline{f}$-pre-image of $i$ in $\per(\overline{f})=\Layer_{\infty}$, which takes $O(d\cdot|\overline{f}^{-1}(\{i\})|\cdot\log^2{d})\subseteq O(d^2\log^2{d})$ bit operations for all $i\in\Layer_{\infty}$ together, using binary search thanks to $\Layer_{\infty}$ being sorted.

In what follows, we assume that $i$ is fixed. Each $j_t$ lies in a unique layer $\Layer_{h_{j_t}}$ with $h_{j_t}<h_i=h$, and so there is a unique non-negative integer $\overline{n}_{j_t}\in\Ncal$\phantomsection\label{not284} such that $j_t\in S_{\overline{n}_{j_t}}$. Computing $\overline{n}_{j_t}$ takes $O(|\Ncal|\cdot\log^2{d})\subseteq O(d\log^2{d})$ bit operations for a single $t$, hence $O(d^2\log^2{d})$ bit operations altogether (for this fixed value of $i$). Now, by Lemma \ref{allPermutationsLem}, the rooted tree above any $x\in C_i$ is isomorphic to
\[
\begin{cases}
\Tree_{\Gamma_{\overline{f}}}(i)\cong\sum_{t=1}^K{\Ifrak_{\overline{n}_{j_t}}^+}, & \text{if }i<d, \\
\sum_{t=1}^K{s\Ifrak_{\overline{n}_{j_t}}^+}, & \text{if }i=d,
\end{cases}
\]
and so we may choose the following compact description $\Dfrak$ for this tree.
\begin{itemize}
\item If $i<d$, we set
\[
\Dfrak:=\{(n,m): n\in\{\overline{n}_{j_t}: 1\leq t\leq K\}, m=|\{t\in\{1,\ldots,K\}: \overline{n}_{j_t}=n\}|>0\}.
\]
\item If $i=d$, we set
\[
\Dfrak:=\{(n,sm): n\in\{\overline{n}_{j_t}: 1\leq t\leq K\}, m=|\{t\in\{1,\ldots,K\}: \overline{n}_{j_t}=n\}|>0\}.
\]
\end{itemize}
Computing $\Dfrak$ after the numbers $\overline{n}_{j_t}$ have been determined requires us to create a list of the \emph{distinct} values of the $\overline{n}_{j_t}$ and their multiplicities, which can be done in $O(K\log{K}\log{d})\subseteq O(d\log^2{d})$ bit operations when using the sorting algorithm from Lemma \ref{complexitiesLem}(10). If $i<d$, this is also the overall complexity of computing $\Dfrak$ for that $i$, whereas if $i=d$, the complexity of computing $\Dfrak$ is in $O(d\log^2{d}+d\log^{1+o(1)}{q})$, since it also involves integer multiplications. After $\Dfrak$ has been computed, we check whether there is an $n\in\Ncal$ such that $\Dfrak=\Dfrak_n$, which takes $O(|\Ncal|\cdot d\log{d})\subseteq O(d^2\log{d})$ bit operations regardless of the value of $i$. Indeed, if $i=d$, for which the bit length of the second entries of elements of $\Dfrak$ is not necessarily in $O(\log{d})$, one can proceed as follows. Observing that those second entries can only be that large for this one value of $i$, one first checks whether $s>d$, which can be done with a mere $O(\log{d})$ bit operations (we note that $s$ itself was already computed at the beginning). If so, one knows that $\Dfrak\not=\Dfrak_n$ for any $n\in\Ncal$; otherwise, the bit length of the second entries of $\Dfrak$ is in $O(\log{d^2})=O(\log{d})$ even for $i=d$, and one can proceed as for $i<d$. In any case, if $\Dfrak=\Dfrak_n$ for some $n\in\Ncal$, then we add $i$ to $S_n$ as a new element by merging the sorted lists corresponding to the sets $S_n$ and $\{i\}$, which takes $O(d\log{d})$ bit operations by Lemma \ref{complexitiesLem}(11). Otherwise, we create $\Dfrak$ as a new description $\Dfrak_{n'}$, where $n'=\max{\Ncal}+1$, and initialize $S_{n'}:=\{i\}$.

Since we need to carry out the computations described after declaring $i$ to be fixed for all such $i$, the total bit operation cost of these computations is in $O(d^3\log^2{d}+d\log^{1+o(1)}{q})$, and so the overall $q$-bounded query complexity of computing a type-I tree register for $f$ is as asserted.

For statement (3), we simply compute $\overline{f}$ and check whether $\im(\overline{f})=\{0,1,\ldots,d\}$. The former of these two tasks takes $q$-bounded query complexity
\[
(d\log^{1+o(1)}{q},d,0,0,0)
\]
by Proposition \ref{faiProp}, and the latter takes $O(d\log^2{d})$ bit operations (see the beginning of the argument in Subsubsection \ref{subsubsec5P2P1}).

For statement (4), we first compute $\overline{f}$, the affine maps $A_i:x\mapsto\alpha_ix+\beta_i$ and (as in Subsubsection \ref{subsubsec5P2P1}) a CRL-list $\overline{\Lcal}$ of $\overline{f}$ and the cycles of $\overline{f}$, taking $q$-bounded query complexity in $(d\log^{1+o(1)}{q}+d^2\log^2{d},d,0,0,0)$. Following that, we start building the tree register, and as in the proof of statement (2), in dependency of a given point in that process, we denote by $\Ncal$ the initial segment of $\IN_0$ consisting of all $n$ for which $\Dfrak_n$ is defined at that point. Because the associated rooted tree isomorphism types $\Ifrak_n$ are pairwise distinct and are elements of the set $\{\Expand(\Ifrak_{i,h}): i\in\{0,1,\ldots,d-1\}, h\in\{0,1,\ldots,H_i\}\}$, we have
\[
|\Ncal|\leq d\cdot(\max_i{H_i}+1)\leq d^2\mpe(q-1)+d\in O(d^2\mpe(q-1))
\]
(for the bound on $H_i$, see formula (\ref{hiBoundEq}) in Subsection \ref{subsec3P3}). In particular, $N\in O(d^2\mpe(q-1))$ in the end, as asserted.

To build the register, we go through the elements $(i,\ell)\in\overline{\Lcal}$ with $i<d$ (we note that $\Tree_{\Gamma_f}(0_{\IF_q})$ is simply trivial and is not even recorded in the register by definition), and for each of them, we do the following. First, we compute the exact value of $H_i$, the maximum tree height above a periodic vertex in $\bigcup_{t=0}^{\ell-1}{C_{i_t}}$, where $(i_0,i_1,\ldots,i_{\ell-1})$ with $i=i_0$ is the $\overline{f}$-cycle of $i$. We note that by Theorem \ref{cosetPermTheo} and the paragraph before it (which was worked out in detail in Subsection \ref{subsec4P1}), we have the following. For each $t\in\{0,1,\ldots,\ell-1\}$, the trees above periodic vertices in $C_{i_t}$ are pairwise isomorphic and thus of a common height $\Hcal_{i_t}$. We have $H_i=\max\{\Hcal_{i_t}: t=0,1,\ldots,\ell-1\}$, so we compute the numbers $\Hcal_{i_t}$ in order to get $H_i$. At this point, we note that in fact, in Subsubsection \ref{subsubsec5P2P2}, we already described how to find $H_i$ through a binary search. However, here we are also interested in storing the procreation numbers $\proc_{i_t,k}$ for $t\in\{0,1,\ldots,\ell-1\}$ and $k\in\{1,2,\ldots,\Hcal_{i_t}\}$, whence we do \emph{not} use binary search to skip steps.

We remind the reader that for arbitrary $t\in\IZ$, the notation $i_t$ is shorthand for $i_{t\bmod{\ell}}$. For $t=0,1,\ldots,\ell-1$ and successively for $k=1,2,\ldots$, we compute (and store) the procreation number (see Theorem \ref{cosetPermTheo})
\[
\proc_{i_t,k}=\frac{\gcd(\prod_{r=0}^{k-1}{\alpha_{i_{t-k+r}}},s)}{\gcd(\prod_{r=0}^{k-2}{\alpha_{i_{t-k+r}}},s)}
\]
until $\proc_{i_t,k}=1$ for the first time for a given $t$, which happens precisely when $k=\Hcal_{i_t}+1$. If we store the values of the two products appearing in the formula for $\proc_{i_t,k}$, then the computation of each $\proc_{i_t,k}$ only involves $O(1)$ multiplications and thus has a bit operation cost in $O(\log^{1+o(1)}{q})$ by Lemma \ref{complexitiesLem}(3,8). Therefore, and because $\Hcal_{i_t}\in O(\ell\mpe(q-1))$, we can compute each individual $\Hcal_{i_t}$ using $O(\ell\mpe(q-1)\log^{1+o(1)}{q})$ bit operations, whence the computation of $H_i$ in total takes $O(\ell^2\mpe(q-1)\log^{1+o(1)}{q})$ bit operations.

Once $H_i$ has been computed, we start adding the information associated with the $\overline{f}$-cycle of $i$ to our tree register. More precisely, we do the following successively for $h=0,1,\ldots,H_i$. If $h=0$, then $\Expand(\Ifrak_{i_t,h})$ is trivial for all $t$, so in case $\Ncal=\emptyset$ (which only happens for the first pair $(i,\ell)\in\overline{\Lcal}$ we consider), we set $\Dfrak_0:=\emptyset$, causing $\Ifrak_0$ to be the trivial rooted tree (in particular, $\height(\Ifrak_0)=0$), and we initialize some variables as follows.
\begin{itemize}
\item We set
\[
S_{0,\trans}:=
\begin{cases}
\{i_0,i_1,\ldots,i_{\ell-1}\}, & \text{if }H_i>0, \\
\emptyset, & \text{otherwise}.
\end{cases}
\]
\item We set $S_{0,\per}:=\{i_t: t\in\{0,1,\ldots,\ell-1\}, \Hcal_{i_t}=0\}$.
\end{itemize}
We remind the reader that we want the arrays representing $S_{0,\trans}$ and $S_{0,\per}$ to be sorted, so one should apply the sorting algorithm from Lemma \ref{complexitiesLem}(10), which takes $O(d\log^2{d})$ bit operations for each array.

In the other case, where $\Ncal\not=\emptyset$, we do the following.
\begin{itemize}
\item If $H_i>0$, we add $i_0,i_1,\ldots,i_{\ell-1}$ to the already defined set $S_{0,\trans}$ as new elements (technically speaking, we sort $\{i_0,i_1,\ldots,i_{\ell-1}\}$ and merge it with $S_{0,\trans}$).
\item We also add all indices $i_t$, for $t\in\{0,1,\ldots,\ell-1\}$, such that $\Hcal_{i_t}=0$ to the already defined set $S_{0,\per}$ as new elements.
\end{itemize}
Using Lemma \ref{complexitiesLem}(10,11), one sees that dealing with the case $h=0$ as a whole only takes $O(\ell\log^2{d}+d\log{d})$ bit operations (for copying information and sorting/merging, as well as simple look-ups of the values $\Hcal_{i_t}$).

Now we assume that $h\geq1$. The edge-weighted rooted tree (isomorphism type) $\Ifrak_{i_t,h}$ is drawn at the end of Subsection \ref{subsec4P1} (we draw the reader's attention to the case distinction between $h<H_i$ and $h=H_i$), and we compute the description $\Dfrak=\Dfrak(i_t,h)$ of $\Expand(\Ifrak_{i_t,h})$ as follows. First, we set
\[
h':=
\begin{cases}
h, & \text{if }h<H_i, \\
\Hcal_{i_t}, & \text{if }h=H_i,
\end{cases}
\]
which is the height of $\Expand(\Ifrak_{i_t,h})$. It is also the number of edges in $\Ifrak_{i_t,h}$ that have the root as their terminal vertex (i.e., the unweighted in-degree of that root). We do note that some of these edges may have weight $0$. For $k=0,1,\ldots,h'-1$, we compute
\[
\wfrak_k:=
\begin{cases}
\wfrak_{i_t,k}=\proc_{i_t,k+1}-\proc_{i_t,k+2}, & \text{if }h=H_i,\text{ or }h<H_i\text{ and }k<h'-1, \\
\proc_{i_t,h}, & \text{if }h<H_i\text{ and }k=h'-1,
\end{cases}
\]
which\phantomsection\label{not285} is the weight of the $(k+1)$-th edge in $\Ifrak_{i_t,h}$ (counted from the left in the drawing) that has the root as its terminal vertex. These computations only require
\[
O(\ell\cdot h'\cdot\log{q})\subseteq O(\ell^2\mpe(q-1)\log{q})
\]
bit operations for all $t$ together. We may then set
\[
\Dfrak:=\{(\overline{n}_{i_t,k},\wfrak_k): k=0,1,\ldots,h'-1\}
\]
where $\overline{n}_{i_t,k}$\phantomsection\label{not286} is the unique $n\in\Ncal$ such that $\Ifrak_n=\Expand(\Ifrak_{i_t,k})$, i.e., such that $i_t\in S_{n,\trans}$ and $\height(\Ifrak_n)=k$. Assuming that the heights of the various $\Ifrak_n$ are stored whenever the register is updated, the computation of $\Dfrak$ takes
\begin{align*}
&O(h'\cdot(|\Ncal|\cdot\log^2{d}+\log\log{q})) \\
\subseteq &O(\ell\mpe(q-1)\cdot(d^2\mpe(q-1)\cdot\log^2{d}+\log\log{q})) \\
\subseteq &O(\ell d^2\log^2{d}\mpe(q-1)^2+\ell\mpe(q-1)\log\log{q})
\end{align*}
bit operations for a single $t$ (needed for determining the $\overline{n}_{i_t,k}$), hence
\[
O(\ell^2d^3\log^2{d}\mpe(q-1)^3+\ell^2d\mpe(q-1)^2\log\log{q})
\]
bit operations for all $t$ and $h$ together.

Once $\Dfrak$ has been computed, we need to check whether it already occurs among the $\Dfrak_n$ for $n\in\Ncal$ (and update the register accordingly). If we sort $\Dfrak$ lexicographically, we may compare it with a given $\Dfrak_n$ through linear comparison of entries, and so checking whether $\Dfrak=\Dfrak_n$ for some $n\in\Ncal$ takes
\begin{align*}
&O(h'\log{h'}\log{q}+|\Ncal|h'\log{q})\subseteq O(d^2\mpe(q-1)\cdot\ell\mpe(q-1)\cdot\log{q}) \\
=&O(\ell d^2\mpe(q-1)^2\log{q})
\end{align*}
bit operations for a single $t$, hence
\[
O(\ell H_i\cdot\ell d^2\mpe(q-1)^2\log{q})\subseteq O(\ell^2d^3\mpe(q-1)^3\log{q})\subseteq O(\ell d^4\mpe(q-1)^3\log{q})
\]
bit operations for all $t$ and $h$ together. This last $O$-expression dominates every other bit operation cost given in this complexity analysis except the cost
\[
O(\ell^2\mpe(q-1)\log^{1+o(1)}{q})\subseteq O(\ell d\mpe(q-1)\log^{1+o(1)}{q})
\]
of computing $H_i$, and the total cost $O(\ell^2d^3\log^2{d}\mpe(q-1)^3)$ of computing the descriptions $\Dfrak$. Therefore, the bit operation cost of these computations is in
\[
O(\ell d\mpe(q-1)\log^{1+o(1)}{q}+\ell d^4\mpe(q-1)^3\log{q}+\ell^2d^3\log^2{d}\mpe(q-1)^3)
\]
for a given $(i,\ell)$ and $h$. Using that $\sum_{(i,\ell)\in\overline{\Lcal}}{\ell}=d$, the total bit operation cost of computing the type-II tree register for $f$ is in
\begin{align*}
&O(d\log^{1+o(1)}{q}+d^2\log^2{d} \\
&+\sum_{(i,\ell)\in\overline{\Lcal}}{(\ell d\mpe(q-1)\log^{1+o(1)}{q}+\ell d^4\mpe(q-1)^3\log{q}+\ell^2d^3\log^2{d}\mpe(q-1)^3)}) \\
&\subseteq O(d^2\mpe(q-1)\log^{1+o(1)}{q}+d^5\mpe(q-1)^3\log{q}+d^5\log^2{d}\mpe(q-1)^3),
\end{align*}
as asserted. The \enquote{In particular} statement holds because $\mpe(q-1)\in O(\log{q})$.
\end{proof}

So far, everything has been of a $q$-bounded query complexity that is polynomial in $\log{q}$ and $d$. The quantity $\tau(q-1)$, which is generally superpolynomial in $\log{q}$, enters through the following auxiliary result.

\begin{proppposition}\label{ctTauProp}
Let $m$ be a positive integer, and let $A:x\mapsto ax+b$ be an affine map of $\IZ/m\IZ$. The cycle type of $A_{\mid\per(A)}$, denoted by $\CT(A_{\mid\per(A)})$, can be computed within $m$-bounded query complexity
\[
(\tau(m)\log^{2+o(1)}{m}+\tau(m)^2\log{m},0,\log{m},1,0).
\]
\end{proppposition}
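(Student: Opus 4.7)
The plan is to reduce the computation of $\CT(A_{\mid\per(A)})$ to the case of an affine permutation and then apply the tensor decomposition of Proposition~\ref{cyclicTensorProductProp}(3). First I would factor $m$ with a single mdl query and split it as $m = m'm''$, where $m' := \prod_{p\mid m,\, p\nmid a}p^{\nu_p(m)}$ and $m'' := m/m'$. By Proposition~\ref{primaryDichotomyProp}, the reduction of $A$ modulo each prime power divisor of $m''$ has a unique (fixed) periodic point, while the reduction modulo each prime power divisor of $m'$ is an affine permutation. Combining this with Proposition~\ref{cyclicTensorProductProp}(1) via the Chinese Remainder Theorem yields a bijection $\per(A)\cong\IZ/m'\IZ$ that conjugates $A_{\mid\per(A)}$ to $A\bmod m'$; consequently $\CT(A_{\mid\per(A)})=\CT(A\bmod m')$. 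Identifying $m',m''$ and this conjugation requires $O(\log^{2+o(1)}m)$ bit operations beyond the mdl query.

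Next, by Lemma~\ref{affineTensorProdLem}, the permutation $A\bmod m'$ factors as $\bigotimes_{p\mid m'}(A\bmod p^{\nu_p(m)})$, and Proposition~\ref{cyclicTensorProductProp}(3) gives $\CT(A\bmod m')$ as the $\divideontimes$-product of the primary cycle types $T_j:=\CT(A\bmod p_j^{\nu_{p_j}(m)})$. I would read off each $T_j$ from Table~\ref{crlListPrimaryTable}, which requires the multiplicative orders $\ord_{p_j^{\nu_{p_j}(m)}}(a)$---all obtained via one mord query---plus $O(\log^{2+o(1)}m)$ bit operations of simple arithmetic. Each $T_j$ is a monomial with at most $\nu_{p_j}(m)+1$ distinct cycle-length variables, so the number of tuples $(l_j)_j$ with $l_j$ a cycle length appearing in $T_j$ is at most $\prod_j(\nu_{p_j}(m)+1)=\tau(m)$. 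An easy induction using $x_n^e\divideontimes x_{n'}^{e'}=x_{\lcm(n,n')}^{ee'\gcd(n,n')}$ and $\gcd(n,n')=nn'/\lcm(n,n')$ yields the closed form
\[
x_{l_1}^{e_1}\divideontimes\cdots\divideontimes x_{l_K}^{e_K}=x_L^{(\prod_j e_j)(\prod_j l_j)/L},
\]
where $L:=\lcm_j l_j$. Hence each tuple contributes a single monomial whose index $L$ and coefficient can be computed from the $l_j$ and the $e_{j,l_j}$ via $O(\log m)$ iterated lcm, gcd, and multiplication operations of bit cost $O(\log^{1+o(1)}m)$ each, for a total of $O(\tau(m)\log^{2+o(1)}m)$ bit operations across all $\tau(m)$ tuples. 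Factoring the cycle lengths appearing in the $T_j$, if needed for subsequent comparisons or for presenting the output in factored form, absorbs the remaining $O(\log m)$ mdl queries budgeted in the claim.

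Finally I would aggregate the $\tau(m)$ monomial contributions into the sought cycle type by maintaining a list of $(L,\text{coefficient})$ pairs: for each newly computed contribution, I would linearly scan the current list to detect whether $L$ already occurs, adding to its coefficient if so and appending a new entry otherwise. The total number of comparisons is bounded by $\tau(m)^2$, each of bit cost $O(\log m)$, yielding the $\tau(m)^2\log m$ summand in the asserted classical complexity. The main obstacle will be the careful bookkeeping needed to verify that every arithmetic and data-structure operation in this pipeline fits into the tuple
\[
(\tau(m)\log^{2+o(1)}m+\tau(m)^2\log m,\, 0,\, \log m,\, 1,\, 0);
\]
the closed-form expansion of the iterated $\divideontimes$-product itself is straightforward to verify by associativity and is the key identity that keeps the enumeration linear in $\tau(m)$.
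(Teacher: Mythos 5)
Your proposal is correct and follows essentially the same route as the paper's proof: reduce to the permutation $A\bmod m'$ via the periodic-point structure, read off the primary cycle types (the paper cites Tables 3--4 of BW22b, you cite Table~\ref{crlListPrimaryTable}; both carry the same information), and then compute the Wei--Xu $\divideontimes$-product over the $O(\tau(m))$ tuples of cycle lengths, using $O(\log m)$ mdl queries for factoring and one mord query for the orders, with the $\tau(m)^2\log m$ term coming from aggregating the resulting monomials. Your explicit closed form $x_{l_1}^{e_1}\divideontimes\cdots\divideontimes x_{l_K}^{e_K}=x_L^{(\prod_j e_j)(\prod_j l_j)/L}$ is a clean self-contained replacement for the paper's appeal to formula (2.9) of WX93a, but it encodes the same identity and does not change the structure of the argument.
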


\begin{proof}
Using a single $m$-bounded mord query (i.e., $m$-bounded query complexity $(\log{m},0,0,1,0)$), we factor $m$ and compute $\ord_{p^{\nu_p(m)}}(a)$ for all primes $p\mid m$ such that $p\nmid a$.
Letting $m'':=\prod_{p\mid\gcd(a,m)}{p^{\nu_p(m)}}$ and $m':=m/m''$ (which we do not need to actually compute), we observe the following. Because $A\bmod{m''}$ has a unique periodic point (see Lemma \ref{periodicCharLem}), we find that $\CT(A_{\mid\per(A)})=\CT(A\bmod{m'})$, and so we compute the latter. This allows us to assume without loss of generality that $A$ is a permutation of $\IZ/m\IZ$. For each prime $p\mid m$, we set $A_{(p)}:=A\bmod{p^{\nu_p(m)}}$. Since $A$ is given via its coefficients $a$ and $b$, computing $A_{(p)}$ takes a mere $O(\log^{1+o(1)}{m})$ bit operations per $p$ for obtaining the remainders of $a$ and $b$ upon division by $p^{\nu_p(m)}$. Hence, computing all reductions $A_{(p)}$ takes $O(\log^{2+o(1)}{m})$ bit operations.

Formulas for $\CT(A_{(p)})$ were given in \cite[Tables 3 and 4]{BW22b}, and since we know $\ord_{p^{\nu_p(m)}}(a)$ for all $p\mid m$ from our initial mord query, these formulas allow us to compute $\CT(A_{(p)})$ for a given $p$ within $m$-bounded query complexity
\[
(\log^{2+o(1)}(p^{\nu_p(m)})+\nu_p(m)\log^{1+o(1)}(p^{\nu_p(m)}),0,1,0,0).
\]
Indeed, taking a closer look at those formulas, we see that we initially need to factor $\ord_{p^{\nu_p(m)}}(a)$ and compute a single power, combined with simpler arithmetic (taking $p^{\nu_p(m)}$-bounded query complexity $(\log^{2+o(1)}(p^{\nu_p(m)}),0,1,0,0)$), followed by $O(\nu_p(m))$ iterations of a loop, each consisting of $O(1)$ basic arithmetic operations taking $O(\log^{1+o(1)}(p^{\nu_p(m)}))$ bit operations each. For all $p$ together, computing $\CT(A_{(p)})$ has $m$-bounded query complexity $(\log^{2+o(1)}{m},0,\log{m},0,0)$. We also note that each $\CT(A_{(p)})$ is a monomial with at most $\nu_p(m)+1$ factors, and that our computation process allows us to store $\CT(A_{(p)})$ with all cycle lengths fully factored.

Now, we may compute $\CT(A)$ via the formula $\CT(A)=\divideontimes_{p\mid m}{\CT(A_{(p)})}$, where $\divideontimes$ denotes the Wei-Xu product from \cite[Definition 2.2 on pp.~182f.]{WX93a}. This can be done by looping over the $O(\prod_{p\mid m}{(\nu_p(m)+1)})\subseteq O(\tau(m))$ tuples formed by choosing one variable power in the factorization of each $\CT(A_{(p)})$ and computing the Wei-Xu product of those variable powers (which is itself a variable power) according to \cite[formula (2.9) in Lemma 2.3(b)]{WX93a}. Doing so requires us to compute the least common multiple of the involved cycle lengths, which takes $O(\log^{2+o(1)}{m})$ bit operations because those cycle lengths are already fully factored (see also the paragraph on the computation of $l$ in the proof of Corollary \ref{dengCor}), followed by $O(\log{m})$ integer multiplications and divisions for computing the exponent, which also take $O(\log^{2+o(1)}{m})$ bit operations together. In total, the process of computing all relevant Wei-Xu products of variable powers takes $O(\tau(m)\log^{2+o(1)}{m})$ bit operations. Once this is done, we need to compute the product of those variable powers, which means that $O(\tau(m))$ times, we need to multiply a monic monomial with $O(\tau(m))$ distinct variable power factors, each with index and exponent in $\{1,2,\ldots,m\}$, with a single such variable power. Each such multiplication takes $O(\tau(m)\log{m})$ bit operations, so the overall complexity of these computations, which result in $\CT(A)$, is in $O(\tau(m)^2\log{m})$.
\end{proof}

\begin{remmmark}\label{ctTauRem}
By our proof of Proposition \ref{ctTauProp}, the cycle type of an affine map $A$ of $\IZ/m\IZ$ is a product of at most $\tau(m)$ variable powers, and so $A$ has at most $\tau(m)$ distinct cycle lengths.

As far as lower bounds on the maximum number of distinct cycle lengths of an affine map of $\IZ/m\IZ$ are concerned, let us fix a positive integer $K$ and primes $2<p_1<p_2<\cdots<p_K$ such that $\gcd(p_j,p_{j'}-1)=1$ for $1\leq j<j'\leq K$ (such primes exist for each $K$ by Dirichlet's theorem on primes in arithmetical progressions, see \cite[Chapter 7]{Apo76a}). For variable positive integers $v_1,v_2,\ldots,v_K$, we set $m:=p_1^{v_1}\cdots p_K^{v_K}$ and consider an automorphism $A:x\mapsto ax$ of $\IZ/m\IZ$ such that $a$ is a primitive root modulo $p_j^{v_j}$ for each $j$ (it is possible to choose $a$ like this because of the Chinese Remainder Theorem). By \cite[Table 3]{BW22b}, the cycle lengths of $A\bmod{p_j^{v_j}}$ are just the numbers of the form $(p_j-1)p_j^{v'_j}$ where $v'_j\in\{0,1,\ldots,v_j-1\}$. Therefore, the cycle lengths of $A$ are just the numbers of the form $\prod_{j=1}^K{(p_j-1)}\cdot\prod_{j=1}^K{p_j^{v'_j}}$, whence $A$ has $v_1\cdots v_K$ distinct cycle lengths. We observe that this cycle length count is asymptotically equivalent to $(v_1+1)\cdots(v_K+1)=\tau(m)$ if $\min\{v_1,\ldots,v_K\}\to\infty$. Moreover, we note that $\log{m}=v_1\log{p_1}+\cdots+v_K\log{p_K}\leq(v_1+\cdots+v_K)\log{p_K}$. Now, let us assume that $v_1=v_2=\cdots=v_K=:v\to\infty$. Then the number of distinct cycle lengths of $A$ is
\[
v^K=\left(\frac{\log{p_K}\cdot K\cdot v}{\log{p_K}\cdot K}\right)^K\geq\left(\frac{\log{m}}{\log{p_K}\cdot K}\right)^K=\frac{\log^K{m}}{(\log{p_K}\cdot K)^K}=c(K,p_K)\cdot\log^K{m},
\]
where $c(K,p_K):=(K\log{p_K})^{-K}$. Because we can construct such a class of examples for each $K$, the maximum number of distinct cycle lengths of an affine map of $\IZ/m\IZ$ is in general not bounded from above by a polynomial in $\log{m}$.
\end{remmmark}

Before we proceed further, we need another auxiliary concept and result.

\begin{defffinition}\label{periodDef}
Let $\vec{\xfrak}=(\xfrak_1,\xfrak_2,\ldots,\xfrak_n)$ be a finite sequence. A \emph{period length of $\vec{\xfrak}$}\phantomsection\label{term86} is a positive divisor $m$ of $n$ such that $\vec{\xfrak}=\diamond_{t=1}^{n/m}{(\xfrak_1,\xfrak_2,\ldots,\xfrak_m)}$, where $\diamond$ denotes concatenation (as in Subsection \ref{subsec3P3}). The smallest positive integer that is a period length of $\vec{\xfrak}$ is denoted by $\minperl(\vec{\xfrak})$\phantomsection\label{not287}.
\end{defffinition}

\begin{remmmark}\label{periodRem}
We note the following concerning Definition \ref{periodDef}.
\begin{enumerate}
\item The number $\minperl(\vec{\xfrak})$ is well-defined because at the very least, the length $n$ of $\vec{\xfrak}$ is a period of it.
\item All elements in $[\vec{\xfrak}]$, the cyclic equivalence class of the sequence $\vec{\xfrak}$ (see the second paragraph after Definition \ref{treeAboveDef}) have the same period lengths, in particular the same $\minperl$-value, as $\vec{\xfrak}$. We denote this common $\minperl$-value by $\minperl([\vec{\xfrak}])$.
\end{enumerate}
\end{remmmark}

We can bound the complexity of computing $\minperl(\vec{\xfrak})$ as follows.

\begin{lemmmma}\label{periodLem}
Let $\vec{\xfrak}\in\{0,1,\ldots,N-1\}^n$, given as a length $n$ list of non-negative integers of bit length $l_{\bit}$. Then $\minperl(\vec{\xfrak})$ can be computed using
\[
O(n\log{n}\log\log{n}(\log{n}+l_{\bit}))\subseteq O(n^{1+o(1)}l_{\bit})
\]
bit operations.
\end{lemmmma}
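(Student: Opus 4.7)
The plan is to reduce the problem to computing the Knuth--Morris--Pratt (KMP) failure function of $\vec{\xfrak}$ followed by a single divisibility check. Recall that the KMP failure function $\pi:\{1,\ldots,n\}\to\{0,1,\ldots,n-1\}$ is defined so that $\pi(i)$ is the length of the longest proper prefix of $(\xfrak_1,\ldots,\xfrak_i)$ that coincides with the suffix of that length. It is a classical fact that the smallest integer $p\geq 1$ with $\xfrak_j=\xfrak_{j+p}$ for all $j\in\{1,\ldots,n-p\}$ (the smallest \emph{period} of $\vec{\xfrak}$ in the standard stringology sense, not necessarily dividing $n$) equals $p^{\ast}:=n-\pi(n)$.

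The first key step is to prove that $\minperl(\vec{\xfrak})=p^{\ast}$ whenever $p^{\ast}\mid n$, and $\minperl(\vec{\xfrak})=n$ otherwise. A divisor $m$ of $n$ is a period length in the sense of Definition \ref{periodDef} if and only if $m$ is a period of $\vec{\xfrak}$ in the above standard sense. If $p^{\ast}\mid n$, then $p^{\ast}$ is a period length and, being the overall smallest period, is also the smallest period length. Conversely, suppose some divisor $m\mid n$ with $m<n$ is a period length. Then $m\leq n/2$ and the minimality of $p^{\ast}$ gives $p^{\ast}\leq m$, so $p^{\ast}+m\leq n\leq n+\gcd(p^{\ast},m)$. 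The Fine--Wilf theorem then implies that $\gcd(p^{\ast},m)$ is itself a period, and minimality of $p^{\ast}$ forces $\gcd(p^{\ast},m)=p^{\ast}$, i.e.\ $p^{\ast}\mid m$ and hence $p^{\ast}\mid n$. Contrapositively, if $p^{\ast}\nmid n$ then no divisor $m\mid n$ with $m<n$ can be a period length, so $\minperl(\vec{\xfrak})=n$.

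The algorithm is thus: compute $\pi(n)$ by standard KMP preprocessing, set $p^{\ast}:=n-\pi(n)$, test $p^{\ast}\mid n$ via one division with remainder, and output $p^{\ast}$ or $n$ accordingly. The KMP preprocessing performs $O(n)$ element comparisons (each costing $O(l_{\bit})$ bit operations since the elements have bit length $l_{\bit}$) together with $O(n)$ index increments, decrements and array look-ups on indices of bit length $O(\log n)$, costing $O(\log n)$ bit operations apiece in the memory model from Subsection \ref{subsec5P1}. The concluding divisibility check costs $O(\log^{1+o(1)}n)$ bit operations by Lemma \ref{complexitiesLem}(3). Summing these gives a total bit operation cost of $O(n(l_{\bit}+\log n))$, which is comfortably inside the bound $O(n\log n\log\log n(\log n+l_{\bit}))$ claimed in the lemma.

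The only conceptual ingredient beyond routine complexity bookkeeping is the Fine--Wilf argument used to handle the case $p^{\ast}\nmid n$; the rest is a straightforward adaptation of textbook KMP analysis to our bit operation model. Since my sketch already gives a bound tighter than the one stated, the main obstacle is purely expository: stating the period characterization cleanly so that the bit operation count of KMP drops out of a single, transparent pass through the sequence.
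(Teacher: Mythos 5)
Your proof is correct and takes a genuinely different route from the paper. The paper first determines the binary representation of $n$, then factors $n$ deterministically (via Hittmeir/Harvey), and finally, for each prime $p$ dividing $n$, runs a binary search over $\{0,1,\ldots,\nu_p(n)\}$ to locate $\nu_p(\minperl(\vec{\xfrak}))$, testing each candidate period length by a full $O(n(\log n + l_{\bit}))$-bit-operation scan; the $\log n$ primes times $\log\log n$ binary-search iterations produce the advertised $O(n\log n\log\log n(\log n + l_{\bit}))$ bound. This approach tacitly relies on the fact that the set of period lengths of $\vec{\xfrak}$ forms a filter in the divisor lattice of $n$ (closed under $\gcd$), which is exactly the Fine--Wilf consequence you make explicit. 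You instead compute the smallest stringology period $p^{\ast}=n-\pi(n)$ in a single KMP pass and use Fine--Wilf to show $\minperl(\vec{\xfrak})=p^{\ast}$ if $p^{\ast}\mid n$ and $\minperl(\vec{\xfrak})=n$ otherwise. This sidesteps factoring $n$ and all the binary searches, and your bound $O(n(\log n + l_{\bit}))$ is in fact strictly tighter than the one claimed in the lemma (though both are absorbed by the stated $O$-class). Your Fine--Wilf case analysis is sound: any proper divisor $m\mid n$ that is a period length satisfies $m\le n/2$ and $p^{\ast}\le m$, so $p^{\ast}+m-\gcd(p^{\ast},m)\le n$, and minimality of $p^{\ast}$ then forces $p^{\ast}\mid m\mid n$. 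In short, yours is a cleaner argument that strengthens the lemma; the only thing it adds beyond the paper is the need to cite (or reprove) the border characterization $p^{\ast}=n-\pi(n)$ and the Fine--Wilf theorem, both of which are standard.
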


\begin{proof}
We start by determining $n$, the number of entries of the tuple $\vec{\xfrak}$, in its binary representation with $\lfloor\log_2{n}\rfloor+1$ bits, which takes $O(n\log{n})$ bit operations. Following that, we factor $n$ deterministically. The current record for the bit operation cost of this is $O(n^{1/5+o(1)})$ due to Harvey \cite{Har21a}, building on an ealier breakthrough of Hittmeir \cite{Hit21a} (we could also use an mdl query for this factorization, but then the algorithm is not entirely classical, and $n^{1/5+o(1)}$ is majorized by other terms in this analysis anyway). The rest of the algorithm is analogous to the deterministic procedure for computing a modular multiplicative order described in the proof of Lemma \ref{complexitiesLem2}(2). More specifically, if $n=p_1^{v_1}\cdots p_K^{v_K}$ is the obtained factorization of $n$, then for $j=1,2,\ldots,K$, we perform a binary search to find the smallest $v'_j\in\IN_0$ such that $p_j^{v'_j}\prod_{k\not=j}{p_k^{v_k}}$ is a period length of $\vec{\xfrak}$, which implies that $v'_j=\nu_{p_j}(\minperl(\vec{\xfrak}))$. For each given $j$, it takes $O(\log{v_j})\subseteq O(\log\log{n})$ iterations of the binary search loop until $v'_j$ is found, and each iteration costs $O(n(\log{n}+l_{\bit}))$ bit operations. Because $K\in O(\log{n})$, this means that the total bit operation cost of computing the numbers $v'_j$ is in $O(n\log{n}\log\log{n}(\log{n}+l_{\bit}))$, and $\minperl(\vec{\xfrak})=\prod_{j=1}^K{p_j^{v'_j}}$ takes $O(\log{n}\cdot\log^{2+o(1)}{n})=O(\log^{3+o(1)}{n})$ bit operations to compute by Lemma \ref{complexitiesLem}(3,6).
\end{proof}

We now give the precise definition of the compact description of the isomorphism type of $\Gamma_f$ we aim to obtain.

\begin{defffinition}\label{necklaceListDef}
Let $f$ be a function $X\rightarrow X$, where $X$ is some finite set, and let $\vec{\Ifrak}=(\Ifrak_n)_{n=0,1,\ldots,N}$ be a sequence of pairwise distinct finite rooted tree isomorphism types that contains all isomorphism types of the form $\Tree_{\Gamma_f}(x)$ for $x\in\per(f)$. The \emph{tree necklace list for $f$ relative to $\vec{\Ifrak}$}\phantomsection\label{term87} is the unique set $\{([\vec{\nfrak}_k],l_k,\mfrak_k): k=1,2,\ldots,N'\}$\phantomsection\label{not288} of triples such that the following hold.
\begin{enumerate}
\item $[\vec{\nfrak}_k]=[\nfrak_{k,1},\nfrak_{k,2},\ldots,\nfrak_{k,\ell'_k}]$ is a cyclic sequence of numbers in $\{0,1,\ldots,N\}$ such that $\minperl([\vec{\nfrak}_k])=\ell'_k$.
\item $l_k$ and $\mfrak_k$ are positive integers, and $l_k$ is a multiple of $\ell'_k$.
\item In $\Gamma_f$, there are precisely $\mfrak_k$ connected components corresponding to the cyclic sequence of rooted tree isomorphism types $[\diamond_{t=1}^{l_k/\ell'_k}{(\Ifrak_{\nfrak_{k,1}},\Ifrak_{\nfrak_{k,2}},\ldots,\Ifrak_{\nfrak_{k,\ell'_k}})}]$.
\item For each connected component of $\Gamma_f$, there is a $k\in\{1,2,\ldots,N'\}$ such that the said connected component corresponds to $[\diamond_{t=1}^{l_k/\ell'_k}{(\Ifrak_{\nfrak_{k,1}},\Ifrak_{\nfrak_{k,2}},\ldots,\Ifrak_{\nfrak_{k,\ell'_k}})}]$.
\end{enumerate}
If $f$ is an index $d$ generalized cyclotomic mapping of the finite field $\IF_q$ such that $f$ is of special type I or II respectively, and if $\Rfrak=((\Dfrak_n,S_n))_{n=0,1,\ldots,N}$ is a type-I or -II tree register for $f$, then associated with $\Rfrak$, we have the sequence $(\Ifrak_n)_{n=0,1,\ldots,N}$ of rooted tree isomorphism types where $\Ifrak_n$ has the compact description $\Dfrak_n$. In that case, the tree necklace list for $f$ relative to $\vec{\Ifrak}$ is also called one \emph{relative to $\Rfrak$}\phantomsection\label{term88}.
\end{defffinition}

\begin{remmmark}\label{necklaceListRem}
We make the following comments concerning Definition \ref{necklaceListDef}.
\begin{enumerate}
\item The uniqueness of the tree necklace list for $f$ relative to $\vec{\Ifrak}$ is not hard to prove, but it does require that $\minperl([\vec{\nfrak}_k])=\ell'_k$ for all $k$. For example, without this property, for any $\vec{\Ifrak}$ of length $N+1\geq2$, both $\{([0,1],4,1)\}$ and $\{([0,1,0,1],4,1)\}$ would be tree necklace lists relative to $\vec{\Ifrak}$ for a suitable function $f$.
\item For $j=1,2$, let $X_j$ be a finite set and $f_j$ a function $X_j\rightarrow X_j$. Moreover, let $\vec{\Ifrak}$ be a finite sequence of pairwise distinct, finite rooted tree isomorphism types such that for $j=1,2$, each $\Tree_{\Gamma_{f_j}}(x)$ for $x\in\per(f_j)$ occurs in $\vec{\Ifrak}$. For $j=1,2$, let $\Nfrak_j$\phantomsection\label{not289} be the unique tree necklace list for $f_j$ relative to $\vec{\Ifrak}$. It is not hard to prove that $\Nfrak_1=\Nfrak_2$ if and only if $\Gamma_{f_1}\cong\Gamma_{f_2}$.
\item Let $f$ be an index $d$ generalized cyclotomic mapping of $\IF_q$ that is of special type I or II. We need to fix suitable bit string encodings of the elements of a tree necklace list for $f$, making the following conventions. By assumption, if $i\in\{0,1,\ldots,d\}$ has cycle length $\ell$ under $\overline{f}$, then the cyclic sequence of rooted tree isomorphism types encoding the connected component of $\Gamma_f$ containing any $f$-periodic vertex from $C_i$ has minimal period some divisor of $\ell$. In particular, the said minimal period is always at most $d$. Moreover, we assume that $\vec{\Ifrak}$ stems from a recursive tree description list $\vec{\Dfrak}$ that is part of a type-I or -II tree register $\Rfrak$ for $f$. In $\vec{\Dfrak}$, there is a common bit length to represent numbers from $\{0,1,\ldots,N\}$ (see the remarks after Definition \ref{treeRegDef}); we denote that bit length by $l_{\bit}$ and observe that
\[
l_{\bit}=
\begin{cases}
\lfloor\log_2{d}\rfloor+1\in O(\log{d}), & \text{if }\Rfrak\text{ has type I}, \\
\lfloor\log_2(d^2\lfloor\log_2{q}\rfloor+d)\rfloor+1\in O(\log{d}+\log\log{q}), & \text{if }\Rfrak\text{ has type II}.
\end{cases}
\]
A cyclic sequence $[\vec{\nfrak}]=[\nfrak_1,\nfrak_2,\ldots,\nfrak_{\ell'}]$ as above is the first entry of an element of the tree necklace list for $f$ relative to $\Rfrak$; we assume that the associated ordered sequence $\vec{\nfrak}=(\nfrak_1,\nfrak_2,\ldots,\nfrak_{\ell'})$ is lexicographically minimal among all ordered sequences in its cyclic equivalence class $[\vec{\nfrak}]$. We encode $[\vec{\nfrak}]$ as follows. We take the ordered sequence $\vec{\nfrak}$ and fill it up with entries $-1$ (a dummy value) until it has $d$ entries. We then print a bit string that is a concatenation of encodings of the entries of this extended sequence. We use $l_{\bit}+1$ bits to denote each entry, where an entry other than $-1$ is represented by an ancillary bit $1$, followed by the length $l_{\bit}$ binary digit representation of that entry from $\Rfrak$. On the other hand, an entry $-1$ is represented by a string of $l_{\bit}+1$ zeroes. For example, if $l_{\bit}=3$ and $d=5$, then the bit string encoding of $[6,3,4]=[3,4,6]$ in the corresponding tree necklace list is
\[
10111100111000000000.
\]
On the other hand, the second and third entries of elements of any tree necklace list for $f$ are positive integers that are at most $q$, and we simply use their standard binary representations with $\lfloor\log_2{q}\rfloor+1$ digits to represent them; these may be directly concatenated with the bit string encoding of $[\vec{\nfrak}]$. With these conventions, all elements of a given tree necklace list for $f$ are bit strings of the same bit length, namely
\[
d(l_{\bit}+1)+2(\lfloor\log_2{q}\rfloor+1)\in
\begin{cases}
O(d\log{d}+\log{q}), & \text{if }\Rfrak\text{ has type I}, \\
O(d\log{d}+d\log\log{q}+\log{q}), & \text{if }\Rfrak\text{ has type II},
\end{cases}
\]
which allows us to use the sorting algorithm from Lemma \ref{complexitiesLem}(10) to sort them lexicographically. Moreover, the lexicographic ordering of those bit strings corresponds to the \enquote{natural} lexicographic ordering of the elements of the associated (abstract) tree necklace list (using the lexicographic ordering among lexicographically minimal representatives of cyclic sequences in the first component, and the usual linear ordering of integers in the second and third component). It should be noted that our approach involves some padding, and this could be avoided through using \cite[Algorithm 3.2 on pp.~89f.]{AHU75a}, which is a more general lexicographic sorting algorithm that does not require the bit strings from the input to be of a common length. However, in terms of the $O$-class of the complexity bounds we derive, it does not make a difference.
\end{enumerate}
\end{remmmark}

Remark \ref{necklaceListRem}(2) guarantees that tree necklace lists are \emph{injective} descriptions of digraph isomorphism types of functional graphs, but they may not always be compact. Indeed, they contain as many elements as there are distinct isomorphism types of connected components of the said functional graph, and in the case of the functional graph $\Gamma_f$ of a generalized cyclotomic mapping $f$ of $\IF_q$ of a fixed index $d$, the maximum number of such connected components is at least the maximum number of distinct cycle lengths which an affine permutation of $\IZ/s\IZ$ can achieve. That latter number can, a priori, be superpolynomial in $s=(q-1)/d$ (and thus in $q$ if $d$ is fixed), see Remark \ref{ctTauRem}. We note however, that the moduli considered in Remark \ref{ctTauRem} are of a special form, and it is not clear whether the construction from Remark \ref{ctTauRem} can still be carried out if, additionally, all constructed moduli must be of the form $(q-1)/d$ for some prime power $q$ with $d\mid q-1$, where $d\in\IN^+$ is fixed. Moreover, by our Proposition \ref{tauAvProp}, as long as one is willing to exclude a small positive asymptotic fraction of prime powers, then $\tau(q-1)$, which is an upper bound on the number of distinct cycle lengths of an affine map of $\IZ/s\IZ$ (see the proof of Proposition \ref{ctTauProp}), \emph{is} polynomial in $\log{q}$. Even for such prime powers $q$, the number of distinct isomorphism types of connected components of $\Gamma_f$ itself could be superpolynomial in $\log{q}$, however. We leave the problem of finding a concrete infinite class of examples that confirms this open; see also Problems \ref{isomorphismTypesProb} and \ref{isomorphismTypesProb2}.

Nonetheless, if $f$ is of special type I or II, then the following key result implies that one may compute a tree register $\Rfrak$ for $f$ and, subsequently, compute and print the tree necklace list for $f$ relative to $\Rfrak$ within a $q$-bounded query complexity that is polynomial in $\log{q}$, $d$ and $\tau(q-1)$.

\begin{theoremmm}\label{necklaceListTheo}
Let $f$ be an index $d$ generalized cyclotomic mapping of $\IF_q$.
\begin{enumerate}
\item We assume that $f$ is of special type I. Then one can compute within a $q$-bounded query complexity of
\begin{align*}
(&d^3\log^2{d}+d^3\tau(q-1)^2\log{q}+d^2\log^{1+o(1)}{q}+d\tau(q-1)\log^{2+o(1)}{q}, \\
&d,d\log{q},d,0),
\end{align*}
or a $q$-bounded Las Vegas dual complexity of
\begin{align*}
(&d^3\log^2{d}+d^3\tau(q-1)^2\log{q}+d^2\log^{1+o(1)}{q}+d\tau(q-1)\log^{2+o(1)}{q} \\
&+d\log^{8+o(1)}{q},d\log^{4+o(1)}{q},d\log^2{q}),
\end{align*}
a type-I tree register $\Rfrak=((\Dfrak_n,S_n))_{n=0,1,\ldots,N}$ for $f$ with $N\in O(d)$, as well as the tree necklace list of $f$ relative to $\Rfrak$.
\item We assume that $f$ is of special type II. Then one can compute within a $q$-bounded query complexity of
\begin{align*}
(&d^5\log^2{d}\mpe(q-1)+(d^5\mpe(q-1)^3+d^3\tau(q-1)^2)\log{q} \\
&+d^2\mpe(q-1)\log^{1+o(1)}{q}+d\tau(q-1)\log^{2+o(1)}{q},d,d\log{q},d,0),
\end{align*}
or a $q$-bounded Las Vegas dual complexity of
\begin{align*}
(&d^5\log^2{d}\mpe(q-1)+(d^5\mpe(q-1)^3+d^3\tau(q-1)^2)\log{q} \\
&+d^2\mpe(q-1)\log^{1+o(1)}{q}+d\tau(q-1)\log^{2+o(1)}{q}+d\log^{8+o(1)}{q}, \\
&d\log^{4+o(1)}{q},d\log^2{q}),
\end{align*}
a type-II tree register $\Rfrak=((\Dfrak_n,S_n))_{n=0,1,\ldots,N}$ of $f$ with $N\in O(d^2\mpe(q-1))$, as well as the tree necklace list of $f$ relative to $\Rfrak$.
\end{enumerate}
\end{theoremmm}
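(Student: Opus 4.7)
The plan is to first compute the tree register $\Rfrak$ using Lemma \ref{treeRegLem}, which already contains virtually all the data needed to identify, for each coset $C_i$, the isomorphism type of the rooted tree $\Tree_{\Gamma_f}(x)$ attached to any $f$-periodic vertex $x \in C_i$. Following this, I would compute a CRL-list $\overline{\Lcal}$ of $\overline{f}$ together with its full cycles (using the brute-force method from the beginning of Subsubsection \ref{subsubsec5P2P1} at cost $O(d^2\log^2 d)$), then process each $\overline{f}$-cycle individually and extract its contribution to the tree necklace list by computing the cycle structure of the induced affine permutation of $\IZ/s\IZ$ and pairing its cycle lengths with the canonicalized sequence of tree indices along the $\overline{f}$-cycle. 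A final sorting-and-merging pass will consolidate triples with coinciding first two entries.

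For the Type I case, Subsection \ref{subsec4P3} and Lemma \ref{allPermutationsLem} tell us that on a live $\overline{f}$-cycle $(i_0, \ldots, i_{\ell-1})$ with $i_0 < d$, every vertex of $U_{i_0} = \bigcup_t C_{i_t}$ is $f$-periodic and sits above a rooted tree of isomorphism type $\Ifrak_{n(i_t)}$, where $n(i) \in \{0,\ldots,N\}$ is the unique index with $i \in S_{n(i)}$ (read off in $O(d \log d)$ bit operations by a single pass through the register). The cycles of $f_{\mid U_{i_0}}$ are controlled by $\CT(f_{\mid U_{i_0}}) = \BU_\ell(\CT(\Acal_{i_0}))$, where $\Acal_{i_0} = A_{i_0} A_{i_1} \cdots A_{i_{\ell-1}}$ is an affine permutation of $\IZ/s\IZ$ that I would compute in $O(\ell\log^{1+o(1)}{q})$ bit operations. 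Proposition \ref{ctTauProp} then computes $\CT(\Acal_{i_0})$ in fully factored form within a $q$-bounded query cost of $(\tau(q-1)\log^{2+o(1)}{q} + \tau(q-1)^2\log{q}, 0, \log{q}, 1, 0)$. In parallel, I would determine $p = \minperl(n(i_0), \ldots, n(i_{\ell-1}))$ via Lemma \ref{periodLem} and extract the lexicographically minimal cyclic shift $[\vec{\nfrak}_C]$ of $(n(i_0), \ldots, n(i_{p-1}))$. Then for each cycle length $l$ of $\Acal_{i_0}$ with multiplicity $e_l$, I emit the triple $([\vec{\nfrak}_C], \ell l, e_l)$. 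The $\overline{f}$-fixed cycle $\{d\}$ is handled separately by emitting $([n(d)], 1, 1)$.

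For the Type II case, $\overline{f}$ is a permutation, so every $i$ lies on some $\overline{f}$-cycle, and Theorem \ref{cosetPermTheo} guarantees that all $f$-periodic vertices in $C_{i_t}$ share the same rooted tree isomorphism type $\Expand(\Ifrak_{i_t, H_{i_t}})$; the tree index $n(i_t)$ is read off from the register component $S_{n, \per}$. With this modification the processing proceeds identically: assemble $\Acal_{i_0}$ (it is an affine permutation of $\IZ/s\IZ$ because $\overline{f}$ permutes the cosets), compute $\CT(\Acal_{i_0})$ via Proposition \ref{ctTauProp}, compute $p$ and $[\vec{\nfrak}_C]$ via Lemma \ref{periodLem}, and emit one triple per cycle length. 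The singleton cycle $\{d\}$ contributes $([0], 1, 1)$, since $\Tree_{\Gamma_f}(0_{\IF_q})$ is trivial (the sole $\overline{f}$-preimage of $d$ is $d$ itself) and the trivial rooted tree is $\Ifrak_0$ by convention of every recursive tree description list.

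The final step is to take all emitted triples, encode them as bit strings using the padding convention of Remark \ref{necklaceListRem}(3), sort them lexicographically via Lemma \ref{complexitiesLem}(10), then scan once and merge consecutive triples agreeing in their first two entries by summing the third entries. The stated bit-operation bounds absorb the register-construction cost from Lemma \ref{treeRegLem}, the summed $\tau(s)$-dependent cost of Proposition \ref{ctTauProp} over the at most $d$ $\overline{f}$-cycles, the cost of the $\minperl$ computations, the cost of assembling the $\Acal_{i_0}$, and the encoding/sort/merge of the at most $O(d\tau(q-1))$ triples of bit length $O(d\log d + \log q)$ (or $O(d\log d + d\log\log q + \log q)$ in Type II); the Las Vegas dual complexity then follows from Lemma \ref{LVComplexityLem} with $h(\vec{y}) := q$. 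The main obstacle I anticipate will be the careful bookkeeping around sorting and merging, coupled with guaranteeing that each emitted necklace representative is placed in a unique canonical form (lexicographically minimal rotation of the basic cyclic sequence, with the period $p$ already pre-extracted) so that genuinely equal cyclic tree-index sequences are detected as such by the sort; a secondary subtlety will be handling cycles where the basic cyclic sequence has a nontrivial internal period, which must correctly shorten the stored representative but not the cycle length.
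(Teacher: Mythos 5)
Your proposal follows the same route as the paper's proof: build the tree register via Lemma \ref{treeRegLem}, compute a CRL-list $\overline{\Lcal}$ of $\overline{f}$ together with its cycles, on each $\overline{f}$-cycle of length $\ell$ assemble $\Acal_{i_0}$, invoke Proposition \ref{ctTauProp} for its cycle type and Lemma \ref{periodLem} to canonicalize the cyclic tree-index sequence, emit one triple per cycle length with the stretch factor $\ell$, and aggregate at the end; the sort-and-merge variant you use for aggregation is equally valid and fits inside the allotted $d\tau(q-1)\log^{2+o(1)}{q}$ term.

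One parenthetical claim is false, however. In the type-II case you write that $\Acal_{i_0}$ is an affine \emph{permutation} of $\IZ/s\IZ$ ``because $\overline{f}$ permutes the cosets.'' That inference is invalid: being of special type II constrains $\overline{f}$ to be a permutation of $\{0,1,\ldots,d\}$ but places no constraint on $\gcd(\alpha_{i_t},s)$, so the component maps $A_{i_t}$ (and hence $\Acal_{i_0}$) can fail to be injective; contrast this with type I, where the bijectivity hypothesis does force each $A_{i_t}$ to be a permutation. Your algorithm survives this slip only because Proposition \ref{ctTauProp} is stated for arbitrary affine maps and outputs $\CT\left((\Acal_{i_0})_{\mid\per(\Acal_{i_0})}\right)$, which is exactly what the tree necklace list requires -- the paper is careful to write $\CT\left((\Acal_i)_{\mid\per(\Acal_i)}\right)$ rather than $\CT(\Acal_i)$ for this very reason. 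It is worth flagging the distinction explicitly in your writeup so that a reader does not conclude that every point of $U_{i_0}$ is $f$-periodic in type II, which would be wrong.
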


\begin{proof}
We prove both statements simultaneously, referring with \enquote{case I}, respectively \enquote{case II}, to the situation described in statement (1), respectively (2). First, we compute $\overline{f}$, the affine maps $A_i$, and a tree register $\Rfrak$ for $f$ of the desired type and with the asserted bound on $N$, taking $q$-bounded query complexity
\begin{itemize}
\item $(d^3\log^2{d}+d\log^{1+o(1)}{q},d,0,0,0)$ in case I,
\item $(d^2\mpe(q-1)\log^{1+o(1)}{q}+d^5\mpe(q-1)^3\log{q}+d^5\log^2{d}\mpe(q-1)^3,d,0,0,0)$ in case II
\end{itemize}
by Proposition \ref{faiProp} and Lemma \ref{treeRegLem}(2,4). Then we compute a CRL-list $\overline{\Lcal}$ of $\overline{f}$ together with the cycles of $\overline{f}$, taking $O(d^2\log^2{d})$ bit operations by the argument at the beginning of Subsubsection \ref{subsubsec5P2P1}. For each $(i,\ell)\in\overline{\Lcal}$, letting $(i_0,i_1,\ldots,i_{\ell-1})$ with $i=i_0$ denote the $\overline{f}$-cycle of $i$ determined earlier, we compute the tree necklace list $\Nfrak_i$, relative to $\Rfrak$, for the restriction of $f$ to $\bigcup_{t=0}^{\ell-1}{C_{i_t}}$ as follows.

If $i=d$, we simply set $\Nfrak_i:=\{([\nfrak],1,1)\}$ where $\nfrak\in\{0,1,\ldots,N\}$ is the positive integer that represents $\Tree_{\Gamma_f}(0_{\IF_q})$ in $\Rfrak$. We observe that in case II, where $\Tree_{\Gamma_f}(0_{\IF_q})$ is trivial, one has $\nfrak=0$ necessarily, whereas in case I, the number $\nfrak$ is uniquely characterized by the inclusion $d\in S_{\nfrak}$, and can thus be determined with $O(d\log{d})$ bit operations (using that each $\nfrak$ is represented by a bit string of length in $O(\log{d})$).

Now we assume that $i<d$. Then we compute $\Acal_i=A_{i_0}A_{i_1}\cdots A_{i_{\ell-1}}$, taking $O(d\log^{1+o(1)}{q})$ bit operations (see Subsubsection \ref{subsubsec5P2P1}, page \pageref{not222}), as well as the cycle type $\CT((\Acal_i)_{\mid\per(\Acal_i)})=x_1^{e_{i,1}}x_2^{e_{i,2}}\cdots x_s^{e_{i,s}}$ (where some of the $e_{i,j}$ may be $0$), taking $q$-bounded query complexity
\[
(\tau(q-1)\log^{2+o(1)}{q}+\tau(q-1)^2\log{q},0,\log{q},1,0)
\]
by Proposition \ref{ctTauProp}. Moreover, we compute the number sequence
\[
\vec{\nfrak}_i=(\nfrak_{i_0},\nfrak_{i_1},\ldots,\nfrak_{i_{\ell-1}})
\]
where $\nfrak_{i_t}$ is uniquely characterized by the inclusion
\[
\begin{cases}
i_t\in S_{\nfrak_{i_t}}, & \text{in case I}, \\
i_t\in S_{\nfrak_{i_t},\per}, & \text{in case II},
\end{cases}
\]
i.e., $\nfrak_{i_t}$ is the positive integer that represents, in the register $\Rfrak$, the unique rooted tree isomorphism type above periodic vertices of $\Gamma_f$ that are contained in $C_{i_t}$. The number of bit operations it takes to determine (and set the value of) $\nfrak_{i_t}$ for all $t=0,1,\ldots,\ell-1$ can be bounded as follows:
\begin{itemize}
\item in case I, it is in
\[
O\left(d\cdot\sum_{n=0}^N{|S_n|}\cdot\log{d}+dl_{\bit}\right)\subseteq O(d^2\log{d});
\]
\item in case II, it is in
\[
O\left(d\left(N+\sum_{n=0}^N{|S_{n,\per}|}\right)\log{d}+dl_{\bit}\right)\subseteq O(d^3\log{d}\mpe(q-1)+d\log\log{q}).
\]
\end{itemize}
Next, we overwrite $\vec{\nfrak}_i$ with the unique lexicographically minimal number sequence in the same cyclic equivalence class. We can do do by spelling the $O(\ell)\subseteq O(d)$ cyclic shifts of $\vec{\nfrak}_i$ out, then ordering them lexicographically as in Lemma \ref{complexitiesLem}(10) and taking the first sequence in the sorted list. This process takes
\[
O(d\log{d}\cdot dl_{\bit})=O(d^2\log{d}l_{\bit})\subseteq
\begin{cases}
O(d^2\log^2{d}), & \text{in case I}, \\
O(d^2\log^2{d}+d^2\log{d}\log\log{q}), & \text{in case II}
\end{cases}
\]
bit operations. Following that, we compute $\minperl(\vec{\nfrak}_i)$, which by Lemma \ref{periodLem} takes the following amount of bit operations:
\begin{align*}
&O(d\log{d}\log\log{d}\cdot(\log{d}+l_{\bit})) \\
\subseteq
&\begin{cases}
O(d\log^2{d}\log\log{d}), & \text{in case I}, \\
O(d\log{d}\log\log{d}(\log{d}+\log\log{q})), & \text{in case II}.
\end{cases}
\end{align*}

We observe that the following is a valid choice for $\Nfrak_i$:
\[
\Nfrak_i:=\{([\nfrak_{i_0},\nfrak_{i_1},\ldots,\nfrak_{i_{\minperl(\vec{\nfrak}_i)-1}}],\ell\cdot l',e_{i,l'}): l'\in\{1,2,\ldots,s\}, e_{i,l'}>0\}.
\]
For further processing, rather than store $\Nfrak_i$ literally as this list, it is more advantageous to store $\vec{\nfrak}'_i:=[\nfrak_{i_0},\nfrak_{i_1},\ldots,\nfrak_{i_{\minperl(\vec{\nfrak}_i)-1}}]$ and the list $\Nfrak'_i:=\{(\ell\cdot l',e_{i,l'}): l'\in\{1,2,\ldots,s\}, e_{i,l'}>0\}$ separately, which takes
\begin{align*}
&O(\minperl(\vec{\nfrak}_i)l_{\bit}+\tau(q-1)\log^{1+o(1)}{q}) \\
&\subseteq
\begin{cases}
O(d\log{d}+\tau(q-1)\log^{1+o(1)}{q}), & \text{in case I}, \\
O(d(\log{d}+\log\log{q})+\tau(q-1)\log^{1+o(1)}{q}), & \text{in case II}
\end{cases}
\end{align*}
bit operations for carrying out the multiplications $\ell\cdot l'$ and copying data. This concludes our analysis of how to compute $\Nfrak_i$, which in summary takes $q$-bounded query complexity
\[
(\tau(q-1)\log^{2+o(1)}{q}+(d+\tau(q-1))\log^{1+o(1)}{q}+\tau(q-1)^2\log{q}+E(d,q),0,\log{q},1,0)
\]
where
\begin{align*}
&E(d,q):=Nd\log{d}+d^2\log{d}l_{\bit} \\
\in
&\begin{cases}
O(d^2\log^2{d}), & \text{in case I}, \\
O(d^3\log{d}\mpe(q-1)+d^2\log^2{d}+d^2\log{d}\log\log{q}), & \text{in case II}
\end{cases}
\end{align*}
for each given $i$, and the (component-wise) $d$-fold of that complexity for all $i$ together.

Finally, we need to compute the actual tree necklace list $\Nfrak$ for $f$ relative to $\Rfrak$. We start by setting $\Mfrak:=\Nfrak:=\emptyset$ and $\vec{\nfrak}':=\{(\vec{\nfrak}'_i,i,\ell): (i,\ell)\in\overline{\Lcal}\}$ (where $\vec{\nfrak}'_d:=[\nfrak]$, the first entry of the unique triple in $\Nfrak_d$), followed by sorting $\vec{\nfrak}'$ lexicographically. Altogether, this takes $O(d^2\log{d}l_{\bit})$ bit operations. Throughout the subsequently described process, $\Nfrak$ is a (lexicographically sorted) initial segment of the tree necklace list that will be output, and $\Mfrak$ is an initial segment of the list obtained by deleting repeated entries in the multiset $\{\vec{\nfrak}'_i: (i,\ell)\in\overline{\Lcal}\}$. In particular, $\Mfrak$ has $O(d)$ elements, each of which is a cyclic sequence of length in $O(d)$ each entry of which has bit length in $O(l_{\bit})$ and which is given by its lexicographically minimal representative. We observe that two such cyclic sequences are equal if and only if their representatives are equal, so it takes $O(dl_{\bit})$ bit operations to verify such an equality.

Now, we go through the triples $(\vec{\nfrak}'_i,i,\ell)\in\vec{\nfrak}'$, and for each of them, we do the following. First, we check whether $\vec{\nfrak}'_i\in\Mfrak$, which takes $O(d^2l_{\bit})$ bit operations. If so, we skip to the next triple in $\vec{\nfrak}'$, otherwise we proceed as follows. We add $\vec{\nfrak}'_i$ to $\Mfrak$ as a new element (which takes $O(dl_{\bit})$ bit operations for copying), and set $\Vfrak:=\Nfrak'_i$, taking $O(\tau(q-1)\log{q})$ bit operations.

Then, we go through the elements $(\vec{\nfrak}'_j,j,\ell')\in\vec{\nfrak}'$ that come after $(\vec{\nfrak}'_i,i,\ell)$, and for each of them, we do the following. We check whether $\vec{\nfrak}'_j=\vec{\nfrak}'_i$, which takes $O(dl_{\bit})$ bit operations. If not, we skip to the next value of $(\vec{\nfrak}'_j,j,\ell')$, otherwise we proceed as follows. We go through the elements $(l,k)$ of $\Nfrak'_j$, and for each of them, we check whether $l$ occurs as the first entry of some pair $(l,k')$ of $\Vfrak$; each such check takes $O(|\Vfrak|\log{q})\subseteq O(d\tau(q-1)\log{q})$ bit operations. If so, we replace the unique element of $\Vfrak$ of the form $(l,k')$ by $(l,k+k')$; otherwise, we add $(l,k)$ to $\Vfrak$ as a new element.

Overall, the described loop over the elements of $\Nfrak'_j$ takes
\[
O(|\Nfrak'_j|\cdot d\tau(q-1)\log{q})\subseteq O(d\tau(q-1)^2\log{q})
\]
bit operations, and thus the loop over $(\vec{\nfrak}'_j,j,\ell')$ takes $O(d^2l_{\bit}+d^2\tau(q-1)^2\log{q})$ bit operations. Once that loop is finished, we complete the loop over $(\vec{\nfrak}'_i,i,\ell)\in\vec{\nfrak}'$ by adding, for each $(l,k)\in\Vfrak$, the triple $([\vec{\nfrak}'_i],l,k)$ to $\Nfrak$ as a new element -- this copying process takes
\[
O(|\Vfrak|\cdot(dl_{\bit}+\log{q}))\subseteq O(d\tau(q-1)(dl_{\bit}+\log{q}))
\]
bit operations.

At the end of the loop over $(\vec{\nfrak}'_i,i,\ell)\in\vec{\nfrak}'$, the variable $\Nfrak$ has its desired value, and the overall bit operation cost of this loop is in
\[
O(d\cdot(d^2l_{\bit}+d^2\tau(q-1)l_{\bit}+d^2\tau(q-1)^2\log{q}))=O(d^3\tau(q-1)^2\log{q})
\]
\end{proof}

Finally, we discuss the complexity of the digraph isomorphism problem. Let $f_1$ and $f_2$ be generalized cyclotomic mappings of $\IF_q$, each of one of the special types I or II. We note that neither do $f_1$ and $f_2$ need to have the same index, nor are they necessarily both of the same special type. In order to decide whether $\Gamma_{f_1}\cong\Gamma_{f_2}$, we would like to compare a computed tree necklace list for $f_1$ with one for $f_2$. To that end, those tree necklace lists must be \enquote{synchronized}, so that each $n$ denotes the same rooted tree isomorphism type $\Ifrak_n$ in each case. Here is a precise definition.

\begin{defffinition}\label{synchronizeDef}
Let $\vec{\Dfrak}=(\Dfrak_n)_{n=0,1,\ldots,N}$ and $\vec{\Dfrak}'=(\Dfrak'_n)_{n=0,1,\ldots,N'}$ be recursive tree description lists, with associated rooted tree isomorphism type sequences $(\Ifrak_n)_{n=0,1,\ldots,N}$ and $(\Ifrak'_n)_{n=0,1,\ldots,N'}$. A \emph{synchronization of $\vec{\Dfrak}$ and $\vec{\Dfrak}'$}\phantomsection\label{term89} is a pair $(\vec{\Dfrak}^+,\ifrak)$\phantomsection\label{not290} such that the following hold.
\begin{enumerate}
\item $\vec{\Dfrak}^+=(\Dfrak^+_n)_{n=0,1,\ldots,N^+}$ is a recursive tree description list of which $\vec{\Dfrak}$ is an initial segment (in particular, $N\leq N^+$). We denote by $(\Ifrak^+_n)_{n=0,1,\ldots,N^+}$ the unique rooted tree isomorphism type sequence associated with $\vec{\Dfrak}^+$.
\item $\ifrak$ is a function $\{0,1,\ldots,N'\}\rightarrow\{0,1,\ldots,N^+\}$ with $\{N+1,N+2,\ldots,N^+\}\subseteq\im(\ifrak)$ such that for each $n\in\{0,1,\ldots,N'\}$, one has $\Ifrak'_n\cong\Ifrak^+_{\ifrak(n)}$.
\end{enumerate}
\end{defffinition}

In an implementation, we assume that each description $\Dfrak_n$ or $\Dfrak'_n$ is sorted by increasing first entries of its elements. We also assume that each of $\vec{\Dfrak}$ and $\vec{\Dfrak}'$ uses a common bit length, denoted by $l_{\bit}$ and $l'_{\bit}$ respectively, for the binary representations of the numbers $n$. Because $N^+\leq N+N'$, we use $\max\{l_{\bit},l'_{\bit}\}+1\in O(l_{\bit}+l'_{\bit})$ bits for the numbers $n$ in the synchronization $\vec{\Dfrak}^+$. We note that the definition of a synchronization is asymmetric in the sense that a synchronization of $\vec{\Dfrak}$ and $\vec{\Dfrak}'$ is not necessarily also one of $\vec{\Dfrak}'$ and $\vec{\Dfrak}$. Complexity-wise, the following lemma shows that it is slightly more advantageous to have $N'\leq N$.

\begin{lemmmma}\label{synchronizeLem}
Let $\vec{\Dfrak}=(\Dfrak_n)_{n=0,1,\ldots,N}$ and $\vec{\Dfrak}'=(\Dfrak'_n)_{n=0,1,\ldots,N'}$ be recursive tree description lists such that for all $n$,
\begin{itemize}
\item each second entry of an element of $\Dfrak_n$ or $\Dfrak'_n$ is represented by a bit string of length at most $m$ (a quantity that does not depend on $n$);
\item each first entry of each element of $\Dfrak_n$, respectively of $\Dfrak'_n$, is represented by a bit string of length exactly $l_{\bit}$, respectively $l'_{\bit}$, and
\item within a given description $\Dfrak_n$, respectively $\Dfrak'_n$, the second entries of elements of that description have a common bit length (so all pairs in $\Dfrak_n$, respectively in $\Dfrak'_n$, have the same bit length, which lies in $O(l_{\bit}+m)$, respectively in $O(l'_{\bit}+m)$).
\end{itemize}
It takes $O((NN'\min\{N,N'\}+(\max\{N,N'\})^2+(N')^2l'_{\bit})(l_{\bit}+l'_{\bit}+m))$ bit operations to compute a synchronization of $\vec{\Dfrak}$ and $\vec{\Dfrak}'$.
\end{lemmmma}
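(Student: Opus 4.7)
The plan is to build $(\vec{\Dfrak}^+,\ifrak)$ by induction on $n=0,1,\ldots,N'$, initializing $\vec{\Dfrak}^+:=\vec{\Dfrak}$ and incrementally defining $\ifrak(0),\ifrak(1),\ldots,\ifrak(N')$. The key structural input is that, since $\vec{\Dfrak}'$ is a recursive tree description list, every first entry of an element of $\Dfrak'_n$ lies in $\{0,1,\ldots,n-1\}$, so by the time we process index $n$ the values $\ifrak(m)$ that are needed to translate $\Dfrak'_n$ have already been defined. We form the translated description $\widetilde{\Dfrak}'_n:=\{(\ifrak(m),k_m):(m,k_m)\in\Dfrak'_n\}$, reorder it by increasing first entry to put it into canonical form, and then search in the current $\vec{\Dfrak}^+$ for a description that equals $\widetilde{\Dfrak}'_n$. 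If such a description is found at index $k$, set $\ifrak(n):=k$; otherwise, append $\widetilde{\Dfrak}'_n$ to $\vec{\Dfrak}^+$ and let $\ifrak(n)$ be its new index. Correctness follows from a straightforward induction: if $\Ifrak'_m\cong\Ifrak^+_{\ifrak(m)}$ for every $m<n$, then $\widetilde{\Dfrak}'_n$ describes the same rooted tree as $\Dfrak'_n$, namely $\Ifrak'_n$, which after the update equals $\Ifrak^+_{\ifrak(n)}$; injectivity of $\ifrak$ follows because the $\Ifrak'_n$ are pairwise distinct.

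For the complexity bound I rely on the observation that $|\Dfrak_n|\leq N$ and $|\Dfrak'_n|\leq N'$ for all $n$, since within a single description the first entries are distinct and bounded by $n-1$. Over all $n$, the translation phase requires $\sum_n|\Dfrak'_n|\in O((N')^2)$ look-ups in the array storing $\ifrak$, each costing $O(l'_{\bit})$ for processing the memory address plus $O(l_{\bit}+l'_{\bit}+m)$ for reading the stored target and writing out the new pair; this contributes $O((N')^2 l'_{\bit}(l_{\bit}+l'_{\bit}+m))$ bit operations and explains the third summand of the claimed bound. The raw storage cost of the resulting $\vec{\Dfrak}^+$, which contains at most $N+N'+2\in O(\max\{N,N'\})$ descriptions each of size at most $\max\{N,N'\}$, accounts for the $(\max\{N,N'\})^2(l_{\bit}+l'_{\bit}+m)$ term. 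Finally, comparing each $\widetilde{\Dfrak}'_n$ against the descriptions inherited from $\vec{\Dfrak}$ can be restricted to those whose size matches $|\widetilde{\Dfrak}'_n|$ (size mismatches are rejected in $O(\log(\max\{N,N'\}))$ bit operations apiece), and a size-matched comparison processes at most $\min\{N,N'\}$ pairs of bit length $O(l_{\bit}+l'_{\bit}+m)$; summing over $O(N')$ values of $n$ and $O(N)$ candidates yields the $NN'\min\{N,N'\}(l_{\bit}+l'_{\bit}+m)$ contribution.

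The main obstacle is controlling the cost of comparisons against descriptions that are \emph{themselves} added during the loop. A naive pairwise scan would cost $O((N')^3(l_{\bit}+l'_{\bit}+m))$, which exceeds the target bound when $N'>N$. The remedy is to maintain the descriptions added during the loop in a secondary data structure bucketed by size, with each bucket storing the descriptions as a balanced search tree keyed lexicographically by canonical content; each look-up or insertion then costs $O(|\widetilde{\Dfrak}'_n|\cdot\log(\max\{N,N'\})\cdot(l_{\bit}+l'_{\bit}+m))$, and after summation the cumulative cost telescopes into the $(\max\{N,N'\})^2(l_{\bit}+l'_{\bit}+m)$ term once the logarithmic factor is absorbed into one of the leading size factors (using that a size-$s$ bucket can contain at most $O(s)$ many descriptions without forcing an overall content total exceeding the storage bound for $\vec{\Dfrak}^+$). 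Carrying out this absorption precisely in the random-access bit-operation model used throughout the paper, and checking that the pointer management needed to support the balanced search trees stays within the $O(1)$-pointer discipline of the model, will be the most delicate bookkeeping step of the full proof.
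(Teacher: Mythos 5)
Your algorithm has the right overall shape — translate each $\Dfrak'_n$ through $\ifrak$ and match it against $\vec{\Dfrak}^+$ — but it misses the one observation that makes the complexity bound come out, and the workaround you propose for the resulting $(N')^3$ blowup is both unnecessary and, by your own admission, unfinished.

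The key point you overlook is that the translated description $\Dfrak$ (your $\widetilde{\Dfrak}'_n$) can never match a description $\Dfrak^+_k$ that was \emph{added} during the loop, i.e.\ with $k>N$. Indeed, any such $\Dfrak^+_k$ satisfies $\Ifrak^+_k\cong\Ifrak'_l$ for some $l<n$ by construction, and the isomorphism types $\Ifrak'_0,\ldots,\Ifrak'_{N'}$ associated with a recursive tree description list are pairwise distinct, so $\Ifrak'_n\not\cong\Ifrak^+_k$. Hence the search only ever needs to scan the original $N+1$ descriptions $\Dfrak_0,\ldots,\Dfrak_N$, which gives the $O(N\min\{N,N'\}(l_{\bit}+l'_{\bit}+m))$ cost per $n$ directly with a plain linear scan of sorted lists, without any auxiliary data structure. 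This is what the paper does, and it is why the first summand $NN'\min\{N,N'\}$ has an $N$ rather than a $\max\{N,N'\}$ in it.

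Because you did not notice this, you propose a secondary data structure of balanced search trees bucketed by description size to tame the cost of comparing against newly added descriptions. That route has two genuine problems. First, the final absorption of the logarithmic factor into the size factors is not actually carried out; you only assert it ``telescopes'' and flag the bookkeeping as ``the most delicate step,'' which leaves the bound unestablished. Second, balanced search trees require a number of stored pointers that grows with the number of nodes, which is in tension with the paper's computational model, where memory access is by explicit addresses (at $O(\log(\text{address}))$ bits each) and only a \emph{constant} number of genuine pointers are available; you flag this concern yourself but do not resolve it. With the observation above, neither issue arises, and the straightforward linear-scan algorithm already achieves the claimed bound.
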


\begin{proof}
Let $\Ifrak_n$, respectively $\Ifrak'_n$, be the rooted tree isomorphism type described by $\Dfrak_n$, respectively by $\Dfrak'_n$. In order to compute the synchronization, we proceed in a loop over $n=0,1,\ldots,N'$. At each given point in the process, the description $\Dfrak^+_k$ (of the rooted tree isomorphism type $\Ifrak^+_k$) is defined for all $k\in\Ncal$, an initial segment of $\IN_0$ that starts out as $\{0,1,\ldots,N\}$, with $\Dfrak^+_k:=\Dfrak_k$ for each $k\in\{0,1,\ldots,N\}$, and will be $\{0,1,\ldots,N^+\}$ in the end. We note that it takes
\[
O(N^2(l_{\bit}+\log{m}))\subseteq O((\max\{N,N'\})^2(l_{\bit}+l'_{\bit}+m))
\]
bit operations (spent copying) to set $\Ncal$ and the $\Dfrak^+_k$ for $k\in\{0,1,\ldots,N\}$ up. At any given point in the algorithm, the descriptions $\Dfrak^+_k$ form a recursive tree description list denoted by $\vec{\Dfrak}^+$ (which will have the desired value in the end). We also keep updating the value of $\ifrak:\{0,1,\ldots,n\}\rightarrow\Ncal$, which starts out as the empty function $\emptyset$.

For $n=0$, where $\Dfrak'_n=\emptyset$ and its associated rooted tree is trivial, we simply set $\ifrak(0):=0$ without updating $\Ncal$. Now let us assume that $n\geq1$. Based on $\Dfrak'_n$, we compute a new rooted tree description $\Dfrak$ through replacing the first entry $k<n\leq N'$ of each given pair in $\Dfrak'_n$ by $\ifrak(k)$. Because $\Dfrak'_n$ contains at most $n+1\in O(n)$ distinct pairs and we are handling non-negative integers of bit length in $O(l_{\bit}+l'_{\bit})$ here, computing $\Dfrak$ as an unsorted list takes $O(n\cdot(l_{\bit}+l'_{\bit}))\subseteq O(N'(l_{\bit}+l'_{\bit}))$ bit operations overall (for a given $n$), and another $O(N'l'_{\bit}(l_{\bit}+l'_{\bit}+m))$ bit operations for sorting $\Dfrak$.

Once $\Dfrak$ has been computed in sorted form, we need to check whether the rooted tree $\Ifrak'_n$ described by it with respect to $\vec{\Dfrak}^+$ already occurs among the $\Ifrak^+_k$. If $k>N$, then $\Ifrak^+_k\cong\Ifrak'_l$ for some $l\in\{0,1,\ldots,n-1\}$, and thus $\Ifrak^+_k\not\cong\Ifrak'_n$, as the isomorphism types $\Ifrak'_t$ are pairwise distinct by assumption. Therefore, we only need to check the isomorphism $\Ifrak^+_k\cong\Ifrak'_n$ for $k\leq N$, where it is equivalent to $\Ifrak_k\cong\Ifrak'_n$ and further to $\Dfrak_k=\Dfrak$. For a given $k$, it takes $O(\min\{N,N'\}(l_{\bit}+l'_{\bit}+m))$ bit operations to check with a linear scan whether $\Dfrak_k=\Dfrak$ (using that both $\Dfrak_k$ and $\Dfrak$ are sorted), and so it can be checked with $O(N\min\{N,N'\}(l_{\bit}+l'_{\bit}+m))$ bit operations whether $\Ifrak'_n\cong\Ifrak^+_k$ for a (unique) $k\in\{0,1,\ldots,N\}$. If so, we set $\ifrak(n):=k$ without updating $\Ncal$; otherwise, we extend $\Ncal$ by the new element $n':=\max{\Ncal}+1$ and set $\Dfrak^+_{n'}:=\Dfrak$ and $\ifrak(n):=n'$.

The overall bit operation cost of the described loop is in $O((NN'\min\{N,N'\}+(N')^2l'_{\bit})(l_{\bit}+l'_{\bit}+m))$. We conclude the algorithm by outputting $(\vec{\Dfrak}^+,\ifrak)$, where $\vec{\Dfrak}^+:=(\Dfrak^+_k)_{k\in\Ncal}$. This takes
\begin{align*}
&O((N+N')\max\{N,N'\}(l_{\bit}+l'_{\bit}+m)) \\
=&O((\max\{N,N'\})^2(l_{\bit}+l'_{\bit}+m))
\end{align*}
bit operations for copying.
\end{proof}

\begin{corrrollary}\label{necklaceListCor}
Let $f_1$ and $f_2$ be generalized cyclotomic mappings of a common finite field $\IF_q$, say of index $d_1$ and $d_2$, respectively, and set $d:=\max\{d_1,d_2\}$.
\begin{enumerate}
\item If each $f_j$ is of special type I or II (not necessarily both of the same type), then it takes $q$-bounded query complexity
\begin{align*}
(&(d^6\mpe(q-1)^3+d^3\tau(q-1)^2)\log{q}+d^4\mpe(q-1)^2\log^{1+o(1)}{q} \\
&+d\tau(q-1)\log^{2+o(1)}{q},d,d\log{q},d,0),
\end{align*}
or $q$-bounded Las Vegas dual complexity
\begin{align*}
(&(d^6\mpe(q-1)^3+d^3\tau(q-1)^2)\log{q}+d^4\mpe(q-1)^2\log^{1+o(1)}{q} \\
&+d\tau(q-1)\log^{2+o(1)}{q}+d\log^{8+o(1)}{q},d\log^{4+o(1)}{q},d\log^2{q}),
\end{align*}
to decide whether $\Gamma_{f_1}\cong\Gamma_{f_2}$.
\item If both $f_j$ are of special type I, then it takes $q$-bounded query complexity
\begin{align*}
(&d^3\log^2(d)+d^3\tau(q-1)^2\log{q}+d^2\log^{1+o(1)}{q} \\
&+d\tau(q-1)\log^{2+o(1)}{q},d,d\log{q},d,0),
\end{align*}
or $q$-bounded Las Vegas dual complexity
\begin{align*}
(&d^3\log^2(d)+d^3\tau(q-1)^2\log{q}+d^2\log^{1+o(1)}{q} \\
&+d\tau(q-1)\log^{2+o(1)}{q}+d\log^{8+o(1)}{q},d\log^{4+o(1)}{q},d\log^2{q}),
\end{align*}
to decide whether $\Gamma_{f_1}\cong\Gamma_{f_2}$.
\end{enumerate}
\end{corrrollary}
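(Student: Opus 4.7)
The plan is to reduce the isomorphism test to a comparison of two suitably normalized tree necklace lists. First, I would invoke Theorem \ref{necklaceListTheo} separately on $f_1$ and $f_2$ to produce, for each $j\in\{1,2\}$, a tree register $\Rfrak_j=((\Dfrak^{(j)}_n,S^{(j)}_n))_{n=0,1,\ldots,N_j}$ together with the tree necklace list $\Nfrak_j$ for $f_j$ relative to $\Rfrak_j$. In case (1), at least one $\Rfrak_j$ may be of type II, so $N_j\in O(d^2\mpe(q-1))$ and the underlying index bit length $l^{(j)}_{\bit}$ lies in $O(\log{d}+\log\log{q})$, with second-entry bit lengths in $O(\log{q})$; in case (2) both registers have type I, so $N_j\in O(d)$ and $l^{(j)}_{\bit}\in O(\log{d})$. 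In either case, the complexity of this step is majorized by the bounds asserted in Theorem \ref{necklaceListTheo}, and is contained in the complexities claimed in the corollary.

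Next, the two necklace lists $\Nfrak_1$ and $\Nfrak_2$ index rooted tree isomorphism types by integers that are \emph{internal} to the respective registers; before I can compare them, I must translate them into a common indexing. To do so, I would apply Lemma \ref{synchronizeLem} to the two recursive tree description lists $\vec{\Dfrak}^{(1)}=(\Dfrak^{(1)}_n)_{n=0,1,\ldots,N_1}$ and $\vec{\Dfrak}^{(2)}=(\Dfrak^{(2)}_n)_{n=0,1,\ldots,N_2}$, obtaining an extended recursive tree description list $\vec{\Dfrak}^+$ together with a function $\ifrak:\{0,1,\ldots,N_2\}\to\{0,1,\ldots,N^+\}$ such that $\Ifrak^{(2)}_n\cong\Ifrak^+_{\ifrak(n)}$ for every $n$; the identity on $\{0,1,\ldots,N_1\}$ plays the analogous role for $\vec{\Dfrak}^{(1)}$. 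With the parameters above (taking $m\in O(\log{q})$), Lemma \ref{synchronizeLem} bounds the bit-operation cost of this step by $O(d^6\mpe(q-1)^3\log{q})$ in case (1) and by $O(d^3\log{q})$ in case (2), which matches the relevant summand in the stated complexity.

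Third, I would rewrite each element $([\vec{\nfrak}_k],l_k,\mfrak_k)\in\Nfrak_2$ by replacing each entry of $\vec{\nfrak}_k$ with its image under $\ifrak$, producing a necklace list $\widetilde{\Nfrak}_2$ that is the tree necklace list for $f_2$ relative to the rooted tree sequence $(\Ifrak^+_n)_{n=0,1,\ldots,N^+}$ associated with $\vec{\Dfrak}^+$. Using the uniform bit-string encoding from Remark \ref{necklaceListRem}(3) with the common per-entry width $\max\{l^{(1)}_{\bit},l^{(2)}_{\bit}\}+1$, each element of $\Nfrak_1$ and of $\widetilde{\Nfrak}_2$ becomes a bit string of a fixed common length. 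I would then sort both lists lexicographically via Lemma \ref{complexitiesLem}(10) and compare them entry by entry. By Remark \ref{necklaceListRem}(2), $\Gamma_{f_1}\cong\Gamma_{f_2}$ if and only if $\Nfrak_1=\widetilde{\Nfrak}_2$ as sets, which is equivalent to their sorted bit-string versions being identical. Each of these lists has $O(d\tau(q-1))$ elements of bit length in $O(dl_{\bit}+\log{q})$, so the sorting and comparison cost is majorized by the $d\tau(q-1)\log^{2+o(1)}{q}$ summand already present.

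The main subtlety, and the step where I expect most of the bookkeeping to live, is the synchronization in the second paragraph: one has to ensure that when $f_j$ has type II the potentially large multiplicities recorded in the descriptions do not inflate the cost, and that the re-indexing under $\ifrak$ preserves the lexicographic-minimal-representative convention of cyclic sequences used in Remark \ref{necklaceListRem}(3). This last point requires re-running the cyclic minimization from the proof of Theorem \ref{necklaceListTheo} on each rewritten $\vec{\nfrak}_k$, at a uniform cost in $O(d^2\log{d}(l^{(1)}_{\bit}+l^{(2)}_{\bit}))$ per element, which is absorbed by the bounds above. The $q$-bounded Las Vegas dual complexities follow from the query complexities by Lemma \ref{LVComplexityLem}.
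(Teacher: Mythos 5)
Your proposal matches the paper's own proof essentially step for step: compute a tree register and tree necklace list for each $f_j$ via Theorem \ref{necklaceListTheo}, synchronize the two recursive tree description lists via Lemma \ref{synchronizeLem}, re-index $\Nfrak_2$ under $\ifrak$ with re-minimization of the cyclic representatives, sort, and compare via Remark \ref{necklaceListRem}(2). The only cosmetic differences are that the paper re-sorts only $\Nfrak'_2$ (using the merging routine of Lemma \ref{complexitiesLem}(11) rather than a full re-sort of both, since $\Nfrak_1$ is already output in sorted form) and that the paper's synchronization bound in case (1) carries an extra $d^4\mpe(q-1)^2\log\log{q}\log{q}$ summand, which your estimate elides; that term is precisely where the $d^4\mpe(q-1)^2\log^{1+o(1)}{q}$ summand in the stated complexity comes from, so when checking the arithmetic you should account for it explicitly rather than fold it in silently.
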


\begin{proof}
We denote the situation in statement (1) by \enquote{case (1)}, and the one in statement (2) by \enquote{case (2)}; these must \emph{not} be confused with cases I and II from the proof of Theorem \ref{necklaceListTheo}. First, for $j=1,2$, we compute a suitable tree register $\Rfrak_j$ of $f_j$ with $N_j+1$ entries, together with an associated tree necklace list $\Nfrak_j$ for $f_j$. By Theorem \ref{necklaceListTheo}, this takes $q$-bounded query complexity
\begin{align*}
(&d^5\log^2{d}\mpe(q-1)+d^3\log{d}\tau(q-1)+d^3\tau(q-1)\log\log{q} \\
&+(d^5\mpe(q-1)^3+d^3\tau(q-1)^2)\log{q}+d^2\mpe(q-1)\log^{1+o(1)}{q} \\
&+d\tau(q-1)\log^{2+o(1)}{q},d,d\log{q},d,0),
\end{align*}
in case (1), or
\begin{align*}
(&d^3\log^2{d}+d^3\log{d}\tau(q-1)+d^3\tau(q-1)^2\log{q}+d^2\log^{1+o(1)}{q} \\
&+d\tau(q-1)\log^{2+o(1)}{q},d,d\log{q},d,0),
\end{align*}
in case (2).

Following that, we synchronize the underlying recursive tree description lists of the $\Rfrak_j$. By Lemma \ref{synchronizeLem}, applied with $m:=\log{q}$, and using the facts that $l_{\bit}\in O(\log{q})$ in either case and that
\[
\min\{N_1,N_2\}\leq\max\{N_1,N_2\}\in
\begin{cases}
O(d^2\mpe(q-1)), & \text{in case (1)}, \\
O(d), & \text{in case (2)},
\end{cases}
\]
we see that this can be done using
\begin{align*}
&O((d^6\mpe(q-1)^3+d^4\mpe(q-1)^2\log\log{q})(\log{d}+\log{\mpe(q-1)}+\log{q})) \\
=&O((d^6\mpe(q-1)^3+d^4\mpe(q-1)^2\log\log{q})\log{q})
\end{align*}
bit operations in case (1), or $O(d^3\log{q})$ bit operations in case (2). Following our convention on the bit lengths of indices $n$ of descriptions $\Dfrak^+_n$ in synchronizations, the computed synchronization uses a common bit length $l^+_{\bit}$, which lies in $O(\log{d}+\log\log{q})$ in case (1), and in $O(\log{d})$ in case (2).

Next, based on $\Nfrak_2$, we compute a modified tree necklace list $\Nfrak'_2$ for $f_2$ with respect to the computed synchronization $(\vec{\Dfrak}^+,\ifrak)$ through replacing each number $\nfrak$ occurring as an entry in one of the cyclic sequences in $\Nfrak_2$ by $\ifrak(\nfrak)$, then replacing the underlying ordered sequence with the lexicographically minimal representative in the same cyclic equivalence class. Computing $\Nfrak'_2$ in unsorted form takes
\[
O(|\Nfrak_2|\cdot (dl^+_{\bit}+\log{q})+d\cdot d\log{d}l^+_{\bit})\subseteq O(d^2\tau(q-1)\log{q}+d^2\log{d}\log{q})
\]
bit operations, and following that, we sort $\Nfrak'_2$ lexicographically, which takes $O(d^3\tau(q-1)\log{q})$ bit operations through successively merging the $O(d)$ segments corresponding to a common, rewritten first entry (see Lemma \ref{complexitiesLem}(11)).

Finally, we need to check whether $\Nfrak'_2=\Nfrak_1$, which only takes a linear scan thanks to $\Nfrak'_2$ and $\Nfrak_1$ both being lexicographically sorted (we note that the bit lengths of indices $n$ in $\Nfrak_1$ may not be the same as those in $\Nfrak'_2$, but that is of course not a problem). The bit operation cost of this is in
\[
O(\min\{|\Nfrak_1|,|\Nfrak'_2|\}\cdot(dl^+_{\bit}+\log{q}))\subseteq O(d\tau(q-1)\cdot d\log{q})=O(d^2\tau(q-1)\log{q}).
\]
\end{proof}

As we did throughout Subsection \ref{subsec5P2} and in Subsubsection \ref{subsubsec5P3P1}, we conclude this subsubsection with pseudocode for all relevant algorithms introduced in it, specifying the query complexity ($q$-bounded or $m$-bounded, depending on the context) of each step. We start with the algorithm from Lemma \ref{treeRegLem}(1), which decides whether a given index $d$ generalized cyclotomic mapping $f$ of $\IF_q$ is of special type I.

\begin{denumerate}[label=\arabic*]
\item Compute the affine maps $A_i:x\mapsto\alpha_ix+\beta_i$ of $\IZ/s\IZ$ associated with $f$.

QC: $(d\log^{1+o(1)}{q},d,0,0,0)$.
\item For each $i\in\{0,1,\ldots,d-1\}$, do the following.

QC: $(d\log^{1+o(1)}{q},0,0,0,0)$.
\begin{denumerate}[label=2.\arabic*]
\item If $a_i=0_{\IF_q}$, then skip to the next $i$.

QC: $(\log{d},0,0,0,0)$.
\item If $\gcd(\alpha_i,s)>1$, then output \enquote{false} and halt.

QC: $(\log^{1+o(1)}{q},0,0,0,0)$.
\end{denumerate}
\item Output \enquote{true} and halt.
\end{denumerate}

Next, we list the steps of the algorithm from Lemma \ref{treeRegLem}(2), which computes, for a given index $d$ generalized cyclotomic mapping $f$ of $\IF_q$ that is of special type I, a type-I tree register $\Rfrak=((\Dfrak_n,S_n))_{n=0,1,\ldots,N}$ for $f$ with $N\in O(d)$.

\begin{denumerate}[label=\arabic*]
\item Compute $s=(q-1)/d$, the induced function $\overline{f}$ on $\{0,1,\ldots,d\}$ and the layers $\Layer_h$, for $h\in\{0,1,\ldots,\overline{H}-1,\infty\}$, of $\overline{f}$ with respect to iteration.

QC: $(d^2\log^2{d}+d\log^{1+o(1)}{q},d,0,0,0)$.
\item Set $\Ncal:=\emptyset$.

QC: $(1,0,0,0,0)$.
\item For each $h=0,1,\ldots,\overline{H}-1,\infty$, do the following.

QC: $(d^3\log^2{d}+d\log^{1+o(1)}{q},0,0,0,0)$.
\begin{denumerate}[label=3.\arabic*]
\item If $h=0$ or $\overline{H}=0$, then do the following.
\begin{denumerate}[label=3.1.\arabic*]
\item Add $0$ to $\Ncal$ as a new (the first) element, set $\Dfrak_0:=\emptyset$ and $S_0:=\Layer_h$.

QC: $(d\log{d},0,0,0,0)$.
\item If $\overline{H}=0$, then output $\Rfrak:=((\Dfrak_0,S_0))$ and halt.

QC: $(d\log{d},0,0,0,0)$.
\end{denumerate}
\item Else do the following.
\begin{denumerate}[label=3.2.\arabic*]
\item For each $i\in\Layer_h$, do the following.

QC: $(|\Layer_h|d^2\log^2{d}+\delta_{d\in\Layer_h}d\log^{1+o(1)}{q},0,0,0,0)$.
\begin{denumerate}[label=3.2.1.\arabic*]
\item Determine the full pre-image set $\overline{f}^{-1}(\{i\})$ and the $\overline{f}$-transient $\overline{f}$-pre-images $j_1,j_2,\ldots,j_K$ of $i$.

QC: $(d\log{d},0,0,0,0)$ if $h<\infty$; $(d^2\log^2{d},0,0,0,0)$ if $h=\infty$.
\item For each $t=1,2,\ldots,K$, do the following.

QC: $(d^2\log^2{d},0,0,0,0)$.
\begin{denumerate}[label=3.2.1.2.\arabic*]
\item Determine the unique number $\overline{n}_{j_t}\in\Ncal$ such that $j_t\in S_{\overline{n}_{j_t}}$.

QC: $(d\log^2{d},0,0,0,0)$.
\end{denumerate}
\item If $i<d$, then do the following.
\begin{denumerate}[label=3.2.1.3.\arabic*]
\item Set
\begin{align*}
\Dfrak:=\{&(n,m): n\in\{\overline{n}_{j_t}: 1\leq t\leq K\}, \\
&m=|\{t\in\{1,\ldots,K\}: \overline{n}_{j_t}=n\}|>0\}.
\end{align*}
QC: $(d\log^2{d},0,0,0,0)$.
\end{denumerate}
\item Else do the following.
\begin{denumerate}[label=3.2.1.4.\arabic*]
\item Set
\begin{align*}
\Dfrak:=\{&(n,sm): n\in\{\overline{n}_{j_t}: 1\leq t\leq K\}, \\
&m=|\{t\in\{1,\ldots,K\}: \overline{n}_{j_t}=n\}|>0\}
\end{align*}
QC: $(d\log^2{d}+d\log^{1+o(1)}{q},0,0,0,0)$.
\end{denumerate}
\item Check whether there is a (unique) $n\in\Ncal$ such that $\Dfrak=\Dfrak_n$, and store this information (the truth value and $n$).

QC: $(d^2\log{d},0,0,0,0)$.
\item If $\Dfrak=\Dfrak_n$ for some $n\in\Ncal$, then do the following.
\begin{denumerate}[label=3.2.1.6.\arabic*]
\item Merge the sorted lists $\{i\}$ and $S_n$.

QC: $(d\log{d},0,0,0,0)$.
\end{denumerate}
\item Else do the following.
\begin{denumerate}
\item Set $n':=\max{\Ncal}+1$, and add $n'$ to $\Ncal$ as a new element.

QC: $(\log{d},0,0,0,0)$.
\item Set $\Dfrak_{n'}:=\Dfrak$, and initialize $S_{n'}:=\{i\}$.

QC: $(d\log{d},0,0,0,0)$ if $i<d$; $(d\log{q},0,0,0,0)$ if $i=d$.
\end{denumerate}
\end{denumerate}
\end{denumerate}
\end{denumerate}
\item Output $\Rfrak:=((\Dfrak_n,S_n))_{n=0,1,\ldots,\max{\Ncal}}$ and halt.

QC: $(d^2\log{d}+d\log{q},0,0,0,0)$.
\end{denumerate}

Next, we give the (simple) pseudocode for the algorithm from Lemma \ref{treeRegLem}(3), which checks whether a given index $d$ generalized cyclotomic mapping $f$ of $\IF_q$ is of special type II.

\begin{denumerate}[label=\arabic*]
\item Compute the induced function $\overline{f}:\{0,1,\ldots,d\}\rightarrow\{0,1,\ldots,d\}$.

QC: $(d\log^{1+o(1)}{q},d,0,0,0)$.
\item Sort $\im(\overline{f})$, and check whether it is equal to $\{0,1,\ldots,d\}$.

QC: $(d\log^2{d},0,0,0,0)$.
\end{denumerate}

The algorithm from Lemma \ref{treeRegLem}(4), for computing a type-II tree register for a given index $d$ generalized cyclotomic mapping $f$ of $\IF_q$ that is of special type II, has the following pseudocode.

\begin{denumerate}[label=\arabic*]
\item Compute the induced function $\overline{f}$ and the affine maps $A_i:x\mapsto\alpha_ix+\beta_i$.

QC: $(d\log^{1+o(1)}{q},d,0,0,0)$.
\item Compute a CRL-list $\overline{\Lcal}$ of $\overline{f}$ and, in the process, store the cycles of $\overline{f}$.

QC: $(d^2\log^2{d},0,0,0,0)$.
\item Set $\Ncal:=\emptyset$.

QC: $(1,0,0,0,0)$.
\item For each $(i,\ell)\in\overline{\Lcal}$, do the following.

QC: $(d^5\log^2{d}\mpe(q-1)^3+d^5\mpe(q-1)^3\log{q}+d^2\mpe(q-1)\log^{1+o(1)}{q},0,0,0,0)$.
\begin{denumerate}[label=4.\arabic*]
\item For each $t=0,1,\ldots,\ell-1$, do the following.

QC: $(\ell^2\mpe(q-1)\log^{1+o(1)}{q},0,0,0,0)$.
\begin{denumerate}[label=4.1.\arabic*]
\item Set $\product_{\up}:=\alpha_{i_{t-1}}$.

QC: $(\log{q},0,0,0,0)$.
\item Set $\product_{\low}:=1$.

QC: $(1,0,0,0,0)$.
\item For each $k=1,2,\ldots$ (no upper bound a priori), do the following.

QC: $(\ell\mpe(q-1)\log^{1+o(1)}{q},0,0,0,0)$.
\begin{denumerate}[label=4.1.3.\arabic*]
\item Compute
\[
\proc_{i_t,k}:=\frac{\gcd(\product_{\up},s)}{\gcd(\product_{\low},s)}.
\]
QC: $(\log^{1+o(1)}{q},0,0,0,0)$.
\item If $\proc_{i_t,k}=1$, then do the following.
\begin{denumerate}[label=4.1.3.2.\arabic*]
\item Set $\Hcal_{i_t}:=k-1$.

QC: $(\log{d}+\log\log{q},0,0,0,0)$.
\item Exit the loop for $k$, and skip to the next $t$.

QC: $(1,0,0,0,0)$.
\end{denumerate}
\item Else do the following.
\begin{denumerate}
\item Set $\product_{\low}:=\product_{\up}$.

QC: $(\log{q},0,0,0,0)$.
\item Set $\product_{\up}:=\product_{\up}\cdot\alpha_{i_{t-k-1}}$.

QC: $(\log^{1+o(1)}{q},0,0,0,0)$.
\end{denumerate}
\end{denumerate}
\end{denumerate}
\item Compute $H_i:=\max\{\Hcal_{i_t}: t=0,1,\ldots,\ell-1\}$.

QC: $(\ell(\log{d}+\log\log{q}),0,0,0,0)$.
\item For $h=0,1,\ldots,H_i$, do the following.

QC: $(\ell^2d^3\log^2{d}\mpe(q-1)^3+\ell d^4\mpe(q-1)^3\log{q},0,0,0,0)$.
\begin{denumerate}[label=4.3.\arabic*]
\item If $h=0$, then do the following.
\begin{denumerate}[label=4.3.1.\arabic*]
\item If $\Ncal=\emptyset$, then do the following.
\begin{denumerate}[label=4.3.1.1.\arabic*]
\item Set $\Dfrak_0:=\emptyset$ and $\height_0:=0$.

QC: $(1,0,0,0,0)$.
\item If $H_i>0$, then set $S_{0,\trans}:=\{i_0,i_1,\ldots,i_{\ell-1}\}$, sorted. Otherwise, set $S_{0,\trans}:=\emptyset$.

QC: $(\ell\log^2{d},0,0,0,0)$.
\item Set $S_{0,\per}:=\{i_t: \Hcal_{i_t}=0\}$, sorted.

QC: $(\ell\log^2{d},0,0,0,0)$.
\end{denumerate}
\item Else do the following.
\begin{denumerate}[label=4.3.1.2.\arabic*]
\item If $H_i>0$, then sort $\{i_0,i_1,\ldots,i_{\ell-1}\}$ and merge it with $S_{0,\trans}$.

QC: $(\ell\log^2{d}+d\log{d},0,0,0,0)$.
\item Create a list of all indices $i_t$, for $t=0,1,\ldots,\ell-1$, such that $\Hcal_{i_t}=0$, then sort it and merge it with $S_{0,\per}$.

QC: $(\ell\log^2{d}+d\log{d},0,0,0,0)$.
\end{denumerate}
\end{denumerate}
\item Else do the following.
\begin{denumerate}[label=4.3.2.\arabic*]
\item For $t=0,1,\ldots,\ell-1$, do the following.

QC: $(\ell^2d^2\log^2{d}\mpe(q-1)^2+\ell d^3\mpe(q-1)^2\log{q},0,0,0,0)$.
\begin{denumerate}[label=4.3.2.1.\arabic*]
\item If $h<H_i$, then set $h':=h$; otherwise, set $h':=\Hcal_{i_t}$.

QC: $(\log{d}+\log\log{q},0,0,0,0)$.
\item For $k=0,1,\ldots,h'-1$, set
\[
\wfrak_k:=
\begin{cases}
\proc_{i_t,k+1}-\proc_{i_t,k+2}, & \text{if }h=H_i,\text{ or }h<H_i\text{ and }k<h'-1, \\
\proc_{i_t,h}, & \text{if }h<H_i\text{ and }k=h'-1.
\end{cases}
\]
QC: $(\ell\mpe(q-1)\log{q},0,0,0,0)$.
\item For $k=0,1,\ldots,h'-1$, find $\overline{n}_{i_t,k}$, the unique $n\in\Ncal$ such that $i_t\in S_{n,\trans}$ and $\height_n=k$.

QC: $(\ell d^2\log^2{d}\mpe(q-1)^2+\ell\mpe(q-1)\log\log{q},0,0,0,0)$.
\item Set $\Dfrak:=\{(\overline{n}_{i_t,k},\wfrak_k): k=0,1,\ldots,h'-1\}$, sorted lexicographically.

QC: $(\ell\mpe(q-1)(\log{\ell}+\log{\mpe(q-1)})\log{q},0,0,0,0)$.
\item Check whether $\Dfrak=\Dfrak_n$ some (unique) $n\in\Ncal$, and if so, store this information (the truth value and $n$).

QC: $(\ell d^2\mpe(q-1)^2\log{q},0,0,0,0)$.
\item If $\Dfrak=\Dfrak_n$ for some $n\in\Ncal$, then do the following.
\begin{denumerate}[label=4.3.2.2.6.\arabic*]
\item If $h<H_i$, then merge the sorted lists $\{i_t\}$ and $S_{n,\trans}$. Otherwise, merge the sorted lists $\{i_t\}$ and $S_{n,\per}$.

QC: $(d\log{d},0,0,0,0)$.
\end{denumerate}
\item Else do the following.
\begin{denumerate}
\item Set $n':=\max{\Ncal}+1$, $\Dfrak_{n'}:=\Dfrak$, $\height_{n'}:=h'$, and add $n'$ to $\Ncal$ as a new element.

QC: $(d\mpe(q-1)\log{q},0,0,0,0)$.
\item Initialize $S_{n',\trans}:=\emptyset$ and $S_{n',\per}:=\emptyset$.

QC: $(\log{d}+\log\log{q},0,0,0,0)$
\item If $h<H_i$, then add $i_t$ to $S_{n',\trans}$ as a new element. Otherwise, add $i_t$ to $S_{n',\per}$ as a new element.

QC: $(\log{d}+\log\log{q},0,0,0,0)$.
\end{denumerate}
\end{denumerate}
\end{denumerate}
\end{denumerate}
\end{denumerate}
\item Compute $\Hfrak:=\max\{H_i: (i,\ell)\in\overline{\Lcal}\}$.

QC: $(d(\log{d}+\log\log{q}),0,0,0,0)$.
\item For $n\in\Ncal$, do the following.

QC: $(d^3\log{d}\mpe(q-1)+d^2\mpe(q-1)\log\log{q},0,0,0,0)$.
\begin{denumerate}[label=6.\arabic*]
\item Set $S_n:=(\height_n,S_{n,\trans},S_{n,\per})$.

QC: $(d\log{d}+\log\log{q},0,0,0,0)$.
\end{denumerate}
\item Output $\Rfrak:=((\Dfrak_n,S_n))_{n\in\Ncal}$ and halt.

QC: $(d^4\mpe(q-1)^2\log{q},0,0,0,0)$.
\end{denumerate}

Next, we give pseudocode for the algorithm from Proposition \ref{ctTauProp}, which computes the cycle type $\CT(A_{\mid\per(A)})$ for a given affine map $A:x\mapsto ax+b$ of $\IZ/m\IZ$.

\begin{denumerate}[label=\arabic*]
\item Factor $m$, and compute $\ord_{p^{\nu_p(m)}}(a)$ for all primes $p\mid m$ with $p\nmid a$.

QC: $(\log{m},0,1,0,0)$.
\item For each prime $p\mid m$ such that $p\nmid a$, do the following.

QC: $(\log^{2+o(1)}{m},0,\log{m},0,0)$.
\begin{denumerate}[label=2.\arabic*]
\item Compute $A_{(p)}:=A\bmod{p^{\nu_p(m)}}$.

QC: $(\log^{1+o(1)}{m},0,0,0,0)$.
\item Using \cite[Tables 3 and 4]{BW22b}, compute
\[
\CT(A_{(p)})=x_{\overline{l}_{p,1}}^{e_{p,1}}x_{\overline{l}_{p,2}}^{e_{p,2}}\cdots x_{\overline{l}_{p,K_p}}^{e_{p,K_p}}\text{ (all }e_{p,j}>0\text{)}
\]
with all cycle lengths $\overline{l}_{p,j}$ fully factored. This involves factoring $\ord_{p^{\nu_p(m)}}(a)$, a single power computation, and $O(\nu_p(m))$ instances of simpler arithmetic.

QC: $(\log^{2+o(1)}{p^{\nu_p(m)}}+\nu_p(m)\log^{1+o(1)}{p^{\nu_p(m)}},0,1,0,0)$.
\end{denumerate}
\item For each $\vec{\jmath}=(j_p)_{p\mid m,p\nmid a}\in\prod_{p\mid m,p\nmid a}{\{1,2,\ldots,K_p\}}$, do the following.

QC: $(\tau(m)\log^{2+o(1)}{m},0,0,0,0)$.
\begin{denumerate}[label=3.\arabic*]
\item Compute the Wei-Xu product of variable powers
\[
\WX(\vec{\jmath}):=\divideontimes_{p\mid m,p\nmid a}{x_{\overline{l}_{p,j_p}}^{e_{p,j_p}}}=x_{\lcm(\overline{l}_{p,j_p}: p\mid m, p\nmid a)}^{\prod_{p\mid m,p\nmid a}{(e_{p,j_p}\overline{l}_{p,j_p})}/\lcm(\overline{l}_{p,j_p}: p\mid m, p\nmid a)}.
\]
QC: $(\log^{2+o(1)}{m},0,0,0,0)$.
\end{denumerate}
\item Compute and output
\[
\CT(A)=\divideontimes_{p\mid m,p\nmid a}{\CT(A_{(p)})}=\prod_{\vec{\jmath}}{\WX(\vec{\jmath})},
\]
then halt.

QC: $(\tau(m)^2\log{m},0,0,0,0)$.
\end{denumerate}

The following is pseudocode for the algorithm from Lemma \ref{periodLem}, serving to compute $\minperl(\vec{\xfrak})$ for given $\vec{\xfrak}\in\{0,1,\ldots,N-1\}^n$, where each $n\in\{0,1,\ldots,N-1\}$ is given with bit length $l_{\bit}$.

\begin{denumerate}[label=\arabic*]
\item Compute the binary representation of $n$

QC: $(n\log{n},0,0,0,0)$.
\item Factor $n=p_1^{v_1}\cdots p_K^{v_K}$ deterministically.

QC: $(n^{1/5+o(1)},0,0,0,0)$.
\item For each $j=1,2,\ldots,K$, do the following.

QC: $(n\log{n}\log\log{n}(\log{n}+l_{\bit}))$.
\begin{denumerate}[label=3.\arabic*]
\item Using binary search, find $v'_j=\nu_{p_j}(\minperl(\vec{\xfrak}))$ as the smallest $v\in\{0,1,\ldots,v_j\}$ such that $p_j^v\prod_{k\not=j}{p_k^{v_k}}$ is a period length of $\vec{\xfrak}$.

QC: $(n\log\log{n}(\log{n}+l_{\bit}))$.
\end{denumerate}
\item Compute and output
\[
\minperl(\vec{\xfrak})=\prod_{j=1}^K{p_j^{v'_j}},
\]
then halt.

QC: $(\log^{3+o(1)}{n},0,0,0,0)$.
\end{denumerate}

Next, we give pseudocode for Theorem \ref{necklaceListTheo}, which is concerned with computing not only a tree register, but also an associated tree necklace list for a given index $d$ generalized cyclotomic mapping $f$ of $\IF_q$ that is of special type I or II. Because the procedures for the two cases are analogous, we just give one algorithm that deals with both simultaneously.

\begin{denumerate}[label=\arabic*]
\item Check whether $f$ is of special type I and store this information. In the process, also compute and store $\overline{f}$ and the affine maps $A_i$ for later use.

QC: $(d\log^{1+o(1)}{q},d,0,0,0)$.
\item If $f$ is \emph{not} of special type I, then check whether $f$ is of special type II and store this information.

QC: $(d\log^2{d},0,0,0,0)$ because $\overline{f}$ has already been computed.
\item If $f$ is neither of special type I nor II, then output \enquote{fail} and halt.

QC: $(1,0,0,0,0)$.
\item If $f$ is of special type I, then do the following.
\begin{denumerate}[label=4.\arabic*]
\item Compute a type-I tree register $\Rfrak=((\Dfrak_n,S_n))_{n=0,1,\ldots,N}$ for $f$ with $N\in O(d)$.

QC: $(d^3\log^2{d}+d\log^{1+o(1)}{q},0,0,0,0)$ because $\overline{f}$ and the $A_i$ have already been computed.
\end{denumerate}
\item Else do the following.
\begin{denumerate}[label=5.\arabic*]
\item Compute a type-II tree register $\Rfrak=((\Dfrak_n,S_n))_{n=0,1,\ldots,N}$ for $f$ with $N\in O(d^2\mpe(q-1))$, where $S_n=(\height_n,S_{n,\trans},S_{n,\per})$.

QC: $(d^2\mpe(q-1)\log^{1+o(1)}{q}+d^5\mpe(q-1)^3\log{q}+d^5\log^2{d}\mpe(q-1)^3,0,0,0,0)$ because $\overline{f}$ and the $A_i$ have already been computed.
\end{denumerate}
\item Compute a CRL-list $\overline{\Lcal}$ of $\overline{f}$ and, in the process, store the cycles of $\overline{f}$.

QC: $(d^2\log^2{d},0,0,0,0)$.
\item For each $(i,\ell)\in\overline{\Lcal}$, with associated $\overline{f}$-cycle $(i_0,i_1,\ldots,i_{\ell-1})$, do the following.

QC: $(d\tau(q-1)\log^{2+o(1)}{q}+d^2\log^{1+o(1)}{q}+d\tau(q-1)^2\log{q}+Nd^2\log{d}+d^3\log{d}l_{\bit},\linebreak[4]0,d\log{q},d,0)$.
\begin{denumerate}[label=7.\arabic*]
\item If $i=d$, then do the following.
\begin{denumerate}[label=7.1.\arabic*]
\item If $f$ is of special type I, then do the following.
\begin{denumerate}[label=7.1.1.\arabic*]
\item Set $\nfrak$ to be the unique $n\in\{0,1,\ldots,N\}$ such that $d\in S_n$.

QC: $(d\log{d},0,0,0,0)$.
\end{denumerate}
\item Else do the following.
\begin{denumerate}[label=7.1.2.\arabic*]
\item Set $\nfrak:=0$.

QC: $(1,0,0,0,0)$.
\end{denumerate}
\item Set $\Nfrak_d:=\{([\nfrak],1,1)\}$, $\vec{\nfrak}'_d:=[\nfrak]$ and $\Nfrak'_d:=\{(1,1)\}$, then skip to the next pair $(i,\ell)$.

QC: $(\log{d},0,0,0,0)$.
\end{denumerate}
\item Else do the following.
\begin{denumerate}[label=7.2.\arabic*]
\item Compute $\Acal_i:=A_{i_0}A_{i_1}\cdots A_{i_{\ell-1}}$.

QC: $(d\log^{1+o(1)}{q},0,0,0,0)$.
\item Compute $\CT((\Acal_i)_{\mid\per(\Acal_i)})=x_1^{e_{i,1}}x_2^{e_{i,2}}\cdots x_s^{e_{i,s}}$.

QC: $(\tau(q-1)\log^{2+o(1)}{q}+\tau(q-1)^2\log{q},0,\log{q},1,0)$.
\item If $f$ is of special type I, then do the following.
\begin{denumerate}[label=7.2.3.\arabic*]
\item For each $n=0,1,\ldots,N$, do the following.

QC: $(d^2\log{d},0,0,0,0)$.
\begin{denumerate}[label=7.2.3.1.\arabic*]
\item For each $t=0,1,\ldots,\ell-1$, do the following.

QC: $(|S_n|d\log{d},0,0,0,0)$.
\begin{denumerate}[label=7.2.3.1.1.\arabic*]
\item If $i_t\in S_n$, then set $\nfrak_{i_t}:=n$.

QC: $(|S_n|\log{d},0,0,0,0)$.
\end{denumerate}
\end{denumerate}
\end{denumerate}
\item Else do the following.
\begin{denumerate}[label=7.2.4.\arabic*]
\item For each $n=0,1,\ldots,N$, do the following.

QC: $(d^3\log{d}\mpe(q-1)+dl_{\bit},0,0,0,0)$.
\begin{denumerate}[label=7.2.4.1.\arabic*]
\item For each $t=0,1,\ldots,\ell-1$, do the following.

QC: $(\max\{1,|S_{n,\per}|\}d\log{d}+|S_{n,\per}\cap\{i_0,\ldots,i_{\ell-1}\}|l_{\bit},0,0,0,0)$.
\begin{denumerate}[label=7.2.4.1.1.\arabic*]
\item If $i_t\in S_{n,\per}$, then set $\nfrak_{i_t}:=n$.

QC: $(\max\{1,|S_{n,\per}|\}\log{d}+l_{\bit},0,0,0,0)$.
\end{denumerate}
\end{denumerate}
\end{denumerate}
\item Set $\vec{\nfrak}_i:=(\nfrak_{i_0},\nfrak_{i_1},\ldots,\nfrak_{i_{\ell-1}})$.

QC: $(dl_{\bit},0,0,0,0)$.
\item Overwrite $\vec{\nfrak}_i$ with the lexicographically smallest sequence in the same cyclic equivalence class.

QC: $(d^2\log{d}l_{\bit},0,0,0,0)$.
\item Compute $\minperl(\vec{\nfrak}_i)$.

QC: $(d\log{d}\log\log{d}(\log{d}+l_{\bit}),0,0,0,0)$.
\item Set
\begin{itemize}
\item $\vec{\nfrak}'_i:=[\nfrak_{i_0},\nfrak_{i_1},\ldots,\nfrak_{i_{\minperl(\vec{\nfrak}_i)-1}}]$,
\item $\Nfrak'_i:=\{(\ell\cdot l',e_{i,l'}): l'\in\{1,2,\ldots,s\}, e_{i,l'}>0\}$, and
\item $\Nfrak_i:=\{\vec{\nfrak}'_i\}\times\Nfrak'_i$.
\end{itemize}
QC: $(dl_{\bit}+\tau(q-1)\log^{1+o(1)}{q},0,0,0,0)$.
\end{denumerate}
\end{denumerate}
\item Set $\Mfrak:=\Nfrak:=\emptyset$.

QC: $(1,0,0,0,0)$.
\item Set $\vec{\nfrak}':=\{(\vec{\nfrak}'_i,i,\ell): (i,\ell)\in\overline{\Lcal}\}$, and sort it lexicographically.

QC: $(d^2\log{d}l_{\bit},0,0,0,0)$.
\item For each $(\vec{\nfrak}'_i,i,\ell)\in\vec{\nfrak}'$, do the following.

QC: $(d^3\tau(q-1)^2\log{q},0,0,0,0)$.
\begin{denumerate}[label=10.\arabic*]
\item Check whether $\vec{\nfrak}'_i\in\Mfrak$, and if so, skip to the next triple $(\vec{\nfrak}'_i,i,\ell)$.

QC: $(d^2l_{\bit},0,0,0,0)$.
\item Add $\vec{\nfrak}'_i$ to $\Mfrak$ as a new element.

QC: $(dl_{\bit},0,0,0,0)$.
\item Set $\Vfrak:=\Nfrak'_i$.

QC: $(\tau(q-1)\log{q},0,0,0,0)$.
\item For each $(\vec{\nfrak}'_j,j,\ell')\in\vec{\nfrak}'$ that comes after $(\vec{\nfrak}'_i,i,\ell)$, do the following.

QC: $(d^2l_{\bit}+d^2\tau(q-1)^2\log{q},0,0,0,0)$.
\begin{denumerate}[label=10.4.\arabic*]
\item Check whether $\vec{\nfrak}'_i=\vec{\nfrak}'_j$, and if not, skip to the next triple $(\vec{\nfrak}'_j,j,\ell')$.

QC: $(dl_{\bit},0,0,0,0)$.
\item For each $(l,k)\in\Nfrak'_j$, do the following.

QC: $(d\tau(q-1)^2\log{q},0,0,0,0)$.
\begin{denumerate}[label=10.4.2.\arabic*]
\item Check whether $l$ occurs as the first entry of some pair $(l,k')\in\Vfrak$, and store this information.

QC: $(d\tau(q-1)\log{q},0,0,0,0)$.
\item If $(l,k')\in\Vfrak$ for some $k'$, then do the following.
\begin{denumerate}[label=10.4.2.2.\arabic*]
\item Replace the unique element of $\Vfrak$ of the form $(l,k')$ by $(l,k+k')$.

QC: $(\log{q},0,0,0,0)$.
\end{denumerate}
\item Else do the following.
\begin{denumerate}[label=10.4.2.3.\arabic*]
\item Add $(l,k)$ to $\Vfrak$ as a new element.

QC: $(\log{q},0,0,0,0)$.
\end{denumerate}
\end{denumerate}
\end{denumerate}
\item For each $(l,k)\in\Vfrak$, do the following.

QC: $(d\tau(q-1)(dl_{\bit}+\log{q}),0,0,0,0)$.
\begin{denumerate}[label=10.5.\arabic*]
\item Add $([\vec{\nfrak}'_i],l,k)$ to $\Nfrak$ as a new element.

QC: $(dl_{\bit}+\log{q},0,0,0,0)$.
\end{denumerate}
\end{denumerate}
\item Output $\Rfrak$ and $\Nfrak$, and halt.

QC: $(N^2\log{q}+d\tau(q-1)(dl_{\bit}+\log{q}),0,0,0,0)$.
\end{denumerate}

The following is pseudocode for the algorithm from Lemma \ref{synchronizeLem}. For given recursive tree description lists $\vec{\Dfrak}=(\Dfrak_n)_{n=0,1,\ldots,N}$ and $\vec{\Dfrak}'=(\Dfrak'_n)_{n=0,1,\ldots,N'}$ satisfying the assumptions of Lemma \ref{synchronizeLem}, this algorithm computes a synchronization $(\vec{\Dfrak}^+,\ifrak)$ of $\vec{\Dfrak}$ and $\vec{\Dfrak}'$.

\begin{denumerate}[label=\arabic*]
\item Set $\Nfrak:=\{0,1,\ldots,N\}$, and for $k\in\Nfrak$, set $\Dfrak^+_k:=\Dfrak_k$. Moreover, let $\ifrak$ be the empty function $\emptyset$.

QC: $(N^2(l_{\bit}+m),0,0,0,0)$.
\item For each $n=0,1,\ldots,N'$, do the following.

QC: $((NN'\min\{N,N'\}+(N')^2l'_{\bit})(l_{\bit}+l'_{\bit}+m),0,0,0,0)$.
\begin{denumerate}[label=2.\arabic*]
\item If $n=0$, then do the following.
\begin{denumerate}[label=2.1.\arabic*]
\item Set $\ifrak(0):=0$.

QC: $(1,0,0,0,0)$.
\end{denumerate}
\item Else do the following.
\begin{denumerate}[label=2.2.\arabic*]
\item Let $\Dfrak$ be the set of pairs obtained from $\Dfrak'_n$ through replacing each first entry $k$ of each pair in $\Dfrak'_n$ by $\ifrak(k)$ (one may simply overwrite the corresponding entries of $\Dfrak'_n$, so one does not need to handle the second entries of bit length in $O(m)$).

QC: $(N'(l_{\bit}+l'_{\bit}),0,0,0,0)$.
\item Sort $\Dfrak$.

QC: $(N'l'_{\bit}(l_{\bit}+l'_{\bit}+m))$.
\item Check whether $\Dfrak=\Dfrak^+_k$ for some (unique) $k\in\{0,1,\ldots,N\}$, and store this information (the truth value and $k$).

QC: $(N\min\{N,N'\}(l_{\bit}+l'_{\bit}+m),0,0,0,0)$.
\item If $\Dfrak=\Dfrak^+_k$ for some $k\in\{0,1,\ldots,N\}$, then do the following.
\begin{denumerate}[label=2.2.4.\arabic*]
\item Set $\ifrak(n):=k$.

QC: $(l_{\bit}+l'_{\bit},0,0,0,0)$.
\end{denumerate}
\item Else do the following.
\begin{denumerate}[label=2.2.5.\arabic*]
\item Set $n':=\max{\Ncal}+1$, add $n'$ to $\Ncal$ as a new element, set $\Dfrak^+_{n'}:=\Dfrak$ and $\ifrak(n):=n'$.

QC: $(N'(l_{\bit}+l'_{\bit}+m),0,0,0,0)$.
\end{denumerate}
\end{denumerate}
\end{denumerate}
\item Set $\vec{\Dfrak}^+:=(\Dfrak^+_n)_{n\in\Ncal}$, output $(\vec{\Dfrak}^+,\ifrak)$ and halt.

QC: $((N+N')\max\{N,N'\}(l_{\bit}+l'_{\bit}+m),0,0,0,0)$.
\end{denumerate}

Finally, we provide pseudocode for the algorithm from Corollary \ref{necklaceListCor}. For given generalized cyclotomic mappings $f_1$ and $f_2$ of $\IF_q$, of index $d_1$ and $d_2$, respectively, such that each $f_j$ is of special type I or II (not necessarily both of the same special type), this algorithm decides whether $\Gamma_{f_1}\cong\Gamma_{f_2}$. Throughout this discussion, we have $d:=\max\{d_1,d_2\}$.

\begin{denumerate}[label=\arabic*]
\item For $j=1,2$, do the following.

QC:
\begin{itemize}
\item $(d\log^{1+o(1)}{q},d,0,0,0)$ if $f_1$ and $f_2$ are both of special type I;
\item $(d^2\log{d}+d\log^{1+o(1)}{q},d,0,0,0)$ otherwise.
\end{itemize}
\begin{denumerate}[label=1.\arabic*]
\item Check whether $f_j$ is of special type I, and store this information as well as the induced function $\overline{f_j}$ and the affine maps on $\IZ/((q-1)/d_j)\IZ$ associated with $f_j$.

QC: $(d\log^{1+o(1)}{q},d,0,0,0)$.
\item If $f_j$ is \emph{not} of special type I, then check whether $f_j$ is of special type II, and store this information.

QC: $(d\log^2{d},0,0,0,0)$, because $\overline{f_j}$ and the affine maps have already been computed.
\item If $f_j$ is neither of special type I nor II, then output \enquote{fail} and halt.

QC: $(1,0,0,0,0)$.
\end{denumerate}
\item For $j=1,2$, do the following.

QC:
\begin{itemize}
\item $(d^3\log^2{d}+d^3\log{d}\tau(q-1)+d^3\tau(q-1)^2\log{q}+d^2\log^{1+o(1)}{q}+d\tau(q-1)\log^{2+o(1)}{q},d,d\log{q},d,0)$ if $f_1$ and $f_2$ are both of special type I;
\item $(d^5\log^2{d}\mpe(q-1)+(d^5\mpe(q-1)^3+d^3\tau(q-1)^2)\log{q}+d^2\mpe(q-1)\log^{1+o(1)}{q}+d\tau(q-1)\log^{2+o(1)}{q},d,d\log{q},d,0)$ otherwise.
\end{itemize}
\begin{denumerate}[label=2.\arabic*]
\item If $f_j$ is of special type I, then do the following.
\begin{denumerate}[label=2.1.\arabic*]
\item Compute a type-I tree register $\Rfrak_j$ of $f_j$, and the tree necklace list $\Nfrak_j$ for $f_j$ relative to $\Rfrak_j$.

QC: $(d^3\log^2{d}+d^3\log{d}\tau(q-1)+d^3\tau(q-1)^2\log{q}+d^2\log^{1+o(1)}{q}+d\tau(q-1)\log^{2+o(1)}{q},d,d\log{q},d,0)$.
\end{denumerate}
\item Else do the following.
\begin{denumerate}[label=2.2.\arabic*]
\item Compute a type-II tree register $\Rfrak_j$ of $f_j$, and the tree necklace list $\Nfrak_j$ for $f_j$ relative to $\Rfrak_j$.

QC: $(d^5\log^2{d}\mpe(q-1)+(d^5\mpe(q-1)^3+d^3\tau(q-1)^2)\log{q}+d^2\mpe(q-1)\log^{1+o(1)}{q}+d\tau(q-1)\log^{2+o(1)}{q},d,d\log{q},d,0)$.
\end{denumerate}
\end{denumerate}
\item For $j=1,2$, let $\vec{\Dfrak}^{(j)}=(\Dfrak^{(j)}_n)_{n=0,1,\ldots,N_j}$ be the underlying recursive tree description list of $\Rfrak_j$. Compute a synchronization $(\vec{\Dfrak}^+,\ifrak)$ of $\vec{\Dfrak}^{(1)}$ and $\vec{\Dfrak}^{(2)}$.

QC:
\begin{itemize}
\item $(d^3\log{q},0,0,0,0)$ if $f_1$ and $f_2$ are both of special type I;
\item $(d^6\mpe(q-1)^3\log{q}+d^4\mpe(q-1)^2\log^{1+o(1)}{q},0,0,0,0)$ otherwise.
\end{itemize}
\item Create a modified version $\Nfrak'_2$ of $\Nfrak_2$ by replacing each entry $\nfrak$ of each first entry $[\vec{\nfrak}]$ of an element $([\vec{\nfrak}],l,\mfrak)\in\Nfrak_2$ by $\ifrak(\nfrak)$, then overwriting each of the resulting $O(d)$ distinct first entries of triples in the list with the lexicographically minimal number sequence in the same cyclic equivalence class, and finally sorting $\Nfrak'_2$ lexicographically by merging the $O(d)$ distinct segments corresponding to the same first entry of triples in $\Nfrak'_2$.

QC: $(d^3\tau(q-1)\log{q},0,0,0,0)$.
\item Check whether $\Nfrak'_2=\Nfrak_1$, output the corresponding truth value, and halt.

QC: $(d^2\tau(q-1)\log{q},0,0,0,0)$.
\end{denumerate}

\subsubsection{Short-term block behavior and the special case where all cycles are short}\label{subsubsec5P3P3}

Let $f$ be an index $d$ generalized cyclotomic mapping of $\IF_q$. For an $f$-periodic $x\in\IF_q$ and $t\in\IZ$, we set $x^{(t)}:=(f_{\mid\per(f)})^t(x)$\phantomsection\label{not291}, and we recall the notation $i_t:=(\overline{f}_{\mid\per(\overline{f})})^t(i)$ for $\overline{f}$-periodic $i\in\{0,1,\ldots,d-1\}$ and $t\in\IZ$, as well as $i':=i_{-1}$.

In Subsection \ref{subsec3P3}, for each $i\in\{0,1,\ldots,d-1\}$, we constructed an arithmetic partition $\Pcal_i$ of $C_i$ such that for $x\in C_i$, the isomorphism type of $\Tree_{\Gamma_f}(x)$ only depends on the $\Pcal_i$-block in which $x$ is contained. Here, we refine this construction. We recall that $\Pcal_i=\Qcal_{i,H_i}$ where $H_i$ is the maximum tree height in $\Gamma_{\per}$ (the induced subgraph of $\Gamma_f$ on $\bigcup_{j\in\per(\overline{f})}{C_j}$) above an $f$-periodic point in $\bigcup_{t\in\IZ}{C_{i_t}}$. The arithmetic partition $\Qcal_{i,h}$ of $C_i$ is defined for all $h\in\IN_0$ (even though we only considered it for $h\in\{0,1,\ldots,H_i\}$ in Subsection \ref{subsec3P3}) and satisfies
\begin{equation}\label{crucialEq}
\Qcal_{i,h+1}=\Rcal_i\wedge\Pfrak'(\Qcal_{i',h},A_{i'}).
\end{equation}
This formula is key to our construction. Indeed, the refined arithmetic partition of $C_i$ which we consider here is simply $\Qcal_{i,H_i+L-1}$ for some $L\in\IN^+$, as opposed to $\Pcal_i=\Qcal_{i,H_i}$. While the blocks of $\Pcal_i$ control the isomorphism types of rooted trees in $\Gamma_f$ above (periodic) vertices in $C_i$, the following more general statement holds for $\Qcal_{i,H_i+L-1}$.

\begin{lemmmma}\label{refinedLem}
Let $x\in C_i$ be $f$-periodic, and let $L\in\IN_0$. The $\Qcal_{i,H_i+L-1}$-block in which $x$ is contained uniquely determines the (length $L$) sequence
\[
(\Tree_{\Gamma_f}(x^{(t)}))_{t=0,-1,\ldots,-L+1}
\]
of rooted tree isomorphism types.
\end{lemmmma}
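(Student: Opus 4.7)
The plan is to proceed by induction on $L\in\IN_0$; the base case $L=0$ is trivial, since the sequence in question is empty. For the inductive step, assume the statement for some $L\geq0$, let $x\in C_i$ be $f$-periodic, and suppose the $\Qcal_{i,H_i+L}$-block $B^{\ast}$ containing $x$ is specified; the goal is to recover the length $L+1$ sequence $(\Tree_{\Gamma_f}(x^{(t)}))_{t=0,-1,\ldots,-L}$. Since $\Qcal_{i,h+1}$ is obtained from $\Qcal_{i,h}$ by intersecting with $\lambda_{i_{-(h+1)}}^{h+1}(\Rcal_{i_{-(h+1)}})$, the partition $\Qcal_{i,h}$ is monotone under refinement in $h$, so $B^{\ast}$ refines the $\Pcal_i=\Qcal_{i,H_i}$-block containing $x$. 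By Proposition \ref{periodicCosetsProp}, this determines $\Tree_{\Gamma_f}(x^{(0)})$, giving the first entry of the sequence. If $L=0$, we are done; otherwise $L\geq1$ and we continue as follows.

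To recover the remaining entries $(\Tree_{\Gamma_f}(x^{(t)}))_{t=-1,\ldots,-L}$, I would apply the inductive hypothesis to the $f$-periodic point $y:=x^{(-1)}\in C_{i'}$ with parameter $L$. Because $i$ and $i'$ lie on a common $\overline{f}$-cycle, the defining data of $H_{\cdot}$ is cyclically invariant, so $H_{i'}=H_i$; and since $f_{\mid\per(f)}$ is a permutation we have $y^{(t)}=x^{(t-1)}$, so the hypothesis reformulates as: the $\Qcal_{i',H_i+L-1}$-block $B$ containing $y$ determines $(\Tree_{\Gamma_f}(x^{(s)}))_{s=-1,\ldots,-L}$. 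It thus suffices to identify $B$ from $B^{\ast}$. The recursion (\ref{crucialEq}) gives $\Qcal_{i,H_i+L}=\Rcal_i\wedge\Pfrak'(\Qcal_{i',H_i+L-1},A_{i'})$, so $B^{\ast}$ determines the $\Pfrak'(\Qcal_{i',H_i+L-1},A_{i'})$-block containing $x$; then the Master Lemma (Lemma \ref{masterLem}) yields the intersection cardinalities $|A_{i'}^{-1}(\{x\})\cap B'|$ for every block $B'$ of $\Qcal_{i',H_i+L-1}$. In particular, we know exactly which blocks $B'$ contain at least one $f$-preimage of $x$ lying in $C_{i'}$.

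The main obstacle, and the final step of the induction, is picking $B$ out uniquely among these candidate blocks. The key structural fact is that $\Qcal_{i',H_i+L-1}$ has $\Ucal_{i'}$ as one of its defining factors, so each of its blocks lies either entirely inside $\Bcal(\Ucal_{i'},\vec{\xi}_{i',H_{i'}})$ (hence consists solely of $f$-periodic points) or entirely outside of it (hence consists solely of $f$-transient points); moreover, the periodic versus transient character of a block is readable from its logical sign tuple alone. Since $f_{\mid\per(f)}$ is a permutation, $y=x^{(-1)}$ is the unique $f$-periodic preimage of $x$ in $C_{i'}$, whence $B$ is unambiguously the unique ``periodic'' block of $\Qcal_{i',H_i+L-1}$ whose intersection with $A_{i'}^{-1}(\{x\})$ is nonempty, and this nonemptiness is precisely what the Master Lemma outputs. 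This closes the induction. Everything else is routine bookkeeping; the subtlety is that the design of $\Qcal_{i',h}$ bakes in the $\hfrak=H_{i'}$ condition through $\Ucal_{i'}$, which is what permits this clean unambiguous identification.
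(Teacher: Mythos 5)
Your proof is correct and takes essentially the same approach as the paper's: using the recursion $\Qcal_{i,h+1}=\Rcal_i\wedge\Pfrak'(\Qcal_{i',h},A_{i'})$ together with the Master Lemma and the fact that $x$ has exactly one $f$-periodic pre-image in $C_{i'}$ (which must lie in a ``periodic'' block, as $\Qcal_{i',h}$ refines $\Ucal_{i'}$) to unambiguously recover the $\Qcal_{i',H_i+L-2}$-block of $x^{(-1)}$, and then iterating. The only difference is cosmetic — you phrase the iteration as a formal induction on $L$ with an explicit base case and inductive step, whereas the paper carries out the first step of the descent and then closes with ``continuing this process inductively.''
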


\begin{proof}
We can write the $\Qcal_{i,H_i+L-1}$-block of $x$ as $\Bcal(\Qcal_{i,H_i+L-1},\diamond_{t=0}^{H_i+L-1}{\vec{o'_t}}\diamond\vec{\xi}_{i,H_i})$ where $\vec{o'_t}\in\{\emptyset,\neg\}^{n_{i_{-t}}}$. Noting that
\[
\Bcal(\Qcal_{i,H_i+L-1},\diamond_{t=0}^{H_i+L-1}{\vec{o'_t}}\diamond\vec{\xi}_{i,H_i})\subseteq\Bcal(\Pcal_i,\diamond_{t=0}^{H_i}{\vec{o'_t}}\diamond\vec{\xi}_{i,H_i}),
\]
we see that $\Tree_{\Gamma_f}(x)=\Tree_i(\Pcal_i,\diamond_{t=0}^{H_i}{\vec{o'_t}}\diamond\vec{\xi}_{i,H_i})$ is uniquely determined.

By formula (\ref{crucialEq}), we know that for each block $\Bcal(\Qcal_{i',H_i+L-2},\diamond_{t=0}^{H_i+L-2}{\vec{o_t}}\diamond\vec{\xi})$ of $\Qcal_{i',H_i+L-2}$, the number of $f$-pre-images of $x$ in that block is the constant
\begin{equation}\label{qcalSigmaEq}
\sigma_{\Qcal_{i',H_i+L-2},A_{i'}}(\diamond_{t=0}^{H_i+L-2}{\vec{o_t}}\diamond\vec{\xi},\diamond_{t=1}^{H_i+L-1}{\vec{o'_t}}\diamond\vec{\xi}_{i,H_i}).
\end{equation}
Now we assume that $\vec{\xi}=\vec{\xi}_{i',H_i}$ (which is actually the same as $\vec{\xi}_{i,H_i}$). The union of all blocks of $\Qcal_{i',H_i+L-2}$ of the form $\Bcal(\Qcal_{i',H_i+L-2},\diamond_{t=0}^{H_i+L-2}{\vec{o_t}}\diamond\vec{\xi}_{i',H_i})$ (where $\vec{o_t}$ ranges over $\{\emptyset,\neg\}^{n_{i_{-t-1}}}$ for each $t\in\{0,1,\ldots,H_i+L-2\}$) is just the subset of $C_{i'}$ consisting of all $f$-periodic points in it. Since $x$ has precisely one $f$-periodic pre-image (which lies in $C_{i'}$), it follows that the value of (\ref{qcalSigmaEq}) for $\vec{\xi}=\vec{\xi}_{i',H_i}$ is $0$ for all $\diamond_{t=0}^{H_i+L-2}{\vec{o_t}}\in\{\emptyset,\neg\}^{n_{i_{-1}}+n_{i_{-2}}+\cdots+n_{i_{-H_i-L+1}}}$ except for one, for which the constant (\ref{qcalSigmaEq}) has value $1$. If $\diamond_{t=0}^{H_i+L-2}{\vec{o_t}}$ is that unique logical sign tuple, then the unique $f$-periodic pre-image $x^{(-1)}$ of $x\in\Bcal(\Qcal_{i,H_i+L-1},\diamond_{t=0}^{H_i+L-1}{\vec{o'_t}}\diamond\vec{\xi}_{i,H_i})$ always lies in $\Bcal(\Qcal_{i',H_i+L-2},\diamond_{t=0}^{H_i+L-2}{\vec{o_t}}\diamond\vec{\xi}_{i',H_i})$, and so $\Tree_{\Gamma_f}(x^{(-1)})\cong\Tree_{i'}(\Pcal_{i'},\diamond_{t=0}^{H_i}{\vec{o_t}}\diamond\vec{\xi}_{i',H_i})$ is also uniquely determined. Continuing this process inductively, we get the statement of the lemma.
\end{proof}

For the purposes of our later complexity analysis, we need a more explicit version of Lemma \ref{refinedLem}. To each $\overline{f}$-periodic $i\in\{0,1,\ldots,d-1\}$ and each $L\in\IN^+$, we associate the set
\begin{align*}
&\Ocal_{i,L}:= \\
&\{\diamond_{t=0}^{H_i+L-1}{\vec{o'_t}}\in\{\emptyset,\neg\}^{n_{i_0}+n_{i_{-1}}+\cdots+n_{i_{-H_i-L+1}}}: \Bcal(\Qcal_{i,H_i+L-1},\diamond_{t=0}^{H_i+L-1}{\vec{o'_t}}\diamond\vec{\xi}_{i,H_i})\not=\emptyset\}
\end{align*}
of\phantomsection\label{not292} logical sign tuples that correspond to a non-empty block of $\Qcal_{i,H_i+L-1}$ consisting of $f$-periodic points. The proof of Lemma \ref{refinedLem} shows that as long as $L\geq2$, we may implicitly define a (surjective) function $\ufrak_{i,L}:\Ocal_{i,L}\rightarrow\Ocal_{i',L-1}$\phantomsection\label{not293} via
\[
\sigma_{\Qcal_{i',H_i+L-2},A_{i'}}(\ufrak_{i,L}(\diamond_{t=0}^{H_i+L-1}{\vec{o'_t}})\diamond\vec{\xi}_{i',H_i},\diamond_{t=1}^{H_i+L-1}{\vec{o'_t}}\diamond\vec{\xi}_{i,H_i})=1.
\]
Then for each ($f$-periodic) $x\in\Bcal(\Qcal_{i,H_i+L-1},\diamond_{t=0}^{H_i+L-1}{\vec{o'_t}}\diamond\vec{\xi}_{i,H_i})$, the unique $f$-periodic pre-image $x^{(-1)}$ of $x$, which lies in $C_{i'}$, is contained in $\Bcal(\Qcal_{i',H_i+L-2},\ufrak_{i,L}(\diamond_{t=0}^{H_i+L-1}{\vec{o'_t}})\diamond\vec{\xi}_{i',H_i})$. Denoting by $\proj_{i,L}$\phantomsection\label{not294} the projection
\[
\Ocal_{i,L}\rightarrow\{\emptyset,\neg\}^{n_{i_0}+n_{i_{-1}}+\cdots+n_{i_{-H_i}}},
\diamond_{t=0}^{H_i+L-1}{\vec{o'_t}}\mapsto\diamond_{t=0}^{H_i}{\vec{o'_t}},
\]
we therefore have the following more explicit version of Lemma \ref{refinedLem}.

\begin{lemmmma}\label{refinedExplicitLem}
Let $L\in\IN^+$, and let $x\in C_i$ be $f$-periodic, say contained in $\Bcal(\Qcal_{i,H_i+L-1},\diamond_{t=0}^{H_i+L-1}{\vec{o'_t}})$. Then for each $k=0,-1,\ldots,-L+1$, we have
\begin{align*}
&\Tree_{\Gamma_f}(x^{(k)})\cong \\
&\Tree_{i_k}(\Pcal_{i_k},(\proj_{i_k,L+k}\circ\ufrak_{i_{k+1},L+k+1}\circ\ufrak_{i_{k+2},L+k+2}\circ\cdots\circ\ufrak_{i_0,L})(\diamond_{t=0}^{H_i+L-1}{\vec{o'_t}})).
\end{align*}
\end{lemmmma}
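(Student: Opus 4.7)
The plan is to prove, by downward induction on $k$ starting from $k=0$ and proceeding to $k=-L+1$, the stronger statement that
\[
x^{(k)}\in\Bcal\bigl(\Qcal_{i_k,H_i+L+k-1},\,\vec{\omega}^{(k)}\diamond\vec{\xi}_{i_k,H_i}\bigr),
\]
where I set $\vec{\omega}^{(k)}:=(\ufrak_{i_{k+1},L+k+1}\circ\ufrak_{i_{k+2},L+k+2}\circ\cdots\circ\ufrak_{i_0,L})(\diamond_{t=0}^{H_i+L-1}{\vec{o'_t}})$, with the convention that the empty composition (at $k=0$) is the identity. A preliminary observation is that $H_{i_k}=H_i$ for all $k$, since all indices $i_k$ lie on the same $\overline{f}$-cycle; this allows the common use of $H_i$ throughout the various $\vec{\xi}$-subscripts.

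The base case $k=0$ is immediate: the composition is empty, so $\vec{\omega}^{(0)}=\diamond_{t=0}^{H_i+L-1}{\vec{o'_t}}$, and the required containment is precisely the hypothesis on $x$. For the inductive step, assume the containment holds at $k\in\{0,-1,\ldots,-L+2\}$ (so that $L+k\geq 2$ and $\ufrak_{i_k,L+k}$ is defined). Since $x^{(k)}$ is $f$-periodic, it has a unique $f$-periodic pre-image $x^{(k-1)}$, which lies in $C_{i_{k-1}}=C_{i_k'}$. By the implicit defining property of $\ufrak_{i_k,L+k}$ (which is exactly the content of the final paragraph of the proof of Lemma~\ref{refinedLem}), $x^{(k-1)}$ is contained in $\Bcal\bigl(\Qcal_{i_{k-1},H_i+L+k-2},\,\ufrak_{i_k,L+k}(\vec{\omega}^{(k)})\diamond\vec{\xi}_{i_{k-1},H_i}\bigr)$. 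As $\vec{\omega}^{(k-1)}=\ufrak_{i_k,L+k}(\vec{\omega}^{(k)})$ by definition, this is the containment at $k-1$, completing the induction.

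Once the stronger containment is established for all $k\in\{0,-1,\ldots,-L+1\}$, the conclusion of the lemma follows: since $\Pcal_{i_k}=\Qcal_{i_k,H_i}$ is coarser than $\Qcal_{i_k,H_i+L+k-1}$, the above block is contained in $\Bcal\bigl(\Pcal_{i_k},\,\proj_{i_k,L+k}(\vec{\omega}^{(k)})\diamond\vec{\xi}_{i_k,H_i}\bigr)$, and then Proposition~\ref{periodicCosetsProp} (applied with the $\hfrak$-value $H_i=H_{i_k}$) yields $\Tree_{\Gamma_f}(x^{(k)})\cong\Tree_{i_k}\bigl(\Pcal_{i_k},\,\proj_{i_k,L+k}(\vec{\omega}^{(k)})\bigr)$, which is the asserted formula after unfolding $\vec{\omega}^{(k)}$.

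The only real obstacle is bookkeeping: keeping track of the indices $L+k$ in the $\ufrak$'s and $\proj$'s, verifying that $L+k\geq 2$ at each application of $\ufrak_{i_k,L+k}$ (which is precisely why the induction terminates at $k=-L+1$), and matching the $\vec{\xi}$-subscripts across cyclic shifts via the equality $H_{i_k}=H_i$. No new combinatorial content beyond Lemma~\ref{refinedLem} is required—the present lemma simply tracks the block data explicitly through the recursion already implicit in that earlier argument.
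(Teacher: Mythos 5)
Your proof is correct and matches what the paper leaves implicit: the paper presents Lemma \ref{refinedExplicitLem} as an immediate consequence of the one-step block-propagation statement established just before it (the defining property of $\ufrak_{i,L}$, which in turn comes out of the proof of Lemma \ref{refinedLem}), and your downward induction simply carries out the iteration explicitly, projecting to $\Pcal_{i_k}$ and invoking Proposition \ref{periodicCosetsProp} at the end. The bookkeeping observations you flag — $H_{i_k}=H_i$ along the cycle, $L+k\geq 2$ so that $\ufrak_{i_k,L+k}$ is applicable, and $\Qcal_{i_k,H_i+L+k-1}$ refining $\Pcal_{i_k}=\Qcal_{i_k,H_i}$ — are exactly the points needed to make this precise.
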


We note that the composition of functions of the form $\ufrak_{i_t,L+t}$ in the formula in Lemma \ref{refinedExplicitLem} is empty if $k=0$ (index-wise, it is supposed to ascend from $1$ to $0$, which is nonsensical). Specifically, Lemma \ref{refinedExplicitLem} for $k=0$ states that
\[
\Tree_{\Gamma_f}(x)=\Tree_{\Gamma_f}(x^{(0)})=\Tree_{i_0}(\Pcal_{i_0},\proj_{i_0,L}(\diamond_{t=0}^{H_i+L-1}{\vec{o'_t}})),
\]
for $k=-1$, it states that
\[
\Tree_{\Gamma_f}(x^{(-1)})=\Tree_{i_{-1}}(\Pcal_{i_{-1}},\proj_{i_{-1},L-1}\circ\ufrak_{i_0,L}(\diamond_{t=0}^{H_i+L-1}{\vec{o'_t}})),
\]
and so on.

In what follows, let us assume that all cycle lengths of $f$ are at most $L$. We consider an $\overline{f}$-periodic index $i$ of cycle length $\ell$. By Lemma \ref{refinedExplicitLem}, for each $x\in C_i$, the block of $\Qcal_{i,H_i+L-1}$ in which $x$ is contained together with the precise $f$-cycle length of $x$ completely determines the digraph isomorphism type of the connected component of $\Gamma_f$ containing $x$. By adding suitable $s$-congruences to the spanning congruences of $\Qcal_{i,H_i+L-1}$, we can construct a finer arithmetic partition, denoted by $\Wcal_{i,L}$\phantomsection\label{not295} below, each block of which consists of points of a common $f$-cycle length. Hence, for each $f$-periodic point $x\in C_i$, the digraph isomorphism type of the connected component of $\Gamma_f$ containing $x$ is completely determined by the $\Wcal_{i,L}$-block containing $x$.

Let us discuss the details of how to construct $\Wcal_{i,L}$. We recall from Subsection \ref{subsec3P3} that for each $l\in\IN^+$, the restriction of $f^l$ to $C_{i_{-l}}$, which maps to $C_i$, is represented by the affine map $\Acal_{i,l}:x\mapsto \overline{\alpha}_{i,l}x+\overline{\beta}_{i,l}$ (formulas for $\overline{\alpha}_{i,l}$ and $\overline{\beta}_{i,l}$ are given in the first bullet point after Proposition \ref{periodicCosetsTransientProp}). Therefore, a point $x\in C_i$, viewed as an element of $\IZ/s\IZ$, is a fixed point of $f^l$ if and only if $\ell$ divides $l$ (so that $i_{-l}=i$) and $\overline{\alpha}_{i,l}x+\overline{\beta}_{i,l}\equiv x\Mod{s}$. This congruence is solvable if and only if $\gcd(s,\overline{\alpha}_{i,l}-1)\mid\overline{\beta}_{i,l}$, in which case it is equivalent to the $s$-congruence
\[
x\equiv-\frac{\overline{\beta}_{i,l}}{\gcd(s,\overline{\alpha}_{i,l}-1)}\cdot\inv_{\frac{s}{\gcd(s,\overline{\alpha}_{i,l}-1)}}\left(\frac{\overline{\alpha}_{i,l}-1}{\gcd(s,\overline{\alpha}_{i,l}-1)}\right)\Mod{\frac{s}{\gcd(s,\overline{\alpha}_{i,l}-1)}},
\]
which we henceforth denote by $\eta_{i,l}(x)$\phantomsection\label{not296}. We observe that $\eta_{i,l}(x)$ is only well-defined when $\gcd(s,\overline{\alpha}_{i,l}-1)\mid\overline{\beta}_{i,l}$. Let us set
\[
\Cfrak_{i,L}:=\{l\in\{1,2,\ldots,L\}: \ell\mid l\text{ and }\gcd(s,\overline{\alpha}_{i,l}-1)\mid\overline{\beta}_{i,l}\}
\]
and\phantomsection\label{not297} define
\[
\Vcal_{i,L}:=\Pfrak(\eta_{i,l}(x): l\in\Cfrak_{i,L})\text{ and }\Wcal_{i,L}:=\Qcal_{i,H_i+L-1}\wedge\Vcal_{i,L}.
\]
Viewing\phantomsection\label{not298} $\Vcal_{i,L}$ as an arithmetic partition of $C_i$, we claim that its blocks are just those subsets of $C_i$ that consist of all points of any given $f$-cycle length. Indeed, let $l\in\{1,2,\ldots,L\}$. If $l\notin\Cfrak_{i,L}$, then $f^l$ has no fixed points in $C_i$ and, in particular, $f$ has no points of cycle length $l$ in $C_i$. On the other hand, if $l\in\Cfrak_{i,L}$, then the points $x\in C_i$ of $f$-cycle length exactly $l$ (if any) are just those that satisfy the congruence $\eta_{i,l'}(x)$ for precisely those $l'\in\Cfrak_{i,L}$ that are multiples of $l$. In other words, if for $l'\in\Cfrak_{i,L}$, we set
\[
\nu_{l,l'}:=
\begin{cases}
\emptyset, & \text{if }l\mid l', \\
\neg, & \text{otherwise},
\end{cases}
\]
and\phantomsection\label{not299} set $\vec{\nu}_{i,L,l}:=(\nu_{l,l'})_{l'\in\Cfrak_{i,L}}$\phantomsection\label{not300}, then the set $\Bcal(\Vcal_{i,L},\vec{\nu}_{i,L,l})$ (which may be empty) consists precisely of those $x\in C_i$ that are of $f$-cycle length $l$. In summary, we obtain the following result.

\begin{proppposition}\label{refinedExplicitProp}
Let $L\in\IN^+$ be such that all cycle lengths of $f$ are at most $L$, and let $i\in\{0,1,\ldots,d-1\}$ be $\overline{f}$-periodic. We view $\Vcal_{i,L}$ and $\Wcal_{i,L}$ as arithmetic partitions of $C_i$. Then the following hold.
\begin{enumerate}
\item Each block of $\Vcal_{i,L}$ is of one of the forms $\Bcal(\Vcal_{i,L},(\neg,\neg,\ldots,\neg))$, respectively $\Bcal(\Vcal_{i,L},\vec{\nu}_{i,L,l})$ for some $l\in\Cfrak_{i,L}$, and it consists precisely of the $f$-transient points in $C_i$, respectively of those $f$-periodic points in $C_i$ that have $f$-cycle length precisely $l$.
\item Each block of $\Wcal_{i,L}$ consists either entirely of $f$-periodic or entirely of $f$-transient points. Moreover, each block of $\Wcal_{i,L}$ whose elements are $f$-periodic is of the form
\[
\Bcal(\Wcal_{i,L},\diamond_{t=0}^{H_i+L-1}{\vec{o'_t}}\diamond\vec{\xi}_{i,H_i}\diamond\vec{\nu}_{i,L,l})
\]
for some $\vec{o'_t}\in\{\emptyset,\neg\}^{n_{i_{-t}}}$ and $l\in\Cfrak_{i,L}$, in which case for any given point $x$ in that block, the digraph isomorphism type of the connected component of $\Gamma_f$ containing $x$ is represented by the cyclic sequence
\begin{align*}
&[\Tree_{i_k}(\Pcal_{i_k}, \\
&(\proj_{i_k,L+k}\circ\ufrak_{i_{k+1},L+k+1}\circ\ufrak_{i_{k+2},L+k+2}\circ\cdots\circ\ufrak_{i_0,L})(\diamond_{t=0}^{H_i+L}{\vec{o'_t}}))]_{k=-l+1,-l+2,\ldots,0}
\end{align*}
of rooted tree isomorphism types.
\end{enumerate}
\end{proppposition}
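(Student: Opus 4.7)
The plan is to prove (1) first by unpacking what the spanning congruences $\eta_{i,l}(x)$ of $\Vcal_{i,L}$ actually encode, then to derive (2) by intersecting with $\Qcal_{i,H_i+L-1}$ and invoking Lemma \ref{refinedExplicitLem}.

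For (1), the key observation is that the congruence $\eta_{i,l}(x)$ was constructed precisely so that for $x \in C_i \cong \IZ/s\IZ$, it holds if and only if $\Acal_{i,l}(x) = x$; equivalently, if and only if $f^l(x) = x$ (using that $\ell \mid l$ when $l \in \Cfrak_{i,L}$, so that $f^l$ stabilizes $C_i$). Since $L$ is chosen to be at least the largest $f$-cycle length, a point $x \in C_i$ is $f$-periodic of cycle length exactly $l$ if and only if $l \in \Cfrak_{i,L}$ and for every $l' \in \Cfrak_{i,L}$ we have $f^{l'}(x) = x$ iff $l \mid l'$. This means $x$ lies in the block indexed by the tuple $\vec{\nu}_{i,L,l}$. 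Conversely, if $x \in C_i$ is $f$-transient, then $\eta_{i,l}(x)$ fails for every $l \in \Cfrak_{i,L}$, so $x$ lies in $\Bcal(\Vcal_{i,L},(\neg,\ldots,\neg))$. Any other logical sign tuple corresponds to an empty block, because the subset of $\Cfrak_{i,L}$ on which the congruences hold must be upward closed (with respect to divisibility) and, when non-empty, must be the set of multiples of its minimum.

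For (2), I would argue as follows. Since $\Wcal_{i,L} = \Qcal_{i,H_i+L-1} \wedge \Vcal_{i,L}$ and both factor partitions already separate $f$-periodic from $f$-transient points (the former via the $\vec{\xi}$-component corresponding to $\Ucal_i$, the latter by part (1)), every non-empty block of $\Wcal_{i,L}$ consists entirely of one kind of point. Compatibility then forces a non-empty periodic block to have $\vec{\xi} = \vec{\xi}_{i,H_i}$ and the $\Vcal_{i,L}$-component to be of the form $\vec{\nu}_{i,L,l}$ for some $l \in \Cfrak_{i,L}$. Hence each periodic block has the displayed form
$\Bcal(\Wcal_{i,L},\diamond_{t=0}^{H_i+L-1}{\vec{o'_t}} \diamond \vec{\xi}_{i,H_i} \diamond \vec{\nu}_{i,L,l})$, and all points in it have $f$-cycle length exactly $l$.

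Finally, fixing a periodic $x$ in such a block, the cycle of $x$ under $f$ is $(x^{(0)},x^{(-l+1)},\ldots,x^{(-1)})$ read in the direction of $f$, so the necklace encoding the connected component is the cyclic sequence of rooted tree types $[\Tree_{\Gamma_f}(x^{(k)})]_{k = -l+1, \ldots, 0}$. Applying Lemma \ref{refinedExplicitLem} with $x \in \Bcal(\Qcal_{i,H_i+L-1},\diamond_{t=0}^{H_i+L-1}{\vec{o'_t}} \diamond \vec{\xi}_{i,H_i})$ (which we access through $\Wcal_{i,L}$-membership) produces, for each such $k$, exactly the isomorphism type $\Tree_{i_k}(\Pcal_{i_k},(\proj_{i_k,L+k} \circ \ufrak_{i_{k+1},L+k+1} \circ \cdots \circ \ufrak_{i_0,L})(\diamond_{t=0}^{H_i+L-1}{\vec{o'_t}}))$, which is the asserted formula. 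I expect the main obstacle to be verifying the empty-block claim cleanly in (1) — i.e., that only the $\vec{\nu}_{i,L,l}$ and the all-$\neg$ tuples give non-empty blocks — since this uses the upward-closed divisibility structure of the set $\{l' \in \Cfrak_{i,L} : f^{l'}(x) = x\}$, and the rest of the proof is mostly bookkeeping against Lemma \ref{refinedExplicitLem} and the constructions of Subsection \ref{subsec3P3}.
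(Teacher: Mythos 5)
Your proposal is correct and follows essentially the same route as the paper, which establishes part~(1) in the discussion directly preceding the proposition (via the observation that $\eta_{i,l}(x)$ encodes $f^l(x)=x$ for $x\in C_i$ with $\ell\mid l$, so the truth pattern over $\Cfrak_{i,L}$ determines the exact cycle length) and then derives part~(2) by intersecting with $\Qcal_{i,H_i+L-1}$ and invoking Lemma~\ref{refinedExplicitLem}. The only cosmetic point: the step you flag as the ``main obstacle'' -- that only the all-$\neg$ tuple and the tuples $\vec{\nu}_{i,L,l}$ for $l\in\Cfrak_{i,L}$ yield non-empty blocks -- is exactly the observation that for a periodic $x$ of cycle length $c$, one has $c\in\Cfrak_{i,L}$ (since $c\leq L$, $\ell\mid c$, and the solvability condition holds because $x$ itself is a fixed point of $\Acal_{i,c}$), and the truth set is then precisely $\{l'\in\Cfrak_{i,L}:c\mid l'\}$; stating it this way avoids the slight imprecision of ``upward closed with respect to divisibility,'' which on its own does not force the set to be the multiples of a single element.
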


Proposition \ref{refinedExplicitProp} is the basis for proving the following theorem.

\begin{theoremmm}\label{lengthsBoundedTheo}
Let $f$ be an index $d$ generalized cyclotomic mapping of $\IF_q$. Moreover, let $L\in\IN^+$ with $L\leq q-1$ be such that all cycle lengths of $f$ are at most $L$. Then, within $q$-bounded query complexity
\[
(8^{d^2\mpe(q-1)+dL}2^{d\mpe(q-1)}(d^3L\mpe(q-1)+d^2L^2)\log^{1+o(1)}{q},d,0,0,0),
\]
and thus within $q$-bounded Las Vegas dual complexity
\begin{align*}
(&8^{d^2\mpe(q-1)+dL}2^{d\mpe(q-1)}(d^3L\mpe(q-1)+d^2L^2)\log^{1+o(1)}{q}+d\log^{3+o(1)}{q}, \\
&d\log^{3+o(1)}{q},d\log{q}),
\end{align*}
one can compute
\begin{itemize}
\item a recursive tree description list $\vec{\Dfrak}=(\Dfrak_n)_{n=0,1,\ldots,N}$ with $N\in O(d2^{d^2\mpe(q-1)+d})$, whose associated sequence of rooted tree isomorphism types is denoted by $\vec{\Ifrak}=(\Ifrak_n)_{n=0,1,\ldots,N}$, such that indices $n\in\{0,1,\ldots,N\}$ as well as second entries of elements of a description $\Dfrak_n$ are represented by bit strings of length $l_{\bit}:=\lfloor\log_2{q}\rfloor+1$; and
\item the tree necklace list $\Nfrak$ of $f$ relative to $\vec{\Ifrak}$, in the sense of Definition \ref{necklaceListDef}, in lexicographically sorted form, which has $O(dL2^{d^2\mpe(q-1)+dL})$ distinct elements (triples) and, by convention,
\begin{itemize}
\item has the first entries $[\vec{\nfrak}]=[\nfrak_1,\nfrak_2,\ldots,\nfrak_{l'}]$ of its elements padded analogously to Remark \ref{necklaceListRem}(3), but with $L-l'$ dummy entries $-1$, so that the bit string representation of $[\vec{\nfrak}]$ always has the length $L(l_{\bit}+1)=L(\lfloor\log_2{q}\rfloor+2)$;
\item uses $\lfloor\log_2{L}\rfloor+1$ bits to represent the second entries of its elements; and
\item uses $\lfloor\log_2{q}\rfloor+1$ bits for the third entries of its elements.
\end{itemize}
\end{itemize}
\end{theoremmm}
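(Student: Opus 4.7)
My plan is to follow the blueprint laid down by Proposition~\ref{refinedExplicitProp}. First I compute a partition-tree register $\Rfrak = ((\Zcal_i)_i,((\Dfrak_n,(S_{n,i})_i))_{n=0,\ldots,N})$ for $f$ by invoking Theorem~\ref{complexitiesTheo}(2); this yields the arithmetic partitions $\Pcal_i$, the recursive tree description list $\vec{\Dfrak}$ with $N \in O(d 2^{d^2\mpe(q-1)+d})$, and the sign-tuple data $S_{n,i}$ encoding $\Tree_i(\Pcal_i,\vec{\nu})$. The quantum budget $d$ and the classical summand $d \log^{3+o(1)}{q}$ in the asserted complexity absorb this call (via Lemma~\ref{LVComplexityLem}); from this point onward, no further queries are needed. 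After this, for each $(i,\ell) \in \overline{\Lcal}$ with $i < d$ I compute the cycle $(i_0,\ldots,i_{\ell-1})$, the iterated affine compositions $\Acal_{i,l}: x \mapsto \overline{\alpha}_{i,l}x + \overline{\beta}_{i,l}$ for $l=1,2,\ldots,L$ (built recursively from $\Acal_{i,l-1}$ and $A_{i_{-l}}$), and then the arithmetic partitions
\[
\Qcal_{i,H_i+L-1} = \bigwedge_{h=0}^{H_i+L-1}{\lambda_{i_{-h}}^h(\Rcal_{i_{-h}})} \wedge \Ucal_i
\]
by iterating formula~(\ref{crucialEq}) starting from the already-stored $\Rcal_{i_{-h}}$. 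Using the $\eta_{i,l}(x)$ defined in the discussion preceding Proposition~\ref{refinedExplicitProp}, I form $\Vcal_{i,L}$ and $\Wcal_{i,L} = \Qcal_{i,H_i+L-1} \wedge \Vcal_{i,L}$; by construction $\AC(\Wcal_{i,L}) \in O(d^2\mpe(q-1) + dL)$.

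Next, I loop over those logical sign tuples $\vec{\sigma} = \diamond_{t=0}^{H_i+L-1}{\vec{o'_t}} \diamond \vec{\xi}_{i,H_i} \diamond \vec{\nu}_{i,L,l}$ in $\{\emptyset,\neg\}^{\AC(\Wcal_{i,L})}$ (there are at most $2^{d^2\mpe(q-1)+dL}$ per $i$, hence $O(d L 2^{d^2\mpe(q-1)+dL})$ in total across all $(i,\ell)$ and $l$). For each $\vec{\sigma}$ I test whether $\Bcal(\Wcal_{i,L}, \vec{\sigma}) \neq \emptyset$ using Lemma~\ref{distNumLem} applied to $\sigma_{\Wcal_{i,L},\mathbf{0}}$; if so, I apply Proposition~\ref{refinedExplicitProp}(2) to compute the cyclic sequence $(\nfrak_k)_{k=-l+1,\ldots,0}$ of tree indices along the cycle. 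Concretely, this means iteratively evaluating $\ufrak_{i_{k+1},L+k+1}$, which by its implicit definition is the unique logical sign tuple on $\Ocal_{i_k,L+k}$ for which the distribution number $\sigma_{\Qcal_{i_{k+1}',H_{i_{k+1}}+L+k-1},A_{i_{k+1}'}}$ equals $1$. Then I read off $\Tree_{i_k}(\Pcal_{i_k}, \cdot) = \Ifrak_{\nfrak_k}$ from the sign-tuple data $S_{n,i_k}$ of $\Rfrak$, rotate the length-$l$ sequence into its lexicographically minimal representative, and verify its minimal period via Lemma~\ref{periodLem}. Each such triple $([\vec{\nfrak}], l, 1)$ is appended to a growing list $\Nfrak'$ together with bookkeeping for the underlying block.

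Finally, I collapse $\Nfrak'$ into $\Nfrak$: I sort $\Nfrak'$ lexicographically by the padded encoding described in Remark~\ref{necklaceListRem}(3) (using Lemma~\ref{complexitiesLem}(10), hence $O(d L 2^{d^2\mpe(q-1)+dL})$ keys of bit length $O(L \log q)$) and merge adjacent entries with equal first two coordinates, summing the multiplicity contributions $|\Bcal(\Wcal_{i,L},\vec{\sigma})|$ (computed via $\sigma_{\Wcal_{i,L},\mathbf{0}}$). Output $\vec{\Dfrak}$ and the sorted $\Nfrak$.

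The main obstacle will be the complexity bookkeeping rather than any conceptual novelty. The dominant cost comes from the loop over $\vec{\sigma}$: for each of $O(d L 2^{d^2\mpe(q-1)+dL})$ blocks, the computation of the cyclic sequence requires $O(L)$ distribution-number evaluations, each of which — by Lemma~\ref{distNumLem} applied to partitions of arithmetic complexity $O(d^2\mpe(q-1)+dL)$ — costs
\[
O((d^2\mpe(q-1)+dL)\, 2^{d^2\mpe(q-1)+dL}\, \log^{1+o(1)}{q})
\]
bit operations, but searching for the correct preimage under $\ufrak$ at each step introduces an additional factor of $2^{d\mpe(q-1)}$ (coming from the free sign components not pinned down by $\vec{\sigma}$). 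Multiplying these factors, together with the final sort, yields the bit-operation bound $8^{d^2\mpe(q-1)+dL} 2^{d\mpe(q-1)}(d^3 L \mpe(q-1) + d^2 L^2) \log^{1+o(1)}{q}$ claimed in the theorem. The bound on $|\Nfrak|$ follows because each triple arises from at least one block of some $\Wcal_{i,L}$, and there are only $O(dL 2^{d^2\mpe(q-1)+dL})$ such blocks in total. The conversion to a Las Vegas dual complexity is then a direct application of Lemma~\ref{LVComplexityLem}, with the summand $d \log^{8+o(1)}{q}$ from Theorem~\ref{complexitiesTheo}(2) reducing to $d \log^{3+o(1)}{q}$ because no queries beyond those in Step~1 are used here.
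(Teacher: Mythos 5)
Your high-level plan---compute the partition-tree register via Theorem~\ref{complexitiesTheo}(2), build $\Qcal_{i,H_i+L-1}$, $\Vcal_{i,L}$ and $\Wcal_{i,L}$, loop over their nonempty periodic blocks, extract cyclic sequences of tree indices via Proposition~\ref{refinedExplicitProp}(2), then sort and merge into $\Nfrak$---matches the paper. But your complexity bookkeeping contains errors, and in particular you are missing the crucial preprocessing step of computing the functions $\ufrak_{i_{-t},L-t}$ \emph{once} as stored lookup tables, as the paper does at a one-time cost of $O(8^{d^2\mpe(q-1)+dL}(d^2\mpe(q-1)+dL)\log^{1+o(1)}{q})$ per $(i,\ell)$. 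You instead evaluate each $\ufrak$ on the fly inside the per-block loop by searching for the unique preimage. That search is over $\Ocal_{i_k,L+k}$, which has up to $2^{(H_i+L-1)d}\in O(2^{d^2\mpe(q-1)+dL})$ candidates---\emph{not} $2^{d\mpe(q-1)}$ as you assert: the $\vec{\xi}$-component of each candidate is pinned to $\vec{\xi}_{i',H_i}$, so there are no ``free'' $H_i$ bits to account for; the search space is the whole candidate set. Carrying this search out anew for $O(L)$ steps inside each of $O(dL2^{d^2\mpe(q-1)+dL})$ blocks, with each candidate costing a distribution-number evaluation of $O((d^2\mpe(q-1)+dL)2^{d^2\mpe(q-1)+dL}\log^{1+o(1)}q)$ by Lemma~\ref{distNumLem}, gives $O((d^3L^2\mpe(q-1)+d^2L^3)8^{d^2\mpe(q-1)+dL}\log^{1+o(1)}q)$---an extra factor of $L$ over the claimed bound. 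Since $L$ may be as large as $q-1$ while $2^{d\mpe(q-1)}$ may be as small as $2^d$, this is not absorbed. The precomputation is what removes the extra $L$, and is not optional.

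Relatedly, the factor $2^{d\mpe(q-1)}$ in the stated bound is there to majorize the $d^3\mpe(q-1)2^{(3d^2+d)\mpe(q-1)+2d}\log^{1+o(1)}q$ bit-operation cost of the partition-tree register from Theorem~\ref{complexitiesTheo}(2), not any preimage-search cost. Your claim that ``the classical summand $d\log^{3+o(1)}q$ absorbs'' the register call is also off: that summand comes, via Lemma~\ref{LVComplexityLem}, from converting the $d$ fdl queries to bit operations; the register's (much larger) classical cost is absorbed in the dominant exponential term. And your stated per-step multiplication (blocks $\times$ steps $\times$ per-step cost $\times$ $2^{d\mpe(q-1)}$) produces $4^{d^2\mpe(q-1)+dL}$ with an excess $L$, not the $8^{d^2\mpe(q-1)+dL}$ in the theorem---a further sign that the bookkeeping does not line up with the claimed bound.
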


\begin{proof}
First, we compute $\overline{f}$, the affine maps $A_i$ and a partition-tree register
\[
((\Zcal_i)_{i=0,1,\ldots,d-1},((\Dfrak_n,(S_{n,i})_{i=0,1,\ldots,d}))_{n=0,1,\ldots,N})
\]
for $f$ with $N\in O(d2^{d^2\mpe(q-1)+d})$; the desired recursive tree description list $\vec{\Dfrak}$ is a part of this. By Proposition \ref{faiProp} and Theorem \ref{complexitiesTheo}(2), these computations can be carried out within $q$-bounded query complexity
\[
(d^3\mpe(q-1)2^{(3d^2+d)\mpe(q-1)+2d}\log^{1+o(1)}{q},d,0,0,0),
\]
which is majorized by the asserted overall $q$-bounded query complexity for computing $\vec{\Dfrak}$ and $\Nfrak$ (it is this term which necessitates the inclusion of the factor $2^{d\mpe(q-1)}$ in the bound on the overall bit operation cost). Moreover, by the proof of Theorem \ref{complexitiesTheo}(2), the following are computed (and may be stored) as part of this:
\begin{itemize}
\item the cycles of $\overline{f}$ and a CRL-list $\overline{\Lcal}$ of $\overline{f}$;
\item the parameter $H_i$ for each $\overline{f}$-periodic index $i\in\{0,1,\ldots,d-1\}$.
\end{itemize}
Until further notice, we assume that $(i,\ell)\in\overline{\Lcal}$ with $i<d$ is fixed (the case $i=d$ is easy to deal with separately and will be \enquote{tacked on} at the end of this proof). As usual, we let $(i_0,i_1,\ldots,i_{\ell-1})$ with $i=i_0$ be the $\overline{f}$-cycle of $i$, and set $i_t:=i_{t\bmod{\ell}}$ for arbitrary $t\in\IZ$. We analyze the bit operation cost of counting the isomorphism types of connected components of $\Gamma_f$ that intersect $\bigcup_{t=0}^{d-1}{C_{i_t}}$ (i.e., that may be represented by a periodic vertex in one of the cosets $C_{i_t}$). We note that for each $t\in\{0,1,\ldots,\ell-1\}$, one can directly read off a spanning congruence sequence for $\Ucal_{i_t}$, of length $H_i\leq d\mpe(q-1)$, from the partition-tree register computed above. To proceed, we need to determine a spanning congruence sequence of $\Qcal_{i_{-t},H_i+L-t-1}$ for $t=0,1,\ldots,L-1$. By the definitions of $\Rcal_i$ and $\Qcal_{i,h}$ from Subsection \ref{subsec3P3}, we have
\[
\Qcal_{i_{-t},H_i+L-t-1}=\Rcal_{i_{-t}}\wedge\Pfrak'(\Qcal_{i_{-t-1},H_i+L-t-2},A_{i_{-t-1}}).
\]
Moreover, we observe that
\begin{align*}
&\Pfrak'(\Qcal_{i_{-t-1},H_i+L-t-2},A_{i_{-t-1}})
=\Pfrak'\left(\bigwedge_{j=0}^{H_i+L-t-2}{\lambda_{i_{-t-1-j}}^j(\Rcal_{i_{-t-1-j}})}\wedge\Ucal_{i_{-t-1}},A_{i_{-t-1}}\right) \\
&=\bigwedge_{j=0}^{H_i+L-t-2}{\lambda_{i_{-t-1-j}}^{j+1}(\Rcal_{i_{-t-1-j}})}\wedge\Ucal_{i_{-t}}
=\bigwedge_{j=1}^{H_i+L-t-1}{\lambda_{i_{-t-j}}^j(\Rcal_{i_{-t-j}})}\wedge\Ucal_{i_{-t}},
\end{align*}
whence
\[
\Qcal_{i_{-t},H_i+L-t-1}=\bigwedge_{j=0}^{H_i+L-t-1}{\lambda_{i_{-t-j}}^j(\Rcal_{i_{-t-j}})}\wedge\Ucal_{i_{-t}}.
\]
Now, from our partition-tree register, we can directly read off a spanning congruence sequence for $\lambda_{i_t}^j(\Rcal_{i_t})$, of length $n_{i_t}\leq d$, for each $t=0,1,\ldots,\ell-1$ and each $j=0,1,\ldots,H_i$, which altogether takes only $O(d^3\mpe(q-1)\log{q})$ bit operations for copying. Moreover, for any fixed $t\in\{0,1,\ldots,\ell-1\}$, we can compute a spanning congruence sequence for
\[
\lambda_{i_t}^j(\Rcal_{i_t})=\lambda(\lambda_{i_t}^{j-1}(\Rcal_{i_t}),A_{i_{t+j-1}})
\]
successively for $j=H_i+1,H_i+2,\ldots,H_i+L-1$. For each given $t$ and $j$, this takes $O(d\log^{1+o(1)}{q})$ bit operations, and thus for all $t$ and $j$ together, it takes $O(d^2L\log^{1+o(1)}{q})$ bit operations. Once all of these spanning congruence sequences have been computed, one can paste together such a sequence for a single partition of the form $\Qcal_{i_{-t},H_i+L-t-1}$ using $O((H_i+L)d\log{q})\subseteq O((d^2\mpe(q-1)+dL)\log{q})$ bit operations. Doing so for all $t=0,1,\ldots,L-1$ takes $O((d^2L\mpe(q-1)+dL^2)\log{q})$ bit operations.

Our next goal is to compute the function
\[
\ufrak_{i_{-t},L-t}:\Ocal_{i_{-t},L-t}\rightarrow\Ocal_{i_{-t-1},L-t-1}
\]
for $t=0,1,\ldots,L-2$. To that end, we first compute the set
\begin{align*}
&\Ocal_{i_{-t},L-t}= \\
&\{\diamond_{k=0}^{H_i+L-t-1}{\vec{o'_k}}\in\{\emptyset,\neg\}^{n_{i_{-t}}+n_{i_{-t+1}}+\cdots+n_{i_{-H_i-L+1}}}: \\
&\Bcal(\Qcal_{i_{-t},H_i+L-t-1},\diamond_{k=0}^{H_i+L-t-1}{\vec{o'_k}}\diamond\vec{\xi}_{i_{-t},H_i})\not=\emptyset\}
\end{align*}
for $t=0,1,\ldots,L-1$. To do so, we go through the
\[
O(2^{d(H_i+L-t)})\subseteq O(2^{d^2\mpe(q-1)+dL-dt})
\]
tuples $\diamond_{k=0}^{H_i+L-t-1}{\vec{o'_k}}$, and for each of them, we compute the cardinality of the block
\[
\Bcal(\Qcal_{i_{-t},H_i+L-t-1},\diamond_{k=0}^{H_i+L-t-1}{\vec{o'_k}}\diamond\vec{\xi}_{i_{-t},H_i}).
\]
Following the ideas leading to Proposition \ref{zeroBlockProp}, this cardinality is equal to the distribution number
\begin{equation}\label{specialDistNumEq}
\sigma_{\Qcal_{i_{-t},H_i+L-t-1},\mathbf{0}}(\diamond_{k=0}^{H_i+L-t-1}{\vec{o'_k}}\diamond\vec{\xi}_{i_{-t},H_i},(\emptyset,\ldots,\emptyset))
\end{equation}
of $\Qcal_{i_{-t},H_i+L-t-1}$ under the constantly zero affine function $\mathbf{0}$. By the proof of Lemma \ref{distNumLem} and the facts that
\begin{itemize}
\item the number of spanning congruences of $\Qcal_{i_{-t},H_i+L-t-1}$ we are using is at most $d(H_i+L)+H_i\leq d^2\mpe(q-1)+d\mpe(q-1)+dL\in O(d^2\mpe(q-1)+dL)$,
\item the subsets $J$ we need to loop over never contain any index corresponding to a logical sign for one of the $H_i$ spanning congruences of $\Ucal_{i_{-t}}$, because $\vec{\xi}_{i_{-t},H_i}$ consists only of positive logical signs, and
\item $d(H_i+L)\leq d^2\mpe(q-1)+dL$,
\end{itemize}
we conclude that the complexity of computing the distribution number (\ref{specialDistNumEq}) is in
\begin{equation}\label{specialDistNumCompEq}
O(2^{d^2\mpe(q-1)+dL}(d^2\mpe(q-1)+dL)\log^{1+o(1)}{q}).
\end{equation}
In summary, computing all sets $\Ocal_{i_{-t},L-t}$ for $t=0,1,\ldots,L-1$ takes
\begin{align*}
&O\left(\sum_{t=0}^{L-1}{4^{d^2\mpe(q-1)+dL}2^{-dt}(d^2\mpe(q-1)+dL)\log^{1+o(1)}{q}}\right) \\
\subseteq &O(4^{d^2\mpe(q-1)+dL}(d^2\mpe(q-1)+dL)\log^{1+o(1)}{q}).
\end{align*}
bit operations.

Concerning the computation of the functions $\ufrak_{i_{-t},L-t}$ themselves, we note that for a given $t$ and argument $\diamond_{k=0}^{H_i+L-t-1}{\vec{o'_k}}$ of $\ufrak_{i_{-t},L-t}$, the associated function value $\ufrak_{i_{-t},L-t}(\diamond_{k=0}^{H_i+L-t-1}{\vec{o'_k}})$ is the unique tuple $\diamond_{k=0}^{H_i+L-t-2}{\vec{o_k}}\in\Ocal_{i_{-t-1},L-t-1}$ such that the distribution number
\[
\sigma_{\Qcal_{i_{-t-1},H_i+L-t-2},A_{i_{-t-1}}}(\diamond_{k=0}^{H_i+L-t-2}{\vec{o_k}}\diamond\vec{\xi}_{i_{-t-1},H_i},\diamond_{k=1}^{H_i+L-t-1}{\vec{o'_k}}\diamond\vec{\xi}_{i_{-t},H_i})
\]
is equal to $1$ (and, according to the proof of Lemma \ref{refinedLem}, that distribution number is $0$ for all other choices of $\diamond_{k=0}^{H_i+L-t-2}{\vec{o_k}}$). Therefore, in order to compute each value of every function $\ufrak_{i_{-t},L-t}$ for $t=0,1,\ldots,L-2$, we need to carry out
\[
O\left(\sum_{t=0}^{L-2}{4^{d^2\mpe(q-1)+dL-dt}2^{-d}}\right)\subseteq O\left(4^{d^2\mpe(q-1)+dL}\right)
\]
computations of a distribution number of $\Qcal_{i_{-t-1},H_i+L-t-2}$ of the above form, and as above (this time using that all entries of $\vec{\xi}_{i_{-t-1},H_i}$ are the positive logical sign), the bit operation cost of each individual such distribution number computation is (\ref{specialDistNumCompEq}). Therefore, we end up with a total bit operation cost of
\[
O(8^{d^2\mpe(q-1)+dL}(d^2\mpe(q-1)+dL)\log^{1+o(1)}{q})
\]
for computing all functions $\ufrak_{i_{-t},L-t}$.

Next, we compute the set $\Cfrak_{i,L}$, and for each $l\in\Cfrak_{i,L}$, we compute the $s$-congruence $\eta_{i,l}(x)$. By Lemma \ref{complexitiesLem}(1,3,4,8), this takes $O((d+L)\log^{1+o(1)}{q})$ bit operations altogether if $\Acal_{i,l}=(A_{i_0}A_{i_1}\cdots A_{i_{\ell-1}})^{l/\ell}=\Acal_i^{l/\ell}$, with linear coefficient $\overline{\alpha}_{i,l}$ and constant coefficient $\overline{\beta}_{i,l}$, is stored for each $l$ such that $\ell\mid l$, then computed for the next larger relevant value, $l+\ell$, using the formula $\Acal_{i,l+\ell}=\Acal_{i,l}\Acal_i$ whenever $\ell\mid l$. With these computations, we have established a spanning congruence sequence for $\Wcal_{i,L}$, of length at most $d(H_i+L)+H_i+L\leq d^2\mpe(q-1)+d\mpe(q-1)+dL+L$. Now, we go through the $O(2^{d(H_i+L)}L)\subseteq O(2^{d^2\mpe(q-1)+dL}L)$ logical sign tuples $\diamond_{k=0}^{H_i+L-1}{\vec{o'_k}}\diamond\vec{\nu}_{i,L,l}$ that parametrize subsets of $\IZ/s\IZ$ of the form $\Bcal(\Wcal_{i,L},\diamond_{k=0}^{H_i+L-1}{\vec{o'_k}}\diamond\vec{\xi}_{i,H_i}\diamond\vec{\nu}_{i,L,l})$ -- we observe that the non-empty such sets are just those blocks of $\Wcal_{i,L}$ that consist of $f$-periodic points. For each such tuple, we compute
\[
\mfrak_i(\diamond_{k=0}^{H_i+L-1}{\vec{o'_k}}\diamond\vec{\nu}_{i,L,l}):=|\Bcal(\Wcal_{i,L},\diamond_{k=0}^{H_i+L-1}{\vec{o'_k}}\diamond\vec{\xi}_{i,H_i}\diamond\vec{\nu}_{i,L,l})|
\]
as\phantomsection\label{not301} the distribution number
\[
\sigma_{\Wcal_{i,L},\mathbf{0}}(\diamond_{k=0}^{H_i+L-1}{\vec{o'_k}}\diamond\vec{\xi}_{i,H_i}\diamond\vec{\nu}_{i,L,l},(\emptyset,\emptyset,\ldots,\emptyset)),
\]
which costs
\begin{align*}
&O(2^{d(H_i+L)+L}(d(H_i+L)+H_i+L)\log^{1+o(1)}{q}) \\
&\subseteq O(2^{d^2\mpe(q-1)+dL+L}(d^2\mpe(q-1)+dL)\log^{1+o(1)}{q})
\end{align*}
bit operations per cardinality to compute. If $\mfrak_i(\diamond_{k=0}^{H_i+L-1}{\vec{o'_k}}\diamond\vec{\nu}_{i,L,l})=0$, then we discard that case. Otherwise, we store the computed block size. Overall, this process takes
\begin{align*}
&O(4^{d^2\mpe(q-1)+dL}2^L(d^2L\mpe(q-1)+dL^2)\log^{1+o(1)}{q}) \\
\subseteq &O(8^{d^2\mpe(q-1)+dL}(d^2L\mpe(q-1)+dL^2)\log^{1+o(1)}{q})
\end{align*}
bit operations.

We recall that in view of Lemmas \ref{refinedLem} and \ref{refinedExplicitLem}, the blocks of $\Wcal_{i,L}$ that consist of $f$-periodic points control the digraph isomorphism type of the connected component of $\Gamma_f$ containing any given point in the block. We use Lemma \ref{refinedExplicitLem} to compute, for each tuple $\diamond_{k=0}^{H_i+L-1}{\vec{o'_k}}\diamond\vec{\nu}_{i,L,l}$ whose $\mfrak_i$-value (the associated block size) is non-zero, the unique cyclic sequence $[\nfrak_1,\nfrak_2,\ldots,\nfrak_{l'}]$ with entries in $\{0,1,\ldots,N\}$ and of minimal period $l'$ such that the cyclic sequence of rooted tree isomorphism types characterizing the corresponding digraph isomorphism type is equal to
\[
[\diamond_{m=1}^{l/l'}(\Ifrak_{\nfrak_1},\Ifrak_{\nfrak_2},\ldots,\Ifrak_{\nfrak_{l'}})].
\]
To that end, for fixed $\diamond_{k=0}^{H_i+L-1}{\vec{o'_k}}\diamond\vec{\nu}_{i,L,l}$, we compute the sequence
\[
(\nfrak^{(-l+1)},\nfrak^{(-l+2)},\ldots,\nfrak^{(0)})
\]
where $\nfrak^{(-t)}$ for $t\in\{0,1,\ldots,l-1\}$ is the unique index in $\{0,1,\ldots,N\}$ such that
\begin{align*}
&\Tree_{i_{-t}}(\Pcal_{i_{-t}},(\proj_{i_{-t,L-t}}\circ\ufrak_{i_{-t+1},L-t+1}\circ\ufrak_{i_{-t+2},L-t+2}\circ\cdots\circ\ufrak_{i_0,L})(\diamond_{k=0}^{H_i+L-1}{\vec{o'_k}})) \\
\cong &\Ifrak_{\nfrak^{(-t)}}.
\end{align*}
For each fixed $t$, this requires us to compute the logical sign tuple
\begin{equation}\label{logicalSignTupleEq}
(\proj_{i_{-t,L-t}}\circ\ufrak_{i_{-t+1},L-t+1}\circ\ufrak_{i_{-t+2},L-t+2}\circ\cdots\circ\ufrak_{i_0,L})(\diamond_{k=0}^{H_i+L-1}{\vec{o'_k}}).
\end{equation}
Now, we can compute
\[
(\ufrak_{i_{-t+1},L-t+1}\circ\ufrak_{i_{-t+2},L-t+2}\circ\cdots\circ\ufrak_{i_0,L})(\diamond_{k=0}^{H_i+L-1}{\vec{o'_k}})
\]
simply by looking up the pre-computed values of the functions $\ufrak_{i_{-j},L-j}$, which takes $O(t(H_i+L)d)\subseteq O(d^2L\mpe(q-1)+dL^2)$ bit operations. The $\proj_{i_t,L-t}$-value of this tuple is a projection onto an initial segment, which can be read off using $O(d(H_i+1))\subseteq O(d^2\mpe(q-1))$ bit operations. Finally, we need to look up the number $\nfrak^{(-t)}$ in our partition-tree register -- it is characterized by the inclusion
\[
(\proj_{i_{-t,L-t}}\circ\ufrak_{i_{-t+1},L-t+1}\circ\ufrak_{i_{-t+2},L-t+2}\circ\cdots\circ\ufrak_{i_0,L})(\diamond_{k=0}^{H_i+L-1}{\vec{o'_k}})\in S_{\nfrak^{(-t)},i_{-t},H_i}.
\]
Therefore, in order to determine and store $\nfrak^{(-t)}$, we need to go through the pairwise disjoint sets $S_{n,i_{-t},H_i}$ for $n=0,1,\ldots,N$ until we find the one that contains the logical sign tuple (\ref{logicalSignTupleEq}). Because $\sum_{n=0}^N{|S_{n,i_{-t},H_i}|}\leq 2^{d(H_i+1)}\in O(2^{d^2\mpe(q-1)+d})$, and each element of each set $S_{n,i_{-t},H_i}$ is a logical sign tuple of length in $O(d(H_i+1))\subseteq O(d^2\mpe(q-1))$, it takes
\[
O(N+2^{d^2\mpe(q-1)+d}d^2\mpe(q-1)+\log{q})\subseteq O(d^2\mpe(q-1)2^{d^2\mpe(q-1)+d}+\log{q})
\]
bit operations to determine $\nfrak^{(-t)}$ for our fixed value of $t$. In total, the computation of the sequence $\vec{\nfrak}=(\nfrak^{(-l+1)},\nfrak^{(-l+2)},\ldots,\nfrak^{(0)})$ for a fixed value of $\diamond_{k=0}^{H_i+L-1}{\vec{o'_k}}\diamond\vec{\nu}_{i,L,l}$ takes
\[
O(d^2L^2\mpe(q-1)+dL^3+d^2L\mpe(q-1)2^{d^2\mpe(q-1)+d}+L\log{q})
\]
bit operations. The cyclic sequence $[\nfrak_1,\nfrak_2,\ldots,\nfrak_{l'}]$ we are looking for is simply $[\nfrak^{(-l+1)},\nfrak^{(-l+2)},\ldots,\nfrak^{(-l+l')}]$ where $l'=\minper(\vec{\nfrak})$, which can be computed in
\[
O(l^{1+o(1)}l_{\bit})\subseteq O(L^{1+o(1)}\log{q})
\]
bit operations by Lemma \ref{periodLem}. Finally, in representing this cyclic sequence, we would like to replace $(\nfrak_1,\ldots,\nfrak_{l'})$ by the lexicographically minimal ordered sequence in the same cyclic equivalence class. This takes another $O(L^2\log{L}\log{q})$ bit operations per such sequence. In total, the computation of the cyclic sequence $[\nfrak_1,\nfrak_2,\ldots,\nfrak_{l'}]$ for all of the $O(2^{d(H_i+L)}L)\subseteq O(2^{d^2\mpe(q-1)+dL}L)$ logical sign tuples $\diamond_{k=0}^{H_i+L-1}{\vec{o'_k}}\diamond\vec{\nu}_{i,L,l}$ takes
\begin{align*}
&O(2^{d^2\mpe(q-1)+dL}L\cdot(d^2L^2\mpe(q-1)+dL^3+d^2L\mpe(q-1)2^{d^2\mpe(q-1)+d} \\
&+L^2\log{L}\log{q}))\subseteq O(8^{d^2\mpe(q-1)+dL}\log^{1+o(1)}{q})
\end{align*}
bit operations. In summary, the bit operation cost of the computations described after fixing $(i,\ell)\in\overline{\Lcal}\setminus\{(d,1)\}$ for all of those $O(d)$ pairs $(i,\ell)$ together is in
\[
O(8^{d^2\mpe(q-1)+dL}(d^3L\mpe(q-1)+d^2L^2)\log^{1+o(1)}{q}).
\]
At this point, we have computed, for each $(i,\ell)\in\overline{\Lcal}\setminus\{(d,1)\}$, a set $\Nfrak_i$ of the form
\begin{align*}
&\Nfrak_i= \\
&\{(\diamond_{k=0}^{H_i+L-1}{\vec{o'_k}}\diamond\vec{\nu}_{i,L,l},[\nfrak_1,\nfrak_2,\ldots,\nfrak_{l'}],\mfrak_i(\diamond_{k=0}^{H_i+L-1}{\vec{o'_k}}\diamond\vec{\nu}_{i,L,l})): \\
&\vec{o'_k}\in\{\emptyset,\neg\}^{n_{i_{-k}}}\text{ for }k=0,1,\ldots,H_i+L-1,l\in\Cfrak_{i,L},\text{ and} \\
&\mfrak_i(\diamond_{k=0}^{H_i+L-1}{\vec{o'_k}}\diamond\vec{\nu}_{i,L,l})>0\}.
\end{align*}
We note that for each element of $\Nfrak_i$, we have $l'\mid l$ and $\minper([\nfrak_1,\ldots,\nfrak_{l'}])=l'$. We also observe that while $\Nfrak_i$ is \emph{not} the tree necklace list, relative to $\vec{\Ifrak}$, for the restriction of $f$ to the union of all cosets $C_j$ where $j$ is a vertex of the connected component of $\Gamma_{\overline{f}}$ containing $i$, that tree necklace list could be easily derived from $\Nfrak_i$ as follows. If we fix $l$ and $[\nfrak_1,\ldots,\nfrak_{l'}]$ and add up all the third entries of the corresponding triples in $\Nfrak_i$ (i.e., of those triples where the first entry has terminal segment $\vec{\nu}_{i,L,l}$ and the second entry is $[\nfrak_1,\ldots,\nfrak_{l'}]$), then we end up with the exact number of connected components of $\Gamma_f$ that are characterized by the cyclic sequence of rooted tree isomorphism types $[\diamond_{m=1}^{l/l'}{(\Ifrak_{\nfrak_1},\ldots,\Ifrak_{\nfrak_{l'}})}]$ and are contained in the union of all cosets $C_j$ for $j$ in the connected component of $\Gamma_{\overline{f}}$ containing $i$. This observation also implies that we can compute the full tree necklace list $\Nfrak$ of $f$ relative to $\vec{\Ifrak}$ as follows.

We start by setting $\Nfrak:=\emptyset$. Throughout the process described below, $\Nfrak$ is a set of triples $([\nfrak_1,\nfrak_2,\ldots,\nfrak_{l'}],l,\mfrak)$ such that
\begin{itemize}
\item $|\Nfrak|\leq\sum_{(i,\ell)\in\overline{\Lcal}\setminus\{(d,1)\}}{|\Nfrak_i|}\in O(dL2^{d^2\mpe(q-1)+dL})$;
\item $\nfrak_1,\ldots,\nfrak_{l'}\in\{0,1,\ldots,N\}$;
\item $\minper([\nfrak_1,\ldots,\nfrak_{l'}])=l'$;
\item $[\diamond_{m=1}^{l/l'}{(\Ifrak_{\nfrak_1},\Ifrak_{\nfrak_2},\ldots,\Ifrak_{\nfrak_{l'}})}]$ is a cyclic sequence of rooted tree isomorphism types that characterizes at least one connected component of $\Gamma_f$ that is contained in $\IF_q^{\ast}$; and
\item $\mfrak\in\IN^+$ is the exact number of connected components of $\Gamma_f$ that are contained in $\IF_q^{\ast}$ and are characterized by $[\diamond_{m=1}^{l/l'}{(\Ifrak_{\nfrak_1},\Ifrak_{\nfrak_2},\ldots,\Ifrak_{\nfrak_{l'}})}]$.
\end{itemize}
We observe that the first entry of each triple in $\Nfrak$ is a cyclic sequence of length in $O(L)$ each entry of which has bit length in $O(\log{N})\subseteq O(d^2\mpe(q-1))$, and the second and third entries of such a triple may be represented by bit strings of length in $O(\log{L})$ and $O(\log{q})$, respectively. Now, to get an almost-final form of $\Nfrak$, we loop over the pairs $(i,\ell)\in\overline{\Lcal}\setminus\{(d,1)\}$, and for each such pair, we loop over the elements of $\Nfrak_i$; this double loop has $O(dL2^{d^2\mpe(q-1)+dL})$ individual iterations. In each iteration, we add at most one new element to $\Nfrak$, which explains the above bound on $|\Nfrak|$ that is valid throughout the process. Specifically, an iteration consists of the following steps. Associated with the triple in $\Nfrak_i$ we are considering, we have the parameters $l$, which can be read off from the first entry $\diamond_{k=0}^{H_i+L}{\vec{o'_k}}\diamond\vec{\nu}_{i,L,l}$ of the triple using $O(l\log{l})\subseteq O(L^{1+o(1)})$ bit operations (by scanning to find the first positive logical sign in $\vec{\nu}_{i,L,l}$ and incrementing a counter during that process), and $[\nfrak_1,\ldots,\nfrak_{l'}]$. We check whether these already occur as the first two entries of some element of $\Nfrak$, which takes
\[
O(dL2^{d^2\mpe(q-1)+dL}\cdot L\log{q})=O(dL^22^{d^2\mpe(q-1)+dL}\log{q})
\]
bit operations. If this is the case, then we end the current iteration, having added nothing to $\Nfrak$. Otherwise, we compute the sum $\mfrak$ of the third entries of all triples in $\bigcup_{(j,\ell')\in\overline{\Lcal}\setminus\{(d,1)\}}{\Nfrak_j}$ that have the parameters $l$ and $[\nfrak_1,\ldots,\nfrak_{l'}]$ associated with them. This takes
\begin{align*}
&O(dL2^{d^2\mpe(q-1)+dL}(L\log{L}+L\log{q})=O(dL^22^{d^2\mpe(q-1)+dL}\log{q})
\end{align*}
bit operations. Then we add $([\nfrak_1,\ldots,\nfrak_{l'}],l,\mfrak)$ to $\Nfrak$ as a new element and end the current iteration. Overall, this double loop takes
\begin{align*}
&O(dL2^{d^2\mpe(q-1)+dL}\cdot dL^22^{d^2\mpe(q-1)+dL}\log{q})=O(d^2L^34^{d^2\mpe(q-1)+dL}\log{q}) \\
\subseteq&\,O(8^{d^2\mpe(q-1)+dL}\log{q})
\end{align*}
bit operations.

At the end of the double loop, $\Nfrak$ is almost equal to the tree necklace list for $f$ relative to $\vec{\Ifrak}$; only the connected component of $\Gamma_f$ containing $0_{\IF_q}$, which is characterized by the cyclic sequence $[\Tree_{\Gamma_f}(0_{\IF_q})]$, has not yet been accounted for. To find the index number $\nfrak\in\{0,1,\ldots,N\}$ of $\Tree_{\Gamma_f}(0_{\IF_q})$ with respect to our partition-tree register, we note that $\nfrak$ is characterized by the equality $S_{\nfrak,d}=\emptyset$ (the positive logical sign). Therefore, we only need an additional $O(N)\subseteq O(d2^{d^2\mpe(q-1)+d})$ bit operations to find $\nfrak$. Then, we need to check whether $[\nfrak]$ and $1$ already occur as the first two entries of some triple in $\Nfrak$, which takes
\[
O(dL2^{d^2\mpe(q-1)+dL}\cdot\log{q})\subseteq O(8^{d^2\mpe(q-1)+dL}\log{q})
\]
bit operations. If so, we increase the third entry of that triple by $1$ and halt. Otherwise, we add $([\nfrak],1,1)$ to $\Nfrak$ as a new element.

Finally, we sort the computed array representing $\Nfrak$ lexicographically. By Lemma \ref{complexitiesLem}(10), since $|\Nfrak|\in O(dL2^{d^2\mpe(q-1)+dL})$ and each entry of the array has bit length in $O(L\log{q})$, this takes
\[
O(dL^2(d^2\mpe(q-1)+dL)2^{d^2\mpe(q-1)+dL}\log{q})\subseteq O(8^{d^2\mpe(q-1)+dL}\log{q})
\]
bit operations. We conclude by outputting $(\vec{\Dfrak},\Nfrak)$ and halting.
\end{proof}

Of course, Theorem \ref{lengthsBoundedTheo} is not useful in practice unless the maximum cycle length of a generalized cyclotomic mapping can be computed efficiently. The following proposition takes care of that.

\begin{proppposition}\label{lengthsBoundedProp}
Let $f$ be an index $d$ generalized cyclotomic mapping of $\IF_q$. The maximum cycle length of $f_{\mid\per(f)}$ can be computed within $q$-bounded query complexity
\[
(d\log^{2+o(1)}{q}+d^2\log^2{d},d,1,d,0).
\]
\end{proppposition}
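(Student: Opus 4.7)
The plan is to reduce the computation of the maximum cycle length of $f$ on its periodic points to computations of maximum cycle lengths of affine maps on finite cyclic groups, which in turn factor through prime power components by the Chinese Remainder Theorem. I first compute the induced function $\overline{f}$ and the affine maps $A_i: x \mapsto \alpha_i x + \beta_i$ of $\IZ/s\IZ$ (query complexity $(d\log^{1+o(1)}{q}, d, 0, 0, 0)$ by Proposition \ref{faiProp}), then compute a CRL-list $\overline{\Lcal}$ of $\overline{f}$ along with the cycles of $\overline{f}$ in $O(d^2\log^2{d})$ bit operations (as at the beginning of Subsubsection \ref{subsubsec5P2P1}). By Proposition \ref{crlListConstructProp} and the discussion of Subsection \ref{subsec3P1}, the cycle lengths of $f_{\mid\per(f)}$ are precisely $1$ (for the point $0_{\IF_q}$ if $(d,1) \in \overline{\Lcal}$) and the numbers $\ell \cdot l$ where $(i,\ell) \in \overline{\Lcal}$ with $i < d$ and $l$ ranges over cycle lengths of $\Acal_i = A_{i_0} A_{i_1} \cdots A_{i_{\ell-1}}: x \mapsto \overline{\alpha}_i x + \overline{\beta}_i$ on its periodic points. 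Hence the desired maximum is $\max\{1, \max_{(i,\ell)\in\overline{\Lcal}, i<d} \ell \cdot l^{\max}_i\}$, where $l^{\max}_i$ denotes the maximum cycle length of $\Acal_i$ on $\per(\Acal_i)$.

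Next, I compute all the products $\Acal_i$. Since $\sum_{(i,\ell)\in\overline{\Lcal}}\ell \leq d+1$, and each affine map multiplication costs $O(\log^{1+o(1)}{q})$ bit operations by Lemma \ref{complexitiesLem}(1,3), computing all $\Acal_i$ takes $O(d\log^{1+o(1)}{q})$ bit operations in total. Then I factor $s = (q-1)/d$ via a single mdl query. For each $(i,\ell) \in \overline{\Lcal}$ with $i < d$, I apply a single mord query to obtain the multiplicative orders $\ord_{p^{\nu_p(m_i)}}(\overline{\alpha}_i)$, where $m_i := s/\gcd(\overline{\alpha}_i^L, s)$ for a sufficiently large $L$ (so that $\overline{\alpha}_i$ is a unit modulo $m_i$); the primes dividing $m_i$ are precisely those primes $p \mid s$ with $p \nmid \overline{\alpha}_i$, and $\nu_p(m_i) = \nu_p(s)$ for each such $p$. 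Determining $m_i$ from the factorization of $s$ takes $O(\log^{1+o(1)}{q})$ bit operations by Lemma \ref{complexitiesLem}(3,8), so this step altogether uses $O(d\log^{1+o(1)}{q})$ bit operations and $d$ mord queries.

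The key reduction is that by Lemma \ref{affineTensorProdLem} and Proposition \ref{cyclicTensorProductProp}(1,3), $\per(\Acal_i)$ corresponds under the Chinese Remainder Theorem to $\prod_{p \mid s}\per(\overline{\Acal}_{i,p})$, and the cycle lengths of $\Acal_i$ on its periodic points are the least common multiples of the cycle lengths of the $\overline{\Acal}_{i,p}$ on their periodic points; thus
\[
l^{\max}_i = \lcm_{p \mid s}\, l^{\max}_{i,p},
\]
where $l^{\max}_{i,p}$ is the maximum cycle length of $\overline{\Acal}_{i,p}$ on its periodic points. For primes $p \mid \gcd(\overline{\alpha}_i, s)$, Proposition \ref{primaryDichotomyProp}(1) gives $l^{\max}_{i,p} = 1$. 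For primes $p$ with $p \nmid \overline{\alpha}_i$, the map $\overline{\Acal}_{i,p}$ is an affine permutation of $\IZ/p^{\nu_p(s)}\IZ$, and $l^{\max}_{i,p}$ is read off in $O(\log^{1+o(1)}(p^{\nu_p(s)}))$ bit operations directly from Table \ref{crlListPrimaryTable}: case-by-case inspection shows that the maximum cycle length in each case is given by a simple closed-form expression in $\ord_{p^{\nu_p(s)}}(\overline{\alpha}_i)$, $p^{\nu_p(s)}$, and $\nu_p^{(\nu_p(s))}(\overline{\beta}_i)$ (for instance, $\ord_{p^{\nu_p(s)}}(\overline{\alpha}_i)$ in case 1, or $p^{\nu_p(s) - \nu_p^{(\nu_p(s))}(\overline{\beta}_i)}$ in case 2). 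For each $i$, assembling $l^{\max}_i$ as the lcm over primes $p \mid s$ costs $O(\log^{2+o(1)}{q})$ bit operations by Lemma \ref{complexitiesLem}(3,8), totalling $O(d\log^{2+o(1)}{q})$ bit operations across all $i$.

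Finally I compute $\max\{1, \max_{(i,\ell)} \ell \cdot l^{\max}_i\}$ with $O(d\log^{1+o(1)}{q})$ additional bit operations. Summing, the total $q$-bounded query complexity is $(d\log^{2+o(1)}{q} + d^2\log^2{d}, d, 1, d, 0)$. The main subtlety is ensuring the mord query can be invoked legitimately on each $\overline{\alpha}_i$ (since $\overline{\alpha}_i$ need not be a unit modulo $s$), which is handled by replacing $s$ with $m_i$; the rest is a routine case analysis against Table \ref{crlListPrimaryTable}.
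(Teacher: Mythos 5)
The reduction to computing, per $\overline{f}$-cycle representative $i<d$, the maximum cycle length $l^{\max}_i$ of $\Acal_i$ on its periodic points is correct and matches the paper's strategy, and the claimed complexity tuple is attained. However, the step ``thus $l^{\max}_i=\lcm_{p\mid s}l^{\max}_{i,p}$'' is a genuine gap as written: from the bare fact that every cycle length of $\Acal_i$ is an lcm of component cycle lengths, it does \emph{not} follow that the maximum achievable lcm is the lcm of the component maxima. For a general tensor product of permutations this is false (if the two components each have cycle lengths $\{2,3\}$, the maximum lcm is $6$, not $\lcm(3,3)=3$). What makes your formula correct here is the additional fact --- visible from Table \ref{crlListPrimaryTable} and stated explicitly in the paper's proof --- that for an affine permutation of a finite primary cyclic group $\IZ/p^{\nu_p(s)}\IZ$ the cycle lengths are linearly ordered under divisibility, so that $l^{\max}_{i,p}$ equals the lcm of all cycle lengths of $\overline{\Acal}_{i,p}$, i.e.\ the order of $\overline{\Acal}_{i,p}$ as a permutation. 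You need to state this; once you do, it not only repairs the ``thus'' but also lets you avoid the case-by-case table lookup entirely, since it gives $l^{\max}_i=\ord(\Acal_i\bmod s'_i)$ directly, which the paper then evaluates in one shot as $\ord_{s'_i}(\overline{\alpha}'_i)$ times the additive order modulo $s'_i$ of the translation part of $(\Acal_i\bmod s'_i)^{\ord_{s'_i}(\overline{\alpha}'_i)}$.

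A secondary issue: your cost of $O(\log^{1+o(1)}q)$ bit operations per $i$ for ``determining $m_i$ from the factorization of $s$'' is too low. One must loop over the $O(\log q)$ primes $p\mid s$, and each divisibility check $p\mid\overline{\alpha}_i$ already costs $O(\log^{1+o(1)}q)$, so the correct per-$i$ cost is $O(\log^{2+o(1)}q)$. This is absorbed by the $d\log^{2+o(1)}q$ term in the stated bound, so the final complexity is unaffected, but the intermediate accounting should be fixed.
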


\begin{proof}
This is similar in spirit to Proposition \ref{ctTauProp}, but we obtain a better bound than by using that proposition directly, because there is no need to spell out entire cycle types here. First, we compute $\overline{f}$, the affine maps $A_i$, the cycles of $\overline{f}$ and a CRL-list $\overline{\Lcal}$ of $\overline{f}$. By Proposition \ref{faiProp} and the beginning of Subsubsection \ref{subsubsec5P2P1}, this takes $q$-bounded query complexity
\[
(d\log^{1+o(1)}{q}+d^2\log^2{d},d,0,0,0).
\]
We also factor $s$, using a single $q$-bounded mdl query (i.e., spending $q$-bounded query complexity $(\log{q},0,1,0,0)$).

Now, we loop over the pairs $(i,\ell)\in\overline{\Lcal}\setminus\{(d,1)\}$, and for each of them, we do the following. First, we compute $\Acal_i=A_{i_0}A_{i_1}\cdots A_{i_{\ell-1}}:x\mapsto\overline{\alpha}_ix+\overline{\beta}_i$, taking $O(\ell\log^{1+o(1)}{q})$ bit operations (per $(i,\ell)$) by the beginning of Subsubsection \ref{subsubsec5P2P1}. Next, we determine the largest cycle length of $\Acal_i$ on $\per(\Acal_i)\subseteq\IZ/s\IZ$. To do so, we note that every cycle length of $\Acal_i$ is a least common multiple of cycle lengths of those primary components $\overline{\Acal}_{i,p}=\Acal_i\bmod{p^{\nu_p(s)}}$ where $p\mid s$ and $p\nmid\overline{\alpha}_i$. But by \cite[Tables 3 and 4]{BW22b} (or our Table \ref{crlListPrimaryTable}), the largest cycle length of $\overline{\Acal}_{i,p}$ is equal to the order of $\overline{\Acal}_{i,p}$ in $\Sym(\IZ/p^{\nu_p(s)}\IZ)$, i.e., to the least common multiple of all cycle lengths of $\overline{\Acal}_{i,p}$. It follows that the largest cycle length of $\Acal_i$ is equal to $\ord(\Acal_i\bmod{s'_i})$ where $s'_i=\prod_{p\mid s,p\nmid\overline{\alpha}_i}{p^{\nu_p(s)}}$. We compute this order as follows.

We set $s'_i:=1$ and loop over the $O(\log{q})$ primes $p$ dividing $s$ (which can be read off from the factorization of $s$ computed above). For each $p$, we check whether $p\mid\overline{\alpha}_i$, taking $O(\log^{1+o(1)}{q})$ bit operations by Lemma \ref{complexitiesLem}(3). If so, we skip to the next $p$; otherwise, we overwrite $s'_i:=s'_i\cdot p^{\nu_p(s)}$, taking $O(\log^{1+o(1)}{q})$ bit operations for the multiplication (there is no need to compute the power $p^{\nu_p(s)}$, because it is part of the output of the mdl query used to factor $s$). At the end of this loop over $p$, which has a bit operation cost in $O(\log^{2+o(1)}{q})$, the variable $s'_i$ has the desired value. Now, we compute $\overline{\alpha}'_i:=\overline{\alpha}_i\bmod{s'_i}$\phantomsection\label{not302} and $\overline{\beta}'_i:=\overline{\beta}_i\bmod{s'_i}$\phantomsection\label{not303}, taking $O(\log^{1+o(1)}{q})$ bit operations, to get the affine map $\Acal'_i=\Acal_i\bmod{s'_i}:x\mapsto\overline{\alpha}'_ix+\overline{\beta}'_i$ of $\IZ/s'_i\IZ$. We wish to compute $\ord(\Acal'_i)$. To that end, we first compute the (multiplicative) order $\ord_{s'_i}(\overline{\alpha}'_i)$ with a $q$-bounded mord query. We observe that $\ord_{s'_i}(\overline{\alpha}'_i)$ divides $\ord(\Acal'_i)$, because $(\Acal'_i)^t(x)=(\overline{\alpha}'_i)^tx+c(\overline{\alpha}'_i,\overline{\beta}'_i,t)$ for all $x\in\IZ/s'_i\IZ$ and all $t\in\IZ$. Therefore,
\[
\ord(\Acal'_i)=\ord_{s'_i}(\overline{\alpha}'_i)\cdot\ord((\Acal'_i)^{\ord_{s'_i}(\overline{\alpha}'_i)}).
\]
But $(\Acal'_i)^{\ord_{s'_i}(\overline{\alpha}'_i)}$ is the translation $x\mapsto x+\overline{\beta}''_i$\phantomsection\label{not304} such that
\[
\overline{\beta}''_i=
\begin{cases}
\overline{\beta}'_i, & \text{if }\overline{\alpha}'_i=1, \\
\overline{\beta}'_i\frac{(\overline{\alpha}'_i)^{\ord_{s'_i}(\overline{\alpha}'_i)}-1}{\overline{\alpha}'_i-1}, & \text{otherwise},
\end{cases}
\]
where the formula in the second case is to be evaluated in the ring $\IZ$, although the result is to be viewed as an element of $\IZ/s'_i\IZ$. Because $\ord((\Acal'_i)^{\ord_{s'_i}(\overline{\alpha}'_i)})$ is equal to the additive order of $\overline{\beta}''_i$ modulo $s'_i$, we can work out $\ord(\Acal'_i)$, which coincides with the largest cycle length of $\Acal_i$, using another $O(\log^{2+o(1)}{q})$ bit operations (for computing $\overline{\beta}''_i$, which may involve a power computation, and working out its additive order modulo $s'_i$ via a gcd computation and a division). We conclude this loop by setting $l_i:=\ell\cdot\ord(\Acal'_i)$, which takes $O(\log^{1+o(1)}{q})$ bit operations to compute and is the largest cycle length of $f$ on its periodic points in $\bigcup_{t=0}^{\ell-1}{C_{i_t}}$. This ends our description of the loop over $(i,\ell)\in\overline{\Lcal}\setminus\{(d,1)\}$, which overall takes $q$-bounded query complexity $(d\log^{2+o(1)}{q},0,0,d,0)$, using that $\sum_{(i,\ell)\in\overline{\Lcal}}{\ell}\leq d$.

Finally, the maximum cycle length of $f$ is simply the maximum value among the $l_i$ for $(i,\ell)\in\overline{\Lcal}$, where $l_d:=1$, which takes $O(d\log{q})$ bit operations to compute.
\end{proof}

To conclude, we give the following corollary of Theorem \ref{lengthsBoundedTheo}, which can be seen as the main result of this subsubsection.

\begin{corrrollary}\label{lengthsBoundedCor}
Let $f_1$ and $f_2$ be generalized cyclotomic mappings of $\IF_q$, of index $d_1$ and $d_2$ respectively, and set $d:=\max\{d_1,d_2\}$. Moreover, let $L\in\IN^+$, and denote by $L_1$, respectively $L_2$, the maximum cycle length of $f_1$, respectively of $f_2$, on its periodic points. Then, if $\min\{L_1,L_2\}\leq L$, it can be decided whether $\Gamma_{f_1}\cong\Gamma_{f_2}$ within $q$-bounded query complexity
\begin{align*}
(&8^{d^2\mpe(q-1)+dL}2^{d\mpe(q-1)}(d^3L\mpe(q-1)+d^2L^2)\log^{1+o(1)}{q} \\
&+d^24^{d^2\mpe(q-1)+d}\log^2{q}+d\log^{2+o(1)}{q}, \\
&d,1,d,0),
\end{align*}
and thus within $q$-bounded Las Vegas dual complexity
\begin{align*}
(&8^{d^2\mpe(q-1)+dL}2^{d\mpe(q-1)}(d^3L\mpe(q-1)+d^2L^2)\log^{1+o(1)}{q} \\
&+d^24^{d^2\mpe(q-1)+d}\log^2{q}+d\log^{2+o(1)}{q}+d\log^{7+o(1)}{q}, \\
&d\log^{3+o(1)}{q},d\log{q}).
\end{align*}
\end{corrrollary}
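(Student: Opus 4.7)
The plan is to first compute the maximum cycle lengths $L_1$ and $L_2$ of $f_1$ and $f_2$ respectively, and then to reduce to an application of Theorem \ref{lengthsBoundedTheo} whenever those agree. First, I would apply Proposition \ref{lengthsBoundedProp} to each of $f_1$ and $f_2$; this produces $L_1$ and $L_2$ within $q$-bounded query complexity $(d\log^{2+o(1)}{q}+d^2\log^2{d},d,1,d,0)$ each (a single factorization of $q-1$, which determines the factorizations of both $(q-1)/d_1$ and $(q-1)/d_2$, can be shared between the two runs so as to keep the total number of mdl queries in $O(1)$). Because the maximum cycle length is a digraph isomorphism invariant of a functional graph, if $L_1\not=L_2$ then $\Gamma_{f_1}\not\cong\Gamma_{f_2}$, and the algorithm would output this verdict and halt.

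Otherwise $L_1=L_2\leq L$ by hypothesis, so all cycle lengths of both $f_1$ and $f_2$ are bounded by $L$. I would then apply Theorem \ref{lengthsBoundedTheo} to each of $f_1$ and $f_2$ with this common bound $L$, obtaining for $j=1,2$ a recursive tree description list $\vec{\Dfrak}^{(j)}=(\Dfrak^{(j)}_n)_{n=0,1,\ldots,N_j}$ (with associated rooted tree isomorphism type sequence $\vec{\Ifrak}^{(j)}$) together with the tree necklace list $\Nfrak_j$ for $f_j$ relative to $\vec{\Ifrak}^{(j)}$. Next, I would synchronize $\vec{\Dfrak}^{(1)}$ and $\vec{\Dfrak}^{(2)}$ via Lemma \ref{synchronizeLem}, obtaining $(\vec{\Dfrak}^+,\ifrak)$; then I rewrite $\Nfrak_2$ as a new list $\Nfrak_2'$ by replacing each index $\nfrak$ occurring in the first entry of any triple in $\Nfrak_2$ by $\ifrak(\nfrak)$, overwriting each resulting cyclic sequence with its lexicographically minimal representative, and finally sorting $\Nfrak_2'$ lexicographically (as in the proof of Corollary \ref{necklaceListCor}). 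By Remark \ref{necklaceListRem}(2), $\Gamma_{f_1}\cong\Gamma_{f_2}$ if and only if $\Nfrak_1=\Nfrak_2'$, and since both lists are lexicographically sorted, this equality can be decided by a single linear scan.

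The main remaining task is the complexity accounting. The dominant summand $8^{d^2\mpe(q-1)+dL}2^{d\mpe(q-1)}(d^3L\mpe(q-1)+d^2L^2)\log^{1+o(1)}{q}$ arises directly from the two applications of Theorem \ref{lengthsBoundedTheo}, and the summand $d\log^{2+o(1)}{q}$ comes from the two applications of Proposition \ref{lengthsBoundedProp}. The middle summand $d^24^{d^2\mpe(q-1)+d}\log^2{q}$ is produced by the synchronization step: applying Lemma \ref{synchronizeLem} with $N_1,N_2\in O(d2^{d^2\mpe(q-1)+d})$, $l_{\bit},l'_{\bit}\in O(\log q)$ and bit-length parameter $m\in O(\log q)$ yields a bit-operation cost in $O((d2^{d^2\mpe(q-1)+d})^2\log q)$. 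The rewriting, resorting and comparison of $\Nfrak_2'$ against $\Nfrak_1$ contribute only lower-order terms, since each necklace list has at most $O(dL2^{d^2\mpe(q-1)+dL})$ entries of bit length in $O(L\log q)$. The query totals combine to $O(d)$ fdl queries, $O(1)$ mdl queries, $O(d)$ mord queries, and no prt queries, as stated. Finally, the asserted $q$-bounded Las Vegas dual complexity follows by applying Lemma \ref{LVComplexityLem} to this query complexity.
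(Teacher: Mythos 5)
Your proof takes the same route as the paper's: compute and compare the maximum cycle lengths $L_1,L_2$ via Proposition \ref{lengthsBoundedProp}, halt with a negative verdict if they differ, otherwise apply Theorem \ref{lengthsBoundedTheo} to both $f_1$ and $f_2$, synchronize the two recursive tree description lists via Lemma \ref{synchronizeLem}, then rewrite, sort and linearly compare the necklace lists. The overall structure, the query accounting, and the observation that the rewriting/sorting/comparison steps are lower order are all in agreement with the paper.

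However, your stated synchronization cost is an underestimate. Lemma \ref{synchronizeLem} gives a bound of
\[
O\bigl((NN'\min\{N,N'\}+(\max\{N,N'\})^2+(N')^2l'_{\bit})(l_{\bit}+l'_{\bit}+m)\bigr),
\]
and with $N,N'\in O(d2^{d^2\mpe(q-1)+d})$ and $l_{\bit},l'_{\bit},m\in O(\log q)$ this yields
\[
O\bigl(d^38^{d^2\mpe(q-1)+d}\log q + d^24^{d^2\mpe(q-1)+d}\log^2 q\bigr),
\]
not the $O\bigl((d2^{d^2\mpe(q-1)+d})^2\log q\bigr)=O\bigl(d^24^{d^2\mpe(q-1)+d}\log q\bigr)$ you claim. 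You appear to have dropped the $(N')^2l'_{\bit}$ term before multiplying by $(l_{\bit}+l'_{\bit}+m)$, which is precisely what produces the $\log^2 q$ in the corollary's middle summand; you also omit the cubic term $N^3\log q$, although that one is subsumed into the leading summand since $L\geq1$. The slip does not invalidate the corollary — the corrected synchronization cost still fits within the stated complexity bound, and indeed it is where the middle summand comes from, as you correctly intuited — but as written your justification of that summand is inconsistent with the bound you derive from the lemma, and should be repaired.
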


\begin{proof}
First, we compute $L_1$ and $L_2$, which takes $q$-bounded query complexity
\[
(d\log^{2+o(1)}{q}+d^2\log^2{d},d,1,d,0)
\]
by Proposition \ref{lengthsBoundedProp}. We check whether $L_1=L_2$, taking $O(\log{q})$ bit operations. If not, then $\Gamma_{f_1}\not\cong\Gamma_{f_2}$, so we may output \enquote{false} and halt. Otherwise, we continue by computing, for $j=1,2$,
\begin{itemize}
\item a recursive tree description list $\vec{\Dfrak}^{(j)}=(\Dfrak_n^{(j)})_{n=0,1,\ldots,N_j}$, with
\[
N_j\in O(d2^{d^2\mpe(q-1)+d})
\]
and associated rooted tree isomorphism type list $\vec{\Ifrak}^{(j)}$; and
\item the tree necklace list $\Nfrak_j$ of $f_j$ relative to $\vec{\Ifrak}^{(j)}$ such that
\[
|\Nfrak_j|\in O(dL2^{d^2\mpe(q-1)+dL}).
\]
\end{itemize}
By Theorem \ref{lengthsBoundedTheo}, this can be done within $q$-bounded query complexity
\[
(8^{d^2\mpe(q-1)+dL}2^{d\mpe(q-1)}(d^3L\mpe(q-1)+d^2L^2)\log^{1+o(1)}{q},d,0,0,0).
\]
Next, we compute a synchronization $(\vec{\Dfrak}^+,\ifrak)$ of $\vec{\Dfrak}^{(1)}$ and $\vec{\Dfrak}^{(2)}$, in the sense of Definition \ref{synchronizeDef}. By Lemma \ref{synchronizeLem}, this takes
\begin{align*}
&O((d^38^{d^2\mpe(q-1)+d}+d^24^{d^2\mpe(q-1)+d}\log{q})\cdot\log{q}) \\
=&\,O(d^38^{d^2\mpe(q-1)+d}\log{q}+d^24^{d^2\mpe(q-1)+d}\log^2{q})
\end{align*}
bit operations. Following that, we overwrite each first entry $[\nfrak_1,\nfrak_2,\ldots,\nfrak_{l'}]$ in each triple in $\Nfrak_2$ with $[\ifrak(\nfrak_1),\ifrak(\nfrak_2),\ldots,\ifrak(\nfrak_{l'})]$, using lexicographically minimal representatives of cyclic equivalence classes, which results in a modified, unsorted tree necklace list $\Nfrak'_2$. This takes
\[
O(dL2^{d^2\mpe(q-1)+dL}\cdot L^2\log{L}\log{q})=O(dL^3\log{L}2^{d^2\mpe(q-1)+dL}\log{q})
\]
bit operations overall. After this, we sort $\Nfrak'_2$ lexicographically, which takes
\[
O(dL^2(d^2\mpe(q-1)+dL)2^{d^2\mpe(q-1)+dL}\log{q})
\]
bit operations (see also the end of the proof of Theorem \ref{lengthsBoundedTheo}). Finally, we note that $\Gamma_{f_1}\cong\Gamma_{f_2}$ if and only if $\Nfrak_1=\Nfrak'_2$, so we determine the truth value of the latter, which takes $O(dL^22^{d^2\mpe(q-1)+dL}\log{q})$ bit operations using a linear scan. We then output that truth value and halt.
\end{proof}

Like the previous subsubsections, we conclude this subsubsection with some pseudocode for the discussed algorithms, specifying the $q$-bounded query complexity (QC) of each step. We begin with the algorithm from Theorem \ref{lengthsBoundedTheo}, which on input $(f,L)$, where $f$ is an index $d$ generalized cyclotomic mapping $f$ of $\IF_q$ such that all cycle lengths of $f$ are at most $L$, computes a pair $(\vec{\Dfrak},\Nfrak)$ such that $\vec{\Dfrak}=(\Dfrak_n)_{n=0,1,\ldots,N}$ is a recursive tree description list with $N\in O(d2^{d^2\mpe(q-1)+d})$ and associated sequence of rooted tree isomorphism types $\vec{\Ifrak}$, and $\Nfrak$ is a tree necklace list for $f$ relative to $\vec{\Ifrak}$ with $|\Nfrak|\in O(dL2^{d^2\mpe(q-1)+dL+d})$.

\begin{denumerate}[label=\arabic*]
\item Compute the induced function $\overline{f}$ on $\{0,1,\ldots,d\}$ and the affine maps $A_i$ of $\IZ/s\IZ$.

QC: $(d\log^{1+o(1)}{q},d,0,0,0)$.
\item Compute a partition-tree register
\[
((\Zcal_i)_{i=0,1,\ldots,d-1},((\Dfrak_n,(S_{n,i})_{i=0,1,\ldots,d}))_{n=0,1,\ldots,N})
\]
of $f$ with $N\in O(d2^{d^2\mpe(q-1)+d})$. In the process, store a CRL-list $\overline{\Lcal}$ of $\overline{f}$, the cycles of $\overline{f}$, and the parameter $H_i$ for each $\overline{f}$-periodic $i\in\{0,1,\ldots,d-1\}$.

QC: $(d^3\mpe(q-1)2^{(3d^2+d)\mpe(q-1)+2d}\log^{1+o(1)}{q},0,0,0,0)$ because $\overline{f}$ and the $A_i$ have already been computed.
\item Set $\vec{\Dfrak}:=(\Dfrak_n)_{n=0,1,\ldots,N}$.

QC: $(d^24^{d^2\mpe(q-1)+d}\log{q},0,0,0,0)$.
\item For each $(i,\ell)\in\overline{\Lcal}\setminus\{(d,1)\}$, do the following.

QC: $(8^{d^2\mpe(q-1)+dL}(d^3L\mpe(q-1)+d^2L^2)\log^{1+o(1)}{q},0,0,0,0)$.
\begin{denumerate}[label=4.\arabic*]
\item For each $t=0,1,\ldots,\ell-1$, do the following.

QC: $(d^3\mpe(q-1)\log{q}+d^2L\log^{1+o(1)}{q},0,0,0,0)$.
\begin{denumerate}[label=4.1.\arabic*]
\item From $\Zcal_{i_t}$, read off a spanning congruence sequence for $\Ucal_{i_t}$ of length $H_i\leq d\mpe(q-1)$.

QC: $(d\mpe(q-1)\log{q},0,0,0,0)$.
\item For each $j=0,1,\ldots,H_i+L-1$, do the following.

QC: $(d^2\mpe(q-1)\log{q}+dL\log^{1+o(1)}{q},0,0,0,0)$.
\begin{denumerate}[label=4.1.2.\arabic*]
\item If $j\leq H_i$, then do the following.
\begin{denumerate}[label=4.1.2.1.\arabic*]
\item From $\Zcal_{i_{t+j}}$, read off a spanning congruence sequence for $\lambda_{i_t}^j(\Rcal_{i_t})$ of length $n_{i_t}\leq d$.

QC: $(d\log{q},0,0,0,0)$.
\end{denumerate}
\item Else do the following.
\begin{denumerate}[label=4.1.2.2.\arabic*]
\item Compute a spanning congruence sequence for
\[
\lambda_{i_t}^j(\Rcal_{i_t})=\lambda(\lambda_{i_t}^{j-1}(\Rcal_{i_t}),A_{i_{t+j-1}})
\]
of length $n_{i_t}\leq d$.

QC: $(d\log^{1+o(1)}{q},0,0,0,0)$.
\end{denumerate}
\end{denumerate}
\end{denumerate}
\item For each $t=0,1,\ldots,L-1$, do the following.

QC: $((d^2L\mpe(q-1)+dL^2)\log{q},0,0,0,0)$.
\begin{denumerate}[label=4.2.\arabic*]
\item From the data stored in Step 4.1, paste together a spanning congruence sequence for
\[
\Qcal_{i_{-t},H_i+L-t-1}=\bigwedge_{j=0}^{H_i+L-t-1}{\lambda_{i_{-t-j}}^j(\Rcal_{i_{-t-j}})}\wedge\Ucal_{i_{-t}}.
\]
QC: $((d^2\mpe(q-1)+dL)\log{q},0,0,0,0)$.
\end{denumerate}
\item For each $t=0,1,\ldots,L-1$, do the following.

QC: $(4^{d^2\mpe(q-1)+dL}(d^2\mpe(q-1)+dL)\log^{1+o(1)}{q},0,0,0,0)$.
\begin{denumerate}[label=4.3.\arabic*]
\item Set $\Ocal_{i_{-t},L-t}:=\emptyset$.

QC: $(\log{d}+\log{L},0,0,0,0)$.
\item For each $\diamond_{k=0}^{H_i+L-t-1}{\vec{o'_k}}\in\{\emptyset,\neg\}^{n_{i_{-t}}+n_{i_{-t-1}}+\cdots+n_{i_{-H_i-L+1}}}$, do the following.

QC: $(4^{d^2\mpe(q-1)+dL}2^{-dt}(d^2\mpe(q-1)+dL)\log^{1+o(1)}{q},0,0,0,0)$.
\begin{denumerate}[label=4.3.2.\arabic*]
\item Check whether
\[
\sigma_{\Qcal_{i_{-t},H_i+L-t-1},\mathbf{0}}(\diamond_{k=0}^{H_i+L-t-1}{\vec{o'_k}}\diamond\vec{\xi}_{i_{-t},H_i},(\emptyset,\ldots,\emptyset))>0,
\]
and if so, add $\diamond_{k=0}^{H_i+L-t-1}{\vec{o'_k}}$ to $\Ocal_{i_{-t},L-t}$ as a new element.

QC: $(2^{d^2\mpe(q-1)+dL}(d^2\mpe(q-1)+dL)\log^{1+o(1)}{q},0,0,0,0)$.
\end{denumerate}
\end{denumerate}
\item For each $t=0,1,\ldots,L-2$, do the following.

QC: $(8^{d^2\mpe(q-1)+dL}(d^2\mpe(q-1)+dL)\log^{1+o(1)}{q},0,0,0,0)$.
\begin{denumerate}[label=4.4.\arabic*]
\item For each $\diamond_{k=0}^{H_i+L-t-1}{\vec{o'_k}}\in\Ocal_{i_{-t},L-t}$, do the following.

QC: $(8^{d^2\mpe(q-1)+dL}2^{-d(t+1)}2^{-dt}(d^2\mpe(q-1)+dL)\log^{1+o(1)}{q},0,0,0,0)$.
\begin{denumerate}[label=4.4.1.\arabic*]
\item For each $\diamond_{k=0}^{H_i+L-t-2}{\vec{o_k}}\in\Ocal_{i_{-t-1},L-t-1}$, do the following.

QC: $(4^{d^2\mpe(q-1)+dL}2^{-d(t+1)}(d^2\mpe(q-1)+dL)\log^{1+o(1)}{q},0,0,0,0)$.
\begin{denumerate}[label=4.4.1.1.\arabic*]
\item Check whether the distribution number
\begin{align*}
\sigma_{\Qcal_{i_{-t-1},H_i+L-t-2},A_{i_{-t-1}}}(&\diamond_{k=0}^{H_i+L-t-2}{\vec{o_k}}\diamond\vec{\xi}_{i_{-t-1},H_i}, \\
&\diamond_{k=1}^{H_i+L-t-1}{\vec{o'_k}}\diamond\vec{\xi}_{i_{-t},H_i})
\end{align*}
is equal to $1$. If so, set
\[
\ufrak_{i_{-t},L-t}(\diamond_{k=0}^{H_i+L-t-1}{\vec{o'_k}}):=\diamond_{k=0}^{H_i+L-t-2}{\vec{o_k}}.
\]
QC: $(2^{d^2\mpe(q-1)+dL}(d^2\mpe(q-1)+dL)\log^{1+o(1)}{q},0,0,0,0)$.
\end{denumerate}
\end{denumerate}
\end{denumerate}
\item Compute $\Acal_i:=A_{i_0}A_{i_1}\cdots A_{i_{\ell-1}}$.

QC: $(d\log^{1+o(1)}{q},0,0,0,0)$.
\item Set $\Cfrak_{i,L}:=\emptyset$.

QC: $(\log{d}+\log{L},0,0,0,0)$.
\item For each $l\in\{1,2,\ldots,L\}$, do the following.

QC: $(L\log^{1+o(1)}{q},0,0,0,0)$.
\begin{denumerate}[label=4.7.\arabic*]
\item Check whether $\ell\mid l$. If not, skip to the next $l$.

QC: $(\log^{1+o(1)}{q},0,0,0,0)$.
\item Compute and store the affine iterate
\[
\Acal_{i,l}=\Acal_i^{l/\ell}=
\begin{cases}
\Acal_i, & \text{if }l=\ell, \\
\Acal_{i,l-\ell}\Acal_i, & \text{if }\ell\mid l>\ell,
\end{cases}
\]
with linear coefficient $\overline{\alpha}_{i,l}$ and constant coefficient $\overline{\beta}_{i,l}$.

QC: $(\log{q},0,0,0,0)$ if $l=\ell$ (only needing to copy information from Step 4.5); $(\log^{1+o(q)}{q},0,0,0,0)$ otherwise.
\item Check whether $\gcd(s,\overline{\alpha}_{i,l}-1)\mid\overline{\beta}_{i,l}$. If so, add $l$ to $\Cfrak_{i,L}$ as a new element, and store $\eta_{i,l}(x)$ as the $s$-congruence
\begin{align*}
&x\equiv-\frac{\overline{\beta}_{i,l}}{\gcd(s,\overline{\alpha}_{i,l}-1)}\cdot\inv_{\frac{s}{\gcd(s,\overline{\alpha}_{i,l}-1)}}\left(\frac{\overline{\alpha}_{i,l}-1}{\gcd(s,\overline{\alpha}_{i,l}-1)}\right) \\
&\Mod{\frac{s}{\gcd(s,\overline{\alpha}_{i,l}-1)}}.
\end{align*}
QC: $(\log^{1+o(1)}{q},0,0,0,0)$.
\end{denumerate}
\item Set $\Nfrak_i:=\emptyset$.

QC: $(\log{d},0,0,0,0)$.
\item For each $\diamond_{k=0}^{H_i+L-1}{\vec{o'_k}}\in\Ocal_{i,L}$, do the following.

QC: $(4^{d^2\mpe(q-1)+dL}2^L(d^2L\mpe(q-1)+dL^2)\log^{1+o(1)}{q},0,0,0,0)$.
\begin{denumerate}[label=4.9.\arabic*]
\item For each $l\in\Cfrak_{i,L}$, do the following.

QC: $(2^{d^2\mpe(q-1)+dL+L}(d^2L\mpe(q-1)+dL^2)\log^{1+o(1)}{q},0,0,0,0)$.
\begin{denumerate}[label=4.9.1.\arabic*]
\item Compute the logical sign tuple $\vec{\nu}_{i,L,l}$ (see the paragraph before Proposition \ref{refinedExplicitProp}).

QC: $(L\log^{1+o(1)}{q},0,0,0,0)$.
\item Set
\[
\mfrak_i(\diamond_{k=0}^{H_i+L-1}{\vec{o'_k}}\diamond\vec{\nu}_{i,L,l}):=\sigma_{\Wcal_{i,L},\mathbf{0}}(\diamond_{k=0}^{H_i+L-1}{\vec{o'_k}}\diamond\vec{\xi}_{i,H_i}\diamond\vec{\nu}_{i,L,l},(\emptyset,\ldots,\emptyset)).
\]
QC: $(2^{d^2\mpe(q-1)+dL+L}(d^2\mpe(q-1)+dL)\log^{1+o(1)}{q},0,0,0,0)$.
\item If $\mfrak_i(\diamond_{k=0}^{H_i+L-1}{\vec{o'_k}}\diamond\vec{\nu}_{i,L,l})=0$, then skip to the next $l$.

QC: $(d^2\mpe(q-1)+dL,0,0,0,0)$.
\item For each $t=0,1,\ldots,l-1$, do the following.

QC: $(dL^3+d^2L\mpe(q-1)2^{d^2\mpe(q-1)+d}+L\log{q},0,0,0,0)$.
\begin{denumerate}[label=4.9.1.4.\arabic*]
\item Compute the logical sign tuple
\[
(\proj_{i_{-t},L-t}\circ\ufrak_{i_{-t+1},L-t+1}\circ\ufrak_{i_{-t+2},L-t+2}\circ\cdots\circ\ufrak_{i_0,L})(\diamond_{k=0}^{H_i+L-1}{\vec{o'_k}}).
\]
QC: $(d^2L\mpe(q-1)+dL^2,0,0,0,0)$.
\item For each $n=0,1,\ldots,N$, do the following.

QC: $(d^2\mpe(q-1)2^{d^2\mpe(q-1)+d}+\log{q},0,0,0,0)$.
\begin{denumerate}[label=4.9.1.4.2.\arabic*]
\item If the logical sign tuple computed in Step 4.9.1.4.1 is an element of $S_{n,i_{-t},H_i}$ (the last entry of the tuple $S_{n,i_{-t}}$ from the partition-tree register computed in Step 2), then set $\nfrak^{(-t)}:=n$ and skip to the next $t$.

QC: $(\max\{1,|S_{n,i_{-t},H_i}|d^2\mpe(q-1)\},0,0,0,0)$ if the condition is \emph{not} satisfied; $(\max\{1,|S_{n,i_{-t},H_i}|d^2\mpe(q-1)\}+\log{q},0,0,0,0)$ if the condition \emph{is} satisfied (the additional $O(\log{q})$ bit operations are from copying the value of $n$; we do not need to process the $O(\log{q})$-bit indices $n$ during the loop over them, because we may jump to a neighboring address in memory in $O(1)$ bit operations).
\end{denumerate}
\end{denumerate}
\item Set $\vec{\nfrak}:=(\nfrak^{(-l+1)},\nfrak^{(-l+2)},\ldots,\nfrak^{(0)})$.

QC: $(L\log{q},0,0,0,0)$.
\item Set $l'$ to be $\minperl(\vec{\nfrak})$.

QC: $(L^{1+o(1)}\log{q},0,0,0,0)$.
\item Set $\vec{\nfrak}'$ to be the lexicographically minimal ordered sequence in the same cyclic equivalence class as $(\nfrak^{(-l+1)},\nfrak^{(-l+2)},\ldots,\nfrak^{(-l+l')})$.

QC: $(L^2\log{L}\log{q},0,0,0,0)$.
\item Add
\[
(\diamond_{k=0}^{H_i+L-1}{\vec{o'_k}}\diamond\vec{\nu}_{i,L,l},[\vec{\nfrak}'],\mfrak_i(\diamond_{k=0}^{H_i+L-1}{\vec{o'_k}}\diamond\vec{\nu}_{i,L,l}))
\]
to $\Nfrak_i$ as a new element.

QC: $(d^2\mpe(q-1)+dL+L\log{q},0,0,0,0)$.
\end{denumerate}
\end{denumerate}
\end{denumerate}
\item Set $\Nfrak:=\emptyset$.

QC: $(1,0,0,0,0)$.
\item For each $(i,\ell)\in\overline{\Lcal}\setminus\{(d,1)\}$, do the following.

QC: $(d^4L^4\mpe(q-1)4^{d^2\mpe(q-1)+dL}+d^2L^24^{d^2\mpe(q-1)+dL}\log{q},0,0,0,0)$.
\begin{denumerate}[label=6.\arabic*]
\item For each $(\diamond_{k=0}^{H_i+L-1}{\vec{o'_k}}\diamond\vec{\nu}_{i,L,l},[\vec{\nfrak}],\mfrak')\in\Nfrak_i$, do the following.

QC: $(d^3L^4\mpe(q-1)4^{d^2\mpe(q-1)+dL}+dL^24^{d^2\mpe(q-1)+dL}\log{q},0,0,0,0)$.
\begin{denumerate}[label=6.1.\arabic*]
\item From the first entry, $\diamond_{k=0}^{H_i+L-1}{\vec{o'_k}}\diamond\vec{\nu}_{i,L,l}$, determine the binary representation of $l$.

QC: $(L\log{L},0,0,0,0)$.
\item Check whether $[\vec{\nfrak}]$ and $l$ already occur as the first two entries of some element of $\Nfrak$. If so, skip to the next element of $\Nfrak_i$.

QC: $(dL^22^{d^2\mpe(q-1)+dL}\log{q},0,0,0,0)$.
\item Compute the sum $\mfrak$ of the third entries of all triples in $\bigcup_{(j,\ell')\in\overline{\Lcal}\setminus\{(d,1)\}}{\Nfrak_j}$ that have the parameters $l$ and $[\vec{\nfrak}]$ associated with them.

QC: $(dL^22^{d^2\mpe(q-1)+dL}\log{q},0,0,0,0)$.
\item Add $([\vec{\nfrak}],l,\mfrak)$ to $\Nfrak$ as a new element.

QC: $(L\log{q},0,0,0,0)$.
\end{denumerate}
\end{denumerate}
\item For each $n\in\{0,1,\ldots,N\}$, do the following.

QC: $(d2^{d^2\mpe(q-1)+d}+\log{q},0,0,0,0)$.
\begin{denumerate}[label=7.\arabic*]
\item Check whether $S_{n,d}=\emptyset$. If so, set $\nfrak:=n$ and exit the loop.

QC: $(\log{q},0,0,0,0)$ if the condition is satisfied (which happens only once), $(1,0,0,0,0)$ otherwise.
\end{denumerate}
\item Check whether $[\nfrak]$ and $1$ already occur as the first two entries of some (unique) triple in $\Nfrak$, and store this information (the truth value and, if applicable, the position of that triple in $\Nfrak$).

QC: $(dL2^{d^2\mpe(q-1)+dL}\log{q},0,0,0,0)$.
\item If $[\nfrak]$ and $1$ occur as the first two entries of a triple $([\nfrak],1,\mfrak)$ in $\Nfrak$, then do the following.
\begin{denumerate}[label=9.\arabic*]
\item Overwrite the entry $([\nfrak],1,\mfrak)$ of the list $\Nfrak$ with $([\nfrak],1,\mfrak+1)$.

QC: $(\log{q},0,0,0,0)$.
\end{denumerate}
\item Else do the following.
\begin{denumerate}[label=10.\arabic*]
\item Add $([\nfrak],1,1)$ to $\Nfrak$ as a new element.

QC: $(L\log{q},0,0,0,0)$.
\end{denumerate}
\item Sort $\Nfrak$ lexicographically.

QC: $(dL^2(d^2\mpe(q-1)+dL)2^{d^2\mpe(q-1)+dL}\log{q},0,0,0,0)$.
\item Output $(\vec{\Dfrak},\Nfrak)$ and halt.

QC: $((d^24^{d^2\mpe(q-1)+d}+dL^22^{d^2\mpe(q-1)+dL})\log{q},0,0,0,0)$.
\end{denumerate}

Next, we give pseudocode for the algorithm from Proposition \ref{lengthsBoundedProp}, which for a given index $d$ generalized cyclotomic mapping $f$ of $\IF_q$ outputs the maximum cycle length of $f$.

\begin{denumerate}[label=\arabic*]
\item Compute the induced function $\overline{f}$ on $\{0,1,\ldots,d\}$ and the affine maps $A_i$ of $\IZ/s\IZ$.

QC: $(d\log^{1+o(1)}{q},d,0,0,0)$.
\item Compute a CRL-list $\overline{\Lcal}$ of $\overline{f}$ and the cycles of $\overline{f}$.

QC: $(d^2\log^2{d},0,0,0,0)$.
\item Factor $s=p_1^{v_1}p_2^{v_2}\cdots p_K^{v_K}$.

QC: $(\log{q},0,1,0,0)$.
\item For each $(i,\ell)\in\overline{\Lcal}\setminus\{(d,1)\}$, do the following.

QC: $(d\log^{2+o(1)}{q},0,0,d,0)$.
\begin{denumerate}[label=4.\arabic*]
\item Compute $\Acal_i=A_{i_0}A_{i_1}\cdots A_{i_{\ell-1}}:x\mapsto\overline{\alpha}_ix+\overline{\beta}_i$.

QC: $(\ell\log^{1+o(1)}{q},0,0,0,0)$.
\item Set $s'_i:=1$.

QC: $(\log{d},0,0,0,0)$.
\item For each $j=1,2,\ldots,K$, do the following.

QC: $(\log^{2+o(1)}{q},0,0,0,0)$.
\begin{denumerate}[label=4.3.\arabic*]
\item Check whether $p_j\mid\overline{\alpha}_i$, and if so, skip to the next $j$.

QC: $(\log^{1+o(1)}{q},0,0,0,0)$.
\item Set $s'_i:=s'_i\cdot p_j^{v_j}$.

QC: $(\log^{1+o(1)}{q},0,0,0,0)$.
\end{denumerate}
\item Set $\overline{\alpha}'_i:=\overline{\alpha}_i\bmod{s'_i}$ and $\overline{\beta}'_i:=\overline{\beta}_i\bmod{s'_i}$.

QC: $(\log^{1+o(1)}{q},0,0,0,0)$.
\item Compute $\ord_{s'_i}(\overline{\alpha}'_i)$.

QC: $(\log{q},0,0,1,0)$.
\item Compute
\[
\overline{\beta}''_i:=
\begin{cases}
\overline{\beta}'_i, & \text{if }\overline{\alpha}'_i=1, \\
\overline{\beta}'_i\frac{(\overline{\alpha}'_i)^{\ord_{s'_i}(\overline{\alpha}'_i)}-1}{\overline{\alpha}'_i-1}, & \text{otherwise}.
\end{cases}
\]
QC: $(\log^{2+o(1)}{q},0,0,0,0)$.
\item Compute the additive order of $\overline{\beta}''_i$ modulo $s'_i$, which is equal to $s'_i/\gcd(s'_i,\overline{\beta}''_i)$.

QC: $(\log^{1+o(1)}{q},0,0,0,0)$.
\item Set $l_i:=\ell\cdot\ord_{s'_i}(\overline{\alpha}'_i)\cdot s'_i/\gcd(s'_i,\overline{\beta}''_i)$.

QC: $(\log^{1+o(1)}{q},0,0,0,0)$.
\end{denumerate}
\item Set $L:=1$.

QC: $(\log{d},0,0,0,0)$.
\item For each $(i,\ell)\in\overline{\Lcal}\setminus\{(d,1)\}$, do the following.

QC: $(d\log{q},0,0,0,0)$.
\begin{denumerate}[label=6.\arabic*]
\item If $l_i>L$, then set $L:=l_i$.

QC: $(\log{q},0,0,0,0)$.
\end{denumerate}
\item Output $L$ and halt.

QC: $(\log{q},0,0,0,0)$.
\end{denumerate}

Finally, we give pseudocode for a variant of the algorithm from Corollary \ref{lengthsBoundedCor}. On input $(L,f_1,f_2)$ where $L\in\IN^+$ and $f_j$, for $j=1,2$, is a generalized cyclotomic mapping of $\IF_q$ of index $d_j$, this algorithm outputs \enquote{fail} if neither the largest cycle length of $f_1$ nor the largest cycle length of $f_2$ is at most $L$. Otherwise, it outputs the truth value of the digraph isomorphism relation $\Gamma_{f_1}\cong\Gamma_{f_2}$.

\begin{denumerate}[label=\arabic*]
\item For $j=1,2$, compute the largest cycle length $L_j$ of $f_j$.

QC: $(d\log^{2+o(1)}{q}+d^2\log^2{d},d,1,d,0)$.
\item Check whether $\min\{L_1,L_2\}\leq L$, and store this information.

QC: $(\log{q},0,0,0,0)$.
\item If $\min\{L_1,L_2\}>L$, then do the following.
\begin{denumerate}[label=3.\arabic*]
\item Output \enquote{fail} and halt.

QC: $(1,0,0,0,0)$.
\end{denumerate}
\item Else do the following.
\begin{denumerate}[label=4.\arabic*]
\item Check whether $L_1=L_2$. If not, output \enquote{false} and halt.

QC: $(\log{q},0,0,0,0)$.
\end{denumerate}
\item For $j=1,2$, compute
\begin{itemize}
\item a recursive tree description list $\vec{\Dfrak}^{(j)}=(\Dfrak^{(j)}_n)_{n=0,1,\ldots,N_j}$ with
\[
N_j\in O(d2^{d^2\mpe(q-1)+d})
\]
and associated rooted tree isomorphism type sequence $\vec{\Ifrak}^{(j)}$; and
\item the tree necklace list $\Nfrak_j$ of $f_j$ relative to $\vec{\Ifrak}^{(j)}$, with
\[
|\Nfrak_j|\leq O(dL2^{d^2\mpe(q-1)+dL}).
\]
\end{itemize}
QC: $(8^{d^2\mpe(q-1)+dL}2^{d\mpe(q-1)}(d^3L\mpe(q-1)+d^2L^2)\log^{1+o(1)}{q},d,0,0,0)$.
\item Compute a synchronization $(\vec{\Dfrak}^+,\ifrak)$ of $\vec{\Dfrak}^{(1)}$ and $\vec{\Dfrak}^{(2)}$.

QC: $(d^38^{d^2\mpe(q-1)+d}\log{q}+d^24^{d^2\mpe(q-1)}\log^2{q},0,0,0,0)$.
\item Set $\Nfrak'_2:=\emptyset$.

QC: $(1,0,0,0,0)$.
\item For each $([\nfrak_1,\nfrak_2,\ldots,\nfrak_{l'}],l,\mfrak)\in\Nfrak_2$, do the following.

QC: $(dL^3\log{L}2^{d^2\mpe(q-1)+dL}\log{q},0,0,0,0)$.
\begin{denumerate}[label=8.\arabic*]
\item Compute the lexicographically minimal representative of the cyclic equivalence class $[\nfrak_1,\nfrak_2,\ldots,\nfrak_{l'}]$, representing that class by it.

QC: $(L^2\log{L}\log{q},0,0,0,0)$.
\item Add $([\ifrak(\nfrak_1),\ifrak(\nfrak_2),\ldots,\ifrak(\nfrak_{l'})],l,\mfrak)$ to $\Nfrak'_2$ as a new element.

QC: $(L\log{q},0,0,0,0)$.
\end{denumerate}
\item Sort $\Nfrak'_2$ lexicographically.

QC: $(dL^2(d^2\mpe(q-1)+dL)2^{d^2\mpe(q-1)+dL}\log{q},0,0,0,0)$.
\item Check whether $\Nfrak_1=\Nfrak'_2$ as sets, output the truth value of this equality, and halt.

QC: $(dL^22^{d^2\mpe(q-1)+dL}\log{q},0,0,0,0)$.
\end{denumerate}

\section{Open problems}\label{sec6}

We conclude this paper with a discussion of open problems related to our results and methods.

\subsection{Asymptotic behavior of \texorpdfstring{$\mpe$}{mpe} and \texorpdfstring{$\tau$}{tau} over prime powers}\label{subsec6P1}

Our Proposition \ref{mpeAvProp} states that as $q$ ranges over an initial segment of all prime powers, the average value of $\mpe(q-1)$ (the maximum exponent of a prime in the full factorization of $q-1$) is bounded from above by a constant (independent of that segment). This led to the important observation that when $d$ is fixed, then for asymptotically almost all finite fields $\IF_q$, the complexities in Theorem \ref{complexitiesTheo}(2,3) are polynomial in $\log{q}$. Following that, at the end of Subsection \ref{subsec5P1}, we raised the analogous problem restricted to powers of $2$, and our computational evidence (gathered in the form of Table \ref{mpeMersenneTable}) leads to the following conjecture.

\begin{connjecture}\label{mpeConj}
The average value of $\mpe(2^k-1)$, where $k$ ranges over an initial segment of $\IN^+$, is always less than $2$. Formally, this conjecture asserts that for each $K\in\IN^+$, one has
\[
\frac{1}{K}\sum_{k=1}^K{\mpe(2^k-1)}<2.
\]
\end{connjecture}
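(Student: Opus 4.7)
The plan is to combine a layer-cake decomposition of the left-hand side with multiplicative-order estimates for $2$ modulo prime powers. Since $\mpe(2^k - 1) \geq 1$ for every $k \geq 2$, the identity
\[
\sum_{k=1}^K \mpe(2^k-1) = \sum_{v \geq 1} h_v(K), \qquad h_v(K) := |\{k \in \{1,\ldots,K\} : \mpe(2^k - 1) \geq v\}|,
\]
gives $h_1(K) = K - 1$, and Conjecture~\ref{mpeConj} reduces to the strict inequality $\sum_{v \geq 2} h_v(K) < K + 1$ for every $K \in \IN^+$.

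First I would apply a union bound: since $p^v \mid 2^k - 1$ if and only if $\ord_{p^v}(2) \mid k$, one has $h_v(K) \leq \sum_{p \text{ odd prime}} \lfloor K/\ord_{p^v}(2) \rfloor$. Next I would invoke the lifting-the-exponent lemma: for a prime $p$ that is not a Wieferich prime in base $2$, $\ord_{p^v}(2) = p^{v-1}\ord_p(2)$ for every $v \geq 1$, while for a Wieferich prime of index $s_p \geq 2$, $\ord_{p^v}(2) = p^{\max(0, v - s_p)}\ord_p(2)$. Summing the resulting geometric series in $v$ and isolating the (numerically negligible) contribution of the two known Wieferich primes $1093$ and $3511$ collapses the double sum and reduces the task to the explicit numerical inequality
\[
S := \sum_{p \text{ odd prime}} \frac{1}{(p-1)\ord_p(2)} < 1.
\]

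To estimate $S$, I would regroup primes according to $d := \ord_p(2)$: for each $d \geq 2$, the primes with $\ord_p(2) = d$ are precisely the primitive prime divisors of $2^d - 1$ (whose existence for $d > 6$ is ensured by Bang's theorem); they satisfy $p \equiv 1 \pmod{d}$, hence $p \geq 2d + 1$, and $\prod_{\ord_p(2) = d} p \leq 2^d - 1$. Combining a direct finite computation of the head of $S$ over primes $p \leq P$ for some explicit cut-off $P$ with an analytic tail estimate exploiting these constraints---together with the heuristic (supported in part by Hooley's conditional resolution of Artin's conjecture) that $\ord_p(2)$ is close to $p-1$ for a density-one subset of primes---should reduce the conjecture to a finite verification for small $K$, which may be handled by extending Table~\ref{mpeMersenneTable}.

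The main obstacle is the tightness of the target $S < 1$. Numerically, $S$ appears comfortably less than $1$ (the cumulative head over $p \leq 100$ is already around $0.4$ and the tail decays quickly), but any rigorous upper bound must be genuinely sharp: the single term $p = 3$ alone contributes $1/4$, and a naive union bound of the shape $\sum_{\ord_p(2) = d} 1/(p-1) \leq d\log 2/\log(2d+1)$ summed over $d$ risks exceeding $1$. A successful unconditional argument will therefore likely need to combine the sparsity of primitive prime divisors of $2^d - 1$ with quantitative information about primes in the arithmetic progression $1 \pmod{d}$, and it is conceivable that reaching the explicit constant $2$ unconditionally will require an Artin-type hypothesis on the distribution of $\ord_p(2)$.
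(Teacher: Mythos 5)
Conjecture~\ref{mpeConj} is stated in the paper only as a conjecture, supported by the computations in Table~\ref{mpeMersenneTable}; no proof is offered, so your proposal is a genuine attempt at new progress and has to stand on its own. The framework you set up is correct and economical: the layer-cake identity, the observation that $h_1(K)=K-1$ (from $\mpe(2^1-1)=\mpe(1)=0$ and $\mpe(2^k-1)\ge1$ for $k\ge2$), the union bound $h_v(K)\le\sum_p\lfloor K/\ord_{p^v}(2)\rfloor$ via the equivalence $p^v\mid 2^k-1\Leftrightarrow\ord_{p^v}(2)\mid k$, and the lifting-the-exponent evaluation $\ord_{p^v}(2)=p^{\max(0,v-s_p)}\ord_p(2)$ are all sound, and they do collapse the problem to bounding the series $S'=\sum_{p\text{ odd}}\bigl(\tfrac{s_p-1}{\ord_p(2)}+\tfrac{1}{(p-1)\ord_p(2)}\bigr)$ strictly below $1$.

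The genuine gap is your treatment of the Wieferich term, which you call ``numerically negligible'' by pointing to the two known Wieferich primes $1093$ and $3511$. That step tacitly assumes an open form of the Wieferich conjecture. Unconditionally one has only $p^{s_p}\mid 2^{\ord_p(2)}-1$, hence $s_p\le\ord_p(2)\log2/\log p$ and $\tfrac{s_p-1}{\ord_p(2)}<\tfrac{1}{\log_2 p}$; since it is open whether there are finitely many Wieferich primes (indeed, whether the set of primes $p$ with $s_p\ge2$ is sparse in any quantitative sense), the sum $\sum_{p:\,s_p\ge2}\tfrac{s_p-1}{\ord_p(2)}$ has no unconditional bound and the reduction does not close. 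By contrast, you may be overstating the difficulty of the non-Wieferich main term $S=\sum_p\tfrac{1}{(p-1)\ord_p(2)}$: the elementary bound $\ord_p(2)>\log_2 p$ already gives $S<\sum_{p\text{ odd}}\tfrac{1}{(p-1)\log_2(p+1)}$, and a head computation over, say, $p<100$ (roughly $0.57$) together with a standard tail estimate for $\sum_{p>x}\tfrac{1}{p\log p}$ keeps this comfortably below $1$, without Bang's theorem or any Artin-type hypothesis. The Artin/Hooley heuristic you invoke concerns primes with $\ord_p(2)$ near $p-1$, which is the opposite of what hurts you; the real obstruction to this route is the Wieferich problem, not the distribution of $\ord_p(2)$.
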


In fact, looking at Table \ref{mpeMersenneTable}, one might even conjecture that the said average value is always less than $3/2$, which would imply that $\mpe(2^k-1)=1$ (i.e., that $2^k-1$ is square-free) for more than half of all $k\in\IN^+$.

The complexity bounds in Subsubsection \ref{subsubsec5P3P2} also involved $\tau(q-1)$, the number of divisors of $q-1$, and with our Proposition \ref{tauAvProp}, we were able to show that for all but an asymptotic fraction of less than $\epsilon$ of all prime powers $q$, one has $\tau(q-1)<\log^{c''_{\epsilon}}{q}$, where $c''_{\epsilon}$ is a suitable constant depending on $\epsilon$. This implies that for fixed $d$ and all such prime powers $q$, the complexity bounds from Subsubsection \ref{subsubsec5P3P2} are polynomial in $\log{q}$ of a degree depending on $\epsilon$. In the comments leading to Proposition \ref{tauAvProp}, we mentioned Dirichlet's result \cite[Theorem 3.3]{Apo76a}, which implies that as $x\to\infty$, one has
\[
\frac{1}{x}\sum_{n\leq x}{\tau(n)}\sim\log{x},
\]
where the summation index $n$ ranges over arbitrary positive integers (not just numbers of the form $q-1$ where $q$ is a prime power). In view of this result, we pose the following problem concerning a potential strengthening of Proposition \ref{tauAvProp}.

\begin{problemm}\label{tauProb}
Prove or disprove that as $x\to\infty$, one has
\[
\left(\sum_{q\leq x}{1}\right)^{-1}\sum_{q\leq x}{\tau(q-1)}\in O(\log{x}),
\]
where $q$ ranges over prime powers. Should this turn out to be false, is it at least the case that for some absolute constant $c>0$, one has
\[
\left(\sum_{q\leq x}{1}\right)^{-1}\sum_{q\leq x}{\tau(q-1)}\in O(\log^c{x})?
\]
\end{problemm}

We back Problem \ref{tauProb} up with the following table containing some computational evidence obtained with GAP \cite{GAP4}.

\begin{longtable}[h]{|c|c|}\hline
$x$ & $(\sum_{q\leq x}{1})^{-1}\sum_{q\leq x}{\tau(q-1)}/\log{x}$ rounded \\ \hline
$10^3$ & $1.56316$ \\ \hline
$10^4$ & $1.70011$ \\ \hline
$10^5$ & $1.74922$ \\ \hline
$10^6$ & $1.78351$ \\ \hline
$10^7$ & $1.8069$ \\ \hline
$10^8$ & $1.82506$ \\ \hline
\caption{Average value of $\tau(q-1)$ vs natural logarithm.}
\label{tauVsLogTable}
\end{longtable}

\subsection{Efficient comparison of arithmetic partitions}\label{subsec6P2}

We recall Problem 3 from the beginning of Section \ref{sec5}: given a generalized cyclotomic mapping $f$ of $\IF_q$ and a pair $(r,l)$ where $r\in\IF_q$ is $f$-periodic of cycle length $l$, the task is to obtain a compact description of the digraph isomorphism type of the connected component of $\Gamma_f$ containing $r$, viewed as a necklace of rooted tree isomorphism types. In Problem 3, it is also assumed that a partition-tree register for $f$ (in the sense of Definition \ref{partTreeRegDef}) is given, and we can use this to refer to the rooted trees with their numbers $n\in\{0,1,\ldots,N\}$ in this register, rather than spell each of them out completely.

Assuming that $r\not=0_{\IF_q}$ (the case \enquote{$r=0_{\IF_q}$} is easily dealt with separately), the main idea behind our algorithm from the proof of Theorem \ref{complexitiesTheo}(3) for tackling this problem is to identify the positions on the $f$-cycle of $r$ that lie in a given coset $C_j$ with the elements of $\IZ/(l/\ell)\IZ$, where $\ell$ is the associated coset cycle length (i.e., the $\overline{f}$-cycle length of $i$ for the unique $i\in\{0,1,\ldots,d-1\}$ such that $r\in C_i$), and to derive an arithmetic partition $\Pcal^{(i_t)}$ of $\IZ/(l/\ell)\IZ$ for $t\in\{0,1,\ldots,\ell-1\}$ such that vertices in $C_{i_t}$ on the cycle that lie in the same block of $\Pcal^{(i_t)}$ have the same tree above them in $\Gamma_f$. Formally, we may express this via a labeling function $\lab_{i_t}:\Pcal^{(i_t)}\rightarrow\{0,1,\ldots,N\}$\phantomsection\label{not305} that maps each block of $\Pcal^{(i_t)}$ to its associated rooted tree number.

Now, the description of the connected component of $\Gamma_f$ containing $r$ obtained this way is \emph{not} an injective encoding of its isomorphism type. That is, isomorphic connected components may end up getting different descriptions, and it appears to be a nontrivial computational problem to decide efficiently whether two given descriptions pertain to the same isomorphism type. The purpose of this subsection is to discuss this open problem in more detail, reducing it to some concrete questions to be answered.

Of course, in the actual implementation of our algorithm for Problem 3, each of the arithmetic partitions $\Pcal^{(i_t)}$ mentioned above is expressed through a spanning congruence sequence, of length $\overline{m}_{i_t}$\phantomsection\label{not306} say, and $\lab_{i_t}$ may be expressed through a function $\{\emptyset,\neg\}^{\overline{m}_{i_t}}\rightarrow\{-1,0,1,\ldots,N\}$, where $-1$ is a dummy value to be assigned to a logical sign tuple $\vec{\nu}$ if the associated set $\Bcal(\Pcal^{(i_t)},\vec{\nu})$ is empty. By abuse of notation, we also call this function $\lab_{i_t}$. We give a special name to ordered pairs such as $(\Pcal^{(i_t)},\lab_{i_t})$.

\begin{deffinition}\label{labeledDef}
Let $m,N\in\IN_0$ with $m>0$. An \emph{$N$-labeled arithmetic partition of $\IZ/m\IZ$}\phantomsection\label{term90} is a pair $(\Pfrak(x\equiv \bfrak_j\Mod{\afrak_j}:j=1,2,\ldots,K),\lab)$ consisting of an arithmetic partition of $\IZ/m\IZ$ with a fixed spanning congruence sequence and a so-called labeling function $\lab:\{\emptyset,\neg\}^K\rightarrow\{-1,0,1,\ldots,N\}$\phantomsection\label{not307} such that $\lab(\vec{\nu})=-1$ if and only if $\Bcal(\Pcal,\vec{\nu})=\emptyset$.
\end{deffinition}

In order to characterize when two compact descriptions obtained by the algorithm for Problem 3 represent isomorphic connected components, let us first consider the special case where $\ell=1$. Then we only need to worry about the coset $C_i$ and the associated $N$-labeled arithmetic partition $(\Pcal^{(i)},\lab_i)$ of $\IZ/l\IZ$.

We need to understand how applying a cyclic shift to the associated sequence of rooted tree isomorphism types affects the $N$-labeled arithmetic partition. Let $\tfrak$\phantomsection\label{not308} be the translation $x\mapsto x+1$ of $\IZ/l\IZ$. It generates a cyclic subgroup $\Tfrak$\phantomsection\label{not309} of order $l$ of $\Sym(\IZ/l\IZ)$, namely the image of the regular representation of $\IZ/l\IZ$ on itself. This group $\Tfrak$ also acts naturally on the power set of $\IZ/l\IZ$ via $M^t:=t(M)=\{y^t: y\in M\}$. In the same manner, this leads to an action of $\Tfrak$ on the power set of the power set of $\IZ/l\IZ$ (i.e., an action which transforms families of subsets of $\IZ/l\IZ$ into other such families), and this action restricts to one on the set of all arithmetic partitions of $\IZ/l\IZ$. Indeed, if $\Pcal=\Pfrak(x\equiv\bfrak_j\Mod{\afrak_j}:j=1,2,\ldots,K)$ is an arithmetic partition of $\IZ/l\IZ$, then $\Pcal^{\tfrak}$, the partition of $\IZ/l\IZ$ obtained by shifting all blocks of $\Pcal$ to the right by one unit, is just the arithmetic partition $\Pfrak(x\equiv\bfrak_j+1\Mod{\afrak_j}:j=1,2,\ldots,K)$ of $\IZ/l\IZ$.

If we assume that $\Pcal$ is $N$-labeled and that each block of $\Pcal^t$, where $t\in\Tfrak$, carries the same label as the block of $\Pcal$ it is shifted from, we finally get an action of $\Tfrak$ on the set of $N$-labeled arithmetic partitions of $\IZ/l\IZ$, which is useful for our characterization. Formally, this action is defined via
\begin{align*}
&(\Pfrak(x\equiv\bfrak_j\Mod{\afrak_j}: j=1,2,\ldots,K),\lab)^{\tfrak^n}:= \\
&(\Pfrak(x\equiv\bfrak_j+n\Mod{\afrak_j}: j=1,2,\ldots,K),\lab).
\end{align*}
In the special case \enquote{$\ell=1$} we are currently discussing, applying a right cyclic shift of $n$ units to the cyclic sequence of rooted tree isomorphism types associated with the $N$-labeled arithmetic partition $(\Pcal^{(i)},\lab_i)$ corresponds to replacing $(\Pcal^{(i)},\lab_i)$ by $(\Pcal^{(i)},\lab_i)^{\tfrak^n}$. This motivates the following definition.

\begin{deffinition}\label{labeledEquivDef}
Let $m,N\in\IN_0$ with $m>0$, and let $(\Pcal,\lab)$ and $(\Pcal',\lab')$ be $N$-labeled arithmetic partitions of $\IZ/m\IZ$. These partitions are \emph{equivalent}\phantomsection\label{term91} if there is a $t\in\Tfrak$ such that $(\Pcal',\lab')^t=(\Pcal,\lab)$. In that case, the smallest positive integer $n$ such that $(\Pcal',\lab')^{\tfrak^n}=(\Pcal,\lab)$ is the \emph{translation number of $(\Pcal,\lab)$ and $(\Pcal',\lab')$}\phantomsection\label{term92}. If $(\Pcal,\lab)$ and $(\Pcal',\lab')$ are not equivalent, then we define their translation number to be $\infty$.
\end{deffinition}

We observe that translation numbers are not symmetric in their two arguments. Rather, if $(\Pcal,\lab)$ and $(\Pcal',\lab')$ are equivalent, then the translation number of $(\Pcal',\lab')$ and $(\Pcal,\lab)$ is the difference of the stabilizer order
\[
|\Stab_{\Tfrak}((\Pcal,\lab))|=|\{t\in\Tfrak: (\Pcal,\lab)^t=(\Pcal,\lab)\}|=|\Stab_{\Tfrak}((\Pcal',\lab'))|
\]
and\phantomsection\label{not310} the translation number of $(\Pcal,\lab)$ and $(\Pcal',\lab')$. For example, if $(\Pcal,\lab)^{\tfrak^3}=(\Pcal,\lab)$ and $(\Pcal',\lab')^{\tfrak}=(\Pcal,\lab)$, then $(\Pcal,\lab)^{\tfrak^2}=(\Pcal',\lab')$.

As we mentioned just before Definition \ref{labeledEquivDef}, in case $\ell=1$, two $N$-labeled arithmetic partitions of the form $(\Pcal^{(i)},\lab_i)$ and $(\Pcal^{(j)},\lab_j)$ correspond to isomorphic connected components of $\Gamma_f$ if and only if they are equivalent. We thus pose the following algorithmic problem.

\begin{problemm}\label{labeledProb1}
Find an efficient algorithm which, for given $m,N\in\IN_0$ with $m>0$ and $N$-labeled arithmetic partitions $(\Pcal,\lab)$ and $(\Pcal',\lab')$ of $\IZ/m\IZ$, computes the translation number of $(\Pcal,\lab)$ and $(\Pcal',\lab')$.
\end{problemm}

We note that by our convention on translation numbers of inequivalent $N$-labeled arithmetic partitions, such an algorithm could in particular be used to efficiently decide whether $(\Pcal,\lab)$ and $(\Pcal',\lab')$ are equivalent in the first place. Moreover, it could be used to determine the stabilizer in $\Tfrak$ of a given $N$-labeled arithmetic partition $(\Pcal,\lab)$, because that stabilizer is generated by $\tfrak^n$ where $n$ is the translation number of $(\Pcal,\lab)$ with itself.

We now return from \enquote{$\ell=1$} to the general case. By the details of our identification of the $l$ positions on the $f$-cycle of $r$ with the elements in $\ell$ disjoint copies of $\IZ/(l/\ell)\IZ$ (see Subsubsection \ref{subsubsec5P2P3}), it is not hard to see that in general, applying a right cyclic shift by $n$ units to the sequence of rooted tree isomorphism types associated with the sequence $((\Pcal^{(i_t)},\lab_{i_t}))_{t=0,1,\ldots,\ell-1}$ corresponds to replacing that sequence with
\begin{align*}
\notag &(((\Pcal^{(i_j)},\lab_{i_j}))_{j=0,1,\ldots,\ell-1})^{\tfrak^n}:= \\
&((\Pcal^{(i_j)},\lab_{i_j})^{\tfrak^{n''+1}})_{j=\ell-n',\ell-n'+1,\ldots,\ell-1}\diamond((\Pcal^{(i_j)},\lab_{i_j})^{\tfrak^{n''}})_{j=0,1,\ldots,\ell-n'-1}
\end{align*}
where $n':=n\bmod{\ell}$ and $n'':=(n-n')/\ell$. This defines an action of $\Tfrak$ on the set of all length $\ell$ sequences of $N$-labeled arithmetic partitions of $\IZ/(l/\ell)\IZ$, and as for $\ell=1$, we call two such sequences \emph{equivalent}\phantomsection\label{term93} if they can be mapped to each other under this action. In order to decide in general whether two descriptions produced by our algorithm for Problem 3 correspond to isomorphic connected components, we need to decide whether these descriptions are equivalent in this more general sense. However, it turns out that this can be done efficiently if we have an algorithm as in Problem \ref{labeledProb1}. Let us explain why.

We assume that $(\Xcal_k,\lab^{(k)}))_{k=0,1,\ldots,\ell-1}$ and $(\Ycal_k,\Lab^{(k)}))_{k=0,1,\ldots,\ell-1}$\phantomsection\label{not311}\phantomsection\label{not312} are length $\ell$ sequences of $N$-labeled arithmetic partitions of $\IZ/m\IZ$ for some $m\in\IN^+$. They are equivalent if and only if there are $n'\in\{0,1,\ldots,\ell-1\}$ and $n''\in\IZ$ such that $((\Ycal_k,\Lab^{(k)}))_{k=0,1,\ldots,\ell-1}$ is equal to
\begin{equation}\label{generalShiftEq}
((\Xcal_t,\lab^{(t)})^{\tfrak^{n''+1}})_{t=\ell-n',\ell-n'+1,\ldots,\ell-1}\diamond((\Xcal_t,\lab^{(t)})^{\tfrak^{n''}})_{t=0,1,\ldots,\ell-n'-1}.
\end{equation}
To check whether this is the case, we assume that $n'$ is fixed (in the worst case, we need to try out $\ell\in O(d)$ values for $n'$). For $t=0,1,\ldots,\ell-1$, we compute the number
\[
\bfrak_t:=
\begin{cases}
\text{translation number of }(\Ycal_t,\Lab^{(t)})\text{ and }(\Xcal_{\ell-n'+t},\lab^{(\ell-n'+t)}), & \text{if }t<n', \\
\text{translation number of }(\Ycal_t,\Lab^{(t)})\text{ and }(\Xcal_{t-n'},\lab^{(t-n')}), & \text{otherwise}.
\end{cases}
\]
using the algorithm from Problem \ref{labeledProb1}. If any of these numbers is $\infty$, then the chosen value of $n'$ does not work. Otherwise, we compute $\afrak_t$, the group order of the stabilizer of $(\Ycal_t,\Lab^{(t)})$ in $\Tfrak$ (which here is a cyclic group of order $m$), using the said algorithm. The question is whether there exists $n''\in\IZ$ such that
\begin{align*}
n''+1&\equiv\bfrak_0\Mod{\afrak_0} \\
n''+1&\equiv\bfrak_1\Mod{\afrak_1} \\
&\vdots \\
n''+1&\equiv\bfrak_{n'-1}\Mod{\afrak_{n'-1}} \\
n''&\equiv\bfrak_{n'}\Mod{\afrak_{n'}} \\
n''&\equiv\bfrak_{n'+1}\Mod{\afrak_{n'+1}} \\
&\vdots \\
n''&\equiv\bfrak_{\ell-1}\Mod{\afrak_{\ell-1}}
\end{align*}
because these congruences characterize when $((\Ycal_t,\Lab^{(t)}))_{t=0,1,\ldots,\ell-1}$ is equal to (\ref{generalShiftEq}). Viewing this as a system of $m$-congruences in the single variable $n''$, the existence of $n''$ can easily be decided using Proposition \ref{sCongProp}.

We conclude this subsection by noting that the algorithm from Problem \ref{labeledProb1} can be used to decide whether two ($N$-labeled) arithmetic partitions are equal, i.e., have the same (labeled) blocks. This is because two $N$-labeled arithmetic partitions $(\Pcal,\lab)$ and $(\Pcal',\lab')$ of $\IZ/m\IZ$ are equal if and only if the translation number of $(\Pcal,\lab)$ and $(\Pcal',\lab')$ equals the translation number of $(\Pcal,\lab)$ with itself. Moreover, $\Pcal$ and $\Pcal'$ are equal if and only if $(\Pcal,\mathbf{0})$ and $(\Pcal',\mathbf{0})$ are equal, where (in each of the two cases) $\mathbf{0}$ denotes the constantly zero labeling function. Still, the algorithmic problem of verifying whether two given arithmetic partitions of $\IZ/m\IZ$ are equal is interesting in its own right, and it may admit an efficient algorithmic solution even if Problem \ref{labeledProb1} does not, so we pose it separately.

\begin{problemm}\label{labeledProb2}
Find an efficient algorithm which, for given $m\in\IN^+$ and (spanning $m$-congruence sequences of) arithmetic partitions $\Pcal$ and $\Qcal$ of $\IZ/m\IZ$, decides whether $\Pcal=\Qcal$.
\end{problemm}

\subsection{More problems concerning asymptotic growth rates}\label{subsec6P3}

Let $m=p_1^{v_1}\cdots p_K^{v_K}$ be a positive integer with its factorization displayed. We recall from Definition \ref{sArithDef}(3) that the minimal number of spanning $m$-congruences for an arithmetic partition $\Pcal$ of $\IZ/m\IZ$ is called the (arithmetic) complexity of $\Pcal$ and denoted by $\AC(\Pcal)$. In Remark \ref{trivialBoundRem}, we observed that the trivial partition $\Tcal_m$ of $\IZ/m\IZ$, all of whose blocks are singletons, satisfies $\AC(\Tcal_m)\leq\sum_{j=1}^K{p_j^{v_j}}-K$. While this bound is equal to $m-1$ when $m$ is a prime power, we also observed in Remark \ref{trivialBoundRem} that the bound is asymptotically equivalent to $\log^2{m}/(2\log\log{m})$ when $m$ is a primorial, which leads to the question whether the actual complexity of $\Tcal_m$ can be significantly smaller than that (on a suitable infinite class of values for $m$).

\begin{quesstion}\label{liminfQues}
Is it true that
\[
\liminf_{m\to\infty}{\frac{\AC(\Tcal_m)}{\log^2{m}/\log\log{m}}}>0?
\]
\end{quesstion}

We observe that $\Tcal_m$ is the unique (arithmetic) partition of $\IZ/m\IZ$ that achieves the maximum possible number of blocks, $m$. One may ask more generally for nontrivial bounds that relate the arithmetic complexity of an arithmetic partition $\Pcal$ of $\IZ/m\IZ$ with its number of blocks. Trivially, the number of distinct blocks of $\Pcal$ is at most $2^{\AC(\Pcal)}$, and this bound is attained if $\AC(\Pcal)\in\{0,1\}$. In fact, for any given value $k\in\IN_0$, there is an $m\in\IN^+$ and an arithmetic partition $\Pcal$ of $\IZ/m\IZ$ such that $\AC(\Pcal)=k$ and $\Pcal$ has $2^k$ distinct blocks: simply let $m$ be the $k$-th primorial $p_k\#=p_1p_2\cdots p_k$, where $p_j$ denotes the $j$-th smallest prime number, and $\Pcal:=\Pfrak(x\equiv0\Mod{p_j}:j=1,2,\ldots,k)$. However, once $\AC(\Pcal)$ becomes sufficiently large with respect to $m$, the number of blocks of $\Pcal$ falls behind $2^{\AC(\Pcal)}$; at latest, this happens once $\AC(\Pcal)>\log_2{m}$, because the block count of $\Pcal$ cannot be larger than $m$. In the example we just gave, where $m=p_k\#$, we have $\AC(\Pcal)=k\sim(\log{m}/\log\log{m})$, which motivates the following open problem.

\begin{problemm}\label{gammaBlocksProb}
Either find functions $f,g:\left[0,\infty\right)\rightarrow\left[0,\infty\right)$ such that
\begin{enumerate}
\item $(\log{x}/\log\log{x})\lesssim f(x)\in o(\log{x})$,
\item $g(x)\in o(2^x)$, and
\item for every positive integer $m$ and every arithmetic partition $\Pcal$ of $\IZ/m\IZ$ with $\AC(\Pcal)\geq f(m)$, the number of blocks of $\Pcal$ is at most $g(\AC(\Pcal))$,
\end{enumerate}
or prove that such functions do not exist.
\end{problemm}

In Subsubsection \ref{subsubsec5P3P2}, we took note of a potential obstacle to using tree necklace lists to give a general, efficient algorithm for deciding whether the functional graphs of two given generalized cyclotomic mappings of $\IF_q$ are isomorphic. Namely, it could be that even when their index is fixed, generalized cyclotomic mappings $f$ have too many distinct isomorphism types of connected components in their functional graphs. Specifically, we pose the following problem.

\begin{problemm}\label{isomorphismTypesProb}
Prove or disprove that for every $d\in\IN^+$, there is a constant $c=c(d)$ such that for every prime power $q$ and every index $d$ generalized cyclotomic mapping $f$ of $\IF_q$, the number of distinct isomorphism types of connected components of $\Gamma_f$ is in $O(\log^c{q})$.
\end{problemm}

We observe that for $d=1$, all rooted trees above \emph{non-zero} $f$-periodic points are isomorphic; see Theorem \ref{allTreesIsomorphicTheo}, noting that $\Gamma_{f_{\mid\IF_q^{\ast}}}\cong\Gamma_{A_0}$, unless $f$ is constantly zero, in which case the statement in question is vacuously true. Therefore, for $d=1$, Problem \ref{isomorphismTypesProb} is equivalent to proving or disproving that the number of distinct cycle lengths of an affine map of $\IZ/(q-1)\IZ$, where $q$ ranges over all prime powers, is bounded from above by some fixed polynomial in $\log{q}$. Even this appears to be an open problem, in spite of Remark \ref{ctTauRem}.

In view of Proposition \ref{tauAvProp}, we accept that an arbitrarily small but positive asymptotic fraction of prime powers $q$ needs to be excluded in order for the algorithms from Subsubsection \ref{subsubsec5P3P2} to be efficient. In this context, we note the following problem, which is harder than Problem \ref{isomorphismTypesProb}, but also more interesting.

\begin{problemm}\label{isomorphismTypesProb2}
For $d\in\IN^+$ and $c>0$, we denote by $\epsilon(d,c)$ the asymptotic proportion of all prime powers $q$ with $d\mid q-1$ for which there exists an index $d$ generalized cyclotomic mapping $f$ of $\IF_q$ such that $\Gamma_f$ has more than $\log^{c}{q}$ distinct isomorphism types of connected components. Prove or disprove that as $d$ is fixed and $c\to\infty$, one has $\epsilon(d,c)\to0$.
\end{problemm}

\subsection{Extension to other coset-wise affine functions}\label{subsec6P4}

An index $d$ generalized cyclotomic mapping $f$ of $\IF_q$, given in cyclotomic form (\ref{cyclotomicFormEq}), such that all $a_i$ and $r_i$ are non-zero restricts to a function $\IF_q^{\ast}\rightarrow\IF_q^{\ast}$, which is \enquote{coset-wise affine} in the sense that its restriction to any given coset $C_i$ of the index $d$ subgroup $C$ of $\IF_q^{\ast}$ maps to another coset $C_{\overline{f}(i)}$ via an affine map of the cyclic group $C$ (here, we are using the general, group-theoretic sense of the word \enquote{affine map}, as in Definition \ref{affineMapDef}). That we can split $f$ up into such smaller, easy to handle parts is crucial for the approach of understanding $\Gamma_f$ presented in this paper.

In this subsection, we aim to generalize this idea. More specifically, we replace $\IF_q^{\ast}$ by some group $G$ (usually, but not necessarily finite), and $C$ by a subgroup $H$ of $G$. We consider the following two notions of coset-wise affine functions.

\begin{deffinition}\label{cosetwiseAffineDef}
Let $G$ be a group, and let $H$ be a subgroup of $G$.
\begin{enumerate}
\item An \emph{affine function $H\rightarrow G$}\phantomsection\label{term94} is a function $H\rightarrow G$ of the form $h\mapsto h^{\varphi}b$ for some group homomorphism $\varphi:H\rightarrow G$ and some $b\in G$.
\item A function $f:G\rightarrow G$ is called \emph{$H$-coset-wise affine in the wide sense}\phantomsection\label{term95} if for every right coset $C=Hr_C$ of $H$ in $G$, there is an affine function $A_C:H\rightarrow G$ such that $f(hr_C)=A_C(h)$ for all $h\in H$.
\item A function $f:G\rightarrow G$ is called \emph{$H$-coset-wise affine in the narrow sense}\phantomsection\label{term96} if for every right coset $C=Hr_C$ of $H$ in $G$, there is an affine map $A_C$ of $H$ and a $t_C\in G$ such that $f(hr_C)=A_C(h)t_C$ for all $h\in H$.
\end{enumerate}
\end{deffinition}

In contrast to $H$-coset-wise affine functions in the narrow sense, an $H$-coset-wise affine function in the wide sense does not need to map each right coset of $H$ to a single such coset. This makes it hard to study the behavior of $H$-coset-wise affine functions in the wide sense under iteration. The most celebrated example of this is the Collatz function $g$, corresponding to $G=\IZ$ and $H=2\IZ$ and given by the coset-wise affine formula
\[
g(x)=
\begin{cases}
x/2, & \text{if }x\in2\IZ, \\
3x+1, & \text{if }x\in 2\IZ+1.
\end{cases}
\]
On the other hand, any function $g:\IZ\rightarrow\IZ$ such that $g$ agrees, on each coset $k+n\IZ$ of the index $n$ subgroup $n\IZ$, with an affine function $x\mapsto a^{(k)}x+b^{(k)}$ with \emph{integer} coefficients $a^{(k)}$ and $b^{(k)}$, is $n\IZ$-coset-wise affine in the narrow sense and thus amenable to the ideas mentioned in the first paragraph of this subsection. Henceforth, we restrict our attention to $H$-coset-wise affine functions in the narrow sense, which we simply call \emph{$H$-coset-wise affine functions}\phantomsection\label{term97} for short.

An important special case is when $G=(\IF_q,+)$ and $H$ is an $\IF_p$-subspace of $G$, for which this class of functions was already considered in \cite{BW22a}. We expect our approach for understanding functional graphs of generalized cyclotomic mappings to work mostly analogously for $H$-coset-wise affine functions, with one big caveat: in the proof of Lemma \ref{masterLem} (our \enquote{Master Lemma}), we made essential use of the equivalence of statements (1) and (3) in Proposition \ref{sCongProp}. In the more general group-theoretic context of the current subsection, this equivalence needs to be replaced by the following property of the group $H$.

\begin{deffinition}\label{pairwiseCongConsDef}
Let $H$ be a group. We say that $H$ is \emph{pairwise congruence-consistent}\phantomsection\label{term98} if any given system of congruences over $H$,
\begin{align*}
x&\equiv h_1\Mod{N_1} \\
x&\equiv h_2\Mod{N_2} \\
&\vdots \\
x&\equiv h_K\Mod{N_K}
\end{align*}
where $h_1,h_2,\ldots,h_K\in H$ and $N_1,N_2,\ldots,N_K$ are normal subgroups of $H$, is consistent if and only if each pair of congruences in the system is consistent.
\end{deffinition}

The equivalence of statements (1) and (3) in Proposition \ref{sCongProp} can be reformulated as \enquote{Finite cyclic groups are pairwise congruence-consistent.} In the special case \enquote{$G=(\IF_q,+)$} mentioned above, the group $H$ is of the form $\IF_p^n$, i.e., it is a finite elementary abelian $p$-group. If our approach is to work completely analogously for that case, we would need that finite elementary abelian groups are pairwise congruence-consistent. However, that is not the case, as the following result shows (noting that all abelian groups are nilpotent).

\begin{theoremm}\label{pairwiseCongConsTheo}
Let $G$ be a finite nilpotent group. The following are equivalent:
\begin{enumerate}
\item $G$ is pairwise congruence-consistent.
\item $G$ is cyclic.
\end{enumerate}
\end{theoremm}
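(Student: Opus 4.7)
The plan is to treat the direction (2) $\Rightarrow$ (1) as an almost immediate corollary of Proposition \ref{sCongProp}, and then to reduce the harder direction (1) $\Rightarrow$ (2) via the Frattini quotient to the single group $(\IZ/p\IZ)^2$, where an explicit pairwise-but-not-jointly consistent three-congruence system closes the argument. For (2) $\Rightarrow$ (1), every normal subgroup of the finite cyclic group $\IZ/m\IZ$ has the form $(m/d)\IZ/m\IZ$ for some divisor $d$ of $m$, and $x\equiv h\Mod{(m/d)\IZ/m\IZ}$ is literally the same as the $m$-congruence $x\equiv h\Mod{m/d}$ in the sense of Subsection \ref{subsec2P2}. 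Thus every group-theoretic system of congruences over $\IZ/m\IZ$ is a system of $m$-congruences, and the equivalence of statements (1) and (3) in Proposition \ref{sCongProp} is exactly pairwise congruence-consistency of $\IZ/m\IZ$.

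For (1) $\Rightarrow$ (2), the plan is first to verify two routine inheritance properties: (a) pairwise congruence-consistency passes to direct factors, because a system over $H_1$ lifts to a system over $H_1\times H_2$ by replacing each modulus $N_j\trianglelefteq H_1$ with $N_j\times H_2$ and each right-hand side $h_j$ with $(h_j,1_{H_2})$, with both joint and pairwise consistency preserved under the lifting; and (b) pairwise congruence-consistency passes to quotients $G/N$, because systems over $G/N$ correspond bijectively to systems over $G$ all of whose moduli contain $N$, again with both kinds of consistency preserved. A finite nilpotent group is the internal direct product of its Sylow subgroups and is cyclic if and only if each of its Sylow subgroups is cyclic, so if $G$ is pairwise congruence-consistent and non-cyclic, then by (a) some Sylow $p$-subgroup $P$ of $G$ is pairwise congruence-consistent and non-cyclic. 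By the Burnside basis theorem, the Frattini quotient $P/\Phi(P)$ is then elementary abelian of $\IF_p$-rank at least two and therefore has $(\IZ/p\IZ)^2$ as a quotient, so by (b) the group $(\IZ/p\IZ)^2$ must also be pairwise congruence-consistent.

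The core step is to contradict this by exhibiting an explicit obstruction in $(\IZ/p\IZ)^2$. Setting $H_1:=\langle(1,0)\rangle$, $H_2:=\langle(0,1)\rangle$ and $H_3:=\langle(1,1)\rangle$, consider
\begin{align*}
x &\equiv (0,0)\Mod{H_1}, \\
x &\equiv (0,0)\Mod{H_2}, \\
x &\equiv (1,0)\Mod{H_3}.
\end{align*}
Each pair is consistent (with unique solutions $(0,0)$, $(1,0)$ and $(0,-1)$ respectively, verified by three short line intersections in $\IF_p^2$), but the first two congruences force $x=(0,0)$, and then the third would require $(-1,0)\in H_3=\{(c,c):c\in\IZ/p\IZ\}$, which is impossible for any prime $p$. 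The main obstacle in the proof is conceptual rather than computational: one must recognize the Frattini quotient (rather than a naive induction on $|G|$ or on nilpotency class) as the correct reduction, because it isolates the minimal non-cyclic behavior of a $p$-group at the elementary abelian level where an explicit obstruction to pairwise congruence-consistency becomes visible in only three congruences.
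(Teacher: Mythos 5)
Your proof is correct, and it is a genuinely more streamlined route than the one in the paper. For $(2)\Rightarrow(1)$ you both read it off Proposition \ref{sCongProp}. For $(1)\Rightarrow(2)$, the paper splits into an abelian case and a non-abelian case: in the non-abelian case it applies the Burnside basis theorem and a Frattini-quotient reduction exactly as you do, but in the abelian case it instead invokes Corollary \ref{pairwiseIntersectingCor}(2) together with Theorem \ref{pairwiseIntersectingTheo}, and that theorem is proved by running through the Miller--Moreno classification of minimal non-cyclic groups, requiring explicit coset obstructions in $\IZ/p\IZ\times\IZ/p\IZ$, in $\Q_8$, and in the non-abelian metacyclic groups. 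Your observation that the Burnside basis theorem applies to any non-cyclic finite $p$-group, abelian or not, collapses the case split: one uniform reduction to $(\IZ/p\IZ)^2$ suffices, and the PCIP concept, Theorem \ref{pairwiseIntersectingTheo}, and the Miller--Moreno classification are avoided entirely. What the paper's longer route buys is Theorem \ref{pairwiseIntersectingTheo} itself, a classification of \emph{all} finite PCIP-groups which is of independent interest and is not a consequence of the nilpotent result; for the statement actually being proved, your route is shorter and more self-contained. Two minor remarks: your step (a) about direct factors is subsumed by step (b) about quotients, since $H_1\cong(H_1\times H_2)/(\{1\}\times H_2)$, so it could be dropped; and your three-congruence obstruction in $(\IZ/p\IZ)^2$ is a minor variant of the three cosets the paper uses in the first case of its proof of Theorem \ref{pairwiseIntersectingTheo} -- both are instances of three pairwise-intersecting but non-concurrent affine lines in $\IF_p^2$.
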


We prove Theorem \ref{pairwiseCongConsTheo} at the end of this subsection. Before doing so, we make two more comments.

Firstly, we observe that non-cyclic finite pairwise congruence-consistent groups exist; there are both non-solvable examples, such as any non-abelian finite simple group (for trivial reasons), and solvable examples, such as $\AGL_1(q)=\IF_q\rtimes\IF_q^{\ast}$ for any prime power $q$. To see that the latter kind of groups are pairwise congruence-consistent, we note that $\AGL_1(q)$ has $\IF_q$ as its unique minimal, nontrivial normal subgroup, so any congruence over $\AGL_1(q)$ has an associated congruence over the cyclic group $\IF_q^{\ast}$ (i.e., a congruence in the classical, number-theoretic sense) such that the solution set of the congruence over $\AGL_1(q)$ is the full pre-image, under the canonical projection $\AGL_1(q)\rightarrow\IF_q^{\ast}$, of the solution set of the associated number-theoretic congruence. In particular, if any pair of congruences in a given system of congruences over $\AGL_1(q)$ is consistent, the same holds true for the associated system over $\IF_q^{\ast}$, whence that system is consistent by Proposition \ref{sCongProp}, and so the original system over $\AGL_1(q)$ must also have a solution.

Secondly, we pose the following two open problems, which are motivated by Theorem \ref{pairwiseCongConsTheo} and the discussion leading to it.

\begin{problemm}\label{pairwiseCongConsProb}
Classify the finite groups that are pairwise congruence-consistent.
\end{problemm}

\begin{problemm}\label{elementaryAbelianCongProb}
For important classes of finite groups that are \emph{not} contained in the class of finite pairwise congruence-consistent groups (such as the class of finite (elementary) abelian groups), devise efficient algorithms that decide whether a given system of congruences over a group in that class is consistent.
\end{problemm}

In order to prove Theorem \ref{pairwiseCongConsTheo}, we first consider the following property of groups.

\begin{deffinition}\label{pairwiseIntersectingDef}
Let $G$ be a group. We say that $G$ has the \emph{pairwise coset-intersection property}\phantomsection\label{term99} (or \emph{PCIP}\phantomsection\label{term100} for short) if the following holds: for any positive integer $m$ and any sequence $(C_1,C_2,\ldots,C_m)$ of (left or right) cosets of subgroups of $G$, if $C_j\cap C_k\not=\emptyset$ for all $1\leq j<k\leq m$, then $\bigcap_{j=1}^m{C_j}\not=\emptyset$. A group satisfying the PCIP is also called a \emph{PCIP-group}\phantomsection\label{term101} for short.
\end{deffinition}

The following proposition is immediate from observing that the solution set of the congruence $x\equiv g\Mod{N}$ over the group $G$ is the coset $gN=Ng$ of $N$.

\begin{propposition}\label{pairwiseIntersectingProp}
Let $G$ be a group. The following are equivalent.
\begin{enumerate}
\item $G$ is pairwise congruence-consistent.
\item For any positive integer $m$ and any sequence $(C_1,C_2,\ldots,C_m)$ of cosets of \underline{normal} subgroups of $G$, if $C_j\cap C_k\not=\emptyset$ for all $1\leq j<k\leq m$, then $\bigcap_{j=1}^m{C_j}\not=\emptyset$.
\end{enumerate}
\end{propposition}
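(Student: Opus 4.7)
The plan is to execute the direct translation flagged in the lead-in sentence: the solution set of a single congruence $x\equiv g\Mod{N}$ over $G$ is exactly the coset $gN=Ng$ (where the equality $gN=Ng$ is the one place we genuinely use normality of $N$), and the solution set of a system of such congruences is the intersection of the corresponding cosets. Thus ``consistency of a system'' and ``non-emptiness of the intersection of the corresponding cosets'' are synonymous, and the same holds for the consistency of any pair of congruences versus the non-emptiness of the intersection of the corresponding pair of cosets. Both implications of the proposition are then direct rewordings of each other.

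For the implication (1)$\Rightarrow$(2), I would proceed as follows. Let $(C_1,\ldots,C_m)$ be a sequence of cosets of normal subgroups $N_1,\ldots,N_m$ of $G$ with $C_j\cap C_k\not=\emptyset$ for all $1\leq j<k\leq m$. Pick coset representatives $g_j\in C_j$, so that $C_j=g_jN_j=N_jg_j$, and consider the system
\begin{align*}
x&\equiv g_1\Mod{N_1}\\
&\vdots\\
x&\equiv g_m\Mod{N_m}.
\end{align*}
By the observation, the solution set of $x\equiv g_j\Mod{N_j}$ is precisely $C_j$, so the consistency of the pair $(x\equiv g_j\Mod{N_j},\, x\equiv g_k\Mod{N_k})$ is equivalent to $C_j\cap C_k\not=\emptyset$, which holds by hypothesis. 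Applying assumption (1) yields that the full system is consistent, i.e.\ $\bigcap_{j=1}^m C_j\not=\emptyset$.

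For the reverse implication (2)$\Rightarrow$(1), the argument is entirely symmetric. Given a system of congruences $x\equiv h_j\Mod{N_j}$ for $j=1,\ldots,m$ whose pairs are each consistent, one sets $C_j:=h_jN_j=N_jh_j$ and notes that each $C_j\cap C_k$, being the solution set of the consistent pair $(x\equiv h_j\Mod{N_j},\, x\equiv h_k\Mod{N_k})$, is non-empty; so assumption (2) provides an element of $\bigcap_{j=1}^m C_j$, which is a simultaneous solution of the original system.

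There is no genuine obstacle here: the only subtlety worth flagging is that statement (2) explicitly restricts to cosets of \emph{normal} subgroups, which matches exactly the fact that the solution sets of congruences over $G$ are precisely the cosets of normal subgroups. This is why Definition~\ref{pairwiseIntersectingDef}, which allows arbitrary (left or right) cosets of arbitrary subgroups, is in general a strictly stronger condition than pairwise congruence-consistency, but the proposition only claims the weaker coset-intersection statement in (2), which suffices for the translation to go through cleanly.
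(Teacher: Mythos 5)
Your argument is exactly the paper's: the paper dismisses the proposition as immediate from the observation that the solution set of $x\equiv g\Mod{N}$ is the coset $gN=Ng$, and you have simply spelled out the resulting bijective translation between pairwise-consistent congruence systems and pairwise-intersecting coset families. Correct, and same approach.
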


Proposition \ref{pairwiseIntersectingProp} has two important consequences.

\begin{corrollary}\label{pairwiseIntersectingCor}
The following hold.
\begin{enumerate}
\item Every PCIP-group is pairwise congruence-consistent.
\item An abelian group satisfies the PCIP if and only if it is pairwise congruence-consistent.
\end{enumerate}
\end{corrollary}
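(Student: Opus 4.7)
The proof of both parts will be essentially a direct unwrapping of definitions combined with Proposition \ref{pairwiseIntersectingProp}, so there is no real obstacle here; the work is all built into the preceding material.

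For statement (1), the plan is to observe that the PCIP as stated in Definition \ref{pairwiseIntersectingDef} quantifies over sequences of cosets of \emph{arbitrary} subgroups of $G$, whereas condition (2) of Proposition \ref{pairwiseIntersectingProp} only quantifies over cosets of \emph{normal} subgroups. Hence the PCIP is \emph{a priori} a stronger property, and in particular any PCIP-group automatically satisfies the normal-subgroup-restricted coset-intersection condition. Invoking the equivalence established in Proposition \ref{pairwiseIntersectingProp}, one then concludes that any PCIP-group is pairwise congruence-consistent.

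For statement (2), the plan is to use that in an abelian group $G$, every subgroup is normal. Consequently, the universal quantifier over \enquote{cosets of subgroups of $G$} in Definition \ref{pairwiseIntersectingDef} ranges over exactly the same family of subsets of $G$ as the quantifier over \enquote{cosets of normal subgroups of $G$} in condition (2) of Proposition \ref{pairwiseIntersectingProp}. Therefore, for abelian $G$, the PCIP coincides verbatim with condition (2) of Proposition \ref{pairwiseIntersectingProp}, which by that proposition is in turn equivalent to pairwise congruence-consistency of $G$. The forward implication of statement (2) is of course also a special case of statement (1), so only the reverse implication genuinely uses abelianness, and it uses it only through the normality of all subgroups.

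Since both parts reduce to bookkeeping with the definitions, there is no substantive obstacle. The one point worth stating carefully in the write-up is the asymmetry between the two quantifier ranges (arbitrary subgroups vs.~normal subgroups), because that is exactly what makes statement (2) a genuine equivalence only in the abelian setting, while statement (1) holds in full generality.
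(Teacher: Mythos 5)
Your proof is correct and is essentially the argument the paper has in mind: the corollary is presented without proof precisely because it follows by the quantifier comparison you spell out, using Proposition \ref{pairwiseIntersectingProp} and, for statement (2), the fact that all subgroups of an abelian group are normal. Nothing to add.
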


In view of Corollary \ref{pairwiseIntersectingCor}, the following result proves Theorem \ref{pairwiseCongConsTheo} for \emph{abelian} groups.

\begin{theoremm}\label{pairwiseIntersectingTheo}
Let $G$ be a finite group. The following are equivalent.
\begin{enumerate}
\item $G$ satisfies the PCIP.
\item $G$ is cyclic.
\end{enumerate}
\end{theoremm}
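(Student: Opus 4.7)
The implication $(2) \Rightarrow (1)$ is essentially the Chinese Remainder Theorem. Writing $G = \IZ/m\IZ$, cosets of subgroups of $G$ are exactly residue classes modulo divisors of $m$, so a family of such cosets is pairwise (respectively, globally) intersecting iff the associated system of $m$-congruences is pairwise (respectively, globally) consistent, and the equivalence \enquote{(1)$\Leftrightarrow$(3)} of Proposition \ref{sCongProp} directly yields the PCIP.

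For the converse $(1) \Rightarrow (2)$, I argue the contrapositive. Given a non-cyclic finite $G$, I will exhibit three cosets of subgroups of $G$ that pairwise intersect but have empty triple intersection. Since cosets of subgroups of any subgroup $K \leq G$ are also cosets of subgroups of $G$ with identical intersection behavior, it suffices to locate such a witnessing triple inside some canonical substructure of $G$, and I split into three cases. Case A: $G$ contains an elementary abelian subgroup $V \cong (\IZ/p\IZ)^2$. Let $H_1, H_2, H_3$ be three distinct order-$p$ subgroups of $V$; since $H_i + H_j = V$ for $i \neq j$, \emph{every} triple $v_1 + H_1, v_2 + H_2, v_3 + H_3$ is automatically pairwise intersecting, and a generic choice---say the cosets $\{(t,1) : t\}$, $\{(1,t) : t\}$, and $\{(t,t+1) : t\}$ in $\IF_p \times \IF_p$ for the two coordinate axes and the diagonal---has empty triple intersection. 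Case B: $G$ contains $Q_8$. Take the three cosets $j\langle i\rangle = \{\pm j, \pm k\}$, $i\langle j\rangle = \{\pm i, \pm k\}$, and $i\langle k\rangle = \{\pm i, \pm j\}$; they meet pairwise in $\{\pm k\}, \{\pm j\}, \{\pm i\}$ respectively but share no common element. Case C: $G$ contains neither $(\IZ/p\IZ)^2$ nor $Q_8$ as a subgroup. The first exclusion forces every abelian subgroup of $G$ to be cyclic; combining this with the classical theorem that a non-cyclic $p$-group with all abelian subgroups cyclic must be generalized quaternion---together with the observation that every generalized quaternion $2$-group of order at least $8$ contains $Q_8$---we conclude that every Sylow subgroup of $G$ is cyclic. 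Thus $G$ is a Z-group; a nilpotent Z-group is a direct product of cyclic subgroups of pairwise coprime orders and therefore cyclic, so $G$ is non-nilpotent, and Schur--Zassenhaus yields a semidirect decomposition $G = N \rtimes H$ with $N = \langle a\rangle$ cyclic normal, $H = \langle b\rangle$ cyclic, $\gcd(|N|,|H|)=1$, and $bab^{-1} = a^r$ for some $r \not\equiv 1 \pmod{|N|}$. The three cosets $bN$, $H$, $aHa^{-1}$ then witness the PCIP failure: a short calculation in the semidirect product (using $\gcd(|N|,|H|)=1$ and the identity $aba^{-1} = a^{1-r}b$) gives $bN \cap H = \{b\}$, $bN \cap aHa^{-1} = \{aba^{-1}\}$, and $H \cap aHa^{-1} = \langle b^d\rangle$ where $d > 1$ is the multiplicative order of $r$ modulo $|N|$, so that $b \notin \langle b^d\rangle$ forces the triple intersection to be empty.

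The main obstacle is the trichotomy in the hard direction: an \emph{a priori} guarantee that every non-cyclic $G$ falls into at least one of Cases A, B, or C with a usable witness is not obvious, and the reduction relies on the classical classification of $p$-groups whose abelian subgroups are all cyclic. Once this structural ingredient is in place, each case is an elementary verification---linear algebra over $\IF_p$ in Case A, direct enumeration in Case B, and a short semidirect-product calculation in Case C.
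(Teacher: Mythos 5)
Your proof is correct, and it takes a genuinely different route from the paper's. The paper argues by minimal counterexample: since the PCIP is inherited by subgroups, a minimal non-cyclic PCIP-group would have every proper subgroup cyclic, so it lies in the Miller--Moreno list ($\IZ/p\IZ\times\IZ/p\IZ$, $Q_8$, or the split metacyclic group $\IZ/q^n\IZ\ltimes\IZ/p\IZ$), and the paper exhibits a bad coset triple in each. You instead run the contrapositive through a trichotomy on $G$ itself: either $(\IZ/p\IZ)^2\leq G$, or $Q_8\leq G$, or (after observing that excluding both forces every Sylow subgroup to be cyclic) $G$ is a non-nilpotent Z-group $\IZ/m\IZ\rtimes\IZ/n\IZ$ with coprime factors. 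Your Cases A and B coincide with the paper's first two witnesses (A is the paper's $\IF_p^2$ example written additively, B is literally the paper's $Q_8$ example), but Case C is strictly more general than the paper's minimal metacyclic case and uses a different witnessing triple $bN,\ H,\ aHa^{-1}$, whose pairwise and triple intersections you compute cleanly via $(1-r)(1+r+\cdots+r^{j-1})=1-r^j$; I verified these computations and they are correct. What your route buys is avoiding the minimal-counterexample induction and the precise Miller--Moreno statement; what it costs is invoking the structure theorem for Z-groups, which is comparable in weight (and note that Schur--Zassenhaus alone does not quite give you the decomposition — you also need the Hölder--Burnside--Zassenhaus fact that $G'$ is a cyclic normal Hall subgroup of $G$, since Schur--Zassenhaus only produces a complement once the normal Hall subgroup is in hand). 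One tiny imprecision: "a non-cyclic $p$-group with all abelian subgroups cyclic must be generalized quaternion" holds only for $p=2$; for odd $p$ such a group is cyclic outright, so the non-cyclic hypothesis is vacuous there — but this does not affect the conclusion that all Sylow subgroups of $G$ are cyclic.
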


\begin{proof}
The implication \enquote{(2) $\Rightarrow$ (1)} holds by Proposition \ref{sCongProp}, so we focus on \enquote{(1) $\Rightarrow$ (2)}.

It is not hard to show that all subgroups and quotients of a PCIP-group are PCIP-groups themselves. A minimal counterexample to the implication \enquote{(1) $\Rightarrow$ (2)} would thus be a finite, non-cyclic group $G$ all of whose proper subgroups are cyclic. These groups were classified by Miller and Moreno \cite{MM03a} to be one of the following.
\begin{enumerate}
\item $\IZ/p\IZ\times\IZ/p\IZ$, where $p$ is a prime;
\item the quaternion group $\Q_8$; or
\item the metacyclic group $\IZ/q^n\IZ\ltimes\IZ/p\IZ=\langle x,y: x^p=y^{q^n}=1, y^{-1}xy=x^r\rangle$, where $r\equiv1\Mod{q}$ and $r^q\equiv1\Mod{p}$, but $r\not\equiv1\Mod{p}$, since otherwise, the group is cyclic or isomorphic to $\IZ/p\IZ\times\IZ/p\IZ$.
\end{enumerate}
It suffices to show that none of these groups $G$ satisfies the PCIP, which we do now, by specifying three subsets $C_j\subseteq G$ for $j=1,2,3$, each of which is a left and right coset of some subgroup of $G$, such that the $C_j$ intersect pairwise while $\bigcap_{j=1}^3{C_j}=\emptyset$.
\begin{itemize}
\item For groups of the first type, where the elements are pairs $(x,y)$ with $x,y\in\IF_p$, let
\begin{align*}
&C_1:=\{(x,y)\in\IF_p^2: x=1\}, C_2:=\{(x,y)\in\IF_p^2: y=1\},\text{ and} \\
&C_3:=\{(x,y)\in\IF_p^2: x+y=1\}.
\end{align*}
\item For $\Q_8=\{\pm1,\pm i,\pm j,\pm k\}$, let
\begin{align*}
&C_1:=\langle i\rangle j=j\langle i\rangle=\{\pm j,\pm k\}, C_2:=\langle j\rangle i=i\langle j\rangle=\{\pm i,\pm k\},\text{ and} \\
&C_3:=\langle k\rangle j=j\langle k\rangle=\{\pm i,\pm j\}.
\end{align*}
\item We note that the central quotient of a group of the third type is of the same form but with $n=1$, which we may thus assume without loss of generality. Let
\begin{align*}
&C_1=\langle x\rangle y=y\langle x\rangle=\{y,yx,yx^2,\ldots,yx^{p-1}\}, C_2=\langle y\rangle=\{1,y,y^2,\ldots,y^{q-1}\}, \\ 
&\text{and }C_3=\langle yx\rangle=\{1,yx,y^2x^{1+r},y^3x^{1+r+r^2},\ldots,y^{q-1}x^{1+r+r^2+\cdots+r^{q-2}}\}.
\end{align*}
Since the $y$-exponent is $1$ in all elements of $C_1$, we have $C_1\cap C_2=\{y\}$ and $C_1\cap C_3=\{yx\}$. Moreover, $C_2\cap C_3=\{1\}$, since $C_2$ and $C_3$ are distinct subgroups of prime order. It follows that $C_1\cap C_2\cap C_3=\emptyset$, as required.
\end{itemize}
\end{proof}

\begin{proof}[Proof of Theorem \ref{pairwiseCongConsTheo}]
As in the proof of Theorem \ref{pairwiseIntersectingTheo}, the implication \enquote{(2) $\Rightarrow$ (1)} is clear by Proposition \ref{sCongProp}, so we focus on \enquote{(1) $\Rightarrow$ (2)}.

Let $G$ be a finite nilpotent group that is pairwise congruence-consistent. If $G$ is abelian, then $G$ must be cyclic by Corollary \ref{pairwiseIntersectingCor}(2) and Theorem \ref{pairwiseIntersectingTheo}, so we assume (aiming for a contradiction) that $G$ is non-abelian. Then for some prime $p$, the (unique) Sylow $p$-group $S_p$ of $G$ is non-abelian. Because $G$ is the direct product of its Sylow subgroups, we find that $S_p$ is a quotient of $G$. Moreover, it is not hard to prove that quotients of pairwise congruence-consistent groups are themselves pairwise congruence-consistent. Therefore, $S_p$ is pairwise congruence-consistent.

But $S_p$ is a non-abelian finite $p$-group, whence Burnside's Basis Theorem implies that its Frattini quotient $S_p/\Phi(S_p)$ is the finite elementary abelian $p$-group $\IF_p^n$ where $n>1$ is the minimal size of a generating set of $S_p$. Using again that the property of being pairwise congruence-consistent is preserved under passing to quotients, it follows that $\IF_p^n$ is pairwise congruence-consistent. Since $\IF_p^n$ is abelian, Corollary \ref{pairwiseIntersectingCor} thus implies that $\IF_p^n$ satisfies the PCIP, which contradicts Theorem \ref{pairwiseIntersectingTheo}.
\end{proof}

\subsection{Generalization to transformation graphs}\label{subsec6P5}

This paper is concerned with functional graphs, which are natural visualizations of individual functions on a set and are useful for understanding the long-term behavior of discrete dynamical systems. As a generalization, one may consider the situation where a dynamical system does not evolve deterministically, but for some $n\in\IN^+$, each system state $x$ has $n$ possibilities (possibly with repetitions) for its successor state, occurring with different probabilities and represented by the values $g_1(x),g_2(x),\ldots,g_n(x)$ of functions $g_j:X\rightarrow X$. Let us set $\Gcal:=\{g_1,g_2,\ldots,g_n\}$\phantomsection\label{not313}. A first step toward studying the behavior of such a system is to understand the so-called \emph{transformation graph}\phantomsection\label{term102} $\TRAG(X,\Gcal)$\phantomsection\label{not314}, which is defined as the edge-labeled digraph with vertex set $X$ whose arcs are of the form $x\xrightarrow{g_j}g_j(x)$ for $x\in X$ and $j=1,2,\ldots,n$. The terminology \enquote{transformation graph} is from Annexstein, Baumslag and Rosenberg's paper \cite{ABR90a}. The concept is also closely related to operands, which are actions of semigroups on sets \cite[Section 11.1]{CP67a} (and in analogy to the terminology \enquote{group action graph} from \cite{ABR90a}, one could also call transformation graphs \enquote{operand graphs}), and to deterministic finite automata \cite[Subsection 2.2.1]{HMU07a}. In fact, $\TRAG(X,\Gcal)$ is like a deterministic finite automaton with state set $X$ and input symbol set $\Gcal$, but without declared start and accept states.

We observe that except for the edge labels, $\TRAG(X,\{g\})$ is the same as the functional graph $\Gamma_g$ in our notation. As noted in \cite[beginning of Subsection 2.1]{ABR90a}, one may also consider a simple (i.e., no multiple arcs $x\rightarrow x'$ for given $x,x'\in X$), unlabeled version of $\TRAG(X,\Gcal)$, which we denote by $\STRAG(X,\Gcal)$\phantomsection\label{not315}. Of course, for a given set $X$, any digraph with vertex set $X$ in which each vertex has positive out-degree (including possibly $\infty$) is of the form $\STRAG(X,\Gcal)$ for a suitable non-empty $\Gcal\subseteq X^X$.

It would be interesting to know whether the methods developed in our paper could be extended to deal with graphs of the form $\TRAG(\IF_q,\Fcal)$ and $\STRAG(\IF_q,\Fcal)$ where $\Fcal$\phantomsection\label{not316} is a set of generalized cyclotomic mappings of $\IF_q$, say of a common, small index $d$ (which also covers some cases where the index is not uniform, because if $f_j$ for $j=1,2,\ldots,n$ is a generalized cyclotomic mapping of $\IF_q$ of index $d_j$, then each $f_j$ also has index $\lcm(d_1,d_2,\ldots,d_n)$). Specifically, we pose the following problems.

\begin{problemm}\label{tragProb1}
For a given prime power $q$ and set $\Fcal$ of index $d$ generalized cyclotomic mappings of $\IF_q$, devise efficient algorithms (say of runtime polynomial in $\log{q}$ for fixed $d$) that find
\begin{enumerate}
\item a compact parametrization of the connected components of $\TRAG(\IF_q,\Fcal)$ (equivalently, of $\STRAG(\IF_q,\Fcal)$) by representative vertices, and
\item a compact description of the isomorphism type of a connected component of $\TRAG(\IF_q,\Fcal)$, respectively of $\STRAG(\IF_q,\Fcal)$, given by a vertex in the image of the parametrization from point (1).
\end{enumerate}
\end{problemm}

To the authors' knowledge, this is an open problem even for $d=1$ and $|\Fcal|=2$ (i.e., when considering transformation graphs that are each based on two monomial functions over $\IF_q$).

\begin{problemm}\label{tragProb2}
For some classes of sets of index $d$ generalized cyclotomic mappings of $\IF_q$, devise efficient algorithms to decide, for sets $\Fcal_1,\Fcal_2$ in such a class, whether $\TRAG(\IF_q,\Fcal_1)\cong\TRAG(\IF_q,\Fcal_2)$, respectively $\STRAG(\IF_q,\Fcal_1)\cong\STRAG(\IF_q,\Fcal_2)$.
\end{problemm}

\setcounter{secnumdepth}{0}

\section{Appendix: Tabular overview of notation and terminology}

The following two tables contain all pieces of notation and terminology that appear in this paper. We start with a rather short list of notations based on mathematical symbols in Table \ref{termNotTableShort}, which would be hard to find in the much longer Table \ref{termNotTable}, the entries of which are listed in alphabetical order (placing Latin letters before Greek letters, lowercase letters before their capital counterparts, and letters in standard font before calligraphic letters, which are in turn placed before Fraktur letters).



\end{document}